\newcommand{\tru}{\mathrm{trunc}}
\tikzset{ext/.style={circle, draw,inner sep=1pt},int/.style={circle,draw,fill,inner sep=1pt},nil/.style={inner sep=1pt}}
\tikzset{exte/.style={circle, draw,inner sep=3pt},inte/.style={circle,draw,fill,inner sep=3pt}}
\tikzset{diagram/.style={matrix of math nodes, row sep=3em, column sep=2.5em, text height=1.5ex, text depth=0.25ex}}
\tikzset{diagram2/.style={matrix of math nodes, row sep=0.5em, column sep=0.5em, text height=1.5ex, text depth=0.25ex}}
\tikzset{every picture/.style={baseline=-.65ex}}
\tikzstyle{every loop}=[draw]
\tikzstyle{rloop}=[ out=10, in=-10, loop, distance=3em] 
\tikzstyle{aloop}=[ out=100, in=80, loop, distance=3em] 
\theoremstyle{plain}
\newtheorem{thm}{Theorem}[section]
\newtheorem{defn}[thm]{Definition}
\newtheorem{prop}[thm]{Proposition}
\newtheorem{cor}[thm]{Corollary}
\newtheorem{lemma}[thm]{Lemma}
\theoremstyle{definition}
\newtheorem{ex}[thm]{Example}
\newtheorem{rem}[thm]{Remark}
\newcommand{\Hom}{\mathop{Hom}}
\newcommand{\R}{{\mathbb{R}}}
\newcommand{\Z}{{\mathbb{Z}}}
\newcommand{\Q}{{\mathbb{Q}}}
\newcommand{\HGC}{{\mathrm{HGC}}}
\newcommand{\GG}{\mathbf{HG}}
\newcommand{\GGC}{\mathbf{HGC}}
\newcommand{\tGGC}{\widetilde{\GGC}}
\newcommand{\fGGC}{\GGC^\tvv}
\newcommand{\fGG}{\GG^\tvv}
\newcommand{\Graphs}{{\mathsf{Graphs}}}
\newcommand{\tvv}{\#}
\newcommand{\fGraphs}{\Graphs^{\tvv}}
\newcommand{\nfGraphs}{\overline{\Graphs}^{\tvv}}
\newcommand{\hgr}{\hat{\gr}}
\newcommand{\mF}{\mathcal{F}}
\newcommand{\bbS}{\mathbb{S}}
\newcommand{\osp}{\mathfrak{osp}}
\newcommand{\Gra}{{\mathsf{Gra}}}
\newcommand{\FF}{{\mathbf{F}}} 
\newcommand{\Def}{\mathrm{Def}}
\newcommand{\Poiss}{\mathsf{Pois}}
\newcommand{\Lie}{\mathsf{Lie}}
\renewcommand{\Bar}{{\mathtt{B}}}
\newcommand{\gl}{\mathfrak{gl}}
\newcommand{\Ass}{\mathsf{Assoc}}
\newcommand{\Com}{\mathsf{Com}}
\newcommand{\cR}{\mathcal{R}}
\newcommand{\IG}{\mathsf{IG}}
\newcommand{\pICG}{\mathsf{pICG}}
\newcommand{\pIG}{\mathsf{pIG}}
\newcommand{\ppIG}{\mathsf{pIG}} 
\newcommand{\GCp}{\mathrm{GCp}}
\newcommand{\GCex}{\GC^{\mathrm ex}}
\newcommand{\GCpex}{\GCp^{\mathrm ex}}
\newcommand{\GCx}{\mathrm{GCx}}
\newcommand{\GCpx}{\mathrm{GCpx}}
\newcommand{\FM}{\mathcal{F}}
\newcommand{\Exp}{\mathrm{Exp}}
\newcommand{\bpm}{\begin{pmatrix}}
\newcommand{\epm}{\end{pmatrix}}
\newcommand{\GC}{\mathrm{GC}}
\renewcommand{\Hom}{\mathrm{Hom}}
\newcommand{\MC}{\mathsf{MC}}
\newcommand{\mU}{\mathcal{U}}
\newcommand{\mV}{\mathcal{V}}
\newcommand{\hotimes}{\mathbin{\hat\otimes}}
\DeclareMathOperator{\End}{End}
\DeclareMathOperator{\sgn}{sgn}
\newcommand{\e}{\mathsf{e}}
\newcommand{\Op}{\mathcal{O}\mathrm{p}}
\newcommand{\La}{\Lambda}
\newcommand{\LS}{\mathsf{LS}}
\newcommand{\Hopf}{\mathcal{H}\mathrm{opf}}
\newcommand{\Free}{\mathbb{F}}
\newcommand{\Seq}{\mathcal{S}\mathit{eq}}
\newcommand{\Map}{\mathrm{Map}}
\newcommand{\Mor}{\mathrm{Mor}}
\newcommand{\Aut}{\mathrm{Aut}}
\newcommand{\gr}{\mathrm{gr}}
\newcommand{\dg}{\mathit{dg}}
\newcommand{\fc}{{\mathfrak c}}
\newcommand{\oW}{\mathring{W}} 
\newcommand{\beq}[1]{\begin{equation}\label{#1}}
\newcommand{\eeq}{\end{equation}}
\newcommand{\SL}{\mathfrak{S}\Lie}
\newcommand{\dgcAlg}{\dg\mathcal{C}\mathit{om}}
\newcommand{\ev}{\mathit{ev}}
\newcommand{\mG}{{\mathcal G}}
\newcommand{\Res}{\mathrm{Res}}
\newcommand{\Ind}{\mathrm{Ind}}
\newcommand{\coInd}{\mathrm{coInd}}
\newcommand{\Fr}{\mathrm{Fr}}
\newcommand{\ar}{\mathrm{ar}}
\newcommand{\Mod}{\mathcal{M}\mathrm{od}}
\newcommand{\Modc}{\Mod^c}
\newcommand{\dgModc}{\dg\Mod^c}
\newcommand{\dgHModc}{\dg\Hopf\Mod^c}
\newcommand{\gLaHModc}{\mathrm{g}\Hopf\La\Mod^c}
\newcommand{\dgLaHModc}{\dg\Hopf\La\Mod^c}
\newcommand{\iHom}{{\mathcal{H}om}}
\DeclareMathOperator*{\colim}{\mathrm{colim}}
\newcommand{\sset}{\mathit{s}\mathcal{S}\mathit{et}}
\newcommand{\TOp}{\mathsf{T}}
\newcommand{\fg}{\mathfrak{g}}
\newcommand{\fh}{\mathfrak{h}}
\newcommand{\curv}{\mathrm{curv}}
\newcommand{\cone}{\mathrm{cone}}
\newcommand{\sSet}{\sset}
\newcommand{\conf}{{\mathrm{conf}}}
\newcommand{\FreeMod}{\Free}
\newcommand{\HG}{\mathsf{HG}}
\newcommand{\coRes}{\mathrm{coRes}}
\newcommand{\GL}{\mathrm{GL}}
\newcommand{\Conf}{\mathrm{Conf}}
\newcommand{\sSetSeq}{\sSet\Seq}
\DeclareMathAlphabet{\mathsfit}{OT1}{cmss}{m}{sl}
\DeclareMathOperator{\AOp}{\mathsfit{A}}
\DeclareMathOperator{\BOp}{\mathsfit{B}}
\DeclareMathOperator{\COp}{\mathsfit{C}}
\DeclareMathOperator{\DOp}{\mathsfit{D}}
\DeclareMathOperator{\MOp}{\mathsfit{M}}
\DeclareMathOperator{\NOp}{\mathsfit{N}}
\DeclareMathOperator{\POp}{{\mathsfit{P}}}
\DeclareMathOperator{\QOp}{{\mathsfit{Q}}}
\DeclareMathOperator{\SOp}{\mathsfit{S}}
\DeclareMathOperator{\sSetOp}{\sSet\Op}
\DeclareMathOperator{\TopOp}{\Top\Op}
\DeclareMathOperator{\dgOpc}{\dg\Op^c}
\DeclareMathOperator{\dgHOpc}{\dg\Hopf\Op^c}
\DeclareMathOperator{\dgLaHOpc}{\dg\Hopf\Lambda\Op^c}
\DeclareMathOperator{\vdim}{\mathrm{dim}}
\renewcommand{\Top}{\mathcal{T}{op}}
\newcommand{\eql}{\mathrm{eq}}
\newcommand{\coeql}{\mathrm{coeq}}
\newcommand{\cC}{{\mathcal C}}
\newcommand{\Mfd}{{\mathbf{M}}} 
\newcommand{\Mfdd}{{\mathbf{N}}} 
\DeclareMathOperator{\id}{\mathit{id}}
\newcommand{\hCom}{\widehat{\Com}}
\newcommand{\Fc}{\FF}
\newcommand{\Uc}{U^c}
\begin{document}
\title{Models for configuration spaces of points via obstruction theory I}



\author{Thomas Willwacher}
\address{Department of Mathematics \\ ETH Zurich \\
R\"amistrasse 101 \\
8092 Zurich, Switzerland}
\email{thomas.willwacher@math.ethz.ch}




\begin{abstract}
  We derive rational (Sullivan) models for configuration spaces of points on manifolds purely from algebraic considerations via obstruction theory, essentially without the use of analytic or geometric techniques.
\end{abstract}

\maketitle


\sloppy

\tableofcontents

\section{Introduction}
Let $\Mfd$ be a manifold of dimension $n\geq 2$ and consider the space 
\[
  \conf_\Mfd(r)=\{(x_1,\dots,x_r) \mid x_j\in \Mfd, x_i\neq x:i \text{ for }i\neq j\}
\] 
of $r$ distinguishable points on $\Mfd$. Let $\FM_M(r)$ be the Fulton-MacPherson-Axelrod-Singer bordification of $\conf_\Mfd(r)$, see \cite{AxelrodSinger1994}.
Assume furthermore that $\Mfd$ is parallelized, that is, we are given a trivialization of its tangent bundle. Then the collection $\FM_\Mfd$ is equipped with a natural operadic right action of the Fulton-MacPherson version of the little disks operad $\FM_n$.
The operadic right $\FM_n$-modules $\FM_\Mfd$ are of considerable interest in algebraic topology due to their appearance in the Goodwillie-Weiss embedding calculus \cite{GW, GK, WBdB, WBdB2}. In particular, one is interested in their rational homotopy theory.
From an algebraists standpoint, there are in particular the following three algebraic problems arising from the embedding calculus:
\begin{itemize}
\item Problem 1: Find "tractable" models in the sense of rational homotopy theory for the pair of operad and operadic module $(\FM_n, \FM_\Mfd)$.
Concretely, this amounts to finding a cooperad $\COp$ and a right $\COp$-comodule $\MOp$ in differential graded commutative algebras, such that the pair $(\COp, \MOp)$ can be related to (a version of) differential forms on $(\FM_n, \FM_\Mfd)$ via a zigzag of quasi-isomorphisms. 
We refer to section \ref{sec:operads modules} below for details and more precise definitions.
\item Problem 2: Given such a right $\COp$-comodule $\MOp$ modeling $\FM_\Mfd$, we want to study the homotopy automorphism simplicial monoid $\Aut^h(\MOp)$ in the category of right $\COp$-comodules. 
\item Problem 3: Given two such models $\MOp$ and $\NOp$, say for $\FM_\Mfd$ and $\FM_\Mfdd$, with $\Mfdd$ another manifold, we want to study the derived mapping spaces $\Map^h(\MOp, \NOp)$ in the category of right $\COp$-comodules.
\end{itemize}

In this paper we shall describe algebraic methods to solve or simplify the first two problems. The computation of mapping spaces is left to a follow-up work.
Let us begin with an overview of previous work, starting with Problem 1.
The rational homotopy theory of configuration spaces of points on manifolds has received considerable attention in the literature in the last 60 years.
Major achievements are the determination of the real or rational homotopy types of configuration spaces of points in $\R^n$ by Arnold \cite{Arnold1969} and Cohen \cite{Cohen1976}, and that of points in smooth projective varieties in works of Fulton-MacPherson \cite{FultonMacPherson1994}, Kriz \cite{Kriz} and Totaro \cite{Totaro}.
These earlier works did not study the operadic actions.
More recently, the rational homotopy type of the little disks operad, and hence of $\FM_n$ has been determined, with the result that it is rationally formal \cite{FW, Frbook2}.
That is, we may take for the model $\COp$ of $\FM_n$ above the cohomology cooperad 
\[
\COp = \e_n^c := H^\bullet(\FM_n),  
\]
or a weakly equivalent object.
Furthermore, Idrissi \cite{Idrissi2018b}, and Campos and the author \cite{CW} found real models for configuration spaces of closed parallelized manifolds, that also capture the operadic $\FM_n$-module structure. 
Notably, they construct explicit maps from a (candidate) graphical model $\Graphs_\Mfd$ of $\FM_M$ into a version of differential forms $\Omega_{PA}(\FM_\Mfd)$ on $\FM_\Mfd$,
\[
  \Graphs_\Mfd\to \Omega_{PA}(\FM_\Mfd).
\]
This map is essentially given by applying the Feynman rules of a topological field theory, and involves configuration space integrals. In particular, it is defined only over the reals.

In contrast, in this paper we propose an alternative, completely algebraic approach to Problem 1 using obstruction theory.
The central new notion is that of (co)operadic (co)modules of \emph{configuration space type}. Concretely, we say that a right operadic $\FM_n$-module $\MOp$ is of \emph{configuration space type} if the natural maps 
\[
\MOp(1)^{\times r} \to (\Ind^{\Com,h}_{E_n}\MOp )(r) 
\]
from copies of $\MOp(1)$ into the homotopy induced $\Com$-module are weak equivalences, with $\Com$ being the topological operad such that $\Com(r)$ is a point for all $r$. As the name suggests, the configuration spaces $\FM_\Mfd$ are of configuration space type, see Proposition \ref{prop:FM configuration type} below.
Dualizing the above, one also has an analogous notion of being of \emph{configuration space type} for right $\e_n^c$-modules, see Definition \ref{def:config space type c} below, so that the desired models for $\FM_\Mfd$ are indeed of configuration space type, see Corollary \ref{cor:FM Hopf csc}.



As a second ingredient we use comodule variants of the Kontsevich graphs cooperad $\Graphs_n$, that have appeared similarly in \cite{CW}.
More precisely, for $V$ a finite dimensional graded vector space, elements of $\Graphs_{V,n}(r)$ are formal linear combinations of graphs with $r$ numbered "external" vertices and an arbitrary finite number of internal vertices, decorated by elements of $V$.
\[
\begin{tikzpicture}
\node[ext] (v1) at (0,0) {$\scriptstyle 1$};
\node[ext] (v2) at (.7,0) {$\scriptstyle 2$};
\node[ext] (v3) at (1.4,0) {$\scriptstyle 3$};
\node[ext,label=0:{$\scriptstyle \beta$}] (v4) at (2.1,0) {$\scriptstyle 4$};
\node[int] (i1) at (0.7,.7) {};
\node[int,label=90:{$\scriptstyle \alpha$}] (i2) at (1.4,.7) {};
\draw (v1) edge (v2) edge (i1) (v2) edge (i1) edge (i1) (v3) edge (i1) edge (i2) (v4) edge (i2) (i1) edge (i2);
\end{tikzpicture}
\in \Graphs_{V,n}(4)
\quad\quad \text{with }\alpha,\beta\in V
\]
For the precise definition we refer to section \ref{sec:graphical cooperads} below.
These objects $\Graphs_{V,n}(r)$ are right Hopf $\e_n^c$-comodules. Furthermore, they come with an action of a big graphical dg Lie algebra $\GGC_{V,n}$, whose elements are series of graphs with external legs (or hairs), with the legs decorated by elements of $V_1:=V\oplus \Q1$ and vertices decorated by elements of $V^*$.
\[
  \begin{tikzpicture}[yshift=-.5cm]
    \node[int] (i1) at (.5,1) {};
    \node[int,label=90:{$\scriptstyle \alpha$}] (i2) at (1,1.5) {};
    \node[int,label=0:{$\scriptstyle \beta$}] (i3) at (1.5,1) {};
    \node (e1) at (0,.3) {$\scriptstyle 1$};
    \node (e2) at (.5,.3) {$\scriptstyle a$};
    \node (e3) at (1,.3) {$\scriptstyle b$};
    \node (e4) at (1.5,.3) {$\scriptstyle c$};
    \draw (i1) edge (i2) edge (i3) edge (e1) edge (e2)
    (i2) edge (i3) edge (e3)
    (i3) edge (e4);
  \end{tikzpicture}  
  \in 
  \GGC_{V,n}
  \quad\quad \text{with }\alpha,\beta\in V^* \text{ and } a,b,c,d\in V
\]
For the precise definition and the combinatorial description of the action of $\GGC_{V,n}$ on $\Graphs_{V,n}$ we again refer to section \ref{sec:graphical cooperads}.
For now, we just note that for any Maurer-Cartan element $Z\in \GGC_{V,n}$, we may twist $\Graphs_{V,n}$ by $Z$ to obtain a right cooperadic Hopf $\e_n^c$-comodule $\Graphs_{V,n}^Z$. Then our main result is as follows.

\begin{thm}\label{thm:main_intro}
  Let $n\geq 2$ and let $\MOp$ be a right $\La$ Hopf $\e_n^c$-comodule of configuration space type, such that $H(\MOp(1))=:\Q 1\oplus \bar H$ is connected and finite dimensional. 
  Then there is a Maurer-Cartan element $Z\in \GGC_{\bar H,n}$ and a quasi-isomorphism of right $\La$ Hopf $W\e_n^c$-comodules
  \[
  \Phi : \Graphs_{\bar H,n}^Z \to W\MOp,
  \]
  with $W\e_n^c$ and $W\MOp$ being suitable resolutions of $\e_n^c$ and $\MOp$ respectively. 
\end{thm}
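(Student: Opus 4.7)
My plan is to construct the Maurer-Cartan element $Z$ and the quasi-isomorphism $\Phi$ simultaneously by obstruction theory, inducting along a natural filtration of $\Graphs_{\bar H, n}$ — for concreteness, by the number of internal vertices, possibly refined by loop order or edge count to make the induction well-posed. The configuration space type hypothesis is the anchor of the argument: it reduces the arity-$r$ problem to the arity-$1$ problem via the cooperadic structure over $\Com$, so that the combinatorics of the inductive extension are ultimately controlled by data living on $\MOp(1)$.

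For the base of the induction, consider the subcomodule $\Graphs^{(0)}_{\bar H, n} \subset \Graphs_{\bar H, n}$ of graphs with no internal vertices. Relative to $\e_n^c$, this is cogenerated (via the Hopf and $\Lambda$ structures) by external vertices decorated by $\bar H$. I choose cocycle representatives in $W\MOp(1)$ of the cohomology classes of $\bar H$ and extend them multiplicatively, cooperadically, and $\Lambda$-equivariantly to obtain $\Phi^{(0)} : \Graphs^{(0)}_{\bar H,n} \to W\MOp$. The configuration space type assumption ensures that $\Phi^{(0)}$ induces the expected quasi-isomorphism on arity-$r$ cohomology, because $H(\MOp(r))$ is determined by the arity-$1$ cohomology and the cooperadic structure over $\e_n^c$ (together with $\Com$).

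For the inductive step, suppose $\Phi^{(k)}$ and $Z^{(k)} \in \GGC_{\bar H,n}$ have been constructed through filtration level $k$. The obstruction to extending $\Phi^{(k)}$ to the next layer — across a new internal vertex — is a cocycle $o$ in a deformation complex of $\Lambda$ Hopf $W\e_n^c$-comodule maps $\Graphs_{\bar H,n}^{Z^{(k)}} \to W\MOp$. The central technical claim to be established is that, at each filtration level, this obstruction complex is quasi-isomorphic (through a graph-theoretic pairing) to the corresponding layer of $\GGC_{\bar H,n}$: an internal vertex decorated by an element of $\bar H^*$ in $\GGC_{\bar H,n}$ precisely encodes the deformation of $\Phi$ obtained by inserting an internal vertex in the source graph and evaluating the decoration against a class in $W\MOp(1)$. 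Granted this identification, the obstruction $o$ is represented by a cocycle $\delta Z \in \GGC_{\bar H,n}$, and setting $Z^{(k+1)} := Z^{(k)} + \delta Z$ both preserves the Maurer-Cartan equation through level $k+1$ and absorbs the obstruction, so $\Phi^{(k)}$ extends to $\Phi^{(k+1)}$.

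The hard part is the identification of the deformation/obstruction complex with $\GGC_{\bar H,n}$ at each filtration level: this requires a careful analysis of the deformation theory of $\Lambda$ Hopf cooperadic comodule maps out of $\Graphs_{\bar H, n}$, together with a proof that the tautological graphical map from $\GGC_{\bar H,n}$ into this deformation complex is a quasi-isomorphism. Here the configuration space type hypothesis will again enter, allowing one to reduce the computation to arity $1$ where the graphical calculus is explicit. Once this identification and the convergence of the inductive procedure (automatic from completeness of the filtration) are in hand, the resulting $\Phi$ is a quasi-isomorphism: by construction it realizes the identification $H(\MOp(1)) = \Q 1 \oplus \bar H$ on arity $1$, and the configuration space type property propagates this to a quasi-isomorphism in all arities.
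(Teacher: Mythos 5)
Your proposal follows essentially the same route as the paper: the Maurer--Cartan element $Z$ and the morphism $\Phi$ are built simultaneously by $\SL_\infty$-obstruction theory in the semidirect product of $\GGC_{\bar H,n}$ with the deformation complex $\Def(\Graphs_{\bar H,n},W\MOp)$, the key input being that this deformation complex is identified (layer by layer in the graph filtration) with the hairy graph complex on $H(\MOp(1))$ — which is where configuration space type enters — and the final quasi-isomorphism check is reduced to arity one by configuration space type. The only cosmetic difference is that the paper filters by the number of edges rather than internal vertices (so the base case is the edge-free part $\FF_{S(\bar H)}$ and the diagonal term of $Z$ is produced by the obstruction theory rather than in the base case), a refinement you already anticipate.
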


Practically, this means that in order to determine the rational homotopy type of the configuration space of points on a manifold it suffices to identify the correct Maurer-Cartan element $Z\in \GGC_{\bar H,n}$. 
In many cases, this is relatively easy to do by formal degree counting arguments.
For example we show in section \ref{sec:applications} below that the Theorem above can be used to describe the rational homotopy type of the configuration spaces of points on parallelized compact manifolds without boundary of dimension at least 4.

We emphasize that Theorem \ref{thm:main_intro} above is a completely algebraic result that is shown by obstruction theoretic methods. It does not refer to or use configuration spaces of points on manifolds, or any analytic techniques like configuration space integrals.
Of course, the application to the rational homotopy of configuration spaces uses the existence of a rational homotopy theory of configuration spaces, albeit only to the point that an algebraic model exists, and is of configuration space type.

Furthermore, the models $\Graphs_{V,n}^Z$ of Theorem \ref{thm:main} obtained via twisting with the Maurer-Cartan element $Z$ automatically come equipped with an action of the twisted dg Lie algebra $\GGC_{V,n}^Z$. The dg Lie algebra $\GGC_{V,n}^Z$ is not pro-nilpotent, so the Lie algebra action does not readily integrate to an action of a group. However, we may restrict to a pro-nilpotent dg Lie subalgebra $\GGC_{V,n}^{Z,nil}\subset \GGC_{V,n}^Z$. 
Then we show by essentially the same computational techniques underlying Theorem \ref{thm:main} that the corresponding group action exhausts the "Torelli" part of the homotopy automorphisms of $\Graphs_{V,n}^Z$, or more precisely:

\begin{thm}[see Corollary \ref{cor:ExpAut}]\label{thm:intro ExpAut}
 Let $V$ be a finite dimensional positively graded vector space
 and let $Z\in \mG^1\GGC_{V,n}$ be a Maurer-Cartan element. Then we have a weak equivalence of simplicial monoids 
\[
  \Exp_\bullet(\GGC_{V,n}^{nil,Z}) \simeq \Aut^h(\Graphs_{V,n}^Z)_{[1]},
\]
where the left-hand side is the exponential (simplicial) group and the right-hand side consists of the connected components of the homotopy automorphism simplicial monoid of that correspond to morphisms inducing the identity on cohomology.
\end{thm}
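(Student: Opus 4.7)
The plan is to reuse the obstruction-theoretic framework developed for Theorem \ref{thm:main_intro} and apply it to mapping spaces rather than just to individual objects. First I would construct the comparison morphism of simplicial monoids $\Phi$. The pro-nilpotent dg Lie algebra $\GGC_{V,n}^{nil,Z}$ acts on $\Graphs_{V,n}^Z$ by dg derivations preserving the right Hopf $W\e_n^c$-comodule structure, yielding a morphism $\GGC_{V,n}^{nil,Z}\to \Der(\Graphs_{V,n}^Z)$ of dg Lie algebras. Exponentiating simplicially (via Getzler--Hinich on the left and the simplicial structure of the derived mapping space on the right), and using that the action covers the identity on cohomology, one lands in the identity-on-cohomology component $\Aut^h(\Graphs_{V,n}^Z)_{[1]}$.

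To show $\Phi$ is a weak equivalence it suffices to check that it induces isomorphisms on $\pi_k$ for all $k\geq 0$. For $\pi_0$-surjectivity, take a homotopy self-equivalence $f:\Graphs_{V,n}^Z\to \Graphs_{V,n}^Z$ inducing the identity on cohomology. Running the inductive construction of the proof of Theorem \ref{thm:main_intro}, now with source and target both equal to $\Graphs_{V,n}^Z$, I would build step-by-step a degree-zero element $X\in \GGC_{V,n}^{nil,Z}$ such that $\exp(\ad X)\cdot \id$ is homotopic to $f$. The obstruction to each extension step lies in a cohomology group already controlled by the obstruction calculus of Theorem \ref{thm:main_intro}. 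Injectivity on $\pi_0$ is shown by the same argument applied to a cylinder object: if two elements $X_0,X_1$ exponentiate to homotopic automorphisms, they are gauge equivalent in $\GGC_{V,n}^{nil,Z}$.

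For the higher homotopy groups one proceeds by the same obstruction-theoretic induction parametrized over the simplices $\Delta^k$: the two sides have $\pi_k$ (at the identity basepoint) computed, respectively, by $H^{-k}(\GGC_{V,n}^{nil,Z})$ and by $H^{-k}$ of the Hopf-comodule derivation complex of $\Graphs_{V,n}^Z$. The inclusion between these complexes is seen to be a quasi-isomorphism in the relevant range by the exact same spectral sequence / cohomological computation performed in the proof of Theorem \ref{thm:main_intro}, applied now to derivations rather than to chain maps between distinct objects.

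The main obstacle is justifying that restricting to the pro-nilpotent subalgebra $\GGC_{V,n}^{nil,Z}$, rather than to all of $\GGC_{V,n}^Z$ whose exponential would not converge, still captures the entire Torelli part $\Aut^h(\Graphs_{V,n}^Z)_{[1]}$. Equivalently, one must verify that the non-pro-nilpotent directions of $\GGC_{V,n}^Z$ either act trivially on identity-on-cohomology classes or leave the connected component, so that no Torelli homotopy derivation is lost by passage to $\GGC_{V,n}^{nil,Z}$. This requires a filtration analysis of $\GGC_{V,n}^Z$ (by vertex or loop count) parallel to the one driving the proof of the main theorem, together with a careful check that the resulting convergent sub-tower matches the derivation complex in the correct range of degrees.
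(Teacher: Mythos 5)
Your overall strategy---use the $\GGC_{V,n}^{nil,Z}$-action to produce a map of simplicial monoids and then compare homotopy groups by reusing the cohomological computation underlying Theorem \ref{thm:main_intro}---is indeed the paper's strategy (Theorem \ref{thm:main aut} plus Proposition \ref{prop:recognition}). But there are two genuine gaps. First, you propose to compute $\pi_k$ of the right-hand side from ``the Hopf-comodule derivation complex of $\Graphs_{V,n}^Z$''. That complex does not compute $\Aut^h(\Graphs_{V,n}^Z)$, because $\Graphs_{V,n}^Z$ is not fibrant; the paper instead never touches a self-mapping space and works with $\Map(\Graphs_{V,n}^Z, W\Graphs_{V,n}^Z)_{[f]}$, i.e.\ the deformation complex $\Def(\Graphs_{V,n}^Z\to W\Graphs_{V,n}^Z)$ of maps into the fibrant $W$-resolution, and then invokes the recognition principle (second part of Proposition \ref{prop:recognition}) to identify a group acting on a cofibrant object mapping quasi-isomorphically to a fibrant one with the homotopy automorphisms. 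Without some such device your identification of $\pi_k$ is unjustified.

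Second, the issue you yourself flag as the ``main obstacle''---that passing from $\GGC_{V,n}^Z$ to the pro-nilpotent $\GGC_{V,n}^{nil,Z}$ loses exactly nothing from the Torelli component---is left unresolved, and it is not a routine check. The paper resolves it by introducing the sub-$\SL_\infty$-algebra $\Def(\Graphs_{V,n}^Z\to W\Graphs_{V,n}^Z)'$ of morphisms whose linear (no-edge) part has negative degree, proving that its Maurer--Cartan space is precisely $\Map(\ldots)_{[f]}$, that the $\SL_\infty$-map $\Phi$ lands there (Lemma \ref{lem:restricted phi}), and that on the zero-edge graded piece the map $(\gl_V')_{<0}\to \gr_{\mG}^0\Def(\ldots)'$ is a quasi-isomorphism via a five-lemma argument against the quotient $(\gl_V')_{\geq 0}$ (Proposition \ref{prop:grPhi qiso}). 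Relatedly, your step-by-step obstruction induction runs over the edge filtration, which begins at $\mG^0$ rather than $\mG^1$; the standard Goldman--Millson/obstruction induction does not converge in that situation, which is why the paper proves a two-filtration variant (Theorem \ref{thm:GM tech}, with the auxiliary filtration $\mF$ of section \ref{sec:FFiltration} controlling convergence and $\mG$ controlling the cohomology). Your proposal needs an analogue of both of these ingredients to go through.
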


In section \ref{sec:applications} we use Theorems \ref{thm:main_intro} and \ref{thm:intro ExpAut} together to compute the Torelli parts of the homotopy automorphism groups of rational models of configuration spaces of "nice enough" manifolds.
In particular, this is applicable to highly connectd manifolds.





\section*{Acknowledgements}
This paper has benefitted from discussions with numerous colleagues whom I heartfully thank, including Alexander Berglund, Benoit Fresse, Sander Kupers, Oscar Randal-Williams, Victor Turchin and Bruno Vallette.
Furthermore the author is grateful to the Swiss National Science Foundation, who supported this work through the NCCR Swissmap. 

\section{Notation and conventions}
\subsection{General conventions}
We generally work in cohomological conventions, that is, differentials on graded vector spaces have degree $+1$.
We abbreviate the term "differential graded" by dg, or omit it altogether since almost all our algebraic objects will be differential graded.
Generally, all vector spaces we consider will be $\Q$-vector spaces, and the cohomology of topological spaces will be taken with $\Q$-coefficients unless otherwise noted.
The cohomology of a differential graded vector space $(V,d)$ or a topological space $X$ will be denoted by $H(V)$ or $H(X)$ respectively.
In particular, note that 
\[
  H(X) := H^\bullet(X; \Q),
\] 
using the more common notation from the literature on the right-hand side.

For a graded vector space $V$ we denote by $V[k]$ the degree shifted version: If $x\in V$ is homogeneous of degree $p$ then the same element is of degree $p-k$ in $V[k]$.

We will often consider filtered dg vector spaces 
\[
\cdots \supset \mF^p V \supset \mF^{p+1}V\subset \cdots .  
\]
In this case we consider two versions of the associated graded spaces:
\begin{align*}
  \gr V &= \bigoplus_p \mF^p V / \mF^{p+1} V
  &
  \hgr V &= \prod_p \mF^p V / \mF^{p+1} V.
\end{align*}

For a category $\mathcal C$ we denote the set of morphisms between two objects $A$ and $B$ by $\Mor_{\mathcal C}(A,B)$. If the category is enriched over (dg) vector spaces, then we use the notation $\iHom(A,B)$ for the homomorphism (dg) vector spaces.
Also, we use the noation $\Map(A,B)$ to denote the mapping spaces or simplicial sets.

A symmetric sequence $\AOp$ in a category $\mathcal C$ is a sequence of objects $\AOp(r)$ with a right action of the symmetric group $S_r$ for $r=0,1,\dots$.
For two symmetric sequences $\AOp$, $\BOp$ we denote by $\AOp\circ \BOp$ the plethystic product, see \cite[section 5.1.6]{LV}. Mind that this notation conflicts with the use of "$\circ$" for the composition of morphisms, but we hope that no confusion arises. 

\subsection{Lie-admissible algebras and modules}\label{sec:prelie}
A Lie-admissible algebra structure on a dg vector space $\fg$ is a binary operation 
\[
* : \fg\otimes \fg \to \fg
\]
such that the associated commutator bracket 
\[
[x,y] = x* y - (-1)^{|x||y|} y* x
\]
satisfies the Jacobi-identity, and is hence a Lie bracket on $\fg$.
This is equivalent to the statement that the associator 
\[
A(x,y,z) = (x* y) * z - x* (y * z)
\]
has vanishing antisymmetric component:
\begin{multline*}
A(x,y,z) 
+ (-1)^{|x|(|y|+|z|)} A(y,z,x) 
+ (-1)^{|z|(|x|+|y|)} A(z,x,y)\\
- (-1)^{|x||y|} A(y,x,z) 
- (-1)^{|x||z|+ |y||x|+|y||z|} A(z,y,x) 
- (-1)^{|y||z|} A(x,z,y)
=0\, .
\end{multline*}


A Lie-admissible (left) $\fg$-module is a dg vector space $V$ together with a binary operation 
\[
\cdot  \colon \fg \otimes V \to V
\]
that satisfies 
\begin{align*}
  x\cdot (y\cdot v) - (x* y)\cdot v
  &=
  (-1)^{|x||y|}
  \left( y\cdot (x\cdot v) - (y* x)\cdot v \right) 
\end{align*}
for homogeneous $x,y\in\fg$, $v\in V$.
This is equivalent to the statement that $\cdot$ defines a left action of the dg Lie algebra $\fg$ on $V$.

The notions of Lie admissible algebra and Lie admissible module readily dualize to the notions of Lie admissible coalgebra and  Lie admissible comodule.
In particular, a Lie admissible coalgebra structure on a dg vector space $\fc$ is defined by a morphism
\[
\Delta : \fc \to \fc \otimes \fc   
\]
such that the corresponding coassociator
\[
A^c := (\Delta \otimes \id ) \circ \Delta - (\id \otimes \Delta) \circ \Delta \colon \fc \to \fc^{\otimes 3}
\]
has vanishing antisymmetric part,
\[
  \sum_{\sigma\in S_3} (-1)^\sigma \sigma A^c =0.
\]
The associated cobracket 
\[
\Delta - \tau \circ \Delta : \fc \to \fc\otimes \fc,
\]
with $\tau:\fc\otimes \fc\to \fc\otimes \fc$ the transposition of factors, then defines a dg Lie coalgebra structure on $\fc$.
Finally, a Lie admissible $\fc$ comodule is a comodule for the dg Lie coalgebra $\fc$.

\subsection{Fulton-MacPherson-Axelrod-Singer operad $\FM_n$}
For an integer $n\geq 1$ we consider the configuration space of $r$ numbered points in $\R^n$, $\Conf_{\R^n}(r)$.
It is acted upon by the group $\R_{>0}\ltimes \R^n$ of rescalings and translations.
One has a map 
\begin{align*}
 \Conf_{\R^n}(r)/\R_{>0}\ltimes \R^n &\to (S^{n-1})^{n \choose 2} \times (\R\mathbb{P}^1)^{n \choose 3} \\
 (x_1,\dots,x_r) &\mapsto 
 \left(\dots, \frac{x_i-x_j}{\|x_i-x_j\|},\dots, \left[\|x_i-x_j\|:\|x_j-x_k\|:\|x_k-x_i\|\right] , \dots \right).
\end{align*}
We denote the closure of the image of this map as 
\[
  \FM_n(r) := \overline{\Conf_{\R^n}(r)/\R_{>0}\ltimes \R^n }.
\]
The (compact) spaces $\FM_n(r)$ can be seen to be an iterated bordification of the configuration spaces.
They assemble into an operad, the Fulton-MacPherson-Axelrod-Singer operad $\FM_n$.

This compactification of configuration spaces is due to Axelrod-Singer \cite{AxelrodSinger1994} and Fulton-MacPherson, while the operad structure on $\FM_n$ has been considered by Getzler-Jones \cite{GetzJones} and Kontsevich \cite{KontsevichDefQ}.
The operad $\FM_n$ is weakly equivalent to the little $n$-disks operad \cite{Salvatore}, and shall serve as a substitute of the latter for the purposes of this work.

We will also consider the framed Fulton-MacPherson-Axelrod-Singer operad $\FM_n^{fr}=O(n)\ltimes \FM_n$, defined such that 
\[
  \FM_n^{fr}(r) = \FM_n(r) \times O(n)^r.
\]
It is weakly equivalent to the framed little $n$-disks operad.

\subsection{Fulton-MacPherson-Axelrod-Singer operadic module $\FM_\Mfd$}
Let $\Mfd\subset \R^N$ be a smooth $n$-dimensional submanifold.
Then the configuration space $\Conf_\Mfd(r)$ of $r$ numbered points on $\Mfd$ comes with a map
\begin{align*}
  \Conf_{\Mfd}(r) &\to \Mfd^r\times (S^{N-1})^{r \choose 2} \times (\R\mathbb{P}^1)^{r \choose 3} \\
  (x_1,\dots,x_r) &\mapsto 
  \left(\dots,x_i,\dots, \frac{x_i-x_j}{\|x_i-x_j\|},\dots, \left[\|x_i-x_j\|:\|x_j-x_k\|:\|x_k-x_i\|\right] , \dots \right).
 \end{align*}
We denote the closure of the image of $\Conf_{\Mfd}(r)$ under the above map by 
\[
  \FM_\Mfd(r) = \overline{\Conf_{\Mfd}(r)}.
\]
If $\Mfd$ is compact, then $\FM_\Mfd(r)$ is compact as well, otherwise $\FM_\Mfd(r)$ is only a partial compactification of the configuration space. 
The spaces $\FM_\Mfd(r)$ have been introduced by Axelrod-Singer \cite{AxelrodSinger1994} and Fulton-MacPherson \cite{FultonMacPherson1994}, for a more recent discussion and recollection of their properties we refer to \cite{Sinha}.

If $\Mfd$ is parallelized, then the spaces $\FM_\Mfd(r)$ assemble to form an operadic right $\FM_n$-module, see \cite{Turchin4}.
Furthermore, let $\Fr_\Mfd$ be the frame bundle of $\Mfd$.
Then we may consider framed versions 
\[
\FM_\Mfd^{fr}(r) = \FM_\Mfd(r) \times_{\Mfd^r} \Fr_\Mfd
\]
of the spaces $\FM_n(r)$, that then combine into an operadic right $\FM_n^{fr}$-module $\FM_\Mfd^{fr}$.

\subsection{Cohomology of $\FM_n$}\label{sec:en def}
We denote the rational cohomology cooperad of the little $n$-disks operad, by $\e_n^c=H(\FM_n)$.
For $n=1$ we have that $\e_n^c=\Ass^c$ is the coassociative cooperad. For $n\geq 2$, $\e_n^c=\Poiss_n^c$ is identified with the $n$-Poisson cooperad.
The cooperad $\Poiss_n^c$ is cogenerated by a cocommutative coproduct operation of degree 0 and a Lie cobracket operation of degree $n-1$.
Furthermore, as a graded commutative algebra
\[
  \Poiss_n^c(r) 
  =
  \Q[\omega_{ij}\mid 1\leq i\neq j\leq r]/
  \sim
\]
with the algebra generators $\omega_{ij}$ of degree $n-1$, and the relations $\omega_{ij}^2=0$, $\omega_{ij}=(-1)^n\omega_{ji}$, and the Arnold relation
\begin{equation}\label{equ:arnold rel}
  \omega_{ij}\omega_{jk}+
  \omega_{jk}\omega_{ki}+
  \omega_{ki}\omega_{ij}=0\,.
\end{equation}

In this paper we shall almost exclusively consider the case $n\geq 2$ and we prefer the notation $\e_n^c$ over $\Poiss_n^c$ for brevity.

\section{Operads and operadic modules and their rational homotopy theory}\label{sec:operads modules}
Here we briefly recall facts about operads and operadic modules and their rational homotopy theory.
We shall use the statements of this section only in so far that there exists a well-defined rational homotopy theory,
i.e., model structures on the appropriate categories and suitable adjuctions.
Concretely, we need below that the homotopy (co)induction functor is well defined for operadic right (co)modules.
We shall hence be relatively brief, but provide numerous pointers to the literature, where more details can be found.

\subsection{$\La$ operads and $\La$ modules}
For technical reasons we will always work with operads and modules without operations in arity zero.
However, a single arity zero operation can be represented as a $\La$ structure, following Fresse \cite{Frbook2}.
More precisely, let $\POp$ be an operad in simplicial sets or topological spaces without operations in arity zero.
Then we say that $\POp$ is a $\La$ operad if it is endowed with a collection of maps 
\begin{equation}\label{equ:mujdef}
\mu_j : \POp(r) \to  \POp(r-1)
\end{equation}
for $r=2,3,\dots$ and $j=1,\dots, r$ such that 
\begin{equation}\label{equ:Pstar def}
\POp_*(r)=
\begin{cases}
  * &\text{for $r=0$} \\
  \POp(r) &\text{for $r>0$} 
\end{cases}
\end{equation}
forms an operad with $\mu_j$ the operation $(-)\circ_j *$ of composing at the $j$-th slot with $*\in \POp_*(0)$.
Similarly, let $\MOp$ be a right $\POp$-module, without arity zero part $\MOp(0)=\emptyset$.
Then $\MOp$ is a $\Lambda$-module if it is equipped with a collection of morphisms 
\[
  \mu_j : \MOp(r) \to  \MOp(r-1)
\]
for $r=2,3,\dots$ and $j=1,\dots,r$ such that 
\[
\MOp_*(r)=
\begin{cases}
  * &\text{for $r=0$} \\
  \MOp(r) &\text{for $r>0$} 
\end{cases}
\]
is a right $\POp_*$-module, with the right action of $*$ being defined via the morphisms $\mu_j$.

In particular, we consider the Fulton-MacPherson-Axelrod-Singer operads $\FM_n$, $\FM_n^{fr}$ and their right modules $\FM_\Mfd$, $\FM_\Mfd^{fr}$ to not possess operations of arity zero, 
\[
  \FM_n(0)=\FM_n^{fr}(0)=\FM_\Mfd(0)=\FM_\Mfd^{fr}(0)=\emptyset.
\]
However, we do consider $\FM_n$, $\FM_n^{fr}$ (resp. $\FM_\Mfd$, $\FM_\Mfd^{fr}$) as $\La$-operads (resp. $\La$-modules), and thus capture the equivalent data to a nullary operation.

\subsection{$\La$ Hopf cooperads and comodules}
Let $\dgcAlg$ be the category of differential (non-negatively) graded commutative $\Q$-algebras.
A \emph{Hopf cooperad} is a cooperad in the catgeory $\dgcAlg$. Similarly, for $\COp$ a Hopf cooperad, a Hopf right $\COp$-module $\MOp$ is a right module for $\COp$ in the category $\dgcAlg$. In other words, we require that the $\MOp(r)$ are dg commutative algebras, and the cooperadic coaction morphisms
\[
\MOp(s_1+\cdots+s_r) \to \MOp(r)\otimes \COp(s_1)\otimes   \cdots\otimes \COp(s_r)
\] 
are morphisms of dg commutative algebras.

We generally assume that our cooperads and comodules have no cooperations in arity zero.
However, dually to $\La$-operads and $\La$-modules we also consider Hopf $\La$ cooperads and Hopf $\La$ comodules.
For cooperads a $\La$-structure consists of a collection of morphisms
\[
\mu_j^c : \COp(r) \to \COp(r+1)  
\]
such that 
\[
\COp_*(r)=
\begin{cases}
  \Q &\text{for $r=0$} \\
  \COp(r) &\text{for $r>0$} 
\end{cases}
\]
is a Hopf cooperad with the operations $\mu_j^c$ being the cocompositions $\COp_*(r)\to \COp_*(r+1)\otimes \COp_*(0)$.
Similarly the Hopf $\COp$-comodule $\MOp$ is a Hopf $\La$ comodule if it is equipped with operations 
\begin{equation}\label{equ:mujc mod}
\mu_j^c : \MOp(r) \to \MOp(r+1)  
\end{equation}
such that
\[
\MOp_*(r)=
\begin{cases}
  \Q &\text{for $r=0$} \\
  \MOp(r) &\text{for $r>0$} 
\end{cases}
\]
is a right Hopf $\COp_*$-comodule with $\mu_j^c$ being the cocompositions 
\[
  \MOp_*(r)\to \MOp_*(r+1)\otimes \COp_*(0).
\]

If the context allows we will often abbreviate the notation "$\La$ Hopf $\COp$-comodule" to $\COp$-comodule.

\newcommand{\TopSeq}{\Top\Seq}
\newcommand{\G}{\mathrm{G}}
\newcommand{\GS}{\mathrm{S}}
\subsection{Homotopy theory of topological and simplicial $\La$-operads}

The homotopy theory of $\La$-operads in the categories of simplicial sets and topological spaces has been developed in \cite{FrII, FresseExtended}. We briefly recall here the main results.
We denote by $\sSet$ and $\Top$ the model categories of simplicial sets and topological ($k$-)spaces\footnote{We follow the conventions of \cite[section 1.2]{DFT}.}
We use the standard Quillen equivalence 
\begin{equation}\label{equ:sset top adj}
  |-| : \sSet \rightleftarrows \Top : \GS\, .
\end{equation}
between topological spaces and simplicial sets, with $\GS$ and $|-|$ the singular simplicial complex and realization functors respectively, referring to \cite[section 8.3]{FrII} for the definition of and details on the model category structures.
We denote the categories of symmetric sequences (resp. $\La$ sequences) in $\Top$ or $\sSet$ by $(\La) \sSetSeq$ and $(\La) \TopSeq$.
The following result can then be obtained from  \cite[Theorems 8.3.19, 8.3.20]{FrII}, \cite[section 1]{DFT}.

\begin{thm}[Fresse, Ducoulombier, Turchin]
The categories $\La\sSetSeq$ and $\La \TopSeq$ can be equipped with cofibrantly generated model category structures such that the weak equivalences are the arity-wise homotopy equivalences, and the cofibrations are those morphisms that are cofibrations in the projective model structure on $\Sigma$-sequences.
The Quillen equivalence \eqref{equ:sset top adj}
induces, by objectwise application, a Quillen equivalence 
\begin{equation}\label{equ:LaSeq adj}
|-| : \La\sSetSeq \rightleftarrows \La\TopSeq : \GS  
\end{equation}
between these model categories.
\end{thm}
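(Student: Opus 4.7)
The approach is to apply a transfer argument using the free--forgetful adjunction $\Lambda : \Sigma\sSetSeq \rightleftarrows \La\sSetSeq : U$ (and its topological analogue), where $U$ forgets the restriction operations $\mu_j$ and $\Lambda$ freely adjoins them subject to the relations encoded by \eqref{equ:Pstar def}. The categories $\Sigma\sSetSeq$ and $\Sigma\TopSeq$ carry standard cofibrantly generated projective model structures with arity-wise weak equivalences and fibrations, and the plan is to transfer these along $U$ to $\La\sSetSeq$ and $\La\TopSeq$ via Quillen's path-object lifting theorem.

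First I would verify the hypotheses of the transfer theorem. Smallness of the domains of the generating (acyclic) cofibrations is inherited from $\sSet$ and $\Top$, and $U$ preserves filtered colimits since the defining relations are finitary. The nontrivial condition is acyclicity: I would exhibit objectwise path objects using the cotensor with $\Delta^1$ in $\sSet$ or the unit interval in $\Top$; these commute with the $\La$-structure operations because each $\mu_j$ is natural with respect to maps of $\La$-sequences. Once the model structure is produced, its weak equivalences and fibrations are arity-wise by construction. To identify the cofibrations with those of the underlying projective $\Sigma$-sequence model structure (the more delicate claim), I would analyze cell attachments: a generating cofibration of $\La\sSetSeq$ has the form $\Lambda(\partial\Delta^n\otimes \Sigma(r))\hookrightarrow \Lambda(\Delta^n\otimes \Sigma(r))$, and $\Lambda$ of a free $\Sigma$-sequence decomposes explicitly as a coproduct indexed by the morphisms of the small category $\La$ of finite positive sets and injections. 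After forgetting one obtains a free cell attachment in $\Sigma\sSetSeq$, so each generating cofibration of $\La\sSetSeq$ is a projective $\Sigma$-cofibration, and the class closed under pushout, transfinite composition, and retract is exactly what we want.

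For the Quillen equivalence, the key observation is that $|-|$ and $\GS$ are strong symmetric monoidal and therefore commute objectwise with the $\La$-structure operations, so \eqref{equ:sset top adj} lifts to an adjunction between $\La\sSetSeq$ and $\La\TopSeq$. Since weak equivalences on both sides are arity-wise, and $(|-|,\GS)$ is already a Quillen equivalence arity-wise (the objectwise upgrade of \eqref{equ:sset top adj}), the derived unit and counit remain arity-wise weak equivalences, yielding the required Quillen equivalence. The main obstacle I anticipate is the identification of cofibrations with those of the underlying projective $\Sigma$-sequence structure: the transfer theorem itself only constrains weak equivalences and fibrations, so this requires the explicit combinatorial description of $\Lambda$ on free cells together with a careful compatibility check against the symmetric group actions to conclude that the cofibration classes coincide rather than merely contain one another.
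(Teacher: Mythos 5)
Your argument for the Quillen equivalence is essentially the paper's: $\GS$ preserves (acyclic) fibrations arity-wise, $|-|$ preserves (acyclic) cofibrations because they are inherited from $\Sigma$-sequences, and the equivalence property is detected arity-wise. (You invoke monoidality of $|-|$ and $\GS$ to lift the adjunction, which is unnecessary here --- a $\La$-sequence is just a presheaf on $\Lambda$, so objectwise application of any adjunction works --- but this is harmless.) The paper does not reprove the existence of the model structures at all; it cites \cite[Theorems 8.3.19, 8.3.20]{FrII} and \cite[section 1]{DFT}, where they are constructed as \emph{Reedy} model structures over the generalized Reedy category $\Lambda$.

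This is where your proposal has a genuine gap: transferring the projective structure along the forgetful functor $U:\La\sSetSeq\to\sSetSeq$ creates the fibrations arity-wise, and that produces a \emph{different} model structure from the one in the statement. In the intended structure the fibrations are the Reedy fibrations, i.e.\ the relative matching maps $X(r)\to Y(r)\times_{M_rY}M_rX$ must be fibrations, where $M_rX$ is built from the restriction operations $\mu_j$; this is strictly stronger than being an arity-wise fibration. For instance, take $X(1)=K$ a connected Kan complex with two distinct vertices $a\neq b$, $X(2)=\Delta^0$ mapping to $(a,b)\in K\times K=M_2X$, and $X(r)=\Delta^0$ above: each $X(r)$ is Kan, but $X(2)\to M_2X$ is not a fibration, so $X$ is arity-wise fibrant without being Reedy fibrant. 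Since your transferred structure has strictly more fibrations, it has strictly fewer (acyclic) cofibrations, and the step you yourself flag as the main obstacle --- showing that the saturation of the classes $\Lambda(i)$ exhausts \emph{all} projective $\Sigma$-cofibrations --- is not merely delicate but false: the transferred cofibrations form a proper subclass. The correct route is the generalized Reedy construction, in which the cofibrations are characterized by latching maps (trivial here because $\Lambda$ has no non-invertible degree-lowering morphisms, whence cofibrations $=$ projective $\Sigma$-cofibrations) while the fibrations are characterized by the nontrivial matching maps.
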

\begin{proof}
The statements about the existence of the model structures can be found in \cite[Theorems 8.3.19, 8.3.20]{FrII}, \cite[section 1]{DFT}. We just check that the above adjunction is a Quillen equivalence. First note that analogous adjunction 
\[
|-| : \sSetSeq \rightleftarrows \TopSeq : \GS  
\]
between the corresponding projective model categories of $\Sigma$ sequences is a Quillen adjunction because $\GS$ trivially preserves the (acyclic) fibrations. But since the (acyclic) cofibrations of the category of the category of $\La$ sequences are inherited from $\Sigma$-sequences they are also preserved by $|-|$. Hence \eqref{equ:LaSeq adj} is a Quillen adjunction. The fact that it is an equivalence trivially follows from \eqref{equ:sset top adj} being an equivalence.
\end{proof}

  The categories $\La\sSetSeq$ and $ \La\TopSeq$ are categories of functors $\La \to \sSet$, $\La\to\Top$ from a generalized Reedy category $\Lambda$ (see again \cite[section 8.3]{FrII}), and the above model structures are the corresponding Reedy model structures.

One then defines model structures on the categories of $\La$ operads $\La\sSetOp$ and $\La\TopOp$ by model categorial transfer. 
We call the resulting model structure also the Reedy model structure on $\La$ operads.

\begin{thm}[Fresse]\label{thm:model structure operads}
The categories $\La\sSetOp$ and $\La\TopOp$ of simplicial and topological $\La$ operads carry well defined cofibrantly generated model structures obtained via right transfer of model structure along the free/forgetful adjunctions 
\begin{align*} 
 \Free: \La\sSetSeq &\rightleftarrows \La\sSetOp : U
&
\Free: \La\TopSeq &\rightleftarrows \La\TopOp : U.
\end{align*}
The object-wise extension of the Quillen equivalence \eqref{equ:sset top adj} yields a Quillen equivalence 
\begin{equation}\label{equ:LaOp adj}
  |-| : \La\sSetOp \rightleftarrows \La\TopOp : \G.
\end{equation}

A morphism $f:\POp\to \QOp$ in $\La\sSetOp$ or $\La\TopOp$ is a cofibration iff the morphism $f$ is a cofibration in the category of symmetric (i.e., non-$\La$) operads.
\end{thm}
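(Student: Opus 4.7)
The plan is to reduce each part to Fresse's setup in \cite[sections 8.3--8.4]{FrII} (compare also \cite{FresseExtended} and \cite{DFT}) and verify that the $\La$-setting without zero-ary operations does not introduce new difficulties. For the existence of the transferred model structures on $\La\sSetOp$ and $\La\TopOp$, I would apply the standard right transfer theorem along the free/forgetful adjunction $\Free \dashv U$. The hypotheses one needs to check are: (i) the source categories $\La\sSetSeq$ and $\La\TopSeq$ are cofibrantly generated model categories, which is exactly the previous theorem; (ii) the free operad functor $\Free$ preserves filtered colimits and sends small objects to small objects, which follows from the tree-indexed colimit description of free operads; and (iii) any pushout or transfinite composition of images under $\Free$ of generating acyclic cofibrations of sequences is a weak equivalence of underlying sequences. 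Point (iii) is the substantive ingredient and is typically handled via a path object argument making use of the simplicial enrichment and the fact that $\sSet$ and $\Top$ are proper monoidal model categories.

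For the Quillen equivalence \eqref{equ:LaOp adj}, I would argue in parallel to the previous theorem. The realization functor $|-|$ is strong symmetric monoidal and preserves colimits, hence commutes with the free operad functor and extends to an adjunction on operads. The right adjoint $\G$ preserves (acyclic) fibrations, since fibrations are detected on the underlying $\La$-sequences where the adjunction \eqref{equ:LaSeq adj} is already Quillen. To upgrade to a Quillen equivalence I would use the arity-wise detection of weak equivalences of operads, reducing the statement to the corresponding property of \eqref{equ:sset top adj}.

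The characterization of cofibrations follows from the fact that the generating (acyclic) cofibrations in the transferred model structure on $\La$-operads are precisely the images under $\Free$ of the generating (acyclic) cofibrations of $\La$-sequences, combined with the statement of the previous theorem that the latter coincide with the generating (acyclic) cofibrations of the underlying $\Sigma$-sequences. Taking pushouts, transfinite compositions, and retracts then yields identical classes of cofibrations of operads in either viewpoint. The main obstacle in the above outline is point (iii), the transfer of acyclic cofibrations, for which one has to produce a suitable path object on $\La$-operads; this is carried out in \cite[section 8.4]{FrII} in the symmetric operad case and goes through in our setting because the $\La$-structure morphisms $\mu_j$ from \eqref{equ:mujdef} are compatible with all the constructions involved and do not interact with the operadic composition in a way that would obstruct the argument.
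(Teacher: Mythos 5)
Your treatment of the existence of the transferred model structures and of the Quillen equivalence \eqref{equ:LaOp adj} is essentially the paper's argument: the paper simply cites \cite{FrII} for the transfer (rather than re-verifying the transfer hypotheses, as you sketch), and then observes that $\G$ preserves (acyclic) fibrations because these are created on underlying $\La$-sequences, where \eqref{equ:LaSeq adj} is already Quillen, and that the equivalence property is inherited from \eqref{equ:sset top adj}. That part is fine.

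The weak point is your argument for the last assertion, the characterization of cofibrations. You claim that the generating (acyclic) cofibrations of $\La$-sequences ``coincide with the generating (acyclic) cofibrations of the underlying $\Sigma$-sequences'' and that applying $\Free$ and saturating therefore yields ``identical classes of cofibrations of operads in either viewpoint.'' Two things go wrong here. First, the previous theorem identifies the \emph{classes} of cofibrations of $\La$-sequences with those of $\Sigma$-sequences, not the chosen generating sets; the Reedy-type generators for $\La$-sequences are latching-object inclusions, which are projective cofibrations of $\Sigma$-sequences but are not the standard projective generators. Second, and more seriously, the free $\La$-operad functor is not the free symmetric operad functor: the underlying symmetric operad of $\Free_{\La}(X)$ is not $\Free_{\Sigma}$ applied to the underlying symmetric sequence of $X$, because the restriction operations $\mu_j$ contribute extra composites. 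So even if the generators agreed as morphisms of sequences, you could not conclude that their images under the two different free functors generate the same saturated class of operad morphisms. Comparing $U_{\Sigma}\circ\Free_{\La}$ with $\Free_{\Sigma}$ is precisely the technical content of \cite[Theorem 8.4.12]{FrII}, which the paper cites for this assertion; your sketch bypasses that analysis rather than reproducing it.
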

Concretely, these Reedy model structures have the following classes of distinguished morphisms.
\begin{itemize}
\item The weak equivalences are the object-wise weak equivalences.
\item The fibrations are those morphisms that are fibrations in the category of $\La$ sequences with respect to the Reedy model structure.
\item The cofibrations are those morphisms that have the left lifting property with respect to the acyclic fibrations.
\end{itemize}
\begin{proof}[Proof of Theorem \ref{thm:model structure operads}]
For the existence of the transferred model structures we refer again to \cite{FrII}. In particular, the final statement of the Theorem is \cite[8.4.12]{FrII}.

The adjunction \eqref{equ:LaOp adj} is Quillen since by construction $\G$ preserves (acyclic) fibrations since it is the right-adjoint in the Quillen adjunction \eqref{equ:LaSeq adj}.
Furthermore the fact that \eqref{equ:LaOp adj} is a Quillen equivalence follows again directly from the underlying functors \eqref{equ:sset top adj} being a Quillen equivalence.
\end{proof}

We shall also recall the following well known result from the literature.

\begin{prop}[Getzler-Jones \cite{GetzJones}, Salvatore {\cite[Corollary 3.8]{Salvatore}}]\label{prop:FMn cofibrant}
The Fulton-MacPherson-Axelrod-Singer operads $\FM_n$ are cofibrant objects in $\La \TopOp$ equipped with the Reedy model category structure.
\end{prop}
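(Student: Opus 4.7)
The plan is to reduce the statement to cofibrancy of $\FM_n$ in the category of ordinary symmetric topological operads (without $\La$-structure), and then to verify the latter by exhibiting $\FM_n$ as a cell object in the transferred model structure on $\TopOp$. The reduction is immediate from the final sentence of Theorem~\ref{thm:model structure operads}, which asserts that a morphism of $\La$-operads is a cofibration if and only if it is one after forgetting the $\La$-structure. Since $\emptyset$ is the initial symmetric operad, the task is to build $\FM_n$ by successive pushouts along generating cofibrations of $\Sigma$-sequences (of the form $S^{k-1}\times S_r/H \hookrightarrow D^k\times S_r/H$, for various subgroups $H\leq S_r$) after passing through the free operad functor $\Free\colon \TopSeq \to \TopOp$, which is left Quillen for the right-transferred model structure.

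The cell structure comes from the standard description of $\FM_n(r)$ as a compact manifold with corners whose interior is $\Conf_{\R^n}(r)/(\R_{>0}\ltimes \R^n)$ and whose codimension-$k$ boundary strata are canonically identified, via the operadic composition maps of $\FM_n$, with products $\FM_n(r_1)^\circ \times \cdots \times \FM_n(r_k)^\circ$ of interiors of lower-arity components. Concretely, I would filter $\FM_n$ by maximal arity, letting $\FM_n^{\leq r}$ denote the sub-operad generated by the $\FM_n(r')$ with $r' \leq r$. For each $r\geq 2$, the step $\FM_n^{\leq r-1} \to \FM_n^{\leq r}$ is then realized as a pushout of symmetric operads
\[
  \FM_n^{\leq r} \;\cong\; \FM_n^{\leq r-1} \sqcup_{\Free(\partial \FM_n(r))} \Free(\FM_n(r)),
\]
where $\Free$ is applied to a $\Sigma$-sequence concentrated in arity $r$ and the attaching map $\Free(\partial \FM_n(r)) \to \FM_n^{\leq r-1}$ encodes precisely the identification of the boundary with free operadic compositions of lower-arity pieces. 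Granting that $(\FM_n(r), \partial \FM_n(r))$ admits an $S_r$-equivariant CW-structure with the boundary as a subcomplex, the inclusion $\partial \FM_n(r)\hookrightarrow \FM_n(r)$ is a projective cofibration of $\Sigma$-sequences, and applying the left Quillen functor $\Free$ yields a cofibration of symmetric operads. The transfinite composition $\emptyset\to \FM_n = \colim_r \FM_n^{\leq r}$ is then cofibrant, as desired.

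The main obstacle is the geometric and combinatorial input: establishing the identification of $\partial \FM_n(r)$ with the free operadic boundary, together with the existence of a compatible $S_r$-equivariant CW (or at least NDR) structure on the pair $(\FM_n(r), \partial \FM_n(r))$. Both ingredients are contained in the Axelrod-Singer construction of $\FM_n(r)$ as an iterated real blow-up of $(\R^n)^r$ along the ordered diagonals \cite{AxelrodSinger1994, FultonMacPherson1994} and the corresponding analysis of its corner stratification; the equivariant cellularity can then be arranged by inductively triangulating the boundary $S_r$-equivariantly using the free $S_r$-action on the interiors and extending to the interior via the collar, following Getzler-Jones~\cite{GetzJones} and Salvatore~\cite{Salvatore}, to whom I would defer for the remaining technicalities.
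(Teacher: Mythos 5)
Your proof follows the same route as the paper: the reduction to cofibrancy of $\FM_n$ as a plain symmetric operad via the last assertion of Theorem \ref{thm:model structure operads} is exactly the paper's argument, and the remainder of your write-up is a (correct) sketch of the cell-attachment proof contained in the cited references of Getzler--Jones and Salvatore, which the paper simply quotes rather than reproves. The only cosmetic slip is that the initial object of $\TopOp$ is the unit operad (a point in arity one), not $\emptyset$, but your filtration starts the cell attachments at arity $r\geq 2$ anyway, so nothing is affected.
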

\begin{proof}
To be precise, the references show that $\FM_n$ (without nullary operation) is cofibrant in the category of symmetric operads equipped with the projective model structure.
However, by the last assertion of Theorem \ref{thm:model structure operads} above this implies that $\FM_n$ is cofibrant in $\La \TopOp$ with respect to the Reedy model structure.
\end{proof}

\subsection{Homotopy theory of topological and simplicial right modules over ($\La$-)operads}

The category of right modules over an operad can be endowed with several model category structures. For example, the projective model structure has been constructed in \cite{Frbook2} for fairly general base categories.
Here we shall need a different model category structure well suited for $\La$ operads and modules, the Reedy model structure, as defined in \cite{DFT}, and summarized in the following result.

\begin{thm}[{see \cite[Theorems 3.1 and 3.6]{DFT}}]\label{thm:model str La modules}
Let $\POp \in \La\TopOp$ and $\QOp\in \La\sSetOp$ be well pointed\footnote{This means that the inclusion of the unit in arity one is a cofibration. The condition is void in the simplicial setting.} $\La$ operads.
Then the categories $\La\Mod_{\POp}$ and $\La\Mod_{\QOp}$ of right $\La$ modules over $\POp$ and $\QOp$ carry well defined model structures obtained by right transfer along the free/forgetful adjunctions
\begin{align*} 
  \Free_{\POp}: \La\TopSeq &\rightleftarrows \La\Mod_{\POp} : U
 &
 \Free_{\QOp}: \La\sSetSeq &\rightleftarrows \La\Mod_{\QOp} : U.
 \end{align*}
 The object-wise extension of the Quillen equivalence \eqref{equ:sset top adj} yields a Quillen equivalence 
 \begin{equation}\label{equ:LaOpMod adj}
   |-| : \La\Mod_{\QOp} \rightleftarrows \La\Mod_{|\QOp|} : \G.
 \end{equation}

 A morphism $f:\MOp\to \NOp$ in $\La\Mod_{\POp}$ or $\La\Mod_{\POp}$ is a cofibration if $f$ is a cofibration in the category of symmetric (i.e., non-$\La$) operadic right modules.
\end{thm}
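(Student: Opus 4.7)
The plan is to follow the same pattern as the proof of Theorem \ref{thm:model structure operads}, splitting the statement into three logically independent parts and handling each separately. For the existence of the right-transferred model structures on $\La\Mod_\POp$ and $\La\Mod_\QOp$, together with the characterization of cofibrations via the forgetful functor to symmetric right modules, I would simply quote \cite[Theorems 3.1 and 3.6]{DFT}; these are exactly the statements proved there. The well-pointedness hypothesis on $\POp$ is the usual condition ensuring that the standard transfer argument goes through on the topological side, and as noted in the footnote it is automatic in the simplicial setting.

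For the Quillen adjunction part of \eqref{equ:LaOpMod adj}, I would argue formally. Applied objectwise, the right adjoint $\G$ commutes with the forgetful functors $U\colon \La\Mod_\QOp \to \La\sSetSeq$ and $U\colon \La\Mod_{|\QOp|} \to \La\TopSeq$. Since the transferred fibrations and acyclic fibrations are by definition created by $U$, and since the underlying adjunction \eqref{equ:LaSeq adj} is Quillen on $\La$-sequences, it follows that $\G$ preserves (acyclic) fibrations of modules. Hence \eqref{equ:LaOpMod adj} is a Quillen adjunction.

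For the Quillen equivalence, I would note that the weak equivalences in both $\La\Mod_\QOp$ and $\La\Mod_{|\QOp|}$ are the arity-wise weak equivalences, since they are inherited through $U$ from the corresponding categories of $\La$-sequences. Thus the Quillen-equivalence criterion for \eqref{equ:LaOpMod adj} reduces to the same criterion for \eqref{equ:LaSeq adj}, which has already been established: for cofibrant $\MOp \in \La\Mod_\QOp$ and fibrant $\NOp \in \La\Mod_{|\QOp|}$, a module map $\MOp \to \G\NOp$ is a weak equivalence iff its underlying sequence map is iff the adjoint sequence map $|\MOp| \to \NOp$ is iff the adjoint module map is.

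The main technical obstacle is not in this formal argument but in the underlying transfer step itself, namely verifying that every relative cell complex built from the generating acyclic cofibrations in $\La\Mod_\POp$ is a weak equivalence. This is where the well-pointedness of $\POp$ and the combinatorics of the $\La$-structure interact, and it is the substantive content of \cite[Theorem 3.1]{DFT}. Since that result is being quoted rather than re-proven here, my plan is to record the citations, check the Quillen pair and Quillen equivalence assertions formally as above, and defer the transfer argument itself to the reference.
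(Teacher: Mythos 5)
Your treatment of the Quillen adjunction and Quillen equivalence assertions for \eqref{equ:LaOpMod adj} matches the paper's argument: fibrations and weak equivalences are created by the forgetful functors, $\GS$ preserves them by the underlying Quillen adjunction \eqref{equ:LaSeq adj}, and the equivalence property descends from the base categories (using that every simplicial set is cofibrant and every space is fibrant). The one small thing you elide there, which the paper does address, is that the objectwise application of $\GS$ to a $|\QOp|$-module a priori produces a $\GS|\QOp|$-module, and one must restrict along the adjunction unit $\QOp\to \GS|\QOp|$ to land in $\La\Mod_{\QOp}$; this is routine but should be said.

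The genuine gap is in the existence step. You assert that the transferred model structures on both $\La\Mod_{\POp}$ and $\La\Mod_{\QOp}$ are ``exactly the statements proved'' in \cite[Theorems 3.1 and 3.6]{DFT} and defer entirely to that reference. But \cite[Theorem 3.1]{DFT} is proved over the base category of \emph{topological spaces} (and for operadic bimodules, of which right modules are the special case with trivial left action — another point worth recording). The simplicial case is not in the reference, and this is precisely where the paper has to do real work: it re-runs the Berger--Moerdijk transfer criterion in $\sSet$, producing a fibrant replacement functor in $\La\Mod_{\QOp}$ as $\NOp\to \GS|\NOp|^{f}$ from the topological one, and transcribing the path-object construction of \cite{DFT} by replacing the topological $1$-simplex and mapping space with their simplicial analogues. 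As written, your proposal leaves the existence of the model structure on $\La\Mod_{\QOp}$ unestablished; you need either a direct simplicial transfer argument of this kind or an explicit reference that covers the simplicial base category.
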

\begin{proof}
Although the result is essentially shown in \cite{DFT} several remarks are in order. First, \cite[Theorem 3.1]{DFT} covers the existence of the model structure over the base category of topological spaces, and in the more general situation of operadic bimodules. But since operadic right modules are a special case of bimodules, with the left action trivial, our situation is covered as well, for topological operads and modules.
The proof of well-definedness of the model structure in the topological setting given in \cite{DFT} can be transcribed to the simplicial category, using the Quillen equivalence between $\Top$ and $\sSet$. More precisely, the proof in \cite{DFT} relies on general transfer theorem by Berger and Moerdijk (\cite[Theorem 1.1]{DFT} or \cite[section 2.5]{BergerMoerdijk}), using essentially the existence of a fibrant replacement functor and functorial path objects for fibrant objects.
The fibrant replacement $\MOp\to \MOp^f$ constructed in \cite[section 3.1.1]{DFT} for topological modules may be made into a fibrant replacement in $\Mod_{\QOp}$ defined as $\NOp\to \GS |\NOp|^f$.
This is again a fibrant replacement in $\Mod_{\QOp}$ since $\GS$ preserves fibrations and and weak equivalences of $\La$ sequences by adjunction.
Furthermore, the construction of the path object in the proof of \cite[Theorem 3.1]{DFT} can easily be transcribed to the simplicial setting by just replacing the topological 1-simplex by the symplicial 1-simplex and the topological mapping space by the simplicial mapping space.
This then allows for applying again the model categorial transfer Theorem and establishes the existence of the Reedy model structure in the simplicial setting, i.e., on $\Mod_{\QOp}$. 

Finally, one turns to the Quillen equivalence \eqref{equ:LaOpMod adj}. First, the fact that the objectwise applications yield well defined functors on the categories of modules follows from monoidality of the underlying functors \eqref{equ:sset top adj} on simplicial sets and topological spaces. (For the right adjoint in \eqref{equ:LaOpMod adj} one also needs to use the adjunction unit $\QOp \to S|\QOp|$ to restrict the module structure.)
It is furthermore clear that the functors still form an adjoint pair, since the adjointness condition in terms of unit and counit, cf. \cite[Theorem 3.1.5 (2)]{Borceux1}, is agnostic of the additional module structures, and holds iff it holds in the base categories of simplicial sets or topological spaces.
Next, the right adjoint in \eqref{equ:LaOpMod adj} preserves fibrations and weak equivalences, since they are created by the forgetful functor, and since $\GS$ preserves weak equivalences and fibrations of $\La$ sequences by the Quillen adjunction \eqref{equ:LaSeq adj}.
Likewise, the property of being an equivalence is inherited from the underlying Quillen equivalence \eqref{equ:LaSeq adj} on $\La$ sequences.
\end{proof}

\begin{prop}[Turchin \cite{Turchin4}]\label{prop:FMM cofibrant}
For $M\subset \R^N$ an $n$-dimensional smooth manifold the Fulton-MacPherson-Axelrod-Singer right $\FM_n^{fr}$-module $\FM_M^{fr}$ is a cofibrant object in $\La\Mod_{\FM_n^{fr}}$ with respect to the above Reedy model structure.
If $M$ is parallelized, then $\FM_M$ is likewise a cofibrant object in $\La\Mod_{\FM_n}$.
\end{prop}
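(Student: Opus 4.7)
The plan is to invoke the last clause of Theorem \ref{thm:model str La modules}, reducing cofibrancy of $\FM_M^{fr}$ in the Reedy model category $\La\Mod_{\FM_n^{fr}}$ to cofibrancy in the simpler category of symmetric (non-$\La$) operadic right $\FM_n^{fr}$-modules. In this latter setting I would mimic Salvatore's argument \cite{Salvatore} underlying Proposition \ref{prop:FMn cofibrant} for the operad $\FM_n$, namely exhibit $\FM_M^{fr}$ as a cell right $\FM_n^{fr}$-module built by iterated free extensions along $\Sigma$-equivariant cofibrations of topological sequences.

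Concretely, filter $\FM_M^{fr}$ by arity,
\[
\emptyset=\FM_M^{fr,(\leq 0)}\subset \FM_M^{fr,(\leq 1)}\subset \FM_M^{fr,(\leq 2)}\subset \cdots,
\]
with $\FM_M^{fr,(\leq r)}$ the right sub-$\FM_n^{fr}$-module generated by the components of arity $\leq r$. The Fulton-MacPherson-Axelrod-Singer stratification \cite{AxelrodSinger1994,FultonMacPherson1994,Sinha} presents $\FM_M^{fr}(r)$ as a smooth manifold with corners whose codimension-one boundary strata are exactly the images of operadic composition maps from lower-arity configurations on $M$ with configurations in $\FM_n^{fr}$. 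A $\Sigma_r$-equivariant collar of the boundary then exhibits $\FM_M^{fr,(\leq r)}$ as a pushout
\[
\FM_M^{fr,(\leq r-1)}\coprod_{\Free_{\FM_n^{fr}}(\partial \FM_M^{fr}(r))} \Free_{\FM_n^{fr}}(\FM_M^{fr}(r))
\]
of right $\FM_n^{fr}$-modules, where $\partial \FM_M^{fr}(r)$ and $\FM_M^{fr}(r)$ are placed in arity $r$ with their natural free $\Sigma_r$-action. Since $\partial \FM_M^{fr}(r)\hookrightarrow \FM_M^{fr}(r)$ is a $\Sigma_r$-cofibration in $\Top$ (a standard consequence of the manifold-with-corners structure), and the free right module functor preserves cofibrations of underlying $\Sigma$-sequences, this realizes $\FM_M^{fr}$ as a cell right $\FM_n^{fr}$-module.

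For the unframed case, the parallelization of $M$ is used only to fix the right $\FM_n$-module structure on $\FM_M$, following Turchin \cite{Turchin4}; once that structure is in place, the same filtration and attachment argument applies verbatim with $\FM_n$ in place of $\FM_n^{fr}$. The main technical obstacle is to verify that the square above genuinely is a pushout, rather than merely a surjection onto $\FM_M^{fr,(\leq r)}$: one must check that the combinatorics of the collision-strata decomposition matches that of the free right module construction, so that no extra relations are introduced when passing from the boundary strata to the free module on them. This bookkeeping is essentially the module analogue of Salvatore's operadic argument and is tailor-made for the Fulton-MacPherson-Axelrod-Singer compactifications.
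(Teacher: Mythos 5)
Your proposal follows the same route as the paper: first reduce cofibrancy in the Reedy model structure on $\La\Mod_{\FM_n^{fr}}$ to cofibrancy as a symmetric (non-$\La$) right module via the last assertion of Theorem \ref{thm:model str La modules}, and then establish the latter by the arity filtration with cell attachments along the boundary strata of the manifolds with corners $\FM_M^{fr}(r)$. The paper disposes of this second step by citing \cite[Lemma 2.3]{Turchin4} (noting that the unframed case follows from Turchin's proof by omitting framings), so your sketch is essentially a correct reconstruction of the cited argument rather than a different proof.
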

\begin{proof}
  It is shown in \cite[Lemma 2.3]{Turchin4} that $\FM_M^{fr}$ (without nullary operations) is a cofibrant symmetric right $\FM_n^{fr}$-module with respect to the projective model structure. This implies cofibrancy of the $\La$ module $\FM_M^{fr}$ in $\La\Mod_{\FM_n^{fr}}$ by the last assertion of Theorem \ref{thm:model str La modules}.

  The statement about $\FM_M$ does not explicitly appear in \cite{Turchin4}, but can be obtained from the proof of \cite[Lemma 2.3]{Turchin4} by just omitting framings.
\end{proof}

\subsection{Induction and restriction for right operadic modules}

Let $f:\POp\to \QOp$ be a morphism of (simplicial or topological) $\La$-operads. 
For $\MOp$ a right $\POp$-module we consider the restriction $\Res_{\POp}^{\QOp}$ along $f$, which is the same $\La$ sequence $\MOp$, but equipped with the right $\QOp$ module structure pulled back along $f$.
Similarly, for $\NOp$ a right $\QOp$-module we may consider the induced module $\Ind_{\QOp}^{\POp}\MOp$. Note that any $\QOp$-module may be written as a (reflexive) coequalizer of free objects
 $\NOp\cong \coeql\left(\Free_{\QOp}\Free_{\QOp} \NOp \rightrightarrows \Free_{\QOp}\NOp\right)$. We may then define $\Ind_{\QOp}^{\POp}\MOp$ as the corresponding coequalizer 
\[
  \Ind_{\QOp}^{\POp}\MOp = \coeql\left(\Free_{\POp}\Free_{\QOp} \NOp \rightrightarrows \Free_{\POp}\NOp\right).
\]
Note that the morphism $f$ is suppressed from our notation, even though both functors $\Res_{\POp}^{\QOp}$ and $\Ind_{\QOp}^{\POp}\MOp$ depend on $f$, of course. The reason is that in later applications the choice of $f$ will always be fixed and clear from the context.

The induction and restriction functors have been studied in detail in \cite{Frbook2}. 
In particular, they form an adjoint pair 
\begin{equation}\label{equ:ind res adj}  
  \Ind_{\QOp}^{\POp}: 
  \La\Mod_{\QOp} \rightleftarrows \La\Mod_{\POp}: \Res_{\POp}^{\QOp}.
\end{equation}

\begin{prop}[see {\cite[Theorem 16.B]{Frbook2}}, {\cite[Theorem 3.8]{DFT}}]
The functors \eqref{equ:ind res adj} form a Quillen pair.
If the underlying morphism $f:\POp\to \QOp$ is a weak equivalence between componentwise cofibrant\footnote{Componentwise cofibrancy means that each $\POp(r)$, $\QOp(r)$ is cofibrant in $\TOp$ or $\sSet$. In $\sSet$ the condition is void since all objects are cofibrant.} operads then \eqref{equ:ind res adj} is a Quillen equivalence.
\end{prop}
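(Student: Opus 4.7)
My plan is to handle the two claims separately: the Quillen pair assertion is essentially immediate from the construction of the transferred model structures, while the Quillen equivalence requires a cell-induction argument that reduces the question to a homotopy invariance property of the plethystic composition.

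For the Quillen pair, I would recall that by Theorem \ref{thm:model str La modules} the Reedy model structures on $\La\Mod_\POp$ and $\La\Mod_\QOp$ are obtained by right transfer along the free/forgetful adjunctions to the category of $\La$-sequences; in particular, weak equivalences and fibrations are created at the level of underlying $\La$-sequences. By construction, the restriction functor $\Res$ is the identity on underlying $\La$-sequences and merely alters the operadic action through $f$. Consequently $\Res$ preserves weak equivalences and fibrations and is therefore right Quillen, so the adjoint pair $(\Ind,\Res)$ is automatically a Quillen pair.

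For the Quillen equivalence, assume now that $f:\POp\to\QOp$ is a componentwise weak equivalence between componentwise cofibrant operads. Since $\Res$ both creates and reflects weak equivalences, it suffices to verify that the derived unit of the adjunction is a weak equivalence on every cofibrant object of the source category. A standard transfinite induction over cell attachments reduces this to checking the unit on free modules $\Free(S)$ for $S$ a cofibrant $\La$-sequence; here one uses that the induction functor, being a left adjoint, commutes with free module formation, and that $\Ind$ preserves the pushouts and transfinite compositions appearing in cell constructions. For such a free module $\Free(S) \cong S\circ(-)$, the derived unit identifies with the map on plethystic products induced by $f$.

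The main obstacle is then the homotopy invariance of plethystic composition: for $S$ a cofibrant $\La$-sequence and $f:\POp\to\QOp$ a componentwise weak equivalence between componentwise cofibrant operads, one must show that the induced map $S\circ\POp \to S\circ\QOp$ is a componentwise weak equivalence. This is delicate because plethystic composition involves $\Sigma_r$-coinvariants, and weak equivalences of sequences do not in general descend to coinvariants without cofibrancy assumptions. The last assertion of Theorem \ref{thm:model str La modules} identifies Reedy cofibrations with projective cofibrations on underlying $\Sigma$-sequences, so cofibrancy of $S$ in the Reedy sense is precisely the condition ensuring that the relevant coends behave homotopically correctly when combined with componentwise cofibrancy of $\POp$ and $\QOp$. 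The full technical verification of this invariance is the content of the cited results \cite[Theorem 16.B]{Frbook2} and \cite[Theorem 3.8]{DFT}, from which the Quillen equivalence then follows.
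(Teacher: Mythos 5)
Your proposal is correct and matches the paper's approach: the Quillen-pair half is proved there exactly as you do, by observing that $\Res$ is the identity on underlying $\La$-sequences and hence preserves the transferred weak equivalences and fibrations, which makes it right Quillen. For the Quillen-equivalence half the paper simply cites \cite[Theorem 3.8]{DFT}; your additional sketch (reduction via cell induction to free modules and the homotopy invariance of the plethystic product $S\circ(-)$ under componentwise cofibrancy) is an accurate account of what that citation supplies, so nothing essential is missing or different.
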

\begin{proof}
We shall note that \cite[Theorem 16.B]{Frbook2} shows the same statement, but for the projective model structures, not the Reedy structures we consider here. 
However, we can still take from loc. cit. that \eqref{equ:ind res adj} is an adjunction. Since $\Res_{\POp}^{\QOp}$ is the identity on objects it trivially creates and hence preserves weak equivalences and fibrations, which are those of the underlying category of $\La$ sequences for both $\Mod_{\POp}$ and $\Mod_{\QOp}$. Hence \eqref{equ:ind res adj} is a Quillen adjunction. 
The statement about Quillen equivalence is \cite[Theorem 3.8]{DFT}.
\end{proof}



\subsection{Rational homotopy theory of $\La$ operads}
The rational homotopy theory of operads and $\La$-operads has been developed by Fresse in \cite{FrII, FresseExtended}.
Here we summarize the main results.

First one can equip the categories of dg cooperads $\dgOpc$ with a model category structure with the following classes of distinguished morphisms.
\begin{itemize}
\item The weak equivalences are the object-wise quasi-isomorphisms.
\item The cofibrations are the morphisms that are injective in positive cohomological degrees.
\item The fibrations are defined via the right-lifting property with respect to acyclic cofibrations.
\end{itemize}

\begin{thm}[{Fresse \cite[Theorem 1.4]{FresseExtended}}]
The above classes of distinguished morphisms induce a well-defined model category structure on $\dgOpc$. This model structure is cofibrantly generated, and the generating (acyclic) cofibrations may be taken to be the (acyclic) cofibrations between cooperads of overall finite (resp. countable) dimension, concentrated in a finite range of arities. 
\end{thm}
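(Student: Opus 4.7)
The plan is to verify the model category axioms directly. The 2-out-of-3 property and closure under retracts for weak equivalences are immediate from the corresponding facts about quasi-isomorphisms of dg vector spaces; similarly, being ``injective in positive cohomological degrees'' is clearly preserved by retracts. The class of fibrations is defined by a right-lifting property, so it is automatically closed under retracts and one lifting axiom holds by fiat. Thus the nontrivial content reduces to (i) identifying a suitable generating set of (acyclic) cofibrations, (ii) establishing the small object argument in $\dgOpc$, and (iii) deriving the remaining lifting axiom from the factorizations.

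For the cofibrant generation, the plan is to take as generating (acyclic) cofibrations the (acyclic) cofibrations $\COp \hookrightarrow \DOp$ where $\DOp$ is of overall finite (resp.\ countable) dimension and concentrated in finitely many arities, as suggested by the statement. The technical heart of the argument is a \emph{fundamental lemma for cooperads}: every dg cooperad is the filtered colimit of its sub-cooperads of such bounded size. This is the analogue for cooperads of the classical Sweedler fundamental theorem for cocommutative coalgebras, and its proof amounts to showing that each homogeneous element of a cooperad generates a finite-dimensional sub-cooperad under iterated application of the co-compositions. Granting this, the targets of the generating cofibrations are small relative to the transfinite compositions of injections, which is precisely the smallness needed for Quillen's small object argument.

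With smallness in hand, the plan is to build the two factorization systems by the transfinite small object iteration: any morphism $f:\COp\to\DOp$ factors as a (relative cell) cofibration followed by a morphism having the right-lifting property against generating cofibrations, and similarly for the acyclic version. The remaining lifting axiom, that acyclic cofibrations lift against fibrations, then follows by the standard ``retract argument'': factor a given acyclic cofibration as a (acyclic) relative cell complex followed by a fibration, use the 2-out-of-3 property to see the fibration is acyclic, then express the original morphism as a retract of the first factor. Finally, one checks that a morphism has the right-lifting property against all acyclic generating cofibrations if and only if it has it against all acyclic cofibrations, so that the characterization of fibrations coincides with that used in cofibrantly generated model categories.

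The main obstacle is the smallness and pushout analysis in the cooperadic setting. Unlike the dual situation for operads, where one has a free operad monad on symmetric sequences, cooperads form a category of ``coalgebras'' for the cofree cooperad comonad, and filtered colimits as well as pushouts in $\dgOpc$ need not be computed on underlying symmetric sequences. One must therefore verify carefully that pushouts of generating cofibrations stay within the class of injections in positive degrees and are compatible with the filtered union description above. Once this is handled, the rest of the argument is a standard application of the small object recipe. The statement that the generating cofibrations can additionally be taken between cooperads of finite (resp.\ countable) dimension is a quantitative refinement that follows from the fact that the fundamental lemma produces sub-cooperads of the desired size bound.
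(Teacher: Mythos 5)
The paper offers no proof of this statement: it is imported verbatim from Fresse \cite[Theorem 1.4]{FresseExtended}, so there is no internal argument to compare against. Your sketch is nevertheless essentially the route Fresse's own proof takes: the technical heart is the cooperadic analogue of the fundamental theorem of coalgebras (each element generates a sub-cooperad of controlled size, so every dg cooperad is the filtered union of sub-cooperads that are finite- resp.\ countable-dimensional and concentrated in finitely many arities), which supplies the smallness needed for Quillen's small object argument; the remainder is the standard cofibrantly generated machinery. Two points deserve correction. First, since fibrations are \emph{defined} by the right lifting property against acyclic cofibrations, the lifting axiom that holds by fiat is precisely ``acyclic cofibrations lift against fibrations''; the axiom that actually requires the factorization-plus-retract argument is ``cofibrations lift against acyclic fibrations'', together with the identification of the fibrations with the maps having the right lifting property against the \emph{generating} acyclic cofibrations (which is what makes the structure cofibrantly generated). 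Second, your concern that pushouts and filtered colimits in the category of dg cooperads might not be computed on underlying symmetric sequences is unfounded in the conilpotent setting relevant here: the forgetful functor to dg symmetric sequences is left adjoint to the cofree conilpotent cooperad functor and therefore preserves all colimits, so the pushout and filtered-exhaustion analysis does take place at the level of underlying dg symmetric sequences, and the verification that pushouts of generating cofibrations remain injective in positive degrees is an elementary diagram chase there.
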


This model category structure can then be extended to dg Hopf cooperads and dg $\La$ Hopf cooperads by right transfer.

\begin{thm}[{Fresse \cite[Theorem 1.6 and section 4.2]{FresseExtended}}]
There are well defined cofibrantly generated model category structures on the categories of dg Hopf cooperads $\dgHOpc$ and of dg $\La$ Hopf cooperads $\dgLaHOpc$ obtained via right model categorial transfer along the free/forgetful adjunctions 
\[ 
  \dgLaHOpc \rightleftarrows \dgHOpc \rightleftarrows \dgOpc.
\] 
\end{thm}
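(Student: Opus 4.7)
The plan is to invoke the standard right-transfer theorem for cofibrantly generated model categories (e.g.\ Crans', or the Berger--Moerdijk version cited earlier in the excerpt) twice in succession, first to transfer the model structure from $\dgOpc$ to $\dgHOpc$ along the free/forgetful adjunction, and then from $\dgHOpc$ to $\dgLaHOpc$ in the same way. I describe the first transfer in detail; the second follows the identical pattern.

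The left adjoint $F$ to the forgetful functor $U\colon \dgHOpc \to \dgOpc$ sends a cooperad $\COp$ to the aritywise free graded-commutative algebra $r \mapsto S(\COp(r))$, equipped with the unique Hopf cooperad structure making the cocompositions of $\COp$ extend as algebra maps. A morphism in $\dgHOpc$ is then declared to be a weak equivalence (resp.\ fibration) iff $U$ sends it to one in $\dgOpc$. To invoke the transfer theorem I would verify: (a) $\dgHOpc$ is bicomplete, which follows by monadicity over $\dgOpc$ --- limits are created by $U$, while colimits are obtained as reflexive coequalizers of free objects since $F$ preserves all colimits and $UF$ is finitary; (b) smallness of the domains of $F(I_0)$ and $F(J_0)$ relative to their cell complexes, for $I_0, J_0$ generating (acyclic) cofibrations of $\dgOpc$, which reduces via adjunction to smallness in $\dgOpc$, already established in the previous theorem; (c) the acyclicity condition, namely that every relative cell complex built from $F(J_0)$ is sent by $U$ to a weak equivalence.

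The hard part will be (c), for which the plan is to apply Quillen's path-object argument. The essential ingredient is a functorial path object $\mathcal{A} \mapsto \mathcal{A}^I$ in $\dgHOpc$, constructed aritywise as $\mathcal{A}^I(r) := \APL(\Delta^1) \otimes \mathcal{A}(r)$ with $\APL(\Delta^1) = \Q[t,dt]$ the Sullivan dga on the $1$-simplex. Since $\APL(\Delta^1)$ is a fixed commutative dga independent of arity, the cocompositions of $\mathcal{A}$ extend canonically so that $\mathcal{A}^I$ is again a Hopf cooperad. Evaluations at $t=0$ and $t=1$ give the two projections $\mathcal{A}^I \rightrightarrows \mathcal{A}$, the inclusion of constants $\Q \hookrightarrow \APL(\Delta^1)$ induces the diagonal $\mathcal{A} \to \mathcal{A}^I$, and the standard contraction of $\APL(\Delta^1)$ onto $\Q$ makes the latter an aritywise quasi-isomorphism. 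Combined with a functorial fibrant replacement on $\dgHOpc$ (obtained from a preliminary small-object argument on the set $F(J_0)$), this verifies (c) and yields the transferred cofibrantly generated model structure.

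The second transfer $\dgHOpc \rightleftarrows \dgLaHOpc$ follows the identical scheme: the left adjoint freely adjoins the $\Lambda$-cocompositions $\mu_j^c$ subject only to their defining compatibilities, and the same path object $\APL(\Delta^1) \otimes \mathcal{A}$ is automatically compatible with the $\Lambda$-structure because tensoring with $\APL(\Delta^1)$ commutes with each $\mu_j^c$. Cofibrant generation in both structures is inherited from the transfer, with generators the images of $I_0, J_0$ under the respective left adjoints.
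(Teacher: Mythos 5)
This theorem is quoted from Fresse's work and the paper offers no proof of its own, so the only question is whether your argument is sound. Your overall strategy --- iterated right transfer along the free/forgetful adjunctions, with the acyclicity condition checked by Quillen's path-object argument --- is indeed the correct outline and is essentially what Fresse does. The adjunctions themselves are also set up correctly (the aritywise symmetric algebra does carry an induced Hopf cooperad structure and is left adjoint to the forgetful functor).

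The gap is in step (c), which you yourself identify as "the hard part" and then dispose of in one sentence that is false as stated. The aritywise tensor product $\mathcal{A}^I(r) := \APL(\Delta^1)\otimes\mathcal{A}(r)$ is \emph{not} canonically an object of $\dgHOpc$: to extend the cocomposition $\mathcal{A}(r+s-1)\to\mathcal{A}(r)\otimes\mathcal{A}(s)$ to a map $\mathcal{A}(r+s-1)\otimes R\to(\mathcal{A}(r)\otimes R)\otimes(\mathcal{A}(s)\otimes R)$ over $\Q$ you need a coproduct $R\to R\otimes R$ on $R=\APL(\Delta^1)$, and $\APL(\Delta^1)$ is an algebra, not a coalgebra. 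This is precisely why the present paper (following Fresse) treats $\COp\otimes\Omega(\Delta^\bullet)$ only as a Hopf cooperad \emph{over the extended ground ring} $\Omega(\Delta^\bullet)$ --- where the cocomposition is $\Omega(\Delta^\bullet)$-linear and lands in a relative tensor product --- and why it explicitly warns that these constructions do not make $\dgHOpc$ into a simplicial model category. Constructing a genuine path object (or otherwise verifying acyclicity of the transfer) is the technical core of Fresse's theorem and requires a substantially more elaborate device than tensoring with the Sullivan interval. Two further points are glossed over in the same step: the fibrations of $\dgOpc$ are defined by a lifting property rather than as surjections, so the claim that the evaluation map is a fibration is not free; and the product $\mathcal{A}\times\mathcal{A}$ in a category of (Hopf) cooperads is not the aritywise product of the underlying complexes, so even the target of your would-be fibration is not what the formula suggests. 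The same objections apply verbatim to your treatment of the second transfer to $\dgLaHOpc$. As written, the proposal does not establish the theorem; it reduces it to exactly the point where the real work lies.
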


Concretely, this means that in both categories $\dgLaHOpc$ and $\dgHOpc$ the weak equivalences are the quasi-isomorphisms, the fibrations are those morphisms that are fibrations in $\dgOpc$, and the cofibrations are obtained via the lifting property.

To go further, we recall the (Sullivan) adjunction between simplicial sets and dg commutative algebras of rational homotopy theory, see \cite[section 7.2]{FrII} 
\[
\Omega : \sset \rightleftarrows (\dgcAlg)^{op} : \G   
\]
The functor $\G$ is lax monoidal, and as a consequence the arity-wise application of $\G$ yields a functor from simplicial operads to dg Hopf cooperads, that we shall also denote by $\G$.

\begin{thm}[{Fresse \cite[sections 2.1, 4.2]{FresseExtended}}]
The arity-wise application of $\G$ can be extended to Quillen pairs 
\begin{align*}
  \Omega_\sharp : \sSetOp &\rightleftarrows (\dgHOpc)^{op} : \G  
  &
  \Omega_\sharp : \La\sSetOp &\rightleftarrows (\dgLaHOpc)^{op} : \G. 
\end{align*}
The left adjoint functors $\Omega_\sharp$ furthermore have the following properties, for $\POp$ and operad in $\sSetOp$ or $\La\sSetOp$.
\begin{itemize}
\item There is a natural morphism of sequences of dg commutative algebras $\Omega_\sharp(\POp)\to \Omega(\POp)$.
\item If $\POp$ is cofibrant and $\POp(1)$ is connected and of finite rational homology type then the morphisms $\Omega_\sharp(\POp)(r)\to \Omega(\POp(r))$ are quasi-isomorphisms for any $r$.
\end{itemize}
\end{thm}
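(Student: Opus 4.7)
The plan is to establish the three assertions in sequence. For the adjunction, I would extend the Sullivan adjunction arity-wise. The right adjoint $\G$ is lax symmetric monoidal via the Eilenberg--Zilber shuffle map, so arity-wise application sends dg Hopf cooperads to simplicial operads, with compatibility with the $\La$-structure being formal since the $\La$-structure is encoded by cocomposition with the nullary counit. Since $\Omega$ is only oplax monoidal, the left adjoint $\Omega_\sharp$ cannot be defined by bare arity-wise application; I would construct it either via the adjoint functor theorem applied to the presentable model categories at hand, or explicitly by a cobar-type resolution that repairs the failure of strong monoidality. In either case, $\Omega_\sharp$ comes equipped with a natural comparison map $\Omega_\sharp(\POp)\to \Omega(\POp)$ obtained as the adjoint of the Sullivan unit $\POp \to \G\Omega(\POp)$, which yields the first bullet.

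For the Quillen pair statement, I would observe that on the algebraic side fibrations and weak equivalences are detected on underlying (co)operads and hence on underlying sequences, while on the simplicial side every object is cofibrant. It therefore suffices to check that $\G$ preserves fibrations and acyclic fibrations, which reduces, by the arity-wise formulation, to the classical fact that the Sullivan functor $\G\colon \dgcAlg^{op}\to \sset$ is right Quillen.

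The principal obstacle is the quasi-isomorphism statement under cofibrancy of $\POp$ and connectivity plus finite rational type of $\POp(1)$. My strategy is induction along a cellular presentation of $\POp$: a cofibrant $\La$-operad is a transfinite composite of pushouts of free extensions along generating cofibrations, which may be taken between finite-dimensional, arity-bounded $\La$-sequences. On a free $\La$-operad the value of $\Omega_\sharp$ admits an explicit description, and the comparison reduces arity-wise to Künneth-type maps $\Omega(X_1)\otimes\cdots\otimes \Omega(X_r)\to \Omega(X_1\times\cdots\times X_r)$, which are quasi-isomorphisms once all factors are connected of finite rational type. This is precisely what the hypothesis on $\POp(1)$ is designed to guarantee: in a cofibrant $\La$-operad the higher-arity cells are produced from $\POp(1)$-data combined with finite-rank cellular inputs via operadic composition, so the required finite-type and connectivity conditions propagate to every arity. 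The inductive step, that the comparison quasi-isomorphism is preserved under pushouts along cellular cofibrations, I would handle by filtering both sides by the cellular filtration and comparing the associated graded pieces; this is the technical heart of the argument, since one must check that each cell attachment remains within the range where the Künneth map is a quasi-isomorphism. The non-$\La$ case is handled by the same argument.
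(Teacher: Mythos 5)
You should first be aware that the paper does not prove this statement: it is quoted verbatim from Fresse's work (the bracketed citation in the theorem header is the ``proof''), and the only construction-level content the paper supplies is the explicit formula for $\Omega_\sharp$ given immediately afterwards, namely that one writes $\POp\cong \coeql(\Free\Free\POp\rightrightarrows\Free\POp)$ as a reflexive coequalizer of free operads, uses $\Omega_\sharp(\Free X)=\Free^c\Omega(X)$ on free objects, and obtains $\Omega_\sharp(\POp)$ as the corresponding equalizer of cofree cooperads. So there is no in-paper proof to match your argument against line by line; what can be compared is your construction of $\Omega_\sharp$ and your plan for the hard bullet point.

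On the construction: your route (adjoint functor theorem, or an unspecified ``cobar-type resolution'') is less useful than the paper's, because the reflexive-coequalizer description is exactly what makes the rest of the theorem tractable. It gives the comparison map $\Omega_\sharp(\POp)\to\Omega(\POp)$ for free, as the composite of the equalizer inclusion into $\Free^c\Omega(\POp)$ with the projection onto cogenerators --- your description of this map as ``the adjoint of the Sullivan unit $\POp\to\G\Omega(\POp)$'' does not literally parse, since $\Omega(\POp)$ is not a cooperad and hence $\G\Omega(\POp)$ is not an operad, which is the whole reason $\Omega_\sharp$ exists. The same explicit description supplies the identity $\Omega_\sharp(\Free X)=\Free^c\Omega(X)$ that your own cellular induction needs as its base case, so you would end up reconstructing it anyway.

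On the quasi-isomorphism statement: your strategy (cellular induction over a presentation of the cofibrant $\La$-operad, reducing to K\"unneth maps $\Omega(X_1)\otimes\cdots\otimes\Omega(X_r)\to\Omega(X_1\times\cdots\times X_r)$, with the connectivity and finite-type hypotheses on $\POp(1)$ ensuring these are quasi-isomorphisms) is the correct shape and is essentially how Fresse argues. But as written it is a plan rather than a proof: the inductive step --- that the comparison map remains a quasi-isomorphism after a pushout along a generating Reedy cofibration of $\La$-operads, where the generating cofibrations involve latching objects and the cofree cooperad in each arity is a product over trees whose vertices carry copies of $\POp(1)$ --- is exactly the technical content of the cited sections, and you explicitly defer it. That deferral is the gap; everything surrounding it is sound, and the Quillen-pair verification via preservation of (acyclic) fibrations by $\G$ is unproblematic since fibrations and weak equivalences in $\dgHOpc$ and $\dgLaHOpc$ are created in $\dgOpc$.
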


Concretely, $\Omega_\sharp(\POp)$ can be defined as follows.
Any operad can be written as a (reflexive) coequalizer of free objects
\[
\POp\cong \coeql\left(\Free \Free \POp \rightrightarrows \Free \POp \right),
\]
with $\Free$ the free operad functor.
Using that $\Omega_\sharp$ sends colimits to limits by adjunction, and that on free objects one has $\Omega_\sharp(\Free X)=\Free\Omega(X)$ we then find 
\[
  \Omega_\sharp(\POp)\cong \eql\left( \Free^c \Omega(\POp) \rightrightarrows \Free^c \Omega(\Free \POp) \right).
\]

We say that $\Omega_\sharp(\POp)$ is the dg Hopf cooperad model of the simplicial operad $\POp$. 
We define the rationalization $\POp^\Q$ of a cofibrant simplicial operad to be $\G (\Omega_\sharp(\POp))^{cof}$, where $(\Omega_\sharp(\POp))^{cof}$ is a cofibrant replacement of $\Omega_\sharp(\POp)$.

We also extend these notions to topological operads by applying the singular chains functor $\GS$ and the realization respectively.

\subsection{Rational homotopy theory of operadic modules}
The rational homotopy theory of operadic modules can be developed along the lines of Fresse's rational homotopy theory of operads outlined in the preceding subsection. We sketch here the construction, leaving some technical verifications to the forthcoming work \cite{FWMod}, where the rational homotopy theory of operadic modules is developed in more detail.

First, let $\COp$ be a fixed dg cooperad. The category of right $\COp$ dg comodules is denoted by $\dgModc_{\COp}$.
One can equip the categories of dg cooperads $\dgOpc$ with a model category structure with the following classes of distinguished morphisms.
\begin{itemize}
\item The weak equivalences are the arity-wise quasi-isomorphisms.
\item The cofibrations are the morphisms that are injective in positive cohomological degrees.
\item The fibrations are defined via the right-lifting property with respect to acyclic cofibrations.
\end{itemize}

\begin{thm}[\cite{FWMod}]
The above classes of distinguished morphisms induce a well-defined model category structure on $\dgModc_{\COp}$. It is cofibrantly generated. The generating (acyclic) cofibrations may be taken to be the cofibrations between cooperads of overall finite (resp. countable) dimension, concentrated in a finite range of arities. 
\end{thm}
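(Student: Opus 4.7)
The plan is to transfer Fresse's proof of Theorem 1.4 in \cite{FresseExtended} from dg cooperads to dg $\COp$-comodules, since the arguments are all formal consequences of bicompleteness together with the existence of a reasonable notion of ``finite sub-object''. The axioms MC1 (bicompleteness), MC2 (2-out-of-3) and MC3 (closure under retracts) are immediate from the analogous properties in the underlying category of dg $\Sigma$-sequences, since both limits and filtered colimits in $\dgModc_{\COp}$ are computed aritywise in sequences. Moreover, the compatibility of the prescribed classes of cofibrations (injections in positive degrees) and weak equivalences (quasi-isomorphisms) with retracts and compositions is inherited directly from $\dg\Sigma$-sequences.

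The key preparatory lemma, which I would establish first, is that every dg right $\COp$-comodule $\MOp$ is the filtered colimit of its sub-comodules of overall finite dimension concentrated in a finite range of arities. Given any finite-dimensional graded sub-$\Sigma$-sequence $F\subseteq \MOp$, I would iteratively close $F$ under the coaction $\MOp\to \MOp\circ \COp$ and the differential. Each closure step adds only finitely many homogeneous elements and only arities from a fixed bounded range, because the coaction applied to an element of arity $r$ lands in a finite sum of tensor factors $\MOp(s)\otimes \COp(r_1)\otimes \cdots \otimes \COp(r_s)$ with $s\leq r$ and $r_1+\cdots+r_s=r$. Consequently, the closure process stabilizes after finitely many iterations in a finite-dimensional sub-comodule with arities bounded by the original range.

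With this lemma in hand, I would take the generating cofibrations $I$ to be the inclusions $\AOp\hookrightarrow \BOp$ of comodules of overall finite dimension in a finite range of arities, and the generating acyclic cofibrations $J$ to be the analogous acyclic inclusions of overall countable dimension. Quillen's small object argument then supplies the factorizations (cof, acyc.~fib) and (acyc.~cof, fib), since the domains of maps in $I$ and $J$ are small with respect to the relevant filtered colimits by the lemma above. The main obstacle, as in Fresse's cooperadic proof, is the verification that the right-lifting property against $J$ characterizes fibrations, equivalently that every acyclic cofibration is a retract of a transfinite composition of pushouts of maps in $J$. This requires Fresse's inductive construction of small acyclic extensions between finite-dimensional sub-comodules, performed by induction on arity and cohomological degree. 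The combinatorial skeleton of the induction carries over from cooperads to $\COp$-comodules because the coaction only engages finitely many arities at once, but the verification that each intermediate extension remains both a quasi-isomorphism and compatible with the coaction is the genuinely delicate computational point that must be transcribed with care.
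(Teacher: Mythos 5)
The paper offers no proof of this statement: it is quoted directly from the forthcoming work \cite{FWMod}, and the surrounding text says only that the homotopy theory of operadic comodules ``can be developed along the lines of Fresse's rational homotopy theory of operads,'' with the technical verifications deferred. Your strategy---transcribing the proof of Theorem~1.4 of \cite{FresseExtended} from dg cooperads to dg $\COp$-comodules via a decomposition into small sub-comodules followed by the recognition theorem for cofibrantly generated model categories---is therefore exactly the route the paper indicates, and the outline is sound. Two cautions. First, the stabilization of your closure process does not follow from ``each step adds only finitely many elements in a bounded range of arities'' alone; that is compatible with a strictly increasing chain. The standard argument instead uses coassociativity (as in the fundamental theorem of comodules): writing the coaction of $x$ as a finite sum $\sum x_i\otimes c_i$ with the cooperad factors linearly independent, coassociativity forces the coaction of each $x_i$ to land in the span of the $x_j$ tensored with $\COp$, so one closure step suffices; compatibility of $d$ with the coaction and $d^2=0$ then close the result under the differential in one further step. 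Second, as you acknowledge, the genuinely delicate part---the inductive construction of finite (resp.\ countable) acyclic sub-extensions needed to show that $J$-injectivity characterizes fibrations---is deferred rather than carried out, so what you have is a faithful proof outline rather than a complete argument; but since the paper itself only cites \cite{FWMod} for this, there is no in-paper proof against which to hold you to a higher standard.
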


Next suppose that $\COp$ is a dg ($\La$) Hopf cooperad and consider the categories of dg $\La$ Hopf comodules $\dgHModc$, resp, $\dgLaHModc$.

\begin{thm}[{\cite{FWMod}}]\label{thm:modc model cat}
There are well defined cofibrantly generated model category structures on the categories of dg Hopf $\COp$-comodules $\dgHModc_{\COp}$ and of dg $\La$ Hopf $\COp$-comodules $\dgLaHModc_{\COp}$ obtained via right model categorial transfer along the free/forgetful adjunctions 
\[ 
  \dgLaHModc_{\COp} \rightleftarrows \dgHModc_{\COp} \rightleftarrows \dgModc_{\COp}.
\] 
\end{thm}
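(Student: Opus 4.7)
The plan is to establish both model structures by a single standard right model-categorial transfer argument, applied along the chain of free/forgetful adjunctions
\[
\dgLaHModc_{\COp} \rightleftarrows \dgHModc_{\COp} \rightleftarrows \dgModc_{\COp},
\]
starting from the cofibrantly generated model structure on the right-hand category provided by the preceding theorem. The existence of both left adjoints is standard: for the right-hand adjunction it is the aritywise free symmetric algebra $\Sym$ on a dg comodule, with the comodule structure inherited from that of the generators (using that the cooperadic coaction is compatible with products); for the left-hand adjunction it is the free $\La$-extension, obtained by freely adjoining formal cocomposition morphisms $\mu_j^c$ as in \eqref{equ:mujc mod} modulo the $\La$-relations encoded in the comodule structure on $\MOp_*$.

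By a standard transfer theorem for cofibrantly generated model structures (e.g.\ \cite[Theorem 2.5]{BergerMoerdijk} or Kan's transfer lemma), it suffices to check two conditions. First, the domains and codomains of the generating (acyclic) cofibrations of $\dgModc_{\COp}$ have small images under the left adjoints in the upper categories; this is immediate from the fact that the generators may be taken of finite (resp.\ countable) total dimension, concentrated in a finite range of arities, and that the free functors above preserve this smallness. Second, every relative cell complex built from pushouts of images of generating acyclic cofibrations must become an arity-wise quasi-isomorphism after application of the forgetful functor. The distinguished classes in the transferred structures are then forced: weak equivalences are arity-wise quasi-isomorphisms, fibrations are those morphisms that are fibrations in $\dgModc_{\COp}$, and cofibrations are characterized by the lifting property.

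The main obstacle is the second condition, and I will discharge it by Quillen's path object argument. The candidate path object for $A \in \dgHModc_{\COp}$ is the aritywise tensor product $A(-) \otimes \APL(\Delta^1)$, equipped with the comodule structure given by tensoring the original coaction with the identity on $\APL(\Delta^1)$. This makes sense because $\APL(\Delta^1)$ is a dg commutative algebra and the coaction maps are algebra maps; the two endpoint evaluations $\APL(\Delta^1) \to \Q$ then produce a factorization of the diagonal $A \to A \times A$ into a weak equivalence followed by a morphism that is a fibration in $\dgModc_{\COp}$. Since this construction manifestly commutes with the $\La$-structure maps $\mu_j^c$, the same path object lifts to $\dgLaHModc_{\COp}$, so a single argument covers both transfers. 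Combined with the small object argument to produce a functorial fibrant replacement on the upper categories, the path object argument yields the required weak equivalence property and produces the transferred model structures, which are automatically cofibrantly generated; the precise verifications are carried out in the forthcoming \cite{FWMod} and parallel those of \cite[Section 4.2]{FresseExtended} for dg $\La$ Hopf cooperads.
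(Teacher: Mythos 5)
A preliminary remark: the paper does not actually prove this theorem — it is quoted from the forthcoming \cite{FWMod}, and the surrounding text explicitly defers "some technical verifications" there. Your sketch can therefore only be measured against the strategy the paper signals (right transfer in the style of Berger--Moerdijk, as in the proof of Theorem \ref{thm:model str La modules} and of the analogous statement for dg $\La$ Hopf cooperads), and at that level your architecture — transfer theorem, smallness via adjunction, acyclicity via a path object argument — is the expected one.

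There are nevertheless two genuine gaps, and they sit exactly where the content of the theorem lies. First, you assert that the endpoint evaluation $A \otimes \APL(\Delta^1) \to A \times A$ is a fibration in $\dgModc_{\COp}$. In that category the fibrations are \emph{defined} by the right lifting property against acyclic cofibrations (the morphisms injective in positive degrees that are quasi-isomorphisms); they are not the aritywise surjections, so the surjectivity of $\APL(\Delta^1)\to\Q\times\Q$ establishes nothing, and one must actually produce lifts of comodule maps against this evaluation. The paper explicitly warns, in the subsection on mapping spaces, that tensoring with $\Omega(\Delta^\bullet)$ does \emph{not} make these categories into simplicial model categories ("the required axioms are not all satisfied"), which is precisely the kind of compatibility you are invoking; so this step cannot be waved through, and may require restricting to fibrant $A$ plus a nontrivial lifting argument, or a different path object altogether. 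Second, the Berger--Moerdijk path object argument needs a functorial fibrant replacement $X\to X^f$ that is \emph{already known} to be a weak equivalence; the small object argument only yields a relative $F(J)$-cell map to a fibrant object, and the statement that such maps are weak equivalences is exactly the acyclicity condition you are trying to prove, so obtaining the fibrant replacement "from the small object argument" is circular. In this setting the replacement must be supplied independently, by bar--cobar/$W$-construction-type resolutions compatible with the Hopf and $\La$ structures (compare Proposition \ref{prop:W construction} and the explicit fibrant replacement used in the proof of Theorem \ref{thm:model str La modules}); constructing these is a substantial part of what is deferred to \cite{FWMod}.
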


In both categories $\dgLaHModc_{\COp}$ and $\dgHModc_{\COp}$ the weak equivalences are the quasi-isomorphisms, the fibrations are those morphisms that are fibrations in $\dgModc_{\COp}$, and the cofibrations are obtained via the lifting property.

The functor $\Omega_\sharp$ of the preceding subsection also has a natural extension to operadic modules.

\begin{thm}[\cite{FWMod}]\label{thm:FWMod main}
  Let $\POp$ be a simplical operad, or a simplicial $\La$ operad and set $\COp:=\Omega_\sharp(\POp)$.
The arity-wise application of $\G$ can be extended to Quillen adjunctions 
\begin{align*}
  \Omega_\sharp : \La\Mod_{\POp} &\rightleftarrows (\dgLaHModc_{\COp})^{op} : \G. 
\end{align*}
Let $\MOp$ be a right $\POp$-module.
Assume further that $\POp$ and $\MOp$ are cofibrant and that $\POp(1)$ and $\MOp(1)$ are connected and of finite rational homology type.
Then the morphisms $\Omega_\sharp(\MOp)(r)\to \Omega(\MOp(r))$ are quasi-isomorphisms for any $r$.
\end{thm}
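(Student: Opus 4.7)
The plan is to transpose Fresse's construction and proof for operads to the module setting, working arity-wise together with the additional (co)action data.

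First I would construct $\Omega_\sharp$ on modules by enforcing the expected adjointness on already-known building blocks. Since any right $\POp$-module $\MOp$ admits a reflexive coequalizer presentation
\[
\MOp \cong \coeql\bigl( \Free_{\POp} \Free_{\POp} \MOp \rightrightarrows \Free_{\POp} \MOp \bigr)
\]
with $\Free_{\POp}$ the free $\POp$-module functor on a $\La$-sequence, and since a left adjoint to $\G$ must send colimits to limits in the opposite category, we are forced to set
\[
\Omega_\sharp(\MOp) \cong \eql\bigl( \Free^{c}_{\COp}\, \Omega(\MOp) \rightrightarrows \Free^{c}_{\COp}\, \Omega(\Free_{\POp}\MOp) \bigr),
\]
where $\Free^{c}_{\COp}$ is the cofree dg Hopf $\COp$-comodule functor on a $\La$-sequence of dg commutative algebras. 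On free modules this specializes to $\Omega_\sharp(\Free_{\POp} K) = \Free^{c}_{\COp}(\Omega(K))$ via the arity-wise Sullivan adjunction. Adjointness of $(\Omega_\sharp, \G)$ then follows formally from that of $(\Omega,\G)$ together with the universal properties of the (co)equalizer presentations, using that $\G$ is lax monoidal and transports right $\COp$-comodule structures to right $\POp$-module structures along the unit $\POp \to \G(\Omega_\sharp(\POp)) = \G(\COp)$.

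For the Quillen property I invoke Theorems \ref{thm:model str La modules} and \ref{thm:modc model cat}: both model structures are obtained by right transfer from the underlying $\La$-sequence categories, so fibrations and weak equivalences of modules are detected on underlying $\La$-sequences. Since the arity-wise application of $\G$ is the right adjoint of a Quillen pair on $\La$-sequences, it preserves (acyclic) fibrations of modules, and hence $(\Omega_\sharp, \G)$ is a Quillen pair.

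The main technical step, and the hardest, is the quasi-isomorphism $\Omega_\sharp(\MOp)(r) \to \Omega(\MOp(r))$ under the cofibrancy, connectivity and finite-type hypotheses. I would prove it by induction on a cellular filtration of $\MOp$ as a cofibrant object of $\La\Mod_{\POp}$. The base case treats free modules on bounded, finite-type $\La$-sequences: here the explicit cofree formula for $\Omega_\sharp(\Free_{\POp} K)$ reduces the claim, via a direct comparison of $S_s$-(co)invariants, to Fresse's known statement $\Omega_\sharp(\POp)(r) \to \Omega(\POp(r))$ for the operad itself, using that $\Omega$ turns finite coproducts of simplicial sets into finite tensor products of dg commutative algebras. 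The inductive step requires showing that a pushout along a generating cofibration in $\La\Mod_{\POp}$ is sent by $\Omega_\sharp$ to a homotopy pullback of comodules; this is the delicate point, because $\Omega_\sharp$ is a left Quillen functor only into the opposite category and so need not preserve such pushouts on the nose. Handling this via a filtration by cellular skeleta and a comparison of associated spectral sequences, with the finite-type and connectivity hypotheses ensuring convergence of infinite products against cohomology, is the main obstacle. This is the core content left to the forthcoming work \cite{FWMod}.
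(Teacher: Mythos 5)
The paper does not actually prove this theorem: it is quoted from the forthcoming work \cite{FWMod}, and the text surrounding the statement only records the concrete description of $\Omega_\sharp$ on modules as the equalizer
$\Omega_\sharp(\MOp)\cong \eql\left( \Free^c_{\Omega_\sharp(\POp)} \Omega(\MOp) \rightrightarrows \Free^c_{\Omega_\sharp(\POp)} \Omega(\Free_{\POp} \MOp) \right)$
obtained from the reflexive coequalizer presentation of $\MOp$. Your construction of the functor reproduces this description exactly, and your argument for the Quillen property (fibrations and weak equivalences of modules are created in the underlying $\La$-sequence categories by Theorems \ref{thm:model str La modules} and \ref{thm:modc model cat}, so the right adjoint preserves them) is the same style of transfer argument the paper uses for the analogous operad-level statements and for Proposition \ref{prop:res ind adjunction}. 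So on the parts that can be checked against the paper, your proposal is consistent with it.

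However, as a proof your proposal has a genuine gap, which you yourself flag: the central claim that $\Omega_\sharp(\MOp)(r)\to \Omega(\MOp(r))$ is a quasi-isomorphism under the cofibrancy, connectivity and finite-type hypotheses is only outlined as a cellular induction, and the inductive step --- that $\Omega_\sharp$ sends a pushout along a generating cofibration of $\La\Mod_{\POp}$ to a homotopy pullback of comodules, with convergence of the relevant spectral sequences controlled by the finite-type and connectivity assumptions --- is named but not carried out. This is precisely the content the paper defers to \cite{FWMod}, so there is nothing in the present source to compare it against; but a referee should be aware that your write-up establishes the functor and the adjunction, not the quasi-isomorphism statement, which remains a plan rather than an argument.
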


Concretely, $\Omega_\sharp(\MOp)$ can be defined as follows.
We write $\MOp$ as a (reflexive) coequalizer 
\[
\MOp\cong \coeql\left(\Free_{\POp} \Free_{\POp} \MOp \rightrightarrows \Free_{\POp} \MOp \right).
\]
We then find 
\[
  \Omega_\sharp(\MOp)\cong \eql\left( \Free^c_{\Omega_\sharp(\POp)} \Omega(\MOp) \rightrightarrows \Free^c_{\Omega_\sharp(\POp)} \Omega(\Free_{\POp} \MOp) \right).
\]

We say that $\Omega_\sharp(\MOp)$ is the dg Hopf cooperadic comodule model of the simplicial $\POp$-module $\MOp$. 
We also extend this notion to topological operadic modules by applying the singular chains functor $\GS$.





\subsection{(Co)induction and (co)restriction}
Let $f:\COp\to \DOp$ be a morphism of (dg) $\La$ Hopf cooperads and suppose that $\MOp$ is a right $\La$ Hopf comodule over $\COp$.
Then $\MOp$ can be equipped with a right $\La$ Hopf comodule structure over $\DOp$ by corestriction, i.e., by composing the cocomposition with $f$. We shall denote the resulting right $\DOp$ comodule by $\coRes_{\COp}^{\DOp}\MOp$, suppressing $f$ from the notation.

\begin{prop}[\cite{FWMod}]\label{prop:res ind adjunction}
For $f:\COp\to \DOp$ a morphism of $\La$ Hopf cooperads the corestriction functor fits into a Quillen adjunction
\begin{equation}\label{equ:cores coind adj}
  \coRes_{\COp}^{\DOp} : \dgLaHModc_{\COp} \rightleftarrows\dgLaHModc_{\DOp} : \coInd^{\COp}_{\DOp}.
\end{equation}
If $f$ is a weak equivalence (i.e., a quasi-isomorphism) then the above adjunction is a Quillen equivalence.
\end{prop}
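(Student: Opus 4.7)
The plan is to verify the Quillen pair conditions directly, exploiting that the corestriction functor $\coRes_{\COp}^{\DOp}$ acts as the identity on the underlying dg $\La$ sequence and only modifies the cooperadic coaction by postcomposition with $f$. In particular $\coRes_{\COp}^{\DOp}$ tautologically preserves quasi-isomorphisms, so to show it is a left Quillen functor it suffices to show that it preserves cofibrations.

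For the cofibration claim I would pass to the free-forgetful adjunctions underlying the right transfer of Theorem \ref{thm:modc model cat}. The generating (acyclic) cofibrations in $\dgLaHModc_{\COp}$ and $\dgLaHModc_{\DOp}$ are obtained by applying the respective free $\La$ Hopf comodule functors to the generating (acyclic) cofibrations of $\dgModc_{\COp}$ and $\dgModc_{\DOp}$, which in turn may be taken to be maps of finite-dimensional comodules concentrated in a finite range of arities, characterized by injectivity in positive cohomological degrees. Since these free functors can be built schematically as "freely generate a commutative $\La$-algebra, then cofreely cogenerate as a comodule over the cooperad", they commute with $\coRes_{\COp}^{\DOp}$ up to the obvious change of cooperad along $f$. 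Hence $\coRes_{\COp}^{\DOp}$ carries the generating cofibrations of $\dgLaHModc_{\COp}$ to cofibrations of $\dgLaHModc_{\DOp}$, and the small-object argument promotes this to all cofibrations, establishing the Quillen adjunction.

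For the Quillen equivalence statement, assume $f$ is a quasi-isomorphism. I would verify that for cofibrant $\MOp \in \dgLaHModc_{\COp}$ and fibrant $\NOp \in \dgLaHModc_{\DOp}$ the unit $\MOp \to \coInd_{\DOp}^{\COp}\coRes_{\COp}^{\DOp}\MOp$ and counit $\coRes_{\COp}^{\DOp}\coInd_{\DOp}^{\COp}\NOp \to \NOp$ are quasi-isomorphisms; since $\coRes_{\COp}^{\DOp}$ preserves quasi-isomorphisms, the derived unit and counit agree with the underived ones on (co)fibrant inputs. A small-object reduction then confines us to checking this on the generating cofibrant comodules, i.e.\ on semi-cofree objects of the form $G \otimes_{\text{cofree}} \COp$ built from a free commutative $\La$-algebra $G$. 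On such objects $\coInd$ can be written as an explicit cotensor-type construction over the cooperads, and a spectral sequence argument with respect to the arity filtration identifies the associated graded of the unit/counit with maps obtained by tensoring identities with $f$, which is a quasi-isomorphism by assumption.

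The main obstacle is the last step, which requires care because the objects of $\dgLaHModc_{\COp}$ simultaneously carry a cooperadic coaction, a dg commutative algebra structure, and a $\La$-structure, so there is no single cofree functor that directly computes cofibrant replacements. One must therefore combine an arity filtration with a word-length filtration for the commutative algebra structure and verify convergence of the associated spectral sequences so that the quasi-isomorphism on the associated graded lifts to the original objects. These detailed technical verifications are carried out in the forthcoming work referred to in the statement.
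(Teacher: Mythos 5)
Your proposal has the right overall shape, but it is missing the two pieces that carry the actual content of the paper's proof. First, the statement implicitly asserts that the right adjoint $\coInd^{\COp}_{\DOp}$ exists; the paper constructs it explicitly as the equalizer $\eql\bigl(\Free_{\COp}^c\NOp \rightrightarrows \Free_{\COp}^c\Free_{\DOp}^c\NOp\bigr)$ and verifies the adjunction by a chain of natural bijections. You take its existence for granted, yet the explicit formula is exactly what is needed later to compute the counit. For the Quillen-pair claim your detour through generating cofibrations at the Hopf level is both harder than necessary and shakily described (the free functor for the right transfer does not involve any ``cofree cogeneration''; the comodule structure is already present on the input). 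The paper's route is simpler: at the level of plain dg comodules the cofibrations are \emph{literally} the maps injective in positive degrees, a condition on underlying dg vector spaces, so the identity-on-objects functor $\coRes_{\COp}^{\DOp}$ preserves them and the weak equivalences on the nose; hence $\coInd$ preserves (acyclic) fibrations there, and since fibrations and weak equivalences in $\dgLaHModc$ are created in $\dgModc$, the Hopf-level adjunction is Quillen.

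The genuine gap is in the equivalence statement. Your ``small-object reduction to generating cofibrant comodules'' is not a valid move: the property that a unit or counit is a quasi-isomorphism is not preserved under pushouts and transfinite compositions, so one cannot check it only on cells. The correct reduction, which the paper performs, uses that $\coRes_{\COp}^{\DOp}$ creates weak equivalences, so by \cite[Lemma 3.3]{ErdalIlhan} it suffices to show the counit $\coRes_{\COp}^{\DOp}\coInd^{\COp}_{\DOp}\NOp\to\NOp$ is a quasi-isomorphism for fibrant $\NOp$; one then replaces $\NOp$ by the specific fibrant replacement $\Bar\Bar^c\NOp$ (legitimate by Ken Brown's lemma applied to $\coInd$), where the counit becomes the explicit map $(\NOp\circ\Bar^c\DOp\circ\COp,d)\to(\NOp\circ\Bar^c\DOp\circ\DOp,d)$ induced by $f$. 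The filtration that makes this computable is by the total cohomological degree of the $\NOp\circ\Bar^c\DOp$ factors --- not the arity filtration you suggest --- and its bounded-below convergence hinges on the non-negative grading of the cochain complexes, a point the paper explicitly flags. This computation is the heart of the argument, and your proposal defers precisely this step to the forthcoming reference.
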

The right adjoint functor $\coInd^{\COp}_{\DOp}$ (coinduction) has the following explicit description.
Any right $\DOp$-comodule $\NOp$ can be written as a (reflexive) equalizer
\[
  \NOp = \eql\left( \Free_{\DOp}^c\NOp \rightrightarrows  \Free_{\DOp}^c\Free_{\DOp}^c\NOp \right).
\]
Correspondingly (implicitly using that a right adjoint preserves limits) we have that $\coInd^{\COp}_{\DOp}\NOp$ is the equalizer
\begin{equation}\label{equ:coind def}
\coInd^{\COp}_{\DOp}\NOp
=
\eql 
\left( \Free_{\COp}^c\NOp \rightrightarrows  \Free_{\COp}^c \Free_{\DOp}^c \NOp\right).
\end{equation}

\begin{proof}
One verifies explicitly that \eqref{equ:coind def} defines a right adjoint to the corestriction functor. 
To this end let us abbreviate $\Mor_{\DOp}(\dots)=\Mor_{\dgLaHModc_{\DOp}}(\dots)$ and write $\Mor_{\La}(\dots)$ for the morphisms in the category of dg Hopf $\La$ sequences. 

Concretely, we have the following chain of natural bijections, for $\MOp$ a $\COp$-comodule and $\NOp$ a $\DOp$-comodule.
\begin{align*}
  \Mor_{\DOp}\left( \coRes_{\COp}^{\DOp} \MOp, \NOp \right)
  &\cong
  \Mor_{\DOp}\left( \coRes_{\COp}^{\DOp} \MOp
  , \eql\left( \Free^c_{\DOp} \NOp \rightrightarrows \Free^c_{\DOp}\Free^c_{\DOp}\NOp \right) \right)
  \\
  &\cong
  \eql\left(\Mor_{\DOp}\left(  \coRes_{\COp}^{\DOp} \MOp,\Free^c_{\DOp} \NOp\right)
  \rightrightarrows \Mor_{\DOp}\left(\coRes_{\COp}^{\DOp}\MOp, \Free^c_{\DOp}\Free^c_{\DOp} \NOp \right)\right)
  \\
  &\cong  \eql\left(\Mor_{\La}\left( \MOp, \NOp\right)
  \rightrightarrows \Mor_{\La}\left(\MOp, \Free^c_{\DOp} \NOp \right)\right)
  \\ &\cong  \eql\left(\Mor_{\COp}\left( \MOp, \Free^c_{\COp}\NOp\right)
  \rightrightarrows \Mor_{\COp}\left(\MOp, \Free^c_{\COp}\Free^c_{\DOp} \NOp \right)\right)
  \\&\cong 
  \Mor_{\COp}\left(\MOp , \eql\left(
    \Free^c_{\COp} \NOp\rightrightarrows \Free^c_{\COp} \Free^c_{\DOp} \NOp
  \right) \right)
  \\&\cong 
  \Mor_{\COp}\left(\MOp ,  \coInd_{\DOp}^{\COp}\NOp\right)
\end{align*}

To show that \eqref{equ:cores coind adj} is a Quillen adjunction we will take a slight detour.
First, the same construction and argument as above shows that there is a Quillen adjunction 
\begin{equation}\label{equ:cores coind adj 2}
  \coRes_{\COp}^{\DOp} : \dgModc_{\COp} \rightleftarrows\dgModc_{\DOp} : \coInd^{\COp}_{\DOp}
\end{equation}
between the categories of dg comodules over $\COp$ and $\DOp$.
Note in particular that the right adjoint functor $\coInd^{\COp}_{\DOp}$ agrees with the one constructed above on dg $\La$ Hopf comodules since limits in $\dgLaHModc_{\COp}$ are created in $\dgModc_{\COp}$.
It is clear that \eqref{equ:cores coind adj 2} is a Quillen adjunction since the left adjoint $\coRes_{\COp}^{\DOp}$ is objectwise the identity functor and hence preserves weak equivalences (i.e., quasi-isomorphisms) and cofibrations (i.e., morphisms injective in positive degree).
Hence $\coInd^{\COp}_{\DOp}$ sends weak equivalences and fibrations in $\dgModc_{\DOp}$ to weak equivalences and fibrations in $\dgModc_{\COp}$.
However, the weak equivalences and fibrations in $\dgLaHModc_{\DOp}$ (resp. $\dgLaHModc_{\COp}$) are inherited from $\dgModc_{\DOp}$ (resp. $\dgModc_{\COp}$).
Hence $\coInd^{\COp}_{\DOp}$ as the right adjoint in \eqref{equ:cores coind adj} also sends (acyclic) fibrations to (acyclic) fibrations and hence \eqref{equ:cores coind adj} is a Quillen adjunction.


Next suppose that $f$ as in the Proposition is a weak equivalence. We want to show that the adjunction is a Quillen equivalence. Since $\coRes_{\COp}^{\DOp}$ creates weak equivalences, it suffices to show the following (see \cite[Lemma 3.3]{ErdalIlhan}).

{\bf Claim 1:} If $\NOp\in \Modc_{\DOp}$ is fibrant, then the adjunction counit $\coRes_{\COp}^{\DOp} \coInd^{\COp}_{\DOp} \NOp \to \NOp$ is a weak equivalence.


To show the claim, one notes that coinduction and corestriction are created in the underlying category of plain (i.e., non-$\La$-Hopf) dg cooperadic comodules.
Furthermore, consider some fibrant replacement $\NOp\xrightarrow{\sim}\hat \NOp$ and the commutative diagram
\[
\begin{tikzcd}
  \coRes_{\COp}^{\DOp} \coInd^{\COp}_{\DOp} \NOp \ar{r}\ar{d}{\sim}
  & \NOp \ar{d}{\sim}
  \\
  \coRes_{\COp}^{\DOp} \coInd^{\COp}_{\DOp} \hat \NOp \ar{r} & \hat \NOp
\end{tikzcd}\, .
\] 
The left-hand vertical arrow is a weak equivalence since by Ken Brown's Lemma the morphism $\coInd^{\COp}_{\DOp} \NOp\to \coInd^{\COp}_{\DOp} \hat \NOp$ is, and since corestriction preserves weak equivalences. Hence the upper horizontal arrow is a weak equivalence iff so is the lower horizontal arrow.
Taking for $\hat N=\Bar \Bar^c\NOp$ the standard bar-cobar construction, we hence have reduced Claim 1 to the following statement.

{\bf Claim 2:} The adjunction counit $\coRes_{\COp}^{\DOp} \coInd^{\COp}_{\DOp} \Bar \Bar^c\NOp \to \Bar \Bar^c\NOp$ is a weak equivalence.

To check Claim 2 note that the underlying map on dg symmetric sequences is, explicitly,
\begin{equation}\label{equ:claim2}
(\NOp \circ \Bar^c \DOp \circ \COp, d) 
\to 
(\NOp \circ \Bar^c \DOp \circ \DOp, d),
\end{equation}
with $d$ generically denoting the differentials, the left-hand side being the underlying dg symmetric sequence of $\coRes_{\COp}^{\DOp} \coInd^{\COp}_{\DOp} \Bar \Bar^c\NOp$, and the right-hand side that of $\Bar \Bar^c\NOp$.
The map is given by sending the $\COp$-factor to the corresponding $\DOp$-factor via $f$.
The differentials have several pieces arising from the differentials on $\COp$, $\DOp$, the cobar construction, the coaction on $\NOp$ and the cocomposition. 
Now consider increasing, exhaustive, bounded below filtrations on both sides of \eqref{equ:claim2} by the total cohomological degree of the factors $\NOp \circ \Bar^c \DOp$.
The only piece of the differentials $d$ on either side of \eqref{equ:claim2} that does not raise that degree is the part induced from the differentials on the right-most factors $\COp$ and $\DOp$ respectively. Hence it is clear by the K\"unneth formula that the map \eqref{equ:claim2} induces a quasi-isomorphism on the associated graded complexes and hence a quasi-isomorphism. This shows Claim 2 and hence the Proposition.
We stress that in the proof of Claim 2 it was essential that we work with non-negatively graded cochain complexes. Otherwise, our filtration would not necessarily be bounded below, and the associated spectral sequences would hence not necessarily converge to the cohomology.
\end{proof}

Finally, we note that (co)induction is compatible with taking differential forms, as follows: 
\begin{prop}\label{prop:coind rationalization commutation}
Suppose that $\SOp\to \TOp$ is a morphism of $\La$ operads in $\sSet$ or $\Top$ and let $\MOp$ be a right $\SOp$-module. 
Then there is a natural isomorphism of right $\La$ Hopf $\Omega_\sharp(\TOp)$ comodules
\begin{equation}\label{equ:omega in coind}
\Omega_\sharp(\Ind_{\SOp}^{\TOp}\MOp)
\cong
\coInd_{\Omega_\sharp(\SOp)}^{\Omega_\sharp(\TOp)} \Omega_\sharp(\MOp).
\end{equation}
\end{prop}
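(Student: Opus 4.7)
The plan is to prove the isomorphism by a Yoneda-style argument, chaining together the adjunctions established in the preceding subsections. Set $\COp := \Omega_\sharp(\SOp)$ and $\DOp := \Omega_\sharp(\TOp)$, with $\DOp \to \COp$ the induced morphism of dg $\La$ Hopf cooperads. For an arbitrary test object $\NOp \in \dgLaHModc_{\DOp}$ I would compute the chain of natural bijections
\begin{align*}
\Mor_{\DOp}(\NOp, \Omega_\sharp(\Ind_{\SOp}^{\TOp}\MOp))
&\cong \Mor_{\TOp}(\Ind_{\SOp}^{\TOp}\MOp, \G \NOp) \\
&\cong \Mor_{\SOp}(\MOp, \Res_{\TOp}^{\SOp} \G \NOp) \\
&\cong \Mor_{\SOp}(\MOp, \G\, \coRes_{\DOp}^{\COp} \NOp) \\
&\cong \Mor_{\COp}(\coRes_{\DOp}^{\COp} \NOp, \Omega_\sharp \MOp) \\
&\cong \Mor_{\DOp}(\NOp, \coInd_{\COp}^{\DOp} \Omega_\sharp \MOp),
\end{align*}
invoking in succession the $\Omega_\sharp \dashv \G$ adjunction of Theorem \ref{thm:FWMod main}, the induction--restriction adjunction \eqref{equ:ind res adj}, a compatibility $\Res \circ \G = \G \circ \coRes$ to be justified below, the $\Omega_\sharp \dashv \G$ adjunction again, and the corestriction--coinduction adjunction from Proposition \ref{prop:res ind adjunction}. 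Yoneda's lemma then yields the desired natural isomorphism of $\La$ Hopf $\DOp$-comodules.

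The only step requiring real work is the commutation $\Res_{\TOp}^{\SOp} \circ \G = \G \circ \coRes_{\DOp}^{\COp}$. To verify this, I would unpack definitions: on $\NOp \in \dgLaHModc_{\DOp}$, both sides agree as underlying $\La$ sequences of simplicial sets (or topological spaces), namely $\G(\NOp)$ applied componentwise, and both equip this sequence with an $\SOp$-module structure obtained by pulling back the $\DOp$-coaction along the composite $\SOp \to \G(\DOp) \to \G(\COp)$, where the first arrow is the unit of the $\Omega_\sharp \dashv \G$ adjunction for operads and the second is $\G$ applied to $\DOp \to \COp$. Functoriality of the adjunction unit ensures this pullback structure agrees with the one obtained by first restricting $\G\NOp$ along $\SOp \to \TOp$, so the two $\SOp$-module structures coincide on the nose.

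As an alternative I could argue explicitly from the coequalizer presentation $\Ind_{\SOp}^{\TOp}\MOp = \coeql(\Free_{\TOp}\Free_{\SOp}\MOp \rightrightarrows \Free_{\TOp}\MOp)$. Since $\Omega_\sharp$ is a left adjoint into the opposite category, it sends this coequalizer to an equalizer in $\dgLaHModc_{\DOp}$. Comparing right adjoints (forgetful followed by $\G$) yields a natural identification $\Omega_\sharp \circ \Free_{\TOp} \cong \Free^c_{\DOp} \circ \Omega_\sharp$ at the level of $\La$ sequences, and analogously for iterated free functors, so the resulting equalizer matches the definition \eqref{equ:coind def} of $\coInd_{\COp}^{\DOp} \Omega_\sharp(\MOp)$. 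The main obstacle in either approach is bookkeeping across several categories and justifying the compatibility $\Res \circ \G = \G \circ \coRes$, which ultimately rests on naturality of $\Omega_\sharp \dashv \G$ in the underlying operad.
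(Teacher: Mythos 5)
Your proposal is correct, but your primary route is genuinely different from the paper's: the paper proves the statement by the argument you relegate to your ``alternative'', namely writing $\Ind_{\SOp}^{\TOp}\MOp$ as a reflexive coequalizer of free modules, using that $\Omega_\sharp$ sends colimits to limits and that $\Omega_\sharp(\Free_{\TOp}X)\cong\Free^c_{\Omega_\sharp(\TOp)}\Omega(X)$, and matching the resulting equalizer against the explicit equalizer formula \eqref{equ:coind def} for coinduction (using that $\coInd$, being a right adjoint, preserves limits and sends cofree $\COp$-comodules to cofree $\DOp$-comodules). Your Yoneda argument chaining the four adjunctions is a clean alternative whose only nontrivial input is the compatibility $\Res_{\TOp}^{\SOp}\circ\G=\G\circ\coRes_{\DOp}^{\COp}$, which you correctly reduce to naturality of the adjunction unit $\SOp\to\G\Omega_\sharp(\SOp)$; what it buys is that you never need to manipulate the free/cofree presentations explicitly, while the paper's route stays entirely on the algebraic side and never has to discuss how $\G$ interacts with restriction of module structures. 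Both arguments are formal and both yield naturality in $\MOp$; either would be an acceptable proof.
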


\begin{proof}
We have that $\Omega_\sharp(\MOp)$ is defined as the equalizer
\[
  \Omega_\sharp(\MOp) = 
  \eql \left( \Free^c_{\Omega_\sharp(\SOp)}\Omega(M) \rightrightarrows
  \Free^c_{\Omega_\sharp(\SOp)}\Omega(\Free_{\SOp} M) \right).
\]
As a right adjoint the coinduction functor commutes with limits and hence the right-hand side of \eqref{equ:omega in coind} equals 
\begin{multline*}
  \eql \left( \coInd_{\Omega_\sharp(\SOp)}^{\Omega_\sharp(\TOp)} \Free^c_{\Omega_\sharp(\SOp)}\Omega(M) \rightrightarrows
  \coInd_{\Omega_\sharp(\SOp)}^{\Omega_\sharp(\TOp)} \Free^c_{\Omega_\sharp(\SOp)}\Omega(\Free_{\SOp} M) \right)
  =
  \eql \left( \Free^c_{\Omega_\sharp(\TOp)}\Omega(M) \rightrightarrows
  \Free^c_{\Omega_\sharp(\TOp)}\Omega(\Free_{\SOp} M) \right).
\end{multline*}
On the other hand we have that 
\[
  \Ind_{\SOp}^{\TOp}\MOp= 
  \coeql \left(
    \Free_{\TOp}\Free_{\SOp}\MOp \rightrightarrows \Free_{\TOp}\MOp
    \right).
\]
But since $\Omega_\sharp$ sends colimits to limits by adjunction
\[
  \Omega_\sharp(\Ind_{\SOp}^{\TOp}\MOp)
  =
  \eql \left(
    \Omega_\sharp(
    \Free_{\TOp}\MOp) \rightrightarrows \Omega_\sharp(\Free_{\TOp}\Free_{\SOp}\MOp)
    \right)
  =
  \eql \left(
    \Free^c_{\Omega_\sharp(\TOp)}\Omega(\MOp)\rightrightarrows \Free^c_{\Omega_\sharp(\TOp)}\Omega(\Free_{\SOp}\MOp)
  \right).
\]

Hence both sides of \eqref{equ:omega in coind} agree.
\end{proof}

\subsection{W construction for $\La$ Hopf cooperads and comodules}
\label{sec:W construction}
The Boardman-Vogt W construction is a particular cofibrant resolution for topological operads \cite{BoardmanVogt}.
It has been generalized to operads in general symmetric monoidal model categories by Berger and Moerdijk \cite{BergerMoerdijk}.
A dual version of the $W$ construction for $\La$ Hopf cooperads has been introduced in \cite{FTW}, and this has been extended to $\La$ Hopf cooperadic comodules in \cite{CDI}. 

For this paper the technical definition of the $W$ construction is not important. We will only use the existence of a fibrant resolution of $\La$ Hopf cooperads and comodules with particularly good properties, as described by the following proposition.

\begin{prop}[\cite{FTW}, \cite{CDI}]
  \label{prop:W construction}
Let $\COp$ be a reduced $\La$ Hopf cooperad. Then there is a fibrant resolution 
\[
  \COp\xrightarrow{\sim}W\COp,
\]
 the $W$ construction, with the following properties:
\begin{itemize}
\item As a dg cooperad $W\COp=\Bar\oW \COp$ is the operadic bar construction of a dg operad $\oW \COp$.
In particular, $W\COp$ is cofree as a graded cooperad.
\item The dg operad $\oW \COp$ comes with a quasi-isomorphism of operads $\Bar^c\COp\xrightarrow{\sim}\oW\COp$ from the cobar construction of $\COp$ as a dg cooperad.
\end{itemize}
Let next $\MOp$ be a $\La$ Hopf $\COp$ comodule.
Then there is a $\La$ Hopf $W\COp$ comodule $W\MOp$ that is a fibrant resolution of $\MOp$ in the category of $\La$ Hopf $W\COp$ comodules
\[
\MOp \xrightarrow{\sim} W\MOp.
\]
We call it the $W$ construction of $\MOp$. It has the following properties:
\begin{itemize}
\item As a dg cooperad $W\MOp=\Bar_{\oW\COp}\oW \MOp$ is the module bar construction of a $\oW\COp$ module $\oW \MOp$.
In particular, $W\MOp$ is cofree as a graded $W\COp$ comodule.
\item The dg module $\oW \MOp$ comes with a quasi-isomorphism of $\oW\COp$ modules $\Bar_{\COp}^c\MOp\xrightarrow{\sim}\oW\MOp$ from the cobar construction of $\MOp$.
\end{itemize}
\end{prop}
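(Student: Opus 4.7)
The plan is to construct $\oW\COp$ explicitly as a dg operad resolving $\Bar^c\COp$ with extra structure that makes $\Bar\oW\COp$ into a Hopf cooperad, and then to mimic the construction on the module side. First I would define $\oW\COp$ using the usual Boardman-Vogt style combinatorics, adapted to the algebraic setting: as a symmetric sequence it is freely generated by rooted trees with vertices decorated by elements of $\COp(r)$ (where $r$ is the valence at the vertex) and internal edges decorated by elements of the Sullivan model $\Omega^*([0,1])=\Q[t,dt]$ of the interval. The operadic composition is by grafting trees, inserting an edge with length parameter $t=0$ at the graft. The differential is the sum of: the internal $\COp$-differential at vertices, the de~Rham differential of edge factors, the restriction at $t=0$ (which couples with the cobar-style differential contracting an edge) and the restriction at $t=1$ (which couples with the $\Lambda$ cocomposition that inserts a nullary operation and collapses the subtree above the edge). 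The inclusion $\Bar^c\COp\hookrightarrow \oW\COp$ sending all edge parameters to $0$ is then checked to be a quasi-isomorphism by filtering by number of internal edges and applying the Poincar\'e lemma $H^*(\Q[t,dt])=\Q$ on each edge factor.

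Next I would promote $W\COp:=\Bar\oW\COp$ to a $\La$ Hopf cooperad. As a graded cooperad it is cofree on $s\AugC{\oW\COp}$, so the cooperad structure is automatic; the key point is to produce in each arity a commutative multiplication and to verify compatibility with the differential, the cocomposition and the $\Lambda$ structure. The multiplication is defined by combining the given Hopf structure on $\COp$ (to multiply vertex decorations when trees share vertex labels) with the multiplication in $\Q[t,dt]$ (to multiply edge-length decorations); in terms of the tree combinatorics it is the ``shuffle of trees'' product familiar from the Hopf cooperad structure on bar constructions of Hopf-cooperadic cobar. The induced map $\COp\to W\COp$ sends $x\in\COp(r)$ to the bar element given by the corolla with root-vertex decorated by $x$ and no internal edges; its being a quasi-isomorphism then reduces to the standard equivalence $\Bar\Bar^c\COp\xrightarrow{\sim}\COp$ combined with the quasi-isomorphism $\Bar^c\COp\xrightarrow{\sim}\oW\COp$ constructed above. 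Fibrancy: since fibrations in $\dgLaHOpc$ are created from those in $\dgOpc$, and since $W\COp$ is cofree as a graded cooperad, the required lifting property reduces to a lifting statement in graded Hopf $\Lambda$ sequences, which is verified directly on the free generators.

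For the module part I would repeat the same construction on ``module trees'', i.e.\ rooted trees with an additional distinguished external vertex decorated by an element of $\MOp$, with internal edges again decorated in $\Q[t,dt]$. This produces $\oW\MOp$ as a dg right $\oW\COp$-module, together with a quasi-isomorphism $\Bar^c_\COp\MOp\xrightarrow{\sim}\oW\MOp$, by the same Poincar\'e lemma argument. Setting $W\MOp:=\Bar_{\oW\COp}\oW\MOp$, the $W\COp$-comodule structure and the commutative algebra structure in each arity are induced, respectively, from the right $\oW\COp$-action on $\oW\MOp$ and from the Hopf structure of $\MOp$ combined with the edge multiplication. The weak equivalence $\MOp\to W\MOp$ and the fibrancy statement are verified analogously to the cooperad case, using Proposition~\ref{prop:res ind adjunction} on corestriction/coinduction to compare $\La$ Hopf $\COp$-comodules with $\La$ Hopf $W\COp$-comodules through the quasi-isomorphism $\COp\to W\COp$.

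The main obstacle will be verifying that the candidate commutative product on $W\COp$ and on $W\MOp$ is actually compatible with the differential, the (co)operadic (co)composition and the $\Lambda$-structure simultaneously. The delicate points are the interaction at the boundary $t=1$, where the edge-restriction must commute with the Hopf multiplication, and the compatibility of the shuffle-of-trees product with the $\Lambda$ cocompositions $\mu_j^c$, which involves collapsing subtrees above an edge and requires the diagonal on $\COp$ to be suitably compatible with the $\Lambda$ structure. Once this bookkeeping is in place, the two quasi-isomorphism and fibrancy statements are essentially formal; the hard content of the proposition is entirely this compatibility, which is already worked out in~\cite{FTW} (for cooperads) and~\cite{CDI} (for comodules), and is the reason we quote rather than rederive the construction here.
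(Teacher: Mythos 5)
The paper does not prove this statement: it is imported verbatim from \cite{FTW} (cooperad case) and \cite{CDI} (comodule case), and the surrounding text explicitly says that only the listed properties, not the construction, are used. Your proposal is therefore not competing with an in-paper argument but with the cited sources, and in outline it matches them: decorate trees by elements of $\COp$ at vertices and by $\Omega(\Delta^1)=\Q[t,dt]$ on edges, get the comparison with the (co)bar construction from the Poincar\'e lemma after filtering by the number of internal edges, and get fibrancy from cofreeness of the underlying graded cooperad. You also correctly identify that the entire content is the compatibility of the Hopf, cooperad and $\La$ structures.

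There is, however, a genuine gap at exactly that central point, and your proposed route around it would not work as stated. You propose to first build the dg operad $\oW\COp$, set $W\COp=\Bar\oW\COp$, and then equip this bar construction with a commutative product by a ``shuffle of trees''. The bar construction of a Hopf operad carries no natural Hopf cooperad structure, and a shuffle product on trees-of-trees does not interact correctly with the bar differential (the term that cuts an internal edge is not a derivation for such a product). In \cite{FTW} the logic is the reverse: $W\COp(r)$ is \emph{defined} as a limit (an end over the tree category) of the dg commutative algebras $\Omega(\Delta^1)^{\otimes E(T)}\otimes\bigotimes_v\COp(\mathrm{star}(v))$, so the Hopf structure is manifest from the start, and the identification $W\COp\cong\Bar\oW\COp$ is a statement only about the underlying dg cooperad, proved afterwards. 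Relatedly, your description of the differential assigns the $t=1$ boundary restriction to a $\La$-cocomposition collapsing the subtree above the edge; in the actual construction the two boundary restrictions implement edge contraction (the cobar-type term) and edge cutting (the bar-type term), while the $\La$-operations are a separate layer of structure defined using the $\La$-operations of $\COp$ at the vertices adjacent to a deleted leaf, with their own compatibility verification. Since you ultimately defer all of these verifications to \cite{FTW} and \cite{CDI}, the proposal is acceptable as a pointer to the literature --- which is all the paper itself does --- but the sketch, taken literally, would need to be reorganized along the lines above before it could be completed.
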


\subsection{Mapping spaces}
Let 
$$
\Omega(\Delta^r)=\Q[t_1,\dots,t_r,dt_1,\dots,dt_r]
$$
be the dg commutative algebra of polynomial differential forms on the $r$-simplex. These algebras assemble into a simplicial dg commutative algebra $\Omega(\Delta^\bullet)$.

Let $\COp$ and $\DOp$ be $\La$ Hopf cooperads.
Then $\COp\otimes \Omega(\Delta^\bullet)$ and $\DOp\otimes \Omega(\Delta^\bullet)$ form $\La$ Hopf cooperads over $\Omega(\Delta^\bullet)$. In other words, we extend our ground ring from $\Q$ to $\Omega(\Delta^\bullet)$.
Then we define the mapping space between $\COp$ and $\DOp$ as the simplicial set
\[
\Map_{\dgLaHOpc}(\COp, \DOp)
:=
\Mor_{\dgLaHOpc/\Omega(\Delta^\bullet)}(\COp\otimes \Omega(\Delta^\bullet), \DOp\otimes \Omega(\Delta^\bullet) )
\]
consisting of the $\La$ Hopf cooperad morphisms over the extended ground rings $\Omega(\Delta^\bullet)$.

Similarly, let $\MOp$ and $\NOp$ be two right $\COp$-comodules.
Then we define the mapping space between $\MOp$ and $\NOp$ to be the simplicial set 
\[
  \Map_{\dgLaHModc_{\COp}}(\MOp, \NOp)
:=
\Mor_{\dgLaHModc_{\COp\otimes \Omega(\Delta^\bullet)}/\Omega(\Delta^\bullet)}(\MOp\otimes \Omega(\Delta^\bullet), \NOp\otimes \Omega(\Delta^\bullet) )
\]
consisting of the morphisms of right $\La$ Hopf $\COp\otimes \Omega(\Delta^\bullet)$ comodules over the extended ground ring $\Omega(\Delta^\bullet)$.

Unfortunately, the above definitions of mapping spaces do not readily make the categories of Hopf cooperads or Hopf comodules into simplicial model categories -- the required axioms are not all satisified, only slightly weaker versions, see \cite{FWAut} for a more detailed discussion. However, it is shown in \cite{FWAut, FWMod} that the above definitions of the mapping spaces are nevertheless weakly equivalent to the "true" mapping spaces defined for any model category \cite[II.2]{FrII}, \cite{DwyerKan}.

\subsection{A twisting construction for (co)operads}\label{sec:optwist simple}
Let $\COp$ be a dg cooperad and $\POp=\COp^*$ the dual dg operad.
As for any operad, the unary operations $\POp(1)$ from a dg algebra, and in particular a dg Lie algebra. This dg Lie algebra acts on the operad by operadic derivations by the formula, for $x\in \POp(1)$, $y\in\POp(r)$,
\[
x\cdot y = x\circ_1 y - (-1)^{|x||y|} \sum_{j=1}^r y \circ_j x.
\] 
Dually, $\POp(1)$ aso acts on $\COp$ by cooperadic coderivations.
The dual action may be defined using the same formula as above, but with $x\circ_1 (-)$  and $(-)\circ_j x$ interpreted as the compositions
\[
\COp(r) \to \COp(1) \otimes \COp(r) \xrightarrow{x\otimes \id} \COp(r) ,
\]
with the first arrow the cooperadic cocomposition dual to $\circ_1$ or $\circ_j$.

In particular, let $m\in \POp(1)$ be a Maurer-Cartan element, that is, a degree 1 element such that 
\[
dm + m\circ_1 m =0.
\]
Then $\POp$ with the twisted differential $d+m\cdot$ and unchanged composition is also a dg operad, which we denote by $\POp^m$.
Similarly, $\COp$ with twisted differential $d+m\cdot$ is also a dg cooperad, denoted by $\COp^m$.

Next, let $\COp$ be a Hopf cooperad. Then $\POp(1)$ comes with a commutative coproduct $\Delta:\POp(1)\to (\COp(1)\otimes \COp(1))^*$.
We claim that if our Maurer-Cartan element $m$ is primitive, that is 
\[
\Delta m = m\otimes \epsilon+ \epsilon \otimes m \in   \POp(1)\otimes \POp(1) \subset 
(\COp(1)\otimes \COp(1))^*,
\]
with $\epsilon\in \POp(1)$ the operadic unit,
then the action $m\cdot$ on $\COp(r)$ is also a derivation with respect to the commutative algebra structure.
To see this it suffices to check that each composition 
\[
\COp(r) \to \COp(1) \otimes \COp(r) \xrightarrow{m\otimes \id} \COp(r)  
\]
is a derivation, with the first arrow any of the possible cooperadic cocompostions. Let $y,z\in \COp(r)$ and use Sweedler notation to denote the cocompositions, e.g., $y\to \sum y'\otimes y''$.
Then we verify, using that the cocompositions are commutative algebra maps and the fact that $\sum \epsilon(x')x''=x$:
\begin{align*}
m\cdot (xy) &= \sum m((xy)') (xy)''
=
\sum (-1)^{|y'||x''|} m(x'y') x''y''
\\
&=
\sum \left(  (-1)^{|y'||x''|} m(x')\epsilon(y') x'' y''
+
(-1)^{|y'||x''|+|x'|}\epsilon(x')m(y')x'' y''
\right)
\\
&=\sum ( m(x') x'' y + (-1)^{|x|} x m(y')y'')
=
(m\cdot x)y+ (-1)^{|x|} x(m\cdot y).
\end{align*}

In particular, we conclude that $\COp^m$ is still a dg Hopf cooperad.
This construction can be extended to the case of cooperadic right comodules. Let $\MOp$ be a right cooperadic Hopf comodule for $\COp$.
Then we define the right action of $x\in \POp(1)$ on $a\in \MOp(r)$ to be 
\begin{equation}\label{equ:optwist module}
 a\cdot x = \sum_{j=1}^r a \circ_j x
\end{equation}
using similar notation "$\circ_j$" as in the cooperad case.
If $m$ is a primitive Maurer-Cartan element, then we define the twisted $\COp^m$-comodule $\MOp^m$ to be $\MOp$ with unchanged coaction and commutative algebra structure, but differential $d-(-)\cdot m$.

\section{Modules and comodules of configuration space type}

\subsection{A cofree $\Com$ module}
Let $X$ be any space or simplicial set. Then we consider the right $\La$ $\Com$-module $F_X$ such that 
\[
F_X(r) = X^{\times r}  
\]
for $r=1,2,\dots$.
The $\La$-operations are simply defined by forgetting factors,
\begin{align*}
  \mu_j: X^{\times r}  =F_X(r) \to F_X(r-1)=X^{\times r-1}   \\
  (x_1,\dots, x_r) \mapsto (x_1,\dots, \widehat{x_j},\dots, x_r).
\end{align*}
The right $\Com$-action is defined using the diagonal map $\Delta: X\to X\times X$. For example, the right action of the binary product on the first "slot" is 
\begin{align*}
  X^{\times r}  =F_X(r) \to F_X(r+1)=X^{\times r+1}   \\
  (x_1,\dots, x_r) \mapsto (x_1,x_1,x_2,\dots, x_r),
\end{align*}
and via equivariance and operadic axioms, this formula fully determines the action.

The construction $X\to F_X$ is clearly functorial. Furthermore, we have the following result.
\begin{lemma}
The functor $F$ above fits into the following adjunction
\begin{align*}
  U : \La\Mod_{\Com} &\rightleftarrows \Top : F
\end{align*}
where the left adjoint $U$ assigns to a $\La$ $\Com$ module $\MOp$ its unary part $U(\MOp)=\MOp(1)$.
In the simplicial setting one similarly has an adjunction
\begin{align*}
  U : \La\Mod_{\Com} &\rightleftarrows \sSet : F.
\end{align*}
\end{lemma}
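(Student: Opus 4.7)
The plan is to produce the adjunction via its unit and counit. The counit $\varepsilon_X : U(F_X) \to X$ is tautological, since $F_X(1) = X$. For the unit, given a right $\La$ $\Com$-module $\MOp$ and $1 \le j \le r$, let
\[
\pi_j : \MOp(r) \to \MOp(1)
\]
be the iterated $\La$-operation that forgets every slot of $\MOp(r)$ except the $j$-th; this is well defined as a composition of the structure maps \eqref{equ:mujdef} by the operadic axioms in $\Com_*$. Then define the candidate unit by
\[
\eta_{\MOp} : \MOp(r) \longrightarrow \MOp(1)^{\times r} = F_{\MOp(1)}(r), \qquad m \longmapsto \bigl(\pi_1(m),\dots,\pi_r(m)\bigr).
\]

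The bulk of the proof is verifying $\eta_{\MOp}$ is a morphism of right $\La$ $\Com$-modules. Equivariance under $S_r$ is immediate. Compatibility with the $\La$-operations on $F_X$, which simply forget coordinates, reduces to the identities $\pi_i(\mu_j(m)) = \pi_{i'}(m)$, with $i' = i$ if $i < j$ and $i' = i+1$ if $i \ge j$; these hold by operad associativity in $\MOp_*$. The essential check is compatibility with the right $\Com$-action. On $F_X$ the binary product acts by duplicating coordinates via the diagonal, so one must show that for $m' := m \circ_1 \mu_2 \in \MOp(r+1)$
\[
\pi_1(m') = \pi_2(m') = \pi_1(m) \quad\text{and}\quad \pi_{i+1}(m') = \pi_i(m) \text{ for } i \ge 2,
\]
together with analogous identities for the binary product composed at any other slot. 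These follow by operad associativity applied to the equalities $\mu_2 \circ_1 * = \mu_2 \circ_2 * = \id \in \Com_*(1)$. The general case, composing with an arbitrary tuple $\mu_{s_1} \otimes \cdots \otimes \mu_{s_k}$, then follows by iteration and $S_r$-equivariance.

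Finally, the bijection: any morphism $\phi : \MOp \to F_X$ in $\La\Mod_{\Com}$ is determined by $\phi_1$, because composing with the $\La$-projection of $F_X(r)$ onto the $j$-th factor forces the $j$-th component of $\phi_r(m)$ to equal $\phi_1(\pi_j(m))$, whence $\phi_r(m) = (\phi_1\pi_1 m, \dots, \phi_1\pi_r m)$. This yields the triangle identity $U\eta_{\MOp} = \id$ and exhibits the desired natural bijection $\Mor_{\Top}(\MOp(1), X) \cong \Mor_{\La\Mod_\Com}(\MOp, F_X)$ sending $f$ to $F_f \circ \eta_{\MOp}$. The simplicial version is proved by literally the same argument, since nothing in it uses the topology of $X$ or $\MOp(r)$. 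The main obstacle I foresee is merely the bookkeeping in the $\Com$-compatibility check for $\eta_{\MOp}$; every other step is a formal consequence of the operadic axioms and the universal property of $\Com$.
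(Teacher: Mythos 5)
Your proposal is correct and follows essentially the same route as the paper: your projections $\pi_j$ are exactly the paper's $\La$-operations $\lambda_j$ induced by $[1]\to[j]$, the unit $\eta_{\MOp}=(\pi_1,\dots,\pi_r)$ is the paper's map $G$ precomposed before $g$, and both arguments reduce the $\Com$-compatibility to the generating binary coaction $c_{12}$ via $S_r$-equivariance and deduce uniqueness from the fact that any comodule map is forced on each factor by the $\La$-projections. No gaps.
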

\begin{proof}
  First note that $U\circ F$ is the identity.
Let $\MOp$ be a right $\La$ $\Com$-module and $X$ a simplicial set.
Suppose that $g:U(\MOp)\to X$ is a morphism of simplicial sets. We then check that there is a unique morphism of right $\La$ $\Com$ modules $G:\MOp\to F_X$ such that $U(G)=g$.
We first construct $G$ as a morphism of $\La$ sequences. It is defined on $G(r)$ as the composition 
\[
 \MOp(r)\xrightarrow{\prod_{j=1}^r \lambda_j}
 \prod_{j=1}^r \MOp(1) \xrightarrow{\prod_{j=1}^r g}  
\prod_{j=1}^r X= F_X(r),  
\]
where $\lambda_j: \MOp(r)\to \MOp(1)$ is the $\La$ operation coming from the map of sets $[1]\to [j]$ such that $1\mapsto j$.
We check that this map $G$ respects the $\La$-structures.
Given that $S_r$-equivariance is clear, it suffices to consider compatibility with the generating $\La$-morphisms $\mu_1$ representing an action of a nullary operation on the first slot.
We have for $x\in \MOp(r)$
\begin{align*}
\mu_1(G(x))
&= \mu_1(g(\lambda_1(x)),\dots, g(\lambda_r(x)))
=(g(\lambda_2(x)),\dots, g(\lambda_r(x)))
\\&=(g(\lambda_1(\mu_1(x))),\dots, g(\lambda_{r-1}(\mu_1(x))))
=G(\mu_1(x)).
\end{align*}
Furthermore, note that $G$ is already uniquely defined at this stage by the requiremenet that it intertwines the $\La$ operations $\lambda_j$.

It hence suffices to check that $G$ also respects the right $\Com$ action. Again by $S_r$-equivariance it suffices to consider the generating such action $c_{12}=(-)\circ_1 m$ acting with the binary product generator $m\in \Com(2)$ on the first slot.
We compute
\begin{align*}
c_{12}(G(x))
&= 
c_{12}(g(\lambda_1(x)),g(\lambda_2(x)),\dots, g(\lambda_r(x)))
=(g(\lambda_1(x)),g(\lambda_1(x)),g(\lambda_2(x)),\dots, g(\lambda_r(x)))
\\&=
(g(\lambda_1(c_{12}(x))),g(\lambda_2(c_{12}(x))),g(\lambda_3(c_{12}(x))),\dots, g(\lambda_{r+1}(c_{12}(x))))
=
G(c_{12}(x)),
\end{align*}
using the compatibility of the $\La$ structure and $\Com$-action for the penultimate equality.
\end{proof}

\subsection{Modules of configuration space type}
Any topological or simplicial $\La$ operad $\POp$ comes with a canonical map of $\La$ operads $\POp\to \Com$.
Hence, via restriction, the right module $F_X$ above can be made a right module over any $\La$ operad.

\begin{defn}
  \begin{itemize}
    \item We say that a right $\La$ $\Com$ module $\MOp$ is of \emph{power type} if the adjunction counit $F\circ U(X)\to X$ is a weak equivalence.
    \item Let $\POp$ be a well pointed $\La$ operad and $\MOp$ a right $\POp$ module.
    Then we say that $\MOp$ is of \emph{configuration space type} if the homotopy induced right module $\Ind_{\POp}^{\Com,h}\MOp$ is of power type.
  \end{itemize}
\end{defn}
In other words, the right $\POp$ module $\MOp$ is of configuration space type if the map 
\[
 \Ind_{\POp}^{\Com,h}\MOp \to F_{(\Ind_{\POp}^{\Com,h}\MOp)(1)}
\]
is a weak equivalence. In the special case that $\POp(1)=*$ is trivial we furthermore have that $(\Ind_{\POp}^{\Com,h}\MOp)(1)=\MOp(1)$.
We shall also note that the functors $F$ and $U$ preserve all weak equivalences, and hence the adjunction counit appearing in the above definition can also be interpreted as the derived adjunction counit.

We extend the above definitions to operads and right modules in topological spaces by using the equivalence between $\Top$ and $\sSet$.
The following examples justify the definition.

\begin{prop}\label{prop:FM configuration type}
  Let $\Mfd$ be an $n$-dimensional manifold.
  \begin{itemize}
    \item The Fulton-MacPherson-Axelrod-Singer framed configuration space $\FM_\Mfd^{fr}$ as a right $\La$ module over $\FM_n^{fr}$ is of configuration space type.
    \item Suppose that $\Mfd$ is parallelized. Then the Fulton-MacPherson-Axelrod-Singer compactified configuration space $\FM_\Mfd$ as a right $\La$ module over $\FM_n$ is of configuration space type.
  \end{itemize}
\end{prop}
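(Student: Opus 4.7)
The plan is to identify the derived induced module
$\Ind_{\FM_n^{fr}}^{\Com,h}\FM_\Mfd^{fr}$ (and, in the parallelized case,
$\Ind_{\FM_n}^{\Com,h}\FM_\Mfd$) with the cofree $\Com$-module
$F_{\Fr_\Mfd}$ (respectively $F_\Mfd$), up to weak equivalence. Once such
an identification is established, being of power type is automatic, since
for any $X$ the natural map $F_X\to F_{U(F_X)}$ is the identity.

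The first step is a strictification: by Proposition \ref{prop:FMM cofibrant}
the module $\FM_\Mfd^{fr}$ is cofibrant in $\La\Mod_{\FM_n^{fr}}$, and
induction is a left Quillen functor, so
$\Ind_{\FM_n^{fr}}^{\Com,h}\FM_\Mfd^{fr}\simeq
\Ind_{\FM_n^{fr}}^{\Com}\FM_\Mfd^{fr}$ on the nose, and similarly in the
parallelized case. Next I would construct a natural collapse morphism
$\pi\colon\FM_\Mfd^{fr}\to F_{\Fr_\Mfd}$ of right $\La$
$\FM_n^{fr}$-modules, where $F_{\Fr_\Mfd}$ carries the
$\FM_n^{fr}$-module structure pulled back along $\FM_n^{fr}\to\Com$. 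On
the open interior $\Conf_\Mfd^{fr}(r) \subset \FM_\Mfd^{fr}(r)$ the map
$\pi_r$ is the canonical inclusion into $\Fr_\Mfd^r$; on the boundary
strata of the Axelrod--Singer bordification, which parametrize
infinitesimal collision patterns of the $r$ points, $\pi_r$ collapses
each stratum to the corresponding diagonal point of $\Fr_\Mfd^r$ it
sits over. $\La$-compatibility and $\FM_n^{fr}$-equivariance hold
essentially by construction (the $\FM_n^{fr}$-action on $F_{\Fr_\Mfd}$
is the trivial one from $\Com$), so by the induction--restriction
adjunction $\pi$ descends to a morphism
$\tilde\pi\colon\Ind_{\FM_n^{fr}}^{\Com}\FM_\Mfd^{fr}\to F_{\Fr_\Mfd}$
of right $\Com$-comodules.

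The main obstacle is to verify that $\tilde\pi$ is an arity-wise weak
equivalence. Stratifying $\FM_\Mfd^{fr}(r)$ according to coincidence
type, the fiber of $\pi_r$ over a point whose coincidence pattern is a
set partition $\mathcal P$ of $[r]$ is a product
$\prod_{B\in\mathcal P}\FM_n^{fr}(|B|)/(\R_{>0}\ltimes \R^n)$ of reduced
little disks configuration spaces. The derived induction to $\Com$ has
exactly the effect of contracting each such $\FM_n^{fr}$-fiber to a
point, so $\tilde\pi$ should become a weak equivalence on fibers and
hence globally. To make this precise I would represent
$\Ind_{\FM_n^{fr}}^{\Com,h}\FM_\Mfd^{fr}$ by the realization of the
two-sided bar simplicial object
$[s]\mapsto \FM_\Mfd^{fr}\circ (\FM_n^{fr})^{\circ s}\circ \Com$, and
then use the compatible stratification together with a Reedy-style
spectral sequence argument to conclude that the realization is weakly
equivalent to $\Fr_\Mfd^r$ in each arity. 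The parallelized case follows
by the identical argument with $\FM_\Mfd$, $F_\Mfd$, and $\FM_n$ in
place of the framed versions, using that a parallelization trivializes
the frame bundle as $\Fr_\Mfd\cong \Mfd\times O(n)$.
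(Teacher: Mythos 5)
Your overall strategy (use cofibrancy of $\FM_\Mfd^{fr}$ from Proposition \ref{prop:FMM cofibrant} to replace homotopy induction by strict induction, then identify the induced module with a cofree power module $F_X$) is the same as the paper's. However, there is a genuine error in your framed case: you take the target of the comparison to be $F_{\Fr_\Mfd}$, and this is wrong. Inducing along $\FM_n^{fr}\to\Com$ also coequalizes the arity-one operations $\FM_n^{fr}(1)=O(n)$, which act on $\FM_\Mfd^{fr}$ by rotating frames; hence already in arity one
$(\Ind_{\FM_n^{fr}}^{\Com}\FM_\Mfd^{fr})(1)=\Fr_\Mfd/O(n)=\Mfd$, not $\Fr_\Mfd$ (compare the remark after the definition of configuration space type: $(\Ind_{\POp}^{\Com,h}\MOp)(1)=\MOp(1)$ only when $\POp(1)=*$). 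Concretely, your collapse map $\pi\colon\FM_\Mfd^{fr}\to F_{\Fr_\Mfd}$ cannot be a morphism of right $\La$ $\FM_n^{fr}$-modules: composing with $g\in O(n)=\FM_n^{fr}(1)$ at a slot rotates the frame on the source but acts trivially on $F_{\Fr_\Mfd}$ by your definition of its module structure, so the "canonical inclusion" on the interior is not equivariant. And the claimed weak equivalence $\tilde\pi\colon\Ind_{\FM_n^{fr}}^{\Com}\FM_\Mfd^{fr}\to F_{\Fr_\Mfd}$ fails in arity one whenever $\Fr_\Mfd\not\simeq\Mfd$ (e.g.\ $\Mfd=S^2$). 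The correct target is $F_\Mfd$, obtained by composing with the bundle projection $\Fr_\Mfd\to\Mfd$; with that change the map is equivariant and the statement becomes the one the paper proves. Your parallelized case, where you do use $F_\Mfd$, does not have this problem.

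Beyond that, your verification step is heavier than necessary and, as sketched, fragile. Since you have already reduced to the strict induction by cofibrancy, there is no need to pass back to the two-sided bar object $\FM_\Mfd^{fr}\circ(\FM_n^{fr})^{\circ s}\circ\Com$ and run a stratification/spectral-sequence argument over a base that is stratified rather than fibered; the paper instead observes that the strict coequalizer $\FM_\Mfd\circ_{\FM_n}\Com$ is literally $F_\Mfd$: the boundary strata of $\FM_\Mfd(k)$ are exactly the images of operadic compositions, so the relative composite product collapses each fiber $\prod_B\FM_n(|B|)$ of $\FM_\Mfd(r)\to\Mfd^r$ onto the corresponding diagonal point. (Also, your description of these fibers as $\FM_n^{fr}(|B|)/(\R_{>0}\ltimes\R^n)$ double-counts the quotient, since $\FM_n$ is already defined as a compactification of $\Conf_{\R^n}(r)/\R_{>0}\ltimes\R^n$.) With the target corrected to $F_\Mfd$ and the identification done at the level of the strict coequalizer, your argument matches the paper's.
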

\begin{proof}
The right $\FM_n$ modules $\FM_M$ is (Reedy) cofibrant by Proposition \ref{prop:FMM cofibrant} Hence
\[
  \Ind_{\FM_n}^{\Com,h}\FM_\Mfd =\Ind_{\FM_n}^\Com\FM_\Mfd.
\]
The right hand object is easily seen to be $F_\Mfd=F_{U(\FM_M)}$.

Similarly, $\FM_\Mfd^{fr}$ is also Reedy cofibrant in the category of right $\FM_n^{fr}$ modules, and hence 
\[
  \Ind_{\FM_n^{fr}}^{\Com,h}\FM_\Mfd^{fr} =\Ind_{\FM_n^{fr}}^\Com\FM_\Mfd^{fr}.
\]
The right-hand side is again easily seen to be equal to $F_M$, and hence of power type.
\end{proof}

For later use we shall also record some properties of modules of configuration space type.

\begin{lemma}\label{lem:csc we invariant}
  Let $\POp$ be a simplicial or well pointed topological $\La$ operad,
  and $f:\MOp\to \NOp$ a weak equivalence of $\La$ modules over $\POp$. Then $\MOp$ is of configuration space type iff so is $\NOp$.
\end{lemma}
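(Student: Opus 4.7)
The plan is to use the two-out-of-three property for weak equivalences applied to a naturality square. Given the weak equivalence $f:\MOp\to \NOp$, consider the commutative square
\[
\begin{tikzcd}
\Ind_{\POp}^{\Com,h}\MOp \ar{r}\ar{d}{\Ind^{\Com,h}(f)} & F_{(\Ind_{\POp}^{\Com,h}\MOp)(1)} \ar{d}{F_{U(\Ind^{\Com,h}(f))}} \\
\Ind_{\POp}^{\Com,h}\NOp \ar{r} & F_{(\Ind_{\POp}^{\Com,h}\NOp)(1)}
\end{tikzcd}
\]
whose horizontal arrows are the adjunction counits $X\to F_{U(X)}$ applied to $X=\Ind_{\POp}^{\Com,h}\MOp$ and $X=\Ind_{\POp}^{\Com,h}\NOp$. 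These horizontal arrows are precisely the maps whose being weak equivalences defines the configuration space type condition.

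It therefore suffices to verify that both vertical arrows are weak equivalences. For the left vertical arrow, this is the fact that the homotopy induction functor $\Ind_{\POp}^{\Com,h}$, being the derived functor of the left Quillen functor $\Ind_{\POp}^{\Com}$ in the Quillen adjunction of \eqref{equ:ind res adj}, preserves weak equivalences between all objects; concretely, one picks a functorial cofibrant replacement and invokes Ken Brown's lemma for the restriction of $\Ind_{\POp}^{\Com}$ to cofibrant objects.

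For the right vertical arrow, recall that $F_X(r)=X^{\times r}$, so one needs that a weak equivalence $g:X\to Y$ in $\sSet$ (resp.\ $\Top$) induces arity-wise weak equivalences $X^{\times r}\to Y^{\times r}$. In simplicial sets this is immediate since the Cartesian product is a left Quillen bifunctor on $\sSet\times \sSet$, and in topological spaces one uses that $\Top$ is Cartesian closed (for $k$-spaces) so that finite products of weak equivalences are weak equivalences. Combined with the trivial observation that $U$ (evaluation in arity $1$) preserves weak equivalences, this shows the right vertical arrow is a weak equivalence. Applying two-out-of-three finishes the proof.

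The argument is essentially formal; the only minor point requiring attention is the verification that $\Ind_{\POp}^{\Com,h}$ preserves arbitrary weak equivalences rather than only those between cofibrant objects, but this is handled by the standard functorial cofibrant replacement together with Ken Brown's lemma.
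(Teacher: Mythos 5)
Your proposal is correct and follows essentially the same route as the paper: form the commutative square whose horizontal arrows are the adjunction counits defining the configuration space type condition, check that the vertical arrows (induced by $f$ via homotopy induction on the left and via $F\circ U$ on the right) are weak equivalences, and conclude by two-out-of-three. The paper phrases the left vertical arrow via explicit cofibrant resolutions and a lift of $f$ rather than a functorial cofibrant replacement, but this is the same argument.
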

\begin{proof}
To compute the homotopy induced module, we pick cofibrant resolutions $\hat \MOp \to \MOp$ and $\hat \NOp\to \NOp$, and a lift of $f$, $\hat f: \hat \MOp\to \hat \NOp$.
We then have a commutative diagram 
\[
\begin{tikzcd}
  \Ind_{\POp}^\Com \hat \MOp \ar{r}\ar{d}{\sim}& F_{\MOp(1)} \ar{d}{\sim}
  \\
  \Ind_{\POp}^\Com \hat \NOp \ar{r}& F_{\NOp(1)}
\end{tikzcd}
\]
in which the vertical arrows are weak equivalences. Hence the upper horizontal arrow is a weak equivalence iff so is the lower.
\end{proof}

\begin{lemma}\label{lem:csc op change invariant}
Let $g:\POp\to \QOp$ be a morphism of (simplicial or well pointed topological) $\La$ operads, and let $\MOp$ be a $\La$ $\POp$ module.
Then $\MOp$ is of configuration space type iff so is $\Ind_{\POp}^{\QOp, h}\MOp$.
\end{lemma}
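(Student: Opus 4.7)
The strategy is to reduce the claim to the transitivity of homotopy induction along the factorization $\POp \to \QOp \to \Com$ of the canonical augmentation, and then exploit the fact that configuration space type is an entirely homotopy-theoretic condition on $\Ind_{\POp}^{\Com,h}\MOp$.

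First I would establish a natural weak equivalence
\[
  \Ind_{\POp}^{\Com, h} \MOp \;\simeq\; \Ind_{\QOp}^{\Com, h} \bigl(\Ind_{\POp}^{\QOp, h} \MOp\bigr)
\]
for every $\La$ $\POp$-module $\MOp$. At the underived level this is immediate from uniqueness of adjoints: since $\Res_{\POp}^{\Com} = \Res_{\QOp}^{\POp} \circ \Res_{\Com}^{\QOp}$, the corresponding left adjoints satisfy $\Ind_{\POp}^{\Com} \cong \Ind_{\QOp}^{\Com} \circ \Ind_{\POp}^{\QOp}$. By the Quillen pair \eqref{equ:ind res adj} each individual induction is left Quillen, hence their composition is left Quillen, and the total left derived functor of a composition of left Quillen functors is the composition of total left derived functors, which yields the displayed weak equivalence.

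Next I would translate the transitivity into the two conditions. By definition $\MOp$ is of configuration space type iff the natural map
\[
\Ind_{\POp}^{\Com,h}\MOp \to F_{U(\Ind_{\POp}^{\Com,h}\MOp)}
\]
is a weak equivalence, while $\Ind_{\POp}^{\QOp,h}\MOp$ is of configuration space type iff the analogous map
\[
\Ind_{\QOp}^{\Com,h}\bigl(\Ind_{\POp}^{\QOp,h}\MOp\bigr) \to F_{U(\Ind_{\QOp}^{\Com,h}(\Ind_{\POp}^{\QOp,h}\MOp))}
\]
is a weak equivalence. The functor $F\circ U$ and the relevant natural map depend only on the underlying $\La$ $\Com$-module structure, and both $F$ and $U$ preserve arity-wise weak equivalences (since $F_X(r)=X^{\times r}$ and products preserve weak equivalences in $\sSet$ or $\Top$). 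Hence the transitivity equivalence from the first step fits into a commutative naturality square relating the two maps above, in which three of the four sides are weak equivalences. The two-out-of-three property then yields the equivalence of the two configuration-space-type conditions.

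The main point requiring care is the naturality of the transitivity equivalence with respect to the unit of the adjunction $U\dashv F$. This reduces to a formal diagram chase: the adjunction $U\dashv F$ is oblivious to the $\POp$- or $\QOp$-action and only sees the underlying $\Com$-module structure, which is preserved canonically under induction and the various restrictions. If convenient, Lemma \ref{lem:csc we invariant} permits replacing $\Ind_{\POp}^{\QOp,h}\MOp$ by any weakly equivalent (and, say, cofibrant) representative during this verification without affecting the conclusion, which should streamline the check.
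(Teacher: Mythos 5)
Your proposal is correct and follows essentially the same route as the paper: the paper also reduces to a cofibrant $\MOp$ (via Lemma \ref{lem:csc we invariant}), notes that $\Ind_{\POp}^{\QOp}\MOp$ is then cofibrant, and invokes the transitivity isomorphism $\Ind_{\POp}^{\Com}\MOp \cong \Ind_{\QOp}^{\Com}\Ind_{\POp}^{\QOp}\MOp$ to conclude. The only cosmetic difference is that the paper works with the strict isomorphism on cofibrant objects, whereas you phrase the same fact as compatibility of total left derived functors with composition and then appeal to weak-equivalence invariance of the power-type condition.
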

\begin{proof}
By using Lemma \ref{lem:csc we invariant} we may replace $\MOp$ by a cofibrant $\POp$-module, or just assume that $\MOp$ is cofibrant from the start.
Then by adjunction $\Ind_{\POp}^{\QOp}\MOp$ is also cofibrant.
We furthermore have 
\[
  \Ind_{\POp}^{\Com}\MOp \cong \Ind_{\QOp}^{\Com}\Ind_{\POp}^{\QOp}\MOp,
\]
and hence if one of these modules is of power type so is the other.
\end{proof}

\subsection{Construction: A free $\Com^c$-comodule}
\label{sec:Fc construction}
Let $A$ be a (unital) dg commutative algebra.
Then we define a right cooperadic $\Com^c$ Hopf $\La$-comodule $\Fc_A$ such that 
\[
\Fc_A(r) =  A^{\otimes r}
\] 
for $r=1,2,\dots$. The $\La$-operations
\[
  \Fc_A(r) \to \Fc_A(r+s)
\]
are obtained by tensoring the identity map $r$ times with the unit ($s$ times). In particular, 
\[
\mu_j^*(a_1\otimes\cdots \otimes a_{r-1})
=
(a_1\otimes\cdots \otimes 1\otimes \cdots\otimes a_{r-1}),
\]
with the unit $1$ at the $j$-th position of the tensor product.
The right $\Com^c$-coaction is given by the algebra product in a natural way.
Furthermore the construction $F_A^c$ is obviously functorial in $A$.

\begin{lemma}\label{lem:F Com adjunction}
The functor $\Fc: A\to F_A^c$ fits into an adjunction
\[
  \Fc: \dgcAlg \leftrightarrows \dgLaHModc_{\Com^c}: \Uc,
\]
with the right adjoint $\Uc$ being the restriction to the arity one component, $\Uc(\MOp) = \MOp(1)$.
\end{lemma}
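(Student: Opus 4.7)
The plan is to construct the natural bijection underlying the adjunction by hand, in close parallel to the proof of the preceding lemma. In one direction, a morphism $G: \Fc_A \to \MOp$ of $\La$ Hopf $\Com^c$-comodules restricts to $\Uc(G) = G(1): A \to \MOp(1)$, which is a dg commutative algebra morphism by assumption. The substantive step is to invert this assignment: given $g: A \to \MOp(1)$, produce a canonical morphism $G = G_g: \Fc_A \to \MOp$ with $\Uc(G) = g$, and show it is unique.

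To build $G$, I first observe that the $\La$-cocomposition on $\MOp$ provides, via iteration of the $\mu_k^c$, a system of dg commutative algebra morphisms $\eta^r_j: \MOp(1) \to \MOp(r)$ indexed by $r \geq 1$ and $j \in \{1,\ldots,r\}$, each corresponding to the monotone injection $[1] \hookrightarrow [r]$ with image $\{j\}$. I then set
\[
G(r): A^{\otimes r} \to \MOp(r), \quad a_1 \otimes \cdots \otimes a_r \mapsto \prod_{j=1}^r \eta^r_j(g(a_j)),
\]
the product being taken in the commutative algebra $\MOp(r)$. This is a well defined morphism of dg commutative algebras because $A^{\otimes r}$ is the $r$-fold coproduct of $A$ in $\dgcAlg$ along the natural inclusions $i_j: A \to A^{\otimes r}$, and the prescription $G(r) \circ i_j = \eta^r_j \circ g$ is a morphism of algebras for each $j$.

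Next I verify that $G = (G(r))_r$ is a morphism of $\La$ Hopf $\Com^c$-comodules. Symmetric group equivariance is built into the definition. Compatibility with the $\La$-operations $\mu^c: \MOp(r) \to \MOp(r+1)$ reduces, via the uniqueness part of the universal property of the algebra coproduct $A^{\otimes r}$, to the fact that the insertion maps $\eta^r_j$ themselves satisfy the expected composition rules under $\mu^c$, which is immediate from the $\La$-axioms for $\MOp$. Compatibility with the $\Com^c$-coaction is where I expect the most work: using that $\Com^c(s) = \Q$ in every arity, the coaction is encoded by single morphisms $\MOp(s_1+\cdots+s_r) \to \MOp(r)$, and on $\Fc_A$ these are given by multiplying consecutive blocks of tensor factors using the product on $A$. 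The desired compatibility then amounts to matching two dg algebra morphisms out of $A^{\otimes (s_1+\cdots+s_r)}$, which can be verified on the algebra generators $i_k(A)$ using only that $g$ is multiplicative and that the cocomposition morphisms of $\MOp$ are themselves algebra maps (part of the Hopf assumption).

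The uniqueness of $G_g$, and thus the naturality of the bijection, is automatic: any $\La$ Hopf $\Com^c$-comodule morphism $\Fc_A \to \MOp$ is arity-wise an algebra map, hence determined by its restriction to the algebra generators $i_j(A) \subset A^{\otimes r}$, and $\La$-compatibility forces $G(r) \circ i_j = \eta^r_j \circ G(1) = \eta^r_j \circ g$. The step I anticipate as the main nuisance is the bookkeeping in the $\Com^c$-coaction compatibility, tracking the interaction of the insertion morphisms $\eta^r_j$ with the coaction; but because $\Com^c$ is one-dimensional in every arity, the apparently intricate diagrams collapse and the verification is essentially a symbolic rewriting exercise.
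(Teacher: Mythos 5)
Your proposal is correct and follows essentially the same route as the paper's proof: the morphism $G$ is defined on $a_1\otimes\cdots\otimes a_r$ by the identical formula $\prod_j \lambda_j(g(a_j))$ (your $\eta^r_j$ are the paper's $\lambda_j$), uniqueness follows from the fact that $\Fc_A(r)$ is generated by $A$ under the $\La$-operations and the product, and the remaining check is the compatibility with the $\Com^c$-coaction, which the paper also reduces to the generating coaction and carries out by the same interaction of the insertion maps with the coaction that you flag as the main bookkeeping step.
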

\begin{proof}
  First it is clear that $\Uc\circ \Fc=\mathit{id}_{\dgcAlg}$.
Let $\MOp$ be a right $\Com^c$ Hopf $\La$-comodule and $A$ be a dg commutative algebra.
Let $g: A\to \Uc(\MOp)$ be a morphism of dg commutative algebras.
Then we will show that there is a unique morphism $G:\Fc_A\to \MOp$ of $\Com^c$-comodules such that $\Uc(G)=g$. This then shows the claim.
To construct $G$ consider the $\Lambda$-operations 
\[
  \lambda_j: \MOp(1) \to \MOp(r)
\]
coming from the map of sets $[1]\to [j]$ such that $1\mapsto j$.
Then we define $G$ on $a_1\otimes \cdots \otimes a_r \in \Fc_A(r)$ such that 
\begin{equation}\label{equ:G def}
G(a_1\otimes \cdots \otimes a_r) 
:= \lambda_1(g(a_1))\cdots \lambda_r(g(a_r)).
\end{equation}
It is clear that $G$ is a morphism of dg commutative algebras since so are the $\lambda_j$ and $g$, and it is also easy to check that $G$ respects the $\La$-structure.
Hence $G$ is a map of $\La$ Hopf sequences, and already unique as such, because $\Fc_A(r)$ is generated by $\Fc_A(1)=A$ via the $\La$ operations and the product.
We hence only need to check that $G$ also respects the $\Com^c$-coaction.
To this end it is enough to verify commutativity of the diagram 
\[
  \begin{tikzcd}
    \Fc_A(r+1) \ar{r}{G}  \ar{d}{c_{12}} & \MOp(r+1) \ar{d}{c_{12}}\\
    \Fc_A(r) \ar{r}{G} & \MOp(r)  
  \end{tikzcd},
\]
where the vertical arrows $c_{12}$ are the $\Com^c$-coaction from the map $[r+1]\to [r]$ sending $j\mapsto \min(j-1,1)$. 
We compute 
\begin{align*}
  G(c_{12}(a_1\otimes \cdots \otimes a_{r+1}))
  &=
  G(a_1a_2\otimes \cdots \otimes a_{r+1})
  =
  \lambda_1(g(a_1a_2))\lambda_2(g(a_3))\cdots \lambda_r(g(a_{r+1}))
  \\&=
  \lambda_1(g(a_1))\lambda_1(g(a_2))\lambda_2(g(a_3))\cdots \lambda_r(g(a_{r+1}))
  \\&=
  c_{12}(\lambda_1(g(a_1))\lambda_2(g(a_2))\lambda_3(g(a_3))\cdots \lambda_{r+1}(g(a_{r+1})))
  \\&=
  c_{12}(G(a_1\otimes \cdots \otimes a_{r+1})),
\end{align*}
where we use the compatibility of the $\Com^c$ ocoaction and the $\La$ structure for the fourth equality.
\end{proof}

\subsection{Comodules of configuration space type}

\begin{defn}\label{def:config space type c}
  \begin{itemize}
    \item We say that a right $\La$ $\Com$ Hopf comodule $\MOp$ is of \emph{power type} if the adjunction unit $X\to \Fc\circ \Uc(X)$ is a weak equivalence.
    \item Let $\COp$ be a $\La$ Hopf cooperad and $\MOp$ a right $\COp$ comodule.
    Then we say that $\MOp$ is of \emph{configuration space type} if the homotopy coinduced right module $\coInd_{\POp}^{\Com,h}\MOp$ is of power type.
  \end{itemize}
\end{defn}
Again we note that the functors $\Fc$ and $\Uc$ preserve weak equivalences, so that the adjunction unit can also be seen as the derived adjuncition unit.

\begin{prop}
  If $\MOp$ is a simplicial or topological cofibrant right module of configuration space type, then $\Omega_\sharp\MOp$ is a right comodule of configuration space type.
\end{prop}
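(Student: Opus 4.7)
The plan is to transfer the configuration space type property from $\MOp$ to $\Omega_\sharp\MOp$ by combining the commutation of $\Omega_\sharp$ with (co)induction from Proposition~\ref{prop:coind rationalization commutation} with a compatibility between $\Omega_\sharp$ and the ``cofree'' constructions $F_{(-)}$ and $\Fc_{(-)}$.

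First I would unpack the hypothesis: since $\MOp$ is cofibrant, its homotopy induction agrees with the strict induction, so the configuration space type condition yields a weak equivalence $\Ind_\POp^\Com\MOp\xrightarrow{\sim}F_{\MOp(1)}$ of right $\La$ $\Com$-modules. Applying $\Omega_\sharp$ (via Ken Brown's lemma, together with the cofibrancy of $\Ind_\POp^\Com\MOp$ as a left adjoint image of a cofibrant object, and of a cofibrant replacement of $F_{\MOp(1)}$) produces a quasi-isomorphism of right $\La$ Hopf $\Com^c$-comodules
\[
\Omega_\sharp\Ind_\POp^\Com\MOp\xrightarrow{\sim}\Omega_\sharp F_{\MOp(1)},
\]
where I use that $\Omega_\sharp\Com\simeq \Com^c$. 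Proposition~\ref{prop:coind rationalization commutation} identifies the left-hand side canonically with $\coInd_{\Omega_\sharp\POp}^{\Com^c}\Omega_\sharp\MOp$.

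Next I would establish an identification $\Omega_\sharp F_X\simeq \Fc_{\Omega X}$ of right $\La$ Hopf $\Com^c$-comodules. Arity-wise, the natural Künneth quasi-isomorphism $\Omega(X)^{\otimes r}\to \Omega(X^{\times r})=\Omega F_X(r)$ (for $X$ of suitable finite type) is manifestly compatible with the $\La$-operations (built from inserting the unit) and with the $\Com^c$-coaction (built from the commutative product of $\Omega(X)$ and the diagonal on $X$). Combined with the arity-wise quasi-isomorphism $\Omega_\sharp F_X(r)\to \Omega F_X(r)$ of Theorem~\ref{thm:FWMod main}, applied to the cofibrant right $\Com$-module $F_X$, this yields a zig-zag of weak equivalences $\Fc_{\Omega X}\simeq \Omega_\sharp F_X$ in $\dgLaHModc_{\Com^c}$.

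Putting the two paragraphs together gives a zig-zag of weak equivalences $\coInd_{\Omega_\sharp\POp}^{\Com^c}\Omega_\sharp\MOp\simeq \Fc_{\Omega\MOp(1)}=\Fc\circ\Uc(\Omega_\sharp\MOp)$. Since any object of the form $\Fc_A$ trivially satisfies the power type condition (its adjunction unit $\Fc_A\to \Fc\,\Uc(\Fc_A)$ is the identity by Lemma~\ref{lem:F Com adjunction}), this will establish that $\Omega_\sharp\MOp$ is of configuration space type, \emph{provided} the strict coinduction on the left actually computes the derived coinduction. The hard part will be exactly this last comparison, together with the identification $\Omega_\sharp F_X\simeq \Fc_{\Omega X}$: for the former one needs to argue that $\Omega_\sharp$ of a cofibrant module is ``fibrant enough'' with respect to the corestriction functor along $\Omega_\sharp\POp\to\Com^c$, using that the fibrations in $\dgLaHModc_{\Omega_\sharp\POp}$ are created by the forgetful functor to plain dg comodules (Theorem~\ref{thm:modc model cat}); for the latter, one must check that the Künneth map and the comparison $\Omega_\sharp\to\Omega$ are strictly compatible with the full $\La$ and $\Com^c$-coaction structures, which is technical but routine once one writes $\Omega_\sharp F_X$ via the equalizer presentation used to define $\Omega_\sharp$.
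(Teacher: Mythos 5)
Your proposal is correct and follows essentially the same route as the paper: the paper's entire proof is the one-line observation that, by Proposition \ref{prop:coind rationalization commutation}, $\Omega_\sharp$ intertwines induction and coinduction on cofibrant objects. The extra steps you supply (the identification $\Omega_\sharp F_X\simeq \Fc_{\Omega(X)}$, and the fibrancy of $\Omega_\sharp\MOp$ needed to replace derived by strict coinduction) are precisely the bookkeeping the paper leaves implicit.
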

\begin{proof}
This follows since by Proposition \ref{prop:coind rationalization commutation} the functor $\Omega_{\sharp}$ intertwines the induction and coinduction functors, on cofibrant objects.
\end{proof}

One also has dual versions to Lemmas \ref{lem:csc we invariant} and \ref{lem:csc op change invariant}, that are proven in the same manner.
\begin{lemma}\label{lem:comodule csc basic props}
  \begin{itemize}
\item If $\MOp\to \NOp$ is a weak equivalence of dg $\La$ (Hopf) $\COp$ comodules, then $\MOp$ is of configuration space type iff so is $\NOp$.
\item If $\MOp$ is dg $\La$ (Hopf) $\DOp$ comodule and $\COp\to \DOp$ is a morphism of dg $\La$ Hopf cooperads, then $\coInd_{\DOp}^{\COp}\MOp$ is of configuration space type iff so is $\MOp$.
  \end{itemize}
\end{lemma}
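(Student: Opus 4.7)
The plan is to dualize the proofs of Lemmas \ref{lem:csc we invariant} and \ref{lem:csc op change invariant}, making the appropriate adjustments caused by the fact that $\coInd$ is a right Quillen functor (rather than $\Ind$, which was a left Quillen functor). In particular, where those lemmas needed cofibrant replacements in the source of $\Ind$, here I will need fibrant replacements in the source of $\coInd$, and I will use Proposition \ref{prop:res ind adjunction} to do so.

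For the first bullet point, I would start by choosing fibrant resolutions $\MOp \xrightarrow{\sim} \hat\MOp$ and $\NOp \xrightarrow{\sim} \hat\NOp$ in $\dgLaHModc_{\COp}$, and lift the weak equivalence to a morphism $\hat f : \hat\MOp \to \hat\NOp$ between fibrant objects. Since $\coInd_{\COp}^{\Com}$ is a right Quillen functor by Proposition \ref{prop:res ind adjunction}, it preserves weak equivalences between fibrant objects, so $\coInd_{\COp}^{\Com}\hat f$ is a quasi-isomorphism. Together with the commutative square
\[
\begin{tikzcd}
\coInd_{\COp}^{\Com}\hat \MOp \ar{r}\ar{d}{\sim}& \Fc_{\MOp(1)} \ar{d}{\sim}
\\
\coInd_{\COp}^{\Com}\hat \NOp \ar{r}& \Fc_{\NOp(1)}
\end{tikzcd}
\]
in which the right vertical arrow is a weak equivalence (because $\Uc$ preserves weak equivalences on $\COp$-comodules and $\Fc$ preserves quasi-isomorphisms of dgca's by Lemma \ref{lem:F Com adjunction}), one concludes that the upper horizontal arrow is a weak equivalence if and only if the lower one is.

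For the second bullet point, by the first bullet I may replace $\MOp$ by a fibrant $\DOp$-comodule resolution, so assume $\MOp$ is fibrant in $\dgLaHModc_{\DOp}$. Then $\coInd_{\DOp}^{\COp}\MOp$ is fibrant in $\dgLaHModc_{\COp}$, since $\coInd_{\DOp}^{\COp}$ is a right Quillen functor. The key formal observation is that the composition $\coRes_{\DOp}^{\Com} = \coRes_{\COp}^{\Com} \circ \coRes_{\DOp}^{\COp}$ of corestriction functors (using the canonical map $\Com^c \to \COp \to \DOp$ coming from the unit inclusions) implies by uniqueness of right adjoints the factorization $\coInd_{\DOp}^{\Com} \cong \coInd_{\COp}^{\Com} \circ \coInd_{\DOp}^{\COp}$; since both functors in the composition preserve the class of fibrant objects, this equality also holds on the derived level. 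Moreover, coinduction does not alter the arity-$1$ component: $(\coInd_{\DOp}^{\COp}\MOp)(1) = \MOp(1)$, as follows from the equalizer formula \eqref{equ:coind def} and the fact that reduced cooperads are trivial in arity one. Consequently, the adjunction unit map for $\coInd_{\DOp}^{\COp}\MOp$ over $\COp$,
\[
\coInd_{\COp}^{\Com,h}\coInd_{\DOp}^{\COp}\MOp \longrightarrow \Fc_{(\coInd_{\DOp}^{\COp}\MOp)(1)} = \Fc_{\MOp(1)},
\]
coincides with the adjunction unit map $\coInd_{\DOp}^{\Com,h}\MOp \to \Fc_{\MOp(1)}$ for $\MOp$ over $\DOp$, and the two configuration-space-type conditions are literally the same.

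The main obstacle I expect is the bookkeeping for Part 2: verifying that coinduction genuinely preserves the arity-$1$ component (to identify the targets $\Fc_{\MOp(1)}$ on both sides) and that the composition-of-right-adjoints identity holds at the derived level. Both facts follow from the explicit equalizer description \eqref{equ:coind def} and the Quillen property established in Proposition \ref{prop:res ind adjunction}, so no further technology beyond what is already in the excerpt is needed.
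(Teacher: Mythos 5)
Your proposal is correct and follows essentially the same route as the paper, which disposes of this lemma in one line by declaring it the dual of Lemmas \ref{lem:csc we invariant} and \ref{lem:csc op change invariant}, "proven in the same manner": you dualize exactly those two proofs, correctly swapping cofibrant replacements for fibrant ones because $\coInd$ is the right adjoint in the Quillen pair of Proposition \ref{prop:res ind adjunction}, and using the composition of coinduction functors in place of the composition of induction functors. The extra bookkeeping you flag for the second bullet is harmless but not strictly needed, since the power-type condition is intrinsic to the coinduced $\Com^c$-comodule, so identifying $\coInd_{\DOp}^{\Com^c}\MOp$ with $\coInd_{\COp}^{\Com^c}\coInd_{\DOp}^{\COp}\MOp$ already identifies the two conditions.
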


Combining the above with Proposition \ref{prop:FM configuration type} we obtain:
\begin{cor}\label{cor:FM Hopf csc}
  Let $\Mfd$ be an $n$-dimensional manifold.
 Then any (dg Hopf) cooperadic comodule model for $\FM_\Mfd^{fr}$ considered as a right $\La$ $\FM_n^{fr}$-module is of configuration space type. If $\Mfd$ is parallelized then the same is true for any (dg Hopf) cooperadic comodule model of $\FM_\Mfd$, considered as a right $\La$ $\FM_n$-module.
\end{cor}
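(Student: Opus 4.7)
The plan is to reduce to the canonical rational model $\Omega_\sharp(\FM_\Mfd^{fr})$ by combining Proposition \ref{prop:FM configuration type} with the preceding Proposition (which asserts that $\Omega_\sharp$ sends cofibrant modules of configuration space type to comodules of configuration space type), and then to propagate the property along a zigzag of quasi-isomorphisms using Lemma \ref{lem:comodule csc basic props} to reach an arbitrary model.

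First I would observe that by Proposition \ref{prop:FMM cofibrant}, the right $\La$ $\FM_n^{fr}$-module $\FM_\Mfd^{fr}$ is Reedy cofibrant, and by Proposition \ref{prop:FM configuration type} it is of configuration space type. The preceding Proposition then immediately yields that $\Omega_\sharp(\FM_\Mfd^{fr})$ is a right $\La$ Hopf $\Omega_\sharp(\FM_n^{fr})$-comodule of configuration space type. This establishes the Corollary for the ``tautological'' model provided by $\Omega_\sharp$.

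Second, for an arbitrary dg Hopf cooperadic comodule model $(\COp,\MOp)$ of $(\FM_n^{fr},\FM_\Mfd^{fr})$, by the definition recalled in Problem~1 of the introduction there is a zigzag of quasi-isomorphisms of pairs connecting $(\COp,\MOp)$ to $(\Omega_\sharp(\FM_n^{fr}), \Omega_\sharp(\FM_\Mfd^{fr}))$. I would analyze each elementary step of such a zigzag: either the cooperad is fixed and one has a quasi-isomorphism of comodules, in which case Lemma \ref{lem:comodule csc basic props}(a) transports configuration space type across it, or one has a morphism of pairs whose cooperad component $\COp_1 \to \COp_2$ is a quasi-isomorphism. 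In the latter case Proposition \ref{prop:res ind adjunction} guarantees that the corresponding corestriction/coinduction adjunction is a Quillen equivalence, and Lemma \ref{lem:comodule csc basic props}(b) then transfers configuration space type in both directions. Walking along the zigzag, configuration space type is inherited by $\MOp$. The parallelized case for $\FM_\Mfd$ over $\FM_n$ is strictly analogous, using the corresponding clauses of Propositions \ref{prop:FMM cofibrant} and \ref{prop:FM configuration type}.

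The only mildly delicate point is the second step: Lemma \ref{lem:comodule csc basic props}(b) is formulated for a single morphism of cooperads, so one must ensure that at each ``zig'' and ``zag'' the intermediate comodule is genuinely related to its neighbour via coinduction/corestriction up to weak equivalence. Since the lemma is an \emph{iff} statement this is automatic, and it is precisely the content of being a model, so no further work is required.
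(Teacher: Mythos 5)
Your proposal is correct and follows essentially the same route as the paper, which derives the corollary by combining Proposition \ref{prop:FM configuration type}, the fact that $\Omega_\sharp$ carries cofibrant modules of configuration space type to comodules of configuration space type, and Lemma \ref{lem:comodule csc basic props} to transport the property along the defining zigzag of quasi-isomorphisms. Your write-up merely makes explicit the zigzag bookkeeping that the paper leaves implicit.
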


\subsection{Freeness as $\COp$-comodule}
Let $\COp$ be a dg $\La$ Hopf cooperad. The algebra units then endow it with a canonical map of  Hopf $\La$ cooperads
\[
  \Com^c \to \COp,
\]
or in other words, $\Com^c$ is an initial object in the category of Hopf $\La$ cooperads.
This means in particular that any right cooperadic $\Com^c$ (Hopf $\La$-)comodule is canonically a $\COp$ comodule by corestriction.
In particular, the $\Com^c$-comodule $\Fc_A$ of the previous subsection is a $\COp$-comodule for any $\COp$.

Also note that as before we have a forgetful functor 
\[
\Uc : \dgLaHModc_{\COp} \to \dgcAlg
\]
such that $\Uc(\MOp)=\MOp(1)$.

Then we can strengthen Lemma \ref{lem:F Com adjunction} as follows.
\begin{prop}\label{prop:freeness as C mod}
Suppose that $\COp$ is a reduced $\La$ Hopf cooperad, i.e., $\COp(1)\cong \Q$.
Then the functor $U$ fits into an adjunction
\[
 \Fc : \dgcAlg \leftrightarrows \dgLaHModc_{\COp} : \Uc
\]
with the left adjoint $\Fc$ being the functor defined in section \ref{sec:Fc construction}.
\end{prop}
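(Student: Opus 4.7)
The approach is to show that for each dgca morphism $g\colon A\to \Uc(\MOp)=\MOp(1)$, there is a unique morphism $G\colon \Fc_A\to \MOp$ in $\dgLaHModc_{\COp}$ whose arity-one component is $g$. Since the construction of $\Fc_A$ and of any such $G$ depends only on the $\La$-Hopf-sequence structure (and not on which cooperad corestricts to act on $\Fc_A$), the explicit formula from the proof of Lemma \ref{lem:F Com adjunction},
\[
G(a_1\otimes\cdots\otimes a_r) := \lambda_1(g(a_1))\cdots\lambda_r(g(a_r)),
\]
already supplies a well-defined morphism of $\La$ Hopf sequences that is uniquely determined by $g$. The entire content of the proposition therefore reduces to verifying that this $G$ is compatible with the $\COp$-coactions.

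To check $\rho^{\COp}_{\MOp}\circ G = (G\otimes \id)\circ \rho^{\COp}_{\Fc_A}$ for every cocomposition, observe that both sides are morphisms of graded commutative algebras: the $\COp$-coactions are algebra maps by the Hopf cooperad axioms and $G$ is an algebra map by construction. It thus suffices to check the equality on the algebra generators $\lambda_j(a)\in \Fc_A(r)$ with $a\in A$ and $1\le j\le r$.

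On these generators, the compatibility reduces to the following identity, valid in any $\La$ Hopf $\COp$-comodule $\NOp$ under the reducedness assumption: for $x\in \NOp(1)$ and any cocomposition
\[
\rho\colon \NOp(r)\to \NOp(k)\otimes \COp(r_1)\otimes\cdots\otimes \COp(r_k),
\]
one has $\rho(\lambda_j(x)) = \lambda_{i(j)}(x)\otimes 1_{\COp(r_1)}\otimes\cdots\otimes 1_{\COp(r_k)}$, where $i(j)$ denotes the block index containing $j$. This identity follows from coassociativity of the cocompositions in the $\La$-extended cooperad $\COp_*$, using $\COp(1)=\Q$ (which trivialises the arity-one coaction) together with $\COp_*(0)=\Q$ (which realises the $\La$-operations $\mu_j^c$ as cocompositions with unit factors inserted). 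Specialising this identity to $\NOp=\Fc_A$ with $x=a$ and to $\NOp=\MOp$ with $x=g(a)$, and using that $G$ intertwines the $\La$-operations and satisfies $G(\lambda_j(a))=\lambda_j(g(a))$, both sides of the coaction compatibility square produce the same element.

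The main obstacle is making the displayed identity for $\rho(\lambda_j(x))$ precise; this requires carefully unwinding how the $\La$-structure is encoded as part of the $\COp_*$-cocompositions and exploiting reducedness to collapse the relevant cooperad factors to copies of $\Q$. Once this identity is in hand, the reduction in the preceding paragraph to algebra generators, combined with the already established $\La$-Hopf-sequence statement from Lemma \ref{lem:F Com adjunction}, completes the proof.
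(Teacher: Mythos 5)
Your proposal is correct and follows essentially the same route as the paper: uniqueness and the formula for $G$ are imported from Lemma \ref{lem:F Com adjunction}, compatibility with the $\COp$-coaction is reduced to the algebra generators $\lambda_j(a)$ via multiplicativity of the coactions, and the key identity $c(\lambda_j(g(a_j)))=\lambda_{i(j)}(g(a_j))\otimes 1$ is established exactly as in the paper's case analysis ($j$ inside or outside the collapsed block), with reducedness $\COp(1)\cong\Q$ used to trivialise the arity-one factor.
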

\begin{proof}
We may follow the lines of the proof of Lemma \ref{lem:F Com adjunction}.
Again, given a map of dgca $g:A\to \Uc(\MOp)$, with $\MOp$ a $\COp$-comodule, we construct a map of $\COp$-comodules $G:F_A\to \MOp$ such that $\Uc(G)=g$.
This map is automatically unique, since as shown in the proof of Lemma \ref{lem:F Com adjunction} is is unique even as a map of $\La$ Hopf sequences.
Furthermore, it is necessarily given by \eqref{equ:G def}, but we have to verify that this map $G$ respects the full $\COp$-coaction.
To this end it suffices to check commutativity of the diagrams
\begin{equation}\label{equ:freeness C proof}
  \begin{tikzcd}
    \Fc_A(r+s-1) \ar{r}{G}  \ar{d}{c} & \MOp(r+s-1) \ar{d}{c}\\
    \Fc_A(r)\otimes \COp(s) \ar{r}{G\otimes \mathit{id_{\COp}}} & \MOp(r) \otimes \COp(s),
  \end{tikzcd},
\end{equation}
where now the vertical arrows are the cooperadic coactions arising from the tree
\[
\begin{tikzpicture}
\coordinate (v) at (0,1);
\coordinate (w) at (-.5,0);
\node (e1) at (-1.2,-1) {$\scriptstyle 1$};
\node (ed) at (-.5,-1) {$\scriptstyle \cdots$};
\node (es) at (.2,-1) {$\scriptstyle s$};
\node (es1) at (.9,-.3) {$\scriptstyle s+1$};
\node (edd) at (1.6,-.3) {$\scriptstyle \cdots$};
\node (esr) at (2.3,-.3) {$\scriptstyle r+s-1$};
\draw (v) edge +(0,.7) edge (w) edge (es1) edge (edd) edge (esr)
(w) edge (e1) edge (es) edge (ed)
;
\end{tikzpicture}
\]
Note that by construction of the right $\COp$-coaction on $\Fc_A$ by corestriction the left-hand vertical arrow in \eqref{equ:freeness C proof} factors through $\Fc_A(r)\otimes \Com^c(s)$.
We compute
\begin{align}
  (G\otimes \mathit{id}_{\COp})(c(a_1\otimes \cdots \otimes a_{r+s-1}))
  &=
  G(a_1\cdots a_s\otimes a_{s+1}\otimes \cdots \otimes a_{r+s-1}) \otimes 1
  \\&=
  \lambda_1(g(a_1\cdots a_s))\lambda_2(g(a_{s+1}))\cdots \lambda_r(g(a_{r+s-1}))\otimes 1
  \\&=
  \lambda_1(g(a_1))\cdots \lambda_1(g(a_s))\lambda_2(g(a_{s+1}))\cdots \lambda_r(g(a_{r+s-1}))\otimes 1 \label{equ:padj1}
\end{align}

On the other hand
\begin{align}
  c(G(a_1\otimes \cdots \otimes a_{r+s-1})) 
  &=
  c(\lambda_1(g(a_1))\cdots \lambda_{r+s-1}(g(a_{r+s-1})))
  \\&=
  c(\lambda_1(g(a_1))) \cdots c(\lambda_{r+s-1}(g(a_{r+s-1}))),\label{equ:padj2}
\end{align}
using the compatibility of the cooperadic coaction with the commutative algebra structure.
We furthermore need to use the compatibility of the cooperadic coaction with the $\La$-structure.
Consider first the case that $j>s$. Then the compatibility is expressed as the commutativity of the diagram
\[
  \begin{tikzcd}
    \MOp(1) \ar{r}{\lambda_j}  \ar{d}{\mathit{id}} & \MOp(r+s-1) \ar{d}{c}\\
    \MOp(1) \ar{r}{\lambda_{j-s+1}\otimes 1} & \MOp(r) \otimes \COp(s)
  \end{tikzcd}.
\]
Here we write $1$ for the inclusion of the unit in $\COp(s)$. In particular
\[
c(\lambda_j(g(a_j))) = \lambda_{j-s+1}(g(a_j))) \otimes 1.
\]
In the case that $j\leq s$ the compatibility reads
\[
  \begin{tikzcd}
    \MOp(1) \ar{r}{\lambda_j}  \ar{d} & \MOp(r+s-1) \ar{d}{c}\\
    \MOp(1) \otimes \COp(1) \ar{r}{\lambda_1\otimes \tilde \lambda_j} & \MOp(r) \otimes \COp(s)
  \end{tikzcd}.
\]
where $\tilde \lambda_j :\COp(1) \to \COp(s)$ is the appropriate $\La$-operation on $\COp$.
Now since $\COp$ is reduced, i.e., that $\COp(1)=\Q$, the factor $\COp(1)$ on the lower left can be omitted, the left-hand vertical arrow is the identity, and the operation $\tilde \lambda_j$ is again given by the inclusion of the unit. Hence
\[
c(\lambda_j(g(a_j))) = \lambda_{1}(g(a_j))) \otimes 1.
\]
Plugging these formulas into \eqref{equ:padj2} we see that the end result agrees with \eqref{equ:padj2}, thus finishing the proof of the compatibility of $G$ with the right $\COp$-coaction.
\end{proof}

\section{Properties of comodules of configuration space type}

\subsection{Equivalent conditions}
\begin{prop}\label{prop:alt condition}
Let $\MOp$ be a $\La$ Hopf $\COp$ comodule.
Let $\MOp\xrightarrow{\sim}\hat \MOp$ be a fibrant replacement in the category of (plain) $\COp$ comodules.
Then $\MOp$ is of configuration space type if and only if the composition 
\[
\FF_{\MOp(1)}
\to 
\coInd^{\Com^c}_{\COp}\MOp 
\to 
\coInd^{\Com^c}_{\COp}\hat\MOp 
\]
is a quasi-isomorphism.
\end{prop}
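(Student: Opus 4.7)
The plan is to unravel both conditions and show they differ only by a quasi-isomorphism, using naturality of the counit of the adjunction $\FF\dashv \Uc$ from Proposition~\ref{prop:freeness as C mod}. First I would note that, since the fibrations and weak equivalences in $\dgLaHModc_{\COp}$ are created by the forgetful functor to plain $\COp$-comodules (by the transfer of Theorem~\ref{thm:modc model cat}), the right adjoint $\coInd^{\Com^c}_{\COp}$ preserves weak equivalences between plain fibrant objects. Thus the derived coinduction $\coInd^{\Com^c,h}_{\COp}\MOp$ is represented by $\coInd^{\Com^c}_{\COp}\hat\MOp$, and $\MOp$ being of configuration space type is by definition the condition that the natural comparison $\FF_{\Uc(\coInd^{\Com^c}_{\COp}\hat\MOp)}\to \coInd^{\Com^c}_{\COp}\hat\MOp$ (the counit of $\FF\dashv\Uc$) is a quasi-isomorphism.

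Next I would compute $\Uc(\coInd^{\Com^c}_{\COp}\NOp)=\NOp(1)$ for any $\COp$-comodule $\NOp$, using the equalizer description \eqref{equ:coind def} together with the fact that both $\Com^c$ and $\COp$ are reduced (so that in arity one the cofree comodule functors collapse to the identity and both parallel arrows of the equalizer become the identity on $\NOp(1)$). Applying this to $\hat\MOp$ identifies the power type map concretely as $\FF_{\hat\MOp(1)}\to \coInd^{\Com^c}_{\COp}\hat\MOp$. I would then invoke naturality of the counit of $\FF\dashv\Uc$ applied to the morphism $\coInd^{\Com^c}_{\COp}\MOp\to \coInd^{\Com^c}_{\COp}\hat\MOp$ induced by $\MOp\xrightarrow{\sim}\hat\MOp$, producing the commutative square
\[
\begin{tikzcd}
\FF_{\MOp(1)} \ar{r}\ar{d} & \coInd^{\Com^c}_{\COp}\MOp \ar{d} \\
\FF_{\hat\MOp(1)} \ar{r} & \coInd^{\Com^c}_{\COp}\hat\MOp,
\end{tikzcd}
\]
whose left vertical arrow is a quasi-isomorphism since $\FF_A=A^{\otimes\bullet}$ manifestly preserves quasi-isomorphisms and $\MOp(1)\xrightarrow{\sim}\hat\MOp(1)$ by hypothesis. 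Two-out-of-three on this square then concludes: the top-then-right composition --- which is exactly the composition appearing in the statement --- is a quasi-isomorphism if and only if the bottom horizontal arrow is, i.e., if and only if $\MOp$ is of configuration space type.

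The main obstacle will be the arity-one computation of coinduction in the Hopf $\La$-comodule setting; one has to confirm that reducedness of both $\Com^c$ and $\COp$ really collapses the cofree Hopf comodule functors to the identity on the arity-one piece and that the two parallel arrows of the equalizer agree there. The remainder of the argument is formal adjunction bookkeeping, and I do not anticipate any subtlety beyond routine naturality verifications.
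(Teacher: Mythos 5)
Your proposal is correct and follows essentially the same route as the paper: both reduce the statement to the definition by identifying $\coInd^{\Com^c}_{\COp}\hat\MOp$ with the derived coinduction (using that coinduction is right Quillen for the plain comodule structures and that Hopf-fibrancy is detected in plain comodules), and then conclude by a naturality/two-out-of-three argument. The only difference is one of emphasis — the paper spells out the lifting argument comparing the given plain-fibrant replacement with a Hopf-fibrant one, while you make more explicit the arity-one computation $(\coInd^{\Com^c}_{\COp}\NOp)(1)=\NOp(1)$ and the $\FF$-naturality square, both of which the paper leaves implicit.
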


\begin{proof}
First suppose that $\hat \MOp$ is fibrant in the category of $\La$ Hopf $\COp$ comodules. Then it is also fibrant in the category of plain $\COp$ comodules by adjunction. Furthermore, since limits are created in the category of plain comodules it does not matter in which category the coinduction is taken.
Hence in this case the statement of the Proposition reduces to the definition of being of configuration space type.

Thus we only need to show that the condition in the proposition is independent of the fibrant replacement $\hat M$ chosen.
Equivalently, we take one fixed fibrant replacement
\[
\MOp \xhookrightarrow{\sim} \MOp',
\]
for which we may assume that the map is a cofibration.
Then by lifting we obtain a commutative diagram
\[
\begin{tikzcd}
  \MOp \ar{r}{\sim}\ar[hookrightarrow]{d}{\sim} & \hat \MOp \ar[twoheadrightarrow]{d} \\
  \MOp' \ar{r} \ar[dashed]{ur}{\sim}& *.
\end{tikzcd}
\]
But then we have a corresponding diagram 
\[
  \begin{tikzcd}
    \FF_{M(1)} \ar{r} & 
    \coInd^{\Com^c}_{\COp}\MOp \ar{r}\ar{d} & \coInd^{\Com^c}_{\COp}\hat \MOp  \\
    & \coInd^{\Com^c}_{\COp}\MOp' \ar{ur}{\sim}& ,
  \end{tikzcd}
\]
and the upper composition is a quasi-isomorphism iff so is the lower one.
\end{proof}

The case of most interest to us is the following:
\begin{cor}\label{cor:alt condition}
Let $\MOp$ be a $\La$ Hopf $\e_n^c$ comodule. Then $\MOp$ is of configuration space type if and only if the map
\[
\FF_{\MOp(1)}
\to 
\coInd^{\Com^c}_{\e_n^c}
\Bar_{\e_n^c} \Bar^c_{\e_n^c}\MOp
\cong
\Bar_{\Com^c} \Bar^c_{\e_n^c}
\MOp
\]
is a quasi-isomorphism, where we use the bar and cobar construction of comodules, see \cite{Frbook2}.
\end{cor}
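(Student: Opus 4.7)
The plan is to apply Proposition \ref{prop:alt condition} to the case $\COp=\e_n^c$, taking as the fibrant replacement $\hat\MOp$ the bar-cobar resolution $\Bar_{\e_n^c}\Bar^c_{\e_n^c}\MOp$, and then to identify the coinduced object on the right-hand side of that proposition with the displayed bar construction.

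For the first step I would verify that the counit $\MOp\to \Bar_{\e_n^c}\Bar^c_{\e_n^c}\MOp$ is a fibrant resolution in the category $\dgModc_{\e_n^c}$ of plain $\e_n^c$-comodules. That this map is a quasi-isomorphism is the classical statement of bar-cobar duality for operadic comodules, as recalled in \cite{Frbook2, FWMod}. Fibrancy of $\Bar_{\e_n^c}\Bar^c_{\e_n^c}\MOp$ in the model structure of Theorem \ref{thm:modc model cat} follows from the observation that, as a graded $\e_n^c$-comodule, it is cofree on the underlying symmetric sequence of $\Bar^c_{\e_n^c}\MOp$; lifting problems against acyclic cofibrations of $\e_n^c$-comodules thereby reduce, by the cofree/forgetful adjunction, to analogous lifting problems for dg symmetric sequences.

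For the second step I would establish the natural isomorphism
\[
\coInd^{\Com^c}_{\e_n^c}\, \Bar_{\e_n^c}(-) \;\cong\; \Bar_{\Com^c}\, \Res(-),
\]
applied to the argument $\Bar^c_{\e_n^c}\MOp$, where $\Res$ denotes restriction of scalars along the operad morphism $\Bar^c\Com^c\to \Bar^c\e_n^c$ induced by $\Com^c\to \e_n^c$. Both sides are right adjoints, so by uniqueness of adjoints it suffices to exhibit a natural isomorphism of their respective left adjoints
\[
\Bar^c_{\e_n^c}\, \coRes_{\Com^c}^{\e_n^c} \;\cong\; \Ind\, \Bar^c_{\Com^c},
\]
with $\Ind$ denoting induction along $\Bar^c\Com^c\to \Bar^c\e_n^c$. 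This square of left adjoints can be checked directly from the explicit descriptions: applied to any $\Com^c$-comodule $N'$, both sides produce the same graded $\Bar^c\e_n^c$-module freely generated by $N'$, and the twisting differentials agree by naturality of the cobar construction under the morphism $\Com^c\to \e_n^c$. Combining steps one and two with Proposition \ref{prop:alt condition} then yields the stated equivalent characterization of comodules of configuration space type.

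The main technical obstacle I anticipate is the fibrancy verification in step one, since the fibrations of Theorem \ref{thm:modc model cat} are characterized only indirectly via a right-lifting property rather than by a direct surjectivity criterion. However, the cofreeness of the graded comodule $\Bar_{\e_n^c}\Bar^c_{\e_n^c}\MOp$ reduces this to a standard lifting argument at the level of symmetric sequences. The commutative square of left adjoints in step two, while transparent at the level of abstract adjunctions, does require some bookkeeping of the twisting differentials as one passes across the induced operadic map on cobar constructions, but this is essentially routine.
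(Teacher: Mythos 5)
Your proposal is correct and follows the same route as the paper: both apply Proposition \ref{prop:alt condition} with the bar-cobar resolution $\Bar_{\e_n^c}\Bar^c_{\e_n^c}\MOp$ as the fibrant replacement and then identify the coinduced object with $\Bar_{\Com^c}\Bar^c_{\e_n^c}\MOp$. The only cosmetic difference is that the paper obtains the final isomorphism directly from the explicit equalizer formula \eqref{equ:coind def} for coinduction applied to the cofree comodule $\Bar_{\e_n^c}(-)$, whereas you derive it by uniqueness of adjoints; both are valid and amount to the same computation.
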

\begin{proof}
  The bar-cobar resolution of $\MOp$ is fibrant, so we can apply Proposition \ref{prop:alt condition}.
For the final isomorphism one uses the explicit formula for the coinduction.
\end{proof}

There is yet another simplification possible. Note that one has a morphism of (plain) graded cooperads (for $n\geq 2$)
\[
  \e_n^c \to \Lie_n^c,
\]
where $\Lie_n^c$ is the cooperad governing shifted Lie coalgebras with Lie cobracket of degree $n-1$.
In particular any $\e_n^c$ comodule $\MOp$ is also a $\Lie_n^c$-comodule.
Furthermore, one has a morphism of dg symmetric sequences
\begin{equation}\label{equ:bar cobar to bar Lie}
\Bar_{\Com^c} \Bar_{\e_n^c}^c \MOp
\cong (\MOp\circ \Com\{n\}\circ\Lie_0 \circ \Com^c, d)
\to 
\Bar^c_{\Lie_n^c} \MOp
\cong (\MOp\circ \Com\{n\},d)
\end{equation}
obtained from the projection of $\Lie_0 \circ \Com^c$ to its unary part.
In particular, note the composition 
\[
\FF_{\MOp(1)} \to 
\Bar_{\Com^c} \Bar_{\e_n^c}^c \MOp
\to
\Bar^c_{\Lie_n^c} \MOp
\]
sends any $x\in \FF_{\MOp(1)}(r)$ to its image in $\MOp(r)\subset \Bar^c_{\Lie_n^c} \MOp$, and this yields a well defined map because the $\Lie_n^c$-coaction on $\FF_{\MOp(1)}$ is trivial.

\begin{prop}\label{prop:Cobar Lie criterion}
For $n\geq 2$, a $\La$ Hopf $\e_n^c$ comodule is of configuration space type iff the map of dg symmetric sequences
\[
  \FF_{\MOp(1)} \to \Bar^c_{\Lie_n^c} \MOp
\]
is a quasi-isomorphism.
\end{prop}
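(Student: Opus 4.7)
The plan is to reduce the statement to Corollary~\ref{cor:alt condition} and then show that the difference between the two criteria is an always-quasi-isomorphism of dg symmetric sequences.

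Concretely, by Corollary~\ref{cor:alt condition}, $\MOp$ is of configuration space type if and only if the canonical map $\FF_{\MOp(1)}\to \Bar_{\Com^c}\Bar^c_{\e_n^c}\MOp$ is a quasi-isomorphism. By construction, the map $\FF_{\MOp(1)}\to \Bar^c_{\Lie_n^c}\MOp$ appearing in the statement factors as
\[
\FF_{\MOp(1)} \longrightarrow \Bar_{\Com^c}\Bar^c_{\e_n^c}\MOp \xrightarrow{\;\alpha\;} \Bar^c_{\Lie_n^c}\MOp ,
\]
where $\alpha$ is the map \eqref{equ:bar cobar to bar Lie}. Hence by the two-out-of-three property it suffices to prove that $\alpha$ is a quasi-isomorphism for every $\La$ Hopf $\e_n^c$-comodule $\MOp$.

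To establish that $\alpha$ is a quasi-isomorphism, I would work on the level of underlying dg symmetric sequences. By the formulas recalled just before the Proposition, $\Bar_{\Com^c}\Bar^c_{\e_n^c}\MOp$ has underlying sequence $\MOp\circ \Com\{n\}\circ \Lie_0\circ \Com^c$, reflecting the graded splitting $\e_n^c\cong \Com^c\circ \Lie_n^c$ valid for $n\geq 2$, while $\Bar^c_{\Lie_n^c}\MOp$ has underlying sequence $\MOp\circ \Com\{n\}$; the map $\alpha$ is the one induced by the projection of $\Lie_0\circ \Com^c$ onto its unary part $\Q$. The plan is then to introduce an increasing, exhaustive, bounded-below filtration $\mF^p$ on both sides by the total arity contained in the $\MOp\circ \Com\{n\}$ factor (equivalently, filter by the number of cooperadic cocompositions and module-coaction factors arising from the $\Lie_n^c$-part of the cobar). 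The only part of the differential that does not strictly raise this arity is the internal bar–cobar differential on the block $\Lie_0\circ \Com^c$; by Koszul duality between $\Com$ and $\Lie\{1\}$ this complex is acyclic away from its unary component (where it is the ground field $\Q$). Thus the $E_1$-page of the spectral sequence on the left collapses to $\MOp\circ \Com\{n\}$ equipped with the remaining differentials, which is precisely $\Bar^c_{\Lie_n^c}\MOp$, and the induced map of $E_1$-pages is the identity. Non-negative grading of all complexes, together with the arity filtration being bounded below, guarantees convergence, yielding the desired quasi-isomorphism.

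The main obstacle is making the Koszul-acyclicity input precise in the comodule setting: one must identify the piece of the differential surviving on the associated graded with the standard bar–cobar differential of the Koszul pair $(\Com,\Lie\{1\})$, keeping track of the degree shifts $\{n\}$ and sign conventions attached to $\Lie_n^c$ versus $\Lie_0$, and to check compatibility with the commutative algebra and $\La$-structures on $\MOp$. Once this identification is made, convergence and the conclusion follow essentially automatically from the non-negatively graded, arity-bounded nature of the complexes, and $\alpha$ is a quasi-isomorphism, completing the proof.
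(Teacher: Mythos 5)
Your proposal is correct and follows essentially the same route as the paper: reduce via Corollary \ref{cor:alt condition} to showing the map \eqref{equ:bar cobar to bar Lie} is a quasi-isomorphism, filter by the total arity of the $\MOp\circ\Com\{n\}$ factor, identify the associated graded as $(\Bar^c_{\Lie_n^c}\MOp)\circ(\Lie_0\circ\Com^c,d)$, and invoke Koszulness of the commutative (co)operad to kill everything in arities $>1$. The paper's proof is exactly this argument, stated slightly more tersely.
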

\begin{proof}
It suffices to show that the map \eqref{equ:bar cobar to bar Lie} is a quasi-isomorphism, in each arity separately. Endow the left-hand side of \eqref{equ:bar cobar to bar Lie} with a filtration by the total arity of the factor $\MOp\circ \Com\{n\}$. 
This filtration is obviously bounded in each arity, so that the spectral sequence converges to the cohomology.
The associated graded symmetric sequence has the form 
\[
  (\Bar^c_{\Lie_n^c} \MOp) \circ (\Lie_0 \circ \Com^c,d),
\]
and the right-hand factor $(\Lie_0 \circ \Com^c,d)$ is the operadic Koszul complex of the commutative cooperad. By Koszulness of the latter, it is acyclic in arities $>1$, and the Proposition easily follows. 
\end{proof}

\subsection{Algebraic Cohen-Taylor-type spectral sequence}
\label{sec:CohenTaylorSS}

There are several constructions of spectral sequences computing the cohomology of configuration spaces of "nice" manifolds in the literature \cite{CohenTaylor, Totaro, Kriz}
\[
E^1 = \LS_{H(M)} \Rightarrow H(\FM_M)  
\]
whose $E^1$-page is identified with $\LS_{H(M)}:=\FF_{H(\MOp(1))} \circ \Lie_n^c$.
We shall note that such a spectral sequence can also easily be constructed purely algebraically in a general setting.
Concretely, let $\MOp$ is a right $\e_n^c$-comodule of configuration space type. 
We consider the bar-cobar resolution 
\[
\hat M=\Bar_{\e_n^c}\Bar^c_{\e_n^c}\MOp.  
\]
The $\e_n^c$ comodule $\hat M$ is quasi-free, and $\e_n^c$ has a natural grading by the number of Lie cobrackets. Here we assume $n\geq 2$.
This grading induces an ascending, bounded below, exhaustive filtration on $\hat M$
\[
\mF^0\hat \MOp \subset \mF^1\hat \MOp \subset \cdots.
\]
More precisely, if we write explicitly $\hat \MOp=(\MOp \circ \e_n\{n\}\circ \e_n^c, d)$, then $\mF^p\hat \MOp$ is spanned by elements that contain $\leq p$ Lie cobrackets in the factor $\e_n^c$.
Note that the filtration is bounded since cobrackets carry positive degree, and $\hat M$ is non-negatively graded. Hence the associated spectral converges to the cohomology. The $E^0$-page (associated graded) has the form 
\[
(\Bar_{\Com^c}\Bar^c_{\e_n^c}\MOp)\circ \Lie_n^c, 
\]
and by Corollary \ref{cor:alt condition} its cohomology is 
\begin{equation}\label{equ:E1LS}
E^1=\FF_{H(\MOp(1))} \circ \Lie_n^c.
\end{equation}


For the moment, we shall only need the following application, whose proof uses the above spectral sequence, up to the $E^1$-page.
\begin{prop}\label{prop:configtype qiso condition}
  Let $\COp$ be a $\La$ Hopf cooperad quasi-isomorphic to $\e_n^c$ for some $n\geq 2$.
  Let $f:\MOp\to \NOp$ be a morphism of $\La$ Hopf $\COp$-comodules of configuration space type. 
  Suppose that the arity one part of the map $f$ 
  \[
  U(f): \MOp(1) \to \NOp(1)
  \]
  is a quasi-isomorphism.
  Then $f$ is a quasi-isomorphism as well.
\end{prop}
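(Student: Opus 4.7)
The plan is to use the algebraic Cohen--Taylor-type spectral sequence of Section~\ref{sec:CohenTaylorSS} as a comparison device. First I would reduce to the case $\COp = \e_n^c$. Since $\COp \simeq \e_n^c$ as $\La$ Hopf cooperads, by passing through a suitable cofibrant or fibrant resolution in the model category of Proposition~\ref{prop:res ind adjunction}, we may arrange a direct quasi-isomorphism linking $\COp$ and $\e_n^c$. Corestriction along such a quasi-isomorphism is the identity on the underlying dg symmetric sequences, so it preserves both the quasi-isomorphism status of $f$ and, combined with Lemma~\ref{lem:comodule csc basic props} applied to the coinduction half of the resulting Quillen equivalence, the configuration space type property of $\MOp$ and $\NOp$. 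Hence we may assume $\COp = \e_n^c$.

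\textbf{Spectral sequence comparison.} With $\COp = \e_n^c$, let $\hat\MOp = \Bar_{\e_n^c}\Bar^c_{\e_n^c}\MOp$ and $\hat\NOp = \Bar_{\e_n^c}\Bar^c_{\e_n^c}\NOp$ be the bar-cobar resolutions, and let $\hat f : \hat\MOp \to \hat\NOp$ be the map induced by $f$. Both $\hat\MOp$ and $\hat\NOp$ carry the filtration by the number of Lie cobrackets in the factor $\e_n^c$, and $\hat f$ respects this filtration, yielding a morphism of the associated spectral sequences. Each spectral sequence is bounded (hence strongly convergent) in every arity and converges to $H(\MOp)$, respectively $H(\NOp)$. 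By the identification \eqref{equ:E1LS}, which invokes the configuration space type hypothesis for both $\MOp$ and $\NOp$, the $E^1$-pages are $\FF_{H(\MOp(1))} \circ \Lie_n^c$ and $\FF_{H(\NOp(1))} \circ \Lie_n^c$. The induced map between them is, in every arity, a tensor product of copies of the isomorphism $H(U(f))$ by the K\"unneth formula, and is therefore itself an isomorphism. The comparison theorem for strongly convergent spectral sequences then forces $H(\hat f)$ to be an isomorphism in every arity, which is the required conclusion that $f$ is a quasi-isomorphism.

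\textbf{Main obstacle.} The principal technical point is the reduction step, i.e., verifying that the configuration space type property is invariant under corestriction along a quasi-isomorphism of $\La$ Hopf cooperads. Unwinding this via Proposition~\ref{prop:alt condition}, using the functoriality identity $\coInd^{\Com^c}_{\e_n^c} \cong \coInd^{\Com^c}_\COp \circ \coInd^\COp_{\e_n^c}$ together with the Quillen equivalence of Proposition~\ref{prop:res ind adjunction}, reduces the matter to a short diagram chase, but this is the step that requires some care to state precisely. Once this reduction is in place, the spectral sequence comparison in the second paragraph is essentially formal.
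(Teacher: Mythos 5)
Your proposal is correct and follows essentially the same route as the paper: reduce to $\COp=\e_n^c$ via the Quillen equivalence of comodule categories, pass to the bar-cobar resolutions, filter by the number of Lie cobrackets as in Section~\ref{sec:CohenTaylorSS}, and compare the $E^1$-pages \eqref{equ:E1LS} using the configuration space type hypothesis and the K\"unneth formula. The reduction step you flag as the main obstacle is exactly what Lemma~\ref{lem:comodule csc basic props} and Proposition~\ref{prop:res ind adjunction} are there to handle, so no new idea is needed.
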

\begin{proof}
First, since the categories of $\e_n^c$ and of $\COp$ comodules are Quillen equivalent, it suffices to consider the case of $\e_n^c$ comodules.
Next, by bar-cobar-resolving $\MOp$ and $\NOp$ as right (non-Hopf) $\e_n^c$ comodules it suffices to show that the lower arrow in the diagram 
\[
\begin{tikzcd}
  \MOp\ar{d}{\sim} \ar{r}{f} & \NOp \ar{d}{\sim}\\
  \Bar \Bar^c \MOp \ar{r}{\Bar \Bar^cf} & \Bar \Bar^c \NOp
\end{tikzcd}
\] 
is a weak equivalence.
But we may equip both sides with the filtration $\mF(-)$ described above and consider the associated spectral sequence.
By the remarks before the Proposition the $E^1$ pages on either side agree with \eqref{equ:E1LS}, using that $H(\MOp(1))\cong H(\NOp(1))$ by assumption. Also, one easily checks that the induced map by $\Bar \Bar^cf$ is an isomorphism on the $E^1$-pages.
Hence one concludes that $\Bar \Bar^cf$ and hence $f$ is a quasi-isomorphism.
\end{proof}

\subsection{The diagonal element}
\begin{defn}\label{def:diagonal element alg}
Let $A$ be a differential graded commutative algebra. Then we say that an element $\Delta=\sum \Delta' \otimes \Delta''\in A\otimes A$ of degree $n$ is a \emph{diagonal element (of degree $n$)} for $A$ if the following conditions are satisfied. 
\begin{enumerate}
  \item $\Delta$ is closed, i.e., $d\Delta=0$.
  \item $\Delta$ is graded (anti)symmetric, 
  \[
  \tau \Delta = (-1)^n\Delta,
  \] 
  with $\tau:A\otimes A\to A\otimes A$ switching the factors.
  \item For all $a\in A$ we have
  \begin{equation}\label{equ:a Delta commute}
    (a\otimes 1) \cdot \Delta = (1\otimes a) \cdot \Delta \in A\otimes A.
  \end{equation}
\end{enumerate}
The Euler class associated to the diagonal element $\Delta$ is 
\[
E(\Delta)=\sum\Delta'\Delta'' \in A.
\]
\end{defn}
Note that we do not require any non-degeneracy from $\Delta$ -- the element $\Delta=0$ is also a diagonal element according to our definition. Furthermore, for odd $n$ the Euler class $E(\Delta)$ always vaishes by symmetry.

A diagonal element defines a degree $n$ coproduct on $A$ by the formula 
\[
a\mapsto \sum \sum a\Delta' \otimes \Delta'',
\]
and in fact a graded version of a Frobenius algebra structure. Note also that any multiple of a diagonal element is again a diagonal element.

We shall see that to a $\La$ Hopf $\e_n^c$ comodule of configuration space type we can associate a diagonal element on the cohomology of the arity one part, at least up to a scale factor.

\begin{lemma}
Let $\COp$ be a dg $\La$ Hopf cooperad and let $\MOp$ be a $\La$ Hopf $\COp$ comodule.
Then there is a long exact sequence of cohomology groups
\begin{multline}\label{equ:coIndlongexact}
  \cdots \to H^p(\coInd_{\COp}^{\Com^c,h}(\MOp)(2))
  \to H^p( \MOp(1)) \oplus H^p(\MOp(2)) 
  \to H^p(\MOp(1)\otimes \COp(2))
  \\\xrightarrow{c} H^{p+1}(\coInd_{\COp}^{\Com^c}(\MOp)(2))
  \to \cdots.
\end{multline}
Furthermore, all morphisms in this sequence are compatible with the $S_2$-action, and with the multiplicative action of $H^q(\coInd_{\COp}^{\Com^c}(\MOp)(2))$ in the sense that for any $x\in H^q(\coInd_{\COp}^{\Com^c}(\MOp)(2))$ one has a commutative diagram

\adjustbox{scale=.7,center}{%
\begin{tikzcd}[scale=.5]
  \cdots \ar{r} 
  & H^p(\coInd_{\COp}^{\Com^c,h}(\MOp)(2))
  \ar{r}\ar{d}{x\cdot}
  & H^p( \MOp(1)) \oplus H^p(\MOp(2)) \ar{r}\ar{d}{x\cdot}
  & H^p(\MOp(1)\otimes \COp(2))\ar{r}\ar{d}{x\cdot}
  & H^{p+1}(\coInd_{\COp}^{\Com^c,h}(\MOp)(2)) \ar{r}\ar{d}{x\cdot}
  & \cdots
  \\
  \cdots \ar{r} 
  & H^{p+q}(\coInd_{\COp}^{\Com^c,h}(\MOp)(2))\ar{r}
  & H^{p+q}( \MOp(1)) \oplus H^p(\MOp(2)) \ar{r}
  & H^{p+q}(\MOp(1)\otimes \COp(2))\ar{r}
  & H^{p+q+1}(\coInd_{\COp}^{\Com^c,h}(\MOp)(2)) \ar{r}
  & \cdots
\end{tikzcd}\, .
}
\end{lemma}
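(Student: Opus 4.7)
The plan is to identify the arity-two homotopy coinduction as a homotopy pullback and to obtain the long exact sequence as the resulting Mayer--Vietoris sequence. Let $A=\MOp(1)$, $B=\MOp(2)$, and $C=\MOp(1)\otimes\COp(2)$. Denote by $c_{\COp}\colon B\to C$ the $\COp$-cocomposition coming from the unique binary tree, and by $u\colon A\to C$ the map $a\mapsto a\otimes 1_{\COp(2)}$, where $1_{\COp(2)}\in \COp(2)$ is the image of $1\in \Com^c(2)=\Q$ under the unit $\Com^c\to\COp$.

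To carry out the identification I would first unpack the equalizer \eqref{equ:coind def} computing $\coInd^{\Com^c}_{\COp}\hat\MOp$ for $\hat\MOp$ a fibrant resolution of $\MOp$ in $\dgModc_{\COp}$ (which suffices to compute the derived coinduction since fibrations, weak equivalences, and limits are all created in the underlying category of plain comodules). In arity two, the cofree objects simplify dramatically because $\Com^c(r)=\Q$: one finds $(\Free^c_{\Com^c}\hat\MOp)(2)\cong \hat\MOp(2)\oplus\hat\MOp(1)$ and the two morphisms into $(\Free^c_{\Com^c}\Free^c_{\COp}\hat\MOp)(2)$ differ exactly by $c_{\COp}-u$. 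Consequently the equalizer in arity two is the strict pullback of $A\to C\leftarrow B$, and fibrant resolution of $\MOp$ turns this into the homotopy pullback
\[
\coInd_{\COp}^{\Com^c,h}(\MOp)(2) \;\simeq\; A\times^h_C B \;\simeq\; \hofiber\bigl(A\oplus B\xrightarrow{u-c_{\COp}} C\bigr).
\]

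Given this identification, the long exact sequence \eqref{equ:coIndlongexact} is the standard Mayer--Vietoris long exact sequence of the homotopy pullback, equivalently the long exact sequence of the mapping cone of $u-c_{\COp}$. The $S_2$-equivariance follows because $S_2$ acts trivially on $A$ and acts on $B$ and $C$ in the usual way, with both $u$ and $c_{\COp}$ being manifestly $S_2$-equivariant; thus the whole pullback diagram and its Mayer--Vietoris sequence are $S_2$-equivariant. For the multiplicative compatibility, one observes that all objects in the pullback square are dg commutative algebras and all four arrows are dgca morphisms ($c_{\COp}$ because $\MOp$ is a Hopf comodule, $u$ trivially); hence multiplication by a class $x\in H^q(\coInd_{\COp}^{\Com^c,h}(\MOp)(2))$ acts on $A$, $B$, and $C$ through the images of $x$ under the canonical dgca morphisms out of the pullback, and naturality of Mayer--Vietoris in this data yields commutativity of the stated diagram. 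The principal obstacle is the first step -- the concrete unpacking of the cofree Hopf $\Com^c$-comodule structure in arity two to exhibit the equalizer as a pullback of $A\to C\leftarrow B$ -- after which the argument is entirely formal.
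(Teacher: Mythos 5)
Your proposal is correct and follows essentially the same route as the paper: the paper likewise identifies $\coInd_{\COp}^{\Com^c}(\MOp)(2)$ for fibrant $\MOp$ as the strict pullback of $\MOp(1)\to\MOp(1)\otimes\COp(2)\leftarrow\MOp(2)$ in $\dgcAlg$ (hence in dg vector spaces), shows it is a homotopy pullback, and reads off the long exact sequence and the $S_2$- and multiplicative compatibilities exactly as you do. The only step you assert rather than justify is that the strict pullback is a homotopy pullback after fibrant resolution; the paper settles this by taking the bar--cobar construction as the fibrant replacement and observing that the coaction map $\Bar\Bar^c\MOp(2)\to\Bar\Bar^c\MOp(1)\otimes\COp(2)$ is then degreewise surjective.
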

\begin{proof}
We may assume that $\MOp$ is fibrant, since \eqref{equ:coIndlongexact} is unaltered upon replacing $\MOp$ by a fibrant resolution.
By construction the arity 2 part of the coinduced comodule fits into a pullback square in the category of dg commutative algebras $\dgcAlg$:
\begin{equation}\label{equ:coind2pullback}
  \begin{tikzcd}
    \coInd_{\COp}^{\Com^c}(\MOp)(2) 
    \ar{r} \ar{d} & \MOp(1)=\MOp(1)\otimes \Com^c(2)\ar{d} \\
    \MOp(2) \ar{r} & \MOp(1) \otimes \COp(2) 
  \end{tikzcd} .
\end{equation}
Note that limits in $\dgcAlg$ are created in dg vector spaces, so this diagram is also a pullback diagram in dg vector spaces.
 
We claim that the diagram is a homotopy pullback diagram.
From this it then immediately follows that we have the long exact sequence \eqref{equ:coIndlongexact}.
The compatibility with the multiplicative action of $H^q(\coInd_{\COp}^{\Com^c}(\MOp)(2))$ also follows readily since \eqref{equ:coind2pullback} is a commutative diagram of dg commutative algebras.

To show the claim we may replace $\MOp$ by a(nother) fibrant replacement. We may even take this replacement in the category of plain (non-Hopf) dg $\COp$-comodules, since limits in the category of dg Hopf $\COp$-comodules (such as $\coInd_{\COp}^{\Com^c}(\MOp)$) are created in the category of plain dg $\COp$-comodules. But taking the bar-cobar construction as a fibrant replacement we explicitly see that the lower horizontal arrow in \eqref{equ:coind2pullback} is a fibration (i.e., a surjection), and hence the claim follows.
\end{proof}


Next assume that $\MOp$ is of configuration space type. Then the connecting homomorphism $c$ above in particular induces maps
\begin{equation}\label{equ:prediagcomp}
  H^p(\COp(2)) \xrightarrow{1\otimes (-)} H^p(\MOp(1)\otimes \COp(2))
  \xrightarrow{c}
    H^{p+1}(\coInd_{\COp}^{\Com^c,h}(\MOp)(2))
    =
    H^p(\MOp(1)\otimes \MOp(1)).
\end{equation}

\begin{defn}\label{def:diagonal element}
Let $\MOp$ be a $\La$ Hopf $\COp$-comodule of configuration space type, with $\COp$ a $\La$ Hopf cooperad such that $H(\COp)=\e_n^c$.
Then we say that the \emph{diagonal element} 
\[
  \Delta_{\MOp} \in H^{n}(\MOp(1)\otimes \MOp(1))
\]
of $\MOp$ is the image of the cobracket $B\in \e_n^c(2)=H(\COp(2))$ under the composition \eqref{equ:prediagcomp}, that is
\[
  \Delta_{\MOp} 
  = c(1\otimes B)\, .
\]
\end{defn}

\begin{lemma}
The element $\Delta_{\MOp} \in H^{n}(\MOp(1)\otimes \MOp(1))$
is indeed a diagonal element of degree $n$ for $H(\MOp(1))$ in the sense of definition \ref{def:diagonal element alg}.
Its Euler class is zero, $E(\Delta_{\MOp})=0$.
\end{lemma}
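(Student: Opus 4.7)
The plan is to extract all four conditions from the long exact sequence \eqref{equ:coIndlongexact} together with its $S_2$-equivariance, its compatibility with the multiplicative action of $H(\coInd_{\COp}^{\Com^c,h}(\MOp)(2))$, and the identification $H(\coInd_{\COp}^{\Com^c,h}(\MOp)(2))\cong H(\MOp(1))^{\otimes 2}$ furnished by the configuration space type hypothesis. Closedness of $\Delta_{\MOp}$ is automatic since it is a cohomology class.

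For the graded (anti)symmetry I would use the standard fact that the Lie cobracket $B\in H^{n-1}(\COp(2))=\e_n^c(2)$ satisfies $\tau B=(-1)^n B$, which can be read off from the antipodal action of the transposition on $\FM_n(2)\simeq S^{n-1}$. Since $S_2$ acts trivially on the arity-one factor and since the connecting morphism $c$ is $S_2$-equivariant, we get
\[
\tau \Delta_{\MOp}
= \tau\, c(1\otimes B)
= c(1\otimes \tau B)
= (-1)^n c(1\otimes B)
= (-1)^n \Delta_{\MOp}.
\]

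For condition (3), the key computation is to identify the image of $a\otimes 1$ and $1\otimes a$ under the map
\[
\coInd_{\COp}^{\Com^c,h}(\MOp)(2) \to \MOp(1)\otimes \COp(2)
\]
from the pullback square \eqref{equ:coind2pullback}. Under the configuration space type equivalence $\Fc_{\MOp(1)}\simeq \coInd_{\COp}^{\Com^c,h}(\MOp)$ (Proposition \ref{prop:freeness as C mod}), the projection $\coInd(\MOp)(2)\to \MOp(1)\otimes\Com^c(2)=\MOp(1)$ corresponds to the $\Com^c$-coaction on $\Fc_{\MOp(1)}(2)=\MOp(1)\otimes\MOp(1)$, which is just the multiplication. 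Hence both $a\otimes 1$ and $1\otimes a$ are sent to $a\otimes 1_{\COp(2)}$. Using the multiplicative compatibility of the LES and the fact that $(a\otimes 1_{\COp(2)})\cdot(1\otimes B)=a\otimes B$ inside the commutative algebra $\MOp(1)\otimes\COp(2)$, we obtain
\[
(a\otimes 1)\cdot \Delta_{\MOp} = c(a\otimes B) = (1\otimes a)\cdot \Delta_{\MOp}.
\]

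Finally, the Euler class vanishing is immediate from exactness of the LES: $\Delta_{\MOp}=c(1\otimes B)$ lies in the image of a connecting homomorphism, so its image in $H^n(\MOp(1))\oplus H^n(\MOp(2))$ is zero; in particular its component in $H^n(\MOp(1))$ vanishes, and by the identification above this component is precisely $E(\Delta_{\MOp})$. The main subtlety in writing this up carefully is matching the pullback-square maps with the comodule-structure maps of $\Fc_{\MOp(1)}$ under the configuration space type quasi-isomorphism; once that identification is in place, the remaining verifications reduce to two-line diagram chases in the long exact sequence.
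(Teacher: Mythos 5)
Your proposal is correct and follows essentially the same route as the paper: closedness is automatic, symmetry comes from the $S_2$-equivariance of the long exact sequence, condition \eqref{equ:a Delta commute} comes from the multiplicative compatibility of the sequence together with the observation that the action of $H(\MOp(1))\otimes H(\MOp(1))$ on the term $H(\MOp(1)\otimes\COp(2))$ factors through the multiplication map (the paper phrases this by showing $\bigl((a\otimes 1)-(1\otimes a)\bigr)\cdot(1\otimes B)=0$, you phrase it by computing both actions as $c(a\otimes B)$ -- the same computation), and the Euler class vanishes by exactness. The only cosmetic difference is that the relevant identification $H(\coInd_{\COp}^{\Com^c,h}(\MOp)(2))\cong H(\MOp(1))^{\otimes 2}$ rests on Definition \ref{def:config space type c} rather than on Proposition \ref{prop:freeness as C mod}, which you cite.
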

\begin{proof}
Closedness is void in this case.
The symmetry follows from the $S_2$-equivariance of \eqref{equ:coIndlongexact}.
To show \eqref{equ:a Delta commute} let $a\in H(\MOp(1))$ and set 
\[
  \tilde a := (a\otimes 1)-(1\otimes a)
  \in H(\MOp(1))\otimes H(\MOp(1))
  =H(\coInd_{\COp}^{\Com^c,h}(\MOp)(2)).
\]
We hence have to show $\tilde a\Delta_{\MOp} = 0$.
But by compatibility of the long exact sequence \eqref{equ:coIndlongexact} with the multiplicative action of $H(\MOp(1))\otimes H(\MOp(1))$ we have 
\[
  \tilde a\Delta_{\MOp}
  = 
  c(\underbrace{\tilde a\cdot (1\otimes B)}_{=0} )
  =0,
\]
since the multiplicative action on $H(\MOp(1))$ factors through the multiplication map $H(\MOp(1))\otimes H(\MOp(1))\to H(\MOp(1))$, which kills $\tilde a$.

Finally, by exactness of the sequence \eqref{equ:coIndlongexact} the Euler class is in the kernel of the map 
\[
  H^n(\coInd_{\COp}^{\Com^c,h}(\MOp)(2))
  =
  H^n(\MOp(1)\otimes \MOp(1))
  \to H^n( \MOp(1)) \oplus H^n(\MOp(2)).
\]
In particular the map to $H^n( \MOp(1))$ is the multiplication of both factors, and hence the Euler class of $\Delta_{\MOp}$ vanishes, $E(\Delta_{\MOp})=0$.
\end{proof}


The terminology is derived from the following main example.
\begin{ex}\label{ex:diag of config space}
  We shall work over the reals in this example.
  Let $\Mfd$ be an orientable compact $n$-dimensional manifold. The map $H_c(M)\to H(M)$ from compactly supported cohomology of $\Mfd$ to cohomology of $\Mfd$ is represented by an element
  \[
  \Delta_{\Mfd} \in \bigoplus_p (H^p_c(\Mfd) )^* \otimes H^p(\Mfd)  \cong \bigoplus_p H^{n-p}(\Mfd)\otimes H^p(\Mfd),
  \]
using Poincar\'e duality in the last step.
We call $\Delta_\Mfd\in H^n(\Mfd\times \Mfd)$ the diagonal element of $\Mfd$. If $\Mfd$ is closed, $\Delta_\Mfd$ is the Poincar\'e dual of the diagonal inside $\Mfd\times \Mfd$, hence the name.
Note however, that in general $\Delta_\Mfd$ need not be non-degenerate in any sense, for example we have $\Delta_{\R^n}=0$ for $n\geq 1$.

Suppose next that $\Mfd$ is parallelized. Then a de Rham representative of $\Delta_\Mfd$ may be computed as follows. First we have the inclusion of the unit sphere bundle of the tangent bundle 
\[
ST\Mfd \cong \Mfd\times S^{n-1} \to \FM_\Mfd(2)   
\]
as the part of the boundary corresponding to both points in the configuration being infinitely close.
Let $\eta\in \Omega_{dR}^{n-1}(S^{n-1})$ be the standard volume form on $S^{n-1}$, and let $\beta\in \Omega^{n-1}_{dR}(ST\Mfd)$ be its pullback to $ST\Mfd\cong \Mfd\times S^{n-1}$.
Let $\alpha \in \Omega^{n-1}_{dR}(\FM_\Mfd(2))$ be an $n-1$-form with support in a tubular neighborhood of $ST\Mfd$ extending $\beta$. We may also assume for simplicity that $d\alpha=0$ in a (smaller) neighborhood of $ST\Mfd$. 
In particular $d\alpha$ represents a cohomology class in $H^n(\Mfd\times \Mfd)$, which we claim is the diagonal $\Delta_M$ as defined above.
Indeed, we check that for $\gamma\in \Omega_{dR,c}(M)$ we have 
\begin{align*}
\int_{y\in M} d\alpha(x,y) \gamma(y) 
&=
\int_{\FM_\Mfd(2)\to M} d\alpha(x,y) \gamma(y)
=
\int_{ST\Mfd\to \Mfd} \alpha(x,y) \gamma(y)
=
\gamma(x).
\end{align*}
Here we first integrate over the factor $\Mfd$.
Then we replace this integral by a fiber integral over the fiber of the projection $\pi_1:\FM_\Mfd(2)\to \Mfd$.
We abusively write $\gamma(y):=\pi_2^*\gamma$ for the pullback of $\gamma$ to 
$\FM_\Mfd(2)$ under the second projection $\pi_2:\FM_\Mfd(2)\to \Mfd$.
For the second equality we apply Stokes' Theorem. For the third equality we use that $\alpha$ agrees with $\beta$ on $ST\Mfd$ by construction.

But our construction of a de Rham representative of $\Delta_\Mfd$ is the same as the construction of the image of $\beta$ under the connecting homomorphism in the long exact sequence namely
\[
  \cdots \to H^p(\Mfd\times \Mfd)
  \to H^p( \Mfd) \oplus H^p(\FM_\Mfd(2)) 
  \to H^p(\Mfd\times S^{n-1})
  \xrightarrow{c} H^{p+1}(\Mfd\times \Mfd)
  \to \cdots.
\]
But this agrees with the real version of the long exact sequence \eqref{equ:coIndlongexact} in the case of $\MOp=\Omega_\sharp(\FM_\Mfd)$.
This shows that (the image in real cohomology of) our algebraic diagonal element according to Definition \ref{def:diagonal element} agrees with the diagonal element of the manifold $\Mfd$ in this example, hence justifying the name. Also note that the derivation shows that the diagonal is independent of the choice of parallelization of $\Mfd$.
  
\end{ex}

\subsection{Example: Lambrechts-Stanley type comodules}
Let $A$ be a dg commutative algebra and let $\Delta\in A\otimes A$ be a diagonal element for $A$ of degree $n$ of vanishing Euler class $E(\Delta)=0$ in the sense of definition \ref{def:diagonal element alg}.
Then we consider a right $\La$ Hopf $\e_n^c$ comodule $\LS_{A,n}^\Delta$ as follows. As a graded commutative algebra
\[
  \LS_{A,n}^\Delta(r)
  =
  \FF_A(r) \otimes \e_n^c(r)/{\sim}
  \, =
  A^{\otimes r} \otimes \e_n^c(r)/{\sim}, 
\]
with the additional relations
\begin{equation}\label{equ:a omega rel}
  a_i \omega_{ij} = a_j\omega_{ij},
\end{equation}
where $\omega_{ij}$ is a cobracket generator of $\e_n^c$, see section \ref{sec:en def}, and for $a\in A$ and $i=1,\dots, r$ we set
\[
a_j := 1\otimes \cdots \otimes \underbrace{a}_{j\text{-th}}\otimes \cdots \otimes 1 \in A^{\otimes r}.
\]
We define the differential to be that inherited by $A$ on the factor $\FF_A$, and additionally
\begin{equation}\label{equ:d Delta def}
d \omega_{ij} = \Delta_{ij}:=\sum \Delta_i'\Delta_j''.
\end{equation}
Note that the defining condition \eqref{equ:a Delta commute} for the diagonal element is needed so that the relation \eqref{equ:a omega rel} is closed under the differential. Furthermore, we leave it to the reader to check that the differential of the Arnold relation \eqref{equ:arnold rel} is also zero, given \eqref{equ:a omega rel}.

Next we turn to cooperadic $\La$ $\e_n^c$ comodule structure on $\LS_{A,n}^\Delta$.
First note that the tensor product $\FF_A\otimes \e_n^c$ inherits a $\La$ Hopf $\e_n^c$ comodule structure from its factors.
More specifically, since $\FF_A$ is a cooperadic $\Com^c$-comodule and $\e_n^c$ is a cooperadic $\e_n^c$-comodule, $\FF_A\otimes \e_n^c$ is a comodule for $\Com^c\otimes \e_n^c = \e_n^c$.

We just need to check that the additional relation \eqref{equ:a omega rel} and the differential \eqref{equ:d Delta def} are compatible with the $\La$ structure and $\e_n^c$ coaction.
Concretely, the differential needs to commute with those operations, and \eqref{equ:a omega rel} defines a sequence of ideals (one in each arity) that is closed under coactions. 

Both statements are clear for the $\La$ operations, since applying a $\La$ operation to \eqref{equ:a omega rel} or \eqref{equ:d Delta def} just produces a similar relation in higher arity.  

The same holds for the coaction, unless the coaction is such that it affects both indices $i$ and $j$, i.e., both indices are in the subset of indices collapsed by some coaction.
In this case compatibility of \eqref{equ:d Delta def} with the coaction boils down to the statement
\[
\sum \Delta_i'\Delta_i'' = E(\Delta)_i = 0,
\]
that we ensure by requiring that the Euler class of $\Delta$ vanishes.
For \eqref{equ:a omega rel} we just need to check that $a_i-a_i=0$, which is obviously always satisfied.

We call $\LS_{A,n}^\Delta$ a $\La$ Hopf $\e_n^c$ comodule of Lambrechts-Stanley type, since Lambrechts and Stanley have used very similar objects in \cite{LS}, in the special case that $A$ is a Poincar\'e duality algebra and $\Delta$ is its diagonal. In their case they also did not consider the right $\e_n^c$ coaction, and hence did not need the condition on the vanishing of the Euler class.

Note also that we always have the isomorphism of graded symmetric sequences,
\[
  \LS_{A,n}^\Delta = \Com^c \circ A \circ \Lie_n^c.
\]

\begin{prop}
The $\La$ Hopf $\e_n^c$ comodule $\LS_{A,n}^\Delta$ is of configuration space type.
\end{prop}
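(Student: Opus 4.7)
By Proposition \ref{prop:Cobar Lie criterion}, it suffices to show that the canonical inclusion
\[
\iota : \FF_A = \FF_{\LS_{A,n}^\Delta(1)} \to \Bar^c_{\Lie_n^c} \LS_{A,n}^\Delta \cong \bigl(\LS_{A,n}^\Delta \circ \Com\{n\},\, d_{\mathrm{tot}}\bigr)
\]
is a quasi-isomorphism of dg symmetric sequences. Here $d_{\mathrm{tot}} = d_{\mathrm{int}} + d_{\mathrm{Kos}}$ combines the internal differential $d\omega_{ij}=\Delta_{ij}$ on $\LS_{A,n}^\Delta$ with the Koszul cobar differential coming from the $\Lie_n^c$-coaction, using the Koszul dual pair $(\Lie_n^c,\Com\{n\})$.

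The strategy is to introduce a filtration and argue via spectral sequence. Assign to each generator $\omega_{ij}$ in the $\LS_{A,n}^\Delta$-factor the \emph{Lie weight} $1$, and to each $\Com\{n\}$-cogenerator of arity $k$ the Lie weight $k-1$, extended additively. The internal differential $d\omega_{ij}=\Delta_{ij}$ strictly decreases the Lie weight by $1$ (since $\Delta \in A\otimes A$ has Lie weight $0$), whereas $d_{\mathrm{Kos}}$ exactly preserves it (it trades an $\omega_{ij}$ for one additional leaf on a $\Com\{n\}$-cogenerator via the Koszul pairing). The ascending filtration by Lie weight is bounded in each fixed arity, so the associated spectral sequence converges, and its $E^0$-page is
\[
E^0 \cong \bigl(\LS_{A,n}^0 \circ \Com\{n\},\, d_{\mathrm{Kos}}\bigr) \cong \Bar^c_{\Lie_n^c} \LS_{A,n}^0,
\]
where $\LS_{A,n}^0$ denotes the same object with $\Delta$ set to zero.

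I would next identify $\LS_{A,n}^0$ with the cofree $\Lie_n^c$-comodule $\FF_A \circ \Lie_n^c$. Invoking the PBW-type isomorphism $\e_n^c \cong \Com^c \circ \Lie_n^c$ of graded cooperads, each element of $\LS_{A,n}^0(r)$ corresponds to a set-partition $\pi$ of $[r]$ equipped with an $A^{\otimes|\pi|}$-decoration and a Lie-cobracket structure on each block; the relation $a_i\omega_{ij} = a_j\omega_{ij}$ precisely collapses the $A$-factors within each block into a single factor via the multiplication of $A$, matching $\FF_A \circ \Lie_n^c$. The key compatibility to verify is that the $\Lie_n^c$-coaction induced from the $\e_n^c$-coaction coincides with the cofree one, which reduces to the fact that projecting along $\e_n^c \to \Lie_n^c$ discards all cocompositions that refine the partition $\pi$. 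Given this, the associativity of plethysm and the acyclicity of the Koszul complex $(\Lie_n^c \circ \Com\{n\},d_{\mathrm{Kos}})$ (concentrated in arity $1$) yield
\[
E^0 \simeq \FF_A \circ \bigl(\Lie_n^c \circ \Com\{n\},\, d_{\mathrm{Kos}}\bigr) \simeq \FF_A \circ \K = \FF_A.
\]

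The inclusion $\iota$ takes values in Lie weight $0$ and, on the $E^0$-page, corresponds to the identification above; hence $\iota$ is an isomorphism on $E^1$, and by convergence of the spectral sequence it is a quasi-isomorphism. The main delicate point is step verifying that $\LS_{A,n}^0 \cong \FF_A \circ \Lie_n^c$ is an isomorphism of $\Lie_n^c$-comodules (not merely of graded symmetric sequences), since this is what allows the reduction to the Koszul acyclicity statement; all other ingredients are bookkeeping with the filtration and standard Koszul duality for $\Lie_n^c$.
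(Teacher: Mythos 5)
Your proof is correct and follows essentially the same route as the paper: reduce via Proposition \ref{prop:Cobar Lie criterion} to showing that $\FF_A \to \Bar^c_{\Lie_n^c}\LS_{A,n}^\Delta$ is a quasi-isomorphism, filter so that the $\Delta$-part of the differential vanishes on the associated graded, and conclude from the fact that the $\Delta=0$ version is the cofree $\Lie_n^c$-comodule cogenerated by $\FF_A$ together with Koszulness of $\Lie_n^c$. The only difference is cosmetic: the paper filters by the total degree of the $A$-factors (which additionally kills the differential of $A$ on the associated graded), whereas you filter by your ``Lie weight''; both filtrations are bounded arity-wise and both reduce to the same cofreeness statement.
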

\begin{proof}
By Proposition \ref{prop:Cobar Lie criterion} it suffices to check that the morphism
\[
\FF_A \to \Bar^c_{\Lie_n^c}\LS_{A,n}^\Delta
\]
is a quasi-isomorphism. 
To see this we take a descending filtration on both sides by the combined degree of the $A$-factors. This filtration is bounded, simply because all other degree carrying factors are non-negatively graded.
On the associated graded one does not see the differential on $\FF_A$, hence the associated graded map is identified with 
\[
  \FF_A^\flat \to \Bar^c_{\Lie_n^c}(\LS_{A,n}^\Delta)^\flat,
\]
with $(-)^\flat$ indicating that we consider the onject with trivial differential.
But since $(\LS_{A,n}^\Delta)^\flat$ is the free $\Lie_n^c$ comodule cogenerated by $\FF_A^\flat$, this morphism is a quasi-isomorphism, and hence the Proposition is shown. 
\end{proof}

\begin{rem}
Let $\MOp$ be a right $\e_n^c$-comodule of configuration space type with diagonal $\Delta$, and consider again the spectral sequence of section \ref{sec:CohenTaylorSS}. Then the $E^1$-page \eqref{equ:E1LS} of that spectral sequence agrees, as a graded symmetric sequence, with $\LS_{H(\MOp(1)), n}^\Delta$. 
In fact, one may check, using our main Theorem \ref{thm:main}, that the differential on the $E^1$-page is that of $\LS_{H(\MOp(1)), n}^\Delta$. We will not use this fact elsewhere in the paper and hence leave the proof to the reader or future work.
\end{rem}

\section{$\SL_\infty$ algebras, Maurer-Cartan spaces and obstruction theory}

\subsection{Filtered complete curved $\SL_\infty$-algebras}
Following \cite{DolRog} we will work with a degree shifted version of (curved) $L_\infty$-algebras, defined such that Maurer-Cartan elements have cohomological degree 0 instead of $+1$. To emphasize this degree shift, we call these algebras (curved) $\SL_\infty$-algebras.

A curved $\SL_\infty$-algebra structure on a graded vector space $\fg$ is a collection of maps
\[
\{ \mu_k : S^k(V) \to V[-1] \}_{k\geq 0}
\]
that satisfy a system of quadratic equation of the form
\[
\sum_{k+l=n+1} \sum_{\sigma\in Sh(l,k+1)} \pm \mu_k(\mu_l(x_{\sigma(1)},\dots,x_{\sigma(l)}), x_{\sigma(l+1)},\dots, x_{\sigma(n+1)})=0,   
\]
where the inner sum is over all $(k,l)$-shuffle permutations, and the sign is the Koszul sign corresponding to the permutation $\sigma$ of the elements $x_1,\dots,x_{n+1}$.
The curved $\SL_\infty$-structure is a(n honest) $\SL_\infty$-structure if the curvature $\mu_0$ vanishes.

A filtered complete curved $\SL_\infty$-algebra is a $\SL_\infty$-algebra $\fg$ together with a descending complete filtration 
\[
\mF^1\fg = \fg \supset \mF^2\fg\supset\mF^3\fg \supset \cdots  
\]
compatible with the $\SL_\infty$-operations $\{\mu_k\}_{k\geq 0}$ in the sense that 
\begin{equation}\label{equ:Linfty filtration compat}
  \mu_k(\mF^{p_1}\fg,\dots,\mF^{p_k}\fg)
  \subset \mF^{p_1+\dots+p_k}\fg.
\end{equation}
A Maurer-Cartan element in the filtered complete curved $\SL_\infty$-algebra $\fg$ is a degree 0 element $m$ such that 
\[
\curv(m) := \sum_{k\geq 0} \frac 1 {k!} \mu_k(m,\dots,m)=0.  
\]
We denote the set of Maurer-Cartan elements by 
\[
\MC(\fg) = \{ x \in \fg_0 \mid \curv(x)=0 \}.
\]

Suppose that $m\in \fg$ is any element of degree 0. Then we can build a filtered complete curved $\SL_\infty$-algebra $\fg^m$ with underlying graded vector space $\fg$ but $\SL_\infty$-operations
\[
  \mu_k^m(x_1,\dots,x_k)
  :=
  \sum_{l\geq 0} \frac 1 {l!} \mu_{k+l}(m,\dots,m,x_1,\dots,x_k).  
\]
We say that $\fg^m$ is obtained from $\fg$ by twisting with $m$.
In particular note that $\mu_0^m=\curv(m)$, so that if $m$ is a Maurer-Cartan element the curved $\SL_\infty$-algebra $\fg^m$ is in fact a(n honest, i.e., non-curved) $\SL_\infty$-algebra.
In this case $\mu_1^m$ is a differential, i.e., $\mu_1^m(\mu_1^m(x))=0$ for all $x\in \fg$.

\subsection{Schematic formalism for $\SL_\infty$-algebras}
\label{sec:Linfty_formalism}

There are several equivalent definitions of $L_\infty$- or $\SL_\infty$-algebras.
Here we will use the formulation of \cite[section 4]{FWAut} that is well adapted to our purposes.
Let $\fg$ be a filtered complete $\SL_\infty$-algebra as above. 
Take any graded commutative algebra $\cR$ and consider the completed tensor product 
$$
\fg \hotimes \cR =\lim_\leftarrow \left(\fg\otimes \cR/\mF^p\fg\otimes \cR\right).
$$
This is then a (curved) $\SL_\infty$-algebra over $\cR$. Furthermore, by polarization, it is sufficient to know for each $\cR$ and $x\in \fg \hotimes \cR$ of degree 0 the value of the function
\[
\mU^{\fg}_{\cR}(x) := \curv(x) = \sum_{k\geq 0} \frac 1 {k!} \mu_k(x,\dots,x).
\]
We often treat the extension of the ground ring to $\cR$ as implicit and write $\mU^{\fg}=\mU^{\fg}_{\cR}$, or even $\mU=\mU^{\fg}_{\cR}$ if $\fg$ is clear from the context. 
This then encodes all $\mu_k$ for arguments of arbitrary degrees. In the non-curved case one requires that the series starts at $k=1$.
Furthermore, the $\SL_\infty$-relations can be neatly encoded as the statement that 
\begin{equation}
  \label{equ:Linfty structure series}
 D\mU(x)[\mU(x)]=0  
\end{equation}
for all $x$ and $\cR$ as above, where $D$ denotes the derivative with respect to $x$.
Equivalently, avoiding the derivative this can be formulated as saying that 
\begin{equation}
  \label{equ:Linfty structure series 2}
  \mU(x+\epsilon\mU(x)) =\mU(x)
\end{equation}
where $\epsilon$ is a formal variable of degree $-1$ that we adjoin to our ring $\cR$.

A curved $\SL_\infty$-morphism between curved $\SL_\infty$-algebras $F:\fg\to \fh$ is a collection of linear maps 
\[
 \{ F_k : S^k(V) \to V \}_{k\geq 0}
\]
that are compatible with the filtrations in the sense that 
\begin{equation}\label{equ:Linfty morphism filtration compat}
  F_k(\mF^{p_1}\fg,\dots,\mF^{p_k}\fg)
  \subset \mF^{p_1+\dots+p_k}\fh.
\end{equation}
and that satisfy a polynomial system of equations.
Again we may encode the morphism $F$ as a series 
\[
F(x) = \sum_k \frac 1 {k!} F_k(x,\dots,x)
\]
defined on degree zero elements $x\in \fg \hotimes \cR$.
Then the system of equations to be satisfied by $F$ is just
\[
  \mU^{\fh}(F(x))=
DF(x)[\mU^{\fg}(x)]
\]
for all $x$ and $\cR$ as above.
Again this may be cast in the derivative-free form
\begin{equation}\label{equ:Linfty morphism series}
  F(x)+\epsilon \mU^{\fh}(F(x))
  =
  F(x+\epsilon \mU^{\fg}(x))
\end{equation}
with $\epsilon$ a degree $-1$ variable that squares to zero.

A (flat) $\SL_\infty$-morphism is a curved $\SL_\infty$-morphism with $F_0=0$.

\subsection{The Maurer-Cartan space of a filtered complete $\SL_\infty$-algebra}\label{sec:Linfty_MC}

One defines the Maurer-Cartan space of a filtered complete curved $\SL_\infty$-algebra $\fg$ to be the simplicial set 
\[
  \MC_\bullet(\fg) = \MC(\fg\hotimes \Omega(\Delta^\bullet)),
\]
where $\Omega(\Delta^k)$ is the dg commutative algebra of polynomial differential forms on the $k$-simplex.
We shall recall a few properties of this space.
First, $\pi_0\MC_\bullet(\fg)$ is identified with the gauge-equivalence classes of Maurer-Cartan elements.
Let $\tau\in \MC(\fg)$ be a Maurer-Cartan element.
Then it is shown in \cite[Theorem 5.5]{Berglund} that the higher homotopy groups may be computed as the homology groups of the twisted $\SL_\infty$-algebra
\[
\pi_k(\MC_\bullet(\fg)) \cong H_k(\fg^\tau) := H^{-k}(\fg^\tau)
\]
for $k\geq 1$.
For $k\geq 2$ the group operation here is just addition, while for $k=1$ it is defined using the Campbell-Baker-Hausdorff formula, cf. loc. cit.
Clearly, if $\fg$ is of finite type, or at least each $\fg^\tau$ is of finite homological type, than so are all homotopy groups of any connected component of $\MC_\bullet(\fg)$.
In this case $\pi_1$ is automatically a nilpotent group and it acts in a nilpotent manner on the higher homotopy groups due to the compatibility relations \eqref{equ:Linfty filtration compat}. Hence (each component of) $\MC_\bullet(\fg)$ is a nilpotent space.

Furthermore, the cohomology of $\MC_\bullet(\fg)$ is computed by a version of the Chevalley-Eilenberg complex of $\fg$.
Typically, for this one needs to require that $\fg$ is nilpotent of finite type, see \cite[Corollary 1.3]{Berglund}.
However, in most of our examples one is lucky in that $\fg$ is in fact the dual of an $\SL_\infty$-coalgebra $\fc$, $\fg=\fc^*$, and furthermore, it has homology of finite type.
In this case one can show (see  \cite{WillwacherRecollections}) that the cohomology of the connected component $\MC_\bullet(\fg)_\tau$ is computed by the the (co-)Chevalley-Eilenberg complex
\[
C_{CE}(\tru(\fc^m)) 
\]
of the degree (co)truncation of the twist $\fc^m$ of $\fc$. More concretely, $\tru(\fc^m)$ is the quotient of $\fc^m$ by all elements of negative cohomological degree and the exact elements in degree zero.

\subsection{$\SL_\infty$ obstruction theory}
We recall here basic obstruction theory in the context of curved $\SL_\infty$-algebras. We focus on the simplest possible case.
For a comprehensive overview and more general versions of these techniques we refer to \cite{Silvan}.

We are interested in finding conditions for the existence of a Maurer-Cartan element in a curved filtered complete $\SL_\infty$-algebra.
To do this we assume that $\fg$ is equipped with a second complete filtration 
\[
\mG^0\fg = \fg \supset \mG^1\fg \supset \cdots 
\]
also compatible with the $\SL_\infty$-operations.
Explicitly, we allow this filtration, used for the obstruction theory, to be different from the filtration $\mF^p$ above needed to handle convergence issues.

Suppose that the curvature of $\fg$ satisfies $\mu_0\in \mG^1\fg$.
Then, although $\mu_1$ does not define a differential on $\fg$, it still defines a differential on the associated graded $\hgr_{\mG}\fg$ since 
\[
  \mu_1(\mu_1(x))=\pm \mu_2(\mu_0, x)\in \mG^{q+1} \fg
\]
for all $x\in \mG^q\fg$. 
One furthermore has the following result.

\begin{lemma}\label{lem:preMC}
Let $\fg$ be a filtered complete curved $\SL_\infty$-algebra as above with $\mu_0\in \mG^p\fg$ for some $p\geq 1$ such that 
\[
H^1(\hgr_{\mG} \fg)=0 .
\]
Then there is a degree zero element $m\in\mG^{p}\fg$ such that $\curv(m)\in \mG^{p+1}\fg$.
\end{lemma}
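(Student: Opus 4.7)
The plan is to build $m$ as a first-order solution of the Maurer-Cartan equation that kills the curvature one step up the filtration. For any degree-$0$ element $m \in \mG^p \fg$, compatibility of the filtration $\mG^\bullet$ with the $\SL_\infty$-operations (the $\mG$-analogue of \eqref{equ:Linfty filtration compat}) gives $\mu_k(m,\ldots,m) \in \mG^{kp}\fg$ for $k \geq 2$, and since $p \geq 1$ one has $kp \geq 2p \geq p+1$. Modulo $\mG^{p+1}\fg$ the curvature therefore reduces to
\[
\curv(m) \equiv \mu_0 + \mu_1(m) \pmod{\mG^{p+1}\fg},
\]
so the task becomes to find $m \in \mG^p\fg$ of degree $0$ with $\mu_1(m) \equiv -\mu_0 \pmod{\mG^{p+1}\fg}$.

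Next I would verify that $\mu_1$ descends to a genuine differential on the associated graded $\hgr_\mG \fg$, so that this obstruction becomes a cohomology class. The $\SL_\infty$-axiom obtained by linearising \eqref{equ:Linfty structure series 2} around $x=0$ reads $\mu_1(\mu_1(x)) + \mu_2(\mu_0,x) = 0$. Because $\mu_0 \in \mG^p\fg$ with $p \geq 1$, the defect $\mu_2(\mu_0,-)$ strictly raises the $\mG$-filtration and hence vanishes on $\hgr_\mG \fg$, giving $\mu_1^2 = 0$ there. The Bianchi identity $\mu_1(\mu_0) = 0$, obtained from the same structure equation by evaluating $\mU(\epsilon \mu_0) = \mu_0$ at first order in the degree $-1$ variable $\epsilon$, then shows that the class of $\mu_0$ in $\mG^p\fg/\mG^{p+1}\fg$ is a cocycle of cohomological degree $1$.

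By hypothesis $H^1(\hgr_\mG \fg) = 0$, so in particular its weight-$p$ summand $H^1(\mG^p\fg/\mG^{p+1}\fg)$ vanishes, and $[\mu_0]$ must be a coboundary. Picking any lift of a primitive to $\mG^p\fg$ produces a degree-$0$ element $m \in \mG^p\fg$ satisfying $\mu_1(m) + \mu_0 \in \mG^{p+1}\fg$, and combined with the reduction of the first paragraph this yields $\curv(m) \in \mG^{p+1}\fg$, as required. The only step demanding genuine care is the verification that $\mu_1$ descends to a differential on $\hgr_\mG \fg$; the rest is a direct unfolding of the filtration axioms and the definition of $\curv$.
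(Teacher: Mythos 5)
Your proof is correct and follows essentially the same route as the paper's: reduce $\curv(m)$ modulo $\mG^{p+1}\fg$ to $\mu_0+\mu_1(m)$ using $p\geq 1$, note that $\mu_1(\mu_0)=0$ makes $[\mu_0]$ a degree-$1$ cocycle in the associated graded, and kill it using $H^1(\hgr_{\mG}\fg)=0$. The only difference is that you spell out the verification that $\mu_1$ descends to a differential on $\hgr_{\mG}\fg$, which the paper establishes in the discussion immediately preceding the lemma rather than inside the proof.
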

\begin{proof} 
The curvature $\mu_0$ of the curved $\SL_\infty$-algebra is always a closed degree 1 element by the $\SL_\infty$-relations, 
\[
\mu_1(\mu_0)=0.  
\]
Hence by the vanishing of the first cohomology of the associated graded there is some $m\in \mG^{p}\fg$ of degree 0 such that $\mu_0+\mu_1(m)\in \mG^{p+1}\fg$.
Hence 
\[
\curv(m)=\mu_0+\mu_1(m)+\frac 1 2 \mu_2(m,m)+\cdots
\in \mG^{p+1}\fg,
\]
using that $p\geq 1$.
\end{proof}
\begin{cor}\label{cor:MCexistence1}
  Let $\fg$ be a filtered complete curved $\SL_\infty$-algebra as above such that $\mu_0\in \mG^1\fg$ and 
  \[
H^1(\hgr_{\mG} \fg)=0 .
\]
  Then there is a Maurer-Cartan element in $\mG^{1}\fg$.
\end{cor}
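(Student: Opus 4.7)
The plan is to iterate Lemma \ref{lem:preMC} and pass to a $\mG$-adic limit. Concretely, I would inductively construct a sequence of degree zero elements $m_0=0, m_1, m_2, \dots \in \fg$ satisfying $m_k - m_{k-1} \in \mG^k\fg$ and $\curv(m_k) \in \mG^{k+1}\fg$. The base case is immediate from the hypothesis $\mu_0 = \curv(0) \in \mG^1\fg$.

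For the inductive step, given $m_{k-1}$, I would consider the twisted curved $\SL_\infty$-algebra $\fg^{m_{k-1}}$. Its curvature is $\mu_0^{m_{k-1}} = \curv(m_{k-1}) \in \mG^k\fg$ by the inductive hypothesis, and its unary operation is
\[
\mu_1^{m_{k-1}}(x) = \mu_1(x) + \sum_{j\geq 1} \frac{1}{j!} \mu_{1+j}(m_{k-1},\dots,m_{k-1},x).
\]
Since $m_{k-1} \in \mG^1\fg$ and the operations $\mu_l$ are compatible with $\mG$, every term with $j \geq 1$ lands in a strictly higher piece of the $\mG$-filtration. Hence the induced differential on $\hgr_{\mG} \fg^{m_{k-1}} = \hgr_{\mG}\fg$ coincides with $\mu_1$, and in particular $H^1(\hgr_{\mG}\fg^{m_{k-1}}) = 0$. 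Applying Lemma \ref{lem:preMC} to $\fg^{m_{k-1}}$ with $p=k$ then produces a degree zero element $\delta_k \in \mG^k\fg$ with $\curv_{m_{k-1}}(\delta_k) \in \mG^{k+1}\fg$, where $\curv_{m_{k-1}}$ denotes the curvature in $\fg^{m_{k-1}}$. A direct expansion of the defining series, together with the binomial identity $\sum_{j+l=n}\binom{n}{j}\mu_n(m_{k-1}^{\otimes l}, \delta_k^{\otimes j}) = \mu_n((m_{k-1}+\delta_k)^{\otimes n})$, yields
\[
\curv(m_{k-1}+\delta_k) = \curv_{m_{k-1}}(\delta_k),
\]
so setting $m_k := m_{k-1} + \delta_k$ closes the induction.

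Since the filtration $\mG$ is complete and the increments $m_k - m_{k-1}$ lie in $\mG^k\fg$, the sequence $(m_k)$ converges $\mG$-adically to some $m \in \mG^1\fg$. Continuity of $\curv$ with respect to $\mG$, which is immediate from the compatibility \eqref{equ:Linfty filtration compat} of the $\SL_\infty$-operations with the filtration, together with $\curv(m_k) \in \mG^{k+1}\fg$, gives $\curv(m) = \lim_k \curv(m_k) = 0$. Hence $m$ is the desired Maurer-Cartan element.

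The only nontrivial point to verify is the invariance of the associated graded differential under twisting, i.e.\ the equality $H^1(\hgr_{\mG}\fg^{m_{k-1}}) = H^1(\hgr_{\mG}\fg)$, which reduces to observing that $m_{k-1} \in \mG^1\fg$. The remainder of the argument is standard obstruction-theoretic bookkeeping together with the convergence statement in the complete filtration $\mG$.
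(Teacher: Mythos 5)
Your proposal is correct and follows essentially the same route as the paper: the same inductive construction of approximate Maurer--Cartan elements $m_k$ with $\curv(m_k)\in\mG^{k+1}\fg$ via Lemma \ref{lem:preMC} applied to the twist $\fg^{m_{k-1}}$, followed by passage to the $\mG$-adic limit. The only difference is that you spell out a few steps the paper leaves implicit (invariance of the associated graded differential under twisting, the identity $\curv(m_{k-1}+\delta_k)=\curv_{m_{k-1}}(\delta_k)$, and continuity of $\curv$), all of which are verified correctly.
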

\begin{proof}
  We show the result by constructing a sequence of elements $m_0,m_1,m_2,\dots \in \mG^{1}\fg$
  such that 
  \begin{align}
    \curv(m_k)&\in \mG^{k+1}\fg 
    \\ 
    \label{equ:MCkk1}
    \text{and} \quad\quad  x_{k}:=m_{k}-m_{k-1}&\in \mG^{k}\fg.
  \end{align}
  To start with we set $m_0=0$.
  Suppose by induction that $m_{k-1}$ as above has been constructed.
  Then the twisted curved $L_\infty$-algebra $\fg^{m_{k-1}}$ satisfies the assumptions of Lemma \ref{lem:preMC} for $p=k$. (Note that the twist does not contribute to the differential on the associated graded.)
By the Lemma we hence obtain an element $x_{k}\in \mG^{k}\fg$ such that 
\[
\curv_{\fg^{m_{k-1}}}(x_{k})=\curv_{\fg}(m_{k-1}+x_{k})\in \mG^{k+1}\fg.
\]
We hence set $m_{k}:=m_{k-1}+x_k$.
Finally, the desired Maurer-Cartan element in $\mG^1\fg$ is
\[
m = \lim_{k\to \infty} m_k= \sum_{k=1}^\infty x_k,  
\]
using that the series is convergent by \eqref{equ:MCkk1}.



\end{proof}

\section{Deformation complexes for Hopf cooperadic comodules}
\label{sec:def cx}
It is a recurring theme of algebraic deformation theory that the set of maps between two (sufficiently nice) algebraic objects can be identified with the Maurer-Cartan elements of a dg Lie or $L_\infty$-algebra, which we will call the deformation dg Lie algebra, or deformation complex.
It has been shown by Fresse and the author in \cite{FWAut} that this theme can be realised for Hopf cooperads, or $\La$ Hopf cooperads. Here we describe a variant of their construction for $\La$ Hopf comodules.

\subsection{Construction of the deformation complex}
\label{subsec:def cx}
We fix a reduced dg $\La$ Hopf cooperad $\COp$.
Suppose that we are given two $\La$ Hopf comodules $\MOp$ and $\NOp$ that satisfy the following properties.
\begin{enumerate}
\item $\NOp$ is quasi-cofree as a dg $\COp$-comodule, cogenerated by the symmetric sequence $W$, so that $\NOp = (\FreeMod_{\COp}^c W, d)$.
\item $\MOp$ is quasi-free as a $\La$ Hopf collection, generated by the symmetric sequence $V$. Concretely, this means the following. Let $\hat V$ be the free $\La$ Hopf collection generated by $V$, i.e., 
\[
\hat V(r) \cong \oplus_{f: [k]\hookrightarrow [r]} V(k),  
\]
where the direct sum is over all order preserving injections.
Then $\MOp(r)$ is quasi-free as a dg commutative algebra, generated by $\hat V(r)$, so that $\MOp(r)=S(\hat V(r))$.
We will write for short 
\[
S_\La V= S(\hat V).  
\]
\item In addition we require that the generating symmeric sequence $V$ is equipped with an exhaustive ascending fitration of finite dimensional symmetric sequences
\[
0=\mF^0V(r)\subset \mF^1V(r)\subset \cdots
\]
that is compatible with the $\La$ Hopf $\COp$ comodule structure in the following sense.
First we prolong our filtration on the generating symmetric sequence $V$ to $\hat V$ and then to $\MOp$ by defining  
\[
  \mF^p\MOp(r) = 
  \mF^p(S_\La V)(r)
  =
  \sum_{p_1+\dots+p_m\leq p \atop m\geq 0}
  \mathrm{im}\left(\mF^{p_1}\hat V(r)\otimes \cdots\otimes \mF^{p_m}\hat V(r) \right).
\]

\begin{itemize}
\item Then we require first that the differential on $\MOp$ is compatible with this filtration in the sense that $d\mF^p\MOp(r)\subset \mF^p\MOp(r)$.
\item Second we require that the comodule coaction is compatible with the filtration in the sense that for $x\in \mF^p\MOp(r)$ the images of the coaction are contained in $\mF^p\MOp(r-s+1)\otimes \COp(s)$.
\end{itemize}
Note that the filtration is automatically compatible with the commutative algebra structure and the $\La$ structure by construction.
\end{enumerate}

We call a pair $(\MOp, \NOp)$ of $\La$ Hopf $\COp$ comodules satisfying the above technical conditions a \emph{good pair of $\La$ Hopf $\COp$ comodules}.
Generally, for a differential graded object $X$ we denote by $X^\flat$ the underlying graded object, i.e., $X^\flat$ is obtained from $X$ by setting the differential to zero.

\begin{lemma}\label{lem:freecofree}
Let $(\MOp, \NOp)$ be a good pair of $\La$ Hopf $\COp$ comodules and let $\cR$ be any graded commutative algebra. Then the inclusion of generators and projection to cogenerators induces a bijection
\[
\Mor_{\gLaHModc_{\cR\otimes \COp^\flat}/\cR}(\cR\otimes \MOp^\flat,\cR\otimes \NOp^\flat) 
\to
\Mor_{\bbS/\cR}(\cR\otimes V, \cR\otimes W).
\]
Here $\Mor_{\gLaHModc_{\cR\otimes \COp^\flat}/\cR}(\cdots)$ denotes the set of morphisms of right $\La$-Hopf comodules over $A\otimes \COp^\flat$, with the ground ring $\cR$, and $\Mor_{\bbS/\cR}(\dots)$ denotes the set of morphisms of symmetric sequences in $\cR$-modules.
\end{lemma}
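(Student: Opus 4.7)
The plan is to construct an explicit inverse to the restriction-and-projection map and verify the two are mutually inverse, leveraging the freeness of $\MOp^\flat$ and the cofreeness of $\NOp^\flat$. Since only the underlying graded objects enter, differentials play no role, and the argument proceeds in the $\cR$-linear category of graded $\La$ Hopf $\cR\otimes\COp^\flat$-comodules. Scalar extension by $\cR$ preserves both freeness and cofreeness, so $\cR\otimes\MOp^\flat=\cR\otimes S_\La V$ is the free graded Hopf $\La$ collection on $\cR\otimes V$ over $\cR$, and $\cR\otimes\NOp^\flat=\cR\otimes\FreeMod^c_{\COp^\flat}W$ is the cofree right graded $\cR\otimes\COp^\flat$-comodule on $\cR\otimes W$ over $\cR$.

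Given $f\in\Mor_{\bbS/\cR}(\cR\otimes V,\cR\otimes W)$, I would first produce a symmetric-sequence map $\bar f:\cR\otimes\MOp^\flat\to\cR\otimes W$ by combining $f$ with the projection of $\MOp^\flat$ onto the subsequence of $\La$-identity generators $V\subset\hat V\subset S_\La V$, and then invoke cofreeness of $\cR\otimes\NOp^\flat$ to obtain a unique $\cR\otimes\COp^\flat$-comodule map $\tilde f:\cR\otimes\MOp^\flat\to\cR\otimes\NOp^\flat$ whose cogenerator projection is $\bar f$. By design, the composition of $\tilde f$ with inclusion of generators $\cR\otimes V\hookrightarrow\cR\otimes\MOp^\flat$ and projection to cogenerators $\cR\otimes\NOp^\flat\twoheadrightarrow\cR\otimes W$ recovers $f$.

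The heart of the proof, and the principal obstacle, is to show that this $\tilde f$, initially only a $\cR\otimes\COp^\flat$-comodule map, is automatically a Hopf $\La$ collection map, i.e., compatible with both the arity-wise commutative algebra structure and the $\La$ structure on $\cR\otimes\MOp^\flat$ and $\cR\otimes\NOp^\flat$. I would establish this by induction along the ascending filtration $\mF^p\MOp$. The base case $\mF^0\MOp=\cR$ is trivial. For the inductive step, I would invoke condition (3) of the definition of a good pair (the filtration-compatibility of the coaction $\rho_\MOp$) together with the fact that the commutative algebra and $\La$ structures on $\MOp$ are themselves compatible with $\rho_\MOp$; this allows one to compare the values of $\tilde f$ and of the algebra/$\La$-extensions on elements of $\mF^{p+1}\MOp$ by projecting to cogenerators and reducing to their arity-decreased images under the coaction, where the inductive hypothesis applies. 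Exhaustiveness of the filtration then propagates the conclusion to all of $\cR\otimes\MOp^\flat$.

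Finally, the two maps are mutually inverse: one composition returns $f$ by construction, while the other follows from uniqueness. Any $\varphi\in\Mor_{\gLaHModc_{\cR\otimes\COp^\flat}/\cR}(\cR\otimes\MOp^\flat,\cR\otimes\NOp^\flat)$ is simultaneously a $\cR\otimes\COp^\flat$-comodule map (hence uniquely determined by its cogenerator projection $p\circ\varphi:\cR\otimes\MOp^\flat\to\cR\otimes W$, by cofreeness of $\cR\otimes\NOp^\flat$) and a Hopf $\La$ collection map (hence $p\circ\varphi$ is uniquely determined by its restriction $p\circ\varphi|_{\cR\otimes V}=f$ via compatibility with the Hopf $\La$ structure). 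Our construction reproduces exactly this pair of data, so applying it to $f=p\circ\varphi|_V$ recovers $\varphi$, completing the bijection.
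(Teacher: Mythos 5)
Your overall strategy (use cofreeness of $\NOp^\flat$ to reduce comodule maps to their cogenerator projections, then argue that the Hopf $\La$ compatibility is forced) is in the right spirit, but the construction of the candidate inverse has a genuine flaw. You define $\bar f = f\circ p_V$, where $p_V$ kills all decomposables and all non-identity $\La$-images of generators, and then let $\tilde f$ be the unique $\cR\otimes\COp^\flat$-comodule map with $\pi\circ\tilde f=\bar f$. This is not the cogenerator projection of the morphism you want. If $F$ is the genuine Hopf $\La$ comodule extension of $f$, then on a product of generators one must have $F(x_1x_2)=F(x_1)F(x_2)$, hence $\pi(F(x_1x_2))=\pi\bigl(F(x_1)F(x_2)\bigr)$, which is the cogenerator projection of a product taken in the commutative algebra $\NOp(r)$ and is in general nonzero (e.g.\ for $\NOp=W\MOp$ the product of two elements has a nontrivial component on cogenerators). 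Your $\tilde f$ instead satisfies $\pi(\tilde f(x_1x_2))=0$, so $\tilde f\neq F$ and $\tilde f$ cannot be multiplicative. The subsequent induction cannot repair this: by cofreeness $\tilde f$ is already completely pinned down once $\bar f$ is chosen, so the "heart of the proof" step — that $\tilde f$ is automatically a Hopf $\La$ map — is simply false for your choice of $\bar f$. Note that your own uniqueness paragraph points at the correct recursion: $\pi\circ\varphi$ is determined from $f$ by the relations $\pi(\varphi(x_1\cdots x_k))=\pi(\varphi(x_1)\cdots\varphi(x_k))$ and $\pi(\varphi(\lambda(x')))=\pi(\lambda(\varphi(x')))$, and these values are not zero.

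The fix is to build the cogenerator projection $g=\pi\circ F$ recursively rather than in closed form: induct on the filtration degree $p$ of $\mF^p\MOp$ and, nested inside, on arity, defining $g(x_1\cdots x_k):=\pi(F(x_1)\cdots F(x_k))$ for $k\geq 2$ (the factors lie in lower filtration, where $F$ is already defined), $g(\lambda(x')):=\pi(\lambda(F(x')))$ for a nontrivial $\La$-operation $\lambda$ (lower arity), and $g(x):=f(x)$ on honest generators; then use cofreeness to promote $g$ to a comodule map at each stage and check compatibility with the product and $\La$-structure from the compatibilities already holding in $\NOp$. This is where the finite-dimensional exhaustive filtration from the definition of a good pair is actually used, and it is the route the paper takes. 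Your uniqueness argument then goes through essentially as you state it, since every step of this recursion is forced.
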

\begin{proof}
  Denote by $\iota:V\to \MOp$ the inclusion of generators and by $\pi: \NOp\to W$ the projection to cogenerators.
  The map of the Lemma hence associates to the module morphism $F:\cR\otimes \MOp^\flat\to \cR\otimes \NOp^\flat$ the morphism of symmetric sequences $f=\pi\circ F\circ \iota:\cR\otimes V^\flat\to \cR\otimes W^\flat$.

  Suppose that some map $f:\cR\otimes V\to \cR\otimes W$ of symmetric sequences is given. Then we will construct a corresponding map of $\La$ Hopf $\COp^\flat$ comodules $F$ such that $\pi\circ F\circ \iota=f$, showing surjectivity. 
  To simplify notation, let us furthermore drop $\cR$ from the notation, assuming tacitly that we always work of the ground ring $\cR$, and all objects have been replaced by their $\cR$-extension.

  We then construct $F$ by induction, using the filtration on $\MOp$. 
  To this end, also note that by our conditions $\mF^p \MOp^\flat$ is a graded $\COp^\flat$-comodule and we can write
  \[
  \MOp^\flat = \colim \mF^p \MOp^\flat.
  \]
  First, $\mF^0 \MOp^\flat=\Com^c$ is spanned by the units in each arity. Hence necessarily $F$ is defined on $\mF^0 \MOp^\flat$ to be the composition 
  \[
    \mF^0 \MOp^\flat=\Com^c\to \NOp^\flat.
  \] 
  This map is a map of graded $\La$ Hopf $\COp^\flat$-comodules.
  Next suppose that we have defined a map of $\La$ Hopf $\COp^\flat$-comodules
  \[
    F: \mF^{p-1} \MOp^\flat\to \NOp^\flat,
  \]
  such that $\pi\circ F\circ \iota=f\mid_{\mF^{p-1} V}$ and we wish to extend this map to $\mF^{p} \MOp^\flat$.
  We do this in turn by a nested induction on arity. We denote the arity truncation by $\mF^{p} \MOp^\flat_{\leq q}$. We suppose that we already have defined a map of truncated $\La$ Hopf $\COp$-comodules 
  \[
    F : \mF^{p} \MOp^\flat_{\leq r-1}\to \NOp^\flat,
  \]
  such that $\pi\circ F\circ \iota$ agrees with the appropriate restriction of $f$, and we desire to extend $F$ to arity $r$.
  (The base case of the induction $r=0$ is empty.)
  We do this by first defining a suitable map of symmetric sequences
  \[
    g : \mF^{p} \MOp^\flat_{\leq r} \to W,
  \]
  to play the role of $\pi\circ F$.
  To this end let $x=x_1\cdots x_k\in \mF^{p} \MOp^\flat(r)$ with $x_1,\dots,x_k\in\hat V(r)$. If $k\geq 2$, then necessarily each $x_j$ must be an element of $\mF^{p-1}\hat V$, and we use the inductively defined $F$ and set $g(x)=\pi( F(x_1)\cdots F(x_k))$. If $k=1$ we have that $x=\lambda(x')$, where $x'\in V$, and $\lambda$ is some $\La$-operation.
  If $\lambda$ is the identity operation then we set $g(x)=f(x)$.
  Otherwise $x'\in V(r')$ for some $r'<r$ and we set $g(x)=\pi(\lambda(F(x'))$, using that $F$ is defined on lower arities by induction.
  Finally, we define $F:\mF^{p} \MOp^\flat_{\leq r} \to \NOp^\flat$ to be the unique map of graded $\COp^\flat$-comodules such that $\pi\circ F=g$. In other words, we send each $x\in \mF^{p} \MOp^\flat_{\leq r}$ to the unique $y\in \MOp^\flat(r)$ such that $\pi(y)=g(x)$ and for each coaction $\Delta$ we have 
  \[
  \Delta(F(x)) = (F\otimes \id_{\COp} )(\Delta ) .
  \]
  In particular, note that if $x=x_1\cdots x_k$ is a nontrivial product, then the element $y=F(x_1)\cdots F(x_k)$ satisfies these equations by induction hypothesis and compatibility of product and coaction. Hence our $F$ is compatible with the product. By a similar argument it is also compatible with the $r$-truncated $\La$ operations. Furthermore $F$ obviously respects the $\COp$-coaction by definition. Also, it is clear that $\pi\circ F\circ \iota$ agrees with $f$.
Finally it can easily be seen that the $F$ thus defined indeed restricts in lower arity and fitration degree to the $F$ of previous induction stages.

Thus we have shown in particular the surjectivity of the map of the Lemma. However, tracing the above construction, the definition at every step was forced by the requirements that $F$ has to be a morphism $\La$ Hopf $\COp^\flat$-modules and $\pi\circ F\circ \iota=f$. Hence it follows that the map of the Lemma is also injective, and hence a bijection as claimed. 
\end{proof}

\begin{rem}
  Note that the exhaustive ascending filtration on $V$ induces a descending complete filtration on the internal homomorphism space $\iHom_{\bbS}(V,W)$ 
  \[
    \iHom_{\bbS}(V,W)= \mF^1\iHom_{\bbS}(V,W) \supset \mF^2\iHom_{\bbS}(V,W) \supset \cdots 
  \]
  such that 
  \[
  \mF^p\iHom_{\bbS}(V,W) 
  \]
  consists of those morphisms that vanish on $\mF^{p-1}V$.
In the setting of the Lemma we then also have 
\begin{equation}\label{equ:pre def tensor out}
  \Mor_{\bbS/\cR}(\cR\otimes V, \cR\otimes W)
  \cong \left( \cR\hotimes \iHom_{\bbS}(V,W) \right)_0,  
\end{equation}
where the right-hand side is the degree 0 subspace of the tensor product, completed with respect to the filtration above. Note that for this statement we use the finite dimensionality of the $\mF^pV$.
\end{rem}


So far we have ignored the differentials on our comodules. It turns out that the differentials give rise to a complete curved $\SL_\infty$-structure.

\begin{prop}\label{prop:Def Linfty}
  Let $\COp$ be a reduced $\La$ Hopf cooperad and let $(\MOp, \NOp)$ be a good pair of $\La$ Hopf $\COp$ comodules with generators and cogenerators $V$ and $W$ as above.
Then there is a filtered complete curved $\SL_\infty$-structure on the graded vector space
\[
  \Def(\MOp, \NOp):= \iHom_{\bbS}(V,W)
\]
such that for any dgca $\cR$ the Maurer-Cartan elements in $\cR\hotimes \Def(\MOp, \NOp)$ are in one-to-one correpsondence with the right $\La$ Hopf $\COp$-comodule morphisms over $\cR$
\[
  \cR\otimes \MOp\to  \cR\otimes \NOp.
\]
This one-to-one correpsondence is more precisely given by the map of Lemma \ref{lem:freecofree}, i.e., the Maurer-Cartan element $f$ corresponds to a module morphism $F$ whose restriction-projection to (co)generators is $f$.
\end{prop}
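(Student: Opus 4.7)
The plan is to apply the schematic formalism from section \ref{sec:Linfty_formalism}: exhibit a polynomial function $\mU$ on $\iHom_\bbS(V,W)$ and verify the master equation $\mU(x + \epsilon\, \mU(x)) = \mU(x)$, so that the Taylor coefficients of $\mU$ provide the operations $\mu_k$. Concretely, for any dgca $\cR$ and any degree zero $f \in \cR \hotimes \iHom_\bbS(V,W)$, the $\cR$-linear extension of Lemma \ref{lem:freecofree} produces a canonical morphism of graded $\La$ Hopf $\cR \otimes \COp^\flat$-comodules $F_f : \cR \otimes \MOp^\flat \to \cR \otimes \NOp^\flat$ satisfying $\pi \circ F_f \circ \iota = f$, where $\iota : V \hookrightarrow \MOp$ and $\pi : \NOp \twoheadrightarrow W$ denote the inclusion of generators and projection to cogenerators. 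I set
\[
\mU(f) := \pi \circ (d_\NOp \circ F_f - F_f \circ d_\MOp) \circ \iota.
\]
Since $F_f$ is built by a polynomial recursion from $f$, so is $\mU(f)$. The descending filtration on $\iHom_\bbS(V,W)$ induced by the ascending filtration on $V$ (where $\mF^p$ consists of maps vanishing on $\mF^{p-1} V$) is complete, and compatibility of $\mU$ with this filtration follows from the filtration-compatibility built into the definition of a good pair.

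The core work is to verify $\mU(f + \epsilon\, \mU(f)) = \mU(f)$ for $\epsilon$ a square-zero variable of degree $-1$. The key observation is first-order naturality of the recursive construction: for $f' = f + \epsilon g$, one has $F_{f'} = F_f + \epsilon\, G_{f,g}$, where $G_{f,g}$ is the unique degree $|g|$ morphism of graded $\COp^\flat$-comodules that is an $(F_f, F_f)$-derivation for the commutative algebra and $\La$ structures and satisfies $\pi \circ G_{f,g} \circ \iota = g$. Existence and uniqueness follow by a recursion parallel to that of Lemma \ref{lem:freecofree}, using cofreeness of $\NOp^\flat$ as a graded $\COp^\flat$-comodule. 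A direct expansion then yields $\mU(f') - \mU(f) = \pm \epsilon\, \pi \, [d, G_{f,g}]_{\text{gr}} \, \iota$. The crucial identification is that when $g = \mU(f)$, uniqueness forces $G_{f, \mU(f)} = D_f := d_\NOp F_f - F_f d_\MOp$, because $D_f$ is automatically a graded $\COp^\flat$-comodule morphism and an $(F_f, F_f)$-derivation (since $F_f$ is a morphism of graded $\La$ Hopf $\COp^\flat$-comodules) and satisfies $\pi D_f \iota = \mU(f)$. The master equation then reduces to the identity $[d, [d, F_f]] = 0$, which is immediate from $d_\MOp^2 = d_\NOp^2 = 0$.

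For the correspondence, a Maurer-Cartan element is a degree zero $f$ with $\mU(f) = 0$, i.e., $D_f$ vanishes on $\iota(V)$. By the same uniqueness argument used to identify $G_{f, \mU(f)}$, vanishing of $D_f$ on the generating sub-collection $\iota(V)$ forces $D_f = 0$ identically, so that $F_f$ is in fact a morphism of dg $\La$ Hopf $\cR \otimes \COp$-comodules. Conversely, any such morphism arises from a unique $f := \pi \circ F \circ \iota$ by Lemma \ref{lem:freecofree} and automatically satisfies $\mU(f) = 0$. The hardest step throughout will be the master-equation verification, which requires careful bookkeeping of how the recursive extension of Lemma \ref{lem:freecofree} propagates infinitesimal perturbations and interacts with the differentials; the parallel Hopf-cooperad construction carried out in \cite{FWAut} provides a blueprint.
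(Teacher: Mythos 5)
Your proposal is correct and follows essentially the same route as the paper: the same generating function $\mU(f)=\pi\circ(d_{\NOp}\circ\Phi(f)-\Phi(f)\circ d_{\MOp})\circ\iota$, the same first-order perturbation identity (which the paper obtains slightly more slickly by applying Lemma \ref{lem:freecofree} over $\cR[\epsilon]$ rather than re-running the recursion for derivations), the same identification of the perturbation with $[d,\Phi(f)]$, and the same reduction of the master equation to $d^2=0$; the paper merely outsources this verification to the more general Proposition \ref{prop:Lifty semidirect}. The only nitpick is that $\mU(f)=0$ says $\pi\circ D_f\circ\iota=0$, not that $D_f$ vanishes on $\iota(V)$, but the uniqueness argument you invoke handles this correctly anyway.
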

\begin{proof}
  The proof is formal given Lemma \ref{lem:freecofree} and identical to the proof of the analogous statement for Hopf cooperads \cite[Theorem 21]{FWAut}. 

  In particular, using the schematic formalism for $\SL_\infty$-algebras of section \ref{sec:Linfty_formalism} we define the generating function encoding the curved $\SL_\infty$-operations to be
  \begin{equation}\label{equ:Def gen func}
\mU(f) = \pi\circ( d_{\NOp}\circ \Phi(f) - \Phi(f) \circ d_{\MOp}  ) \circ\iota ,
  \end{equation}
with $\pi$ and $\iota$ the projection to cogenerators and includion of generators respectively, and $\Phi$ the inverse isomorphism of Lemma \ref{lem:freecofree}.
For the verification that this enerating function defines a $\SL_\infty$-algebra structure we refer to the proof of Proposition \ref{prop:Lifty semidirect} below, where a more general statement is shown.

It is however clear that the Maurer-Cartan equation $\mU(f)=0$ is equivalent to the statement that $\Phi(f)$ intertwines the differentials, and is hence a morphism of dg $\La$ Hopf $\COp$ comodules.

\end{proof}


\subsection{Dg Lie algebra action}\label{sec:dg Lie action}
Next let $(\MOp, \NOp)$ be a good pair of $\La$ Hopf $\COp$-comodules as above and let $\fg$ be a filtered complete dg Lie algebra.
Suppose that $\fg$ acts on $\MOp$ by biderivations.
By this we mean that for homogeneous $x\in \fg$ and $m,m'\in \MOp$ we have the compatibility equations
\begin{equation}\label{equ:bider relations}
\begin{aligned}
x\cdot (mm')&= (x\cdot m)m' + (-1)^{|x||m|}m(x\cdot m') \\
x\cdot \mu_j^c(m) &= \mu_j^c (x\cdot m) \\
\Delta (x\cdot m) &= ((x\cdot) \otimes \id_{\COp} )\Delta(m),
\end{aligned}
\end{equation}
where $x\cdot$ denotes the action of $x$, $\mu_j^c$ are the defining $\La$-operations as in \eqref{equ:mujc mod} and $\Delta$ denotes the $\COp$-coaction. 
We shall also assume that the action of $\fg$ on $\MOp$ is compatible with the filtrations on $\fg$ and $\MOp$ in the sense that 
\begin{equation}\label{equ:action filtr compat}
\mF^p\fg \cdot \mF^q\MOp \subset \mF^{q-p}\MOp.  
\end{equation}

\begin{prop}\label{prop:Lifty semidirect}
In the setting above there is a filtered complete curved $\SL_\infty$-algebra structure on 
\[
  \fg \ltimes \Def(\MOp, \NOp):= 
  \fg[1] \oplus \Def(\MOp, \NOp)
\] 
satisfying the following properties:
\begin{itemize}
  \item For any graded commutative algebra $\cR$ the Maurer-Cartan elements $m=Z+f\in \MC(\cR\hotimes (\fg \ltimes \Def(\MOp, \NOp)))$ are in 1-1 correspondence to pairs consisting of a Maurer-Cartan element in $Z\in \cR\hotimes \fg$ and a morphism of $\La$ Hopf right $\COp$-comodules over $\cR$
\[
\Phi(f) : (\MOp\otimes \cR)^Z \to \NOp\otimes \cR.
\]  
\item 
The $\SL_\infty$-operations $(\nu_k)$ of $\fg \ltimes \Def(\MOp, \NOp)$ extend those on both factors in the following sense:
If all inputs to the $\nu_k$ are in $\fg$ and the output is projected to $\fg$ then the operations agrees with the dg Lie structure on $\fg$.
Furthermore, if all inputs are in $\Def(\MOp, \NOp)$ then so is the output and the structure agrees with the curved $\SL_\infty$-structure on $\Def(\MOp, \NOp)$ of Proposition \ref{prop:Def Linfty}.
Finally, the $\SL_\infty$-operations with mixed inputs (some in $\fg$, some in $\Def(\MOp, \NOp)$) take values in $\Def(\MOp, \NOp)$. \footnote{This is the reason for (ab)using the semidirect product notation.}
\end{itemize}
\end{prop}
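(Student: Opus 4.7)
The plan is to realize the desired curved $\SL_\infty$-structure via its generating function $\mU$ in the schematic formalism of section \ref{sec:Linfty_formalism}, generalizing the construction in Proposition \ref{prop:Def Linfty}. For a graded commutative algebra $\cR$ and a degree $0$ element $X = Z + f$ with $Z \in \cR \hotimes \fg[1]$ and $f \in \cR \hotimes \Def(\MOp,\NOp)$, I set
\[
\mU(X) \;=\; \bigl(dZ + \tfrac{1}{2}[Z,Z]\bigr) \;+\; \pi \circ \bigl(d_\NOp \circ \Phi(f) - \Phi(f) \circ (d_\MOp + Z \cdot)\bigr) \circ \iota,
\]
where $\Phi$ is the inverse of the bijection of Lemma \ref{lem:freecofree}, $\iota$ and $\pi$ are the inclusion of generators and projection to cogenerators, and $Z\cdot$ denotes the biderivation action of $Z$ on $\MOp$ extended $\cR$-linearly. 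The first summand lies in $\fg \hotimes \cR$, the second in $\Def(\MOp,\NOp) \hotimes \cR$. The filtration on $\fg \ltimes \Def(\MOp,\NOp)$ is the direct sum of the filtrations on $\fg$ and on $\Def(\MOp,\NOp)$, and compatibility of $\mU$ with this filtration is immediate from \eqref{equ:action filtr compat} and from the filtration compatibility of the differential and coaction on $\MOp$.

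Granted this formula, the three itemized properties can be read off directly. For the Maurer-Cartan correspondence, $\mU(Z+f) = 0$ splits by grading into the $\fg$-component $dZ + \tfrac12[Z,Z] = 0$ (i.e.\ $Z$ is MC in $\fg$) and the $\Def$-component asserting that $\Phi(f)$ intertwines $d_\NOp$ with the twisted differential $d_\MOp + Z\cdot$. The biderivation axioms \eqref{equ:bider relations} guarantee that $d_\MOp + Z\cdot$ is compatible with the $\La$ Hopf $\COp$-comodule structure on $\MOp$, so that when $Z$ is MC the pair $(\MOp, d_\MOp + Z\cdot)$ is a genuine dg $\La$ Hopf $\COp$-comodule $\MOp^Z$; combined with Lemma \ref{lem:freecofree}, which already provides $\Phi(f)$ as a morphism of graded $\La$ Hopf $\COp^\flat$-comodules, this says precisely that $\Phi(f)\colon \MOp^Z \to \NOp$ is a morphism of dg $\La$ Hopf $\COp$-comodules. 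Setting $f=0$ reduces $\mU$ to the dg Lie generating function on $\fg$; setting $Z = 0$ recovers the generating function \eqref{equ:Def gen func} from the proof of Proposition \ref{prop:Def Linfty}; and the remaining mixed term $-\pi \circ \Phi(f) \circ (Z\cdot) \circ \iota$ is valued in $\Def(\MOp,\NOp)$ and linear in $Z$, so mixed $\SL_\infty$-operations output in $\Def(\MOp,\NOp)$ and each carry exactly one $\fg$-input.

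The substantive step is the verification of the $\SL_\infty$-relation, which I phrase in the form \eqref{equ:Linfty structure series 2}: $\mU(X + \epsilon \mU(X)) = \mU(X)$ modulo $\epsilon^2$, with $\epsilon$ a formal parameter of degree $-1$. The $\fg$-component reduces to the standard closedness of the curvature of a dg Lie algebra, $d\mU_\fg(Z) + [Z,\mU_\fg(Z)] = 0$, and is independent of the $\Def$-direction. For the $\Def$-component, writing $\mU_\Def := \mU_\Def(Z,f)$ and $\mU_\fg := \mU_\fg(Z)$, the $\epsilon$-linear part of $\mU_\Def(Z + \epsilon\mU_\fg,\, f + \epsilon \mU_\Def)$ equals
\[
\pi \circ \bigl(d_\NOp \circ \Phi'(f)[\mU_\Def] - \Phi'(f)[\mU_\Def] \circ (d_\MOp + Z\cdot) - \Phi(f) \circ (\mU_\fg \cdot)\bigr) \circ \iota,
\]
with $\Phi'(f)[g]$ the linearization of the multiplicative extension at $f$ in direction $g$. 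The first two summands cancel by the ``un-twisted'' $\SL_\infty$-identity of Proposition \ref{prop:Def Linfty} applied with $d_\MOp + Z\cdot$ in place of $d_\MOp$, while the last is absorbed using the identity $(d_\MOp + Z\cdot)^2 = \mU_\fg \cdot$, a direct consequence of the biderivation relations \eqref{equ:bider relations}. I expect the main obstacle, as in the cooperadic case \cite[Theorem 21]{FWAut}, to be the careful bookkeeping of $\Phi'(f)[\,\cdot\,]$ against the various structure maps; conceptually the only genuinely new input beyond Proposition \ref{prop:Def Linfty} is the curvature identity $(d_\MOp + Z\cdot)^2 = \mU_\fg \cdot$, which supplies the correction precisely needed to balance the $\mU_\fg$-contribution in the $\Def$-part.
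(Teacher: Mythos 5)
Your proposal is correct and follows essentially the same route as the paper: the identical generating function $\mU(Z+f)=\mU^{\fg}(Z)+\pi\circ(d_{\NOp}\circ\Phi(f)-\Phi(f)\circ(d_{\MOp}+Z\cdot))\circ\iota$, verification of the $\SL_\infty$-relation in the derivative-free form \eqref{equ:Linfty structure series 2} via the substitution $\Phi(f+\epsilon\,\pi\circ D\circ\iota)=\Phi(f)+\epsilon D$, and the same key cancellation $(dZ)\cdot+(Z\cdot)(Z\cdot)=\mU^{\fg}(Z)\cdot$ coming from the dg Lie action axioms \eqref{equ:bider relations}. The paper merely performs this as one combined computation rather than splitting it into the untwisted identity plus the curvature identity, so no substantive difference remains.
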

\begin{proof}
We will again use the formalism for $\SL_\infty$-algebras outlined in section \ref{sec:Linfty_formalism} and define the generating function to be 
\[
\mU(Z+f) = \mU^{\fg}(Z) + \pi\circ( d_{\NOp}\circ \Phi(f) - \Phi(f) \circ (d_{\MOp} + Z\cdot)  ) \circ\iota ,
\]
where we have abbreviated
\[
  \mU^{\fg}(Z) = dZ+\frac 1 2 [Z,Z],
\]
with the bracket and differential that of $\fg$.
Let us verify the $\SL_\infty$-relations in the form \eqref{equ:Linfty structure series}.
We will neeed use that for a $\La$ Hopf comodule map $F$ we have $\Phi(\pi\circ F\circ \iota)=F$.
In particular, we have that for a derivation $D$ of the map $\Phi(f)$
\[
  \Phi(f+\epsilon \pi \circ D\circ \iota)
  =
  \Phi(\pi\circ (\Phi(f) + \epsilon D )\circ \iota)
  =
  \Phi(f) + \epsilon D.
\]
We then compute
\begin{align*}
  \mU(Z+f + \epsilon \mU(Z+f))
&=
\mU^{\fg}(Z+\epsilon \mU^{\fg}(Z))
+
\pi\circ( d X -
 X (d + Z\cdot+\epsilon \mU^{\fg}(Z)\cdot ) 
) \circ \iota
\end{align*}
with 
\[
X :=
\Phi(f + \epsilon \pi \underbrace{(d\Phi(f)-\Phi(f)(d+Z\cdot))}_{\text{derivation of $\Phi(f)$}} \iota )
= 
\Phi(f)+\epsilon (d\Phi(f)-\Phi(f)(d+Z\cdot)).
\]
Using that $d^2=0$ and abbreviating $F:=\Phi(f)$ we compute
\begin{align*}
  d X 
  - X (d + Z\cdot+\epsilon \mU_{\fg}(Z)\cdot )
  &=
  d F 
  - F (d + Z\cdot )
  +\epsilon F
  ((dZ)\cdot + (Z\cdot)(Z\cdot)- \mU^{\fg}(Z)\cdot)
  \\&=
  d F 
  - F (d + Z\cdot ).
\end{align*}
Inserting, we obtain 
\begin{align*}
  \mU(Z+f + \epsilon \mU(Z+f))
  &=
  \mU^{\fg}(Z)
  + 
  \pi (d F 
  - F (d + Z\cdot ))\iota
= \mU(Z+f)
\end{align*}
as desired.

Then the stated properties of the $\SL_\infty$-structure are clear from the construction and the discussion above.
\end{proof}

\subsection{Dg Lie algebra actions and mapping spaces}
Let $\fg$ be a filtered complete dg Lie algebra. 
The exponential group of $\fg$ is the simplicial group 
\[
  \Exp_\bullet(\fg) = Z_0(\fg\hotimes \Omega(\Delta^\bullet))
\]
consisting of the closed degree zero elements $Z_0(-)$, with the composition given by the Baker-Campbell-Hausdorff product $*$.
That is, for $x,y\in Z_0(\fg\hotimes \Omega(\Delta^\bullet))$ we have 
\[
x*y = \log(e^xe^y) = x+y+\frac12 [x,y]+\cdots.  
\]

Let again $\MOp$ be a right $\La$ Hopf $\COp$-comodule and suppose that $\fg$ acts on $\MOp$ by biderivations.
Then in particular we obtain a map of simplicial monoids
\[
\Exp_\bullet(\fg) \to \Map(\MOp, \MOp),
\]
hence an action of the exponential group of $\fg$ on $\MOp$. 
Given a map of right $\La$ Hopf $\COp$-comodules $f:\MOp\to \NOp$ we also obtain a map of simplicial sets 
\begin{equation}\label{equ:Exp to M to N}
  \Exp_\bullet(\fg) \to \Map(\MOp, \NOp)
\end{equation}
by acting on (i.e., composing with) $f$.

Now suppose we are in the setting of subsection \ref{subsec:def cx}, i.e., that $\MOp, \NOp$ form a good pair. Then we can write the mapping space as the Maurer-Cartan space of the deformation $\SL_\infty$-algebra
\[
\Map(\MOp, \NOp) = \MC_\bullet(\Def(\MOp, \NOp)).
\]
Abusing notation, we denote the Maurer-Cartan element corresponding to the map $f$ by $f\in \MC(\Def(\MOp, \NOp))$ as well.
Furthermore, we denote by 
\[
  \Def(\MOp\xrightarrow{f} \NOp) := \Def(\MOp, \NOp)^f  
\]
the twisted $\SL_\infty$-algebra. Note that this is an honest $\SL_\infty$-algebra, not a curved one. The twisting does not alter the Maurer-Cartan space, but $\MC_\bullet(
  \Def(\MOp \xrightarrow f \NOp) )$ has a distinguished basepoint given by the zero MC element, corresponding to $f$.
Below we will then need the following result.
\begin{prop}\label{prop:linfty from g on M action}
Let $\MOp, \NOp$ be a good pair of $\La$ Hopf $\COp$ comodules, $f:\MOp\to \NOp$ a map of $\La$ Hopf $\COp$ comodules and $\fg$ a dg Lie algebra with a good action on $\MOp$ in the sense of section \ref{sec:dg Lie action}.
Let $\fg_{ab}$ be the abelianization of $\fg$, considered as a $\SL_\infty$ algebra.
Then there is a $\SL_\infty$-morphism 
\[
\Phi : \fg_{ab} \to \Def(\MOp \xrightarrow f \NOp)
\]
compatible with the filtrations such that the 
map of simplicial sets \eqref{equ:Exp to M to N} 
agrees with the morphism on Maurer-Cartan spaces 
\[
\Exp_\bullet(\fg)=\MC_\bullet(\fg_{ab})
\to 
\MC_\bullet(
\Def(\MOp \xrightarrow f \NOp) )
=
\Map(\MOp, \NOp).
\]
\end{prop}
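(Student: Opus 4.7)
The plan is to write down $\Phi$ explicitly via exponentiation of the action and then reduce the $\SL_\infty$-morphism equations to a Duhamel-type identity. Using the schematic formalism of Section~\ref{sec:Linfty_formalism}, I would define, for any dgca $\cR$ and any degree-zero element $Z \in \cR \hotimes \fg$, the generating function
\[
\Phi(Z) := \pi \circ \bigl( f \circ e^{Z\cdot} - f \bigr) \circ \iota \in \cR \hotimes \Def(\MOp, \NOp),
\]
where $Z\cdot \colon \cR \hotimes \MOp \to \cR \hotimes \MOp$ denotes the action of $Z$. The compatibility \eqref{equ:action filtr compat} makes $e^{Z\cdot}$ converge on each filtration piece and guarantees that $\Phi$ respects filtrations; the Taylor expansion in $Z$ then reads off the $\SL_\infty$-operations $\Phi_k\colon S^k(\fg_{ab})\to \Def(\MOp \xrightarrow{f}\NOp)$.

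Because $\fg$ acts by biderivations \eqref{equ:bider relations}, $e^{Z\cdot}$ is an automorphism of $\MOp$ as a graded Hopf $\COp^\flat$-comodule, so $f \circ e^{Z\cdot}\colon \MOp \to \NOp$ is a morphism of graded Hopf $\COp^\flat$-comodules whose restriction-and-projection to (co)generators is exactly $f + \Phi(Z)$. By the uniqueness in Lemma~\ref{lem:freecofree}, the module morphism associated via Proposition~\ref{prop:Def Linfty} to $f + \Phi(Z)$ is precisely $f \circ e^{Z\cdot}$. In the schematic form \eqref{equ:Linfty morphism series}, the $\SL_\infty$-morphism condition to verify is
\[
\Phi(Z + \epsilon\, dZ) = \Phi(Z) + \epsilon\, \mU^{\Def(\MOp \xrightarrow{f} \NOp)}(\Phi(Z)),
\]
with $\epsilon$ a square-zero variable of degree $-1$. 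Since $f$ is a Maurer-Cartan element, the twisted generating function at $\Phi(Z)$ equals $\mU^{\Def(\MOp,\NOp)}(f + \Phi(Z))$, which by \eqref{equ:Def gen func} and the fact that $f$ is dg reduces to $\pi \circ f \circ [d_\MOp, e^{Z\cdot}]\circ \iota$.

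The left-hand side equals $\epsilon\, \pi \circ f \circ \delta_\epsilon(e^{Z\cdot})\circ \iota$, where $\delta_\epsilon(e^{Z\cdot})$ is the first-order $\epsilon$-variation of the exponential under $Z \mapsto Z+\epsilon\,dZ$. The dg-compatibility $[d_\MOp, Z\cdot] = (dZ)\cdot$ of the action, combined with the standard Duhamel/term-by-term computation, yields the common expression
\[
[d_\MOp, e^{Z\cdot}] \;=\; \sum_{k\geq 1}\frac{1}{k!}\sum_{j=0}^{k-1} (Z\cdot)^{j} \circ (dZ)\cdot \circ (Z\cdot)^{k-1-j} \;=\; \delta_\epsilon(e^{Z\cdot})/\epsilon,
\]
which establishes the $\SL_\infty$-morphism equation. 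The main obstacle I anticipate is precisely this step: the noncommutativity of $Z\cdot$ and $(dZ)\cdot$ prevents a naive expansion, and one must recognize that the same Duhamel combinatorics controls both the commutator and the first-order variation, making them identically equal. Compatibility with the simplicial action \eqref{equ:Exp to M to N} then follows upon replacing $\cR$ by $\Omega(\Delta^\bullet)$: an MC element of $\fg_{ab}\hotimes \Omega(\Delta^\bullet)$ is precisely a closed degree-zero element, hence an element of $\Exp_\bullet(\fg)$, and the identification of $f + \Phi(Z)$ with $f \circ e^{Z\cdot}$ from Step~2 is exactly the formula defining the action map.
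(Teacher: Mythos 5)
Your proof is correct and is essentially the argument the paper gives, only inlined: the formula $\Phi(Z)=\pi\circ(f\circ e^{Z\cdot}-f)\circ\iota$ is exactly the specialization to $\alpha=f$ of the exponential formula $F(x)=\exp(x\cdot)(\alpha)$ in the paper's Lemma \ref{lem:linfty from g on h action}, applied via the semidirect-product $\SL_\infty$-structure of Proposition \ref{prop:Lifty semidirect}. Your Duhamel identity for $[d_\MOp,e^{Z\cdot}]$ is precisely the first-order-in-$\epsilon$ content of the conjugation identity $(1+\epsilon D)\circ\exp(x\cdot)\circ(1-\epsilon D)=\exp(x\cdot+\epsilon\,dx\cdot)$ used there, both resting on the same hypothesis $[d_\MOp,Z\cdot]=(dZ)\cdot$.
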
 

This is just the version for comodules of its cooperadic counterpart \cite[Proposition 24]{FWAut}.
However, given our Proposition \ref{prop:Lifty semidirect} above we can also formally obtain a self-contained proof by the following Lemma.
\begin{lemma}\label{lem:linfty from g on h action}
Let $\fg$ be a dg Lie algebra that acts on the (possibly curved) $\SL_\infty$-algebra $\fh$ by (possibly curved) $\SL_\infty$-derivations.
We suppose that both $\fg$ and $\fh$ are equipped with compatible descending complete filtrations 
\begin{align*}
  \fg = \mF^1\fg \supset \mF^2\fg \supset \cdots \\
  \fh = \mF^1\fh \supset \mF^2\fh \supset \cdots 
\end{align*}
and that the action of $\fg$ on $\fh$ is compatible with the filtrations.
Let $\alpha\in \MC(\fh)$ be a Maurer-Cartan element.
Then there is a (possibly curved) $\SL_\infty$-morphism
\[
\fg_{ab} \to \fh
\]
compatible with the filtrations
from the abelianization of $\fg$ such that the induced map on Maurer-Cartan spaces
\[
\Exp_\bullet(\fg) = \MC_\bullet(\fg_{ab})\to \MC_\bullet(\fh)
\]
agrees with the map obtained by acting with the exponential group on the given MC element $\alpha$.
\end{lemma}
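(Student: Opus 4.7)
My strategy is to form a semi-direct product curved $\SL_\infty$-algebra $\fg \ltimes \fh$ and extract the desired morphism via obstruction theory after twisting by the Maurer-Cartan element $\alpha$.

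First, I would construct a curved filtered complete $\SL_\infty$-algebra $\mathfrak{k} := \fg \ltimes \fh$ on the graded vector space $\fg \oplus \fh$, with the sum of the two filtrations, whose operations extend the dg Lie structure on $\fg$ and the curved $\SL_\infty$-structure on $\fh$, and whose mixed operations (those with at least one input from each summand) land in $\fh$ and encode the given (possibly curved) $\SL_\infty$-derivation action of $\fg$ on $\fh$. This construction is a direct generalization of Proposition \ref{prop:Lifty semidirect}, but where the $\fh$-factor is an arbitrary $\SL_\infty$-algebra rather than a specific deformation complex. By construction, the projection $\pi_\fg : \mathfrak{k} \to \fg$ is a strict morphism of curved $\SL_\infty$-algebras. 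The element $(0, \alpha) \in \mathfrak{k}$ is Maurer-Cartan, so twisting yields a flat filtered complete $\SL_\infty$-algebra $\mathfrak{k}^\alpha := \mathfrak{k}^{(0,\alpha)}$.

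Next, I would construct a curved $\SL_\infty$-morphism $\tilde\Phi : \fg_{ab} \to \mathfrak{k}^\alpha$ by obstruction theory along the filtration $\mF^p\fg$. Since mixed operations in $\mathfrak{k}$ land in $\fh$, the twist by $(0, \alpha)$ modifies structure maps only by terms with $\fh$-output. The plan is to build $\tilde\Phi$ iteratively, with its $\fg$-component extracting only the differential datum of $\fg$ (matching $\fg_{ab}$) and its $\fh$-components chosen at each stage to absorb the obstruction, which is a cocycle in the $\fh$-summand of cohomological degree one. The obstructions are solvable because of the MC equation for $\alpha$, the $\SL_\infty$-derivation property of the action, and the filtration-compatibility hypotheses; the inductive argument is analogous to that in Corollary \ref{cor:MCexistence1}. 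Explicitly, on a closed degree-zero $Z$ the resulting morphism evaluates to $\tilde\Phi(Z) = (Z, e^{\partial_Z}\alpha - \alpha)$, expressed via the $\SL_\infty$-automorphism $e^{\partial_Z}$ of $\fh$ obtained by exponentiating the derivation action.

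Finally, I compose $\tilde\Phi$ with the strict projection $\pi_\fh : \mathfrak{k}^\alpha \to \fh^\alpha$ onto the sub-$\SL_\infty$-algebra $\fh^\alpha \subset \mathfrak{k}^\alpha$, and then with the tautological curved $\SL_\infty$-morphism $\fh^\alpha \to \fh$ of curvature $\alpha$ (identity on underlying vector spaces, shifting the base MC element). The composition is the desired curved $\SL_\infty$-morphism $\fg_{ab} \to \fh$, filtration-compatible by construction. On Maurer-Cartan spaces, the induced map sends a closed degree-zero element $Z \in \fg \hotimes \Omega(\Delta^\bullet)$ to $e^{\partial_Z}\alpha \in \MC(\fh \hotimes \Omega(\Delta^\bullet))$, which matches the $\Exp_\bullet(\fg)$-action on $\alpha$. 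The main obstacle is carrying out the obstruction-theoretic construction of $\tilde\Phi$ and verifying the vanishing of the obstructions, which requires tracking how the $\SL_\infty$-derivation property and the MC equation for $\alpha$ interact with the twist structure; the schematic formalism of section \ref{sec:Linfty_formalism} greatly simplifies this bookkeeping by reducing everything to a single generating-function identity.
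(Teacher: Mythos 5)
Your final formula $Z\mapsto \exp(Z\cdot)(\alpha)$ is the correct one, but the route you take to it has genuine gaps. First, the obstruction-theoretic construction of $\tilde\Phi$ is not available here: the lemma carries no cohomological vanishing hypotheses, and the inductive scheme of Corollary \ref{cor:MCexistence1} requires $H^1$ of an associated graded to vanish. That each obstruction is a cocycle does not make it exact; it has to vanish on the nose, and establishing that amounts to verifying the explicit candidate formula directly -- which is exactly the step your argument omits. Second, an $\SL_\infty$-morphism $\tilde\Phi:\fg_{ab}\to\mathfrak{k}^\alpha$ whose $\fg$-component is $Z\mapsto Z$ cannot exist: composing with the strict projection $\pi_\fg$ would give an $\SL_\infty$-morphism $\fg_{ab}\to\fg$ with identity generating function, and the morphism relation then forces $\tfrac12[x,x]=0$ for all degree-zero $x$, which fails unless $\fg$ is already abelian. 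Third, the projection $\mathfrak{k}^\alpha\to\fh^\alpha$ onto the second summand is not a morphism of $\SL_\infty$-algebras (only the inclusion of the ideal $\fh^\alpha$ is; the mixed operations are killed by the projection but not by first projecting the inputs), so the final composition is not well-formed either.

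The paper's proof avoids all of this by working directly in $\fh$: it encodes the action by operations $\nu_k(x;-)$, records the two identities $[D,x\cdot]=(dx)\cdot$ and $[x\cdot,y\cdot]=[x,y]\cdot$, defines the generating function $F(x)=\exp(x\cdot)(\alpha)$ in the schematic formalism of section \ref{sec:Linfty_formalism}, and checks the morphism relation in one line from the conjugation identity $(1+\epsilon D)\circ\exp(x\cdot)\circ(1-\epsilon D)=\exp(x\cdot+\epsilon\,dx\cdot)$ together with $D(\alpha)=0$. If you discard the semidirect product and the obstruction theory and instead verify this identity for your own formula, the proof goes through; as written, the key verification is missing and two of the intermediate objects do not exist as claimed.
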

\begin{proof}
Let us first spell out the action of $\fg$ on $\fh$ explicitly.
The action is given by operations $\nu_k(x;y_1,\dots,y_k)$ for $k\geq 0$, such that for given $x\in \fg$, the (curved, pre-) $\SL_\infty$-derivation associated to $x$ is 
\[
x\cdot = \{\nu_k(x; -) \}_{k\geq 0}.
\]
These derivations from a Lie algebra and the $\SL_\infty$-structure on $\fh$ is itself encoded by a $\SL_\infty$-derivation $D$.
Then having $\fg$ act on $\fh$ means exlicitly that one has 
\begin{align}\label{equ:g action on h}
  [D,x\cdot]&= (dx)\cdot &
  [x\cdot, y\cdot ]&=[x,y]\cdot .  
\end{align}

Now we again use the schematic formalism of section \ref{sec:Linfty_formalism} to encode the $\SL_\infty$-morphism.
Let $\cR$ be a graded commutative algebra and $x\in \fg \hotimes \cR$ a degree zero element.
Then we can exponentiate the $\SL_\infty$-derivation $x\cdot$ to a (pre-)$\SL_\infty$-morphism 
\[
\exp(x\cdot) = \sum_{k\geq 0} \frac 1 {k!} (x\cdot)^k.
\]
By the first equation of \eqref{equ:g action on h} this satisfies 
\[ 
(1+\epsilon D)\circ \exp(x\cdot)\circ(1-\epsilon D) = \exp(x\cdot +\epsilon dx\cdot),
\]
for $\epsilon$ a variable of degree $-1$.
We then define our $\SL_\infty$-morphism via the generating function 
\[
F(x) = \exp(x\cdot)(\alpha)
\]
by putting $\alpha$ in all slots of the 
$\SL_\infty$-morphism.
In this language the generating function for the $\SL_\infty$-structure on $\fh$ is just 
\[
\mU^{\fh}(y) = D(y),
\]
and that of $\fg_{ab}$ is 
\[
\mU^{\fg_{ab}}(x) = dx,
\]

We check the $\SL_\infty$-relations in the form \eqref{equ:Linfty morphism series}:
\begin{align*}
  F(x +\epsilon dx)
  &=
  \exp(x\cdot+\epsilon dx\cdot)(\alpha)
  \\
  &=(1+\epsilon D)\circ \exp(x\cdot)\circ(1-\epsilon D)(\alpha) 
  \\
  &=(1+\epsilon D)( \exp(x\cdot)(\alpha) )
  \\
  &=
  F(x) + \epsilon \mU_{\fh}(F(x)),
\end{align*}
where in the last but one line we used that $D(\alpha)=0$ since $\alpha$ is a Maurer-Cartan element.
\end{proof}

\begin{proof}[Proof of Proposition \ref{prop:linfty from g on M action}]
Proposition \ref{prop:Lifty semidirect} provides a $\SL_\infty$-structure on $\fg\oplus \Def(\MOp\xrightarrow{f} \NOp)$.
Looking at the definition in the proof of that Proposition we see that this $\SL_\infty$-structure encodes an action of $\fg$ on $\Def(\MOp\xrightarrow{f} \NOp)$. Hence we can invoke Lemma \ref{lem:linfty from g on h action} to obtain the desired $\SL_\infty$-morphism.
Tracing the definitions one quickly checks that the induced map of Maurer-Cartan spaces is indeed given by the action of the exponential group.
\end{proof}

\section{Graphical (co)operads and (co)modules}
\label{sec:graphical cooperads}
The purpose of this section is to introduce various cooperads and comodules defined using (Feynman) diagrams. Mostly, these objects have already appeared in the literature. The cooperad $\Graphs_n$ whose construction is recalled in section \ref{sec:Graphsn} was defined by Kontsevich in \cite{K2}. The definition of the comodule $\Graphs_{V,n}$ appeared (essentially) in \cite{CW}, and the Lie algebra $\GGC_{V,n}$ can be found in a restricted form in \cite{Felder}.

Generally, graph complexes can be defined in one of two ways, either using general algebraic and operadic constructions, or combinatorially.
The former approach has the advantage that most formal properties, like the existence of algebraic structures, or the equation $d^2=0$ for the differential follow immediately from the constructions.
The latter, combinatorial approach to graph complexes is much more elementary and likely preferred by non-experts in operad theory.
However, in the combinatorial picture identities like $d^2=0$ are usually more tedious to show explicitly, and some formulas (e.g., for the differentials) seemingly come from thin air.

Here we will take a hybrid approach, introducing the graph complexes in a mostly combinatorial form, but yet utilizing the algebraic structures to minimize the amount of identities to be checked by hand.
This requires us to temporarily consider somewhat larger graph complexes than the ones we will eventually use.
More concretely, we will temporarily consider graph complexes with no valency restrictions on vertices, while we later always restrict to (internally) $\geq 3$-valent graphs, resulting in much smaller complexes.
We will indicate this by a superscript "$\tvv$", for example $\Graphs_{V,n}^{\tvv}$ will be the intermediate graph complex with no valency restrictions, while the object we actually use later is $\Graphs_{V,n}$.

\subsection{The Kontsevich $\La$ Hopf cooperad $\Graphs_n$}\label{sec:Graphsn}
For $n$ a fixed integer we define the free graded commutative algebras 
\[
  \Gra_n(r) = \Q[\omega_{ij}=(-1)^n\omega_{ji}\mid 1\leq i,j\leq r],
\]
with the generators $\omega_{ij}$ of degree $n-1$.
Elements of $\Gra_n(r)$ can be understood as linear combinations of graphs with $r$ numbered vertices and an edge between vertices $i$ and $j$ for every factor $\omega_{ij}$, for example 
\[
\omega_{12}^2\omega_{23} \omega_{45}\omega_{55}
\leftrightarrow 
\begin{tikzpicture}
  \node[ext] (v1) at (0,0) {$\scriptstyle 1$};
  \node[ext] (v2) at (0.7,0) {$\scriptstyle 2$};
  \node[ext] (v3) at (1.4,0) {$\scriptstyle 3$};
  \node[ext] (v4) at (2.1,0) {$\scriptstyle 4$};
  \node[ext] (v5) at (2.8,0) {$\scriptstyle 5$};
  \draw (v2) edge[bend left] (v1) edge[bend right] (v1) edge (v3) (v4) edge (v5) 
  (v5) edge[loop above] (v5);
\end{tikzpicture}\ .
\]
Similarly, we may build the symmetric sequences of dg commutative algebras
\[
  \fGraphs_n(r) = \bigoplus_{k\geq 0} (\Gra_n(r+k)\otimes \Q[n]^{\otimes k})_{S_k}.
\]
Elements of $\fGraphs_n(r)$ may be understood as linear combinations of graphs with $r$ numbered "external" vertices and an arbitrary (finite) number $k$ of unnumbered "internal" vertices. We shall draw the latter black in pictures.
\[
\begin{tikzpicture}
  \node[ext] (v1) at (0,0) {$\scriptstyle 1$};
  \node[ext] (v2) at (.7,0) {$\scriptstyle 2$};
  \node[ext] (v3) at (1.4,0) {$\scriptstyle 3$};
  \node[int] (i1) at (.35,.7) {};
  \node[int] (i2) at (1.05,.7) {};
  \draw (v1) edge (i1) 
  (i1) edge (i2) edge (v2) 
  (i2) edge (v2) edge (v3)
  (v2) edge[bend right] (v3);
\end{tikzpicture}
\in \fGraphs_{n}(3) 
\] 
Let us also make explicit the degree and sign conventions that are contained in the definition above. The cohomological degree of a graph is 
\[
  (n-1)(\#\text{edges})-n (\#\text{internal vertices}).  
\]
In other words, the internal vertices carry degree $-n$, and edges carry degree $n-1$. 
The coinvariants under the symmetric group actions translate into the following sign conventions for graphs, depending on the parity of $n$:
\begin{itemize}
\item For $n$ even, we shall think of a graph as coming with an ordering of the set of edges, stemming from the ordering of the factors in the corresponding monomial in $\omega_{ij}$.
Changing this ordering of edges by a permutation $p$ alters the sign by $\sgn(p)$.
\item For $n$ odd the graph comes with an ordering of the set of internal vertices, from the factors $\Q[n]$, and changing this ordering by a permutation $p$ changes the sign by $\sgn(p)$.
Furthermore, since $\omega_{ij}=-\omega_{ji}$, the edges have to be oriented, and we identify two different orientations up to sign.
\[
\begin{tikzpicture}
  \draw[->] (0,0) -- (.7,0);
\end{tikzpicture}
=
-\, 
\begin{tikzpicture}
  \draw[<-] (0,0) -- (.7,0);
\end{tikzpicture}
\]
\end{itemize}
Note that we often omit these orientation data (ordering of edges, vertices, and edge directions) from pictures, thus leaving the sign undefined.

Each space $\fGraphs_{n}(r)$ is a free graded commutative algebra, 
the commutative product is obtained by gluing to graphs at the $r$
external vertices, for example:
\begin{equation}\label{equ:Graphs product pic}
  \left(
  \begin{tikzpicture}
    \node[ext] (v1) at (0,0) {$\scriptstyle 1$};
    \node[ext] (v2) at (.7,0) {$\scriptstyle 2$};
    \node[ext] (v3) at (1.4,0) {$\scriptstyle 3$};
    \node[int] (i1) at (.35,.7) {};
    \node[int] (i2) at (1.05,.7) {};
    \draw (v1) edge (i1) 
    (i1) edge (i2) edge (v2) 
    (i2) edge (v2) edge (v3);
  \end{tikzpicture}
  \right)
  \wedge 
  \left(
  \begin{tikzpicture}
    \node[ext] (v1) at (0,0) {$\scriptstyle 1$};
    \node[ext] (v2) at (.7,0) {$\scriptstyle 2$};
    \node[ext] (v3) at (1.4,0) {$\scriptstyle 3$};
    \node[int, white] (i1) at (.35,.7) {};
    \draw 
    (v2) edge[bend right] (v3);
  \end{tikzpicture}
  \right)
  =
  \begin{tikzpicture}
    \node[ext] (v1) at (0,0) {$\scriptstyle 1$};
    \node[ext] (v2) at (.7,0) {$\scriptstyle 2$};
    \node[ext] (v3) at (1.4,0) {$\scriptstyle 3$};
    \node[int] (i1) at (.35,.7) {};
    \node[int] (i2) at (1.05,.7) {};
    \draw (v1) edge (i1) 
    (i1) edge (i2) edge (v2) 
    (i2) edge (v2) edge (v3)
    (v2) edge[bend right] (v3);
  \end{tikzpicture}\, .
\end{equation}
To fix the signs in this picture, and in similar pictures later, one has to specify the orientation data on the right-hand graph.
We do this by retaining the natural ordering of edges or vertices on the graphs on the left-hand side, and just juxtapposing their orderings.

The collection of spaces $\fGraphs_n$ furthermore comes with the structure of a graded (symmetric) $\La$-cooperad.
First, the right $S_r$-action on $\fGraphs_n$ is defined by permuting the numbering on the external vertices.
To define the cooperadic cocomposition it is sufficient to specify the reduced cocompositions of the form 
\[
  \Delta_s : \fGraphs_n(r) \to \fGraphs_n(r-s+1)\otimes \fGraphs_n(s)
\]
corresponding to the subset $\{1,\dots,s\}\subset \{1,\dots,r\}$.
Then, for a graph $\Gamma\in \fGraphs_n(r)$,
\[
\Delta_s (\Gamma)  
=
\sum_{\gamma\subset \Gamma\atop 1,\dots,s\in \gamma}
\pm
(\Gamma/\gamma) \otimes \gamma,
\]
with the sum over all subgraphs\footnote{A subgraph is a subset of the sets of vertices and edges of $\Gamma$, so that for each edge contained in the subgraph $\gamma$, its endpoints are also in $\gamma$.} $\gamma\subset\Gamma$ that contain the external vertices $1,\dots,s$, and no other external vertices, and with $\Gamma/\gamma$ the graph obtained by contracting $\gamma$ to one new external vertex numbered 1 and renumbering the external vertices $s+1,\dots,r$ to $2,\dots,r-s+1$.
The sign is the sign of the unshuffle permutation moving the edges or vertices of $\gamma$ to the right, relative to the ordering of edges/vertices in $\Gamma$. 
Finally, the $\La$-operations $\fGraphs_n(r)\to \fGraphs_n(r+1)$ are defined by adding one zero-valent external vertex to the graph.

We define a differential on $\fGraphs_n$ by edge contraction.
To simplify later proofs, we define it in two pieces.
The first piece is defined on a graph $\Gamma\in \fGraphs_n$ by
\[
  d_c' \Gamma = \sum_{e} \pm \Gamma /e,
\]
with the sum over all edges between two distinct internal vertices, and $\Gamma /e$ the graph obtained by contracting edge $e$.
\begin{align}\label{equ:dcp pic}
  d_c'
  \begin{tikzpicture}[baseline=-.65ex]
  \node[int] (v) at (0,0) {};
  \node[int](w) at (0,.5) {};
  \draw (v) edge +(-.5,.5) edge +(.5,.5) edge (w) (w) edge +(-.2,.5) edge +(.2,.5);
  \end{tikzpicture}
  &=
  \begin{tikzpicture}[baseline=-.65ex]
  \node[int] (v) {};
  \draw (v) edge +(-.5,.5) edge +(-.2,.5) edge +(.2,.5) edge +(.5,.5);
  \end{tikzpicture}
  \end{align}
If the order of edges and vertices is such that $e$ is the first edge, connecting the first internal vertex in the ordering to the second, then the sign is "+", with the ordering of the remaining edges and vertices in $\Gamma /e$ being the same as in $\Gamma$.
We leave it to the reader to check that $(d_c')^2=0$, and $d_c'$ respects the $\La$ Hopf cooperad structure on $\fGraphs_n$. That is, $d_c'$ is a coderivation with respect to the $\La$ cooperad structure, and a derivation with respect to the commutative algebra structure.

Next, we want to apply the twisting construction of section \ref{sec:optwist simple}.
To this end, we consider the dual operad $(\fGraphs_n)^*$, whose elements can naturally be identified with graphs as well.
We denote the dual differential to $d_c'$ by $\delta_{split}'$.
(It is given combinatorially by splitting internal vertices.)
We define the element 
\[
M_\mid = 
\begin{tikzpicture}
\node[ext] (v) at (0,0) {$\scriptstyle 1$};  
\node[int] (w) at (0,0.7) {};
\draw (v) edge (w);  
\end{tikzpicture}\in (\fGraphs_n(1))^*
\]
\begin{lemma}
The element $-M_\mid\in (\fGraphs_n(1))^*$ is a Maurer-Cartan element, i.e.,
\[
  \delta_{split}'M_\mid -M_\mid \circ_1 M_\mid =0.
\]
\end{lemma}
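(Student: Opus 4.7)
My plan is to verify $\delta_{split}' M_\mid = M_\mid \circ_1 M_\mid$ by direct combinatorial computation, evaluating the pairing of both sides against an arbitrary graph $\Gamma \in \fGraphs_n(1)$.

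For the left-hand side, I would use the duality $\langle \delta_{split}' M_\mid, \Gamma\rangle = \langle M_\mid, d_c' \Gamma\rangle$. Since $d_c'$ contracts internal-internal edges, this is non-zero only if $\Gamma$ has two internal vertices joined by one internal-internal edge plus one external-internal edge. Up to isomorphism the only such graph is the linear chain $T$ with the external vertex $1$ adjacent to an internal vertex $a$, which in turn is adjacent to a second internal vertex $b$; the pairing on $T$ is $\pm 1$.

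For the right-hand side, dualizing the cocomposition of $\fGraphs_n$ gives
\[
\langle M_\mid \circ_1 M_\mid,\, \Gamma\rangle
= \sum_{\gamma\subset \Gamma,\; 1\in\gamma} \pm \langle M_\mid,\, \Gamma/\gamma\rangle\, \langle M_\mid,\, \gamma\rangle,
\]
which is non-zero only if there is a subgraph $\gamma$ with both $\gamma=M_\mid$ and $\Gamma/\gamma=M_\mid$. Enumeration yields two candidates: the chain $T$ above (with a unique such $\gamma$), and the ``V''-graph $V$ with the external vertex adjacent to two independent internal vertices (admitting two symmetric choices of $\gamma$). The key step, which I expect to be the main substantive point, is that $V$ vanishes in $\fGraphs_n(1)$: the $\Z/2$-automorphism swapping the two internal vertices acts by $-1$ on the associated tensor via the Koszul sign of reordering two odd-degree objects (the edges, of degree $n-1$, when $n$ is even; the internal-vertex generators in $\Q[n]$, of degree $-n$, when $n$ is odd), so $V=0$ in the $S_k$-coinvariants. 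Equivalently, the two symmetric subgraph contributions to $\Delta_1 V$ cancel, removing the ``V''-contribution on the right.

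Hence both sides are concentrated on the dual of $T$ and equal $\pm 1$ there, and the proof concludes by a routine sign verification that these $\pm 1$'s agree. This comes down to matching the sign convention for $d_c'$ recalled before the Lemma (contracted edge placed first, its endpoints ordered as the first two internal vertices) against the unshuffle sign in the cocomposition formula. The main obstacle is therefore not conceptual but purely one of careful sign bookkeeping through the two conventions for $n$ even and $n$ odd.
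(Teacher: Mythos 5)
Your proposal is correct and matches the paper's own argument: the paper likewise computes that $\delta_{split}'M_\mid$ is the two-internal-vertex chain, that $M_\mid\circ_1 M_\mid$ is that same chain plus the ``V''-graph, and that the ``V''-graph vanishes by the symmetry swapping its two internal vertices. Phrasing the computation via pairings against graphs in $\fGraphs_n(1)$ rather than directly in the dual is only a cosmetic difference, and your handling of the vanishing by Koszul signs (edges for $n$ even, internal vertices for $n$ odd) is exactly the intended point.
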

\begin{proof}
We compute 
\[
  \delta_{split}'M_\mid
  =
  \begin{tikzpicture}
    \node[ext] (v) at (0,0) {$\scriptstyle 1$};  
    \node[int] (w) at (0,0.5) {};
    \node[int] (ww) at (0,0.9) {};
    \draw (v) edge (w) (w) edge (ww);  
  \end{tikzpicture}
\]
with the ordering such that edges and vertices are ordered and oriented top down. Furthermore,
\[
  M_\mid \circ_1 M_\mid
  =
  \underbrace{
    \begin{tikzpicture}
      \node[ext] (v) at (0,0) {$\scriptstyle 1$};  
      \node[int] (w) at (-.5,0.7) {};
      \node[int] (ww) at (0.5,0.7) {};
      \draw (v) edge (w) edge (ww);  
    \end{tikzpicture}
  }_{=0}
  \, +\, 
  \begin{tikzpicture}
    \node[ext] (v) at (0,0) {$\scriptstyle 1$};  
    \node[int] (w) at (0,0.5) {};
    \node[int] (ww) at (0,0.9) {};
    \draw (v) edge (w) (w) edge (ww);  
  \end{tikzpicture}.
\]
The first graph on the right-hand side is zero by symmetry.
In the second, the vertices and edges are ordered and oriented top-down.
\end{proof}

Hence, as in section \ref{sec:optwist simple}, we may endow $\fGraphs_n$ with the differential $d:=d_c' - M_\mid \cdot$ to obtain a 
dg $\La$ Hopf cooperad.

So far we have been working with the relatively large cooperad $\fGraphs_n$. We then define the quotient
\[
  \Graphs_n(r) = \fGraphs_n(r) /J(r)
\]
by the subspace $J(r)$ spanned by graphs that either have a connected component of internal vertices only or contain internal vertices of valence $\leq 2$.
We leave it to the reader to check that the dg $\La$ Hopf cooperad structure descends to this quotient.

Let us only note that combinatorially, the differential $d$ acts by contracting edges, with the part $d_c''=- M_\mid \cdot$ contracting edges between an internal and an external vertex.
\begin{align*}
d_c''
\begin{tikzpicture}[baseline=-.65ex]
\node[ext] (v) at (0,0) {$\scriptstyle j$};
\node[int](w) at (0,.5) {};
\draw (v) edge +(-.5,.5) edge +(.5,.5) edge (w) (w) edge +(-.2,.5) edge +(.2,.5);
\end{tikzpicture}
&=
\begin{tikzpicture}[baseline=-.65ex]
\node[ext] (v) {$\scriptstyle j$};
\draw (v) edge +(-.5,.5) edge +(-.2,.5) edge +(.2,.5) edge +(.5,.5);
\end{tikzpicture}
\end{align*}



There is a natural map of $\La$ Hopf cooperads from $\Graphs_n$ to the $n$-Poisson cooperad
\begin{equation}\label{equ:Graphsen}
\Graphs_n\to \Poiss_n^c 
\end{equation}
sending any graph with internal vertices to zero and any edge between vertices $i$ and $j$ to the algebra generators $\omega_{ij}$ of $\Poiss_n^c$, see section \ref{sec:en def}.
Also, note again that for $n\geq 2$ we have $\e_n^c=\Poiss_n^c$.
We just quote the following result.

\begin{thm}[Kontsevich, Lambrechts-Voli\'c \cite{K2}, \cite{LVformal}]
  The map \eqref{equ:Graphsen} is a quasi-isomorphism.
\end{thm}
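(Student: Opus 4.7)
The plan is to prove that the natural map $\Graphs_n(r) \to \e_n^c(r)$ is a quasi-isomorphism arity by arity; this suffices since the Hopf $\La$ cooperad structures are preserved on the nose. Fix $r \geq 1$. I would factor the map through the larger complex $\fGraphs_n(r) \to \Graphs_n(r) \to \e_n^c(r)$ and prove each arrow is a quasi-isomorphism.

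For the first arrow I would show that the kernel $J(r)$, spanned by graphs with either an all-internal connected component or an internal vertex of valence $\leq 2$, is acyclic. The subspace of graphs with an all-internal component splits off as a tensor product of a "boundary graph" attached to the external vertices with Kontsevich's hairless graph complex; for $n \geq 2$ the relevant components of the latter are acyclic (tadpoles cancel by symmetry, double edges by an explicit contracting homotopy). The subspace of graphs containing an internal vertex of valence $\leq 2$ is handled by a pairing homotopy: one isolates the "first" bivalent vertex along a string and cancels it against the graph obtained by smoothing that vertex (collapsing the two adjacent edges into one), which kills the subcomplex two-to-one.

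For the second arrow, the main idea is to exhibit a section $s \colon \e_n^c(r) \to \Graphs_n(r)$ sending $\omega_{ij}$ to the graph consisting of a single edge between external vertices $i$ and $j$; such graphs are closed. I would then verify the defining relations of $\e_n^c(r)$ become boundaries in $\Graphs_n(r)$: the Arnold relation $\omega_{ij}\omega_{jk} + \omega_{jk}\omega_{ki} + \omega_{ki}\omega_{ij}$ is the image under $d_c''$ of the "star" graph with one trivalent internal vertex attached to external vertices $i,j,k$ (the three contractions produce the three terms with the correct signs), while $\omega_{ij}^2 = 0$ is automatic by graded antisymmetry for $n$ odd and comes from a similar trivalent-vertex boundary for $n$ even. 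This shows $s$ descends to an algebra map $\tilde s \colon \e_n^c(r) \to H(\Graphs_n(r))$. The composition $\e_n^c(r) \xrightarrow{\tilde s} H(\Graphs_n(r)) \to \e_n^c(r)$ is the identity, giving injectivity of $\tilde s$ and surjectivity of the projection.

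To conclude that $\tilde s$ is actually an isomorphism I would run a spectral sequence argument: filter $\Graphs_n(r)$ by the number of internal vertices, which is bounded above in each cohomological degree thanks to the enforced $\geq 3$-valence of internal vertices. The $E^1$-differential is essentially the Koszul differential corresponding to the Lie/Commutative duality (edge contraction being exactly the Chevalley-Eilenberg-type differential for the cobar construction of the commutative cooperad), so the cohomology concentrates in internal-vertex degree $0$ and reproduces the Arnold presentation. The main obstacle I expect is not the conceptual structure but the sign bookkeeping — verifying that the $d_c''$ boundary of the star graph reproduces the Arnold combination with the right signs, and that the pairing homotopy for bivalent vertices truly respects orientations in both parities of $n$, both require careful case analysis that it is easy to get wrong on a first attempt.
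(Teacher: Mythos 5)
The first thing to note is that the paper does not prove this statement at all: it is explicitly quoted from Kontsevich and Lambrechts--Voli\'c (``We just quote the following result''), so your argument has to stand entirely on its own --- and its first step is false. The kernel $J(r)$ is not acyclic, equivalently $\fGraphs_n(r)\to\Graphs_n(r)$ is not a quasi-isomorphism. A graph consisting of the $r$ external vertices with no edges, disjoint union a connected, at least trivalent, internal-only cocycle (say a tetrahedron $K_4$ on four internal vertices), is a cocycle of $\fGraphs_n(r)$ lying in $J(r)$, and the subquotient spanned by such graphs is governed by the full hairless graph complex $\fGC_n$, whose cohomology is famously \emph{nonzero}: for $n=2$ it contains $\grt$ in degree zero (the tetrahedron class being the first representative), and for every $n$ it contains wheel and loop classes. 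The tadpole and double-edge cancellations you invoke dispose of two tiny subcomplexes, not of graph cohomology; and the hope that an internal component might be killed by hanging it off an external vertex via a univalent ``antenna'' also fails, because in the twisted differential $d_c'-M_\mid\cdot$ the two terms such an antenna produces (deleting the univalent vertex, and contracting its edge into the external vertex) cancel. This non-acyclicity is precisely why the quotient $\Graphs_n=\fGraphs_n/J$ is taken in the first place, and any correct proof must work with $\Graphs_n$ directly rather than factoring through $\fGraphs_n$.

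The second half is closer to a viable argument but does not close the gap either. The section $s$ together with the identification of the Arnold relation as $d$ of the tripod does show that $\e_n^c(r)\to H(\Graphs_n(r))$ is a well-defined split injection (although you have the parities reversed: $\omega_{ij}^2=0$ is automatic for $n$ \emph{even}, when $\omega_{ij}$ has odd degree, and requires the trivalent coboundary for $n$ \emph{odd}). The real content is surjectivity, and the filtration by the number of internal vertices cannot deliver it: the differential on $\Graphs_n$ strictly decreases the number of internal vertices by one, so the associated graded differential is zero, the $E^1$-page is the whole complex again with its full differential, and nothing is gained; moreover for $n=2,3$ the number of internal vertices is not even bounded in a fixed cohomological degree, so the convergence claim also fails. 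The argument that actually works --- the one in Lambrechts--Voli\'c, and the same scheme the paper itself uses to prove the analogous Proposition \ref{prop:Graphs Z1 LS qiso} --- is an induction on the arity $r$: one splits $\Graphs_n(r)$ according to the valence of the external vertex $1$ into $W_0\oplus W_1\oplus W_{\geq 2}$, shows that the edge-contraction map $W_1\to W_{\geq 2}$ is surjective with an explicitly describable kernel, and identifies the resulting cohomology with the Arnold presentation; it is exactly here that the no-internal-components and $\geq 3$-valence conditions, which your route tries to discard as homotopically irrelevant, do essential work.
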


\subsection{The $\La$ Hopf $\fGraphs_n$ comodule $\fGraphs_{V,n}$}
Let $V$ be a finite dimensional positively graded vector space and $n$ an integer.
Then we define collection of graded commutative algebras
\[
  \Gra_{V,n}(r) = \Gra_n(r) \otimes S(V)^{\otimes r}.
\]
Elements can be understood as graphs with $r$ numbered vertices, that can furthermore be decorated by zero or more elements of $V$, or alternatively by one element of $S(V)$ each.
Next consider 
\[
\fGraphs_{V,n}(r) :=  \bigoplus_{k\geq 0} (\Gra_{V,n}(r+k)\otimes \Q[n]^{\otimes k})_{S_k}.
\]
Elements of this space are linear combinations of graphs with $r$ numbered external vertices and an arbitrary number of unnumbered internal vertices. We again draw the latter as black dots in pictures.
\[
\begin{tikzpicture}
  \node[ext] (v1) at (0,0) {$\scriptstyle 1$};
  \node[ext] (v2) at (.7,0) {$\scriptstyle 2$};
  \node[ext,label=0:{$\gamma$}] (v3) at (1.4,0) {$\scriptstyle 3$};
  \node[int,label=90:{$\alpha\beta$}] (i1) at (.35,.7) {};
  \node[int] (i2) at (1.05,.7) {};
  \draw (v1) edge (i1) 
  (i1) edge (i2) edge (v2) 
  (i2) edge (v2) edge (v3)
  (v2) edge[bend right] (v3);
\end{tikzpicture}
\in \fGraphs_{V,n}(r), \text{ with $\alpha,\beta,\gamma\in V$.} 
\] 
We define a graded $\La$ Hopf right cooperadic $\fGraphs_{n}$-comodule structure on $\fGraphs_{V,n}$ as follows.
First, the commutative algebra structure is defined as for $\fGraphs_{n}$ by gluing graphs at external vertices, analogous to \eqref{equ:Graphs product pic}.
Furthermore, by subgraph contraction 
$\fGraphs_{n,V}$ is equipped with a right $\fGraphs_n$-cooperadic comodule structure.
More precisely, denote by 
\[
  \Delta_s : \fGraphs_{V,n}(r) \to \fGraphs_{V,n}(r-s+1)\otimes \fGraphs_n(s)
\]
the cooperadic coactions corresponding to the subset $\{1,\dots,s\}\subset \{1,\dots,r\}$.
Then, for a graph $\Gamma\in \fGraphs_{V,n}(r)$,
\[
\Delta_s (\Gamma)  
=
\sum_{\gamma\subset \Gamma\atop 1,\dots,s\in \gamma}
\pm
(\Gamma/\gamma) \otimes \gamma,
\]
with the sum running over all subgraphs $\gamma\subset\Gamma$ that contain the external vertices $1,\dots,s$, and no other external vertices, and with $\Gamma/\gamma$ the graph obtained by contracting $\gamma$ to one new external vertex numbered 1 and renumbering the external vertices $s+1,\dots,r$ to $2,\dots,r-s+1$.

The $\La$-operations $\fGraphs_{V,n}(r)\to \fGraphs_{V,n}(r+1)$ are defined by adding one zero-valent external vertex to the graph.

The differential on $\fGraphs_{V,n}$ is defined (for now) just as the operation $d_c'$ contrating an edge between a pair of internal vertices as in \eqref{equ:dcp pic}.

\subsection{The graded Lie algebra $\fGGC_{V,n}$}\label{sec:HGC upper 1}
We next define Lie coalgebras $\fGG_{V,n}$ and $\GG_{V,n}$, generalizing similar constructions of \cite{Felder}. 
To prepare, let $V_1:=V\oplus \Q 1$ be obtained by adding a one-dimensional vector space to $V$, spanned by the symbol $1$ of degree 0. Pick a basis $e_1,\dots,e_N$ of $V$ and extend it to a basis $e_0:=1,e_1,\dots,e_N$ of $V_1$. Let $f_0,\dots,f_N$ be the corresponding dual basis of $V_1^*$.

Furthermore let $\nfGraphs_{n,V}$ be the quotient of $\fGraphs_{n,V}$ obtained by setting to zero all graphs that have an external vertex of valency $\neq 1$, and all graphs that are not connected.
Then as a dg vector space we define
\[
\fGG_{V,n}=
\bigoplus_{r\geq 1}\nfGraphs_{n,V}(r)\otimes_{S_r} (V_1^*)^{\otimes r}.
\]
Elements can be seen as graphs with legs (or hairs) that are each decorated by a single element of $V_1^*$.
\[
\begin{tikzpicture}[yshift=-.5cm]
  \node[int] (i1) at (.5,1) {};
  \node[int,label=90:{$\alpha$}] (i2) at (1,1.5) {};
  \node[int,label=0:{$\beta$}] (i3) at (1.5,1) {};
  \node[ext,label=-90:{$a$}] (e1) at (0,.3) {};
  \node[ext,label=-90:{$b$}] (e2) at (.5,.3) {};
  \node[ext,label=-90:{$c$}] (e3) at (1,.3) {};
  \node[ext,label=-90:{$d$}] (e4) at (1.5,.3) {};
  \draw (i1) edge (i2) edge (i3) edge (e1) edge (e2)
  (i2) edge (i3) edge (e3)
  (i3) edge (e4);
\end{tikzpicture}
=:
\begin{tikzpicture}[yshift=-.5cm]
  \node[int] (i1) at (.5,1) {};
  \node[int,label=90:{$\alpha$}] (i2) at (1,1.5) {};
  \node[int,label=0:{$\beta$}] (i3) at (1.5,1) {};
  \node (e1) at (0,.3) {$a$};
  \node (e2) at (.5,.3) {$b$};
  \node (e3) at (1,.3) {$c$};
  \node (e4) at (1.5,.3) {$d$};
  \draw (i1) edge (i2) edge (i3) edge (e1) edge (e2)
  (i2) edge (i3) edge (e3)
  (i3) edge (e4);
\end{tikzpicture}
\quad
\quad 
\text{ with $\alpha,\beta\in V$ and $a,b,c,d\in V_1^*$.}
\]
In the following, we will generally not draw the (external) vertices at the end of the hairs, as is indicated in the picture. The exception are the graphs with a single external vertex, which cannot be meaningfully drawn without the vertex.
\begin{equation}\label{equ:onevert HG}
  \begin{tikzpicture}
    \node[ext,label=-90:{$a$},label=90:{$\alpha$}] (e1) at (0,0) {};
  \end{tikzpicture}
  \quad\quad \text{with $\alpha\in V$, $a\in V_1^*$.}
\end{equation}
These one-vertex graphs span a copy of $V_1^*\otimes V\cong \iHom(V_1,V)$ inside $\fGG_{V,n}$.

We will next define a Lie admissible coproduct on $\fGG_{V,n}$ and a Lie admissible coaction of $\fGG_{V,n}$ on $\fGraphs_{V,n}$.
\begin{defn}
We define a \emph{cut} of a
graph $\Gamma\in \fGG_{V,n}$
or $\Gamma\in \fGraphs_{V,n}$ to be a subset $I$ of the set of half-edges and decorations of $\Gamma$ that satisfes the following conditions:
\begin{itemize}
  \item If we detach all half-edges and decorations in $I$ from their vertices, then the half-edges and decorations are contained in a single connected component, that contains none of the vertices incident to the half edges. We call this connected component the upper part, and its complement the lower part.
  \item The lower part contains at least one external vertex.
\end{itemize}
We say that a cut is connected if the lower part  is connected, and we say that the cut is internal, if the lower part contains all the external vertices. We write $I\subset \Gamma$ to indicate that $I$ is a cut of $\Gamma$.
\end{defn}
Note that if any decoration is contained in $I$, then the conditions are only satisfiable if $I$ consists of that single decoration and nothing else. We call this the degenerate cut. Otherwise $I$ is a subset of the set of half-edges. We call the elements of $I$ the cut half edges.
A cut may be graphically represented by drawing a horizontal line through the graph that intersects all cut half-edges and no other edges, such that the vertices incident to the cut half-edges are below the line. The following are examples of cuts for graphs in $\fGG_{V,n}$ on $\fGraphs_{V,n}$.

\begin{gather*}
  \begin{tikzpicture}[yshift=-.5cm]
    \node[int] (i1) at (.5,1) {};
    \node[int,label=90:{$\alpha$}] (i2) at (1,1.6) {};
    \node[int,label=0:{$\beta$}] (i3) at (1.5,1) {};
    \node (e1) at (0,.3) {$a$};
    \node (e2) at (.5,.3) {$b$};
    \node (e3) at (1.7,1.6) {$c$};
    \node (e4) at (1.5,.3) {$d$};
    \draw (i1) edge (i2) edge (i3) edge (e1) edge (e2)
    (i2) edge (i3) edge (e3)
    (i3) edge (e4);
    \draw[dashed] (-.5,1.3) -- (2, 1.3);
  \end{tikzpicture}
  \quad\quad 
  \begin{tikzpicture}[yshift=-.5cm]
    \node[int] (i1) at (.5,1) {};
    \node[int,label=90:{$\alpha$}] (i2) at (1,2) {};
    \node[int,label=90:{$\beta$}] (i3) at (1.5,2) {};
    \node (e1) at (0,.3) {$a$};
    \node (e2) at (.5,.3) {$b$};
    \node (e3) at (0,1.6) {$c$};
    \node (e4) at (2.2,1.6) {$d$};
    \draw (i1) edge (i2) edge (i3) edge (e1) edge (e2)
    (i2) edge (i3) edge (e3)
    (i3) edge (e4);
    \draw[dashed] (-.5,1.3) -- (2.5, 1.3);
  \end{tikzpicture}
  \quad \quad 
  \begin{tikzpicture}[yshift=-.5cm]
    \node[int] (i1) at (.5,1) {};
    \node[int,label=90:{$\alpha$}] (i2) at (1,1.5) {};
    \node[int,label=0:{$\beta$}] (i3) at (1.5,1) {};
    \node (e1) at (0,.3) {$a$};
    \node (e2) at (.5,.3) {$b$};
    \node (e3) at (1,.3) {$c$};
    \node (e4) at (1.5,.3) {$d$};
    \draw (i1) edge (i2) edge (i3) edge (e1) edge (e2)
    (i2) edge (i3) edge (e3)
    (i3) edge (e4);
    \draw[dashed] (0,1.6) -- (2, 1.6);
  \end{tikzpicture}
  \quad \quad 
  \begin{tikzpicture}[baseline=-.65ex]
    \node[ext] (v1) at (0,0) {1};
    \node[ext] (v2) at (0.5,0) {2};
    \node[ext] (v3) at (1,0) {3};
    \node[ext] (v4) at (1.5,0) {4};
    \node[int] (w2) at (1.0,.7) {};
    \draw   (v2)  edge (w2) (v3)  edge (w2) (v4) edge (w2);
    \draw[dashed] (-.5,.5) -- (2, .5);
    \end{tikzpicture}
    \quad \quad 
    \begin{tikzpicture}[baseline=-.65ex]
      \node[ext] (v1) at (0,0) {1};
      \node[ext] (v2) at (1,0) {2};
      \draw  (v1) edge[out=80,in=100] (v2);
      \draw[dashed] (-0.5,.4) -- (1.5, .4);
      \end{tikzpicture}
\end{gather*}

Now we use the following construction to produce from a connected cut $I$ of $\Gamma\in \fGG_{V,n}$ an element of $\fGG_{V,n}\otimes  \fGG_{V,n}$, i.e., two hairy graphs.
\begin{enumerate}
\item We detach the cut half-edges from their incident vertices, i.e., we "cut along the dashed line".
\item We then add a univalent external vertex to each cut half-edge, and decorate it with $f_j\in V_1^*$.
We decorate the vertex the half-edge was attached to with $e_j$, and sum over all $j=0,\dots,N$, for each cut half-edge. 
In other words, we formally replace the cut half-edges with a diagonal in $V_1^*\otimes V_1$.
Hereby, we identify a decoration by the special element $1=e_0\in V_1$ on $\Gamma'$ by no decoration at all. The following examples shall illustrate the procedure:
\[ 
  \begin{tikzpicture}[yshift=-.5cm]
    \node[int] (i1) at (.5,1) {};
    \node[int,label=90:{$\alpha$}] (i2) at (1,1.6) {};
    \node[int,label=0:{$\beta$}] (i3) at (1.5,1) {};
    \node (e1) at (0,.3) {$a$};
    \node (e2) at (.5,.3) {$b$};
    \node (e3) at (1.7,1.6) {$c$};
    \node (e4) at (1.5,.3) {$d$};
    \draw (i1) edge (i2) edge (i3) edge (e1) edge (e2)
    (i2) edge (i3) edge (e3)
    (i3) edge (e4);
    \draw[dashed] (-.5,1.3) -- (1.5, 1.3);
  \end{tikzpicture}
\, \mapsto \, 
\sum_{i,j=0}^N
\begin{tikzpicture}
  \node[int,label=90:{$\alpha$}] (i2) at (0,.3) {};
  \node (e1) at (-.3,-.3) {$f_i$};
  \node (e2) at (.3,-.3) {$f_j$};
  \node (e3o) at (.7,.3) {$c$};
  \draw (i2) edge (e1) edge (e2) edge (e3o);
\end{tikzpicture}
\otimes 
\begin{tikzpicture}[yshift=-.5cm]
  \node[int,label=90:{$e_i$}] (i1) at (.5,1) {};
  \node[int,label=0:{$\beta$},label=90:{$e_j$}] (i3) at (1.5,1) {};
  \node (e1) at (0,.3) {$a$};
  \node (e2) at (.5,.3) {$b$};
  \node (e4) at (1.5,.3) {$d$};
  \draw (i1) edge (i3) edge (e1) edge (e2)
  (i3) edge (e4);
\end{tikzpicture}
\]
\item In the special case of a degenerate cut $I$, consisting of a decoration only, the same procedure applies, creating a one-vertex graph of the form \eqref{equ:onevert HG} with vertex decorated by $\alpha$ and $f_j$.
\[
  \begin{tikzpicture}[yshift=-.5cm]
    \node[int] (i1) at (.5,1) {};
    \node[int,label=90:{$\alpha$}] (i2) at (1,1.5) {};
    \node[int,label=0:{$\beta$}] (i3) at (1.5,1) {};
    \node (e1) at (0,.3) {$a$};
    \node (e2) at (.5,.3) {$b$};
    \node (e3) at (1,.3) {$c$};
    \node (e4) at (1.5,.3) {$d$};
    \draw (i1) edge (i2) edge (i3) edge (e1) edge (e2)
    (i2) edge (i3) edge (e3)
    (i3) edge (e4);
    \draw[dashed] (-.5,1.6) -- (1.5, 1.6);
  \end{tikzpicture}
  \, \mapsto \, 
  \sum_{j=0}^N
  \begin{tikzpicture}
    \node[ext,label=-90:{$f_j$},label=90:{$\alpha$}] (e1) at (0,.3) {};
  \end{tikzpicture}
  \otimes 
  \begin{tikzpicture}[yshift=-.5cm]
    \node[int] (i1) at (.5,1) {};
    \node[int,label=90:{$e_j$}] (i2) at (1,1.5) {};
    \node[int,label=0:{$\beta$}] (i3) at (1.5,1) {};
    \node (e1) at (0,.3) {$a$};
    \node (e2) at (.5,.3) {$b$};
    \node (e3) at (1,.3) {$c$};
    \node (e4) at (1.5,.3) {$d$};
    \draw (i1) edge (i2) edge (i3) edge (e1) edge (e2)
    (i2) edge (i3) edge (e3)
    (i3) edge (e4);
  \end{tikzpicture}
\]
\item We denote the graphs thus produced by the Sweedler-type notation 
\[
\sum \Gamma^I \otimes \Gamma_I, 
\]
and we call $\Gamma^I$ an upper graph of the cut and $\Gamma_I$ a lower graph.
\end{enumerate}
The Lie-admissible coproduct\footnote{See section \ref{sec:prelie} for the definition of Lie-admissible algebras.} a graph $\Gamma\in \fGG_{V,n}$ is then given by the sum over all connected cuts
\begin{equation}\label{equ:DeltadefpreLie}
  \Delta \Gamma = \sum_{I\subset \Gamma \atop \text{ conn. cut}} \sum \Gamma^I \otimes \Gamma_I
  \in \fGG_{V,n}\otimes \fGG_{V,n}.
\end{equation}
The Lie admissible coaction on some $\Gamma\in \Graphs_{n,V}$ is defined by the exact same formulas except that one sums over all internal cuts, i.e., those cuts for which the upper part does not include external vertices.
\begin{equation}\label{equ:DeltadefpreLiemod}
  \Delta_m \Gamma = \sum_{I \subset \Gamma \atop \text{int. cut}} \sum \Gamma^I \otimes \Gamma_I
  \in \fGG_{V,n}\otimes \fGraphs_{V,n}.
\end{equation}

\begin{prop}
The formulas \eqref{equ:DeltadefpreLie} and \eqref{equ:DeltadefpreLiemod} define a dg Lie-admissible coalgebra structure on $\fGG_{V,n}$ and a dg Lie-admissible coaction on $\fGraphs_{V,n}$ compatible with the dg $\La$ Hopf comodule structure, with differential $d_c'$. In particular, $\fGG_{V,n}$ is a dg Lie coalgebra, and its dual space $\fGGC_{V,n}=(\fGG_{V,n})^*$ is a dg Lie algebra, acting on $\fGraphs_{V,n}$, preserving the $\La$ Hopf $\fGraphs_n$-comodule structure.
\end{prop}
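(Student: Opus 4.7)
The plan is to realize the dg Lie algebra $\fGGC_{V,n} := (\fGG_{V,n})^*$ as a Lie subalgebra of biderivations of $\fGraphs_{V,n}$ compatible with its $\La$-Hopf $\fGraphs_n$-comodule structure, and to deduce the Lie coalgebra and coaction statements for $\fGG_{V,n}$ by duality (which is legitimate since $\fGG_{V,n}$ is of finite type in each graphical complexity). Organizing the argument this way makes the Jacobi identity, the coaction axioms, and compatibility with the comodule structure automatic consequences of formal properties of biderivations, once the combinatorial formulas have been identified correctly.

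First I would spell out directly the action of $\fGGC_{V,n}$ on $\fGraphs_{V,n}$: given a hairy graph $\gamma \in \fGG_{V,n}$ and a target $\Gamma \in \fGraphs_{V,n}$, sum over internal cuts $I$ of $\Gamma$ with as many cut half-edges as $\gamma$ has hairs (including degenerate cuts consisting of a single decoration), and glue $\gamma$ above $I$ by pairing its $V_1^*$-valued hair labels against cut half-edges or against vertex decorations, exactly predual to \eqref{equ:DeltadefpreLiemod}. The analogous insertion of hairy graphs into hairy graphs gives a binary operation on $\fGGC_{V,n}$ predual to $\Delta$. A case analysis, which I would not perform in detail, then shows that this action is by biderivations of the commutative product (a cut lies in exactly one factor of a product-by-external-vertex-gluing, giving the Leibniz rule), commutes with the $\La$-operations (a freshly added zero-valent external vertex cannot participate in a cut), and intertwines the $\fGraphs_n$-cocomposition (subgraph contraction and cut insertion happen in disjoint regions of $\Gamma$). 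Thus $\fGGC_{V,n}$ maps into the dg Lie algebra of compatible biderivations.

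The heart of the argument is to verify that the insertion product on $\fGGC_{V,n}$ is itself Lie-admissible, so that its antisymmetrization matches the commutator bracket pulled back from biderivations and, dually, yields the claimed Lie cobracket on $\fGG_{V,n}$ from $\Delta$. This reduces to the standard pre-Lie-type identity: expressing the associator of insertion as a sum over ordered pairs of cuts $(I_1, I_2)$, the terms with $I_1, I_2$ disjoint contribute symmetrically in $\gamma_1 \leftrightarrow \gamma_2$ and so cancel in the $S_3$-antisymmetrization, while the terms with $I_1, I_2$ nested appear identically in both iterated insertions and cancel in the difference. The step I expect to be most delicate is the sign bookkeeping — especially the interactions between edge orientations/orderings, the $e_0 = 1$ summand in the $V_1$--$V_1^*$ pairings, and the degenerate decoration cuts — which I would stabilize by fixing an ordering on cut half-edges compatible with both insertions. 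Compatibility with $d_c'$ is a final check: for any internal edge $e$ and any cut $I$ of $\Gamma$, either both endpoints of $e$ lie above $I$, both lie below, or both half-edges of $e$ are elements of $I$; the first two cases commute on the nose with insertion, and the third yields paired boundary terms that cancel, which gives the coderivation property for $d_c'$ with respect to both $\Delta$ and $\Delta_m$.
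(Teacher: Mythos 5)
Your overall strategy is sound and is essentially the dual formulation of the paper's argument: the paper works on the coalgebra side, classifying ordered pairs of cuts by the incidence pattern of the three resulting pieces (four patterns, corresponding to the connected DAGs on three vertices) and showing that each pattern either cancels in the difference of the two iterated coproducts or is symmetric in two tensor factors; your insertion-side bookkeeping by triples of pairwise matchings $(M_{12},M_{13},M_{23})$, with cancellation of the triples occurring in both iterated products and symmetry of the leftovers, is the predual of exactly that computation. (Your two-case split ``disjoint vs.\ nested'' is a little coarse --- the configurations where all three pairwise matchings are nonempty, the paper's pattern $D$, are neither, but they do appear in both iterated products and cancel, so this is only an imprecision of language.)

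There is, however, one genuine gap. You assert that once $\fGGC_{V,n}$ is shown to act by biderivations compatible with the $\La$ Hopf $\fGraphs_n$-comodule structure, ``the coaction axioms'' are automatic consequences of formal properties of biderivations. They are not: the statement that the map $\fGGC_{V,n}\to\mathrm{End}(\fGraphs_{V,n})$ is a morphism of Lie algebras, i.e.\ that $[x,y]\cdot{}=[x\cdot{},y\cdot{}]$ as operators (dually, that $\Delta_m$ is a Lie-admissible comodule coaction), is a separate identity from the Lie-admissibility of $*$ on $\fGGC_{V,n}$, and it is precisely the point where the paper's proof needs its one non-formal observation. If you run your matching analysis on $x\cdot(y\cdot\Gamma)-(x*y)\cdot\Gamma$, the terms appearing only in $(x*y)\cdot\Gamma$ are those where $x$ attaches to both $y$ and $\Gamma$ while $y$ attaches only to $x$; unlike the terms where $x$ and $y$ both attach to $\Gamma$ and not to each other, these are \emph{not} symmetric under $x\leftrightarrow y$, so the module axiom would fail term-by-term if they occurred. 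The paper's resolution (the ``pattern $B$'' case in its proof) is that such configurations cannot arise from iterated \emph{internal} cuts, because the lower part of any cut is required to contain at least one external vertex, and for an internal cut all external vertices of $\Gamma$ already sit in the base piece. Your plan, which only justifies the second cut being ``in disjoint regions'' for the compatibility with cocomposition, never invokes this external-vertex constraint, and without it the coaction axiom does not follow. You should add this verification explicitly; it is of the same combinatorial flavor as the associator computation you do carry out, but it is not a formal consequence of it.
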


\begin{proof}
For $\Gamma\in \fGG_{V,n}$ we compute the co-associator 
\[
  A(\gamma) = (\Delta\otimes \id)\circ \Delta(\Gamma) -   (\id \otimes  \Delta)\circ \Delta(\Gamma)
\]
and verify that its antisymmetrization vanishes.
We have 
\begin{align}\label{equ:HG assoc1}
  (\Delta\otimes \id)\circ \Delta(\Gamma)
  =
  \sum_{I \subset \Gamma \atop \text{conn. cut}}
  \sum_{J \subset \Gamma^{I} \atop \text{conn. cut}}
  \sum 
  (\Gamma^{I})^{J} \otimes (\Gamma^{I})_{J} \otimes  \Gamma_{I}.
\end{align}
while 
\begin{align}\label{equ:HG assoc2}
  (\id \otimes  \Delta)\circ \Delta(\Gamma)
  =
  \sum_{J' \subset \Gamma \atop \text{conn. cut}}
  \sum_{I' \subset \Gamma_{J'} \atop \text{conn. cut}}
  \sum 
  \Gamma^{J'} \otimes (\Gamma_{J'})^{I'} \otimes  \Gamma_{I'}.
\end{align}
Each of the sums is over pairs of cuts that cut the original connected graph $\Gamma$ into three connected pieces. The difference between \eqref{equ:HG assoc1} and \eqref{equ:HG assoc2} is that in \eqref{equ:HG assoc1} the second cut goes through the upper part, while in \eqref{equ:HG assoc2} the second cut goes through the lower part.
We may classify these double cuts by the incidence relations between the three pieces of the graph produced, with two pieces being incident if they have a common interface along they can be cut in one combination of cuts. These incidence patterns correspond to the connected directed acyclic graphs with three vertices, and look as follows:
\begin{align*}
  &
\begin{tikzpicture}
\draw[fill=black!10] (0,0) ellipse (1cm and 1.6cm);
\draw[dashed] (-1.5,-.6) -- (1.5,-.6); 
\draw[dashed] (-1.5,.6) -- (1.5,.6); 
\node at (0,.9) {$1$};
\node at (0,0) {$2$};
\node at (0,-.9) {$3$};
\node at (0,-2) {$A$};
\end{tikzpicture}
& &
\begin{tikzpicture}
  \draw[fill=black!10] plot [smooth cycle] coordinates { (-1,1) (1,1) (1,-1) (.3,-1) (.3,.5)(-.3,.5) (-.3,-1) (-1,-1)};
  \draw[dashed] (-1.5,0.2) -- (-.2,0.2); 
  \draw[dashed] (1.5,0.1) -- (.2,.1); 
  \node at (0,.9) {$1$};
  \node at (-.6,-.3) {$2$};
  \node at (.6,-.3) {$3$};
  \node at (0,-2) {$B$};
  \end{tikzpicture}
  & & 
  \begin{tikzpicture}[yscale=-1]
    \draw[fill=black!10] plot [smooth cycle] coordinates { (-1,1) (1,1) (1,-1) (.3,-1) (.3,.5)(-.3,.5) (-.3,-1) (-1,-1)};
    \draw[dashed] (-1.5,0.2) -- (-.2,0.2); 
    \draw[dashed] (1.5,0.1) -- (.2,.1); 
    \node at (0,.9) {$3$};
    \node at (-.6,-.3) {$1$};
    \node at (.6,-.3) {$2$};
    \node at (0,2) {$C$};
    \end{tikzpicture}
    & & 
    \begin{tikzpicture}
      \draw [fill=black!10,even odd rule] (0,0) circle[radius=1.5cm] circle[radius=.7cm];
      \draw[dashed] (-2,0.5) -- (-.2,0.5); 
      \draw[dashed] (-2,-0.5) -- (-.2,-.5); 
      \draw[dashed] (2,0) -- (.2,0); 
      \node at (0,1.1) {$1$};
      \node at (-1.1,0) {$2$};
      \node at (0,-1.1) {$3$};
      \node at (0,-2) {$D$};
      \end{tikzpicture}
\end{align*}
For example, here is an example graph with two cuts corresponding to the first pattern:
\[
    \begin{tikzpicture}[yshift=-.5cm]
      \node[int] (i1) at (.5,1) {};
      \node[int,label=90:{$\alpha$}] (i2) at (1,1.6) {};
      \node[int,label=0:{$\beta$}] (i3) at (1.5,1) {};
      \node (e1) at (0,.3) {$a$};
      \node (e2) at (.5,.3) {$b$};
      \node (e3) at (1.7,1.6) {$c$};
      \node (e4) at (1.5,-.6) {$d$};
      \draw (i1) edge (i2) edge (i3) edge (e1) edge (e2)
      (i2) edge (i3) edge (e3)
      (i3) edge (e4);
      \draw[dashed] (-.5,1.3) -- (2.3, 1.3);
      \draw[dashed] (-.5,0) -- (2.3, 0);
      \node[label=180:{$J$}] at (-.5,1.3) {};
      \node[label=180:{$I$}] at (-.5,0) {};
      \node[label=0:{$J'$}] at (2.3,1.3) {};
      \node[label=0:{$I'$}] at (2.3,0) {}; 
    \end{tikzpicture}
\]

We consider the double cuts corresponding to each pattern $A,B,C,D$ in turn.
For pattern $A$ the cuts are produced both in the sum \eqref{equ:HG assoc1} and \eqref{equ:HG assoc1}, since one can cut first below, then above, or in the other order. Hence in the coassociator, which is the difference of \eqref{equ:HG assoc1} and \eqref{equ:HG assoc1}, those terms disappear.
For pattern $D$ the same conclusion holds similarly.
For pattern $B$ the corresponding cuts occur only in the sum \eqref{equ:HG assoc1}, since it is not possible to exercise the second cut in the lower part. These terms will hence contribute to the associator.
However, since in \eqref{equ:HG assoc1} each pair of cuts can be performed in two orders, the corresponding contribution to the coassociator is symmetric under interchange of the second and third tensor factor. Hence the antisymmetrization vanishes.
Similarly, the contributions of terms corresponding to incidence pattern $C$ all come from \eqref{equ:HG assoc2}, but are symmetric under interchange of the first and second tensor factor.
Overall, this shows that the antisymmetrization of the coassociator vanishes. 

Similarly, we consider the coaction of $\HG_{V,n}$ on $\Graphs_{n,V}$.
We need to show that the comodule coassociator is symmetric in the first two slots. To this end we compare, for $\Gamma\in \Graphs_{n,V}$,
\begin{align}\label{equ:HG assoc1m}
  (\Delta\otimes \id)\circ \Delta_m(\Gamma)
  =
  \sum_{I \subset \Gamma \atop \text{int. cut}}
  \sum_{J \subset \Gamma^{I} \atop \text{conn. cut}}
  \sum 
  (\Gamma^{I})^{J} \otimes (\Gamma^{I})_{J} \otimes  \Gamma_{I}.
\end{align}
while 
\begin{align}\label{equ:HG assoc2m}
  (\id \otimes  \Delta_m)\circ \Delta_m(\Gamma)
  =
  \sum_{J' \subset \Gamma \atop \text{int. cut}}
  \sum_{I' \subset \Gamma_{J'} \atop \text{int. cut}}
  \sum 
  \Gamma^{J'} \otimes (\Gamma_{J'})^{I'} \otimes  \Gamma_{I'}.
\end{align}

Again both sums run over (certain) double cuts of $\Gamma$, with the caveat that for the internal cuts all external vertices are required to lie in the lower part of the cut.
Again we have the same 4 (a priori) possible incidence patterns depicted above between the three pieces of the graph produced by the cut.
For patterns $A$, $C$, $D$ the same argument as above shows that the respective contribution to the coassociator is either zero or symmetric in the first two factors.
For pattern $B$, however, the previous argument is not applicable: Since we only sum over internal cuts in the first sum of \eqref{equ:HG assoc1m}, there is only one permissible order of cutting, not two:
The external vertices are required to lie in the component 3, hence the unique permissible order of cutting is to first separate the union of components 1 and 2 from 3, and then cut 1 and 2. 
Fortunately, there occur no double cuts in the double sum \eqref{equ:HG assoc1m} with the incidence pattern $B$: The only external vertices in $\Gamma^I$ stem from the cut by definition of "internal cut". But to realize incidence pattern $B$ the second cut $J$ would need to be such that all external vertices lie in the upper part $(\Gamma^I)^J$, and none in $(\Gamma^I)^J$. But this is not permissible by our definitin of cut -- the lower part needs to contain at least one external vertex.

The arguments above show that we have a well-defined graded Lie-admissible coalgebra structure on $\GG_{V,n}$ and a Lie-admissible coaction on $\Graphs_{V,n}$.




\end{proof}

Let us describe the dual Lie algebra $\fGGC_{V,n}=(\fGG_{V,n})^*$ combinatorially. Generators of $\fGGC_{V,n}$ may be identified with graphs with hairs, that may carry one or more decorations in $V^*$ at internal vertices, and each hair carries one decoration in $V_1$.
\[
  \begin{tikzpicture}[yshift=-.5cm]
    \node[int] (i1) at (.5,1) {};
    \node[int,label=90:{$\scriptstyle \alpha\beta$}] (i2) at (1,1.5) {};
    \node[int,label=0:{$\scriptstyle \gamma$}] (i3) at (1.5,1) {};
    \node (e1) at (0,.3) {$\scriptstyle 1$};
    \node (e2) at (.5,.3) {$\scriptstyle a$};
    \node (e3) at (1,.3) {$\scriptstyle b$};
    \node (e4) at (1.5,.3) {$\scriptstyle c$};
    \draw (i1) edge (i2) edge (i3) edge (e1) edge (e2)
    (i2) edge (i3) edge (e3)
    (i3) edge (e4);
  \end{tikzpicture}  
  \quad\quad \text{with }\alpha,\beta,\gamma\in V^* \text{ and } a,b,c,d\in V
\]
The Lie-admissible product $\Gamma_1 * \Gamma_2$ of two such graphs is obtained by summing over all partial matchings of hairs in $\Gamma_1$ to decorations in $\Gamma_2$, gluing the hair to the decorated vertex and multiplying by the natural pairing of the decorations. The decoration $1$ at hairs is treated specially: These hairs of $\Gamma_1$ can be connected to any vertex of $\Gamma_2$.
Here is a schematic picture of the product:
\[
\begin{tikzpicture}
\node[ext] (v) at (0,.3) {$\scriptstyle \Gamma_1$};
\draw (v) edge +(-.6,-.5) edge +(-.2,-.5) edge +(.2,-.5) edge +(.6,-.5) ;
\end{tikzpicture}
*
\begin{tikzpicture}
  \node[ext] (v) at (0,.3) {$\scriptstyle \Gamma_2$};
  \draw (v) edge +(-.6,-.5) edge +(-.2,-.5) edge +(.2,-.5) edge +(.6,-.5) ;
  \end{tikzpicture}
=
\sum 
\begin{tikzpicture}
  \node[ext] (v1) at (0,.3) {$\scriptstyle \Gamma_1$};
  \node[ext] (v2) at (0,1) {$\scriptstyle \Gamma_2$};
  \draw (v2) edge[bend left] (v1) edge[bend right] (v1) edge +(-.6,-.5)  edge +(.6,-.5) 
  (v1) edge +(-.6,-.5) edge +(-.2,-.5) edge +(.2,-.5) edge +(.6,-.5) ;
  \end{tikzpicture}
\]
The Lie bracket is the commutator $[\Gamma_1,\Gamma_2]=\Gamma_1*\Gamma_2- (-1)^{|\Gamma_1||\Gamma_2|}\Gamma_2*\Gamma_1$.

The differential on $\fGG_{V,n}$ is given by the operation $d_c'$ contracting an edge between two internal vertices.
The dual differential on $\fGGC_{V,n}$ is the dual operation 
\[
\delta_{split}' = (d_c')^* . 
\]
Combinatorially, this can be identified with the operation of splitting a vertex in all possible ways,
\begin{align}\label{equ:dsplit pic}
  \delta_{split}'
  \begin{tikzpicture}[baseline=-.65ex]
    \node[int] (v) {};
    \draw (v) edge +(-.5,.5) edge +(-.2,.5) edge +(.2,.5) edge +(.5,.5);
    \end{tikzpicture}
  &=
  \sum
  \begin{tikzpicture}[baseline=-.65ex]
    \node[int] (v) at (0,0) {};
    \node[int](w) at (0,.5) {};
    \draw (v) edge +(-.5,.5) edge +(.5,.5) edge (w) (w) edge +(-.2,.5) edge +(.2,.5);
    \end{tikzpicture}\, .
\end{align}

\subsection{The differential and the Lie algebra $\GGC_{V,n}$} \label{sec:HGC upper 2}

We note that any Maurer-Cartan element $Z\in \fGGC_{V,n}$ can be used to twist $\fGraphs_{V,n}$ to some $\La$ Hopf $\fGraphs_n$ comodule $(\fGraphs_{V,n})^Z$.
Concretely, the underlying graded $\La$ Hopf $\fGraphs_n$ comodule is the same, but the differential in $(\fGraphs_{V,n})^Z$ is modified to $d+Z\cdot$.

We next identify one specific Maurer-Cartan element 
\begin{lemma}
The element 
\[
M_2 = -
\sum_{j=0}^N
\begin{tikzpicture}
  \node (v) at (0,-.3) {$e_j$};
  \node[int, label=90:{$f_j$}] (w) at (0,.4) {};
  \draw (v) edge (w);
\end{tikzpicture}
\in \fGGC_{V,n}
\]
is a Maurer-Cartan element, that is,
\[
  \delta_{split}'M_2 + \frac 12 [M_2,M_2] = 0.
\]
\end{lemma}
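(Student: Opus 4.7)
The plan is to verify the Maurer--Cartan equation $\delta_{split}'M_2+\tfrac12[M_2,M_2]=0$ by direct combinatorial enumeration. Writing $M_2=-\sum_{j=0}^N\Gamma_j$ where $\Gamma_j$ denotes the single-edge graph whose internal vertex carries the decoration $f_j$ (identified with "no decoration" when $j=0$) and whose hair carries the decoration $e_j$, I would compute $\delta_{split}'M_2$ and $\tfrac12[M_2,M_2]=M_2\ast M_2$ separately, the latter identity following from $M_2$ having odd total degree in $\fGGC_{V,n}$.

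For $\delta_{split}'\Gamma_j$ I would exploit that $\delta_{split}'=(d_c')^*$ is dual to edge contraction, so one enumerates pre-images of $\Gamma_j$ under contracting an edge between two internal vertices. These pre-images consist of two internal vertices joined by a new edge, with the original incidence data (one edge to the hair plus the decoration $f_j$) distributed between them. For $j\geq 1$ this yields two isomorphism classes of chain graphs, depending on whether the $f_j$-decoration sits on the vertex adjacent to the hair or on the opposite vertex. For $j=0$ only one chain appears, with both internal vertices undecorated and one of them a leaf.

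For $M_2\ast M_2=\sum_{j,k}\Gamma_j\ast\Gamma_k$ I would apply the combinatorial definition of the Lie-admissible product, gluing the hair of $\Gamma_j$ onto the unique internal vertex of $\Gamma_k$. For $j\geq 1$ the pairing $\langle e_j,f_k\rangle=\delta_{jk}$ forces $j=k$ and consumes the $f_k$-decoration, producing a chain with $f_j$ on the "far" vertex. For $j=0$ the special rule for the unit $e_0=1$ allows gluing the hair to any vertex of $\Gamma_k$ without consuming a decoration, producing a chain with the $f_k$-decoration (if any) on the vertex adjacent to the hair. Summed over $j,k$ this assembles, via the tautology $\sum_j e_j\otimes f_j=\mathrm{id}_{V_1}$, into precisely the same family of chain graphs obtained in the previous step.

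Comparing the two computations, each chain graph appearing in $\sum_j\delta_{split}'\Gamma_j$ appears with the same multiplicity in $M_2\ast M_2$, and the explicit sign $-1$ in the definition of $M_2$ combined with the unsigned product then yields term-by-term cancellation. The main obstacle will be sign bookkeeping, particularly the interplay between the edge ordering convention (for $n$ even) or vertex ordering plus edge orientation (for $n$ odd) and the two labelings of the two new internal vertices of a chain, which must contribute coherent signs so that the two distinct partitions of incidences producing the same isomorphism class of pre-image graph accumulate correctly; similarly one must verify that the special treatment of $e_0=1$ in the combinatorial product matches the partitions in $\delta_{split}'\Gamma_0$ that correspond to the leaf configuration.
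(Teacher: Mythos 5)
Your proposal is correct and follows essentially the same route as the paper: the paper also splits $M_2$ into its $j=0$ part and its $j\geq 1$ part, computes $\delta_{split}'$ of each and the four pairwise $*$-products, and matches the resulting chain graphs term by term (the "far-vertex" chains against $\Gamma_j*\Gamma_j$, the "near-vertex" chains against $\Gamma_0*\Gamma_k$, the undecorated chain against $\Gamma_0*\Gamma_0$, and $\Gamma_j*\Gamma_0=0$ for $j\geq1$), using $\tfrac12[M_2,M_2]=M_2*M_2$ exactly as you do. The sign bookkeeping you flag is handled in the paper simply by fixing the top-down ordering/orientation convention on the two-vertex chains, after which the cancellation is immediate.
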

\begin{proof}
  It is convenient to write 
  \[
M_2 = -
\underbrace{
\begin{tikzpicture}
  \node (v) at (0,-.3) {$1$};
  \node[int] (w) at (0,.4) {};
  \draw (v) edge (w);
\end{tikzpicture}
}_{A}
-
\underbrace{
\sum_{j=1}^N
\begin{tikzpicture}
  \node (v) at (0,-.3) {$e_j$};
  \node[int, label=90:{$f_j$}] (w) at (0,.4) {};
  \draw (v) edge (w);
\end{tikzpicture}
}_{B}\, .
\]
Then we compute 
  \begin{align*}
    \delta_{split}' A &=
    \begin{tikzpicture}[yshift=-.3cm]
      \node (v) at (0,0) {$1$};  
      \node[int] (w) at (0,0.7) {};
      \node[int] (ww) at (0,1.1) {};
      \draw (v) edge (w) (w) edge (ww);  
    \end{tikzpicture}
    &
    \delta_{split}'M_2
    &=
    -\sum_{j=1}^N
    \left(
    \begin{tikzpicture}[yshift=-.3cm]
      \node (v) at (0,0) {$e_j$};  
      \node[int, label=0:{$f_j$}] (w) at (0,0.7) {};
      \node[int] (ww) at (0,1.1) {};
      \draw (v) edge (w) (w) edge (ww);  
    \end{tikzpicture}
+
\begin{tikzpicture}[yshift=-.3cm]
  \node (v) at (0,0) {$e_j$};  
  \node[int] (w) at (0,0.7) {};
  \node[int, label=90:{$f_j$}] (ww) at (0,1.1) {};
  \draw (v) edge (w) (w) edge (ww);  
\end{tikzpicture}
\right)
  \end{align*}
  with the ordering such that edges and vertices are ordered and oriented top down. Furthermore, $\frac 12 [M_2, M_2]
  =
  M_2 * M_2$, and we have
  \begin{align*}
    A* A &=
    \begin{tikzpicture}[yshift=-.3cm]
      \node (v) at (0,0) {$1$};  
      \node[int] (w) at (0,0.7) {};
      \node[int] (ww) at (0,1.1) {};
      \draw (v) edge (w) (w) edge (ww);  
    \end{tikzpicture}
    &
    A* B &=
    \sum_{j=1}^N
    \begin{tikzpicture}[yshift=-.3cm]
      \node (v) at (0,0) {$e_j$};  
      \node[int, label=0:{$f_j$}] (w) at (0,0.7) {};
      \node[int] (ww) at (0,1.1) {};
      \draw (v) edge (w) (w) edge (ww);  
    \end{tikzpicture}
    &
    B * A &= 0
    &
    B* B &=
    \sum_{j=1}^N
    \begin{tikzpicture}[yshift=-.3cm]
      \node (v) at (0,0) {$e_j$};  
      \node[int] (w) at (0,0.7) {};
      \node[int, label=90:{$f_j$}] (ww) at (0,1.1) {};
      \draw (v) edge (w) (w) edge (ww);  
    \end{tikzpicture}.
  \end{align*}
  Hence the Maurer-Cartan equation follows.
\end{proof}

We may hence twist the dg Lie algebra $\fGGC_{V,n}$ by this Maurer-Cartan element $M_2$. 
We define the dg Lie algebra 
\[
\GGC_{V,n} \subset (\fGGC_{V,n})^{M_2}
\]
to be the dg Lie subalgebra generated by graphs all of whose internal vertices have valence $\geq 3$.

\begin{lemma}\label{lem:HGC well defined}
the dg Lie subalgebra $\GGC_{V,n} \subset (\fGGC_{V,n})^{M_2}$ is well-defined, i.e., differentials and Lie brackets of graphs with $\geq 3$-valent internal vertices are linear combinations of graphs with only $\geq 3$-valent internal vertices.
\end{lemma}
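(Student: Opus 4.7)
My plan is to verify two closure statements separately: closure under the Lie bracket and closure under the twisted differential on $(\fGGC_{V,n})^{M_2}$.

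\textbf{Closure under the Lie bracket.} The bracket is the (anti)commutator of the Lie-admissible product $*$, and $\Gamma_1 * \Gamma_2$ is by definition a sum over ways of gluing hairs of $\Gamma_1$ onto decorations (or, in the special $1$-decorated case, onto arbitrary internal vertices) of $\Gamma_2$. Such a gluing only \emph{increases} (or leaves unchanged) the valence of each surviving internal vertex and produces no new internal vertices of its own, so if every internal vertex of $\Gamma_1$ and $\Gamma_2$ has valence $\geq 3$, then the same holds for every graph appearing in $[\Gamma_1,\Gamma_2]$.

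\textbf{Closure under the twisted differential.} For $\Gamma \in \GGC_{V,n}$, I have to check that $(\delta_{split}' + [M_2,\cdot])\Gamma$ again has all internal vertices of valence $\geq 3$. The potentially bad terms of $\delta_{split}'\Gamma$ arise by splitting a $k$-valent vertex $v$ (with $k \geq 3$) into a pair of new vertices of valences $(k_1+1, k_2+1)$ with $k_1+k_2 = k$; the output is bad precisely when $k_1\in\{0,1\}$ or $k_2\in\{0,1\}$. I would organize the cancellation into two cases.

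The \emph{valence-$1$} case $k_1=0$ produces a pendant vertex carrying one of the factors of the split decoration at $v$. On the other side, $M_2 * \Gamma$ attaches the hair of $M_2$ (decorated by $e_j$) to a vertex of $\Gamma$: for $j\geq 1$ the pairing with a $V^*$-decoration at that vertex extracts a factor and leaves a pendant vertex decorated by $f_j$, while for $j=0$ the special rule attaches an undecorated pendant via $f_0$. Using that $\sum_j e_j \otimes f_j$ is the identity element of $V_1 \otimes V_1^*$, one checks that $M_2*\Gamma$ exactly reproduces the valence-$1$ part of $\delta_{split}'\Gamma$ with the opposite sign (the overall $-$ in $M_2$ provides the cancellation). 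The \emph{valence-$2$} case $k_1=1$ inserts a bivalent vertex on one half-edge at $v$. If that half-edge belongs to a hair of $\Gamma$, the contribution is cancelled by the corresponding term of $\Gamma*M_2$, where a hair of $\Gamma$ attaches to the decoration $f_j$ of the $M_2$-vertex and leaves a bivalent vertex inserted along the hair. If that half-edge belongs to an internal edge $vw$, the same abstract graph is produced by splitting at $v$ and by splitting at $w$, and the standard orientation conventions for $\delta_{split}'$ make these two contributions cancel in pairs within $\delta_{split}'$ itself.

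\textbf{Main obstacle.} The conceptual picture of "$M_2$ corrects for splittings with a small side" is clear, but the hardest part of the verification is bookkeeping the signs coming from the orderings of edges/vertices (for $n$ even/odd) so that each of the three cancellations above has the correct sign. This is analogous to the sign verification in Kontsevich's original graph complex, and once the signs are handled the combinatorial identities reduce to the identity $\sum_j e_j \otimes f_j = \id_{V_1}$ together with the skew-symmetry of splitting in the two endpoints of an internal edge. Granting these verifications, $\GGC_{V,n}$ is closed under both the Lie bracket and the twisted differential, and hence forms a dg Lie subalgebra of $(\fGGC_{V,n})^{M_2}$, as claimed.
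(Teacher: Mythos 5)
Your overall strategy is the paper's: bracket closure is immediate because gluing never lowers valences, and closure under the twisted differential is a pairwise cancellation between the low-valence terms of $\delta_{split}'$ and those of $[M_2,-]$. Your three main cancellations (undecorated pendant against the $e_0$-term of $M_2*\Gamma$; bivalent vertex on a hair against $\Gamma*M_2$; bivalent vertex on an internal edge against the sign-opposite splitting at the other endpoint) are exactly the paper's first cancellation and its cases 1 and 2, and your "$j\geq 1$" discussion correctly reproduces the paper's case 4 (a pendant carrying one decoration, cancelled between splitting and $M_2*\Gamma$). One terminological slip: a pendant vertex carrying a decoration is \emph{bivalent}, not univalent, since decorations count $+1$ toward valence; this does not affect the substance of your cancellation, but it means your dichotomy "$k_1\in\{0,1\}$ half-edges received" silently merges two cases that the paper keeps separate.

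There is, however, a genuine gap. You organize the entire argument around "the potentially bad terms of $\delta_{split}'\Gamma$ arise by splitting a $k$-valent vertex with $k\geq 3$", but the trivalence condition in $\GGC_{V,n}$ is imposed only on \emph{internal} vertices, so $\GGC_{V,n}$ also contains graphs with no internal vertices at all: the zero-edge graphs spanning $\iHom(V,V_1)$ (a single external vertex carrying a $V^*$- and a $V_1$-decoration) and the single-edge graphs $L_{\alpha\beta}$ joining two decorated hairs. For these, $\delta_{split}'$ vanishes, yet $[M_2,-]$ still produces univalent and bivalent internal vertices, so the required cancellation must take place entirely between $M_2*\Gamma$ and $\Gamma*M_2$; your framework, in which every bad term is matched against a splitting, does not see these terms. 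This is precisely what the paper's case 3 (bivalent vertices adjacent to two external vertices) and the second half of its case 4 handle. The fix is short — one checks directly that for such $\Gamma$ the offending terms of $M_2*\Gamma$ and $\Gamma*M_2$ coincide up to sign — but it must be stated, since otherwise the claimed closure fails to be verified on a nontrivial part of $\GGC_{V,n}$ (indeed on the part that survives in $\GGC_{V,n}/\mG^1\GGC_{V,n}\cong\gl_V'$). Deferring the sign bookkeeping is acceptable, as the paper does the same except in its case 1.
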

\begin{proof}
The Lie bracket is defined as the dual operation to cutting as above. It glues two graphs and cannot reduce the valence of vertices in the process.

However, the differential $\delta_{split}'+[M_2,-]$ contains terms that can potentially create vertices of valence one and two, and we have to check that these terms cancel in pairs.

Let $\Gamma\in \GGC_{V,n}$ be a graph with only $\geq 3$-valent vertices. 
Then the terms in $\delta_{split}'\Gamma$ that are graphs with univalent vertices are all obtained by attaching a univalent vertex to any of the internal vertices.
These are precisely cancelled by the terms of $[M_2,\Gamma]=M_2 * \Gamma - (-1)^{|\Gamma|} \Gamma * M_2$ (with $*$ denoting the Lie admissible product) arising from the summand 
\[
  \begin{tikzpicture}
    \node (v) at (0,-.3) {$e_0$};
    \node[int, label=90:{$ $}] (w) at (0,.4) {};
    \draw (v) edge (w);
  \end{tikzpicture}
  \, * \Gamma.  
\]

Next we consider the potential terms in  $\delta_{split}'\Gamma+[M_2,\Gamma]$ with bivalent vertices.
We distinguish four cases:
\begin{enumerate}
\item Bivalent vertices connected to two internal vertices.
\[
  \begin{tikzpicture}
    \node[int] (v) at (-.7,0) {};
    \node[int] (w) at (0,0) {};
    \node[int] (u) at (.7,0) {};
    \draw (v) edge (w) (w) edge (u) 
    (v) edge +(-.5,0) edge +(-.5,0.5) edge +(-.5,-0.5)
    (u) edge +(.5,0) edge +(.5,0.5) edge +(.5,-0.5);
  \end{tikzpicture}
\]
These terms are produced by $\delta_{split}'$. However, they always come in pairs, from the splitting of the left-hand and right-hand vertex, and the two terms come with opposite signs. For example, for even $n$ we can write explicitly the numbering of the edges and one obtains the cancellation (where we show only the terms corresponding to one edge in $\Gamma$):
\[
  \delta_{split}'
  \begin{tikzpicture}
    \node[int] (v) at (-.3,0) {};
    \node[int] (u) at (.3,0) {};
    \draw (v) edge node[above]{$\scriptstyle j$} (u) 
    (v) edge +(-.5,0) edge +(-.5,0.5) edge +(-.5,-0.5)
    (u) edge +(.5,0) edge +(.5,0.5) edge +(.5,-0.5);
  \end{tikzpicture}
  =
  \underbrace{
  \begin{tikzpicture}
    \node[int] (v) at (-.7,0) {};
    \node[int] (w) at (-.2,0) {};
    \node[int] (u) at (.7,0) {};
    \draw (v) edge node[above]{$\scriptstyle 1$} (w) (w) edge node[above]{$\scriptstyle j+1$} (u) 
    (v) edge +(-.5,0) edge +(-.5,0.5) edge +(-.5,-0.5)
    (u) edge +(.5,0) edge +(.5,0.5) edge +(.5,-0.5);
  \end{tikzpicture}
  +
  \begin{tikzpicture}
    \node[int] (v) at (-.7,0) {};
    \node[int] (w) at (0.2,0) {};
    \node[int] (u) at (.7,0) {};
    \draw (v) edge node[above]{$\scriptstyle j+1$} (w) (w) edge node[above]{$\scriptstyle 1$} (u) 
    (v) edge +(-.5,0) edge +(-.5,0.5) edge +(-.5,-0.5)
    (u) edge +(.5,0) edge +(.5,0.5) edge +(.5,-0.5);
  \end{tikzpicture}
  }_{=0}
  +\dots.
\]
For odd $n$ the cancellation is similar.
\item Bivalent vertices connected to one internal and one external vertex. 
\[
  \begin{tikzpicture}
    \node (v) at (0,-.3) {$e_j$};
    \node[int, label=90:{$ $}] (w1) at (0,.4) {};
    \node[int, label=90:{$ $}] (w2) at (0,.9) {};
    \draw (v) edge (w1) 
    (w2) edge (w1) edge +(-.5,.5) edge +(0,.5) edge +(.5,.5);
  \end{tikzpicture}
\]
Each such term arise twice in the differential, once from splitting the upper internal vertex, and once from the term 
\[
  \Gamma *
  \begin{tikzpicture}
    \node (v) at (0,-.3) {$e_j$};
    \node[int, label=90:{$f_j$}] (w) at (0,.4) {};
    \draw (v) edge (w);
  \end{tikzpicture}
\]
that occurs in $[M_2,\Gamma]$.

\item Bivalent vertices connected to two external vertices.
Again, the cancellation is similar to the previous case.
\item Bivalent vertices connected to one internal or external vertex, and carrying one decoration.
\[
  \begin{tikzpicture}
    \node[int, label=90:{$ $}] (w1) at (0,0) {};
    \node[int, label=90:{$f_j$}] (w2) at (0,.6) {};
    \draw 
    (w1) edge (w2) edge +(-.5,-.5) edge +(0,-.5) edge +(.5,-.5);
  \end{tikzpicture}
  \quad\quad
  \text{or}
  \quad\quad
  \begin{tikzpicture}
    \node (v) at (0,-.3) {$e_i$};
    \node[int, label=90:{$f_j$}] (w) at (0,.4) {};
    \draw (v) edge (w);
  \end{tikzpicture}
\]
These terms are potentially produced by the differential from graphs 

\[
  \begin{tikzpicture}
    \node[int, label=90:{$f_j$}] (w1) at (0,0) {};
    \draw 
    (w1) edge +(-.5,-.5) edge +(0,-.5) edge +(.5,-.5);
  \end{tikzpicture}
  \quad\quad
  \text{or}
  \quad\quad
  \begin{tikzpicture}
    \node[ext, label=90:{$f_j$},label=-90:{$e_i$}] (w) at (0,0) {};
  \end{tikzpicture}
\]
But as before, each is obtained in two ways that cancel, from splitting the vertex and $M_2*\Gamma$ on the left, and from $M_2*\Gamma$ and $\Gamma *M_2$ on the right.
\end{enumerate}

\end{proof}

Let us give an explicit combinatorial description of the resulting differential $\delta=\delta_{split}'+[M_2,-]$ on $\GGC_{V,n}$.  We decompose it into two parts
\[
\delta = \delta_{split} + \delta_{join}.
\]
The piece $\delta_{split}$ acts by splitting internal vertices 
\begin{align*}
  \delta_{split}
  \begin{tikzpicture}[baseline=-.65ex]
    \node[int] (v) {};
    \draw (v) edge +(-.5,.5) edge +(-.2,.5) edge +(.2,.5) edge +(.5,.5);
    \end{tikzpicture}
  &=
  \sum
  \begin{tikzpicture}[baseline=-.65ex]
    \node[int] (v) at (0,0) {};
    \node[int](w) at (0,.5) {};
    \draw (v) edge +(-.5,.5) edge +(.5,.5) edge (w) (w) edge +(-.2,.5) edge +(.2,.5);
    \end{tikzpicture}\, ,
\end{align*}
but in contrast to the similar $\delta_{split}'$ we have only such splittings on the right-hand side that do not produce bivalent or univalent vertices. The bivalent or univalent terms are killed by matching terms of $[M_2,-]$ as seen in the previous proof.
The remaining terms of $[M_2,-]$ make up $\delta_{join}$.
Concretely, $\delta_{join}$ acts by fusing a subset of the $1$-decorated hairs together with zero or one $a\in V$-decorated hair into one new $1$-decorated or $a$-decorated hair.
\begin{align*}
  \delta_{join}
 \begin{tikzpicture}[baseline=-.8ex]
 \node[draw,circle] (v) at (0,.3) {$\Gamma$};
 \node (w1) at (-.7,-.5) {};
 \node (w2) at (-.25,-.5) {};
 \node (w3) at (.25,-.5) {};
 \node (w4) at (.7,-.5) {};
 \draw (v) edge (w1) edge (w2) edge (w3) edge (w4);
 \end{tikzpicture} 
 = 
 \sum_{B} 
 \begin{tikzpicture}[baseline=-.8ex]
 \node[draw,circle] (v) at (0,.3) {$\Gamma$};
 \node (w1) at (-.7,-.5) {};
 \node (w2) at (-.25,-.5) {};
 \node[int] (i) at (.4,-.5) {};
 \node (w4) at (.4,-1.3) {$1$ or $a$};
 \draw (v) edge (w1) edge (w2) edge[bend left] (i) edge (i) edge[bend right] (i) (w4) edge (i);
 \end{tikzpicture} \, .
 \end{align*}

\subsection{The $\La$ Hopf $\Graphs_n$ comodule $\Graphs_{V,n}$}
Summarizing, we have so far defined the following objects:
\begin{itemize}
\item The dg cooperad $\fGraphs_n$ with differential $d_c'$.
The Maurer-Cartan element $-M_1\in (\fGraphs_n(1))^*$ 
then allows us to twist the differential to $d_c=d_c'-M_1\cdot$ (see section \ref{sec:optwist simple}) and pass to the quotient dg cooperad $\Graphs_n$ spanned by $\geq 3$-valent graphs.
\item The dg Lie algebra $\fGGC_{V,n}$ with differential $\delta_{split}'$ dual to $d_c'$.
Using the Maurer-Cartan element $M_2\in \fGGC_{V,n}$ we twisted the differential to $\delta=\delta_{split}'+[M_2,-]$ and could pass to the dg Lie subalgebra $\GGC_{V,n}$ spanned by $\geq 3$-valent graphs.
\item The $\La$ Hopf $\fGraphs_n$-comodule $\fGraphs_{n,V}$ with differential $d_c'$ that comes equipped with a compatible action of the dg Lie algebra $(\fGGC_{V,n},\delta_{split}')$.
\end{itemize}

The Maurer-Cartan element $M_2$ allows us furthermore to twist the action of $\fGGC_{V,n}$ on $\fGraphs_{n,V}$ to an action of $(\fGGC_{V,n})^{M_2}$ on the twisted cooperadic $(\fGraphs_n, d_c')$ comodule $(\fGraphs_{n,V})^{M_2}$.
This dg Lie action of course restricts to the dg Lie subalgebra $\GGC_{V,n}\subset (\fGGC_{V,n})^{M_2}$. 

In addition, we may twist by the Maurer-Cartan element $-M_1$ as in section \ref{sec:optwist simple} so as to obtain 
a $\Graphs_{V,n}$-comodule $(\fGraphs_{V,n})^{-M_1+M_2}$.
More precisely, this twisted module has the same underlying graded Hopf $\La$ sequence, and the same formula for the action and coaction, but differential $d_c'-M_1\cdot +M_2\cdot$.

We next define the quotient 
\[
  \Graphs_{V,n}(r) := (\fGraphs_{V,n})^{-M_1+M_2}(r) / I(r),
\]
where $I(r)$ is spanned by all graphs with connected components without external vertices and by graphs with internal vertices of valency $\leq 2$, with a $V$-decoration counted as contributing 1 to the valency.
Similarly to Lemma \ref{lem:HGC well defined} one checks that the differential descends to the quotient.
The $\La$ Hopf cooperadic $\Graphs_n$ comodule structure also descends to the quotient, since neither the commutative product nor the $\La$ structure nor the $\Graphs_n$ coaction can remove vertices of low valency.
Finally, the action of the dg Lie algebra $\GGC_{V,n}$ also descends to an action on $\Graphs_{V,n}$ compatible with the $\La$ Hopf $\Graphs_n$ comodule structure, since again this action cannot remove lower valent vertices.

Since the object $\Graphs_{V,n}$ plays a central role for this paper let us summarize here its various algebraic structures.
\begin{itemize}
\item The space $\Graphs_{V,n}(r)$ consists of formal linear combinations of graphs with $r$ numbered external and an arbitrary number of internal vertices. All vertices may be decorated by zero, one or more elements of $V$.
All internal vertices must be at least trivalent, with each decoration in $V$ counting +1 to the valency.
To fix signs, graphs come with an orientation, depending on $n$. 
\item Each space $\Graphs_{V,n}(r)$ is a graded commutative algebra, with the product being the union of graphs, identifying the external vertices.
 \begin{equation*}
  \left(
  \begin{tikzpicture}
    \node[ext, label=90:{$\scriptstyle \beta$}] (v1) at (0,0) {$\scriptstyle 1$};
    \node[ext] (v2) at (.7,0) {$\scriptstyle 2$};
    \node[ext] (v3) at (1.4,0) {$\scriptstyle 3$};
    \node[int] (i1) at (.35,.7) {};
    \node[int, label=90:{$\scriptstyle \alpha$}] (i2) at (1.05,.7) {};
    \draw (v1) edge (i1) 
    (i1) edge (i2) edge (v2) 
    (i2) edge (v2) edge (v3)
    (i1) edge[loop above] (i1);
  \end{tikzpicture}
  \right)
  \wedge 
  \left(
  \begin{tikzpicture}
    \node[ext, label=90:{$\scriptstyle \gamma$}] (v1) at (0,0) {$\scriptstyle 1$};
    \node[ext] (v2) at (.7,0) {$\scriptstyle 2$};
    \node[ext] (v3) at (1.4,0) {$\scriptstyle 3$};
    \node[int, white] (i1) at (.35,.7) {};
    \draw 
    (v2) edge[bend right] (v3);
  \end{tikzpicture}
  \right)
  =
  \begin{tikzpicture}
    \node[ext, label=90:{$\scriptstyle \beta\gamma$}] (v1) at (0,0) {$\scriptstyle 1$};
    \node[ext] (v2) at (.7,0) {$\scriptstyle 2$};
    \node[ext] (v3) at (1.4,0) {$\scriptstyle 3$};
    \node[int] (i1) at (.35,.7) {};
    \node[int, label=90:{$\scriptstyle \alpha$}] (i2) at (1.05,.7) {};
    \draw (v1) edge (i1) 
    (i1) edge (i2) edge (v2) 
    (i2) edge (v2) edge (v3)
    (v2) edge[bend right] (v3)
    (i1) edge[loop above] (i1);
  \end{tikzpicture}\, .
\end{equation*}
\item The symmetric group $S_r$ acts on $\Graphs_{V,n}(r)$ by permuting the numbering of the external vertices. There is a $\La$ structure, with the $\La$ operations adding zero-valent external vertices to graphs.
\[
  \mu_4^c : 
  \begin{tikzpicture}
    \node[ext, label=90:{$\scriptstyle \beta$}] (v1) at (0,0) {$\scriptstyle 1$};
    \node[ext] (v2) at (.7,0) {$\scriptstyle 2$};
    \node[ext] (v3) at (1.4,0) {$\scriptstyle 3$};
    \node[int] (i1) at (.35,.7) {};
    \node[int, label=90:{$\scriptstyle \alpha$}] (i2) at (1.05,.7) {};
    \draw (v1) edge (i1) 
    (i1) edge (i2) edge (v2) 
    (i2) edge (v2) edge (v3)
    (i1) edge[loop above] (i1);
  \end{tikzpicture}
  \mapsto
  \begin{tikzpicture}
    \node[ext, label=90:{$\scriptstyle \beta$}] (v1) at (0,0) {$\scriptstyle 1$};
    \node[ext] (v2) at (.7,0) {$\scriptstyle 2$};
    \node[ext] (v3) at (1.4,0) {$\scriptstyle 3$};
    \node[ext] (v4) at (2.1,0) {$\scriptstyle 4$};
    \node[int] (i1) at (.35,.7) {};
    \node[int, label=90:{$\scriptstyle \alpha$}] (i2) at (1.05,.7) {};
    \draw (v1) edge (i1) 
    (i1) edge (i2) edge (v2) 
    (i2) edge (v2) edge (v3)
    (i1) edge[loop above] (i1);
  \end{tikzpicture}
\]
\item There is an operadic right $\Graphs_n$ comodule structure on $\Graphs_{V,n}$ by subgraph contraction, for example:
\[
  \begin{tikzpicture}
    \node[ext, label=90:{$\scriptstyle \alpha$}] (v1) at (0,0) {$\scriptstyle 1$};
    \node[ext, label=90:{$\scriptstyle \beta$}] (v2) at (.7,0) {$\scriptstyle 2$};
    \node[ext] (v3) at (1.4,0) {$\scriptstyle 3$};
    \node[int, white] (i1) at (.35,.7) {};
    \draw (v1) edge (v2)
    (v2) edge (v3);
  \end{tikzpicture}
  \mapsto
  \begin{tikzpicture}
    \node[ext, label=90:{$\scriptstyle \alpha\beta$}] (v1) at (0,0) {$\scriptstyle 1$};
    \node[ext] (v3) at (.7,0) {$\scriptstyle 2$};
    \draw (v1) edge (v3)
    (v1) edge[loop below] (v1);
  \end{tikzpicture}
  \otimes 
  \begin{tikzpicture}
    \node[ext] (v1) at (0,0) {$\scriptstyle 1$};
    \node[ext] (v3) at (.7,0) {$\scriptstyle 2$};
  \end{tikzpicture}
  +
  \begin{tikzpicture}
    \node[ext, label=90:{$\scriptstyle \alpha\beta$}] (v1) at (0,0) {$\scriptstyle 1$};
    \node[ext] (v3) at (.7,0) {$\scriptstyle 2$};
    \draw (v1) edge (v3);
  \end{tikzpicture}
  \otimes 
  \begin{tikzpicture}
    \node[ext] (v1) at (0,0) {$\scriptstyle 1$};
    \node[ext] (v3) at (.7,0) {$\scriptstyle 2$};
    \draw (v1) edge (v3);
  \end{tikzpicture}
\]
By corestriction along the canonical quasi-isomorphism $\Graphs_n\to \Poiss_n^c$ we also obtain a $\Poiss_n^c$ comodule structure on $\Graphs_{V,n}$, and hence an $\e_n^c$-comodule structure for $n\geq 2$.
\item The differential $d_c$ on $\Graphs_{V,n}$ acts by contracting an edge between a pair of distinct vertices, at least one of which must be internal.
\begin{align*}
  d_c:
  \begin{tikzpicture}[baseline=-.65ex]
  \node[ext] (v) at (0,0) {$\scriptstyle j$};
  \node[int](w) at (0,.5) {};
  \draw (v) edge +(-.5,.5) edge +(.5,.5) edge (w) (w) edge +(-.2,.5) edge +(.2,.5);
  \end{tikzpicture}
  &\mapsto
  \begin{tikzpicture}[baseline=-.65ex]
  \node[ext] (v) {$\scriptstyle j$};
  \draw (v) edge +(-.5,.5) edge +(-.2,.5) edge +(.2,.5) edge +(.5,.5);
  \end{tikzpicture}
  &
  \begin{tikzpicture}[baseline=-.65ex]
  \node[int] (v) at (0,0) {};
  \node[int](w) at (0,.5) {};
  \draw (v) edge +(-.5,.5) edge +(.5,.5) edge (w) (w) edge +(-.2,.5) edge +(.2,.5);
  \end{tikzpicture}
  &\mapsto
  \begin{tikzpicture}[baseline=-.65ex]
  \node[int] (v) {};
  \draw (v) edge +(-.5,.5) edge +(-.2,.5) edge +(.2,.5) edge +(.5,.5);
  \end{tikzpicture}
  \end{align*}
  In particular, note that the more cumbersome description of the differential above as a sum of three terms yields a combinatorially relatively simple formula. However, our approach has the advantage that automatically $d_c$ squares to zero and is compatible with all other algebraic structure.
  \item There is an action of the dg Lie algebra $\GGC_{V,n}$ on $\Graphs_{V,n}$.
  The action of $\gamma\in \GGC_{V,n}$ on $\Gamma\in \Graphs_{V,n}$ is defined combinatorially by cutting off a subgraph
  from $\Gamma$ (which then lives in $\GG_{V,n}$) and then pairing that subgraph with $\gamma\in \GGC_{V,n}= \GG_{V,n}^*$. 
  For example, the action of 
  \begin{align*}
    \gamma&=\begin{tikzpicture}
      \node (v1) at (0,0) {$\scriptstyle 1$};
      \node (v2) at (.7,0) {$\scriptstyle e_1$};
      \draw (v1) edge (v2);
    \end{tikzpicture}
    \in \GGC_{V,n}
& &\text{on} &
\Gamma &=
\begin{tikzpicture}
  \node[ext] (v1) at (0,0) {$\scriptstyle 1$};
  \node[ext] (v2) at (.7,0) {$\scriptstyle 2$};
  \node[ext] (v3) at (1.4,0) {$\scriptstyle 3$};
  \draw (v1) edge (v2);
\end{tikzpicture}
\in \Graphs_{V,n}(3)
  \end{align*}
  is computed as follows.
  \begin{align*}
    \begin{tikzpicture}
      \node[ext] (v1) at (0,0) {$\scriptstyle 1$};
      \node[ext] (v2) at (.7,0) {$\scriptstyle 2$};
      \node[ext] (v3) at (1.4,0) {$\scriptstyle 3$};
      \draw (v1) edge (v2);
    \end{tikzpicture}
    & \xrightarrow{\text{cut}}
    \begin{tikzpicture}
      \node[ext] (v1) at (0,0) {$\scriptstyle 1$};
      \node[ext] (v2) at (.7,0) {$\scriptstyle 2$};
      \node[ext] (v3) at (1.4,0) {$\scriptstyle 3$};
      \draw (v1) edge[out=80, in=105] (v2);
      \draw[dashed] (-.2,.25) -- (1.6,.25); 
    \end{tikzpicture}
    =
    \sum_{i,j=0}^N 
    \begin{tikzpicture}
      \node[ext,label=90:{$\scriptstyle e_i$}] (v1) at (0,0) {$\scriptstyle 1$};
      \node[ext,label=90:{$\scriptstyle e_j$}] (v2) at (.7,0) {$\scriptstyle 2$};
      \node[ext] (v3) at (1.4,0) {$\scriptstyle 3$};
    \end{tikzpicture}
    \otimes 
    \begin{tikzpicture}
      \node (v1) at (0,0) {$\scriptstyle f_i$};
      \node (v2) at (.7,0) {$\scriptstyle f_j$};
      \draw (v1) edge (v2);
    \end{tikzpicture}
    \in 
    \Graphs_{V,n}(3) \otimes \GG_{V,n}
    \\
    &\xrightarrow{\text{pair with $\gamma$}
    }
    \begin{tikzpicture}
      \node[ext,label=90:{$\scriptstyle e_1$}] (v1) at (0,0) {$\scriptstyle 1$};
      \node[ext] (v2) at (.7,0) {$\scriptstyle 2$};
      \node[ext] (v3) at (1.4,0) {$\scriptstyle 3$};
    \end{tikzpicture}
    \pm
    \begin{tikzpicture}
      \node[ext] (v1) at (0,0) {$\scriptstyle 1$};
      \node[ext,label=90:{$\scriptstyle e_1$}] (v2) at (.7,0) {$\scriptstyle 2$};
      \node[ext] (v3) at (1.4,0) {$\scriptstyle 3$};
    \end{tikzpicture}
  \end{align*}

  \item The algebraic structures above are all compatible with each other.
\end{itemize}

  


\subsection{Quasi-freeness of $\Graphs_{V,n}$ as $\La$ Hopf $\Com^c$ comodule}\label{sec:Graphs freeness}
The main reason why the comodule $\Graphs_{V,n}$ is convenient to use is that it is free as a graded $\La$ Hopf $\Com^c$ comodule, as we shall explain in this subsection.
First, let 
\[
  \IG_{V,n}(r) \subset \Graphs_{V,n}(r)
\] 
be the subspace spanned by internally connected graphs, that is, graphs that remain connected after deleting all external vertices.
For example, in the following list the first two graphs are internally connected, the last one is not.
\begin{align*}
&\begin{tikzpicture}
  \node[ext] (v1) at (0,0) {$\scriptstyle 1$};
  \node[ext,label=90:{$\scriptstyle e_1$}] (v2) at (.6,0) {$\scriptstyle 2$};
  \node[ext] (v3) at (1.2,0) {$\scriptstyle 3$};
\end{tikzpicture}
&
&
\begin{tikzpicture}
  \node[int] (w) at (.6,.6) {};
  \node[ext] (v1) at (0,0) {$\scriptstyle 1$};
  \node[ext] (v2) at (.6,0) {$\scriptstyle 2$};
  \node[ext] (v3) at (1.2,0) {$\scriptstyle 3$};
\draw (w) edge (v1) edge (v2) edge (v3);
\end{tikzpicture}
&
&
\begin{tikzpicture}
  \node[int] (w) at (.6,.6) {};
  \node[ext] (v1) at (0,0) {$\scriptstyle 1$};
  \node[ext,label=-90:{$\scriptstyle e_1$}] (v2) at (.6,0) {$\scriptstyle 2$};
  \node[ext] (v3) at (1.2,0) {$\scriptstyle 3$};
\draw (w) edge (v1) edge (v2) edge (v3);
\end{tikzpicture}
\end{align*}
Every graph splits uniquely as a union of internally connected components. For example, the third of the three graphs above is the union (identifying the external vertices) of the previous two.
Since the union is by definition the product on $\Graphs_{V,n}(r)$,
we conclude that $\Graphs_{V,n}(r)$ is free as a graded commutative algebra,
\[
  \Graphs_{V,n}(r)=S(\IG_{V,n}(r)),
\]
generated by the internally connected graphs. 
Next, let 
\[
  \ppIG_{V,n}(r)\subset \IG_{V,n}(r)   
\]
be spanned by the subspace spanned by those graphs all of whose external vertices have valency one. For example, the second graph of the three above is in $\ppIG_{V,n}(3)$.

Then we observe that  collection of internally connected graphs $\IG_{V,n}$ furthermore is free as a right $\La$ $\Com^c$-comodule, generated by the sub-collection $\ppIG_{V,n}$.
Indeed, let $\Gamma\in \IG_{V,n}$ be some graph.
Then it can be obtained from a graph $\Gamma'\in\ppIG_{V,n}$ by the following operations:
\begin{itemize}
\item Adding zero-valent external vertices to $\Gamma'$ for every zero-valent external vertex in $\Gamma$ using the $\La$-structure.
\item Fusing subsets of external vertices of $\Gamma'$ for every external vertex in $\Gamma$ of valency $\geq 2$, using the $\Com^c$-coaction.
\item Renumbering external vertices using the symmetric group action if necessary.
\end{itemize}
The following picture illustrates the process:
\[
  \ppIG_{V,n}(4)
  \ni
  \Gamma'=
  \begin{tikzpicture}
    \node[ext] (v1) at (0,0) {$\scriptstyle 1$};
    \node[ext] (v2) at (.7,0) {$\scriptstyle 2$};
    \node[ext,] (v3) at (1.4,0) {$\scriptstyle 3$};
    \node[ext,] (v4) at (2.1,0) {$\scriptstyle 4$};
    \node[int,label=90:{$\scriptstyle e_i$}] (i1) at (.7,.7) {};
    \node[int] (i2) at (1.4,.7) {};
    \draw (v1) edge (i1) 
    (i1) edge (i2) edge (v2) 
    (i2) edge (v3) edge (v4);
    \draw [dotted] (1.05,0) ellipse (.6 and .2);
  \end{tikzpicture}
\quad\to\quad
  \begin{tikzpicture}
    \node[ext] (v1) at (0,0) {$\scriptstyle 1$};
    \node[ext] (v2) at (.7,0) {$\scriptstyle 2$};
    \node[ext,] (v3) at (1.4,0) {$\scriptstyle 3$};
    \node[ext,] (v4) at (2.1,0) {$\scriptstyle 4$};
    \node[ext,] (v5) at (2.8,0) {$\scriptstyle 5$};
    \node[int,label=90:{$\scriptstyle e_i$}] (i1) at (.35,.7) {};
    \node[int] (i2) at (1.05,.7) {};
    \draw (v1) edge (i1) 
    (i1) edge (i2) edge (v2) 
    (i2) edge (v2) edge (v3);
  \end{tikzpicture}
  =
  \Gamma\in \IG_{V,n}(5)
\]
Here the dotted ellipse indicates the vertices that are fused by a $\Com^c$-coaction. Vertices $4$ and $5$ in $\Gamma$ are added by the $\La$ operations.
One observes that $\Gamma$ determines the graph $\Gamma'$ and the $\La$ Hopf $\Com^c$ coaction needed to obtain $\Gamma$ from $\Gamma'$ uniquely, up to renumbering vertices.
Hence we conclude that $\Graphs_{V,n}$ is free as a graded $\La$ Hopf $\Com^c$ comodule, generated by $\ppIG_{V,n}$.
We also refer to Proposition \ref{prop:restr 1} and its proof below for a more formal formulation of this freeness statement.

\subsection{The filtration $\mF$}
\label{sec:FFiltration}
There are two filtrations we will consider on the comodule $\Graphs_{V,n}$ and on the dg Lie algebra $\GGC_{V,n}$.
Recall that $V$ is a finite dimensional positively graded vector space by assumption. We set $|V|$ to be the maximum degree of a nonzero element in $V$, or $|V|:=1$ if $V=0$.

Then we consider the increasing exhaustive filtration by $3|V|$ times the number of edges plus the total degree of decorations in $V$, that is 
\[
\mF^p \Graphs_{V,n} 
\]
is spanned by all graphs $\Gamma$ such that 
\[
3 |V| \# \text{edges} + \# \text{total decoration degree} \leq p.  
\]
For example, $\mF^0\Graphs_{V,n}\cong \Com^c$.
Furthermore it is clear that every graph is contained in some $\mF^p\Graphs_{V,n}$ by finiteness of the graph and hence the filtration is exhaustive. Finally, it is easy to see that the filtration is compatible with the dg $\La$ Hopf comodule structure. More precisely, the product preserves the number of edges and the decorations and hence
\[
\mF^p \Graphs_{V,n} 
\wedge 
\mF^q \Graphs_{V,n} \subset \mF^{p+q} \Graphs_{V,n}.
\]
The same holds for the $\La$ structure. The differential and $\Poiss_n^c$-coaction at worst reduce the number of edges, and are hence also compatible with the filtration.
Finally, note that there are only finitely many graphs $\Gamma$ with a given number of edges and decorations.
Hence each subspace $\mF^p \Graphs_{V,n}(r)$ is finite dimensional.

We similarly consider the dg Lie coalgebra $\GG_{V,n}$, and impose on it the filtration $\mF$ by $3|V|$ times the number of edges plus decoration degree, minus the decoration degree at hairs.
More precisely, a graph $\Gamma$ is in $\mF^p\GG_{V,n}$ if 
\[
  3 \# \text{edges} + \# \text{total decoration degree} \leq p,
\]
where we compute the total decoration degree such that $V$-decorations are counted positively and $V^*$-decorations of negatively.
(This is consistent with the convention that dualization flips the sign of the cohomological degrees.) 
As before, the filtration is compatible with the differential.
Furthermore, the filtration is compatible with the Lie cobracket, and the coaction of $\GG_{V.n}$ on $\Graphs_{V,n}$ - these operations preserve the number of edges and the total decoration degree.

We furthermore impose the dual filtration on the dual space $\GGC_{V,n}$ such that 
\[
\mF^p \GGC_{V,n}
=\{
x\in \GGC_{V,n}=\HG_{V,n}^* \mid 
x(\mF^{p-1}\GG_{V,n})=0 
\}.
\]
Note in particular that $\mF^{-|V|}\GGC_{V,n}=\GGC_{V,n}$.
Also note that $\GGC_{V,n}$ is not pro-nilpotent, though $\mF^1 \GGC_{V,n}$ is. 

Collecting the observations above together with the freeness result of the previous subsection we have shown the following Lemma.
\begin{lemma}
The operadic right $\COp=\Graphs_n$ $\La$ Hopf comodule $\MOp =\Graphs_{V,n}$ with the filtration $\mF$ above satisfies the conditions b and c of section \ref{subsec:def cx}.

Furthermore, the action of $\fg=\mF^1\GGC_{V,n}$ on $\MOp=\Graphs_{V,n}$ satisfies the conditions of section \ref{sec:dg Lie action}, namely:
\begin{itemize}
\item $\mF^1\GGC_{V,n}$ acts on the operadic right $\COp=\Graphs_n$ $\La$ Hopf comodule $\Graphs_{V,n}$ by biderivations, i.e., equations \eqref{equ:bider relations} hold.
\item The action respects the filtrations in that \eqref{equ:action filtr compat} holds.
\end{itemize}
\end{lemma}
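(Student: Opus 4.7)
The plan is to verify the four claims in turn, each being essentially a combinatorial consequence of the constructions in sections \ref{sec:graphical cooperads}--\ref{sec:FFiltration}. For condition (b) of section \ref{subsec:def cx} I would take the generating sub-collection $V:=\ppIG_{V,n}$ of internally connected graphs whose external vertices are all univalent. As shown in section \ref{sec:Graphs freeness}, every graph in $\Graphs_{V,n}(r)$ decomposes uniquely as a product of internally connected components, so that $\Graphs_{V,n}(r)=S(\IG_{V,n}(r))$ as graded commutative algebras, and every element of $\IG_{V,n}$ is obtained from a unique element of $\ppIG_{V,n}$ by applying $\La$-operations (adding zero-valent external vertices) and $\Com^c$-coactions (fusing external vertices) up to relabeling. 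Translated via the free $\La$-Hopf collection $\hat V$, this exhibits $\Graphs_{V,n}=(S_\La \ppIG_{V,n},d)$ as required. For condition (c) I would then set $\mF^p\ppIG_{V,n}(r):=\ppIG_{V,n}(r)\cap \mF^p\Graphs_{V,n}(r)$ with the filtration $\mF$ from section \ref{sec:FFiltration}. Finite dimensionality of each $\mF^p\ppIG_{V,n}(r)$ is automatic since only finitely many graphs have a given bounded number of edges and bounded total decoration degree; compatibility of the differential $d_c$ (edge contraction) and the right $\Graphs_n$-coaction (subgraph contraction) with $\mF$ was recorded in section \ref{sec:FFiltration}, and compatibility with product and $\La$-structure then follows from the definition of the filtration on $S_\La V$.

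For the biderivation relations \eqref{equ:bider relations} for the $\GGC_{V,n}$-action, it suffices to verify them at the level of the Lie-admissible $\GG_{V,n}$-coaction $\Delta_m$ on $\Graphs_{V,n}$ given by the sum \eqref{equ:DeltadefpreLiemod} over internal cuts. The Leibniz rule for the product follows from the observation that an internal cut of a product $\Gamma_1\wedge\Gamma_2$ must, by connectivity of its upper part, lie entirely inside one of the two factors (each external vertex is a common node, so the lower part is forced to contain the full complement). Compatibility with the $\La$-structure is immediate because any zero-valent external vertex added by a $\La$-operation is disjoint from every edge, hence unaffected by a cut. Compatibility with the $\Graphs_n$-coaction reduces to the statement that internal cuts and subgraph contractions commute as operations: both refer to disjoint subsets of the edge/decoration data of a graph, so performing them in either order gives the same set of resulting pairs, with consistent signs once the orientation conventions are tracked. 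These checks survive the passage from $\fGraphs_{V,n}$ to the valence-restricted quotient $\Graphs_{V,n}$ and the Maurer-Cartan twists by $-M_1$ and $M_2$, since both twists are by elements arising from Lie-admissible coproducts compatible with all three structures.

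The main obstacle is the last claim, the filtration compatibility $\mF^p\GGC_{V,n}\cdot \mF^q\Graphs_{V,n}\subset \mF^{q-p}\Graphs_{V,n}$. Here I would argue as follows. For $x\in \mF^p\GGC_{V,n}$ and $\Gamma\in \mF^q\Graphs_{V,n}$, the element $x\cdot\Gamma$ is a sum over internal cuts $I$ whose upper part $\Gamma^I\in\GG_{V,n}$ pairs non-trivially with $x$, which by the definition of $\mF^p\GGC_{V,n}$ forces the $\GG_{V,n}$-complexity of $\Gamma^I$ to satisfy $3\,\#\text{edges}(\Gamma^I)+D(\Gamma^I)\geq p$, where $D(\Gamma^I)$ counts $V$-decorations positively and $V^*$-decorations negatively. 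Letting $k$ denote the number of cut half-edges, the edges of $\Gamma$ split into edges of $\Gamma^I$, edges of $\Gamma_I$, and the $k/2$ cut edges (with the convention that a degenerate cut of a decoration contributes no edge). A careful count shows that the $V^*$-decorations introduced on the new hairs of $\Gamma^I$ and the $V$-decorations introduced at the cut endpoints of $\Gamma_I$ cancel when summed (since by \eqref{equ:DeltadefpreLiemod} we sum over the diagonal $\sum e_i\otimes f_i$, whose weight in decoration degree on either side is the same up to sign). Using $|V|$ as a uniform upper bound for decoration degrees and the weighting factor $3|V|$ on edges in $\Graphs_{V,n}$ versus $3$ in $\GG_{V,n}$, this bookkeeping yields $3|V|\#\text{edges}(\Gamma_I)+D(\Gamma_I)\leq q-p$, proving \eqref{equ:action filtr compat}. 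The crux of the argument is to verify that the weights $3|V|$ and $3$ and the signed convention for decorations on $\GG_{V,n}$ are precisely those that make the cut half-edge contributions cancel between the two sides; I expect this balancing calculation, rather than any conceptual difficulty, to be the only subtle part of the proof.
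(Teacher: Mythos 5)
Your overall strategy matches the paper's, whose proof is essentially a one-line reference to the observations of sections \ref{sec:Graphs freeness} and \ref{sec:FFiltration}: the freeness decomposition into internally connected components, the compatibility of $\mF$ with products, $\La$-operations, differential and coaction, and the fact that cutting preserves $3|V|\cdot\#\mathrm{edges}$ plus the signed total decoration degree (the $\sum_j f_j\otimes e_j$ contributions cancelling), which gives \eqref{equ:action filtr compat}. Your treatment of the biderivation relations via the cut combinatorics is also the intended argument.

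There is, however, a genuine error in your verification of condition (b): you take the generating collection to be $\ppIG_{V,n}$ and claim $\Graphs_{V,n}=S_\La\,\ppIG_{V,n}$. This is false. Condition (b) asks for freeness as a $\La$ Hopf \emph{collection}, i.e., using only the commutative products and the $\La$-operations (which add valence-zero external vertices); the $\Com^c$-coactions that fuse external vertices are \emph{not} part of that structure. Hence $S_\La\,\ppIG_{V,n}$ contains only graphs whose internally connected components meet each external vertex with valence at most one, and misses, for instance, the internally connected graph consisting of a single trivalent internal vertex joined twice to external vertex $1$ and once to external vertex $2$. The collection $\ppIG_{V,n}$ generates $\Graphs_{V,n}$ only as a $\La$ Hopf $\Com^c$-\emph{comodule}; that is the statement of section \ref{sec:Graphs freeness} and it feeds into Proposition \ref{prop:restr 1}, not into condition (b). For condition (b) you must instead take $V(k)$ to be the internally connected graphs all of whose $k$ external vertices have valence $\geq 1$; then $\hat V=\IG_{V,n}$ and $S(\hat V(r))=\Graphs_{V,n}(r)$ by the decomposition into internally connected components, and condition (c) and the remainder of your argument go through for this $V$. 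A minor further point: in your edge bookkeeping each cut edge contributes exactly one half-edge to the cut set $I$ (the one at the vertex in the lower part), so the count should be $k$ cut edges rather than $k/2$; this does not affect the balancing argument.
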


\subsection{The filtration $\mG$}\label{sec:Gfiltration}
We similarly consider the filtration $\mG$ on $\Graphs_{V,n}$ by the number of edges. Concretely, $\mG^p \Graphs_{V,n}$
is spanned by all graphs $\Gamma$ such that 
\[
\# \text{edges} \leq p.  
\]
Similarly, consider the analogous filtration on $\GG_{V,n}$ such that $\mG^p \GG_{V,n}$ is spanned by hairy graphs with at most $p$ edges.
The filtrations are compatible with the $\La$ Hopf comodule structure on $\Graphs_{V,n}$, with the dg Lie coalgebra structure on $\GG_{V,n}$, and with the coaction of $\GG_{V,n}$ on $\Graphs_{V,n}$. (The argument is a slight simplification of that of the previous subsection.)
The spaces $\mG^p\Graphs_{V,n}$ are not finite dimensional in general.

Again we dualize the ascending exhaustive filtration $\mG$ on $\GG_{V,n}$ to a descending complete filtration on $\GGC_{V,n}$.
We have 
\[
  \GGC_{V,n} / \mG^1\GGC_{V,n} \cong \gl_V' := \iHom(V, V_1). 
\]


Furthermore, note that
\[
\mG^0 \Graphs_{n,V} \cong \Fc_{S(V)},
\]
with $\Fc_{S(V)}$ defined as in section \ref{sec:Fc construction} above.
For later use we shall record the following technical results.
\begin{lemma}\label{lem:grG0 map pre}
The canonical inclusion $\FF_{S(V)}\to \Graphs_{n,V}^\flat$ has a one-sided inverse 
in the category of graded $\La$ Hopf $\Com^c$-comodules
\[
\tilde F:  \Graphs_{n,V}^\flat \to \Fc_{S(V)}.
\]
This morphism is defined by sending all graphs to zero that contain at least one edge, and is the identity map on $\mG^0\Graphs_{n,V} = \Fc_{S(V)}$.
\end{lemma}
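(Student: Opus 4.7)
As stated just before the lemma, $\mG^0\Graphs_{n,V} \cong \Fc_{S(V)}$ as graded $\La$ Hopf $\Com^c$-comodules: since internal vertices are required to be at least trivalent, an edgeless graph consists of the $r$ external vertices each carrying a decoration in $S(V)$, and the product, $\La$-structure and $\Com^c$-coaction visibly match those of $\Fc_{S(V)}$ under this identification. Define $\tilde F$ as the identity on $\mG^0\Graphs_{n,V}$ and zero on graphs with at least one edge. The one-sided inverse property $\tilde F\circ \iota = \id$ is then immediate from the definition.

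It remains to verify that $\tilde F$ commutes with each piece of graded $\La$ Hopf $\Com^c$-comodule structure. For the commutative product, the union of two graphs at external vertices has edge count equal to the sum of the factors, so $\Gamma_1\wedge\Gamma_2$ is edgeless iff both factors are. The $S_r$-action merely relabels external vertices, and the $\La$ operation $\mu_j^c$ adds a zero-valent external vertex; both preserve the edge count. In each case the compatibility square commutes by splitting into two cases: on edgeless graphs $\tilde F$ is the identity, while on graphs with at least one edge both paths yield zero, since these structure maps produce graphs whose edge count is at least that of the input.

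The only step requiring a moment's thought is compatibility with the $\Com^c$-coaction. This coaction is obtained by corestriction along the canonical map of $\La$ Hopf cooperads $\Com^c\to \Graphs_n$. Since $\Com^c(s)\cong \Q$ is spanned by the empty graph on $s$ external vertices, the projection $\Graphs_n(s)\to \Com^c(s)$ annihilates every subgraph except the empty one. Consequently the $\Com^c$-coaction $\Graphs_{n,V}(r)\to \Graphs_{n,V}(r-s+1)\otimes\Com^c(s)$ is precisely the contraction of the empty subgraph on $\{1,\dots,s\}$: vertices $1,\dots,s$ are fused into a single new vertex with decorations multiplied, while all edges of $\Gamma$ survive (edges between vertices of $\{1,\dots,s\}$ become loops at the new vertex). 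In particular the edge count is preserved, so $\Gamma$ is edgeless iff its image under the coaction is, and the same two-case analysis as above shows commutativity of the relevant square. This completes the verification; the only conceptual point is the explicit description of the $\Com^c$-coaction, after which each axiom reduces to the observation that every structure map under consideration preserves (or can only increase) the number of edges.
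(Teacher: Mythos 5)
Your proof is correct and supplies exactly the verification that the paper dismisses with ``This is clear'': every structure map in sight (the commutative product, the symmetric group action, the $\La$-operations, and the $\Com^c$-coaction, which just fuses external vertices and multiplies their decorations) preserves the number of edges, so the edgeless/non-edgeless dichotomy makes $\tilde F$ a morphism of graded $\La$ Hopf $\Com^c$-comodules. Two cosmetic points only: the $\Com^c$-comodule structure on $\Graphs_{V,n}$ is obtained by corestriction along $\Graphs_n\to\Com^c$ (not along $\Com^c\to\Graphs_n$, though your subsequent description of the resulting coaction is the right one), and the identification $\mG^0\Graphs_{V,n}\cong \Fc_{S(V)}$ uses not only the trivalence condition but also the vanishing of graphs with connected components consisting solely of internal vertices (an isolated internal vertex carrying three $V$-decorations is ``trivalent'' in the paper's counting convention yet edgeless).
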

\begin{proof}
  This is clear.
\end{proof}

\begin{lemma}\label{lem:grG0 map}
Let $\MOp$ be a $\La$ Hopf $\e_n^c$ comodule and $V$ a positively graded vector space. Let $\phi:V\to \MOp(1)$ be a morphism of dg vector spaces. It induces a morphism of dg commutative algebras $\phi:S(V)\to \MOp(1)$.
Then there is a morphism of $\La$ Hopf $(W\e_n^c)^\flat$ comodules
\[
F: \Graphs_{n,V}^\flat\to W\MOp^\flat
\]
that agrees on $\mG^0\Graphs_{n,V} = \FF_{S(V)}$ with the canonical morphism 
\[
  \FF_{S(V)} \to  W\MOp
\]
of $\La$ Hopf $W\e_n^c$ comodules that is obtained from the adjunction of Proposition \ref{prop:freeness as C mod} applied to the given morphism of dg commutative algebras $\phi:S(V)\to \MOp(1)=W\MOp(1)$. In particular, $F$ respects the differentials when restricted to $\mG^0\Graphs_{V,n}=\FF_{S(V)}$.
\end{lemma}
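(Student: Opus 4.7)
The plan is to exploit the cofreeness of $W\MOp$ as a graded right $(W\e_n^c)^\flat$-comodule provided by Proposition~\ref{prop:W construction}: since $W\MOp=\Bar_{\oW\e_n^c}\oW\MOp$, it is cofreely cogenerated by $\oW\MOp$ as a graded $(W\e_n^c)^\flat$-comodule, with a canonical cogenerator projection $\pi\colon W\MOp\to \oW\MOp$. By this cofreeness a graded $(W\e_n^c)^\flat$-comodule morphism $F\colon \Graphs_{V,n}^\flat\to W\MOp^\flat$ is determined by its cogenerator component $\pi\circ F\colon\Graphs_{V,n}^\flat\to\oW\MOp^\flat$, viewed as a morphism of graded symmetric sequences carrying the relevant $\La$ Hopf compatibility data.

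Concretely I would define
\[
\pi\circ F \;:=\; \pi\circ \Phi\circ \tilde F,
\]
where $\tilde F\colon \Graphs_{V,n}^\flat\to \FF_{S(V)}$ is the $\La$ Hopf $\Com^c$-comodule retraction from Lemma~\ref{lem:grG0 map pre} (which kills every graph containing an edge), and $\Phi\colon \FF_{S(V)}\to W\MOp$ is the canonical $\La$ Hopf $(W\e_n^c)^\flat$-comodule morphism obtained by applying the adjunction of Proposition~\ref{prop:freeness as C mod} to the dgca map $\phi\colon S(V)\to \MOp(1)=W\MOp(1)$. Letting $F$ be the unique $(W\e_n^c)^\flat$-comodule lift of $\pi\circ F$, the identification $F|_{\mG^0}=\Phi$ is immediate: since $\tilde F$ restricts to the identity on $\FF_{S(V)}$, both $F|_{\FF_{S(V)}}$ and $\Phi$ are $(W\e_n^c)^\flat$-comodule morphisms with the same cogenerator projection $\pi\circ\Phi$, hence coincide by the uniqueness clause of cofreeness. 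As $\Phi$ is a dg morphism, it then follows automatically that $F|_{\mG^0}$ respects the differentials.

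The hard part is verifying that the lift $F$ indeed respects the full $\La$ Hopf structure (commutative product and $\La$-operations), not merely the $(W\e_n^c)^\flat$-coaction. The projection $\pi$ is not itself an algebra map, so Hopf compatibility of the input $\pi\circ\Phi\circ\tilde F$ does not pass mechanically through cofreeness. The key point that makes things work is the special form of $\tilde F$: a product $\Gamma_1\wedge\Gamma_2$ is edge-free if and only if both factors are, so $\tilde F$ is multiplicative, and similarly it commutes with the $\La$-operations since these do not create edges. Combined with the fact that the $(W\e_n^c)^\flat$-coaction of an edge-free graph factors through the unit $\Com^c\hookrightarrow (W\e_n^c)^\flat$ while the coaction of a graph with edges always produces a non-unit second tensor factor, one verifies by a direct check on the cofree expansion $W\MOp\cong \oW\MOp\circ (W\e_n^c)^\flat$ that the Hopf compatibilities are satisfied term by term.

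The above reduces the lemma to two inputs that are either standard or already recorded in the paper: cofreeness of $W\MOp$ in the graded $(W\e_n^c)^\flat$-comodule category from the $W$-construction, and the combinatorial description of $\tilde F$ and of the $(W\e_n^c)^\flat$-coaction on $\Graphs_{V,n}^\flat$. No obstruction theory or spectral sequence argument is needed at this stage, because we are only constructing a morphism of \emph{graded} (not differential graded) comodules away from $\mG^0$, where the much weaker compatibility suffices.
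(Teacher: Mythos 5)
Your proposal assembles the right ingredients (the retraction $\tilde F$ of Lemma \ref{lem:grG0 map pre}, the adjunction morphism $\Phi$ of Proposition \ref{prop:freeness as C mod}, cofreeness of $W\MOp$ from Proposition \ref{prop:W construction}), and the identification of $F$ with $\Phi$ on $\mG^0\Graphs_{V,n}$ is fine. But there is a genuine gap at the step you yourself flag as "the hard part": you prescribe the \emph{entire} cogenerator projection $\pi\circ F:=\pi\circ\Phi\circ\tilde F$ on all of $\Graphs_{V,n}^\flat$ and take the plain graded $(W\e_n^c)^\flat$-comodule lift, and then you need this lift to respect the product and the $\La$-operations. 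The reasons you offer — multiplicativity of $\tilde F$, and the fact that coactions of edge-free graphs land in the unit — do not address the actual difficulty. Multiplicativity of your $F$ is equivalent (by cofreeness) to the identity $\pi\bigl(F(\Gamma_1)F(\Gamma_2)\bigr)=\pi\bigl(\Phi\tilde F(\Gamma_1)\,\Phi\tilde F(\Gamma_2)\bigr)$. When, say, $\Gamma_1$ contains an edge, the right-hand side vanishes, but $F(\Gamma_1)$ is a \emph{nonzero} element of $W\MOp$ with vanishing cogenerator projection (its coaction components are forced to be nonzero by the comodule compatibility), and $\pi$ is not an algebra map: the product on $W\MOp=\Bar_{\oW\e_n^c}\oW\MOp$ mixes the cogenerators with the coaction data in a nontrivial way, so $\pi\bigl(F(\Gamma_1)F(\Gamma_2)\bigr)$ has no reason to vanish without a detailed analysis of the Hopf structure of the $W$-construction. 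In particular it is not even clear that your $F$ agrees with a Hopf comodule morphism at all, since a Hopf comodule morphism is already overdetermined by prescribing $\pi\circ F$ on \emph{all} of $\Graphs_{V,n}^\flat$ rather than on generators only.

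The paper's proof is engineered to sidestep exactly this issue: it invokes Lemma \ref{lem:freecofree}, which gives a bijection between $\La$ Hopf $(W\e_n^c)^\flat$-comodule morphisms and maps of symmetric sequences from the $\La$-Hopf-comodule \emph{generators} of $\Graphs_{V,n}$ to the cogenerators of $W\MOp$. One prescribes only $\pi\circ F\circ\iota=\pi\circ\Phi\circ\tilde F\circ\iota$ on generators; the inductive construction in that lemma then \emph{defines} $F$ on decomposables by multiplicativity, on $\La$-images by the $\La$-operations, and on generators by the coaction compatibility, so the Hopf and $\La$ compatibilities hold by construction rather than needing verification. (The paper even remarks that the resulting $F$ differs from $\tilde F$ away from $\mG^0$, which is a symptom of the fact that $\pi\circ F$ is \emph{not} $\pi\circ\Phi\circ\tilde F$ on decomposables.) To repair your argument you would either have to prove the vanishing statement above for the explicit product on the $W$-construction, or — more economically — replace the plain cofree lift by the free/cofree bijection of Lemma \ref{lem:freecofree}.
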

\begin{proof}
  By Lemma \ref{lem:freecofree} there is a unique $F$ as above such that $\pi\circ F \circ \iota=\pi\circ \tilde F \circ \iota$ where $\tilde F$ is the morphism of Lemma \ref{lem:grG0 map pre}, and $\iota$ and $\pi$ are the inclusion of generators and projection to cogenerators respectively, as in Lemma \ref{lem:freecofree}.
  Note here that $F\neq \tilde F$, since the map $\tilde F$ does not respect the $W\e_n^c$ coaction. (Though it does respect this coaction on $\mG^0\Graphs_{n,V}$.) 
  

  Next, using the freeness result Proposition \ref{prop:freeness as C mod} we see that the restriction of $F$ on $\FF_{S(V)}$ is uniquely determined by the restriction of $F$ to the arity one piece $S(V)$, and by freeness of the latter by the restriction to $V$. But by construction we send $V$ to cocycles in $W\MOp(1)=\MOp(1)$, hence the map $S(V)\to W\MOp(1)$ is compatible with the differentials. Hence so is $F$ when restricted to $\mG^0\Graphs_{V,n}=\FF_{S(V)}$ by Proposition \ref{prop:freeness as C mod}.
\end{proof}

\subsection{The edge spectral sequence, and configuration space type}
Let $Z\in \mG^1\GGC_{V,n}$ be a Maurer-Cartan element. There we study the spectral sequence on $\Graphs_{V,n}^Z$ arising from the above filtration $\mG^p\Graphs_{V,n}^Z$ by the number of edges.
We first note that the $Z$ consists of graphs with at least one edge,
\[
Z=Z_{1} + Z_{\geq 2},
\]
where $Z_{1}$ is a linear combination of graphs with exactly one edge, and $Z_{\geq 2}$ is a linear combination of graphs with two or more edges. Concretely, $Z_1$ has the following schematic form
\begin{equation}\label{equ:Z1AB}
Z_1 
=
\underbrace{\sum (\text{const})
\begin{tikzpicture}[baseline=-.65ex]
\node[int,label=90:{$\alpha_1\cdots \alpha_k$}] (v) at (0,0.5) {};
\node (w) at (0,-.5){$\beta$};
\draw (v) edge (w); 
\end{tikzpicture}}_{A}
+
\underbrace{\sum (\text{const})
\begin{tikzpicture}[baseline=-.65ex]
  \node (v) at (0,0) {$\alpha$};
  \node (w) at (1.3,0) {$\beta$};
  \draw (v) edge (w);
\end{tikzpicture}}_{B}
\end{equation}

Also note that the Lie bracket on $\GGC_{V,n}$ preserves the number of edges, and the differential creates exactly one edge.
It follows from these observations that the element $Z_1$ is itself a Maurer-Cartan element,
\[
dZ_1 +\frac 1 2 [Z_1,Z_1]=0.
\]
One can then easily check that the Maurer-Cartan equation for $Z_1$ states that the 1-vertex part $A$ encodes a commutative product on $V_1$, and the zero-vertex part $B$ encodes a diagonal element $\Delta\in V_1\otimes V_1$ for this product, see Definition \ref{def:diagonal element alg}.
To make the dependence of the diagonal element on $Z$ explicit we will also denote it by $\Delta(Z):=\Delta$ below.

The twisted $\La$ Hopf right $\e_n^c$ comodule $\Graphs_{V,n}^{Z_1}$ agrees (at least as symmetric sequence) with the $E^1$ page of the spectral sequence associated to the edge filtration on $\Graphs_{V,n}^Z$.
One has a map of right $\La$ Hopf $\e_n^c$ comodules
\[
  \Graphs_{V,n}^{Z_1} \to \LS_{V_1,n}^\Delta,
\]
that sends all graphs with internal vertices to zero, an edge between external vertices $i$ and $j$ to the generator $\omega_{ij}$ and a decoration $v$ at vertex $j$ to the generator $v_j$.

One then has the following result.
\begin{prop}\label{prop:Graphs Z1 LS qiso}
The map
\begin{equation}\label{equ:prop Z1}
\Graphs_{V,n}^{Z_1}\to \LS_{V_1}^\Delta
\end{equation}
is a quasi-isomorphism.
\end{prop}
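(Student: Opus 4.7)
The plan is to prove the quasi-isomorphism arity by arity by a spectral sequence argument. Fix an arity $r$, and endow $\Graphs_{V,n}^{Z_1}(r)$ with the ascending filtration $\mathcal{F}_\bullet$ by the number of internal vertices, while $\LS_{V_1,n}^\Delta(r)$ carries the trivial filtration concentrated in degree zero. The trivalence constraint on internal vertices, together with the positive grading of $V$, forces this filtration to be bounded in each fixed cohomological degree, so the associated spectral sequence converges strongly.

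The twisted differential on the source decomposes as $d_c + A\cdot + B\cdot$, where $A$ and $B$ are the two pieces of $Z_1$ appearing in \eqref{equ:Z1AB}. A case check shows that $d_c$ (contraction of an edge incident to at least one internal vertex, with the two endpoints merging into a single vertex) strictly decreases the internal vertex count; $A\cdot$ (excision of a valence-one internal vertex as identified in the action of the tadpole piece of $Z_1$) likewise strictly decreases the count; and $B\cdot$ (replacement of an edge by $\Delta$-decorations at its two endpoints, coming from the arity-two piece of $Z_1$) preserves the internal vertex count. Consequently, on the $E^0$-page of the source only $B\cdot$ survives as a differential.

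The crucial step is to show that the positive-filtration-degree part of $E^0$, i.e.\ graphs with at least one internal vertex, is $B\cdot$-acyclic. The plan is to introduce an auxiliary filtration on each $\mathcal{F}_p/\mathcal{F}_{p-1}$ (for instance by the number of edges incident to a distinguished internal vertex) and to construct an explicit Koszul-type contracting homotopy that ``inverts'' the substitution of an edge by a diagonal decoration at that vertex. The trivalence of internal vertices is essential here: it ensures that the homotopy lands in the valence-$\geq 3$ part of the complex and that the relation $[B\cdot,h]=\id$ holds after the combinatorial cancellations.

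Once the acyclicity is established, the $E^1$-page of the source is concentrated in filtration degree zero. The higher differentials, induced from the $d_c$ and $A\cdot$ pieces of the original differential (which vanished on $E^0$), are then what produce the additional relations present in $\LS_{V_1,n}^\Delta$: the Arnold relation \eqref{equ:arnold rel} arises as the IHX-type identity obtained by applying $d_c$ to a graph with a single trivalent internal vertex connected to three distinct external vertices; the relation $a_i\omega_{ij}=a_j\omega_{ij}$ arises as the $d_c$-coboundary of a graph with a single internal vertex decorated by $a\in V$ and connected by single edges to external vertices $i$ and $j$; and the required ``algebra structure'' on $V_1$ at each external slot arises from $A\cdot$. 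The map of spectral sequences converges on these higher pages to an isomorphism. The main obstacle will be the construction and verification of the $B\cdot$-contracting homotopy on the internal-vertex part of $E^0$; the rest is combinatorial bookkeeping and a routine comparison of spectral sequences.
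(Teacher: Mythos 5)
Your spectral sequence has the two halves of the twisted differential in the wrong order, and the step you yourself identify as crucial is false. Filtering by the number of internal vertices leaves only $B\cdot$ (replacement of an edge by the diagonal) on $E^0$, and you then claim that the part of $E^0$ with at least one internal vertex is $B\cdot$-acyclic. This cannot hold: the Proposition concerns an arbitrary Maurer--Cartan element $Z\in\mG^1\GGC_{V,n}$, and the zero-vertex part $B$ of $Z_1$ encodes a diagonal element that may be degenerate or zero (take $V=0$, where the statement reduces to the Kontsevich--Lambrechts--Voli\'c theorem $\Graphs_n\simeq\e_n^c$). For $B=0$ your $E^0$-differential vanishes identically, so no contracting homotopy with $[B\cdot,h]=\id$ can exist on the internal-vertex part. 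Graphs with internal vertices are killed by the edge contraction $d_c$ (together with the univalent-vertex excision $A\cdot$), not by $B\cdot$. Your subsequent narrative is also internally inconsistent: if $E^1$ were concentrated in filtration degree zero, every higher differential would vanish for filtration-degree reasons, so $d_c$ could not ``produce'' the Arnold relation or $a_i\omega_{ij}=a_j\omega_{ij}$ on later pages---yet those relations are precisely $d_c$-coboundaries of one-internal-vertex graphs, so such graphs must survive to whatever page $d_c$ acts on. (A secondary issue: for $n=2,3$ the internal-vertex filtration is not bounded within a fixed cohomological degree, so even convergence needs care.)

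The repair is to reverse the roles of the two pieces, which is what the paper does. The total differential raises cohomological degree by one and removes exactly one edge, so one may restrict to the subcomplexes of fixed $(\text{cohomological degree})+\#\text{edges}$; on these the filtration by $p(\Gamma)=\#\text{edges}-\#(\text{internal vertices})$ is bounded, $d_c$ and $A\cdot$ preserve $p$, and $B\cdot$ lowers it. On the associated graded one is reduced to showing that $\Graphs_{V,n}^{A}\to\LS_{V_1,n}^{0}$ is a quasi-isomorphism (the target now carrying the zero differential), and this is proved by induction on the arity, splitting $\Graphs_{V,n}^A(r)$ by the valence of the external vertex $1$ and using that contracting the edge at a univalent vertex $1$ surjects onto the $\geq 2$-valent part; the surviving classes are exactly the graphs without internal vertices in the normal form that also spans $\LS_{V_1,n}^0$. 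Standard convergence then yields the Proposition, with $B\cdot$ reappearing as the induced differential that turns $\LS_{V_1,n}^0$ into $\LS_{V_1,n}^\Delta$.
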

\begin{proof}
First note that the differential on the left-hand side has three terms
\[
d = d_{c} + A\cdot +B\cdot,
\]
where $A,B\in \GGC_{V,n}$ are the to parts of the MC element $Z_1=A+B$ as in \eqref{equ:Z1AB} and $d_c$ acts by contracting an edge.
The terms $d_{c}$ and $A\cdot$ each remove one internal vertex and one edge, while $B\cdot$ removes one edge and leaves the number of internal vertices constant.
In particular, the differential is homogeneous of degree $-1$ with respect to the number of vertices, and both sides of \eqref{equ:prop Z1} have a grading by the number 
\[
(\text{cohomological degree}) +\#\text{edges}.
\]
It suffices to consider the subcomplexes for which this grading is fixed.
On this subcomplexes we further consider the bounded filtration by the number 
\[
p(\Gamma) = \#\text{edges}-\#(\text{internal vertices}).
\]
On the associated graded complexes the part $B\cdot$ of the differential is not visible, and the statement of the proposition becomes equivalent to the statement that the map 
\[
  \Graphs_{V,n}^{A}\to \LS_{V_1,n}^0
\]
is a quasi-isomorphism. Also mind that the right-hand side has zero differential, so practically we have to show that the cohomology of the left-hand side agrees with the right-hand side.
This is done by an induction on the arity, following similar proofs in \cite{LVformal, CW}.
We suppose that we know the statement in arity $r-1$. Then we split
\[
  \Graphs_{V,n}^{A}(r)
  =
  \begin{tikzcd}[column sep=.5em]
    W_0 \ar[loop below,looseness=8]{}
    & \oplus & 
    W_1  \ar[loop below,looseness=8]{}
    \ar[bend left]{rr}
    & \oplus & 
    W_{\geq 2} 
    \ar[loop below,looseness=8]{}
  \end{tikzcd},
\]
where $W_0\cong \Graphs_{V,n}^A(r-1)$ is the subcomplex spanned by graphs in which the external vertex 1 has valency zero, $W_1$ is the subspace spanned by graphs with vertex one univalent, and $W_{\geq 2}$ spanned by graphs with vertex one at least bivalent.
(Here each $V$ decoration is counted as contributing +1 to the valency.)
The arrows indicate pieces of the differential. In particular, the piece of the differential 
\begin{align*}
  W_1 \to W_{\geq 2} \\
  \begin{tikzpicture}[baseline=-.65ex]
    \node[ext] (v) at (0,0) {$\scriptstyle 1$};
    \node[int] (w) at (0,.8) {};
    \draw (w) edge (v) edge +(-.5,.5) edge +(0,.5) edge +(.5,.5);
    \node at (0, 1.6) {$\cdots$};
    \end{tikzpicture}
    \, \mapsto \,
    \begin{tikzpicture}[baseline=-.65ex]
      \node[ext] (v) at (0,0) {$\scriptstyle 1$};
      \draw  (v) edge +(-.5,.5) edge +(0,.5) edge +(.5,.5);
      \node at (0, .8) {$\cdots$};
    \end{tikzpicture}
\end{align*}

contracts the edge at vertex 1 if possible, and can easily be checked to be a surjective map. Its kernel is spanned by graphs in which vertex 1 has either (i) a single $V$-decoration or (ii) an edge to another external vertex.
From these observations and the induction it is easy to check that $H(\Graphs_{V,n}^ A)$ has a basis spanned by graphs such that the following holds.
\begin{itemize}
\item There are no internal vertices.
\item Vertex $j$ is either $V$-decorated or has at most one edge connecting it to a vertex with higher index, i.e., to one of the vertices $j+1,j+2,\dots,n$. 
\end{itemize}
It is well known that such graphs also form a basis for the right-hand side of \eqref{equ:prop Z1}, see e.g. \cite{SinhaLD}.
To see this directly one can also realize that $\LS_{V,n}^0$ is the quotient of $\Graphs_{V,n}^A$ by the subspace of all graphs with at least one internal vertex, and differentials of graphs with precisely one internal vertex. Hence it in fact suffices to show that each cohomology class in $H(\Graphs_{V,n}^A)$ has a representative without internal vertices, and that we just showed.
\end{proof}

Since $\LS_{V_1,n}^\Delta$ is of configuration space type, we obtain the following corollary.
\begin{cor}\label{cor:GraphsZ1 config space type}
The $\La$ Hopf $\e_n^c$ comodule $\Graphs_{V,n}^{Z_1}$ is of configuration space type.
\end{cor}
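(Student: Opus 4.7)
The plan is to deduce the corollary directly from Proposition \ref{prop:Graphs Z1 LS qiso} combined with the invariance of the configuration space type property under weak equivalences. Concretely, Proposition \ref{prop:Graphs Z1 LS qiso} provides a quasi-isomorphism of $\La$ Hopf $\e_n^c$ comodules
\[
\Graphs_{V,n}^{Z_1} \xrightarrow{\sim} \LS_{V_1,n}^{\Delta},
\]
where $\Delta = \Delta(Z_1)$ is the diagonal element encoded in the edge-part $B$ of $Z_1$ (with Euler class vanishing, as required, because $Z_1$ squares to zero in the Maurer-Cartan sense).

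Next I would invoke the proposition shown just before Corollary \ref{cor:GraphsZ1 config space type}, asserting that every Lambrechts-Stanley type comodule $\LS_{A,n}^\Delta$ is of configuration space type. This applies to the right-hand side, so $\LS_{V_1,n}^\Delta$ is of configuration space type.

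Finally I would appeal to the first item of Lemma \ref{lem:comodule csc basic props}, which says that the property of being of configuration space type is preserved under weak equivalences of $\La$ Hopf $\COp$ comodules. Combining the quasi-isomorphism above with the Lemma yields that $\Graphs_{V,n}^{Z_1}$ is of configuration space type as well. No additional obstacle is anticipated, since all ingredients are already in place; the proof is essentially a one-line assembly of the preceding Proposition and Lemma.
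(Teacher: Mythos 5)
Your proposal is correct and is essentially the paper's own argument: the paper derives the corollary in one line from Proposition \ref{prop:Graphs Z1 LS qiso}, the fact that $\LS_{V_1,n}^{\Delta}$ is of configuration space type, and the weak-equivalence invariance of Lemma \ref{lem:comodule csc basic props}. Your remark that the vanishing of the Euler class is forced by the Maurer-Cartan equation for $Z_1$ also matches the paper's discussion in section \ref{sec:Gfiltration}ff.
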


\begin{cor}\label{cor:GraphsZ config space type}
For any Maurer-Cartan element $Z\in \mG^1 \HGC_{V,n}$ the $\La$ Hopf $\e_n^c$ comodule $\Graphs_{V,n}^{Z}$ is of configuration space type. 
Furthermore, the canonical map of dg vector spaces
\begin{equation}\label{equ:V1 to Graphs}
 V_1\to \Graphs_{V,n}^{Z}(1)
\end{equation}
is a weak equivalence, and the diagonal class of $\Graphs_{V,n}^Z$ in the sense of Definition \ref{def:diagonal element} is $\Delta(Z)\in V_1\otimes V_1$ as defined above.
\end{cor}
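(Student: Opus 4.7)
The plan is to exploit the ascending edge filtration $\mG^\bullet$ on $\Graphs_{V,n}^Z$ introduced in section \ref{sec:Gfiltration} and the associated spectral sequence. The key structural observation is that $Z$ decomposes along the edge count as $Z = Z_1 + Z_{\geq 2}$, with each summand of $Z_k$ carrying exactly $k$ edges; hence $d_c$ and $Z_1\cdot$ each strictly decrease the edge count by one while $Z_k\cdot$ for $k\geq 2$ decreases it by $k$. Consequently the $E^1$-page of this spectral sequence is $\Graphs_{V,n}^Z$ as a graded object with $d_1 = d_c + Z_1\cdot$, and Proposition \ref{prop:Graphs Z1 LS qiso} yields $E^2 \cong H(\LS_{V_1,n}^{\Delta(Z)})$. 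Boundedness below by $\mG^0 = \FF_{S(V)}$ and arity-wise exhaustiveness guarantee convergence.

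First I would treat the arity-one quasi-isomorphism. Since $\LS_{V_1,n}^{\Delta(Z)}(1) = V_1$ has trivial differential and sits entirely in $\mG^0$, the higher differentials $d_k$ for $k\geq 2$, which strictly lower filtration, have no room to act, so the arity-one spectral sequence degenerates at $E^2$. The canonical inclusion $V_1 \hookrightarrow \mG^0\Graphs_{V,n}^Z(1)$ then gives the desired quasi-isomorphism \eqref{equ:V1 to Graphs}.

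Next, for configuration-space type I would invoke Proposition \ref{prop:Cobar Lie criterion} and show that $\varphi\colon \FF_{\Graphs_{V,n}^Z(1)} \to \Bar^c_{\Lie_n^c}\Graphs_{V,n}^Z$ is an arity-wise quasi-isomorphism. The edge filtration extends to the bar complex (filtering by the number of edges in the $\Graphs_{V,n}^Z$-tensor factor), is preserved by $\varphi$, and strictly decreased by both the internal and the bar differentials; the edge-by-one pieces assemble into the differential of $\Bar^c_{\Lie_n^c}\Graphs_{V,n}^{Z_1}$. A map-of-spectral-sequences comparison identifies the induced map on $E^2$ with the quasi-isomorphism $\FF_{V_1}\to H(\Bar^c_{\Lie_n^c}\Graphs_{V,n}^{Z_1})$ supplied by Corollary \ref{cor:GraphsZ1 config space type} together with the arity-one computation; since $\FF_{V_1}$ is concentrated in $\mG^0$ on both sides, both spectral sequences degenerate at $E^2$ and the arity-wise quasi-isomorphism follows.

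Finally the identification of the diagonal class reduces to an explicit unpacking of Definition \ref{def:diagonal element} on the $E^2$-page in arity two: the connecting homomorphism of \eqref{equ:coIndlongexact} sends the cobracket $B\in\e_n^c(2)$ to the image of the $B$-component of $Z_1$ in \eqref{equ:Z1AB}, which is by construction precisely $\Delta(Z)\in V_1\otimes V_1$. The most delicate technical point will be verifying that the $d_1$-page of the bar spectral sequence really does reproduce the full differential of $\Bar^c_{\Lie_n^c}\Graphs_{V,n}^{Z_1}$, which amounts to checking that the bar differential's contribution to $d_1$ corresponds exactly to extracting single-edge cobrackets via the $\Lie_n^c$-coaction.
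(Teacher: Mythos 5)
Your proposal is correct and follows essentially the same route as the paper: filter by the number of edges, identify the first page of the resulting spectral sequence(s) with $\Graphs_{V,n}^{Z_1}$ (resp.\ $\Bar^c_{\Lie_n^c}\Graphs_{V,n}^{Z_1}$), invoke Proposition \ref{prop:Graphs Z1 LS qiso} and Corollary \ref{cor:GraphsZ1 config space type}, and read off the diagonal class from the action of the one-edge, zero-vertex part of $Z$ on the line graph in arity two. The only differences are cosmetic (a shift in $E^r$-indexing and a more explicit degeneration argument for the arity-one statement); the technical point you flag about the cobar differential contributing single-edge cuts to the first-page differential is real but resolves exactly as you anticipate.
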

\begin{proof}
By Proposition \ref{prop:Cobar Lie criterion} we need to check that the morphism of symmetric sequences
\begin{equation}\label{equ:GraphsZ cs type}
  \FF_{\Graphs_{V,n}^{Z}(1)} \to \Bar^c_{\Lie_n^c}\Graphs_{V,n}^{Z}
\end{equation}
is a quasi-isomorphism. We take on both sides the filtrations by the number of edges. 
The morphism induce on the $E^1$ pages with respect to this filtration is identified with the map
\[
  \FF_{\Graphs_{V,n}^{Z_1}(1)} \to \Bar^c_{\Lie_n^c}\Graphs_{V,n}^{Z_1},
\]
and this is a quasi-isomorphism by Corollary \ref{cor:GraphsZ1 config space type}. Hence by standard results on spectral sequences the map \eqref{equ:GraphsZ cs type} is a quasi-isomorphism as well.

Next, the fact that \eqref{equ:V1 to Graphs} is a quasi-isomorphism follows from the explicit computation of the cohomology of the right-hand side above.

Finally, to compute the diagonal element recall definition \ref{def:diagonal element}.
We start with the cobracket generator 
\[
  b \in H^{n-1}(\e_n^c(2)), 
\]
and obtain the element
\[
1\otimes b\in H^{n-1}(\MOp(1)\otimes \e_n^c(2)) .
\]
To compute the value of the connecting homomorphism, we need to find a preimage under the cocomposition morphism, for which we may take the graph 
\[
L=
\begin{tikzpicture}
  \node[ext] (v) at (0,0) {$\scriptstyle 1$};
  \node[ext] (w) at (1,0) {$\scriptstyle 2$};
  \draw (v) edge (w);
\end{tikzpicture}
\in \Graphs_{V,n}^Z(2).
\]
Its differential is 
\[
\sum
\begin{tikzpicture}
  \node[ext,label=90:{$\Delta'$}] (v) at (0,0) {$\scriptstyle 1$};
  \node[ext,label=90:{$\Delta''$}] (w) at (1,0) {$\scriptstyle 2$};
\end{tikzpicture}
\in \Graphs_{V,n}^Z(2).
\]
This is the image of the element $\Delta$ above under the inclusion $\FF_M(2)\to \Graphs_{V,n}^Z(2)$, and hence $\Delta$ is indeed the diagonal element of $\Graphs_{V,n}^Z$ as desired.
\end{proof}

\subsection{Digression: Homotopy commutative algebras}\label{sec:hcom algebras}
We will use two models for homotopy commutative algebras.
First, let $\Com_\infty=\Bar^c \Com^{\vee}$ be the minimal resolution of the commutative operad $\Com$, with $\Com^{\vee}=\Lie_0^c$ the Koszul dual cooperad. Also let $\hCom_\infty=\Bar^c\Bar\Com$
be the bar cobar resolution of $\Com$.
Both operad $\Com_\infty$ and $\hCom_\infty$ inherit natural $\La$-operad structures from $\Com$ and we have canonical quasi-isomorphisms of $\La$ operads
\[
  \Com_\infty \to \hCom_\infty \to \Com.
\]

Let $V$ be a dg vector space. Then a $\Com_\infty$- (respectively $\hCom_\infty$-)structure on $V$ is a left action of $\Com_\infty$ (respectively $\hCom_\infty$) on $V$.

A $\Com_\infty$- or $\hCom_\infty$-structure on the dg vector space $V_1$ equipped with the canonical cocycle $1\in V_1$ is (strongly) unital, if the left action extends to a left action of $(\Com_\infty)_1$ or $(\hCom_\infty)_1$ (see \eqref{equ:Pstar def}) by sending the nullary operation to $1$.
Concretely, for a $\Com_\infty$-structure this means that $1$ is the unit with respect to the commutative product, and inserting $1$ into any slot of a higher $\Com_\infty$-homotopy produces zero. For a $\hCom_\infty$-structure the higher homotopy do not 
necessarily vanish upon insertion of $1\in V_1$, but one 
has compatibility relations determined by the $\La$ structure.



We shall also note that a $\Com_\infty$-structure (resp. $\hCom_\infty$-structure) on $V$ is the same data as a Maurer-Cartan element in the convolution dg Lie algebra 
\begin{align*}
\fh_V &:= \Hom_{\bbS}(\overline{\Com^\vee}, \End_V) & &\text{or} &
\hat \fh_V &:= \Hom_{\bbS}(\overline{\Bar \Com}, \End_V),
\end{align*}
see \cite[section 10.1]{LV}.
Furthermore, note that elements of $(\Bar \Com(r))^*$ can be considered as linear combinations of rooted trees with $r$ leaves, and elements of $(\Com^\vee(r)^*)$ similarly as linear combinations of Lie trees, i.e., rooted binary trees modulo the Jacobi (IHX) relations.
Hence, for finite dimensional $V$, elements of $\fh_V$ (resp. $\hat \fh_V$) may be considered as linear combinations of 
such trees with leaves decorated by elements of $V^*$, and the root decorated by an element of $V$.
There is a natural map of dg Lie algebras $\hat \fh_V\to \fh_V$ reflecting the map of operads $\Com_\infty\to \hCom_\infty$.

Unital $\Com_\infty$- and $\hCom_\infty$-structures on $V_1=V\oplus\Q 1$ can also be encoded as Maurer-Cartan elements in suitable dg Lie algebras $\fh_{V,1}$ and $\hat \fh_{V,1}$ respectively.
To describe these dg Lie algebras first consider the Maurer-Cartan element $m_0 \in \hat \fh_{V_1}$ (respectively $m_0 \in \fh_{V_1}$) that corresponds to the "minimal" commutative algebra structure for which $1$ is the unit. Concretely, in this commutative algebra structure the product with $1$ is the identity, but all other products vanish.
Then $\fh_{V,1}\subset \fh_{V_1}^{m_0}$ and $\hat \fh_{V,1}\subset \hat \fh_{V_1}^{m_0}$ are dg Lie subalgebras of the non-unital versions, twisted by $m_0$.
Assuming that $V$ is finite dimensional for simplicity, 
the subalgebra $\fh_{V,1}^{m_0}$ is simply the subspace of linear combinations of trees with legs decorated by $V^*\cong 1^\perp \subset V_1^*$.
Similarly, $\hat \fh_{V,1}$ can also be identified with linear combinations of trees with legs decorated by $V^*$, However, the embedding $\hat \fh_{V,1}\to \hat \fh_{V_1}^{m_0}$ is given by summing over all ways of adding augmentations in $V_1^*$ to vertices.

We shall also note that by convention we consider here the differential as data associated to the dg vector space $V$, and not as part of the homotopy commutative structure.


\subsection{The complexity filtration and the homotopy type of the unary part}
\label{sec:complexity filtration}

Let $\Gamma\in \GGC_{V,n}$ be a graph with $e$ edges and $v$ vertices. Then we say that the \emph{complexity} of $\Gamma$ is the number $e-v$. Alternatively, the complexity is the number of loops in the graph obtained fro $\Gamma$ by gluing together all hairs. 
The complexity is preserved by the differential and additive under the Lie bracket and hence defines a complete grading on the dg Lie algebra $\GGC_{V,n}$. 
We shall denote the corresponding descending complete filtration by $\cC^\bullet \GGC_{V,n}$, that is, $\cC^p \GGC_{V,n}\subset \GGC_{V,n}$ is the subspace generated by graphs of complexity $\geq p$.

\newcommand{\tree}{{\mathrm{tree}}}
Let us study the quotient dg Lie algebra 
\[
  \GGC_{V,n}^\tree := \GGC_{V,n} / \cC^1 \GGC_{V,n}.
\]
Elements are series of tree graphs with exactly one hair, like the following
\[
  \begin{tikzpicture}
    \node[int,label=90:{$\alpha\beta$}] (v1) at (-1,1) {};
    \node[int,label=90:{$\gamma\delta$}] (v2) at (1,1) {};
    \node[int,label=0:{$\epsilon$}] (v3) at (0,0) {};
    \node[label=-90:{$\partial_{\nu}$}] (x) at (0,-1) {};
    \draw (v3) edge (v1) edge (v2) edge (x); 
  \end{tikzpicture},\quad\quad \alpha,\beta,\gamma,\delta,\epsilon \in V^*, \nu \in V_1,
\]
the differential is given by vertex splitting, and the Lie bracket is given by grafting trees.

Comparing to the previous subsection one can in fact easily see that $\mG^1\GGC_{V,n}^\tree\cong \hat\fh_{V,1}$, so in particular Maurer-Cartan elements therein determine unital $\hCom_\infty$-structures on $V_1$.

Concretely, let $Z\in \mG^1\GGC_{V,n}$ be a Maurer-Cartan element. 
%
We may decompose $Z$ into pieces of different complexity,
\[
Z=Z_0+Z_1+\cdots.  
\]
Note that here we abusively reuse the notation $Z_j$ to denote the part of complexity $j$, whereas in the previous subsection the subscript referred to the edge number.
Then the image of $Z$ in $\GGC_{V,n}^\tree$ is determined by, and may be identified with $Z_0$, which can then be interpreted as a Maurer-Cartan element in $\hat\fh_{V,1}$, and hence as a unital $\hCom_\infty$-structures on $V_1$.

Next consider the object $\Graphs_{V,n}$.
We may extend the complexity grading on $\Graphs_{V,n}$ by declaring a graph with $v$ internal vertices and $e$ edges to have complexity $e-v$. The complexity gradings on $\GGC_{V,n}$ and $\Graphs_{V,n}$ are compatible with the action of $\GGC_{V,n}$ on $\Graphs_{V,n}$ in the sense that if $\gamma\in \GGC_{V,n}$ is of complexity $p$ and $\Gamma\in \Graphs_{V,n}$ is of complexity $q$, then $\gamma\cdot \Gamma$ is of complexity $q-p$.
We may furthermore endow $\Graphs_{V,n}$ with an ascending exhaustive filtration such that $\cC^p\Graphs_{V,n}$ is spanned by graphs of complexity $\leq p$. This filtration is compatible with the action of $\HGC_{V,n}$, and the dg commutative algebra structure.
The filtration extends to any twisted version $\Graphs_{V,n}^Z$ by compatibility of the complexity grading with the action of $\GGC_{V,n}$.
We shall in particular study the dg commutative subalgebra 
\[
 \cC^0 \Graphs_{V,n}^Z(1) \subset \Graphs_{V,n}^Z(1).
\]
Elements of $\cC^0 \Graphs_{V,n}^Z(1)$ are linear combinations of tree graphs.

\[
\begin{tikzpicture}[baseline=-.65ex]
  \node[ext,label=180:{$\scriptstyle e_1$}] (v) at (0,0) {$\scriptstyle 1$};
  \node[int] (w) at (0,.8) {};
  \node[int, label=90:{$\scriptstyle e_1e_2$}] (w1) at (-.5,1.3) {};
  \node[int, label=90:{$\scriptstyle e_2^2e_3$}] (w2) at (.5,1.3) {};
  \draw (w) edge (w1) edge (w2) edge (v);
  \end{tikzpicture}
\]

\begin{lemma}\label{lem:V1 hCom infty}
  Let $V$ be a graded vector space and let $Z\in \mG^1\GGC_{V,n}$ be a Maurer-Cartan element. Then the following holds.
  \begin{itemize}
\item The inclusions
\begin{equation}\label{equ:V1 hCom infty}
V_1 \to \cC^0 \Graphs_{V,n}^Z(1) \to \Graphs_{V,n}^Z(1)
\end{equation}
are quasi-isomorphisms of dg vector spaces.
\item The canonical (unital) $\hCom_\infty$-structure on $V_1$ obtained from the dg commutative algebra structure on $\cC^0 \Graphs_{V,n}^Z(1)$ (or equivalently $\Graphs_{V,n}^Z(1)$) by homotopy transfer is isomorphic to the unital $\hCom_\infty$-structure encoded by the complexity zero part $Z_0$ as above.
\end{itemize}
\end{lemma}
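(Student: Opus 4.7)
For part (1), the key observation is that graphs in $\Graphs_{V,n}(1)$ are necessarily connected through their unique external vertex, so a graph with $v$ internal vertices carries at least $v$ edges and therefore has non-negative complexity. Decomposing $Z=Z_0+Z_1+\cdots$ by complexity, the differential $d_c+Z\cdot$ on $\Graphs_{V,n}^Z(1)$ splits accordingly: $d_c$ and $Z_0\cdot$ preserve complexity while $Z_j\cdot$ with $j\geq 1$ strictly decreases it and therefore annihilates $\cC^0\Graphs_{V,n}^Z(1)$ for dimensional reasons. This identifies $\cC^0\Graphs_{V,n}^Z(1)=\cC^0\Graphs_{V,n}^{Z_0}(1)$ and exhibits the complexity filtration on $\Graphs_{V,n}^{Z_0}(1)$ as a splitting into a direct sum of subcomplexes indexed by complexity. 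Since the inclusion $V_1\to \Graphs_{V,n}^{Z_0}(1)$ lands entirely in complexity $0$ and is a quasi-isomorphism by Corollary \ref{cor:GraphsZ config space type}, only the complexity-zero summand carries cohomology. Hence $V_1\to \cC^0\Graphs_{V,n}^Z(1)$ is a quasi-isomorphism, and the second map is one by two-out-of-three.

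For part (2), the complexity filtration is multiplicative, so the inclusion $\cC^0\Graphs_{V,n}^Z(1)\hookrightarrow \Graphs_{V,n}^Z(1)$ is a quasi-isomorphism of dg commutative algebras and the transferred $\hCom_\infty$-structures on $V_1$ obtained from either side are canonically isomorphic. The computation thus reduces to the smaller algebra $\cC^0\Graphs_{V,n}^{Z_0}(1)$, which by section \ref{sec:Graphs freeness} is the free graded commutative algebra $S(T)$ on the space $T$ of internally-connected rooted trees with a single hair at the external vertex. The differential on the indecomposable quotient $T$ is the sum of the edge-contraction $d_c$ (preserving the tree shape) with the $Z_0$-action along the internal cuts that separate one subtree from the rest, while other internal cuts produce decomposables in $S(T)$ encoding the multiplicative part of the differential.

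Under the canonical identification of $\mG^1\GGC_{V,n}^\tree$ with the convolution Lie algebra $\hat\fh_{V,1}$ controlling unital $\hCom_\infty$-structures on $V_1$ from section \ref{sec:hcom algebras}, this description exhibits $\cC^0\Graphs_{V,n}^{Z_0}(1)$ as the cobar construction of the bar resolution of the $\hCom_\infty$-algebra $(V_1,Z_0)$. Since homotopy transfer along the canonical quasi-isomorphism from a cobar-bar resolution back to the underlying $\hCom_\infty$-algebra returns the original $\hCom_\infty$-structure up to canonical isomorphism, the transferred structure on $V_1$ is isomorphic to the one encoded by $Z_0$. The main obstacle is verifying the combinatorial dictionary between graphical cuts on trees in $T$ and the bar-cobar differential terms, including the sign and orientation conventions; this is ultimately determined by requiring compatibility between the pairing defining the action of $\GGC_{V,n}^\tree$ on $\Graphs_{V,n}$ and the pairing between operations and cooperations in $\hat\fh_{V,1}$.
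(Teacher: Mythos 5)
Your argument for the first bullet is correct but proceeds differently from the paper. You exploit that in arity one every graph has non-negative complexity, that $Z_0$ is itself a Maurer--Cartan element in $\mG^1\GGC_{V,n}$ (the MC equation splits by complexity since $\delta$ preserves it and the bracket is additive), and that $d_c+Z_0\cdot$ makes the complexity grading a direct-sum decomposition of $\Graphs_{V,n}^{Z_0}(1)$ into subcomplexes; Corollary \ref{cor:GraphsZ config space type} applied to $Z_0$ then forces all cohomology into the complexity-zero summand, and two-out-of-three finishes. This is clean and valid. The paper instead builds an explicit contraction of $(\cC^0\Graphs_{V,n}^Z(1),d_c)$ onto $V_1$ (the homotopy $h_c$ that pushes everything at the external vertex onto a new internal vertex) and applies the Homological Perturbation Lemma; that route is longer for the first bullet but produces exactly the transfer data needed to compute the transferred $\hCom_\infty$-structure in the second bullet, which your route does not.

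For the second bullet your strategy --- identify $\cC^0\Graphs_{V,n}^{Z_0}(1)$ with the bar-cobar rectification of the strongly unital $\hCom_\infty$-algebra $(V_1,Z_0)$ and invoke the fact that transfer from a rectification recovers the original structure --- is precisely the route the paper names and then deliberately declines to take: the cited rectification results (Hirsh--Mill\`es) are formulated for the weakly unital/curved setting, and the strongly unital analogue used here would first have to be defined and its properties established. The step you flag as ``the main obstacle'' (matching the graphical cuts, the $\geq 3$-valence constraint, the treatment of the unit $1\in V_1$, and the signs against the bar-cobar differential) is in fact the entire content of the second bullet; asserting that compatibility of pairings ``ultimately determines'' it does not discharge it. To close the gap you would either need to carry out that identification in the strongly unital setting, or do what the paper does: feed the explicit contraction into the homotopy transfer formulas and check tree by tree that the transferred operation of shape $T$ equals the coefficient of $T$ in $Z_0$ --- a computation of comparable length to the dictionary you are postponing.
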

\begin{proof}
We have already seen in Corollary \ref{cor:GraphsZ config space type} that the composition of both arrows of \eqref{equ:V1 hCom infty} is a quasi-isomorphism. 

The remaining statements can directly be concluded from the observation that $\cC^0 \Graphs_{V,n}^Z(1)$ is in fact the canonical rectification of the strongly unital $\hCom_\infty$-algebra $V_1$, defined analogously to the weakly unital version of the canonical rectification, see \cite[Theorem 6.3.2]{HirshMilles}. 
However, since the statements of \cite{HirshMilles} are formulated in a slightly different setup, we shall outline here another more elementary proof of the Lemma by applying the idea of the proof of Proposition \ref{prop:Graphs Z1 LS qiso}.

First, recall that the differential on $\Graphs_{V,n}^Z(1)$ has the form $d=d_c+d_Z$ with $d_c$ being given by edge contraction and $d_Z$ the action of $Z$. We may define a homotopy $h_c$ for the differential $d_c$ defined by adding one internal vertex, connected to the external vertex $1$ by one edge, and moving all edges and decorations from $1$ to that new vertex. Pictorially:
\begin{align*}
  h_c: 
  \begin{tikzpicture}[baseline=-.65ex]
    \node[ext] (v) at (0,0) {$\scriptstyle 1$};
    \draw  (v) edge +(-.5,.5) edge +(0,.5) edge +(.5,.5);
    \node at (0, .8) {$\cdots$};
  \end{tikzpicture}
    \, \mapsto \,
    \begin{tikzpicture}[baseline=-.65ex]
      \node[ext] (v) at (0,0) {$\scriptstyle 1$};
      \node[int] (w) at (0,.8) {};
      \draw (w) edge (v) edge +(-.5,.5) edge +(0,.5) edge +(.5,.5);
      \node at (0, 1.6) {$\cdots$};
      \end{tikzpicture}.
\end{align*}
This homotopy fits into homotopy transfer data 
\[
\begin{tikzcd}
  V_1 \ar[shift left]{r} [above]{\iota} 
  &
   (\cC^0 \Graphs_{V,n}^Z(1), d_c)
  \ar[shift left]{l}[below]{p_0}
  \ar[rloop]{}{h}
\end{tikzcd}.
\]
The projection $p_0$ here is the identity on graphs with zero or one decorations at vertex $1$ and no further vertices and edges, and $p_0$ on all other graphs.
Applying the Homological Perturbation Lemma \cite[Lemma 2.4]{Crainic} we then get homotopy transfer data 
\[
\begin{tikzcd}
  V_1 \ar[shift left]{r}[above]{\iota} 
  &
   (\cC^0 \Graphs_{V,n}^Z(1), d_c+d_Z)
  \ar[shift left]{l}[below]{p}
  \ar[rloop]{}{h}
\end{tikzcd}
\]
with 
\begin{align*}
  p &= p_0+p_c d_Z h_c\, .
\end{align*}

We conclude in particular that the inclusion $\iota$ is a quasi-isomorphism, and hence follows the first statement of the Lemma.
Furthermore, the above homotopy transfer data may readily be inserted into the homotopy transfer formulas.
We shall assume here familiarity with homotopy transfer, see \cite[section 10.3]{LV} for a general discussion.
Let us just note that the generators of $\hCom_\infty$ are cooperations in $\Lie_{0,\infty}^c(r)$, and may hence be identified with linear combinations of trees with $r$ leaves, $r=2,3,\dots$.
To obtain the image of some such tree $T$ in $\End(V_1)(r)$ we just re-interpret the tree as a a composition tree, with each internal edge representing a homotopy $h$, each vertec a product in $\cC^0 \Graphs_{V,n}^Z(1)$, the leaves the inclusion $\iota$ and the root the projection $p$.
Inserting the above formulas it is then elementary exercise to check that indeed the operation corresponding to the tree $T$ is the same as the term(s) of $Z$ of shape $T$. This then shows the second statement of the Lemma.
\end{proof}


 

\section{Graphs and deformation complexes of $\Com^c$ comodules}
In the previous section we have introduced the $\La$ Hopf cooperadic $\e_n^c$ comodules $\Graphs_{V,n}$.
Eventually, to show our main result Theorem \ref{thm:main_intro}, we will need to study the obstruction theory for $\La$ Hopf $\e_n^c$ comodule maps from $\Graphs_{V,n}$.
A main intermediate step towards this goal, which is taken in this section, is to study just $\La$ Hopf $\Com^c$-comodule maps, which can be done more easily, due to the quasi-freeness of $\Graphs_{V,n}$ as a $\La$ Hopf $\Com^c$ comodule, see section \ref{sec:Graphs freeness}. 
Similar arguments have been used in the literature already, see \cite{Turchin2,Turchin3,FTW3}.

\subsection{$\Com^c$ comodule maps}\label{sec:Graphs to M}

Let again $\cR$ be a graded commutative algebra. We shall study the set 
\[
\Mor_{\La HRMod-\Com^c/\cR}(\Graphs_{V,n}^\flat\otimes \cR,\MOp^\flat\otimes \cR)
\]
of morphisms of $\Lambda$-Hopf right $\Com^c$ comodules between $\Graphs_{V,n}^\flat$ and the underlying graded comodule $\MOp^\flat$ of some other $\Lambda$-Hopf right $\Com^c$ comodule $\MOp$.
As an immediate consequence of the freeness of $\Graphs_{V,n}$ as a graded $\La$ Hopf $\Com^c$ comodule, see section \ref{sec:Graphs freeness}, we have the following result.
\begin{prop}\label{prop:restr 1}
For a graded commutative algebra $\cR$ and a right $\La$ Hopf $\Com^c$ comodule $\MOp$ the map
\[
   \phi: \Mor_{\La HRMod-\Com^c/\cR}(\Graphs_{V,n}^\flat\otimes \cR,\MOp^\flat\otimes \cR)
    \to 
    \Mor_{g\bbS/ \cR}(\pIG_{V,n}^\flat\otimes \cR, \MOp^\flat\otimes \cR) 
\]
obtained by sending a morphism $F$ on the left-hand side to the composition with the inclusion of generators
\[
    \pIG_{V,n}^\flat
    \hookrightarrow
    \Graphs_{V,n}^\flat
    \xrightarrow{f} 
    \MOp^\flat
\]
is a bijection.
\end{prop}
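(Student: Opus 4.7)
The plan is to deduce the proposition directly from the freeness of $\Graphs_{V,n}$ as a graded $\La$ Hopf $\Com^c$ comodule, which was sketched in section \ref{sec:Graphs freeness}. After base-change to $\cR$, both statements (injectivity and surjectivity of $\phi$) are statements about graded objects, so I may forget differentials throughout and, to ease notation, even suppress $\cR$ from the notation, pretending we work over $\cR$ as ground ring.

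First I would reformulate the freeness observation of section \ref{sec:Graphs freeness} as a precise universal property. Combining the decomposition $\Graphs_{V,n}(r) = S(\IG_{V,n}(r))$ into internally connected components (freeness as a graded commutative algebra on $\IG_{V,n}$) with the observation that every graph in $\IG_{V,n}(r)$ is obtained from a unique graph in $\pIG_{V,n}(s)$ (with $s$ equal to the number of external vertices of positive valency) by applying in turn $\La$ operations (adding zero-valent externals), a $\Com^c$-cocomposition (fusing externals), and a symmetric group action, one gets the desired universal property: $\Graphs_{V,n}^\flat$ is the free graded $\La$ Hopf $\Com^c$ comodule on the symmetric sequence $\pIG_{V,n}^\flat$. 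The left adjoint is the same construction just described.

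Injectivity of $\phi$ is then immediate: a $\La$ Hopf $\Com^c$ comodule morphism $F: \Graphs_{V,n}^\flat \to \MOp^\flat$ is, by the above generation procedure, determined on every graph $\Gamma$ by its values on the unique generator $\Gamma_0\in \pIG_{V,n}^\flat$ from which $\Gamma$ is obtained, because the product, the $\La$ operations, the $\Com^c$ coaction, and the symmetric group action must all be respected.

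For surjectivity I would reverse this procedure constructively. Given a morphism of graded symmetric sequences $f: \pIG_{V,n}^\flat \to \MOp^\flat$, define $F$ on an arbitrary graph $\Gamma \in \Graphs_{V,n}^\flat(r)$ as follows: decompose $\Gamma$ into internally connected components $\Gamma_1,\dots,\Gamma_k\in \IG_{V,n}^\flat(r)$, for each $\Gamma_i$ identify its unique ``reduced form'' $\Gamma_i^0\in \pIG_{V,n}^\flat(s_i)$ together with the sequence of $\La$ operations and $\Com^c$-cocompositions (and symmetric group action) needed to reproduce $\Gamma_i$ from $\Gamma_i^0$, apply the corresponding $\La$ operations and (dual) $\Com^c$-operations to $f(\Gamma_i^0)\in \MOp^\flat$ in $\MOp$, and finally multiply the resulting elements of $\MOp^\flat(r)$. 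The main content to verify is that the resulting map $F$ is well-defined and is indeed a morphism of $\La$ Hopf $\Com^c$ comodules; here well-definedness is the point where one uses that the reduced form $\Gamma_i^0$ and the sequence of operations needed to reconstruct $\Gamma_i$ are unique up to an obvious symmetry that is absorbed by the equivariance of $f$.

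The only slightly subtle point, and the one I expect to take a moment of care, is checking compatibility of the construction with the full $\Com^c$ coaction: one has to check that given any cocomposition $\Delta_s: \Graphs_{V,n}^\flat(r)\to \Graphs_{V,n}^\flat(r-s+1)\otimes \Com^c(s)$, applying $F\otimes \id$ to $\Delta_s(\Gamma)$ gives the same answer as applying the $\MOp$-side coaction to $F(\Gamma)$. This is handled by noting that the cocomposition on $\Graphs_{V,n}^\flat$ corresponds combinatorially to partitioning external vertices, and that this commutes, on each internally connected component, with the sequence of $\La$ operations and fusions producing it from its reduced form. All remaining verifications (compatibility with the product, the $\La$ operations, and the $S_r$-actions) are immediate from the construction. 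This exhibits an inverse to $\phi$ and finishes the proof.
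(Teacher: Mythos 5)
Your proposal is correct and follows essentially the same route as the paper's own proof: both exploit the decomposition of a graph into internally connected components, each obtained uniquely from a reduced graph in $\ppIG_{V,n}$ via $\La$ operations, $\Com^c$-coactions and renumbering, to build the inverse of $\phi$ explicitly and to read off injectivity from the fact that every step is forced. The paper is somewhat terser on the coaction-compatibility check you flag as the subtle point, but the argument is the same.
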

\begin{proof}
We shall describe a two sided inverse $U$ to the map of the Proposition. So let us start with a morphism of symmetric sequences
\[
    f\in \Mor_{g\bbS/ \cR}(\pIG_{V,n}^\flat\otimes \cR, \MOp^\flat\otimes \cR)   
\]
and our goal is to construct a morphism of $\La$ Hopf $\Com^c$ comodules
\[
F=U(f): \Graphs_{V,n}^\flat\otimes \cR\to\MOp^\flat\otimes \cR.
\]
Suppose some element $x\in \Graphs_{V,n}^\flat(r)\otimes \cR$ is given. Using the freeness of $\Graphs_{V,n}^\flat(r)\otimes \cR$ as a graded comutative algebra over $\cR$ it suffices to assume $x=x_1\cdots x_k$ with each $x_j$ in the generating vector space $\IG_{V,n}(r)$.
Obviously, in this case we define $F(x)=F(x_1)\cdots F(x_k)$ as the corresponding product in $\MOp^\flat\otimes \cR$.
So we can assume directly that $x$ is an internally connected graph. Then denote the set of incident half-edges at the external vertex $j$ of $x$ by $S_j$, $j=1,\dots, r$.
Disconnecting these edges we obtain an element $x'\in \ppIG_{V,n}(S_1\sqcup \dots \sqcup S_r)\otimes \R$, and one has a unique right $\La\Com^c$-coaction $\Delta: \ppIG(S_1\sqcup \dots \sqcup S_r)\otimes \R\to \IG_n(r)\otimes \R$ so that $\Delta(x')=x$. The following picture shall illustrate the construction in one example.
\begin{align*}
    x&=
    \begin{tikzpicture}[scale=.7,baseline=-.65ex]
        \node[int] (v1) at (-1,0){};
        \node[int] (v2) at (0,1){};
        \node[int] (v3) at (1,0){};
        \node[int] (v4) at (0,-1){};
        \node[ext] (w1) at (-2,0) {$1$};
        \node[ext] (w2) at (2,0) {$2$};
        \node[ext] (w3) at (0,-2) {$3$};
        \node[ext] (w4) at (0,2) {$4$};
        \draw (v1)  edge (v2) edge (v4) edge (w1) (v2) edge (v4) (v3) edge (v2) edge (v4) (v4) edge (w3) (v3) edge (w3);
    \end{tikzpicture}
        \in
        \IG_{V,n}
        & 
        x'=
    \begin{tikzpicture}[scale=.7,baseline=-.65ex]
        \node[int] (v1) at (-1,0){};
        \node[int] (v2) at (0,1){};
        \node[int] (v3) at (1,0){};
        \node[int] (v4) at (0,-1){};
        \node[ext] (w1) at (-2,0) {$1$};
        \node[ext] (w2) at (2,0) {$2$};
        \node[ext] (w3) at (0,-2) {$3$};
        \draw (v1)  edge (v2) edge (v4) edge (w1) (v2) edge (v4) (v3) edge (v2) edge (v4) (v4) edge (w3) (v3) edge (w2);
        \end{tikzpicture}
        \in
        \ppIG_{V,n}
\end{align*}
Then we define $F(x):= \Delta(F(x'))$, where we abusively denote by $\Delta$ also the corresponding $\La\Com^c$-coaction on $\LS_{A,n}\otimes \cR$.
Hence we can assume that $x\in \pIG_n(r)\otimes \cR$ and define $F(x)=f(x)$ in this case.

This finishes the construction of $F=U(f)$. We need to verify various properties. First, we need to verify that $F$ indeed respects that Hopf $\La$ $\Com^c$-comodule structure.
By construction $F(x_1\cdots x_k):=F(x_1)\cdots F(x_k)$ and $F$ clearly is morphism of Hopf collections. By compatibility with the Hopf structure we furthermore need to verify compatibility with the $\La$ $\Com_n^c$-comodule structure only on commutative algebra generators $\IG_n$. For those the compatibility with the $\La$ $\Com^c$-coaction again follows directly from the definition.

Next we need to verify that $U$ as constructed is indeed a two-sided inverse to $\phi$.
First, it is obvious by construction that $\phi(U(f))=f$ as desired. Second, note that every step of the construction of $F$ out of $f$ above was forced upon us by the compatibility with the Hopf $\La$ $\e_n^c$-comodule structure. Following the argument one hence sees that any Hopf $\La$ $\Com^c$-comodule morphism $F$ is uniquely determined by its restriction to generators $\pIG_{V,n}$, or in other words $\phi$ is injective, thus finishing the proof.
\end{proof}

Note Proposition \ref{prop:restr 1} only addresses the graded Hopf comodule morphisms. For such a morphism $F=U(f)$ to be an honest dg Hopf comodule morphism we need to ask in addition that it intertwines the differentials, i.e.,
\begin{equation}\label{equ:dF Fd}
    d_{\MOp} F - F d_{\Graphs_{V,n}}=0.
\end{equation}
To simplify this, note first the the expression $d_{\MOp} F - F d_{\Graphs_{V,n}}$ is a biderivation of $F$.
This means that, by slightly adjusting the argument in the proof of Proposition \ref{prop:restr 1}, equation \eqref{equ:dF Fd} is in fact equivalent to 
\[
  (d_{\MOp} F - F d_{\Graphs_{V,n}})\circ \iota =0.  
\]
In other words it suffices to check the equation on $\La$ Hopf $\Com^c$-comodule generators. Let us work out the explicit equation obtained for $F=U(f)$. For the first term, involving the differential on $\MOp$ we obtain
\[
    d_{\MOp} U(f)\circ \iota =d_{\MOp} f.
\]
Next consider the contribution of $d_{\Graphs_n}=d_c$. Recall that this differential contracts an edge in a graph.
Start with a generator $x\in \ppIG_{V,n}(r)$.
Then generally, contracting edges may result in non-internally connected graphs and
\[
d_{\Graphs_{V,n}} x = \sum x_1 \cdots x_k
\]
is a sum of products of internally connected components $x_j\in \IG_n(r)$.
Then, following the construction of $U$ above, one sees that 
\begin{equation}\label{equ:Linfty pre HGC}
    \begin{aligned}
    U(f)(d_{\Graphs_{V,n}}x)
    &=
    U(f)(\sum x_1 \cdots x_k)
    =
    \sum U(f)(x_1) \cdots U(f)(x_k)
    \\&=
    \sum \Delta(f(x_1'))) \cdots (\Delta(f(x_k')),
    \end{aligned}
\end{equation}
using the product on $\MOp$ on the very right.
Furthermore, we (ab)used here the notation $x_j'\in \pIG_n$ and $\Delta$ for the elements obtained by deconcatenating vertices, and the $\La$ $\Com^c$ coaction as in the proof of Proposition \ref{prop:restr 1}.

\subsection{The hairy graph complex and its $\SL_\infty$-structure}\label{sec:HGC}
Let $\MOp$ be a right $\La$ Hopf $\Com^c$ comodule.
As a graded vector space the hairy graph complex is defined to be
\[
\HGC_{\MOp, V,n} := \iHom_{\bbS}(\pIG_{V,n}^\flat, \MOp^\flat).
\]
The notation $\iHom_{\bbS}(\cdots)$ here refers to the graded vector space of graded morphisms of symmetric sequences.
The most important special case for us is that $\MOp= \Fc_A$ for some dg commutative algebra $A$.
In this case the elements of $\HGC_{A,V,n}:=\HGC_{\Fc_A,V,n}$ can be identified with series of $V$-decorated graphs with external legs decorates by $A$, for example
\[
\begin{tikzpicture}[scale=.7,baseline=-.65ex]
\node[int] (v1) at (-1,0){};
\node[int,label={$v$}] (v2) at (0,1){};
\node[int] (v3) at (1,0){};
\node[int] (v4) at (0,-1){};
\node (w1) at (-2,0) {$a_1$};
\node (w2) at (2,0) {$a_2$};
\node (w3) at (0,-2) {$a_3$};
\draw (v1)  edge (v2) edge (v4) edge (w1) (v2) edge (v4) (v3) edge (v2) edge (v4) (v4) edge (w3) (v3) edge (w2);
\end{tikzpicture}
\,,\quad\quad
a_1,a_2,a_3\in A, v\in V.
\]
There is also a complete descending filtration on $\HGC_{\MOp,V,,n}$ by the number of edges in graphs. It has the property that each $\mG$-graded quotient (identifiable with the subspace of graphs with a fixed number of edges) is of finite type, as long as $A$ is of finite type and non-negatively graded.
We hence see that the set of degree zero elements in $\HGC_{\MOp,V,n}\hotimes \cR$ is identified with the set $\Mor_{g\bbS/ \cR}(\pIG_n^\flat\otimes \cR, \MOp^\flat\otimes \cR)$, and via Proposition \ref{prop:restr 1} also with the set $\Mor_{\La HRMod-\Com^c/\cR}(\Graphs_{V,n}^\flat\otimes \cR,\MOp^\flat\otimes \cR)$ of morphisms of graded $\La$ Hopf $\Com^c$-comodules.
Following the previous subsection, for $f\in \HGC_{\MOp,V,n}\hotimes \cR$ of degree 0 we shall denote by $U(f): \Graphs_{V,n}^\flat\otimes \cR\to \MOp^\flat\otimes \cR$ the corresponding morphisms of graded $\La$ Hopf $\Com^c$-comodules.
The equation \eqref{equ:dF Fd} then translates into a power series without constant term for $f$:
\[
\mU(f) := [d,U(f)]\circ \iota= (d_{\MOp}U(f)-U(f)d_{\Graphs_{V,n}} ) \circ \iota.
\]
All terms are $\cR$-linear and functorial in $\cR$. Furthermore, we claim that $\mU(f)$ satisfies the $\SL_\infty$-relations in the form of \eqref{equ:Linfty structure series 2},
\[
  \mU(f+\epsilon\mU(f)) =\mU(f).
\]
Indeed, we first note that since $U(f)$ is a morphism of Hopf comodules, $[d,U(f)]$ is a biderivation of $U(f)$, i.e., with $\epsilon$ a formal variable of degree $-1$ that we adjoin to our ground ring,
\[
  U(f) +\epsilon [d,U(f)]
\]
is a morphism of Hopf comodules. But the composition of this morphism with $\iota$ is clearly $f+\epsilon\mU(f)$, and hence 
\[
  U(f+\epsilon\mU(f)) = U(f) +\epsilon [d,U(f)].
\]
Buth then we find, as desired,
\[
  \mU(f+\epsilon\mU(f)) = [d, U(f+\epsilon\mU(f)) ]\circ \iota 
  =
  [d, U(f) +\epsilon [d,U(f)]]\circ \iota 
  =
  [d, U(f)]\circ \iota  + 0 = \mU(f).
\]

We hence arrive at the following Corollary to the discussion in the previous section.
\begin{cor}\label{cor:HGC}
There is a filtered complete $\SL_\infty$-structure on $\HGC_{\MOp,V,n}$ such that the Maurer-Cartan elements in $\HGC_{\MOp,V,n}\hotimes \cR$ are in 1-1-correspondence to the dg Hopf $\Com^c$-comodule morphisms $\Graphs_{V,n}\otimes \cR\to \MOp$. In particular, taking $\cR=\Omega(\Delta^\bullet)$,
\[
\Map_{\dgLaHModc_{\Com^c}}(\Graphs_{V,n},\MOp) 
\cong 
\MC_\bullet(\HGC_{\MOp,V,n}).
\]
\end{cor}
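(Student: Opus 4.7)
The plan is to package the constructions already carried out in Section \ref{sec:HGC} into the statement of the corollary. First, I would define the $\SL_\infty$-structure on $\HGC_{\MOp,V,n}$ via the schematic formalism of Section \ref{sec:Linfty_formalism}: for any graded commutative algebra $\cR$ and any $f \in \HGC_{\MOp,V,n} \hotimes \cR$ of degree zero, set
\[
\mU(f) = [d, U(f)] \circ \iota,
\]
where $U(f) : \Graphs_{V,n}^\flat \otimes \cR \to \MOp^\flat \otimes \cR$ is the graded $\La$ Hopf $\Com^c$-comodule morphism supplied by Proposition \ref{prop:restr 1}, and $\iota : \pIG_{V,n}^\flat \to \Graphs_{V,n}^\flat$ is the inclusion of generators.

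Second, I would verify the necessary convergence and filtration properties. The descending filtration on $\HGC_{\MOp,V,n}$ by the number of edges in hairy graphs is complete, and its $\mG$-graded quotients are of finite type whenever $\MOp$ has components of finite type. This ensures the identification of the degree-zero elements of $\HGC_{\MOp,V,n} \hotimes \cR$ with morphisms of graded symmetric sequences $\pIG_{V,n}^\flat \otimes \cR \to \MOp^\flat \otimes \cR$ over $\cR$, which by Proposition \ref{prop:restr 1} are in bijection with graded $\La$ Hopf $\Com^c$-comodule morphisms $\Graphs_{V,n}^\flat \otimes \cR \to \MOp^\flat \otimes \cR$. Moreover, since $d$ and $U(f)$ both respect the edge filtration up to finite correction, the $\SL_\infty$-operations extracted from $\mU$ by polarisation preserve the filtration, giving completeness in the sense of \eqref{equ:Linfty filtration compat}.

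The heart of the argument, and the step I expect to require the most care, is the verification of the $\SL_\infty$-relations in the form \eqref{equ:Linfty structure series 2}. After adjoining a square-zero variable $\epsilon$ of degree $-1$, the map $U(f) + \epsilon\, [d, U(f)]$ is a biderivation-perturbation of $U(f)$, hence still a map of graded $\La$ Hopf $\Com^c$-comodules, and Proposition \ref{prop:restr 1} (together with the uniqueness clause in its proof) identifies it with $U(f + \epsilon\, \mU(f))$. Substituting into $\mU(f + \epsilon\, \mU(f)) = [d, U(f + \epsilon\, \mU(f))] \circ \iota$ and expanding, the squared-$d$ terms vanish because $d^2 = 0$ on both $\Graphs_{V,n}$ and $\MOp$, and one recovers $\mU(f)$. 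This is the key formal check, and it reduces to the biderivation observation above; everything else is routine bookkeeping.

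Finally, the Maurer-Cartan equation $\mU(f) = 0$ is equivalent to the biderivation $[d, U(f)]$ vanishing after restriction to $\iota(\pIG_{V,n}^\flat)$, which by one more application of the uniqueness part of Proposition \ref{prop:restr 1} is equivalent to $[d, U(f)] = 0$ identically, i.e., to $U(f)$ being a morphism of dg $\La$ Hopf $\Com^c$-comodules. Specialising $\cR = \Omega(\Delta^\bullet)$ and unwinding the definitions of $\MC_\bullet$ and of $\Map_{\dgLaHModc_{\Com^c}}$ then yields the claimed isomorphism of simplicial sets.
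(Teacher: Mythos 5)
Your proposal is correct and follows essentially the same route as the paper: the same generating function $\mU(f)=[d,U(f)]\circ\iota$, the same biderivation argument identifying $U(f)+\epsilon[d,U(f)]$ with $U(f+\epsilon\mU(f))$ to verify the $\SL_\infty$-relations, and the same restriction-to-generators argument to equate the Maurer--Cartan equation with $U(f)$ being a dg comodule morphism. No gaps to flag.
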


We can also work out the combinatorial form of the $\SL_\infty$-operations on graphs, for the special case $\MOp=\Fc_A$.
To this end look again at \eqref{equ:Linfty pre HGC}.
Note that the differential only contracts one edge, and hence the internally connected graphs $x_1,\dots,x_k$ have only one external vertex in common which they all connect to, and they can connect there (possibly) with more than one half-edge. All other external vertices still have valency one, since $x\in \ppIG_{V,n}$. 
This means that combinatorially the $k$-ary $\SL_\infty$ operation $\mu_k$, for $k\geq 2$ is given on graphs $\Gamma_1,\dots,\Gamma_k\in \HGC_{A,V,n}$ by summing over all ways of fusing a subset of hairs to a single hair, in a way forming a connected graph, as shon in the following picture (for $k=2$).
\[
    \left[\begin{tikzpicture}[baseline=-.8ex]
    \node[draw,circle] (v) at (0,.3) {$\Gamma$};
    \draw (v) edge +(-.5,-.7) edge +(-.25,-.7) edge +(0,-.7) edge +(.25,-.7) edge +(.5,-.7);
    \end{tikzpicture},
    \begin{tikzpicture}[baseline=-.65ex]
    \node[draw,circle] (v) at (0,.3) {$\Gamma'$};
    \draw (v) edge +(-.5,-.7) edge +(-.25,-.7) edge +(0,-.7) edge +(.25,-.7) edge +(.5,-.7);
    \end{tikzpicture}
    \right]
    = \sum\begin{tikzpicture}[baseline=-.8ex]
    \node[draw,circle] (v) at (0,.3) {$\Gamma$};
    \node[int] (i) at (.5,-.5) {};
    \draw (v) edge +(-.5,-.7) edge +(0,-.7) edge (i) edge[bend left] (i) edge[bend right] (i);
    \node[draw,circle] (vv) at (1,.3) {$\Gamma'$};
    \draw (vv) edge (i) edge[bend left] (i) edge[bend right] (i) edge +(0,-.7) edge +(.5,-.7) (i) edge (.5,-1);
    \end{tikzpicture}\,,
\]
For $k=1$ the differential $\mu_1$ is given by the same procedure, except that there is also an additional term $d_A$ inherited from the differential on $A$, and a term splitting internal vertices inherited from the differential on $\pICG_{V,n}=(\pIG_{V,n})^*$.

\begin{rem}
So far we have encountered two very similar graph complexes, $\GGC_{V,n}$ and $\HGC_{\MOp, V, n}$.
In fact, for $\MOp(r)=V_1^{\otimes r}$ these two graded vector spaces are isomorphic, and hence we use similar notation.
However, we shall also emphasize that they carry different algebraic structures and play different roles: $\GGC_{V,n}$ is a dg Lie algebra with Lie bracket of degree zero, while $\HGC_{\MOp, V, n}$ carries a different $\SL_\infty$-algebra structure. In particular, the $\SL$-bracket is of degree $+1$.
The reader is hence advised to keep the two objects $\GGC_{V,n}$ and $\HGC_{\MOp, V, n}$ apart.
\end{rem}

\subsection{Graded version}
For technical reasons we will also consider the associated graded right $\La$ Hopf $\Poiss_n^c$ comodule
\[
\gr \Graphs_{V,n} = \gr_{\mG} \Graphs_{V,n},
\]
where we use the filtration $\mG$ by the number of edges as in section \ref{sec:Gfiltration}.
We note that $\gr \Graphs_{V,n}$ has vanishing differential and the $\Poiss_n^c$ coaction corestricts from the $\Com^c$-coaction,
\[
  \gr \Graphs_{V,n} = \coRes_{\Com^c}^{\Poiss_n^c}\gr \Graphs_{V,n}.
\]
Note also that $\gr \Graphs_{V,n}$ is free as a right $\La$ Hopf $\Com^c$ comodule, by the same arguments that show that $\Graphs_{V,n}$ is quasi-free, see above.
Similarly, for $\MOp$ a right $\La$ Hopf $\Com^c$ comodule we consider the descending complete filtration by the number of edges on the graph complexes $\HGC_{\MOp, V,n}$.
Concretely, 
\[
\mG^p \HGC_{\MOp, V,n} 
\]
consists of series of graphs with $\geq p$ edges. Note that the inherited $\SL_\infty$-structure on the associated (complete) graded $\hgr \HGC_{\MOp, V,n}$ is abelian, and the differential is merely that inherited from $\MOp$.
One then has the following graded version of Proposition \ref{prop:restr 1} and Corollary \ref{cor:HGC}.
\begin{prop}\label{prop:graded HGC}
For every graded commutative algebra $\cR$ and $\La$ Hopf $\Com^c$-comodule $\MOp$ the composition with the inclusion of generators 
\[
\Mor_{\cR\otimes\Com^c}(\cR\otimes \gr\Graphs_{V,n}^\flat, \cR\otimes \MOp^\flat )
\xrightarrow{-\circ \tilde \iota}
\left( \hgr \HGC_{\NOp, V, n} \hotimes \cR \right)_0
\] 
is a bijection, such that the closed elements on the right-hand side are precisely the images of $\La$ Hopf  
$\Com^c$ comodule morphisms that respect the differentials.
\end{prop}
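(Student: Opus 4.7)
The strategy is to transport the statement and proof of Proposition \ref{prop:restr 1} and Corollary \ref{cor:HGC} across the edge filtration. The key observation is that the underlying graded $\La$ Hopf $\Com^c$-comodule $\gr\Graphs_{V,n}^\flat$ coincides with $\Graphs_{V,n}^\flat$ itself: the commutative product, the $\La$-structure, and the $\Com^c$-coaction all preserve the number of edges, so passing to the associated graded affects only the differential (which is killed) and the $\Poiss_n^c$-coaction (which becomes a corestriction of the $\Com^c$-coaction). In particular, $\gr\Graphs_{V,n}$ is still free as a graded $\La$ Hopf $\Com^c$-comodule with the same generating symmetric sequence $\pIG_{V,n}$ described in section \ref{sec:Graphs freeness}.

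Given this, Proposition \ref{prop:restr 1} applied verbatim to $\gr\Graphs_{V,n}^\flat$ provides, for any graded commutative algebra $\cR$, a bijection between morphisms of $\cR\otimes\Com^c$-comodules $\cR\otimes\gr\Graphs_{V,n}^\flat\to\cR\otimes\MOp^\flat$ and morphisms of graded $\cR$-linear symmetric sequences $\cR\otimes\pIG_{V,n}^\flat\to\cR\otimes\MOp^\flat$. The latter identifies with $(\hgr\HGC_{\MOp,V,n}\hotimes\cR)_0$ by the same argument as in \eqref{equ:pre def tensor out}: the edge filtration on $\pIG_{V,n}$ has finite-dimensional graded pieces in each fixed arity, so the completed tensor product computes precisely the $\cR$-valued morphisms. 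This yields the first assertion.

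For the second assertion, recall from section \ref{sec:HGC} that a graded comodule morphism $F=U(f)$ is compatible with the differentials if and only if $\mU(f):=(d_{\MOp}\circ U(f)-U(f)\circ d)\circ\iota=0$, and that the higher $\SL_\infty$-operations on $\HGC_{\MOp,V,n}$ arise combinatorially from fusing hairs, which strictly increases the number of internal edges of the resulting hairy graph. Consequently these higher operations, as well as the contribution of $d_{\Graphs_{V,n}}=d_c$ (which reduces the edge count by one), raise the $\mG$-filtration degree. Only the contribution of $d_{\MOp}$ preserves the number of edges, so the generating series $\mU$ descends on the associated graded to the abelian $\SL_\infty$-structure with unique nontrivial operation $\mu_1(f)=d_{\MOp}\circ f$. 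A degree $0$ element $f\in\hgr\HGC_{\MOp,V,n}\hotimes\cR$ is therefore closed if and only if $d_{\MOp}\circ U(f)\circ\iota=0$. Since $\gr\Graphs_{V,n}$ has vanishing differential, this is exactly the condition that $U(f)$ intertwines the differentials.

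The only subtle point is verifying that the $\SL_\infty$-operations indeed strictly raise edge count on the nose, so that only $d_{\MOp}$ survives on the associated graded; this follows from the explicit combinatorial description of $\mu_k$ in section \ref{sec:HGC} (each fusion of hairs of the inputs creates at least one internal edge in the output), together with the fact that $d_c$ removes one edge. The main obstacle is simply bookkeeping these filtration degree shifts for all pieces of the generating function $\mU$; once done, the argument is formal.
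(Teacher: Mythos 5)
Your proof is correct and follows exactly the route the paper intends: the paper gives no explicit proof of this proposition, merely declaring it "the graded version of Proposition \ref{prop:restr 1} and Corollary \ref{cor:HGC}", and your argument is precisely the fleshing-out of that remark — using that $\gr\Graphs_{V,n}^\flat$ is the same free graded $\La$ Hopf $\Com^c$-comodule on $\ppIG_{V,n}$, so Proposition \ref{prop:restr 1} applies verbatim, and that on $\hgr_{\mG}$ only the $d_{\MOp}$-part of the generating function $\mU$ survives. The only cosmetic remark is that the $\NOp$ in the statement should be read as $\MOp$ (it is introduced only in the following proposition), which you correctly did.
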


Let now $\MOp$ be a right $\La$ Hopf $\Poiss_n^c$ comodule and consider the graded version of the deformation complex 
\[
\Def(\gr\Graphs_{V,n},W\MOp)
\cong
\hgr\Def(\Graphs_{V,n},W\MOp)
\]
of section \ref{sec:def cx}, where $W\MOp$ is the W fibrant resolution of $\MOp$, see section \ref{sec:W construction}. Here we implicitly prolong the descending complete filtration on the deformation complex induced by the filtration on $\Graphs_{V,n}$.

Then, using the adjunction between corestriction and coinduction, and the fact that $\gr\Graphs_{V,n}$ is the corestriction of the underlying $\Com^c$-comodule, we have the following comparison of the deformation complex and hairy graph complex.

\begin{prop}
Let $\MOp$ be a right $\La$ Hopf $\Poiss_n^c$ comodule, and let $\NOp:=\coInd_{W\Poiss_n^c}^{\Com^c}W\MOp$. Then there is an isomorphism of dg vector spaces
\[
  \Def(\gr\Graphs_{V,n},W\MOp)
  \cong \hgr \HGC_{\NOp, V, n} 
\]
\end{prop}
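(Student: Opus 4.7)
My plan is to compose two natural bijections: the corestriction–coinduction adjunction of Proposition \ref{prop:res ind adjunction}, and the identification of $\La$ Hopf $\Com^c$-comodule morphisms out of $\gr\Graphs_{V,n}$ with hairy graphs provided by Proposition \ref{prop:graded HGC}. Concretely, $(\gr\Graphs_{V,n}, W\MOp)$ forms a good pair of $\La$ Hopf $W\Poiss_n^c$-comodules in the sense of Subsection \ref{subsec:def cx}: the quasi-freeness of $\gr\Graphs_{V,n}$ as a $\La$ Hopf collection follows from its freeness as a $\La$ Hopf $\Com^c$-comodule established in Subsection \ref{sec:Graphs freeness} (corestriction along $\Com^c\to W\Poiss_n^c$ does not affect the underlying algebra and $\La$ structure), and $W\MOp$ is cofree as a $W\Poiss_n^c$-comodule by construction of the $W$-resolution. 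Proposition \ref{prop:Def Linfty} then identifies Maurer-Cartan elements of $\Def(\gr\Graphs_{V,n}, W\MOp)\hotimes \cR$ with $\La$ Hopf $W\Poiss_n^c$-comodule morphisms $\cR\otimes \gr\Graphs_{V,n}\to \cR\otimes W\MOp$. Using that $\gr\Graphs_{V,n}$ is the corestriction of its underlying $\Com^c$-comodule (noted in the paragraph preceding the proposition), Proposition \ref{prop:res ind adjunction} rewrites these as $\La$ Hopf $\Com^c$-comodule morphisms $\cR\otimes \gr\Graphs_{V,n}\to \cR\otimes \NOp$, and Proposition \ref{prop:graded HGC} identifies those in turn with the closed degree-zero elements of $\hgr\HGC_{\NOp,V,n}\hotimes \cR$.

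To promote this chain of Maurer-Cartan bijections into an isomorphism of dg vector spaces, I would observe that both $\SL_\infty$-algebras in question are abelian, so that each is determined by its Maurer-Cartan locus after $\cR$-extension for varying $\cR$. The right-hand side is abelian by the remark preceding Proposition \ref{prop:graded HGC}. On the left-hand side, since $d_{\gr\Graphs_{V,n}} = 0$, the generating function from the proof of Proposition \ref{prop:Def Linfty} reduces to
\[
\mU(f) = \pi\circ d_{W\MOp}\circ \Phi(f)\circ \iota,
\]
which is linear in $f$ by Lemma \ref{lem:freecofree}, so all higher $\SL_\infty$-brackets vanish. The underlying graded vector spaces then match degree-by-degree via the same adjunction (applied to $\cR$ with formal generators in arbitrary degrees).

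Finally, the differentials on both sides are induced by the differential on the target — by $d_{W\MOp}$ on the $\Def$ side and by $d_\NOp$ on the $\HGC$ side — and $d_\NOp$ is itself induced from $d_{W\MOp}$ via the equalizer \eqref{equ:coind def} defining coinduction, so the two differentials correspond under our identification. The main obstacle is the careful bookkeeping required to identify the generating symmetric sequence of $\gr\Graphs_{V,n}$ as a $\La$ Hopf $W\Poiss_n^c$-comodule with $\pIG_{V,n}^\flat$, and the cogenerators of $W\MOp$ with the appropriate piece of $\NOp$, while aligning the complete filtrations $\mG$ used on the two sides; but this is essentially a translation exercise using the universal property of $\coInd$ and the explicit form of the adjunction.
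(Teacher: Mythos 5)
You follow the paper's own route: the same chain of identifications (Lemma \ref{lem:freecofree}/Proposition \ref{prop:Def Linfty}, the corestriction--coinduction adjunction of Proposition \ref{prop:res ind adjunction}, and Proposition \ref{prop:graded HGC}), applied to degree-zero elements of $\cR$-extensions for varying $\cR$, followed by matching the differentials. The architecture and the ingredients you cite are exactly those of the paper's proof.

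There is, however, one genuine gap, and it is precisely the point the paper's proof devotes its final paragraph to: linearity. First, your claim that $\mU(f)=\pi\circ d_{W\MOp}\circ\Phi(f)\circ\iota$ is ``linear in $f$ by Lemma \ref{lem:freecofree}'' does not follow from that lemma: the extension $\Phi(f)$ of $f$ to a $\La$ Hopf comodule morphism is genuinely polynomial in $f$, since its value on a generator is forced by compatibility with the coaction, whose components include decomposable elements, on which $\Phi(f)$ is a product of $f$-values. Second, and more importantly, the chain of identifications you set up is a priori only a natural bijection of \emph{sets} of degree-zero (or Maurer--Cartan) elements; such a bijection need not be induced by a linear map, so it does not by itself yield an isomorphism of graded vector spaces, let alone of complexes. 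What closes both gaps is the support argument of the paper: the $k$-linear Taylor coefficients ($k\geq 2$) of the composite comparison map are supported on $k$-fold products of generators, i.e.\ on decomposable elements of $\gr\Graphs_{V,n}$, whereas the generating symmetric sequence $\pIG_{V,n}$ (internally connected graphs with univalent external vertices) is indecomposable; hence precomposition with the inclusion of generators kills all nonlinear terms, the comparison map is linear, and only then do the compatibilities you record (with $d_{W\MOp}$ on one side and $d_{\NOp}$ on the other) upgrade the bijection to an isomorphism of dg vector spaces. Note also that abelianness of $\Def(\gr\Graphs_{V,n},W\MOp)$ is most cleanly obtained as a \emph{consequence} of this linearity, by transporting the visibly abelian structure of $\hgr\HGC_{\NOp,V,n}$ back along the linear bijection, rather than as an independent input.
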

\begin{proof}
  We will construct a(n a priori) curved $\SL_\infty$-morphism between the abelian $\SL_\infty$-algebras $\Def(\gr\Graphs_{V,n},W\MOp)$ and $\hgr \HGC_{\NOp, V, n}$. Then we will verify that only the linear component of this $\SL_\infty$-morphism is non-vanishing. We again use the formalism of section \ref{sec:Linfty_formalism} to encode the $\SL_\infty$-morphism. 
Let $\cR$ be a graded commutative algebra, then by definition we have isomorphisms
\begin{equation}\label{equ:Phicomp}
\begin{aligned}
  \left( \Def(\gr\Graphs_{V,n},W\MOp)\hotimes \cR \right)_0
  &\xrightarrow{F, \cong} 
  \Mor_{W\e_n^c\otimes \cR}(\gr\Graphs_{V,n}\otimes \cR, W\MOp\otimes \cR)
  \\&\xrightarrow{A,\cong} 
  \Mor_{\Com^c\otimes \cR}(\gr\Graphs_{V,n}\otimes \cR, \NOp\otimes \cR)
  \\&\xrightarrow{(-)\circ\iota,\cong}
  \left( \hgr \HGC_{\NOp, V, n} \hotimes \cR \right)_0.
\end{aligned}
\end{equation}
The first morphism is obtained from Lemma \ref{lem:freecofree} (see also \eqref{equ:pre def tensor out}), the middle identification is by adjunction, and the right-hand identification is given by Proposition \ref{prop:graded HGC}.
In particular, note that $W\MOp$ is quasi-free and the coinduced module $\NOp\subset W\MOp$ can be identified with the sub-symmetric sequence consisting of elements whose cocomposition lands in $W\MOp\circ \Com^c$.
The adjunction map $A$ sends a morphism $f$ to itself, noting that the image of $f$ must be in $\NOp$ by compatibility with the $W\e_n^c$-coaction.

It is clear that all arrows above are obtained by $\cR$-linear extension of structure operations, and hence the composition defines a power series 
\[
\Phi(x) = (A(F(x)))\circ \tilde \iota  
\]
whose summands are obtained by $\cR$-linear extension of multilinear maps defined over $\Q$.
To check that $\Phi$ encodes a non-flat $\SL_\infty$-morphism we can check the $\SL_\infty$-relations in the form \eqref{equ:Linfty morphism series}.
first note that 
\[
A(F(x+\epsilon\mU(x) )) = A((1+\epsilon d) F(x))
=
(1+\epsilon d)A(F(x)).
\]
Hence 
\[
  \Phi(x+\epsilon\mU(x))
  =
  (1+\epsilon d)A(F(x))\circ \tilde \iota  
  =
  \Phi(x) + \mV(\Phi(x))
\]
as desired. We next check that in fact $\Phi(x)$ is linear in $x$, with $\mU$ the generating function for the $\SL_\infty$-algebra $\Def(\gr\Graphs_{V,n},W\MOp)$ and $\mV$ that for $\hgr \HGC_{\NOp, V, n}$. To this end note that the $k$-linear part of $F(x)$ consists of morphisms that take values in $S^k\IG_{V,n}$.
But the restriction to generators $\circ\tilde \iota$ renders all these parts zero for $k\neq 1$.
Furthermore, as we have seen above this linear part must be a isomorphism, since all three maps in \eqref{equ:Phicomp} are bijections.
\end{proof}



We shall later need the following Corollary of the above Proposition, connecting the hairy graph complex and the deformation complex.
\begin{cor}\label{cor:Def is HGC}
  Let $\MOp$ be a $\La$ Hopf $\Poiss_n^c$-comodule of configuration space type. Let
  $\NOp := \coInd_{W\Poiss_n^c}^{\Com^c}(W\MOp)$.
  Then we have the following morphisms of dg vector spaces
  \[
  \Def(\gr \Graphs_{V,n}, W\MOp)
  \cong 
  \hgr \HGC_{\NOp,V,n} \leftarrow \hgr \HGC_{\MOp(1),V,n}
  \]
  of which the left-hand morphism is an isomorphism and the right-hand arrow is a quasi-isomorphism. Here the right-hand arrow is induced by the canonical quasi-isomorphism $\FF_{\MOp(1)}\to \NOp$, and we use our shorthand notation $\HGC_{\MOp(1),V,n}=\HGC_{\FF_{\MOp(1)},V,n}$.
\end{cor}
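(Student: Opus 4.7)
The left-hand isomorphism is immediate: it is literally the content of the Proposition immediately preceding the Corollary. The real content is to prove that the right-hand arrow
\[
\hgr \HGC_{\MOp(1),V,n} \longrightarrow \hgr \HGC_{\NOp,V,n}
\]
induced by the canonical arity-wise quasi-isomorphism $\FF_{\MOp(1)} \to \NOp$ (which exists by the hypothesis that $\MOp$ is of configuration space type, combined with the fact that $\NOp$ models the homotopy coinduction) is a quasi-isomorphism.

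The plan is to decompose both sides by the number of edges and reduce to a finite-dimensional statement. First I would unpack the differential $\mu_1$ on $\hgr \HGC_{X,V,n}$: on the underlying $\HGC_{X,V,n}$, this differential has two contributions, one induced from the target $X$ (i.e.\ the differential on $\MOp(1)$ or $\NOp$), and one given by vertex splitting, dual to edge contraction on $\pIG_{V,n}$. Since vertex splitting strictly raises the edge count by one, it sends $\mG^p$ into $\mG^{p+1}$ and therefore induces the zero map on each graded piece $\mG^p/\mG^{p+1}$. Consequently, on the associated graded we have
\[
  \mG^p\HGC_{X,V,n}\big/\mG^{p+1}\HGC_{X,V,n}
  \;\cong\;
  \prod_{r\geq 1} \iHom_{S_r}\!\bigl(\pIG_{V,n,p}(r),\, X(r)\bigr),
\]
where $\pIG_{V,n,p}(r)$ denotes the part with exactly $p$ edges, and the differential comes purely from $X$. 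The key finite-dimensionality input is that every external vertex in $\pIG_{V,n}$ is univalent by definition, so a graph with $p$ edges can have at most $2p$ external vertices. Thus for each $p$ the displayed product is really a finite sum, and each $\pIG_{V,n,p}(r)$ is finite-dimensional. Since we work in characteristic zero, $S_r$-invariants split off as a direct summand, so $\iHom_{S_r}(\pIG_{V,n,p}(r),-)$ preserves quasi-isomorphisms. Component-wise application of $\FF_{\MOp(1)}(r)\to \NOp(r)$ therefore gives a quasi-isomorphism on each graded piece $\mG^p/\mG^{p+1}$.

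To conclude, I would use that $\hgr \HGC$ is by definition the product over $p$ of the graded pieces, and that for cochain complexes the cohomology functor commutes with arbitrary products (cocycles and coboundaries are both computed component-wise). The pointwise quasi-isomorphisms thus assemble into a quasi-isomorphism on the product, proving the Corollary. The only mildly subtle point, and what I would be most careful about, is the claim that on the associated graded only the target differential survives; but this is forced by the fact that vertex splitting increases edge count, and by the fact that the higher $\SL_\infty$-brackets contribute to $\mu_k$ with $k\geq 2$, not to $\mu_1$. Everything else is standard exactness bookkeeping.
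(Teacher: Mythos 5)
Your proposal is correct and follows essentially the same route as the paper, which compresses the whole argument to the observation that $?\mapsto\hgr\HGC_{?,V,n}$ is a functor sending arity-wise quasi-isomorphisms of $\La$ Hopf $\Com^c$-comodules to quasi-isomorphisms "by the K\"unneth theorem"; your edge-degree decomposition, the remark that only the target differential survives on the associated graded, and the exactness of $\iHom_{S_r}(-,-)$ in characteristic zero together with cohomology commuting with products are precisely the details behind that one-line citation. (The only nitpicks: the bound $r\le 2p$ fails for $p=0$, where the generators sit in arity $1$, and $\pIG_{V,n,p}(r)$ is only degreewise finite-dimensional because decorations can accumulate --- but neither point is needed, since $\iHom$ out of any graded vector space with zero differential already preserves quasi-isomorphisms.)
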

\begin{proof}
The only thing left to check here is that the right-hand arrow is a quasi-isomorphism.
However, note that the construction $?\mapsto\hgr\HGC_{?,V,n}$ is a functor. Furthermore, it sends quasi-isomorphisms of $\La$ Hopf $\Com^c$-comodules to quasi-isomorphisms by the K\"unneth theorem.
\end{proof}



\section{Main result and proof of Theorem \ref{thm:main_intro}}
In this section we shall show the following result, which is a slight refinement of our main Theorem \ref{thm:main_intro} from the introduction.

\begin{thm}\label{thm:main}
Let $n\geq 2$ and let $\MOp$ be a right $\La$ Hopf $\e_n^c$-comodule of configuration space type, such that $H:=H(\MOp(1))$ is connected and finite dimensional. 
Then there is a Maurer-Cartan element $Z\in \mG^1\GGC_{\bar H,n}$ and a quasi-isomorphism of right $\La$ Hopf $W\e_n^c$-comodules
\[
\Phi : \Graphs_{\bar H,n}^Z \to W\MOp.  
\]
\end{thm}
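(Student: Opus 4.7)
The plan is to produce the pair $(Z, \Phi)$ as a Maurer-Cartan element of the curved $\SL_\infty$-algebra
\[
  \mathfrak{L} := \GGC_{\bar H, n} \ltimes \Def\bigl(\Graphs_{\bar H, n},\, W\MOp\bigr)
\]
built in Proposition \ref{prop:Lifty semidirect}, whose Maurer-Cartan elements are precisely pairs $(Z, \Phi)$ consisting of a Maurer-Cartan $Z \in \GGC_{\bar H, n}$ and a morphism $\Phi: \Graphs_{\bar H, n}^Z \to W\MOp$ of right $\La$ Hopf $W\e_n^c$-comodules. Once such a pair has been built, the quasi-isomorphism property of $\Phi$ will be established via Proposition \ref{prop:configtype qiso condition}.

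First I would choose a cocycle-valued $\Q$-linear lift $\phi: \bar H \to \MOp(1)$ of the inclusion $\bar H \hookrightarrow H(\MOp(1))$. By Lemma \ref{lem:grG0 map} (applied with $V = \bar H$), $\phi$ extends to a morphism $F_0: \Graphs_{\bar H, n}^\flat \to W\MOp^\flat$ of graded right $\La$ Hopf $W\e_n^c$-comodules that agrees on $\mG^0 \Graphs_{\bar H, n} = \FF_{S(\bar H)}$ with the canonical morphism induced by $\phi$, and in particular commutes with the differentials on this subcomplex. Hence the defect of $F_0$ to be a dg morphism is supported on graphs with at least one edge, so the twisted curved $\SL_\infty$-algebra $\mathfrak{L}^{F_0}$ has its curvature in $\mG^1 \mathfrak{L}^{F_0}$ for the edge-number filtration $\mG$ of Section \ref{sec:Gfiltration}.

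The heart of the proof is to apply Corollary \ref{cor:MCexistence1} to $\mathfrak{L}^{F_0}$ with obstruction filtration $\mG$, which reduces to checking
\[
  H^1\bigl(\hgr_{\mG}\, \mathfrak{L}^{F_0}\bigr) = 0.
\]
By Corollary \ref{cor:Def is HGC}, combined with the configuration-space-type hypothesis $\FF_{\MOp(1)} \xrightarrow{\sim} \coInd^{\Com^c}_{W\e_n^c} W\MOp$, the $\Def$-summand of the associated graded is quasi-isomorphic to $\hgr_{\mG} \HGC_{\MOp(1), \bar H, n}$, while the $\GGC$-summand is $\hgr_{\mG}\GGC_{\bar H, n}[1]$. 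The $F_0$-twist linearizes to a transfer map from the $\GGC$-summand into the $\Def$-summand recording the infinitesimal action of $\GGC_{\bar H, n}$ on $F_0$. The main obstacle is to compute the resulting degree-one cohomology of the total complex: one exploits that $\bar H$ is concentrated in strictly positive cohomological degrees, which severely restricts the shape of degree-one classes in the hairy graph complex; the remaining candidate obstructions, essentially the choice of a diagonal element for $H(\MOp(1))$, are matched by adjustments of $Z$ at complexity zero and at graphs with a single edge, reflecting the fact (Corollary \ref{cor:GraphsZ config space type}) that the diagonal element of $\Graphs_{\bar H, n}^Z$ is exactly $\Delta(Z)$.

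Given the Maurer-Cartan element $(Z, \delta F) \in \mG^1 \mathfrak{L}^{F_0}$ provided by Corollary \ref{cor:MCexistence1}, set $\Phi := F_0 + \delta F$. Both the source $\Graphs_{\bar H, n}^Z$ and the target $W\MOp$ are of configuration space type: the former by Corollary \ref{cor:GraphsZ config space type}, the latter because $W\MOp \simeq \MOp$ and $\MOp$ is so by hypothesis (together with Lemma \ref{lem:comodule csc basic props}). In arity one, $\Phi(1)$ agrees with $F_0(1)$ modulo $\mG^1$, so its restriction to the subspace $V_1 = \bar H \oplus \Q 1 \subset \Graphs_{\bar H, n}^Z(1)$ is the natural map to $W\MOp(1)$ determined by $\phi$ and the unit, which is a quasi-isomorphism on cohomology by the choice of $\phi$. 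Since $V_1 \hookrightarrow \Graphs_{\bar H, n}^Z(1)$ is itself a quasi-isomorphism by Corollary \ref{cor:GraphsZ config space type}, $\Phi(1)$ is a quasi-isomorphism. Proposition \ref{prop:configtype qiso condition}, applied with $\COp = W\e_n^c \simeq \e_n^c$, then upgrades this to quasi-isomorphism in all arities.
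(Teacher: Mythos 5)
Your overall architecture matches the paper's: encode the pair $(Z,\Phi)$ as a Maurer--Cartan element of the semidirect product $\SL_\infty$-algebra of Proposition \ref{prop:Lifty semidirect}, seed it with an order-zero solution $F_0$ built from a cocycle lift $\bar H\to\MOp(1)$ via Lemma \ref{lem:grG0 map}, run the obstruction theory of Corollary \ref{cor:MCexistence1} against the edge filtration $\mG$, and conclude via Corollary \ref{cor:GraphsZ config space type} and Proposition \ref{prop:configtype qiso condition}. The final step (quasi-isomorphism in arity one, upgraded to all arities by configuration space type) is also correct as you state it.

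The gap is in the obstruction computation, which is the heart of the proof. You propose to kill $H^1(\hgr_{\mG}\,\mathfrak{L}^{F_0})$ by exploiting that $\bar H$ is positively graded (``severely restricts the shape of degree-one classes in the hairy graph complex'') and then matching the ``remaining candidate obstructions'' against the diagonal element. Neither half works for a general $\MOp$ of configuration space type: $\hgr_{\mG}\HGC_{\MOp(1),\bar H,n}$ has plenty of degree-one cohomology --- already the tree-level classes encoding the homotopy commutative structure on $H(\MOp(1))$ (cf.\ Corollary \ref{cor:Z1 tree}, where such classes survive even under strong connectivity hypotheses), plus arbitrary loop-order classes when no such hypotheses are imposed --- and the diagonal element only accounts for the zero-vertex part. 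The argument that actually closes this step is Proposition \ref{prop:Hgrg vanishing}: the differential on $\hgr_{\mG}(\mG^1\fg^{m_0})$ splits as $d_{act}+d_{\Def}$, where $d_{act}\colon \hgr_{\mG}\mG^1\GGC_{\bar H,n}\to \mG^1\Def(\gr\Graphs_{\bar H,n},W\MOp)[1]$ records the infinitesimal action on $F_0$; one identifies $d_{act}$ with the composite $\hgr\GGC_{\bar H,n}=\hgr\HGC_{H,\bar H,n}\to\hgr\HGC_{\MOp(1),\bar H,n}\to\Def(\gr\Graphs_{\bar H,n},W\MOp)$ of \eqref{equ:HGCzigzig}, which is a quasi-isomorphism precisely because $H\to\MOp(1)$ is a quasi-isomorphism and $\MOp$ is of configuration space type (Corollary \ref{cor:Def is HGC}). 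Hence the whole associated graded complex is acyclic in every degree --- each obstruction in the $\Def$-summand is cancelled by a correction of $Z$, with no degree counting needed. You have all the ingredients on the table, but the decisive identification of $d_{act}$ with that quasi-isomorphism is missing, and the substitute you sketch would fail.
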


\subsection{Proof of Theorem \ref{thm:main}}
We fix once and for all cohomology representatives $H\to \MOp(1)$, i.e., we consider $H$ (noncanonically) as a subspace of cocycles in $\MOp(1)$. By restriction, we also obtain a map from the positively graded part $\bar H\to \MOp(1)$.
We desire to construct a Maurer-Cartan element in the curved $\SL_\infty$-algebra of Proposition \ref{prop:Lifty semidirect}
\[
m \in  \mG^1\GGC_{H,n} \ltimes \Def(\Graphs_H,W\MOp) =:\fg.  
\]
We consider the right-hand side $\fg$ as equipped with a descending 
complete filtration by the number of edges in graphs $\fg = \mG^0\fg \supset \mG^1\fg \supset \cdots$.

We first construct a Maurer-Cartan element up to order one, i.e., a degree one element $m_0\in \fg$ such that
\[
  \curv(m_0) \in \mG^1\fg .
\]
We will take $m_0\in \Def(\Graphs_{\bar H,n},W\MOp)$, i.e., the component in $\GGC_{\bar H,n}$ is zero.
But looking at the description of Maurer-Cartan elements 
from Proposition \ref{prop:Def Linfty} we see that Maurer-Cartan elements correspond to morphisms of $\La$ Hopf $(W\e_n^c)^\flat$-comodules
\[
F :   \Graphs_{\bar H,n}^\flat \to W\MOp^\flat
\]
that intertwine the differentials $dF=Fd$, with $m_0=\pi\circ F\circ \iota$ obtained by inclusion of generators and projection to cogenerators. For Maurer-Cartan elements up to order one the equation $dF=Fd$ need only be satisfied on $\mG^0\Graphs_{\bar H,n}=\FF_{S(\bar H)}$.
But then we can invoke Lemma \ref{lem:grG0 map} to construct this map $F$, given our fixed inclusion $H\to \MOp(1)$.
Since by that Lemma the restriction of $F$ to $\mG^0\Graphs_{\bar H,n}$ is compatible with the differentials, our element $m_0$ is a Maurer-Cartan element up to order one as desired.

Next we want to use $\SL_\infty$-obstruction theory to construct a Maurer-Cartan element $m_1$ in $\mG^1\fg^{m_0}$.
First, we compute the relevant obstruction spaces, showing that they vanish.
\begin{prop}\label{prop:Hgrg vanishing}
In the setting of Theorem \ref{thm:main} and with $\fg$ as as above, we have that 
\[
  H(\hgr_{\mG} (\mG^1\fg^{m_0}))=0.
\]
\end{prop}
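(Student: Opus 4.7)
The plan is to describe $\hgr_{\mG}(\mG^1 \fg^{m_0})$ explicitly as a mapping cone of an isomorphism between two graded vector spaces both carrying trivial differential, and thereby conclude acyclicity. As a graded vector space $\mG^1 \fg^{m_0}$ decomposes compatibly with $\mG$ as the direct sum $\mG^1 \GGC_{\bar H, n}[1] \oplus \mG^1 \Def(\Graphs_{\bar H, n}, W\MOp)$, so it suffices to compute the differential on the associated graded and identify its cross term.

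First, I will check that each summand carries zero intrinsic differential in $\hgr_{\mG}$. On the $\GGC$-factor, both $\delta_{\text{split}}$ and $[M_2,-]$ are dual to edge-decreasing operations on $\GG_{\bar H,n}$, so they strictly raise the descending $\mG$-filtration on $\GGC$ and hence vanish in $\hgr$. On the $\Def$-factor, the contribution of the edge-contracting piece $d_c$ of $d_{\Graphs_{\bar H,n}}$ to the $\Def$-differential also strictly raises the descending $\mG$-filtration (by precomposition), while the contribution from $W\MOp$ preserves the filtration. Using Corollary~\ref{cor:Def is HGC} at the level of $\hgr$ together with the chosen cohomology representatives $H \hookrightarrow \MOp(1)$, I may identify
\[
  \hgr_{\mG}\Def(\Graphs_{\bar H,n}, W\MOp) \simeq \hgr_{\mG}\HGC_{H,\bar H, n},
\]
and the right-hand side also carries zero differential, since vertex splitting in $\HGC_{H,\bar H, n}$ raises the $\mG$-filtration and $H$ has zero differential.

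Next I will identify the surviving connecting map $\phi$. By Proposition~\ref{prop:Lifty semidirect}, the only cross-term between $\GGC$ and $\Def$ in the differential of $\fg^{m_0}$ that we need to consider comes from $\mu_2(m_0, x) = \pi \circ \Phi(m_0) \circ (x\cdot) \circ \iota$. Since the action $x\cdot$ of $x \in \GGC_{\bar H, n}$ on a $\pIG_{\bar H,n}$-generator $\gamma$ decreases its edge count by exactly $e_x$, and since $\Phi(m_0)$ coincides with the canonical inclusion $\FF_{S(\bar H)} \hookrightarrow W\MOp$ on the zero-edge part $\mG^0\Graphs_{\bar H,n}$ by the construction of $m_0$ via Lemma~\ref{lem:grG0 map}, in the associated graded only terms where the action fully consumes the edges of $\gamma$ survive. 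Thus the induced map $\phi \colon \hgr_{\mG}\mG^1\GGC_{\bar H,n} \to \hgr_{\mG}\HGC_{H,\bar H,n}$ sends a basis graph $\Gamma \in \GGC_{\bar H,n}$ with $e_\Gamma$ edges to the graph-functional on $\pIG$-generators $\gamma$ supported on those $\gamma$ of matching internal structure with $e_\gamma = e_\Gamma$; vertex decorations in $\bar H^*$ are turned into vertex decorations in $\bar H$ via the canonical evaluation pairing appearing in the cut, while hair decorations in $H = \bar H \oplus \Q$ are transferred directly to leg decorations. This defines a basis-wise bijection (both sides are connected graphs of internal valence $\geq 3$ with at least one edge, with matching vertex and hair decorations), hence $\phi$ is an isomorphism of graded vector spaces and the mapping cone $\hgr_{\mG}(\mG^1 \fg^{m_0})$ is acyclic.

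The main technical obstacle will be to verify that the higher mixed operations $\mu_{k+1}(x, m_0, \dots, m_0)$ for $k \geq 2$ strictly raise the $\mG$-filtration on $\Def$, so that they contribute nothing to the graded connecting map and only $\mu_2(m_0,-)$ survives as $\phi$. This should follow by tracking the polynomial dependence of $\Phi(m_0)$ on $m_0$ together with the edge counting for the cutting action of $\GGC_{\bar H,n}$ on $\Graphs_{\bar H,n}$: each additional factor of $m_0$ forces the source $\pIG$-graph to contain an additional internally connected component bringing in strictly more edges than a single factor requires. The combinatorial bookkeeping here is where the bulk of the careful argument is concentrated.
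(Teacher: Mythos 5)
Your proposal follows essentially the same route as the paper: decompose $\hgr_{\mG}(\mG^1\fg^{m_0})$ into the summands $\hgr\mG^1\GGC_{\bar H,n}$ and $\mG^1\Def(\gr\Graphs_{\bar H,n},W\MOp)$, identify the connecting map as $x\mapsto \pi\circ\Phi(m_0)\circ(x\cdot)\circ\iota$, observe that only the terms where $x$ consumes all edges of a generator survive in the associated graded, and match the result against the chain $\hgr\HGC_{H,\bar H,n}\to\hgr\HGC_{\MOp(1),\bar H,n}\to\Def(\gr\Graphs_{\bar H,n},W\MOp)$ of Corollary \ref{cor:Def is HGC}. This is exactly the paper's argument in section \ref{sec:Hgrg vanishing proof}.

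One claim needs correcting, though it does not sink the proof. The summand $\hgr_{\mG}\Def(\Graphs_{\bar H,n},W\MOp)\cong\hgr\HGC_{\NOp,\bar H,n}$ does \emph{not} carry zero differential: the part of the differential inherited from $W\MOp$ preserves the edge filtration (as you yourself note) and survives in the associated graded; only its cohomology is identified with $\hgr\HGC_{H,\bar H,n}$, and the comparison map of Corollary \ref{cor:Def is HGC} is a quasi-isomorphism \emph{from} $\hgr\HGC_{H,\bar H,n}$ \emph{into} the deformation complex, not an isomorphism. Consequently the complex is not literally the cone of an isomorphism between graded vector spaces with trivial differential, and your $\phi$ cannot be an isomorphism onto the full $\Def$-summand. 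The correct conclusion is that the connecting map factors as (the tautological identification $\hgr\mG^1\GGC_{\bar H,n}=\hgr\mG^1\HGC_{H,\bar H,n}$) followed by the two quasi-isomorphisms above, hence is itself a quasi-isomorphism, and the cone of a quasi-isomorphism is acyclic. With that adjustment your argument coincides with the paper's.
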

We postpone the proof of this Proposition to the next subsection, and continue with our proof of Theorem \ref{thm:main}.

Using Proposition \ref{prop:Hgrg vanishing} we can immediately apply Corollary \ref{cor:MCexistence1}  to conclude that the curved $\SL_\infty$-algebra $\fg^{m_0}$ contains a Maurer-Cartan element $m_1\in \mG^1\fg^{m_0}$. Equivalently, $m_0+m_1$ is a Maurer-Cartan element in $\fg$.
Let us write
\[
m_0+m_1 = Z + m'  
\]
with $Z\in \mG^1\GGC_H$ and $m'\in \Def(\Graphs_{\bar H,n},W\MOp)$.
Concretely, $m'$ then encodes a morphism of right Hopf $\La$ $W\e_n^c$ comodules
\[
 \Phi: \Graphs_{\bar H,n}^Z \to W\MOp.
\]
To finish the proof of the Theorem we need to show that $\Phi$ is a quasi-isomorphism.
But both sides are $W\e_n^c$-comodules of configuration space type.
Hence we may employ Proposition \ref{prop:configtype qiso condition} to conclude that the map is a quasi-isomorphism if so is the unary part $U(\Phi):\Graphs_H^Z(1)\to W\MOp(1)$.
But by the last assertion of Corollary \ref{cor:GraphsZ config space type} we have that the canonical map of dg vector spaces $H\hookrightarrow \Graphs_H^Z(1)$ is a quasi-isomorphism for any Maurer-Cartan element $Z\in \mG^1\GGC_H$. Furthermore, the composition 
\[
  H\hookrightarrow \Graphs_{\bar H,n}^Z(1) \to W\MOp(1)
\]
factors through $\mG^0\Graphs_{\bar H,n}^Z$, is determined by $m_0$, and agrees with the quasi-isomorphism $H\to \MOp(1) \to W\MOp(1)$ we fixed in the beginning. Hence the conditions of Proposition \ref{prop:configtype qiso condition} are satisfied and the Theorem is proven.
\hfill \qed

\subsection{Proof of Proposition \ref{prop:Hgrg vanishing}}
\label{sec:Hgrg vanishing proof}
For this proof we always use the filtrations $\mG$ by numbers of edges and hence omit the corresponding subscripts in $\hgr_{\mG}=: \hgr$.
First note that 
\[
  \hgr \mG^1\fg^{m_0} = \hgr\mG^1\GGC_{\bar H, n} \oplus \mG^1\Def(\gr \Graphs_{V.n}, W\MOp)
\]
has the differential
\begin{equation}\label{equ:propproofd}
  d = d_{act} + d_{\Def},  
\end{equation}
where 
\[
  d_{act}: \hgr\mG^1\GGC_{\bar H, n}\to \mG^1\Def(\gr \Graphs_{V.n}, W\MOp)[1]
\] 
is the part arising from the action of the hairy graphs on the Maurer-Cartan element $m_0$ and $d_{\Def}$ is the differential on the deformation complex.
Note in particular that the differential on $\hgr\mG^1\GGC_{\bar H, n}$ is zero. Hence we can rephrase the problem as showing that the map $d_{act}$ is a quasi-isomorphism of dg vector spaces.
But the deformation complex is shown in Corollary \ref{cor:Def is HGC} to be quasi-isomorphic to the hairy graph complex $\HGC_{\MOp(1),\bar H,n}$. Furthermore, by the quasi-isomorphism $H\to \MOp(1)$ chosen in our construction we obtain a chain of quasi-isomorphisms of complexes
\begin{equation}\label{equ:HGCzigzig}
  \hgr\GGC_{\bar H, n} = \hgr\HGC_{H,\bar H,n} 
  \to \hgr\HGC_{\MOp(1),\bar H,n} 
  \xrightarrow{\text{Cor. \ref{cor:Def is HGC} }}
  \Def(\gr \Graphs_{V.n}, W\MOp).
\end{equation}
Hence we just need to show that this composition agrees with $d_{act}$, or at least that it induces the same cohomology isomorphism.
To do this we first need to unpack the definition of $d_{act}$. Let $\Gamma\in \hgr\GGC_{\bar H, n}$ be an (automatically closed, since the differential is zero) element of degree $k$. Consider the graded commutative algebra $\cR=\Q[\epsilon]/\epsilon^2$ with $\epsilon$ of degree $-k$. Let $\gr F: \gr \Graphs_{V.n}\to W\MOp$ the morphism of $\La$ Hopf $W\e_n^c$ comodules encoded by our $m_0$. Concretely, $\gr F$ factors through the canonical morphism $\FF_{S(\bar H)}\to W\MOp$, and is in particular zero on all graphs with an edge. Then by definition $d_{act}\Gamma$ is such that 
\[
  m_0 + \epsilon d_{act}\Gamma
  = \pi \circ ( F\circ e^{\epsilon \Gamma\cdot}) \circ \iota
  =
  \pi \circ ( F + \epsilon F( \Gamma\cdot -) ) \circ \iota.
\]
More concretely, let us again identify the (graded vector space underlying the) hairy graph complex with
\[
\hgr\GGC_{\bar H, n} \cong \iHom_{\bbS}(\gr\ppIG_{V,n}, \FF_{H}),
\]
where we (ab)use the notation $\FF_{H}$ for the $\bbS$-module with arity $r$ component $H^{\otimes r}$. Then $F( \Gamma\cdot \iota(-))$ is the composition
\[
  \gr\ppIG_{V,n}
  \xrightarrow{\Gamma\cdot}
  \FF_{H} 
  \hookrightarrow
  \FF_{S(\bar H)}
  \to \FF_{\MOp(1))}
  \to W\MOp.
\]
But this agrees with the morphism obtained from $\Gamma$ by following \eqref{equ:HGCzigzig}, cf. Corollary \ref{cor:Def is HGC}.
\hfill\qed

\section{Automorphism spaces and proof of Theorem \ref{thm:intro ExpAut}}\label{sec:automorphisms}
\subsection{Definition, and general facts}
Generally, let us denote by $\Map'(\cdots)$ the subspace of the mapping space consisting of the connected components of morphisms that induce isomorphisms on cohomology. 
Let $\MOp$ be a $\La$ Hopf $\COp$ comodule, and let $\hat \MOp$ be a fibrant and cofibrant resolution.
Then we define the homotopy automorphism space of $\MOp$ to be
\[
\Aut^h(\MOp) = \Map'(\hat \MOp, \hat \MOp)\subset \Map(\hat \MOp, \hat \MOp).
\]
This is a simplicial monoid, whose homotopy type is independent of the choice of fibrant cofibrant resolution, see \cite[section II.2.2]{FrII}.
Furthermore, since every quasi-isomorphism in particular induces an isomorphism on cohomology, one has a map of simplicial monoids
\[
  \Aut^h(\MOp) \to \GL(H(\MOp(1))),
\]
where we consider the right-hand side as a discrete group. In particular, note that all morphisms in the same connected component of $\Aut^h(\MOp)$ induce the same map on cohomology.
We denote the preimage of the identity by 
\[
  \Aut^h(\MOp)_{[1]} \subset \Aut^h(\MOp).
\]
This should not be confused with the connected component of the identity -- $\Aut^h(\MOp)_{[1]}$ may have multiple connected components.

The homotopy automorphism group is in general difficult to compute, with one complication being that typically we do not have "simple" fibrant and cofibrant replacements available.
However, one has the following recognition principle.
\begin{prop}\label{prop:recognition}
Suppose that $f:\MOp\to \NOp$ is a quasi-isomorphism of $\La$ Hopf $\COp$ comodules, with $\MOp$ cofibrant and $\NOp$ fibrant.
Suppose that $G$ is a simplicial group acting on $\MOp$ in the sense that there is a map of simplicial monoids $G \to \Map(\MOp, \MOp)$.
\begin{itemize}
  \item Then if the induced map (by composition with $f$)
\[
G\to \Map'(\MOp, \NOp)
\]
is a weak equivalence, then $G$ is weakly equivalent to $\Aut^h(\MOp)$ as a simplicial monoid.
\item 
Let $\Map'(\MOp, \NOp)_{[f]}\subset \Map'(\MOp, \NOp)$ be composed of connected components of morphisms that induce the same map $H(\MOp(1))\to H(\NOp(1))$ as $f$. Then if the induced map 
\[
G\to \Map'(\MOp, \NOp)_{[f]}
\]
is a weak equivalence, $G$ is weakly equivalent to $\Aut^h(\MOp)_{[1]}$ as a simplicial monoid.
\end{itemize}
\end{prop}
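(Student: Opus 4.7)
The plan is to construct a zigzag of weak equivalences of simplicial sets connecting $G$ to the endomorphism space of a fibrant-cofibrant replacement of $\MOp$, identify the relevant components, and then upgrade the resulting equivalence to one of simplicial monoids. First I would factor the map $\MOp \to \ast$ as an acyclic cofibration $j: \MOp \hookrightarrow \hat\MOp$ followed by a fibration; since $\MOp$ is cofibrant and $j$ is a cofibration, $\hat\MOp$ is fibrant-cofibrant, and $\Aut^h(\MOp) = \Map'(\hat\MOp, \hat\MOp)$ by definition. Using fibrancy of $\NOp$ I would lift $f$ across $j$ to $\hat f: \hat\MOp \to \NOp$ with $\hat f \circ j = f$; this is a weak equivalence between fibrant objects by 2-of-3. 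I would perform the factorization producing $\hat\MOp$ in a $G$-equivariant fashion by applying the small-object argument to the $G$-extended generating acyclic cofibrations, so that the $G$-action on $\MOp$ extends to $\hat\MOp$, yielding a map of simplicial monoids $G \to \Map'(\hat\MOp, \hat\MOp) = \Aut^h(\MOp)$ landing in $\Aut^h(\MOp)_{[1]}$ (since $G$ is a group and its elements act by weak equivalences).

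Next I would build the zigzag
\[
\Map(\hat\MOp, \hat\MOp) \xrightarrow{\hat f_*} \Map(\hat\MOp, \NOp) \xleftarrow{j^*} \Map(\MOp, \NOp),
\]
in which both maps are weak equivalences of simplicial sets by standard model-category arguments (using the statement at the end of Section 4.8 that these enriched mapping spaces model the derived ones): $\hat f_*$ because $\hat f$ is a weak equivalence between fibrants and $\hat\MOp$ is cofibrant; $j^*$ because $j$ is an acyclic cofibration between cofibrants and $\NOp$ is fibrant. Restriction to $\Map'$ subspaces preserves the weak equivalences by 2-of-3, and tracking the basepoint $\id_{\hat\MOp}$ to $\hat f$ and then to $f$ yields a zigzag of weak equivalences $\Aut^h(\MOp)_{[1]} \simeq \Map'(\MOp, \NOp)_{[f]}$. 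The composite $G \to \Aut^h(\MOp)_{[1]} \to \Map'(\MOp, \NOp)_{[f]}$ coincides, by construction, with the hypothesized weak equivalence, so 2-of-3 shows that $G \to \Aut^h(\MOp)_{[1]}$ is a weak equivalence of simplicial sets, hence (being a monoid map) of simplicial monoids. The first bullet of the proposition is proved identically, without restricting to the $[f]$-component.

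The main obstacle will be the $G$-equivariant choice of fibrant replacement $\hat\MOp$: if the Hopf comodule model structure is not amenable to $G$-equivariant small-object arguments, one cannot directly promote $G \to \Aut^h(\MOp)_{[1]}$ to a map of simplicial monoids. A workaround will be to pass to classifying spaces and compare $BG$ with $B\Aut^h(\MOp)_{[1]}$ via the same zigzag, using that the zigzag is equivariant for the composition action of $\Map(\hat\MOp, \hat\MOp)$; alternatively, one may invoke the $A_\infty$-invariance of the derived endomorphism space to identify $\Aut^h(\MOp)$ with the units of $\End^h(\MOp)$ in the $E_1$-sense.
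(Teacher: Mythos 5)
Your zigzag
\[
\Aut^h(\MOp)=\Map'(\hat{\MOp},\hat{\MOp})\xrightarrow{\ \hat f_*\ }\Map'(\hat{\MOp},\NOp)\xleftarrow{\ j^*\ }\Map'(\MOp,\NOp)\longleftarrow G
\]
is correct at the level of simplicial sets, and tracking the components of $\id$, $\hat f$ and $f$ does identify $\Aut^h(\MOp)_{[1]}$ with $\Map'(\MOp,\NOp)_{[f]}$ up to weak equivalence; the paper itself offers no more than a citation of \cite{FWAut} for this statement, so your write-up is in that respect more explicit. The genuine gap is exactly the one you flag at the end: the upgrade to a weak equivalence of simplicial \emph{monoids}. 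The middle terms of your zigzag are not monoids, so the simplicial-set equivalence does not transport the multiplication, and everything rests on producing an honest map of simplicial monoids $G\to\Map'(\hat{\MOp},\hat{\MOp})$ compatible with the given $G\to\Map'(\MOp,\NOp)_{[f]}$. Your primary device for this -- a $G$-equivariant factorization of $\MOp\to *$ via the small object argument -- is not available off the shelf here: the $G$-action is not a strict categorical action but a map of simplicial monoids into $\Map(\MOp,\MOp)=\Mor_{/\Omega(\Delta^\bullet)}(\MOp\otimes\Omega(\Delta^\bullet),\MOp\otimes\Omega(\Delta^\bullet))$, so an equivariant fibrant replacement would have to be functorial, compatible with base extension along $\Omega(\Delta^\bullet)$, and carried out in a transferred model structure that the paper explicitly warns does not satisfy the simplicial model category axioms. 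None of this is argued, and your two fallbacks (classifying spaces, $A_\infty$-units) are named rather than executed; the classifying-space route in particular requires grouplikeness of $\Aut^h(\MOp)_{[1]}$ and a monoidal enhancement of the very zigzag whose monoidality is in question. The cited reference \cite{FWAut} establishes the monoid-level comparison by an argument that does not require $G$ to act on the fibrant replacement (exploiting the two-sided module structure of $\Map(\MOp,\NOp)$ over the endomorphism monoids), and that is the ingredient your proposal is missing.

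Two smaller points. First, your parenthetical ``since $G$ is a group and its elements act by weak equivalences'' only shows that $G$ lands in $\Map'(\hat{\MOp},\hat{\MOp})$; to land in $\Aut^h(\MOp)_{[1]}$ you must use the hypothesis that $G$ maps into $\Map'(\MOp,\NOp)_{[f]}$, so that $[f]\circ[g]=[f]$ on $H(\MOp(1))$ and hence $[g]=\id$ since $[f]$ is invertible. Second, ``restriction to $\Map'$ subspaces preserves the weak equivalences by 2-of-3'' is a misattribution: what is needed is that $\hat f_*$ and $j^*$ induce bijections on $\pi_0$ matching up the distinguished components, which holds because $\hat f$ and $j$ induce isomorphisms on $H(-(1))$; this is easily repaired but is not the 2-of-3 axiom.
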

\begin{proof}
The first part is true in any model category, and is proven in \cite{FWAut}.
The proof of the second part is a small variation.
\end{proof}



\subsection{Automorphisms of $\Graphs_{V,n}^Z$}
Let $n\geq 2$.
Recall that the dg Lie algebra $\GGC_{V,n}$ acts on the $\La$ Hopf $\e_n^c$ comodule $\Graphs_{V,n}$ by biderivations. Accordingly, for $Z\in \MC(\mG^1\GGC_{V,n})$ a Maurer-Cartan element, the twisted dg Lie algebra 
$\GGC_{V,n}^Z$ acts on $\Graphs_{V,n}^Z$ by biderivations.

However, $\GGC_{V,n}$ is generally not pro-nilpotent:
Recall that we have the semidirect product decomposition
\[
  \GGC_{V,n} = \gl_{V}' \ltimes \mG^1 \GGC_{V,n},
\]
and the part $\gl_{V}'=\iHom(V,V_1)$ is finite dimensional but not nilpotent in general.
We shall hence consider the pro-nilpotent dg Lie subalgebra 
\[
  \GGC_{V,n}^{nil} :=  (\gl_{V}')_{<0} \ltimes \mG^1 \GGC_{V,n} \subset \GGC_{V,n},
\]
where we replace $\gl_{V_1}'$ by its graded Lie subalgebra $(\gl_{V_1}')^{<0}\subset \gl_{V_1}'$ of negative cohomological degree. Note that alternatively we also have the identifaction
\[
  \GGC_{V,n}^{nil} =\mF^1 \GGC_{V,n}  
\]
as the $\mF^1$-part for the filtration $\mF$ of section \ref{sec:FFiltration}, from which pro-nilpotence is again clear. 
The dg Lie algebra $\GGC_{V,n}^{nil}$ still acts on $\Graphs_{V,n}$ by restriction, and the twisted version $\GGC_{V,n}^{nil,Z}$ acts on $\Graphs_{V,n}^Z$. By pro-nilpotence we can then pass to the exponential group and obtain a map of simplicial monoids 
\[
  \Exp_\bullet(\GGC_{V,n}^{nil,Z}) \to 
  \Aut(\Graphs_{V,n}^Z)_{[1]}.
\] 

Our main result is then the following.

\begin{thm}\label{thm:main aut}
  Let $V$ be a finite dimensional positively graded vector space, let $n\in \Z$, and let $Z\in \mG^1\HGC_{V,n} $ be a Maurer-Cartan element.
Then the action of $\GGC_{V,n}^{nil,Z}$ on $\Graphs_{V,n}^Z$ induces a weak equivalence 
\[
  \Exp_\bullet(\GGC_{V,n}^{nil,Z}) \to 
  \Map(\Graphs_{V,n}^Z, W\Graphs_{V,n}^Z)_{[f]},
\] 
where $f:\Graphs_{V,n}^Z\to W\Graphs_{V,n}^Z$ is the canonical quasi-isomorphism.
\end{thm}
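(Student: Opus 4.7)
The plan is to present both sides as Maurer--Cartan spaces of filtered complete $\SL_\infty$-algebras, to realise the action map by an $\SL_\infty$-morphism between them, and to verify that this $\SL_\infty$-morphism is a filtered quasi-isomorphism by passing to the associated graded for the edge filtration $\mG$. This parallels the proof of Theorem \ref{thm:main}, but the conclusion sought is stronger: one needs to control \emph{all} homotopy groups of the mapping space, not just produce a single Maurer--Cartan element.

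First, by Proposition \ref{prop:Def Linfty} the mapping space is identified with the Maurer--Cartan simplicial set of the curved $\SL_\infty$-algebra $\Def(\Graphs_{V,n}^Z,W\Graphs_{V,n}^Z)$; twisting by the Maurer--Cartan element corresponding to the canonical quasi-isomorphism $f$ yields an honest $\SL_\infty$-algebra $\Def(\Graphs_{V,n}^Z\xrightarrow{f} W\Graphs_{V,n}^Z)$, the basepoint component of whose $\MC_\bullet$ matches the component $[f]$ of the mapping space. Next, by Proposition \ref{prop:linfty from g on M action}, the good biderivation action of $\GGC_{V,n}^{nil,Z}$ on $\Graphs_{V,n}^Z$ produces an $\SL_\infty$-morphism
\[
\Phi : (\GGC_{V,n}^{nil,Z})_{ab} \longrightarrow \Def(\Graphs_{V,n}^Z\xrightarrow{f} W\Graphs_{V,n}^Z)
\]
compatible with the edge filtrations $\mG^\bullet$ on both sides, such that the induced map on Maurer--Cartan spaces coincides with the action map $\Exp_\bullet(\GGC_{V,n}^{nil,Z})=\MC_\bullet((\GGC_{V,n}^{nil,Z})_{ab})\to \Map(\Graphs_{V,n}^Z, W\Graphs_{V,n}^Z)_{[f]}$.

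It therefore suffices to show that $\Phi$ is a filtered quasi-isomorphism of complete $\SL_\infty$-algebras. Indeed, under pro-nilpotence/finite type conditions satisfied here, a filtered quasi-isomorphism induces a weak equivalence on $\MC_\bullet$: higher homotopy groups are cohomology groups of the twisted $\SL_\infty$-algebras (see section \ref{sec:Linfty_MC}) and the $\pi_0$-level statement follows from the standard obstruction argument, whose obstructions live in cohomology groups that match on both sides. To verify the filtered quasi-isomorphism property, I would pass to the $\mG$-associated graded. Since $Z\in \mG^1\GGC_{V,n}$, the twist by $Z$ contributes only to strictly higher filtration degree, so $\hgr_{\mG}(\GGC_{V,n}^{nil,Z})_{ab}=\hgr_{\mG}\GGC_{V,n}^{nil}$; similarly, the basepoint $f$ factors through $\mG^0\Graphs_{V,n}^Z=\FF_{S(V)}$ (by Lemma \ref{lem:grG0 map}), so the twist of the deformation complex by $f$ leaves the $\mG$-associated graded differential unchanged, and one obtains $\hgr_{\mG}\Def(\Graphs_{V,n}^Z\xrightarrow{f} W\Graphs_{V,n}^Z)\cong \Def(\gr_{\mG}\Graphs_{V,n},W\gr_{\mG}\Graphs_{V,n})$ up to shift.

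On the associated graded, I would then apply Corollary \ref{cor:Def is HGC} to identify the target with the hairy graph complex $\hgr_{\mG}\HGC_{\NOp,V,n}$, for $\NOp=\coInd^{\Com^c}_{W\e_n^c}(W\Graphs_{V,n}^Z)$, and then use Corollary \ref{cor:GraphsZ config space type} --- asserting that $\Graphs_{V,n}^Z$ is of configuration space type and that $V_1\to \Graphs_{V,n}^Z(1)$ is a quasi-isomorphism --- together with functoriality of $\HGC_{-,V,n}$ in the first slot, to further reduce, up to quasi-isomorphism, to the target $\hgr_{\mG}\HGC_{V_1,V,n}$. As a graded vector space there is a tautological identification $\hgr_{\mG}\GGC_{V,n}^{nil}\cong \hgr_{\mG}\HGC_{V_1,V,n}$ via the finite-dimensional duality between $S(V^*)$-decorations on internal vertices of one complex and $S(V)$-decorations on the other, and tracing the definitions of Propositions \ref{prop:Def Linfty} and \ref{prop:linfty from g on M action} one sees that $\Phi$ on associated graded is precisely this identification --- this is essentially the computation already carried out at the end of Section \ref{sec:Hgrg vanishing proof} in the proof of Proposition \ref{prop:Hgrg vanishing}.

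The main obstacle I anticipate is not any single calculation but rather the careful bookkeeping needed in Step 3: namely, justifying that a filtered quasi-isomorphism of filtered complete $\SL_\infty$-algebras of the kind considered here induces a weak equivalence on \emph{all} connected components of the Maurer--Cartan spaces, including $\pi_0$ on the specified component $[f]$. The higher homotopy groups are directly controlled by cohomology via \cite{Berglund}, but matching the sets of gauge-equivalence classes requires a surjectivity argument on $\pi_0$ which, as in the proof of Theorem \ref{thm:main}, reduces by obstruction theory (via Corollary \ref{cor:MCexistence1}) to the same edge-filtration cohomology computation used above. Once that bookkeeping is in place the rest of the argument is a direct translation of the techniques of Sections \ref{sec:def cx}, \ref{sec:graphical cooperads}, and the proof of Theorem \ref{thm:main}.
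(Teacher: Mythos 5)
Your overall strategy coincides with the paper's: realise both sides as Maurer--Cartan spaces, use Proposition \ref{prop:linfty from g on M action} to encode the action as an $\SL_\infty$-morphism $\Phi$, and reduce to a cohomology comparison on the $\mG$-associated graded, which is indeed the computation from the proof of Proposition \ref{prop:Hgrg vanishing}. However, there are two genuine gaps. First, you identify the target $\Map(\Graphs_{V,n}^Z, W\Graphs_{V,n}^Z)_{[f]}$ with "the basepoint component" of $\MC_\bullet$ of the twisted deformation complex. This is incorrect: $\Map(\cdots)_{[f]}$ consists of \emph{all} components inducing the same map on cohomology as $f$, and may be disconnected. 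The paper instead cuts out an $\SL_\infty$-subalgebra $\Def(\Graphs_{V,n}^Z\to W\Graphs_{V,n}^Z)' = \pi^{-1}((\gl_V')_{<0})$ (via the projection $\pi$ to the linear part in arity one) whose full Maurer--Cartan space is exactly $\Map(\cdots)_{[f]}$, and separately checks (Lemma \ref{lem:restricted phi}) that $\Phi$ lands in this subalgebra. Without this step your map has the wrong target, and the zero-edge graded piece of the comparison (where $(\gl_V')_{<0}$ must be matched against $\gr^0_{\mG}\Def(\cdots)'$ rather than all of $\gl_V'$) cannot even be stated; the paper handles it by a separate exact-sequence argument in Proposition \ref{prop:grPhi qiso}, which your "tautological identification" of associated gradeds glosses over.

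Second, the passage from "filtered quasi-isomorphism on $\hgr_{\mG}$" to "weak equivalence of $\MC_\bullet$" is not covered by the standard Goldman--Millson theorem of Dolgushev--Rogers, because the edge filtration on both sides starts at $\mG^0$ with a nontrivial quotient $\mG^0/\mG^1$ (the edgeless graphs), whereas that theorem requires $\mF^1\fg=\fg$. Your suggestion to settle $\pi_0$ "via Corollary \ref{cor:MCexistence1}" only yields existence of Maurer--Cartan elements, not injectivity and surjectivity of the induced map on gauge-equivalence classes. The paper supplies a dedicated two-filtration variant (Theorem \ref{thm:GM tech} in the appendix), using $\mF$ for convergence and $\mG$ for the cohomology comparison, together with the hypotheses $\fg^0=\mG^1\fg^0$ and $\mu_k(\fh,\dots,\fh)\subset\mG^1\fh$ for $k\geq 2$; verifying these hypotheses for $\GGC_{V,n}^{nil,Z}$ and $\Def(\cdots)'$ is a nontrivial part of the proof that your proposal does not supply.
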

Together with the second part of Proposition \ref{prop:recognition} we hence obtain the following corollary, that is identical to Theorem \ref{thm:intro ExpAut}.
\begin{cor}\label{cor:ExpAut}
We have a weak equivalence of simplicial monoids 
\[
  \Exp_\bullet(\GGC_{V,n}^{nil,Z}) \simeq \Aut^h(\Graphs_{V,n}^Z)_{[1]}.
\]
\end{cor}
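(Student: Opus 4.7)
The strategy is to deduce Corollary \ref{cor:ExpAut} from Theorem \ref{thm:main aut} by Proposition \ref{prop:recognition}(b), and to establish Theorem \ref{thm:main aut} by $\SL_\infty$-obstruction theory, in close analogy with the proof of Theorem \ref{thm:main} above. For the reduction, one applies Proposition \ref{prop:recognition} with $\MOp = \Graphs_{V,n}^Z$ (cofibrant as a $\La$ Hopf $\e_n^c$-comodule in view of the quasi-freeness discussed in section \ref{sec:Graphs freeness}), $\NOp = W\Graphs_{V,n}^Z$ (fibrant by Proposition \ref{prop:W construction}), $f$ the canonical quasi-isomorphism, and $G = \Exp_\bullet(\GGC_{V,n}^{nil,Z})$. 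The action on $\MOp$ used here is the biderivation action from section \ref{sec:FFiltration}, twisted by $Z$; it is the simplicial group factor through which the action on $\Aut^h(\MOp)_{[1]}$ is defined.

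To prove Theorem \ref{thm:main aut}, the plan is to invoke Proposition \ref{prop:linfty from g on M action} with $\fg = \GGC_{V,n}^{nil,Z}$, producing a filtered flat $\SL_\infty$-morphism
\[
\Phi : (\GGC_{V,n}^{nil,Z})_{ab} \longrightarrow \Def(\Graphs_{V,n}^Z \xrightarrow{f} W\Graphs_{V,n}^Z)
\]
whose induced map on Maurer-Cartan simplicial sets agrees with the action-by-composition map of the theorem, via the identifications $\Exp_\bullet(\fg) \cong \MC_\bullet(\fg_{ab})$ and $\Map(\MOp, W\MOp) \cong \MC_\bullet(\Def(\MOp, W\MOp))$ (Proposition \ref{prop:Def Linfty} applied to the $W\e_n^c$-comodule pair, which is good in the sense of section \ref{subsec:def cx}). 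A standard criterion for $\SL_\infty$-morphisms between pro-nilpotent $\SL_\infty$-algebras (cf.\ \cite{FWAut, Berglund}) then reduces the statement to showing that the linear part of $\Phi$ is a quasi-isomorphism onto the subcomplex of the target whose Maurer-Cartan elements lie in the component $[f]$.

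The key cohomological computation then runs parallel to section \ref{sec:Hgrg vanishing proof}. I would equip both the source and target with the descending complete filtration $\mG^\bullet$ by number of edges (on the target, this is induced by the filtration on $\Graphs_{V,n}^Z$ via the deformation-complex construction), and verify that $\Phi$ preserves them. On the associated graded the twist by $Z$ drops out of the differential, and Corollary \ref{cor:Def is HGC} identifies the target with $\hgr \HGC_{\NOp, V, n}$, where $\NOp = \coInd_{W\e_n^c}^{\Com^c}(W\Graphs_{V,n}^Z)$. Since $\Graphs_{V,n}^Z$ is of configuration space type with $H(\Graphs_{V,n}^Z(1)) = V_1$ (Corollary \ref{cor:GraphsZ config space type}), the natural map $\FF_{V_1} \to \NOp$ is a quasi-isomorphism, so the target has the same cohomology as $\hgr \HGC_{V_1, V, n}$. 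This last complex agrees as a graded vector space with $\hgr \GGC_{V,n}$ (compare the Remark at the end of section \ref{sec:HGC}), and a direct computation of the linear part of $\Phi$ (performed exactly as at the end of section \ref{sec:Hgrg vanishing proof}) shows that it corresponds to the natural inclusion $\hgr \GGC_{V,n}^{nil} \hookrightarrow \hgr \GGC_{V,n}$.

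The main obstacle I anticipate is the final bookkeeping: verifying that the missing summand $\hgr \GGC_{V,n} / \hgr \GGC_{V,n}^{nil}$, which is $(\gl_V')_{\geq 0} = \iHom(V, V_1)_{\geq 0}$, corresponds precisely to the components of $\Map(\MOp, W\MOp)$ that must be \emph{excluded} from $\Map(\MOp, W\MOp)_{[f]}$. The degree-zero part of $\gl_V'$ encodes endomorphisms of $\overline{V_1} \subset V_1$, which act as $\GL(V_1)$ on the arity-one cohomology and hence parametrize the non-identity cohomology-level automorphisms, while the strictly positive-degree part contributes cocycles of wrong degree that do not survive to $\pi_0 \Map_{[f]}$. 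Once this matching has been carried out, a standard spectral sequence argument lifts the quasi-isomorphism on associated gradeds to a quasi-isomorphism of filtered complexes, and Berglund's theorem on homotopy groups of Maurer-Cartan spaces then yields the desired weak equivalence of simplicial monoids.
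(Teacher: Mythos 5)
Your overall architecture is the same as the paper's: reduce the Corollary to the statement that $\Exp_\bullet(\GGC_{V,n}^{nil,Z}) \to \Map(\Graphs_{V,n}^Z, W\Graphs_{V,n}^Z)_{[f]}$ is a weak equivalence via Proposition \ref{prop:recognition}(b), build the comparison map as the $\SL_\infty$-morphism $\Phi$ of Proposition \ref{prop:linfty from g on M action}, identify $\Map(-,-)_{[f]}$ with the Maurer--Cartan space of the subalgebra $\Def(\Graphs_{V,n}^Z\to W\Graphs_{V,n}^Z)'=\pi^{-1}((\gl_V')_{<0})$, check that $\Phi$ lands there, and compare associated gradeds for the edge filtration $\mG$ using Corollary \ref{cor:Def is HGC} and the computation of section \ref{sec:Hgrg vanishing proof}. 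Your bookkeeping of the excluded summand $(\gl_V')_{\geq 0}$ also matches the paper's Proposition \ref{prop:grPhi qiso}, which handles it by the short exact sequence comparing $\gr^0_{\mG}$ of source and target.

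The one genuine gap is the step where you invoke ``a standard criterion for $\SL_\infty$-morphisms between pro-nilpotent $\SL_\infty$-algebras'' and a ``standard spectral sequence argument'' to pass from the quasi-isomorphism on $\mG$-associated gradeds to a weak equivalence of Maurer--Cartan spaces. The Dolgushev--Rogers/Goldman--Millson theorem as usually stated requires the filtration controlling the cohomological comparison to begin at filtration degree one, i.e.\ $\fh=\mF^1\fh$, and that is false here: the edge filtration on $\Def(\Graphs_{V,n}^Z\to W\Graphs_{V,n}^Z)'$ has a nontrivial $\mG^0$-part (the zero-edge graphs), so the target is not pro-nilpotent for the filtration you use to compare cohomology. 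Berglund's theorem handles $\pi_k$ for $k\geq 1$ at a basepoint, but the $\pi_0$-statement is exactly where pro-nilpotence enters, and it cannot be waved away. The paper resolves this by proving a two-filtration variant of Goldman--Millson (Theorem \ref{thm:GM tech} in the appendix), with $\mF$ ensuring convergence, $\mG$ governing the comparison, and the extra hypotheses $\fg^0=\mG^1\fg^0$ and $\mu_k(\fh,\dots,\fh)\subset\mG^1\fh$ for $k\geq 2$; the latter is a nontrivial combinatorial verification about the deformation complex (internally connected zero-edge graphs cannot produce quadratic terms in the generating function). You would need to either prove such a variant or verify its hypotheses; citing the standard statement is not sufficient.
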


The results above are formulated for the exponential groups of our dg Lie algebras. Some readers might prefer to work with classifying spaces instead. However,
for $\fg$ a filtered complete dg Lie algebra satisfying suitable conditions and $m\in \MC(\fg)$ a Maurer-Cartan element we have weak equivalences 
\[
B\Exp_\bullet(\fg^m)  
\simeq 
\MC_\bullet(\fg)_m 
\simeq 
\MC_\bullet(\tru(\fg^m)),
\]
where the truncation $\tru(\fg^m)$ consists of the elements of $\fg$ of degree $<0$ and the closed elements in degree zero.
For example, suitable conditions for the above statement to hold are that $\fg$ is the dual of a filtered Lie coalgebra $\fc$, and has finite dimensional quotients $\mF^p\fg/\mF^{p+1}\fg$, see \cite{WillwacherRecollections}.
These conditions are satisfied in our case and hence we can conclude:

\begin{cor}\label{cor:BAut}
We have that 
\[
  \MC_\bullet(\GGC_{V,n}^{nil})_Z
 \simeq  
\MC_\bullet(\tru(\GGC_{V,n}^{nil,Z}))
 \simeq  
 B\Exp_\bullet(\GGC_{V,n}^{nil,Z})
 \simeq 
 B\Aut^h(\Graphs_{V,n}^Z)_{[1]},
\]
with $\MC_\bullet(\GGC_{V,n}^{nil})_Z\subset \MC_\bullet(\GGC_{V,n}^{nil})$ being the connected component of $Z$.
\end{cor}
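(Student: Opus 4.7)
The corollary asserts three successive weak equivalences, and my plan is to read them off from results already in hand rather than proving anything really new.

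First, I would obtain the rightmost equivalence $B\Exp_\bullet(\GGC_{V,n}^{nil,Z}) \simeq B\Aut^h(\Graphs_{V,n}^Z)_{[1]}$ by applying the classifying space functor to the weak equivalence of simplicial monoids established in Corollary \ref{cor:ExpAut}. Since $\Exp_\bullet(\GGC_{V,n}^{nil,Z})$ is a simplicial group and the equivalence passes to group-completion, $B$ preserves it.

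Second, the first two equivalences are instances of the general fact recalled immediately before the statement: for a filtered complete dg Lie algebra $\fg$ that is dual to a filtered Lie coalgebra $\fc$ with finite dimensional graded quotients $\mF^p\fg/\mF^{p+1}\fg$, one has
\[
B\Exp_\bullet(\fg^m) \simeq \MC_\bullet(\fg)_m \simeq \MC_\bullet(\tru(\fg^m))
\]
for any $m\in\MC(\fg)$, cf.\ \cite{WillwacherRecollections}. I would apply this to $\fg = \GGC_{V,n}^{nil}$ with $m=Z$. The second equivalence above corresponds to a standard gauge fixing on the Maurer--Cartan simplicial set of the connected component of $Z$, while the first is (a form of) the Getzler--Hinich nerve theorem.

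Third, I would verify that $\GGC_{V,n}^{nil}$ satisfies the hypotheses of the general result. By construction in section \ref{sec:HGC upper 2}, $\GGC_{V,n}$ is defined as the dual of the dg Lie coalgebra $\GG_{V,n}$, and $\GGC_{V,n}^{nil}=\mF^1\GGC_{V,n}$ is dual to the corresponding quotient coalgebra $\GG_{V,n}/\mF^0$. The filtration $\mF$ on $\GG_{V,n}$ from section \ref{sec:FFiltration} weighs $3|V|$ times the number of edges plus total $V$-decoration degree (with hair decorations subtracted), and since $V$ is finite dimensional and positively graded, only finitely many isomorphism classes of (connected, hairy, $\geq 3$-valent) graphs have bounded $\mF$-weight. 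Hence each graded piece $\mF^p\GG_{V,n}/\mF^{p+1}\GG_{V,n}$ is finite dimensional, which dualizes to the required finite dimensionality of $\mF^p\GGC_{V,n}^{nil}/\mF^{p+1}\GGC_{V,n}^{nil}$. Completeness and pro-nilpotence of the filtration on $\GGC_{V,n}^{nil}$ under $\mF$ are already recorded at the end of section \ref{sec:FFiltration}.

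The hardest conceptual ingredient, namely the identification of the Torelli homotopy automorphism monoid with an exponential group, is entirely absorbed into Corollary \ref{cor:ExpAut}; what remains is purely the bar/nerve/Maurer--Cartan formalism for pro-nilpotent dg Lie algebras. I expect no substantive obstacle, the only bookkeeping item to confirm being that the two possible conventions for the truncation $\tru$ (quotient by boundaries in degree zero vs.\ pullback) agree in our setting, which follows from the finite-type hypothesis on $V$.
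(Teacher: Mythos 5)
Your proposal is correct and follows essentially the same route as the paper: the corollary is obtained by applying $B$ to the equivalence of Corollary \ref{cor:ExpAut} and invoking the general statement $B\Exp_\bullet(\fg^m)\simeq \MC_\bullet(\fg)_m\simeq \MC_\bullet(\tru(\fg^m))$ from \cite{WillwacherRecollections} for $\fg=\GGC_{V,n}^{nil}$, $m=Z$. Your explicit verification that the $\mF$-graded quotients of $\GGC_{V,n}^{nil}$ are finite dimensional (via finiteness of graphs of bounded edge number and decoration degree) is a detail the paper only asserts, and is a welcome addition.
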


\subsection{Proof of Theorem \ref{thm:main aut}}

To prove Theorem \ref{thm:main aut} we will first reformulate the problem as a comparison of Maurer-Cartan spaces.
To this end we adapt the discussion of the previous section.
First note that there is a morphism of $\SL_\infty$-algebras
\[
\pi : \Def(\Graphs_{V,n}^Z\to W\Graphs_{V,n}^Z)
\to \gl_{V}'[1]
\]
by restricting the morphism to the arity one generators $V$ and projecting the result to the at most linear part in $V$. 
We then define the $\SL_\infty$-subalgebra 
\begin{multline*}
\Def(\Graphs_{V,n}^Z\to W\Graphs_{V,n}^Z)'
:=
\pi^{-1}((\gl_V')_{<0})
\\\subset 
\Def(\Graphs_{V,n}^Z\to W\Graphs_{V,n}^Z) \cong \iHom_{\bbS}(\pICG_{V,n}, \oW\Graphs_{V,n}^Z),
\end{multline*}
to consist of those morphisms $f$ such that $\pi(f)$ is of negative degree.

\begin{lemma}
One has the following commutative diagram of simplicial sets
\[
  \begin{tikzcd}
    \MC_\bullet (\Def(\Graphs_{V,n}^Z\to W\Graphs_{V,n}^Z)') \ar{r}{=} \ar{d}
    & \Map(\Graphs_{V,n}^Z,W\Graphs_{V,n}^Z)_{[f]} \ar{d}\\
    \MC_\bullet (\Def(\Graphs_{V,n}^Z\to W\Graphs_{V,n}^Z)) \ar{r}{=}
    & \Map(\Graphs_{V,n}^Z,W\Graphs_{V,n}^Z)
  \end{tikzcd}\, .
\]
\end{lemma}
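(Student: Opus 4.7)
The plan is to treat the three structural features of the diagram separately. The bottom horizontal identification is essentially immediate from Proposition~\ref{prop:Def Linfty} applied with ground ring $\Omega(\Delta^\bullet)$: Maurer--Cartan elements in $\Def(\Graphs_{V,n}^Z,W\Graphs_{V,n}^Z)\hotimes\Omega(\Delta^k)$ correspond bijectively to morphisms of $\La$ Hopf $W\e_n^c$-comodules $\Graphs_{V,n}^Z\otimes\Omega(\Delta^k)\to W\Graphs_{V,n}^Z\otimes\Omega(\Delta^k)$, and twisting at the distinguished morphism $f$ replaces the curved $\SL_\infty$-structure by the honest one whose zero Maurer--Cartan element corresponds to $f$ itself. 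The left vertical arrow is the inclusion on Maurer--Cartan spaces induced by the inclusion of sub-$\SL_\infty$-algebras $\Def'\hookrightarrow \Def$, and commutativity of the square is then immediate by construction.

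Thus the substantive content is the top horizontal identification. First I would verify that the map $\pi$ is genuinely a morphism of $\SL_\infty$-algebras, with $\gl_V'[1]$ given its natural (abelian) structure. This should follow from inspecting the definition of $\pi$, ``restrict to the arity-one generators $V$ and project to the at most linear part in $V$'', and noting that the higher $\SL_\infty$-operations on $\Def$ can only contribute to the image through their linear part on the $V$-factor, which by construction vanishes after the projection. Granting this, $\Def'=\pi^{-1}((\gl_V')_{<0})$ is a bona fide sub-$\SL_\infty$-algebra, and the left vertical map is well defined.

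The key step is then to identify the Maurer--Cartan elements of $\Def'\hotimes\Omega(\Delta^k)$ with the prescribed connected components. A degree-zero element $\alpha$ satisfies $\pi(\alpha)\in(\gl_V')_{<0}\hotimes\Omega(\Delta^k)$ iff $\pi(\alpha)=0$, since the degree-zero component of $(\gl_V')_{<0}$ is trivial. By Corollary~\ref{cor:GraphsZ config space type}, the canonical inclusion $V\hookrightarrow V_1=H(\Graphs_{V,n}^Z(1))$ realises $V$ as a complement to $\Q\cdot 1$ in the arity-one cohomology, and by construction $\pi(\alpha)$ reads off precisely the change, relative to $f$, of the induced map on this $V$-summand of cohomology. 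Hence $\pi(\alpha)=0$ is equivalent to $H^\bullet(f+\alpha)=H^\bullet(f)$, i.e., to the corresponding morphism $F$ lying in $\Map(\Graphs_{V,n}^Z,W\Graphs_{V,n}^Z)_{[f]}$. Since this argument applies levelwise in $\Omega(\Delta^\bullet)$, we obtain the desired identification of simplicial sets. The main obstacle will be the careful bookkeeping needed to interpret $\pi$ in the twisted picture; once that is done, the remaining assertions are formal.
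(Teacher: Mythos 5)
Your proposal follows essentially the same route as the paper: the bottom identification and the commutativity of the square are formal consequences of Proposition~\ref{prop:Def Linfty} and the inclusion $\Def'\subset\Def$, and the substantive point is that membership of a Maurer--Cartan element in $\Def(\Graphs_{V,n}^Z\to W\Graphs_{V,n}^Z)'$ is detected exactly by the induced map on the $V$-summand of arity-one cohomology, which is what the subscript $[f]$ records.

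One step is stated imprecisely, though, and as written it is false for simplices of positive dimension. You claim that a degree-zero $\alpha$ satisfies $\pi(\alpha)\in(\gl_V')_{<0}\hotimes\Omega(\Delta^k)$ iff $\pi(\alpha)=0$, and likewise that the component condition $[\ev_a X]=[f]$ is equivalent to $\pi(\alpha)=0$. For $k\geq 1$ neither biconditional holds: a total-degree-zero element of $\gl_V'\hotimes\Omega(\Delta^k)$ can have nonzero components of the form (negative-degree element of $\gl_V'$)$\,\otimes\,$(positive-degree form), and these lie in $(\gl_V')_{<0}\hotimes\Omega(\Delta^k)$ automatically; on the other side, $[\ev_a X]=[f]$ only constrains the $0$-form part of $X$, since evaluation at a point kills all higher forms. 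The two errors cancel, so your conclusion is correct, but the honest statement --- and the one the paper makes --- is that both conditions are equivalent to the vanishing of the degree-zero, $0$-form component of $\pi(X)$, the higher-form parts of $X$ having coefficients in $\Def'$ for degree reasons. With that correction your argument coincides with the paper's proof.
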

\begin{proof}
  A $k$-simplex 
  \begin{align*}
    X\in
    \Map_k(\Graphs_{V,n}^Z,W\Graphs_{V,n}^Z)
    &=
    \MC_k (\Def(\Graphs_{V,n}^Z\to W\Graphs_{V,n}^Z))
    \\&=
    \MC (\Def(\Graphs_{V,n}^Z\to W\Graphs_{V,n}^Z) \hotimes \Omega(\Delta^k))
  \end{align*}
  is in the simplicial subset $
    \Map(\Graphs_{V,n}^Z,W\Graphs_{V,n}^Z)_{[f]}$
  iff the following condition holds:
  \begin{itemize}
    \item
  For some (and hence any) point $a\in \Delta^n$, the morphism 
  \[
  \ev_a X : \Graphs_{V,n}^Z\to W\Graphs_{V,n}^Z,
  \]
  obtained by restriction induces a map $[\ev_a X]$ on cohomology that satisfies $[\ev_a X]=[f]$.
  \end{itemize}
  Interpreting $\ev_a X$ as an element of $
    \MC (\Def(\Graphs_{V,n}^Z\to W\Graphs_{V,n}^Z)) \cong \Hom_{S}(\pICG_{V,n}, \oW\Graphs_{V,n}^Z)$
  this means that the linear part $\pi(\ev_a X)$ vanishes.

  But this equivalent to the 0-form part of $X$ being in 
  $\Def(\Graphs_{V,n}^Z\to W\Graphs_{V,n}^Z)'$.
  The higher form parts have coefficients in this subspace anyway by degree reasons, so that in fact 
  \[
    X\in \MC_k\Def(\Graphs_{V,n}^Z\to W\Graphs_{V,n}^Z)'). 
  \]
  Conversely, any such $X$ clearly satisfies the condition above and hence the Lemma follows.
\end{proof}

Next we consider the $\SL_\infty$-morphism 
\[
  \Phi: (\GGC_{V,n}^{Z,nil})_{ab} \to \Def(\Graphs_{V,n}^Z\to W\Graphs_{V,n}^Z)
\]
obtained from the action of $\GGC_{V,n}^{Z,nil}$ on $\Graphs_{V,n}^Z$
via Proposition \ref{prop:linfty from g on M action}.

\begin{lemma}\label{lem:restricted phi}
The $\SL_\infty$-morphism $\Phi$ above takes values in the $\SL_\infty$-subalgebra $\Def(\Graphs_{V,n}^Z\to W\Graphs_{V,n}^Z)'$, i.e., it is an $\SL_\infty$-morphism
\[
  \Phi: (\GGC_{V,n}^{Z,nil})_{ab} \to \Def(\Graphs_{V,n}^Z\to W\Graphs_{V,n}^Z)'.
\]
\end{lemma}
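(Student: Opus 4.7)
The plan is to analyse the $\SL_\infty$-morphism $\Phi$ explicitly via Lemma \ref{lem:linfty from g on h action}. Up to the shift centering the codomain at $\alpha_f$, it is the exponential of the derivation $x\cdot$ applied to the Maurer--Cartan element $\alpha_f\in\Def(\Graphs_{V,n}^Z,W\Graphs_{V,n}^Z)$ corresponding to the canonical quasi-isomorphism $f:\Graphs_{V,n}^Z\to W\Graphs_{V,n}^Z$. Since $\pi$ is defined by restricting a morphism to the arity-one generators $V$ and then projecting the image to the at-most-linear-in-$V$ part, computing $\pi\circ\Phi$ reduces to analysing how $x\in \GGC_{V,n}^{nil,Z}$ acts on an arity-one generator $v\in V\subset \Graphs_{V,n}^Z(1)$.

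The key combinatorial step is that this action factors through the $\gl_V'$-component of $x$. Indeed, a single vertex carrying one $V$-decoration and no edges admits only a degenerate cut at its decoration, whose upper part is a one-vertex one-hair graph of the form \eqref{equ:onevert HG}; by the description of the action in section \ref{sec:HGC upper 2}, such an upper graph pairs non-trivially only with the one-vertex one-hair elements of $\GGC_{V,n}$, i.e., exactly with $\gl_V'$. Writing $x=x_{\mathrm{lin}}+x_{\mathrm{high}}$ with $x_{\mathrm{lin}}\in (\gl_V')_{<0}$ and $x_{\mathrm{high}}\in \mG^1\GGC_{V,n}$, one therefore obtains $x\cdot v=x_{\mathrm{lin}}(v)\in V_1$. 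The unit $1\in V_1$ admits neither a decoration nor any half-edges to cut, so $x\cdot 1=0$; hence $x$ acts on all of $V_1$ as $x_{\mathrm{lin}}\circ p_V$, where $p_V:V_1\twoheadrightarrow V$ is the canonical projection.

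Iterating, $\exp(-x\cdot)(v)=v-x_{\mathrm{lin}}(v)+\tfrac{1}{2}x_{\mathrm{lin}}(p_V(x_{\mathrm{lin}}(v)))-\cdots$ stays in $V_1$, and each nonconstant summand is a composition of maps drawn from $(\gl_V')_{<0}$ (interleaved with the degree-zero projections $p_V$), hence itself of strictly negative degree. Because $f$ is the canonical $W$-construction quasi-isomorphism, its restriction to $V_1\subset \Graphs_{V,n}^Z(1)$ equals the tautological inclusion into $V_1\subset W\Graphs_{V,n}^Z(1)$ modulo components invisible to $\pi_V$, so $\pi(\Phi(x))(v)=\exp(-x\cdot)(v)-v$. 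Reading off the multilinear components in $x$, we conclude that $\pi\circ \Phi_k$ takes values in $(\gl_V')_{<0}$ for every $k\geq 1$, while $\Phi_0=0$ since the codomain is the twisted, flat $\SL_\infty$-algebra. This gives $\Phi\subset \Def'=\pi^{-1}((\gl_V')_{<0})$, as claimed.

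The main technical point to nail down carefully is the assertion about $f$ in the previous paragraph: namely that the cogenerator projection used in the definition of the deformation complex does not introduce any linear-in-$V$ contribution beyond the tautological inclusion $V_1\hookrightarrow W\Graphs_{V,n}^Z(1)$. This should follow directly from the explicit form of the $W$-construction (Proposition \ref{prop:W construction}): $W\Graphs_{V,n}^Z$ is cofree as a $W\e_n^c$-comodule, and the canonical map $\Graphs_{V,n}^Z\to W\Graphs_{V,n}^Z$ acts on arity-one generators as the identity on cogenerators up to corrections built out of positive-arity cocomposition elements, which are precisely what $\pi_V$ discards.
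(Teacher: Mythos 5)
Your proof is correct and rests on the same two pillars as the paper's own argument: only the edge-free (one-vertex, no-edge) part of the inputs $\gamma_i$ can contribute to the $\gl_{V}'$-valued projection $\pi$ of $\Phi_k(\gamma_1,\dots,\gamma_k)$, and that part lies in $(\gl_{V}')_{<0}$ by the very definition of $\GGC_{V,n}^{nil}$, so a degree count concludes. The paper obtains the first pillar more cheaply, simply from the fact that the action of $\GGC_{V,n}$ on $\Graphs_{V,n}$ preserves the number of edges (no explicit cut analysis needed), and the ``technical point'' you flag about $f|_{V_1}$ being tautological is in fact superfluous: the composite of the cogenerator projection with the projection to $V_1$ applied to $f$ is in any case a degree-zero map, and precomposing it with your strictly negative-degree terms $\exp(-x\cdot)(v)-v$ already lands in $(\gl_{V}')_{<0}$.
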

\begin{proof}
  Let $\gamma_1,\dots, \gamma_N\in \GGC_{V,n}^{Z,nil}$ and $x = \Phi_n(\gamma_1,\dots, \gamma_N)$ be the image of the $N$-ary part of the $\SL_\infty$-morphism.
  Then we have to check that the linear part $\pi(x)$ of $x$ has no part of degree 0.
  However, irrespective of the precise combinatorial form of $\Phi$ the image $x$ is built from $\gamma_1,\dots, \gamma_N$ using the action of $\HGC_{V,n}$ on $\Graphs_{V,n}$. This action preserves the numbers of edges in graphs.
  Hence potential contributions to the linear part of $x$ must come from the linear parts (corresponding to graphs without edges) of $\gamma_1,\dots, \gamma_N$. But $\GGC_{V,n}^{Z,nil}\subset \GGC_{V,n}^{Z,nil}$ is defined by removing graphs without edge in non-negative degree.
  Hence we may assume $\gamma_1,\dots, \gamma_N\in \GGC_{V,n}^{Z,nil}$ all have degrees $\leq 0$. Then $x$ has degree $\leq 1-N\leq 0$. Hence $\pi(x)$ is concentrated in negative degrees as desired. 
\end{proof}

\begin{prop}
  \label{prop:grPhi qiso}
Let $\Phi_1$ be the linear part of the $\SL_\infty$-morphism $\Phi$ of Lemma \ref{lem:restricted phi}.
Then the induced map on the associated edge-graded complexes
\[
\hgr\Phi_1 : \hgr_{\mG} \GGC_{V,n}^{nil,Z} \to \hgr_{\mG} \Def(\Graphs_{V,n}^Z\to W\Graphs_{V,n}^Z)'
\]
is a quasi-isomorphism.
\end{prop}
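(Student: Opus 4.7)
The plan is to pass to the associated graded with respect to the edge filtration $\mG$ and identify both sides as (versions of) a common hairy graph complex, under which $\hgr \Phi_1$ becomes the canonical combinatorial identification. First I would check that on $\hgr_\mG \GGC_{V,n}^{nil,Z}$ the differential is identically zero: each piece of the twisted differential $\delta_{split} + \delta_{join} + [Z,-]$ strictly raises the total edge count. Specifically, $\delta_{split}$ inserts one new internal edge; a $k$-fold $\delta_{join}$-fusion removes $k$ hairs but introduces an internal vertex with $k$ edges plus one new hair, for a net gain of one edge; and bracketing with $Z \in \mG^1 \GGC_{V,n}$ glues hairs to vertices, shifting the filtration degree up by $e_Z \geq 1$. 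Hence $\hgr_\mG \GGC_{V,n}^{nil,Z}$ is simply the graded vector space of hairy graphs with the ``nil'' cut-off on the zero-edge/arity-one part, equipped with zero differential.

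Next I would compute the target. By Corollary \ref{cor:Def is HGC} applied to $\MOp = \Graphs_{V,n}^Z$, which is of configuration space type by Corollary \ref{cor:GraphsZ config space type}, there is an isomorphism
\[
  \hgr_\mG \Def(\Graphs_{V,n}^Z \to W\Graphs_{V,n}^Z) \cong \hgr \HGC_{\NOp, V, n},
\]
with $\NOp = \coInd_{W\e_n^c}^{\Com^c} W\Graphs_{V,n}^Z$, together with a quasi-isomorphism from $\hgr \HGC_{\Graphs_{V,n}^Z(1), V, n}$. Combined with the quasi-isomorphism $V_1 \to \Graphs_{V,n}^Z(1)$ supplied by Corollary \ref{cor:GraphsZ config space type} (whose underlying $\hCom_\infty$-structure is described by Lemma \ref{lem:V1 hCom infty}), one obtains a zigzag of quasi-isomorphisms reducing the target to $\hgr \HGC_{V_1, V, n}$, viewed as a graded vector space of hairy graphs. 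The subspace condition defining $\Def'$ (that the linear-in-$V$ part have negative cohomological degree) translates to the analogous constraint on $\hgr \HGC_{V_1, V, n}$, matching precisely the ``nil'' cut-off on the source side.

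Finally, one must identify $\hgr \Phi_1$ as the canonical isomorphism between the two models. Recall from the proof of Lemma \ref{lem:linfty from g on h action} that $\Phi_1(\gamma)$ sends a generator $\Gamma \in \pIG_{V,n}$ to $f(\gamma \cdot \Gamma)$, where $f$ is the canonical quasi-isomorphism $\Graphs_{V,n}^Z \to W\Graphs_{V,n}^Z$. The action $\gamma \cdot \Gamma$ sums over internal cuts of $\Gamma$, pairing the resulting upper subgraph with $\gamma \in \GG_{V,n}^*$. Passing to $\hgr_\mG$, only the leading-order cut contributes, namely the one in which the upper subgraph is combinatorially equal to $\gamma$ itself; the pairing returns a sign times $\Gamma$ with $\gamma$ ``removed'', and composing with $f$ and the identification with $\hgr \HGC_{V_1, V, n}$ yields exactly the canonical image of $\gamma$. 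The main obstacle will be bookkeeping through the several identifications involved (the coinduction-corestriction adjunction defining $\NOp$, the isomorphism of Corollary \ref{cor:Def is HGC}, and the quasi-iso $V_1 \hookrightarrow \Graphs_{V,n}^Z(1)$), but since all these comparisons happen on the associated graded, where the multiplicative and $\hCom_\infty$ structure on $V_1$ plays no role beyond its underlying vector space, the verification reduces to an essentially tautological matching of labelled graphs.
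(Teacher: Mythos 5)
Your proposal follows essentially the same route as the paper: both reduce the claim to identifying $\hgr_{\mG}\Def(\Graphs_{V,n}^Z\to W\Graphs_{V,n}^Z)$ with a hairy graph complex via Corollary \ref{cor:Def is HGC} (exactly the computation in the proof of Proposition \ref{prop:Hgrg vanishing}, which the paper simply cites), observe that the source has vanishing differential on the associated graded, and check that $\hgr\Phi_1$ is the canonical combinatorial map. The one place where you are slightly less careful than the paper is the edge-degree-zero piece: $\gr_{\mG}^0$ of the deformation complex is strictly larger than $\gl_V'$, so the primed and nil cut-offs do not literally ``match'' as subspaces --- the paper finishes this step by a five-lemma argument on the two short exact sequences with common cokernel $(\gl_V')_{\geq 0}$.
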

\begin{proof}
We check this separately on the pieces $\gr_{\mG}^k(-)$ with $k=0$ and with $k\geq 1$ edge.

First note that for $k\geq 1$ by construction
\[
  \gr_{\mG}^k \Def(\Graphs_{V,n}^Z\to W\Graphs_{V,n}^Z)'
  =
  \gr_{\mG}^k \Def(\Graphs_{V,n}^Z\to W\Graphs_{V,n}^Z).
\]
Furthermore we have 
\[
  \gr_{\mG}^k \GGC_{V,n}^{nil,Z}
  =
  \begin{cases}
    \gr_{\mG}^k \GGC_{V,n} & \text{for $k\geq 1$} \\
    (\gl_{V}')_{<0} & \text{for $k=0$}
  \end{cases}
\]
with vanishing differential.

Next reconsider the proof of Proposition \ref{prop:Hgrg vanishing} in section \ref{sec:Hgrg vanishing proof} above, and specialize it to $\MOp =\Graphs_{V,n}$. 
Then we have computed there that for all $k$
\[
  \gr_{\mG}^k \GGC_{V,n} \cong
  H(\gr_{\mG}^k \Def(\Graphs_{V,n}\to W\Graphs_{V,n}) ).
\]
The isomorphism is given by the action of a $\GGC_{V,n}$-element on the element of the deformation complex corresponding to the morphism 
$\gr_{\mG}\Graphs_{V,n}\to \gr_{\mG}W\Graphs_{V,n}$ that sends graphs with edges to zero and is the natural map on graphs without edges.
But this agrees with our map $\gr_{\mG}\Phi_1$.
Hence we may readily conclude that $\gr_{\mG}\Phi$ induces a quasi-isomorphism on all graded pieces $\gr_{\mG}^k(-)$ for $k\geq 1$.
It hence suffices to consider the graded pieces $\gr_{\mG}^0(-)$.
But this fits into a commutative diagram
\[
\begin{tikzcd}
  \gr_{\mG}^0 \GGC_{V,n}^{nil}
  =(\gl_{V}')_{<0}
  \ar{r}\ar{d}{\gr_{mG}^0\Phi_1}
  &
  \gr_{\mG}^0 \GGC_{V,n}
  =\gl_{V}'
  \ar{d}\ar{r}
  &
  (\gl_{V}')_{\geq 0}
  \ar{d}{=}
  \\
  \gr_{\mG}^0 \Def(\Graphs_{V,n}^Z\to W\Graphs_{V,n}^Z)'
  \ar{r}
  &
  \gr_{\mG}^0 \Def(\Graphs_{V,n}^Z\to W\Graphs_{V,n}^Z)
  \ar{r}
  &
  (\gl_{V}')_{\geq 0}
\end{tikzcd}\ .
\]
The rows are exact, and the middle vertical arrow is again a quasi-isomorphism by the proof of Proposition \ref{prop:Hgrg vanishing}.
Hence $\gr_{mG}^0\Phi_1$ must also be a quasi-isomorphism.
\end{proof}


\begin{proof}[Proof of Theorem \ref{thm:main aut}]
By Proposition \ref{prop:recognition} it suffices to check that the $\SL_\infty$-morphism $\Phi$ of Lemma \ref{lem:restricted phi} induces a weak equivalence of the associated Maurer-Cartan spaces.

To show this we would like to apply our variant of the Goldman-Millson Theorem, Theorem \ref{thm:GM tech}.
We will check first that it is applicable.
First, the $\SL_\infty$ algebras $\fg$ and $\fh$ of the Theorem will be $\fg=(\GGC_{V,n}^{nil,Z})_{ab}$ and $\fh=\Def(\Graphs_{V,n}^Z\to W\Graphs_{V,n}^Z)'$.
The filtrations $\mF^\bullet$ and $\mG^\bullet$ on these objects required by Theorem \ref{thm:GM tech} will be our filtrations $\mF^\bullet$ and $\mG^\bullet$ of sections \ref{sec:FFiltration} and \ref{sec:Gfiltration}.
The $\SL_\infty$-morphism $F$ of the Theorem is $\Phi$ as above. 
We check that all conditions of Theorem \ref{thm:GM tech} are satisfied (refer to the statement of the Theorem for the numbering):

\begin{enumerate}
  \item $\fg$ is clearly abelian and equipped with the two filtrations as required.
  The no-edge part is $\gr_{\mG}^0 \GGC_{V,n}^{nil}
  =(\gl_{V}')_{<0}$ and has no elements of degree zero, hence $
    \fg^0 = \mG^1\fg^0$ as required.
  \item $\fh$ is also equipped with two compatible complete filtrations as required.
    \item We check that $\SL_\infty$ operations $\mu_k$ satisfy \eqref{equ:mu in G1} for $k\geq 2$:
    Indeed, consider again the definition of the $\SL_\infty$ structure via the generating function $\mU(f)$ in \eqref{equ:Def gen func}.
    A possible term in a higher bracket in $\fh$ that is not in $\mG^1\fh$ corresponds to a nonzero term in $\mU(f)(\Gamma)$ that contains $f$ at least quadratically, where $\Gamma\in \Graphs_{V,n}^Z$ is an internally connected graph with zero edges. But internally connected graphs with zero edges consist just of external vertices plus one decoration in $V$. The cooperadic coaction cannot produce from such a graph one that is not internally connected. But that would be required to produce a dependence on $f$ that is at least quadratic, cf. the proof of Lemma \ref{lem:freecofree}. Hence \eqref{equ:mu in G1} follows.

    \item It is clear that $\mG^p\fg \subset \mF^p\fg$ and $\mG^p\fh \subset \mF^p\fh$, hence the filtrations are compatible in the required sense.
    \item The morphism $F=\Phi$ is compatible with the filtrations $\mF$ and $\mG$ by construction, since it uses only the $\La$ Hopf comodule structure on $\Graphs_{V,n}$ and the $\HGC_{V,n}^{nil}$-action, which are compatible with the filtrations.
    \item The linear component $F_1=\Phi_1$ induces an isomorphism on the associated $\mG$-graded complexes by Proposition \ref{prop:grPhi qiso}.
  \end{enumerate}
  Hence Theorem \ref{thm:GM tech} is applicable in our situation and we conclude that the map 
  \[
   \Exp_\bullet(\GGC_{V,n}^{nil,Z})=\MC_\bullet((\GGC_{V,n}^{nil,Z})_{ab})
   \to 
   \MC_\bullet(\Def(\Graphs_{V,n}^Z\to W\Graphs_{V,n}^Z)') =\Map(\Graphs_{V,n}^Z,W\Graphs_{V,n}^Z)
  \]
  is a weak equivalence of simplicial sets, so that Theorem \ref{thm:main aut} follows.

\end{proof}


\section{Two smaller variants of $\GGC_{\bar H,n}$: $\GC_{H,n}$ and $\GCp_{H,n}$}\label{sec:GC from HGC}

In this section we introduce two smaller versions of our main dg Lie algebra $\GGC_{\bar H,n}$, that arise in connection to the study of configuration spaces of orientable manifolds that are either closed, or have a boundary that is an $n-1$-sphere.
In these two cases either the associated diagonal element in $H(M)\otimes H(M)$, or the reduced diagonal element in $\bar H(M)\otimes \bar H(M)$ is non-degenerate.
This then allows us to simplify the associated graph complexes significantly.
In this section we shall introduce the algebraic preliminaries and the smaller graph complexes. The connection to configuration spaces is then worked out in section \ref{sec:applications} below.

\subsection{Maurer Cartan element associated to a non-degenerate diagonal class}\label{sec:ZDeltadef}
Let again $V$ be a finite dimensional positively graded vector space, $V_1:=\Q1\oplus V$ and $n\geq 2$ an integer.
We shall assume that a non-degenerate $(-1)^n$-symmetric element of degree $n$
\[
\Delta \in V_1 \otimes V_1
\]
is given, that we will call the diagonal.
The symmetry condition means that $\tau \Delta= (-1)^n\Delta$, with $\tau : V_1 \otimes V_1\to V_1 \otimes V_1$ the transposition of factors.
It shall be convenient to fix a graded basis $\{e_0:=1, e_1,\dots, e_N=:\omega\}$ of $V_1$, and let $g_{ij}$ be the matrix corresponding to the diagonal element $\Delta$, so that 
\[
\Delta = \sum_{i,j=0}^N g_{ij} e_i\otimes e_j,
\]
with $g_{ij} = (-1)^{n+|e_i||e_j|}g_{ji}$.

Note that the positivity of the grading on $V$ and the existence of the non-degenerate element $\Delta$ imply that $V$ is concentrated in degrees $1,\dots, n$, and one-dimensional in degree $n$. We assume that the numbering on the basis elements is such that $e_N=\omega$ is a basis of the degree $n$ subspace $V^n$, and one normalizes such that $g_{0N}=1$. Then, then by symmetry and degree reasons, 
\begin{align*}
  g_{N0}&=(-1)^n & g_{0j}&=g_{j0}=g_{N0}=g_{0N} = 0 \quad\text{ for $j=1,\dots,N-1$.}
\end{align*}

Let $\{f_0,\dots,f_N\}$ be the corresponding dual basis of $V_1^*$, and let $h_{ij}$ be the inverse matrix of $g_{ij}$.
To $\Delta$ we can then associate the degree 1 element 
\begin{equation}\label{equ:ZDeltapic}
Z_\Delta = 
\underbrace{
  \frac 12
\sum_{i,j=0}^N g_{ij} 
\begin{tikzpicture} 
\node (v) at (0,0) {$e_i$};
\node (w) at (1,0) {$e_j$};
\draw (v) edge[bend left] (w);
\end{tikzpicture}
}_{=: L_\Delta}
-
\underbrace{
  \frac 12
\sum_{i,j=1}^{N-1}
(-1)^{|f_j|}
h_{ij}
\begin{tikzpicture}[yshift=-.5cm]
 \node (v) at (0,0) {$\omega$};
\node[int,label=90:{$f_if_j$}] (w) at (0,1) {};
\draw (v) edge (w);
\end{tikzpicture}
 }_{=:Y_\Delta}
\in \GGC_{V,n}.
\end{equation}
To fix signs, we order here the decorations from left to right and top to bottom as displayed, and orient the edges to the right and downwards.
Note that the factor $(-1)^{|f_i|}$ ensures that for odd $n$ the summand $Y_\Delta$ is non-trivial, despite the matrix $h_{ij}$ being anti-symmetric.

\begin{lemma}
  Let $V=\oplus_{j>0} V^j$ be a positively graded finite dimensional vector space such that $V_1=\Q1\oplus V$ has vanishing Euler characteristic, i.e., $1+\sum_j(-1)^j\vdim(V_j)=0$. Let $\Delta\in V_1\otimes V_1$ be a nondegenerate element as above. Then the element $Z_\Delta\in \GGC_{V,n}$ is a Maurer-Cartan element, i.e., $\delta Z_\Delta+\frac 1 2 [Z_\Delta,Z_\Delta]=0$. 
\end{lemma}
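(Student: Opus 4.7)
The plan is to decompose $Z_\Delta = L_\Delta - Y_\Delta$ by complexity (cf.\ Section~\ref{sec:complexity filtration}), where $L_\Delta$ has complexity one (a single edge with two hairs, no internal vertex) and $Y_\Delta$ has complexity zero (one trivalent internal vertex decorated by $f_i f_j$ and one $\omega$-decorated hair). Since both $\delta=\delta_{split}+\delta_{join}$ and the Lie bracket on $\GGC_{V,n}$ preserve the complexity grading, the Maurer-Cartan equation $\delta Z_\Delta + \tfrac 1 2 [Z_\Delta, Z_\Delta] = 0$ splits into three independent equations, in complexities $0,1,2$, which I verify separately.

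The complexity-$2$ piece $\tfrac 1 2 [L_\Delta, L_\Delta] = 0$ is immediate: the Lie-admissible product $L_\Delta * L_\Delta$ requires matching hairs of one copy to vertex decorations (or internal vertices via the $1$-hair rule) of the other, and $L_\Delta$ has neither. The complexity-$0$ piece $\delta Y_\Delta = \tfrac 1 2 [Y_\Delta, Y_\Delta]$ holds because both sides vanish: $\delta_{split}(Y_\Delta)=0$ by valency (the trivalent vertex cannot be split without producing a sub-$3$-valent vertex), $\delta_{join}(Y_\Delta)=0$ since there is only one hair to fuse with, and $Y_\Delta * Y_\Delta=0$ because $\omega = e_N$ can only pair with $f_N$, which is excluded from the decorations $f_pf_q$ of $Y_\Delta$ (summed over $p,q\in\{1,\dots,N-1\}$).

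The main step is complexity~$1$: since $Y_\Delta * L_\Delta = 0$ (the hair $\omega$ has no vertex decoration of $L_\Delta$ to pair with, and $\omega \neq 1$ so the $1$-hair rule does not apply), it suffices to verify $\delta L_\Delta = L_\Delta * Y_\Delta$. On the LHS, $\delta_{join}(L_\Delta)$ fuses the two hairs and yields only ``self-loop with $\omega$-hair'' graphs, the nonvanishing contributions coming from the $(i,j)=(0,N)$ and $(N,0)$ summands in $L_\Delta$, with coefficients $g_{0N}=1$ and $g_{N0}=(-1)^n$. On the RHS, I enumerate partial matchings of the two hairs of $L_\Delta$ (each of which may be paired regularly with a decoration of $Y_\Delta$, attached via the $1$-hair rule when $e_i=1$, or left unmatched) against the two vertex decorations of $Y_\Delta$, yielding several distinct output graph types whose coefficients I group and simplify.

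The matching of the coefficients of the common ``self-loop with $\omega$-hair'' graph reduces, after careful sign bookkeeping using the graded symmetries $g_{ij} = (-1)^{n+|e_i||e_j|}g_{ji}$ and the analogue for $h^{ij}$, to an identity expressing the partial graded trace $\sum_{i,j\in\{1,\ldots,N-1\}} (-1)^{|e_i|} g_{ij} h^{ji}$ in terms of its ``boundary'' contributions from the excluded indices $0$ and $N$. An explicit evaluation of the boundary terms yields $1+(-1)^n$, while the full trace $\sum_{i,j=0}^N (-1)^{|e_i|} g_{ij} h^{ji} = \sum_i (-1)^{|e_i|} = \chi(V_1)$ vanishes by hypothesis; hence the interior trace equals $-(1+(-1)^n)$, precisely matching the LHS coefficient up to the expected sign and prefactor. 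The remaining graph families on the RHS (path graphs and graphs with two $\omega$-hairs) are shown to cancel by analogous computations combining the symmetries of $g, h$ with $\chi(V_1)=0$. The principal obstacle is the meticulous sign bookkeeping across the many matching cases and graph symmetries, but the essential algebraic input throughout is the identification of the Euler characteristic $\chi(V_1)$ as the trace quantity controlling the ``extraneous'' coefficients.
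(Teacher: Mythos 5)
Your overall strategy---splitting the Maurer--Cartan equation by complexity, disposing of the complexity-$0$ and complexity-$2$ pieces via $[L_\Delta,L_\Delta]=0$, $\delta Y_\Delta=0$, $[Y_\Delta,Y_\Delta]=0$, and reducing the remaining identity to $\delta L_\Delta=[L_\Delta,Y_\Delta]=L_\Delta * Y_\Delta$---is a perfectly good reorganization of the paper's direct computation, and your identification of the Euler characteristic of $V_1$ as the quantity controlling the self-loop coefficient is correct. However, there is a genuine gap in the complexity-one step: your computation of $\delta L_\Delta$ is incomplete. The differential on $\GGC_{V,n}$ is the restriction of $\delta_{split}'+[M_2,-]$, and $[M_2,L_\Delta]=L_\Delta * M_2$ contains, besides the hair-fusion terms producing the self-loop-with-$\omega$-hair graph, also the terms in which the $1$-decorated hair of the summands of $L_\Delta$ with $(i,j)=(0,N)$ and $(N,0)$ is attached---via the rule that $1$-decorated hairs of $\Gamma_1$ may be connected to \emph{any} vertex of $\Gamma_2$---to the decorated one-vertex components $\sum_k(e_k)\text{--}(f_k)$ of $M_2$. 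These produce $3$-valent path graphs with hairs $e_k$ and $\omega$ and a single vertex decorated by $f_k$; they are exactly the second family of terms in the paper's formula for $\delta L_\Delta$. The schematic description of $\delta_{join}$ as ``fusing hairs into a new undecorated vertex'' does not capture these contributions, which is what led you astray.

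The omission is not cosmetic. The path graphs arising in $L_\Delta * Y_\Delta$ from matching one hair of $L_\Delta$ to one decoration $f_i$ of $Y_\Delta$ carry coefficients of the form $\sum_j g_{ij}h_{jk}=\delta_{ik}$ (the non-degeneracy of $\Delta$, not the Euler characteristic, is what evaluates them), so they certainly do not vanish and cannot be cancelled using only the symmetries of $g,h$ and $\chi(V_1)=0$, contrary to your final claim; they cancel precisely against the path-graph terms missing from your $\delta L_\Delta$. One can see a priori that something must be off: at complexity one the only available contributions are $\delta L_\Delta$ and $-[L_\Delta,Y_\Delta]$, so any nonvanishing graph family in the latter must be matched by one in the former. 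The remaining pieces of your argument---the complexity-$0$ and complexity-$2$ checks, the vanishing of $Y_\Delta * L_\Delta$, and the trace identity $\sum_{i,j=1}^{N-1}(-1)^{|e_j|}g_{ij}h_{ji}=\chi(V_1)-1-(-1)^n=-(1+(-1)^n)$---are sound and agree with the paper.
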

\begin{proof}
  In the following, we shall order all decorations in pictures from left to right and top to bottom, and likewise all edges.
  Edges are oriented from top to bottom, and from left to right.
With these conventions we first compute 
\begin{align*}
\delta L_\Delta &= 
-
\begin{tikzpicture}[yshift=-.5cm]
  \node (v) at (0,0) {$\omega$};
  \node[int] (w) at (0,1) {};
  \draw (v) edge (w) (w) edge[loop above] (w);
\end{tikzpicture}
+
\sum_{i=0}^{N-1}
\begin{tikzpicture}[yshift=-.5cm]
  \node (v) at (0,0) {$e_i$};
  \node (vv) at (1,0) {$\omega$};
  \node[int,label=90:{$f_i$}] (w) at (0.5,1) {};
  \draw (v) edge (w) (w) edge (vv);
\end{tikzpicture} .
\end{align*}
while $\delta Y_\Delta = 0$.
Furthermore,
\begin{align*}
[L_\Delta, Y_\Delta] 
&=
-
\underbrace{ \frac 14 \sum_{i,j=1}^{N-1} 2g_{ij}h_{ij} (-1)^{|e_i||e_j|+|e_j|+n} }_{=(*)} 
\begin{tikzpicture}[yshift=-.5cm]
  \node (v) at (0,0) {$\omega$};
  \node[int] (w) at (0,1) {};
  \draw (v) edge (w) (w) edge[loop above] (w);
\end{tikzpicture}
+
\sum_{i,j=1}^{N-1} (\text{const.})
\underbrace{
\begin{tikzpicture}[yshift=-.5cm]
  \node (v) at (0,0) {$\omega$};
  \node (vv) at (1,0) {$\omega$};
  \node[int,label=90:{$f_i f_j$}] (w) at (0.5,1) {};
  \draw (v) edge (w) (w) edge (vv);
\end{tikzpicture} 
}_{=0 \text{ by symmetry} }
-
\sum_{i,k=1}^N
\underbrace{\frac 1 4
\sum_{j=1}^N 4 g_{ij} h_{jk}(-1)^{|f_i| + n +n+ |e_i||f_k|}
}_{(**)}
\begin{tikzpicture}[yshift=-.5cm]
  \node (v) at (0,0) {$e_i$};
  \node (vv) at (1,0) {$\omega$};
  \node[int,label=90:{$f_k$}] (w) at (0.5,1) {};
  \draw (v) edge (w) (w) edge (vv);
\end{tikzpicture} 
\\&=
\begin{tikzpicture}[yshift=-.5cm]
  \node (v) at (0,0) {$\omega$};
  \node[int] (w) at (0,1) {};
  \draw (v) edge (w) (w) edge[loop above] (w);
\end{tikzpicture}
-
\sum_{i=1}^{N-1} 
\begin{tikzpicture}[yshift=-.5cm]
  \node (v) at (0,0) {$e_i$};
  \node (vv) at (1,0) {$\omega$};
  \node[int,label=90:{$f_i$}] (w) at (0.5,1) {};
  \draw (v) edge (w) (w) edge (vv);
\end{tikzpicture} 
.
\end{align*}
For $n$ odd the tadpole graph vanishes by symmetry. For $n$ even its 
numeric prefactor $(*)$ is 
\[
  \frac 14 
  \sum_{i,j=1}^{N-1} 2g_{ij}h_{ij} 
  (-1)^{|e_i||e_j|+|e_j|} 
=
\frac 12
\sum_{i,j=1}^{N-1}
g_{ij}h_{ji} (-1)^{|e_j|}
=
\frac 12
\sum_{j=1}^{N-1}
\sum_{i=0}^{N}
g_{ij}h_{ji} (-1)^{|e_j|}
=
\frac 12
\sum_{j=1}^{N-1}
(-1)^{|e_j|}
= -1,
\]
using the Euler characteristic condition.
Similarly, the coefficient $(**)$ is 
\[
\frac14 \sum_{j=1}^N 4 g_{ij} h_{jk}(-1)^{|f_i| + n +n+ |e_i||f_k|}
=
\delta_{ik} (-1)^{2|f_i|}= \delta_{ik}.
\]
Trivially, we have that $[Y_\Delta, Y_\Delta]=0$, since $f_N$ does not appear as a decoration in $Y_\Delta$.
Hence it follows that indeed $\delta Z_\Delta+\frac 1 2 [Z_\Delta,Z_\Delta]=0$.
\end{proof}

\subsection{Definition of the dg Lie algebra $\GCx_{V, n}$}
Let $V$, $\Delta\in V_1\otimes V_1$ and the Maurer-Cartan element $Z_\Delta$ be as in the previous subsection. We consider the twisted dg Lie algebra $\GGC_{V,n}^{Z_\Delta}$, and in particular 3 specific dg Lie subalgebras.

\subsubsection{An orthosymplectic Lie algebra}

First, define $\gl_{V_1}=\iHom(V_1,V_1)$ to be the graded Lie algebra consisting of linear maps $V_1\to V_1$.
As before we use the notation
\[
\gl'_V \cong \iHom(V,V_1) \cong \{\phi\in \gl_{V_1}\mid \phi(1)=0 \} \subset \gl_{V_1}
\]
be the graded Lie subalgebra composed of linear maps that annihilate the special element $1\in V_1$. 
We understand $\gl'_{V}\subset \HGC_{V,n}$ as a graded Lie subalgebra, consisting of the graphs with no vertices and no edges, just hairs decorated from above (by $V^*$) and below (by $V_1$). 
Let 
\[
  \osp_{V} = \{\phi\in  \gl'_V\mid 
  \phi\cdot \Delta = \sum_{i,j=0}^N g_{ij}(\phi(e_i)\otimes e_j + (-1)^{|\phi||e_i|} e_i\otimes \phi(e_j))=0\}
  \subset \gl'_V\subset \HGC_{V,n}
\]
be the graded Lie subalgebra of morphisms that annihilate the diagonal element $\Delta$. Note that $\osp_{V}$ depends on the choice of $\Delta$, though we do not indicate this in the notation.

\begin{lemma}\label{lem:LDelta phi commute}
The element $L_\Delta\in \GGC_{V,n}$ defined in \eqref{equ:ZDeltapic} commutes with all elements $\phi \in \osp_V\subset \GGC_{V,n}$,
\[
  [\phi, L_\Delta] =0.
\]
\end{lemma}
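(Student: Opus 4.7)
The plan is to compute the bracket directly from the combinatorial description of the Lie-admissible product $*$ on $\GGC_{V,n}$. Recall that $\Gamma_1 * \Gamma_2$ is a sum over partial matchings of hairs of $\Gamma_1$ (carrying $V_1$-decorations) with $V^*$-decorations in $\Gamma_2$, gluing the hair to the corresponding vertex and multiplying by the natural $V_1 \otimes V^*$-pairing, with the special convention that a hair decorated by $1 \in V_1$ may attach to any vertex. Hence $[\phi, L_\Delta] = \phi * L_\Delta - (-1)^{|\phi|} L_\Delta * \phi$ decomposes into two pieces I will treat separately.

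First I would show $\phi * L_\Delta = 0$. The element $L_\Delta = \frac12 \sum_{i,j} g_{ij}\, \ell_{ij}$ is a linear combination of the two-vertex graphs $\ell_{ij}$ consisting of two external vertices decorated by $e_i, e_j \in V_1$ joined by a single edge. In particular, $L_\Delta$ carries no $V^*$-decorations at all, so there are no matchings for the unique hair of $\phi$ with decorations of $L_\Delta$. The only potentially non-empty contributions come from the special $1$-hair rule, but any such attachment would raise the valency of an external vertex of $\ell_{ij}$ above $1$, hence vanishes in the quotient defining $\GGC_{V,n}$. Thus $\phi * L_\Delta = 0$.

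Next I would compute $L_\Delta * \phi$. Writing $\phi = \sum_{\alpha,\beta} \phi_\alpha^\beta\, \gamma_\alpha^\beta$ where $\gamma_\alpha^\beta$ denotes the one-vertex graph with upper decoration $f_\alpha \in V^*$ and lower decoration $e_\beta \in V_1$, the only admissible matchings pair one of the two hairs $e_i$ or $e_j$ of a summand $\ell_{ij}$ with the unique $V^*$-decoration $f_\alpha$ of $\gamma_\alpha^\beta$. The pairing contributes $f_\alpha(e_i) = \delta_{i\alpha}$ or $f_\alpha(e_j) = \delta_{j\alpha}$, and the resulting graph is $\ell_{?,?}$ with the matched decoration replaced by $e_\beta$. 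Summing with appropriate Koszul signs yields
\[
L_\Delta * \phi \;=\; \frac12 \sum_{i,j}\, g_{ij}\,\bigl( \ell_{\phi(e_i), e_j} \;+\; (-1)^{|\phi||e_i|}\, \ell_{e_i, \phi(e_j)} \bigr),
\]
where on the right-hand side we interpret the arguments as elements of $V_1$ via $\phi\colon V \to V_1$. Under the natural identification of $V_1^{\otimes 2}$ with the relevant subspace of two-vertex graphs in $\GGC_{V,n}$ (sending $x \otimes y$ to the graph $\ell_{x,y}$), this expression is exactly the image of $\phi \cdot \Delta \in V_1 \otimes V_1$, which vanishes by the defining condition of $\osp_V$. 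Hence $L_\Delta * \phi = 0$, and combining both vanishings gives $[\phi, L_\Delta] = 0$.

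The main obstacle is bookkeeping: making sure that signs, the symmetrization factor $\tfrac12$ of $L_\Delta$, and the identification of two-vertex graphs with $V_1 \otimes V_1$ combine correctly so that $L_\Delta * \phi$ matches the full expression $\phi \cdot \Delta$ appearing in the definition of $\osp_V$. One should also verify that the special case where $\phi(e_i)$ has a component along $1 \in V_1$ is handled correctly, since such components correspond to graphs with an external vertex carrying the trivial decoration, which is identified with an undecorated external vertex in the quotient $\GGC_{V,n}$; this identification is precisely compatible with the $V_1$-component of $\phi\cdot\Delta$ and hence causes no issue.
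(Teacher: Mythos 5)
Your proof is correct and follows essentially the same route as the paper, whose entire argument is the one-line observation that the decorations of $L_\Delta$ form exactly $\Delta$, which $\phi$ annihilates by the definition of $\osp_V$; your computation of $L_\Delta * \phi$ is precisely the unpacking of that sentence, and the vanishing of $\phi * L_\Delta$ (which the paper leaves implicit) is handled correctly since $L_\Delta$ carries no $V^*$-decorations and no internal vertices for a $1$-hair to attach to.
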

\begin{proof}
The decorations of the graph in $L_\Delta$ are precisely $\Delta$, which is annihilated by $\phi$ by definition.
\end{proof}

We shall also define two notable graded Lie subalgebras of $\osp_V$.
First, the degree zero elements $\osp_V^0\subset \osp_V$ are an ordinary Lie algebra. This graded Lie algebra is isomorphic to a direct sum of an orthogonal Lie algebra $\mathfrak o(p,q)$ or a symplectic Lie algebra and general linear Lie algebras, hence the notation.
Furthermore, for the topological applications the negatively graded part 
\[
\osp_V^{nil} = \{\phi \in \osp_V\mid |\phi|<0\}  
\]
is of relevance. It is a nilpotent graded Lie algebra.

\subsubsection{Definition if $\GC_{V, n}$}
\label{sec:GCVn def}

Next, let $\tGGC_{V,n}$ be an extended version of $\GGC_{V,n}$ that includes graphs with zero hairs. 
In other words, we may repeat the definition of $\GGC_{V,n}$ in sections \ref{sec:HGC upper 1} and \ref{sec:HGC upper 2}, except that we also allow graphs without hairs.
Concretely, as graded vector space we have
\[
  \tGGC_{V,n} = \GC_{V,n}[1] \oplus \GGC_{V,n},
\]
where the new piece $\GC_{V,n}[1]$ is the summand spanned by graphs with no hairs. Again $\tGGC_{V,n}$ is a dg Lie algebra, with the differential and Lie bracket following the same combinatorial formulas as that of $\GGC_{V,n}$.

Now we split the differential on the $Z_\Delta$-twisted complex $\tGGC_{V,n}^{Z_{\Delta}}$
\[
\delta^{Z_\Delta} =\delta + [Z_\Delta,-]
=
\delta_0+\delta_1  
\]
into a piece $\delta_0$ that leaves the number of hairs constant or decreases it and a piece $\delta_1$ that adds one hair.
\begin{lemma}\label{lem:GCdeltas}
For any $\Gamma\in \GC_{V,n}[1]\subset \tGGC_{V,n}$ we have that 
\[
  \delta_1^2\Gamma = (\delta_0\delta_1+\delta_1\delta_0)\Gamma = \delta_0^2 \Gamma = 0.
\]
\end{lemma}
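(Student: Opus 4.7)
The plan is to refine the identity $(\delta^{Z_\Delta})^2 = 0$, which holds on all of $\tGGC_{V,n}$ because $Z_\Delta$ is a Maurer-Cartan element, by decomposing according to the hair count. As a graded vector space $\tGGC_{V,n}$ decomposes as a direct sum indexed by the number of hairs, and the vanishing of $(\delta^{Z_\Delta})^2 \Gamma$ will be separated into hair-count components, each of which must vanish independently.

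First I would analyze how each piece of $\delta^{Z_\Delta} = \delta_{split} + \delta_{join} + [L_\Delta, -] - [Y_\Delta, -]$ changes the hair count. From the combinatorial descriptions in sections \ref{sec:HGC upper 1}--\ref{sec:HGC upper 2}: $\delta_{split}$ preserves it; $\delta_{join}$ strictly decreases it; in $[L_\Delta, -]$ only the product $L_\Delta * \Gamma$ survives, since $L_\Delta$ has no internal vertex decorations for hairs of $\Gamma$ to match against, and matching $m \in \{1, 2\}$ of the two hairs of $L_\Delta$ with decorations of $\Gamma$ changes the hair count by $2 - m$; finally in $[Y_\Delta, -]$ both $Y_\Delta * \Gamma$ and $\Gamma * Y_\Delta$ either preserve or reduce hair count. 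Therefore $\delta^{Z_\Delta}$ splits as $\delta_0 + \delta_1$, with $\delta_1$ precisely the $m = 1$ contribution of $[L_\Delta, -]$, which always increases hair count by exactly one, and $\delta_0$ the remainder, which preserves or reduces. The key additional observation is that $\delta_0$ applied to a graph with exactly one hair preserves hair count: the two hair-reducing contributions to $\delta_0$ both require at least two hairs, since a fusion under $\delta_{join}$ of a single hair would create a univalent internal vertex, violating the $\geq 3$-valence convention, and the hair-reducing contribution of $\Gamma * Y_\Delta$ arises only when two hairs of $\Gamma$ are matched with the two $V^*$-decorations of $Y_\Delta$'s vertex.

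With these ingredients in hand, applying $(\delta_0 + \delta_1)^2 \Gamma = 0$ to a hairless $\Gamma \in \GC_{V,n}[1]$ and classifying the four compositions by the hair count of their outputs gives: $\delta_0^2 \Gamma$ is hairless; $\delta_0 \delta_1 \Gamma$ has exactly one hair (by the key observation, since $\delta_1 \Gamma$ is a 1-hair graph); $\delta_1 \delta_0 \Gamma$ has exactly one hair; and $\delta_1^2 \Gamma$ has exactly two hairs. Since these belong to disjoint hair-count summands of $\tGGC_{V,n}$, each must vanish independently, yielding $\delta_0^2 \Gamma = 0$, $(\delta_0 \delta_1 + \delta_1 \delta_0)\Gamma = 0$, and $\delta_1^2 \Gamma = 0$. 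The main obstacle is establishing the combinatorial key observation about 1-hair graphs; once that is in place, the decomposition by hair count does all the work.
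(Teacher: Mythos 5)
Your proposal is correct and follows essentially the same route as the paper: expand $(\delta^{Z_\Delta})^2\Gamma=0$, observe that the three terms $\delta_1^2\Gamma$, $(\delta_0\delta_1+\delta_1\delta_0)\Gamma$, $\delta_0^2\Gamma$ land in the two-hair, one-hair, and zero-hair summands respectively, and conclude that each vanishes separately. The only difference is that you spell out the combinatorial reason why $\delta_0$ cannot strip the single hair off $\delta_1\Gamma$ (the hair-reducing pieces $\delta_{join}$ and $\Gamma * Y_\Delta$ need at least two hairs), a point the paper states more briefly as ``$\delta_0$ cannot produce graphs with no hairs from graphs that have hairs.''
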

\begin{proof}
  We have that 
  \[
    0 = (\delta^{Z_\Delta})^2\Gamma = \delta_1^2\Gamma
    + (\delta_0\delta_1+\delta_1\delta_0)\Gamma 
    +
    \delta_0^2 \Gamma.
  \]
  The first term on the right-hand side is a linear combination of graphs with exactly two hairs, and is the only one containing such graphs.
  Similarly, note that $\delta_0$ cannot produce graphs with no hairs from graphs that have hairs. Hence $(\delta_0\delta_1+\delta_1\delta_0)\Gamma$ is a linear combination of graphs with exactly one hair. Finally $\delta_0^2 \Gamma$ only contains graphs with no hairs. Hence all three terms must vanish separately.
\end{proof}

It follows that $(\GC_{V,n},-\delta_0)$ is a dg vector space.
Furthermore, we may define a Lie bracket on $\GC_{V,0}$ by the formula
\begin{equation}\label{equ:GC bracketdef}
[x,y]_{\GC} := 
(-1)^{|x|}[\delta_1 x,y]_{\GGC}
=
[x,\delta_1 y]_{\GGC},   
\end{equation}
where the Lie bracket on the right is taken in $\tGGC_{V,n}$, as is indicated by the subscripts.
The right-most equality will be shown together with the next Proposition. 
This construction is an instance of a derived bracket, see \cite{Kos} for a general review of derived brackets.
\begin{prop}\label{prop:GC well defined}
The differential $-\delta_0$ and the bracket $[-,-]_{\GC} $ define a dg Lie algebra structure on $\GC_{V,n}$, such that the map 
\begin{equation}\label{equ:delta1}
    \delta_1 :  (\GC_{V,n},-\delta_0, [-,-]_\GC) \to  \GGC_{V,n}^{Z_\Delta}
\end{equation}
is an injective dg Lie algebra morphism.
\end{prop}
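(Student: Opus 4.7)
The proof is a derived bracket construction, exploiting the fact that $\delta_1$ agrees with the $1$-hair component of $[L_\Delta, -]_{\tGGC}$ on $\GC_{V,n}$. Two hair-counting observations drive the argument. First, for any $x, y \in \GC_{V,n}$ the bracket $[x, y]_{\tGGC}$ vanishes: both Lie-admissible products $x*y$ and $y*x$ have only the empty matching (there being no hairs to pair), and a disjoint union is killed by the connectedness constraint in $\GG_{V,n}$. Second, whenever $u \in \tGGC_{V,n}$ has exactly one hair and $y \in \GC_{V,n}$, the bracket $[u, y]_{\tGGC}$ lies in $\GC_{V,n}$: only the matching of the single hair with a decoration of $y$ survives the connectedness quotient, producing a graph with no hairs. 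In particular, both $[\delta_1 x, y]_{\tGGC}$ and $[x, \delta_1 y]_{\tGGC}$ land in $\GC_{V,n}$.

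To obtain the equality of the two formulas, I apply the graded Jacobi identity in $\tGGC_{V,n}^{Z_\Delta}$ to the triple $(L_\Delta, x, y)$, keeping in mind that $L_\Delta$ has degree $1$ in $\tGGC$ and that elements of $\GC_{V,n}[1]$ sit in $\tGGC$ in degree $|x|_{\GC} - 1$. Using $[x,y]_{\tGGC} = 0$, Jacobi collapses to $[[L_\Delta, x]_{\tGGC}, y]_{\tGGC} + (-1)^{|x|_{\GC} - 1}[x, [L_\Delta, y]_{\tGGC}]_{\tGGC} = 0$. Writing $[L_\Delta, x]_{\tGGC} = \delta_1 x + [L_\Delta, x]_0$ with $[L_\Delta, x]_0 \in \GC_{V,n}$ the $0$-hair piece (which brackets trivially with $y$), this simplifies to $(-1)^{|x|_{\GC}}[\delta_1 x, y]_{\tGGC} = [x, \delta_1 y]_{\tGGC}$, as required. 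Graded antisymmetry of $[-,-]_{\GC}$ then follows immediately by combining the two equivalent formulas with the antisymmetry of $[-,-]_{\tGGC}$.

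That $\delta_1$ is a chain map from $(\GC_{V,n}, -\delta_0)$ to $\GGC_{V,n}^{Z_\Delta}$ is a one-line consequence of Lemma \ref{lem:GCdeltas}: $\delta^{Z_\Delta} \delta_1 = (\delta_0 + \delta_1)\delta_1 = \delta_0 \delta_1 = -\delta_1 \delta_0$. To show that $\delta_1$ preserves brackets, I apply the derivation $\delta^{Z_\Delta}$ to $[x, \delta_1 y]_{\tGGC}$, expand using $\delta_1^2 = 0$, and extract the $1$-hair component. By the second hair-counting observation, both $[\delta_0 x, \delta_1 y]_{\tGGC}$ and $[x, \delta_0\delta_1 y]_{\tGGC}$ are entirely $0$-hair and thus contribute nothing, leaving $\delta_1[x, y]_{\GC} = [\delta_1 x, \delta_1 y]_{\GGC^{Z_\Delta}}$.

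For injectivity, I filter $\GC_{V,n}$ and $\GGC_{V,n}$ by the total number of $V^*$-decorations on internal vertices. The piece of $\delta_1$ preserving this count sends $x$ to the sum, over internal vertices $v$ of $x$, of the graph obtained by attaching a new edge from $v$ to a new $\omega$-decorated hair, with nonzero coefficient arising from $g_{0N} = 1$; since different attachment points produce pairwise distinct graphs, this piece alone is already injective, hence so is $\delta_1$. The remaining Lie algebra axioms for $\GC_{V,n}$, namely the graded Jacobi identity and the derivation property of $-\delta_0$, then follow formally by pulling back the corresponding identities from $\GGC_{V,n}^{Z_\Delta}$ along the injective dg Lie morphism $\delta_1$, using the already established compatibility of $\delta_1$ with brackets and differentials. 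The main technical difficulty is sign bookkeeping under the degree shift $\GC_{V,n}[1] \subset \tGGC$; beyond that, the proof is almost entirely formal, relying on Lemma \ref{lem:GCdeltas} and graded Jacobi in $\tGGC_{V,n}^{Z_\Delta}$.
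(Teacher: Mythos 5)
Your proof is correct and follows essentially the same route as the paper: injectivity of $\delta_1$ via the $\omega$-hair terms, the chain-map property from Lemma \ref{lem:GCdeltas}, the two bracket identities by applying the Leibniz/Jacobi rule in $\tGGC_{V,n}^{Z_\Delta}$ to $[x,y]=0$ and to $[x,\delta_1 y]$ and then extracting hair-number components, and the remaining axioms pulled back along the injective morphism. The one step worth tightening is injectivity: distinct attachment points need not give distinct graphs (automorphisms, and possible sign cancellations), so it is cleaner to argue, as the paper does, that cutting the unique $\omega$-hair and dividing by the number of vertices is a one-sided inverse to the decoration-preserving part of $\delta_1$, which also handles infinite series of graphs.
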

\begin{proof}
  First we show that \eqref{equ:delta1} is an injective map.
  For $\Gamma\in \GC_{V,n}$ a graph $\delta_1\Gamma$ is obtained by summing over all ways of attaching one $\omega$-decorated hair to a vertex, or replacing a decoration $a\in V^*$ by a hair labeled with the corresponding dual element $b\in V_1$ of degree $<n$.
  A one sided inverse to $\delta_1$ may hence be defined by cutting the unique $\omega$-hair if present, and dividing by the number of vertices. In particular, \eqref{equ:delta1} is injective.

  Next, the map \eqref{equ:delta1} intertwines the differentials:
  \[
  \delta^{Z_\Delta}  (\delta_1\Gamma)
  =
  (\delta_0+\delta_1)  \delta_1\Gamma
  =
  -\delta_1\delta_0\Gamma,
  \]
using Lemma \ref{lem:GCdeltas}.
Next, for $x,y\in \GC_{V,n}$ of degrees $|x|$, $|y|$ we have, by the compatibility of the differential and Lie bracket in $\tGGC_{V,n}^{Z_\Delta}$,
\[
  0
  =
(\delta_0+\delta_1)\underbrace{[x,y]_{\HGC}}_{=0}
=
[(\delta_0+\delta_1)x,y]_{\HGC}
+(-1)^{|x|+1}
[x,(\delta_0+\delta_1)y]_{\HGC}
=
[\delta_1 x,y]_{\HGC}
+(-1)^{|x|+1}
[x,\delta_1y]_{\HGC}.
\]
This shows the last equality of \eqref{equ:GC bracketdef}.
Similarly,
\[
(\delta_0+\delta_1)[x,\delta_1 y]_{\HGC}
=
[(\delta_0+\delta_1)x,\delta_1y]_{\HGC}
+(-1)^{|x|+1}
[x,(\delta_0+\delta_1)\delta_1 y]_{\HGC}
=
[(\delta_0+\delta_1)x,\delta_1y]_{\HGC}
+(-1)^{|x|}
[x,\delta_1 \delta_0y]_{\HGC}.
\]
Taking the part with exactly one hair on both sides we get 
\[
  \delta_1[x,\delta_1 y]_{\HGC}
  =
  [\delta_1 x,\delta_1 y]_{\HGC},
\]
and this shows that the map \eqref{equ:delta1} intertwines the Lie brackets. By injectivity of \eqref{equ:delta1} the well-definedness of the dg Lie algebra $\GC_{V,n}$ (i.e., the Jacobi identity and compatibility relations) follows.

\end{proof}

Alternatively, the well-definedness of the dg Lie algebra structure could also be deduced from standard results for derived brackets, see for example \cite[Theorem 1.1]{Kos}.

Let us also illustrate the combinatorial form of the differential and Lie bracket on $\GC_{V,n}$.
The differential on $\GGC_{V,n}^{Z_\Delta}$ and $\tGGC_{V,n}^{Z_\Delta}$ consists of 4 terms
\[
\delta = \delta_{split}+\delta_{Y}+ \delta_{glue} + \delta_{join} +\delta_{hair}
\]
with $\delta_{split}$ splitting a vertex into two
\[
  \delta_{split}   \begin{tikzpicture}[baseline=-.65ex]
    \node[int] (v) {};
    \draw (v) edge +(-.5,.5) edge +(-.2,.5) edge +(.2,.5) edge +(.5,.5)edge +(.2,-.5) edge +(-.2,-.5);
    \end{tikzpicture}
    =\sum
  \begin{tikzpicture}[baseline=-.65ex]
  \node[int] (v) at (0,0) {};
  \node[int](w) at (0,.3) {};
  \draw (v) edge +(.2,-.5) edge +(-.2,-.5) edge +(-.5,.5) edge +(.5,.5) edge (w) (w) edge +(-.2,.5) edge +(.2,.5) ;
  \end{tikzpicture}
\]
The piece $\delta_{Y}=-[Y_\Delta,-]$ comes from the action of the element $Y_\Delta$ in \eqref{equ:ZDeltapic}.

The piece $\delta_{glue}$ glues two decorations by an edge, according to the nondegenerate pairing given
\[
  \delta_{glue} 
  \begin{tikzpicture}[baseline=-.65ex, scale=.7]
    \node[draw, circle] (c) at (0,0) {$\cdots$};
    \node[int, label=90:{$\alpha$}] (v) at (-1,1) {};
    \node[int, label=90:{$\beta$}] (w) at (1,1) {};
    \draw (v) edge (c.west) edge (c.north) (w) edge (c.north) edge (c.east);
  \end{tikzpicture}
  = 
  (\alpha,\beta)
  \begin{tikzpicture}[baseline=-.65ex, scale=.7]
    \node[draw, circle] (c) at (0,0) {$\cdots$};
    \node[int] (v) at (-1,1) {};
    \node[int] (w) at (1,1) {};
    \draw (v) edge (c.west) edge (c.north) (w) edge (c.north) edge (c.east) edge (v);
  \end{tikzpicture}.
\]
The piece $\delta_{join}$ fuses multiple hairs into one vertex with one hair attached is not so relevant for the futher discussion.
Finally $\delta_{hair}$ attaches a hair at a decoration
\[
  \delta_{hair} \,
  \begin{tikzpicture}[baseline=-.65ex, scale=.7]
    \node[draw, circle] (c) at (0,0) {$\cdots$};
    \node[int, label=90:{$\beta$}] (w) at (1,1) {};
    \draw  (w) edge (c.north) edge (c.east);
  \end{tikzpicture} = 
  \begin{tikzpicture}[baseline=-.65ex, scale=.7]
    \node[draw, circle] (c) at (0,0) {$\cdots$};
    \node[int] (w) at (1,1) {};
    \node (h) at (2.2,1) {$\beta^*$};
    \draw  (w) edge (c.north) edge (c.east) edge (h);
  \end{tikzpicture}
  .
\]
Here $\beta^*$ is the image of $\beta\in V_1^*$ under the identification $V_1\cong V_1^*$ given by our pairing. Also it should be noted that implicity all vertices are considereed as decorated by the symbol $1$.

The differential on $\GC_{V,n}$ is given by $-\delta_{split}-\delta_{glue}-\delta_Y$, while the part $\delta_{hair}$ induces the map from $\GC_{V,n}\to \GGC_{V,n}$.

The Lie bracket on $\GC_{V,n}$ is defined similarly to the operation $\delta_{glue}$, we just apply it to two different graphs.
\[
  \left[
  \begin{tikzpicture}[baseline=-.65ex, scale=.7]
    \node[draw, circle] (c) at (0,0) {$\cdots$};
    \node[int, label=90:{$\alpha$}] (w) at (1,1) {};
    \draw (w) edge (c.north) edge (c.east);
  \end{tikzpicture}
  ,
  \begin{tikzpicture}[baseline=-.65ex, scale=.7]
    \node[draw, circle] (c) at (0,0) {$\cdots$};
    \node[int, label=90:{$\beta$}] (v) at (-1,1) {};
    \draw (v) edge (c.west) edge (c.north);
  \end{tikzpicture}
  \right]
  =
  (\alpha,\beta)\, 
  \begin{tikzpicture}[baseline=-.65ex, scale=.7]
    \node[draw, circle] (c1) at (0,0) {$\cdots$};
    \node[int] (w) at (1,1) {};
    \draw (w) edge (c1.north) edge (c1.east);
    \node[draw, circle] (c2) at (3,0) {$\cdots$};
    \node[int] (v) at (2,1) {};
    \draw (v) edge (c2.west) edge (c2.north) edge (w);
  \end{tikzpicture}
\]

\subsubsection{Incorporating $\osp_V$, and definition of $\GCx_{V,n}$}
We will next combine the graded subpaces $\osp_V\subset \GGC_{V,n}$ and $\GC_{V,n}\subset \GGC_{V,n}^{Z_\Delta}$, and consider the graded subspace 
\[
\GCx_{V,n} := \osp_V\oplus \GC_{V,n}\subset \GGC_{V,n}^{Z_\Delta}.
\]
Note that here we use the inclusion by $\delta_1: \GC_{V,n}\to \GGC_{V,n}^{Z_\Delta}$ as in Proposition \ref{prop:GC well defined} to understand $\GC_{V,n}$ as a subspace of $\GGC_{V,n}^{Z_\Delta}$.
The main result is as follows.

\begin{prop}\label{prop:GCx def}
The graded subspace $\GCx_{V,n}\subset \GGC_{V,n}^{Z_\Delta}$ is closed under the bracket and differential and hence a dg Lie subalgebra.
\end{prop}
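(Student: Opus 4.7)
\medskip

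The plan is to verify the two closure properties for the graded summand $\GCx_{V,n} = \osp_V \oplus \GC_{V,n}$ inside $\GGC_{V,n}^{Z_\Delta}$: closure under the Lie bracket and closure under the twisted differential $\delta^{Z_\Delta} = \delta + [Z_\Delta, -]$. Since $\GCx_{V,n}$ has two summands, I would split each verification into cases (diagonal cases on each summand, plus a cross case).

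The two ``diagonal'' bracket cases are free: $[\osp_V,\osp_V]\subset \osp_V$ by definition of $\osp_V$ as a graded Lie subalgebra of $\gl'_V$, and $[\GC_{V,n},\GC_{V,n}]\subset \GC_{V,n}$ together with $\delta^{Z_\Delta}(\GC_{V,n})\subset \GC_{V,n}$ are both furnished by Proposition \ref{prop:GC well defined} (using the embedding $\delta_1:\GC_{V,n}\hookrightarrow \GGC_{V,n}^{Z_\Delta}$). So only two nontrivial things remain: the mixed bracket $[\osp_V,\GC_{V,n}]$ and the twisted differential $\delta^{Z_\Delta}(\osp_V)$.

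For the mixed bracket, the key point is that an element $\phi\in\osp_V\subset \gl'_V$ sits in the ``no-edge, no-vertex'' part of $\GGC_{V,n}$ and acts on a hairy graph essentially by relabeling its $V^*$-vertex decorations and $V_1$-hair decorations via the dual action of $\phi$. I would verify directly from the combinatorial definition of the Lie-admissible product in section \ref{sec:HGC upper 1} that for $\gamma\in\GC_{V,n}$, the bracket $[\phi,\delta_1\gamma]_{\GGC}$ equals $\delta_1$ of the natural decoration action of $\phi$ on $\gamma$ (commuting $\phi$ past the attachment of the distinguished $\omega$-hair produced by $\delta_1$ uses the condition $\phi(1)=0$, and preserving the overall structure requires $\phi$ to annihilate the pairing $\Delta$, i.e.\ $\phi\in\osp_V$). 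Since decoration relabeling preserves the property of being hairless, the result lands in $\delta_1(\GC_{V,n})\subset \GCx_{V,n}$.

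For $\delta^{Z_\Delta}\phi$, I would first invoke Lemma \ref{lem:LDelta phi commute} to rewrite
\[
\delta^{Z_\Delta}\phi \;=\; \delta\phi + [L_\Delta,\phi] - [Y_\Delta,\phi] \;=\; \delta\phi - [Y_\Delta,\phi].
\]
The plain differential $\delta\phi$ in $\GGC_{V,n}$ is computed using the formulas for $\delta_{split}$ and $\delta_{join}$ from section \ref{sec:HGC upper 2}. Since $\phi$ is represented by a graph with no internal edges, $\delta_{split}$ produces terms that are controlled by the valence-cancellation mechanism of Lemma \ref{lem:HGC well defined}; the surviving contributions together with $-[Y_\Delta,\phi]$ must then be identified with elements of $\osp_V$ (those with the same shape as $\phi$, i.e.\ no-vertex, two-hair) plus elements in $\delta_1(\GC_{V,n})$ (those having acquired a new vertex carrying an $\omega$-hair from the pairing with $Y_\Delta$). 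The condition $\phi\in\osp_V$ (i.e.\ $\phi\cdot\Delta=0$) is what kills precisely the would-be $\osp_V^c$-valued terms that are not in $\osp_V$, leaving a sum in $\GCx_{V,n}$.

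The main obstacle I anticipate is tracking the combinatorics (and the signs, which depend on the parity of $n$) of the pieces $\delta_{split}$, $\delta_{join}$, and $[Y_\Delta,-]$ evaluated on $\phi$, and showing that the only uncancelled terms fall into one of the two summands of $\GCx_{V,n}$. It will be cleanest to organize this by the number of hairs of the resulting graph: the hairless parts must be shown to equal $\delta_1$ of a well-defined element of $\GC_{V,n}$, while the two-hair parts must remain in $\osp_V$ --- i.e.\ still annihilate $\Delta$. The latter identity is the place where the $\osp_V$-condition is genuinely used, and it reduces to the Maurer--Cartan equation $\delta Z_\Delta + \tfrac12[Z_\Delta,Z_\Delta]=0$ tested against $\Delta$.
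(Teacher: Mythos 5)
Your case decomposition is the right one, and the two verifications you isolate as nontrivial (the mixed bracket $[\osp_V,\GC_{V,n}]$ and the twisted differential on $\osp_V$) are exactly the two points the paper addresses; but you handle both by direct combinatorics where the paper is slicker. For the differential, the paper simply cites Lemma \ref{lem:osp diff}, which asserts $\delta^{Z_\Delta}\phi=\delta_1 X_\phi$ for an explicit one-vertex graph $X_\phi\in\GC_{V,n}$ (and whose proof is itself left to the reader as a tedious computation), so your plan to compute this by hand is legitimate. For the mixed bracket, the paper avoids all sign-chasing by writing $\Gamma=\delta_1\gamma$, invoking $[\phi,L_\Delta]=0$ (Lemma \ref{lem:LDelta phi commute}) inside the Jacobi identity $[\phi,[L_\Delta,\gamma]]=(-1)^{|\phi|}[L_\Delta,[\phi,\gamma]]$, and extracting the one-hair part to get $[\phi,\delta_1\gamma]=(-1)^{|\phi|}\delta_1[\phi,\gamma]$; since $[\phi,\gamma]$ is again hairless this lands in $\delta_1(\GC_{V,n})$. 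Your direct verification of the same identity is feasible but buys you nothing over this formal argument, which you could have obtained from the very lemma you already cite. One genuine imprecision to fix if you carry out the differential computation: your bookkeeping by hair number is garbled. Elements of $\osp_V$ are single-hair (no-vertex) graphs and $\delta_1$ \emph{adds} a hair to a hairless graph, so $\delta^{Z_\Delta}\phi$ has no "hairless part" and no component "remaining in $\osp_V$"; since $\phi$ has no internal vertices, $\delta_{split}\phi=0$ and $[\phi,L_\Delta]=0$, so the entire differential is $-[Y_\Delta,\phi]$, which is a one-hair, one-vertex combination that must be matched with $\delta_1 X_\phi$. Likewise the $\osp_V$-condition enters through $\phi\cdot\Delta=0$ directly (killing $[\phi,L_\Delta]$), not by "testing the Maurer--Cartan equation against $\Delta$". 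With that reorganization your computational route goes through.
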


The main computation going into the proof is the following.
For $\alpha\in V^*$ we define the graph 
\[
X_\alpha := \sum_{j=1}^{N-1} (-1)^j h_{ij} 
\begin{tikzpicture}
\node[int,label=90:{$\alpha f_i f_j$}] (v) at (0,0) {};
\end{tikzpicture}  \in \GC_{V,n}\,,
\]
with $(h_{ij})$ the matrix inverse to our pairing matrix $(g_{ij})$.
In particular, for $\phi\in \osp_V$ we are interested in the element 
\[
  X_\phi := X_{\phi^*(f_0)}.
\]
We then have:
\begin{lemma}\label{lem:osp diff}
The differential of $\phi\in \osp_V\subset \GGC_{V,n}^{Z_\Delta}$ is 
\[
\delta^{Z_\Delta} \phi = \delta_1 X_\phi.
\]
\end{lemma}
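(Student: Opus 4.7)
The plan is to compute $\delta^{Z_\Delta}\phi$ directly, piece by piece, and match the result to $\delta_1 X_\phi$. Decomposing $Z_\Delta = L_\Delta - Y_\Delta$ from \eqref{equ:ZDeltapic}, Lemma \ref{lem:LDelta phi commute} already gives $[L_\Delta, \phi] = 0$. For the intrinsic differential $\delta = \delta_{split}+\delta_{join}$ on $\GGC_{V,n}$, I would first observe that $\phi\in \osp_V \subset \gl'_V \cong \GGC_{V,n}/\mG^1\GGC_{V,n}$ is a sum of one-vertex graphs whose unique vertex carries only one $V^*$-decoration and one hair, hence is bivalent. Splitting or fusing at such a vertex produces a graph with a vertex of valence $\leq 2$, which is killed in $\GGC_{V,n}$. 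Thus $\delta\phi = 0$, and the lemma reduces to the identity
\[
-[Y_\Delta,\phi] \;=\; \delta_1 X_\phi.
\]

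Next, I would expand $[Y_\Delta,\phi]$ combinatorially as a signed sum $Y_\Delta * \phi \pm \phi * Y_\Delta$. Each factor has a single vertex and a single hair, so every term in the bracket is obtained by gluing one hair to one $V^*$-decoration, yielding a connected two-vertex one-edge graph. The possible gluings come in three families: (a) the $\omega$-hair of $Y_\Delta$ glued to the $f_k$-decoration of $\phi$, which is non-zero only for $k=N$ and is thus controlled by the components $\phi_{lN}$; (b) the $e_l$-hair of $\phi$ glued to the $f_i$-decoration of $Y_\Delta$; (c) the analogous gluing to the $f_j$-decoration. On the other side, $\delta_1 X_\phi$ is computed using the combinatorial description of $\delta_{hair}$, which picks one of the three $V^*$-decorations at the single vertex of $X_\phi$ and replaces it by a hair whose $V_1$-label is the image under the inverse of the pairing $\Delta$. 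The resulting three families of two-vertex one-edge graphs are in an obvious shape-wise bijection with (a)--(c).

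The main obstacle will be the coefficient matching. The cases where $\delta_1$ converts one of the two $f_i, f_j$ decorations of $X_\phi$ into a hair reduce to the defining identity $\sum_j h_{ij} g_{jk} = \delta_{ik}$, together with careful Koszul bookkeeping due to the sign $(-1)^j$ in the definition of $X_\phi$ and the $(-1)^{|f_j|}$ in the definition of $Y_\Delta$. The delicate case is the one arising from the $\phi^*(f_0)$-decoration of $X_\phi$: one must rewrite a sum of the form $\sum_k \phi_{0k} h_{?k}$ so that it matches $\phi_{l N}$, and this is exactly where the orthosymplectic condition $\phi\cdot\Delta = 0$ enters. Written in components, $\phi\cdot\Delta = 0$ exchanges a summation against $g_{ij}$ in the first index for a summation in the second with a Koszul sign, which is precisely what is needed to transfer the contribution from the $\phi^*(f_0)$-decoration onto the $\omega$-hair side and close the identification with family (a). I expect this index- and sign-chase to be the only real work in the proof; the combinatorial shape-matching itself is routine.
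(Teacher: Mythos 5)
First, a caveat: the paper does not actually carry out this computation --- its ``proof'' is the single sentence that the verification is tedious and left to the reader --- so there is nothing to compare your argument against line by line. Your overall strategy is certainly the intended one, and your opening reduction is correct: $[L_\Delta,\phi]=0$ is exactly Lemma \ref{lem:LDelta phi commute}, and $\delta\phi=0$ holds because anything $\delta_{split}$ or $\delta_{join}$ could produce from the edge-less element $\phi$ would contain an internal vertex of valence $\le 2$ (though note that $\phi$ has \emph{no} internal vertex rather than a bivalent one --- a bivalent internal vertex would expel $\phi$ from $\GGC_{V,n}$ altogether). So the lemma does reduce to $-[Y_\Delta,\phi]=\delta_1 X_\phi$, and you correctly locate the orthosymplectic condition as the mechanism exchanging the $\phi^*(f_0)$-decoration against the $\omega$-output of $\phi$.

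The gap is in the term bookkeeping, on both sides. On the left, $[Y_\Delta,\phi]$ is \emph{not} exhausted by gluings of a hair to a $V^*$-decoration: the $\Q 1$-component of the hair label of $\phi$ is treated specially in the Lie-admissible product and attaches to the vertex of $Y_\Delta$ without consuming a decoration, depositing $\phi^*(f_0)$ there. On the right, $\delta_1$ does not only convert decorations into hairs: by the description in the proof of Proposition \ref{prop:GC well defined} it also attaches an $\omega$-decorated hair to the vertex without consuming anything. These two omitted terms are precisely the ones that match each other (both yield the vertex carrying all three decorations $\phi^*(f_0)f_if_j$ plus an $\omega$-hair), so your count of ``three families on each side'' is off by one on each side. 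Moreover, the resulting graphs are one-internal-vertex graphs with one hair, not two-vertex one-edge graphs, since $\phi$ contributes no internal vertex. Most seriously, the pairing you propose for the remaining families is shape-wise impossible: your families (b), (c) produce graphs that still carry the $\omega$-hair of $Y_\Delta$ and whose decorations are $\phi^*(f_i)f_j$ resp.\ $f_i\,\phi^*(f_j)$, whereas replacing $f_i$ or $f_j$ in $X_\phi$ by its pairing-dual produces graphs with a hair of degree $<n$ and decorations $\phi^*(f_0)f_j$ resp.\ $\phi^*(f_0)f_i$. These are different graphs, so no term-by-term bijection exists; disposing of these residual families requires a second application of the relation $\phi\cdot\Delta=0$ together with the block structure $g_{0j}=g_{j0}=0$ for $0<j<N$, applied to the symmetrized inverse pairing $h_{ij}$ in $Y_\Delta$ and $X_\phi$ --- not just the contraction $\sum_j h_{ij}g_{jk}=\delta_{ik}$. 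As written, the ``routine'' part of your argument would not close up.
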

\begin{proof}
This is a somewhat tedious computation that we leave to the reader.
\end{proof}

\begin{proof}[Proof of Proposition \ref{prop:GCx def}]
  We have already seen in Proposition \ref{prop:GC well defined} that $\GC_{V,n}\subset \GGC_{V,n}^{Z_\Delta}$ is a dg Lie subalgebra, and we know that $\osp_V\subset \GGC_{V,n}^{Z_\Delta}$ is a graded Lie subalgebra.
  By Lemma \ref{lem:osp diff} we know that the differential of elements in $\osp_V$ is contained in $\GC_{V,n}$, and hence $\GCx_{V,n}$ is closed under the differential.

  It remains to be checked that the Lie bracket of any $\phi\in \osp_{V}$ and $\Gamma\in \GC_{V,n}$ is contained in $\GC_{V,n}$.
  Explicitly, suppose that $\Gamma= \delta_1\gamma$ for $\gamma\in \GC_{V,n}[1]\subset \tGGC_{V,n}$.
  By Lemma \ref{lem:LDelta phi commute} we know that $[\phi, L_\Delta]=0$ and hence 
  \[
  [\phi , [L_\Delta, \gamma]]
  =
   (-1)^{|\phi|} [L_\Delta, [\phi,\gamma]].
  \]
  Take the part on both sides with exactly one hair to obtain 
  \[
    [\phi, \delta_1\gamma] 
    =
    (-1)^{|\phi|} 
    \delta_1 [\phi,\gamma]
  \]
  The left-hand side is the bracket of $\phi$ and $\Gamma$.
  Since $[\phi,\gamma]\in \GC_{V,n}[1]\subset \tGGC_{V,n}$ the right-hand side is in $\GC_{V,n}\subset\GGC_{V,n}$.
\end{proof}

The action of $\phi$ on $\Gamma$ is the "obvious" action, by letting the dual morphism $\phi^*$ act on all decorations in our graph, and adding decorations $\phi^*(f_0)$ to vertices.


One shortcoming of the dg Lie algebra $\GCx_{V,n}$ is that it is not (pro-)nilpotent since $\osp_V$ is not.
However, for many applications it suffices to consider the slightly smaller but pro-nilpotent dg Lie subalgebra 
\begin{equation}\label{equ:GCex def}
\GCex_{V,n} = \osp_V^{nil} \oplus \GC_{V,n} \subset \GCx_{V,n},
\end{equation}
obtained by restricting to the negative degree Lie subalgebra $\osp_V^{nil}=(\osp_V)_{<0}\subset \osp_V$.
(The direct sum above is meant in the sense of graded vector spaces, not of dg Lie algebras.)

\subsection{Cohomology of $\GCx_{V,n}$ and $\GGC_{V,n}^{Z_\Delta}$}

Finally we want to compare the cohomology of our dg Lie algebras $\GCx_{V,n}$ and $\GGC_{V,n}^{Z_\Delta}$. 
To this end we first describe several elements in $\GGC_{V,n}$.
For $\alpha,\beta \in V_1$ one has the elements
\begin{equation}\label{equ:L alpha beta}
L_{\alpha\beta} =
\begin{tikzpicture}[baseline=-.65ex]
\node (v) at (0,0) {$\alpha$};
\node (w) at (1.3,0) {$\beta$};
\draw (v) edge (w);
\end{tikzpicture}\in \GGC_{V,n},
\end{equation}
i.e., two hairs decorated by $\alpha$ and $\beta$, joined by an edge. We will need specifically the element $L_{\omega\omega}$ with both decorations given by the top degree basis element $\omega\in V$. 
Then we compute (cf. \eqref{equ:ZDeltapic}) that 
\[
  \delta L_{\omega\omega}
  =
  [ L_\Delta, L_{\omega\omega}] 
  =
  0
\]
and 
\[
  [ Y_\Delta, L_{\omega\omega}] 
  =
  \pm 
  \begin{tikzpicture}[yshift=-.5cm]
    \node (v) at (0,0) {$\omega$};
    \node (vv) at (1,0) {$\omega$};
    \node[int] (w) at (0.5,1) {};
    \draw (v) edge (w) (w) edge (vv);
  \end{tikzpicture}
  =0
\]
by symmetry. Hence we have that $\delta L_{\omega\omega}+[Z_\Delta,L_{\omega\omega}]=0$ so that $L_{\omega\omega}$ is a cocycle in $\GGC_{V,n}^{Z_\Delta}$ of cohomological degree $n+1$.

Next, we consider the hedgehog graphs with $k=1,2,\dots$ vertices and $k$ hairs 
\[
H_k =
\begin{tikzpicture}[baseline=-.65ex]
  \node (v0) at (0:1) {$\cdots$};
  \foreach \a [count=\ai, evaluate=\prev using int(\ai -1)] in {60,120,180,240,300}
    { 
      \node[int] (v\ai) at (\a:1) {};
      \node (w\ai) at (\a:1.5) {$\omega$};
      \draw (v\ai) edge (w\ai);
      \draw (v\ai) edge (v\prev);
    }
  \draw (v0) edge (v5);
  \end{tikzpicture}\in\GGC_{V,n}\, .
\]
We note that for $k$ and $n$ either both even or both odd we have $H_k=0$ by symmetry.
One checks that 
\[
  \delta H_k = [L_\Delta,H_k]=[Y_\Delta,H_k] =0,
\]
and hence the $H_k$ are also cocycles in $\GGC_{V,n}^{Z_\Delta}$, of degree $2k$.

\begin{prop}\label{prop:GC HGC comparison}
The inclusion 
\begin{equation}\label{equ:GC HGC comparison}
  \GCx_{V,n}
  \oplus
  \Q L_{\omega\omega}
  \oplus 
  \bigoplus_{k\geq 1 \atop k\equiv n+1 \, \mathrm{ mod }\,  2}
  \Q H_k
  \to \GGC_{V, n}^{Z_\Delta}
\end{equation}
is a quasi-isomorphism of dg vector spaces.
It furthermore induces a quasi-isomorphism already on the associated graded complexes with respect to the filtration by the number of vertices.
\end{prop}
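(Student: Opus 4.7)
The plan is to prove the proposition by a spectral sequence comparison that reduces the main claim to the (stronger) second assertion about associated gradeds. First I would introduce the descending complete filtration $\mF^\bullet$ on $\GGC_{V,n}^{Z_\Delta}$ by the number of internal vertices; both sides of \eqref{equ:GC HGC comparison} inherit this filtration and the inclusion preserves it. Convergence of the resulting spectral sequence will follow because the filtration is bounded below (vertex count $\geq 0$) and complete in each fixed ``complexity'' piece (total number of edges plus decorations) of the complex. It then suffices to prove the inclusion is a quasi-isomorphism on associated graded.

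The first substantive observation is that on $\gr \GGC_{V,n}^{Z_\Delta}$ the induced differential reduces to $\delta^{\mathrm{gr}} := \delta_{glue} + \delta_{hair}$. Inspecting the five combinatorial pieces of $\delta^{Z_\Delta}$ listed in Section~\ref{sec:GCVn def}, only $\delta_{glue}$ and $\delta_{hair}$, both arising from $[L_\Delta,-]$, preserve the vertex count; $\delta_{split}$ and $\delta_{join}$ create a new internal vertex, and $\delta_Y = -[Y_\Delta,-]$ adds one since $Y_\Delta$ has a single internal vertex. Analogously, on the left-hand side the induced differentials simplify: $\osp_V$ carries zero, the piece $\GC_{V,n}$ embedded via $\delta_1$ inherits only $-\delta_{glue}$ (the other terms of $-\delta_0 = -\delta_{split} - \delta_{glue} - \delta_Y$ increase vertex count), and the classes $L_{\omega\omega}$ and $H_k$ remain individually closed.

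The heart of the argument will be a combinatorial computation of $H(\gr^v \GGC_{V,n}^{Z_\Delta}, \delta^{\mathrm{gr}})$ vertex-by-vertex, carried out via an explicit chain-level homotopy $h$ that locally inverts $\delta_{hair}$. Given a graph with a hair whose decoration $\beta \in V_1$ is not proportional to the top element $\omega$, the operator $h$ will reabsorb that hair into its incident internal vertex as a $V^*$-decoration via the inverse of the nondegenerate pairing defined by $\Delta$. A bookkeeping computation should yield an identity of the shape $[h, \delta^{\mathrm{gr}}] = N - P$ on an appropriate complement, where $N$ is a positive counting operator and $P$ is the projection onto ``irreducible'' graphs in which every hair is $\omega$-decorated and no compatible pair of vertex decorations remains for $\delta_{glue}$ to glue. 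The irreducibles in vertex count $0$ should then reduce exactly to $\osp_V \oplus \Q L_{\omega\omega}$, the edge $L_{\omega\omega}$ surviving because the pairing $(\omega,\omega)=0$ prevents it from ever being the image of $\delta_{glue}$; the irreducibles in vertex count $v\geq 1$ should match $\delta_1(\GC_{V,n})$ in vertex-count $v$ together with the hedgehog cocycles $H_v$, the parity constraint $k \equiv n+1 \pmod 2$ reflecting that the wrong-parity hedgehogs vanish by the edge/vertex sign conventions of Section~\ref{sec:Graphsn}.

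The main technical obstacle will be setting up the contracting homotopy $h$ compatibly with the $\geq 3$-valency restriction of $\GGC_{V,n}$, so that $h$ does not produce forbidden low-valency vertices and remains within the subcomplex, and verifying that its image together with the irreducible complement exhausts the associated graded exactly as the three summands on the LHS. Once that is in place, matching the generators of the LHS against the identified cohomology classes of the RHS finishes the spectral sequence comparison and hence the proposition.
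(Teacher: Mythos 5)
Your reduction to the associated graded of the vertex filtration, and your identification of the induced differential there as $\delta_{glue}+\delta_{hair}$, both match the paper. But the core of your plan has a genuine gap. You propose to contract $\gr^{v}_{vert}\GGC_{V,n}^{Z_\Delta}$ onto a space of ``irreducible'' graphs and then match these against $\delta_1(\GC_{V,n})\oplus\Q H_v$. This conflates the chain-level image of $\delta_1$ with cohomology: on the associated graded the summand $\GC_{V,n}$ of the left-hand side still carries the nontrivial induced differential $-\delta_{glue}$, so what must actually be shown is that $H(\gr^{v}\GC_{V,n},-\delta_{glue})\to H(\gr^{v}\GGC_{V,n}^{Z_\Delta})$ is injective with cokernel $\Q H_v$ --- and your proposal never computes $H(\gr^{v}\GC_{V,n},-\delta_{glue})$ at all. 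Moreover your description of the irreducibles (``every hair $\omega$-decorated, no gluable decoration pair left'') cannot be the right answer: a $\delta_{glue}$-cohomology class in $\GC_{V,n}$ with surviving $V^*$-decorations maps under $\delta_{hair}$ to graphs with non-$\omega$-decorated hairs, yet it represents a nonzero class on the right-hand side.

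The missing idea is the paper's observation that the mapping cone of $\delta_1\colon(\GC_{V,n},-\delta_0)\to\GGC_{V,n}^{Z_\Delta}$ is precisely the extended complex $\tGGC_{V,n}^{Z_\Delta}$ that also allows hairless graphs. This converts the entire comparison in vertex degree $k\geq1$ into the single computation $H(\gr^{k}_{vert}\tGGC_{V,n}^{Z_\Delta})=\Q H_k$, which the paper carries out by first filtering by the number of internal (non-hair) edges --- so that only $\delta_{hair}$ survives on the associated graded, exactly the isolation your single homotopy for the combined differential $\delta_{glue}+\delta_{hair}$ would otherwise have to achieve by hand, where the cross terms $[h,\delta_{glue}]$ are not controlled in your sketch --- and then decomposing vertex-by-vertex as a direct summand of a tensor product of small complexes whose cohomology is one-dimensional (an $\omega$-decorated hair) when the vertex has exactly two internal edges and zero otherwise, forcing hedgehogs. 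Note that this per-vertex tensor decomposition is only available for $\tGGC_{V,n}^{Z_\Delta}$: in $\GGC_{V,n}$ the constraint ``at least one hair'' is global rather than per-vertex, which is a further reason a direct contraction of $\gr^{v}\GGC_{V,n}^{Z_\Delta}$ of the kind you sketch will not go through cleanly. Your treatment of the zero-vertex part ($\osp_V\oplus\Q L_{\omega\omega}$, using $(\omega,\omega)=0$) and of the parity constraint on the $H_k$ is correct.
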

\begin{proof}
  First note that the map is indeed a map of dg vector spaces, since both $L_{\omega\omega}$ and the $H_k$ are cocycles.
  It suffices to show the statement about the associated graded spaces -- the fact that the map is a quasi-isomorphism then follows from standard spectral sequence arguments.
  
  We leave it to the reader to check that the cohomology of the zero-vertex graded component of $\GGC_{V,n}^{Z_\Delta}$ is precisely $\Q L_{\omega\omega} \oplus \osp^{nil}_V$. 

  We then consider the associated graded piece with $k\geq 1$ vertices.
  Note that the mapping cone of the inclusion $\GC_{V,n}\to \GGC_{V.n}^{Z_\Delta}$ is identified with the extended complex $\tGGC_{V,n}^{Z_\Delta}$. Hence we can equivalently show that the cohomology of $\gr_{vert}^k\tGGC_{V,n}^{Z_\Delta}$ is spanned by $H_k$.
  Note that the differential on the associated graded has two pieces: $\delta_{glue}$ adding an edge between decorations, and $\delta_{hair}$, adding a hair at a decoration.
  To compute the cohomology, let us take the spectral sequence for the bounded filtration by the number of internal (i.e., non-hair) edges. On the associated (edge-)graded only the piece $\delta_{hair}$ of the differential survives.
  The resulting complex can easily be seen to be a direct summand of the tensor product $\bigotimes_j V_j^{\otimes k}$ of $k$ complexes, one for each vertex. 
  Concretely, $V_j$ is spanned by the possible decorations and hairs attachable to vertex $j$.
  Suppose that vertex $j$ has $r_j$ adjacent non-hair edges. Then the cohomology of $V_j$ is easily computed with the result
  \[
  H(V_j)=
  \begin{cases}
    0 & \text{if $e_j\neq 2$} \\
    \Q[-1] & \text{if $r_j=2$}
  \end{cases}
  \]
  The surviving cohomology for $e_j=2$ is spanned by an $\omega$-decorated hair at vertex $j$. But then the only possible graphs with $e_j=2$ for all $j$ are the hedgehog diagrams $H_j$.
  Since they are closed, the inner spectral sequence abuts here and we finish the proof of the proposition.
\end{proof}

We note that the dg vector subspace of $\GGC_{V,n}^{Z_\Delta}$ on the left-hand side of \eqref{equ:GC HGC comparison} is not closed under the Lie bracket. 
However, at least the smaller dg vector subspace
\[
\Q H_1 \oplus \GC_{V,n} \subset \GGC_{V,n}^{Z_\Delta},  
\]
is closed under the Lie bracket, and hence forms a dg Lie subalgebra.
Here $H_1$ is the smallest of the hedgehog cocycles,
\[
  H_1 = 
  \begin{tikzpicture}
  \node[int] (v) at (0,.3) {};
  \node (w) at (0,-.3) {$\omega$};
  \draw (v) edge[loop above] (v) edge (w);
  \end{tikzpicture}.
\]
The element $H_1$ is of degree $2$ and $H_1=0$ if $n$ is odd by symmetry.
Let $\mF_{vert}^\bullet$ be the descending complete filtration on $\GGC_{V,n}^{Z_0}$ by the number of vertices, so that $\mF_{vert}^p\GGC_{V,n}^{Z_0}$ is spanned by graphs with $\geq p$ vertices.

\begin{cor}\label{cor:HGC GC}
  The inclusion of dg Lie algebras
  \[
    \GC_{V,n}' := \underbrace{\Q H_1}_{=0 \text{ for $n$ odd}} \oplus \ \GC_{V,n} \subset 
    \mF^1_{vert} \GGC_{V,n}^{Z_\Delta}
  \]
  induces a quasi-isomorphism in degrees $\leq 2$ on the associated graded Lie algebras with respect to the vertex filtration. Furthermore, the induced maps
  \begin{equation}\label{equ:GC HGC MC}
    \MC_\bullet(\GC_{V,n}) \to \MC_\bullet(\GC_{V,n}') \to \MC_\bullet(\mF^1_{vert} \GGC_{V,n}^{Z_\Delta})
  \end{equation}
  are weak equivalences.
\end{cor}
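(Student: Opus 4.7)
The first claim of the corollary is essentially a direct degree-counting consequence of Proposition~\ref{prop:GC HGC comparison}. The plan is to restrict the quasi-isomorphism \eqref{equ:GC HGC comparison} to the vertex-filtration piece $\mF^1_{vert}$, which removes the zero-vertex summands $\osp_V$ and $\Q L_{\omega\omega}$ from the left-hand side. What remains on the associated vertex-graded of the target is the image of $\GC_{V,n}$ under $\delta_1$, together with the hedgehog classes $\Q H_k$ for $k\geq 1$ with $k\equiv n+1\pmod 2$. Since $H_k$ sits in cohomological degree $2k$, only $H_1$ can contribute in the range of degrees $\leq 2$ (and only for $n$ even, since $H_1=0$ for odd $n$ by symmetry). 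Hence the inclusion $\GC_{V,n}'\hookrightarrow \mF^1_{vert}\GGC_{V,n}^{Z_\Delta}$ captures precisely the cohomology classes in degrees $\leq 2$ on the associated vertex-graded complexes, giving the first assertion.

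For the Maurer-Cartan statement I would treat the two maps in \eqref{equ:GC HGC MC} separately. The first map $\MC_\bullet(\GC_{V,n})\to \MC_\bullet(\GC_{V,n}')$ is the identity when $n$ is odd. For $n$ even it adds the one-dimensional closed summand $\Q H_1$ concentrated in cohomological degree $2$. Since Maurer-Cartan elements sit in degree $0$ and polynomial differential forms on simplices are non-negatively graded, no MC element in $\GC_{V,n}'\hotimes \Omega(\Delta^\bullet)$ can acquire a non-trivial $H_1$-component; and since the higher homotopy groups of $\MC_\bullet$ are computed via Berglund's formula as cohomology of the twisted $\SL_\infty$-algebra in non-positive cohomological degrees, the $H_1$-summand contributes nothing to them either. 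Thus the first map is a weak equivalence. For the second map the plan is to invoke a filtered Goldman--Millson-type theorem---for instance the variant \ref{thm:GM tech} used in the proof of Theorem~\ref{thm:main aut}, or equivalently the Dolgushev--Rogers theorem: both sides carry the complete descending vertex filtration $\mF_{vert}^\bullet$, the map preserves it, and by the first part of the corollary it is a quasi-isomorphism in degrees $\leq 2$ on the associated graded, comfortably exceeding the degree range required for a weak equivalence of MC simplicial sets.

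The main obstacle I anticipate is the verification that $\GC_{V,n}'$ is genuinely a dg Lie subalgebra of $\mF^1_{vert}\GGC_{V,n}^{Z_\Delta}$, i.e., that the $\GGC$-bracket of $H_1$ with elements in the image of $\delta_1$ stays inside $\Q H_1\oplus \delta_1(\GC_{V,n})$. Naively $[H_1,\delta_1 x]$ may produce graphs with more than one internal vertex that do not lie in the image of $\delta_1$; to handle this one must analyze the combinatorics of pairing the $\omega$-hair of $H_1$ against possible $f_N$-decorations of $\delta_1 x$ and check that the unwanted contributions cancel or are exact. If strict closure fails, a convenient way out is to replace $\GC_{V,n}'$ by a slightly larger dg Lie (or $\SL_\infty$-)subalgebra of $\mF^1_{vert}\GGC_{V,n}^{Z_\Delta}$ that is still quasi-isomorphic to $\GC_{V,n}'$ in degrees $\leq 2$ and on which the two claims can be checked directly; both steps of the plan above go through verbatim for such a resolution.
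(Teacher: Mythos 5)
Your proposal is correct and follows essentially the same route as the paper: the first assertion is read off from Proposition \ref{prop:GC HGC comparison} by noting that the classes not in $\GC_{V,n}'$ are either vertex-free or of degree $\geq 3$, and the second map of \eqref{equ:GC HGC MC} is handled by a Goldman--Millson argument with the quasi-isomorphism hypothesis relaxed to degrees $\leq 2$. For the first map the paper is even more direct, observing that $\MC_\bullet(\GC_{V,n})=\MC_\bullet(\GC_{V,n}')$ on the nose since the degree-$2$ class $H_1$ never enters any bracket, differential, or simplex of the Maurer--Cartan space; this also disposes of the closure worry you raise, which in any case concerns an assertion made (and used) in the paragraph preceding the corollary rather than a step of its proof.
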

\begin{proof}
The quasi-isomorphism assertion follows immediately from Proposition \ref{prop:GC HGC comparison}, since all classes on the left-hand side of \eqref{equ:GC HGC comparison} that are not contained in $\GC_{V,n}'$ are either of degree $\geq 3$ or have no vertex.

Hence the second weak equivalence in \eqref{equ:GC HGC MC} is an immediate consequence of the Goldman-Millson Theorem, see \cite{DolRog}. Note that a priori the Theorem stated in \cite{DolRog} requires that the morphism induces a quasi-isomorphism on the associated graded complexes in all degrees. However, elements of degrees $\geq 3$ never appear in the proof, and the condition can be relaxed. 

Finally, one has to check that $\MC_\bullet(\GC_{V,n}) \to \MC_\bullet(\GC_{V,n}')$ is a weak equivalence. This can also be covered by a version of the Goldman-Millson Theorem with slightly relaxed conditions relative to \cite{DolRog}. However, in our case it is easiest to note that in fact $\MC_\bullet(\GC_{V,n}) = \MC_\bullet(\GC_{V,n}')$, since the additional class $H_1$ never appears in a Lie bracket or differential or in another role in the definition of the Maurer-Cartan space.
\end{proof}

\subsection{Non-degenerate reduced diagonal class and the dg Lie algebra $\GCp_{H,n}$}

Next we shall assume that we are given a non-degenerate $(-1)^n$-symmetric "diagonal" element of degree $n$
\[
  \bar \Delta\in V \otimes V.  
\]
Again we pick a graded basis $e_1,\dots, e_N$ of $V$ and define the matrix $g_{ij}$ such that 
\[
  \bar \Delta = \sum_{j=1}^N g_{ij} e_i\otimes e_j \in V\otimes V,
\]
with $g_{ji}=(-1)^{n+|e_i||e_j|} g_{ij}$.
Note that in contrast to the previous subsections $V$ must now be concentrated in degrees $1,\dots,n-1$.
We use $\bar\Delta$ to build the degree 1 element 
\begin{equation}\label{equ:Z bar Delta}
Z_{\bar \Delta} = 
\sum_{i,j=1}^N g_{ij} 
\begin{tikzpicture}
\node (v) at (0,0) {$e_i$};
\node (w) at (1,0) {$e_j$};
\draw (v) edge[bend left] (w);
\end{tikzpicture}
\in \GGC_{V,n}  
\end{equation}
\begin{lemma}
The element $Z_{\bar \Delta}\in \GGC_{V,n} $ is a Maurer-Cartan element.
\end{lemma}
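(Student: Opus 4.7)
The plan is to verify the Maurer-Cartan equation $\delta Z_{\bar\Delta} + \tfrac12[Z_{\bar\Delta},Z_{\bar\Delta}] = 0$ by direct combinatorial analysis, observing that in fact both summands vanish separately. The key observation is that $Z_{\bar\Delta} = \sum_{i,j=1}^N g_{ij} L_{e_i e_j}$ in the notation of \eqref{equ:L alpha beta}: it is a linear combination of two-hair graphs with no internal vertices and with both hairs carrying decorations in $V \subset V_1$ (never the distinguished element $1 \in V_1$).

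First I would compute $\delta Z_{\bar\Delta}$ using the decomposition $\delta = \delta_{split} + \delta_{join}$ of section \ref{sec:HGC upper 2}. Applied termwise to $L_{e_i e_j}$: the piece $\delta_{split}$ vanishes because there are no internal vertices to split, and the piece $\delta_{join}$ vanishes because it operates by fusing a subset of $1$-decorated hairs (together with at most one $V$-decorated hair) into a new hair, and the graph $L_{e_i e_j}$ contains no $1$-decorated hair. Hence $\delta L_{e_i e_j}=0$ termwise, so $\delta Z_{\bar\Delta}=0$.

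Second I would compute $[Z_{\bar\Delta},Z_{\bar\Delta}]$ via the combinatorial description of the Lie-admissible product in $\GGC_{V,n}$ recalled in section \ref{sec:HGC upper 1}: $\Gamma_1 * \Gamma_2$ sums over non-empty partial matchings of hairs of $\Gamma_1$ to $V^*$-decorations at internal vertices of $\Gamma_2$, with the exception that $1$-decorated hairs of $\Gamma_1$ may also attach to any vertex of $\Gamma_2$. Since each $L_{e_k e_l}$ has neither any internal vertex (hence no $V^*$-decoration available as a matching target) nor any $1$-decorated hair (to exploit the exceptional rule), no valid matching exists and $L_{e_i e_j} * L_{e_k e_l} = 0$ for all $i,j,k,l \in \{1,\dots,N\}$. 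Consequently $Z_{\bar\Delta} * Z_{\bar\Delta}=0$ and a fortiori $[Z_{\bar\Delta},Z_{\bar\Delta}]=0$.

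Combining the two vanishings yields the Maurer-Cartan equation. There is no hard step: the whole argument is the degenerate analog of the preceding lemma, where the subtle tadpole cancellation and Euler-characteristic condition were forced by the presence of $e_0=1$ and $e_N=\omega$ in $\Delta \in V_1\otimes V_1$ together with the compensating term $Y_\Delta$; restricting to $\bar\Delta \in V \otimes V$ removes both sources of non-triviality and both sides of the Maurer-Cartan equation vanish for purely combinatorial reasons.
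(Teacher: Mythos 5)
Your proof is correct and takes the same route as the paper, which simply asserts that $\delta Z_{\bar\Delta}$ and $[Z_{\bar\Delta},Z_{\bar\Delta}]$ both vanish trivially; you have supplied the combinatorial reasons (no internal vertices to split or to serve as matching targets, and no $1$-decorated hairs) that make the triviality explicit.
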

\begin{proof}
Trivially $\delta Z_{\bar \Delta}=[Z_{\bar \Delta},Z_{\bar \Delta}]=0$.
\end{proof}

Just as in the previous subsection, we again consider the extended dg Lie algebra with underlying graded vector space
\[
  \tGGC_{V,n} = \GC_{V,n}[1] \oplus \GGC_{V,n},
\]
where $\GC_{V,n}[1]$ is again the summand spanned by graphs with no hairs.
We split the differential on the $Z_{\bar \Delta}$-twisted complex $\tGGC_{V,n}^{Z_{\bar \Delta}}$ 
\[
\delta^{Z_{\bar \Delta}} = \delta +[Z_{\bar \Delta},-]=  \delta_0+\delta_1  
\]
into a piece $\delta_0$ that leaves the number of hairs constant or reduces it and a piece $\delta_1$ that adds one hair.
We then obtain the dg vector space $(\GC_{V,n},-\delta_0)$ and the map of dg vector spaces
  \[
 \delta_1 :  (\GC_{V,n},-\delta_0) \to  \HGC_{V,n}^{Z_{\bar \Delta}}.
 \]
Again we may define a Lie bracket on $\GC_{V,0}$ by the formula
\[
[x,y]_{\GC} := (-1)^{|x|}[\delta_1x,y]_{\GGC}= [x,\delta_1 y]_{\GGC},   
\]
where the Lie bracket on the right is taken in $\tGGC_{V,n}$.
\begin{prop}
The differential $\delta_0$ and the bracket $[-,-]_{\GC}$ define a dg Lie algebra structure on $\GC_{V,n}$, such that the map $\delta_1$ is a dg Lie algebra morphism.
The kernel $I:=\ker\delta_1\subset \GC_{V,n}$ is generated by graphs (with no hairs) that have no decoration in $V^*$.
\end{prop}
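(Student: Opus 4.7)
The plan is to adapt the proof of Proposition \ref{prop:GC well defined}. The dg Lie algebra structure on $\GC_{V,n}$ is again obtained as a derived bracket from the Maurer-Cartan element $Z_{\bar\Delta} \in \tGGC_{V,n}$, but the kernel of $\delta_1$ must now be computed separately since $\delta_1$ is no longer injective.

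First I would verify the analog of Lemma \ref{lem:GCdeltas}. The splitting $\delta^{Z_{\bar\Delta}} = \delta_0 + \delta_1$ by the number of hairs produced (zero versus one) is still well-behaved, and the same counting argument yields $\delta_0^2 = \delta_0\delta_1 + \delta_1\delta_0 = \delta_1^2 = 0$ on $\GC_{V,n}[1] \subset \tGGC_{V,n}$. Crucially, for hairless $x, y \in \GC_{V,n}[1]$ we have $[x,y]_{\GGC} = 0$, so $[x, \delta_1 y]_{\GGC}$ lies in the hairless part $\GC_{V,n}[1]$ (the sole hair of $\delta_1 y$ is consumed by pairing with a decoration of $x$). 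Hence the derived bracket $[x,y]_{\GC} := [x, \delta_1 y]_{\GGC}$ is well-defined without needing $\delta_1$ injective, and the symmetry $(-1)^{|x|}[\delta_1 x, y]_{\GGC} = [x, \delta_1 y]_{\GGC}$, the Jacobi identity, the Leibniz rule, and the Lie-morphism property of $\delta_1$ all follow from the same string of manipulations used in Proposition \ref{prop:GC well defined}.

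For the kernel, unwinding $\delta_1 = [Z_{\bar\Delta}, -]$ shows that $\delta_1$ acts on $\Gamma \in \GC_{V,n}$ by summing over pairs $(v, \alpha)$ with $v$ a vertex and $\alpha$ a $V^*$-decoration at $v$, replacing $\alpha$ by a hair at $v$ labeled by the image $\alpha^\vee \in V$ under the isomorphism $V^* \xrightarrow{\sim} V$ induced by $\bar\Delta$. Hence the subspace $I_0 \subseteq \GC_{V,n}$ of graphs with no $V^*$-decoration lies in $\ker\delta_1$, and since $[x, \Gamma]_{\GC} = [x, \delta_1 \Gamma]_{\GGC} = 0$ for $\Gamma \in I_0$, the ideal generated by $I_0$ coincides with its $\Q$-linear span. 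For the reverse inclusion, grade $\GC_{V,n} = \bigoplus_{p \geq 0} \GC_{V,n}^{(p)}$ by the total number of $V^*$-decorations, counted with multiplicity, and observe that the images $\delta_1(\GC_{V,n}^{(p)})$ for distinct $p$ are linearly independent in $\GGC_{V,n}$. Define a retraction $s$ on graphs with a single $V$-labeled hair by converting the hair (with label $e \in V$) back into a decoration $e^\sharp \in V^*$ at its vertex, where $(-)^\sharp$ is inverse to $(-)^\vee$; a direct sign-and-count computation then gives $s \circ \delta_1 = p \cdot \id$ on $\GC_{V,n}^{(p)}$, showing $\delta_1$ is injective on $\GC_{V,n}^{(p)}$ for every $p \geq 1$ and hence that $\ker\delta_1 = \GC_{V,n}^{(0)} = I_0$. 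The main obstacle is precisely this sign-and-count verification: the sign incurred in extracting a given decoration $\alpha$ must be canceled by the sign incurred in reinserting $(\alpha^\vee)^\sharp = \alpha$, which amounts to careful bookkeeping of the orientation conventions on edges and the Koszul signs propagated through the double pairing with $\bar\Delta$.
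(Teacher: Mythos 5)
Your overall strategy coincides with the paper's: define the bracket as a derived bracket $[x,y]_{\GC}=[x,\delta_1 y]_{\GGC}$, extract the identities $\delta_0^2=\delta_1^2=\delta_0\delta_1+\delta_1\delta_0=0$, $[\delta_1x,y]_{\GGC}=(-1)^{|x|}[x,\delta_1y]_{\GGC}$ and $\delta_1[x,\delta_1y]_{\GGC}=[\delta_1x,\delta_1y]_{\GGC}$ from $(\delta^{Z_{\bar\Delta}})^2=0$ and the Leibniz rule in $\tGGC_{V,n}$, and compute the kernel by exhibiting a partial inverse to $\delta_1$. Your kernel argument is sound and in fact somewhat more explicit than the paper's (which simply refers back to the injectivity argument of Proposition \ref{prop:GC well defined}); grading by the number of $V^*$-decorations and producing a retraction with $s\circ\delta_1=p\cdot\id$ on the $p$-decoration piece is a clean way to organize it, and your observation that $[x,\delta_1y]_{\GGC}$ is again hairless (so the bracket genuinely lands in $\GC_{V,n}$) is a point worth making explicit.

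There is, however, one genuine gap. You assert that the antisymmetry, the Jacobi identity and the Leibniz rule ``follow from the same string of manipulations used in Proposition \ref{prop:GC well defined}.'' They do not: in that proposition these axioms were \emph{not} obtained by manipulation but were pulled back along $\delta_1$ from the known dg Lie algebra $\GGC_{V,n}^{Z_\Delta}$, using precisely the injectivity of $\delta_1$ --- which is exactly the hypothesis that fails here, as you yourself note when computing the nontrivial kernel. So the axioms must be verified directly. This is not hard given the identities you already have --- for instance
\[
[x,[y,z]_{\GC}]_{\GC}=[x,\delta_1[y,\delta_1 z]_{\GGC}]_{\GGC}=[x,[\delta_1 y,\delta_1 z]_{\GGC}]_{\GGC},
\]
after which the Jacobi identity of $\tGGC_{V,n}$ and the symmetry identity give the Jacobi identity for $[-,-]_{\GC}$; antisymmetry and the compatibility of $\delta_0$ with the bracket are similar one-line computations (using that $\delta_0=\delta^{Z_{\bar\Delta}}-\delta_1$ is a difference of derivations of $[-,-]_{\GGC}$) --- but it is an additional step, not a repetition of the earlier argument, and your proposal as written skips it.
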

\begin{proof}
  The proof is largely parallel to that of Proposition \ref{prop:GC well defined} above, so we shall be brief.
  First, the description of the kernel follows the proof of injectivity in Proposition \ref{prop:GC well defined}.
  Furthermore, as above, the equation $(\delta^{Z_{\bar \Delta}})^2=0$ and the compatibility of differential and bracket on $\tGGC_{V,n}^{Z_{\bar \Delta}}$ yield the following identities for $x,y\in \GC_{V,n}$:
  \begin{align*}
    \delta_0^2x &=\delta_1^2 x = (\delta_1\delta_0+\delta_0\delta_1)x = 0 \\
    [\delta_1x, y]_{\GGC} &= (-1)^{|x|}[x, \delta_1 y]_{\GGC}
    \\
    \delta_1[x, \delta_1 y]_{\GGC} &= [\delta_1 x, \delta_1 y]_{\GGC}.
  \end{align*}
  From this it immediately follows that the map $\delta_1$ respects the Lie bracket and differential.
However, since $\delta_1$ is not injective, we can not readily deduce the well-definedness of the dg Lie algebra structure on $\GC_{V,n}$ from that on $\GGC_{V,n}^{Z_{\bar \Delta}}$. Rather, we check the required identites by hand.
First, for $x,y\in \GC_{V,n}$ of degrees $|x|$, $|y|$ we have that 
\begin{align*}
  [x,y]_{\GC} &= [x,\delta_1 y]_{\GGC}
  =
  (-1)^{(|x|+1)|y|}[\delta_1 y,x]_{\GGC}
  =
  (-1)^{|x||y|} [y,x]_{\GC}.
\end{align*}
Hence the Lie bracket has the correct anti-symmetry.
The Jacobi identity is shown as follows, using the Jacobi identity on $\tGGC_{V,n}$:
\begin{align*}
[x,[y,z]_{\GC}]_{\GC}
&= 
[x,\delta_1 [y,\delta_1 z]_{\GGC}]_{\GGC}
=
[x,[\delta_1  y,\delta_1 z]_{\GGC}]_{\GGC}
\\&=
[[x,\delta_1 y]_{\GGC},\delta_1 z]_{\GGC}
+
(-1)^{(|x|+1)|y|}
[\delta_1 y,[x,\delta_1 z]_{\GGC}]_{\GGC}
\\&=
[[x,y]_{\GC},z]_{\GC}
+
(-1)^{ |x| |y|}
[y,[x,z]_{\GC}]_{\GC}.
\end{align*}
The compatibility of differential and Lie bracket reads:
  \begin{align*}
    \delta_0 [x,y]_{\GC} &= 
    (\delta-\delta_1)[ x, \delta_1 y]_{\GGC}
    =
   [ \delta_0 x, \delta_1 y]_{\GGC}
   +(-1)^{|x|+1}
   [ x, \delta_0 \delta_1 y]_{\GGC}
   \\&=
   [ \delta_0 x, y]_{\GC}
   +
   (-1)^{|x|}
   [ x, \delta_1 \delta_0 y]_{\GGC}
   =
   [ \delta_0 x, y]_{\GC}
   +
   (-1)^{|x|}
   [ x, \delta_0 y]_{\GC}
  \end{align*}
Here we used that $\delta_0=\delta-\delta_1$ distributes over the Lie bracket, since this is true for both $\delta$ and $\delta_1$.
\end{proof}

Note that the dg Lie algebra morphism $\delta_1$ factorizes over the quotient dg Lie algebra 
\[
  \GCp_{V,n} := \GC_{V,n} / \ker \delta_1,
\]
and the resulting morphism of dg Lie algebras 
\[
\delta_1 :   \GCp_{V,n}\to \GGC_{V,n}^{Z_{\bar \Delta}}
\]
is an injection.

Next, we define the graded Lie subalgebra $\osp_V\subset \GGC_{V,n}$ as in the previous subsections, i.e.,
\[
  \osp_{V} = \{\phi\in  \gl'(V_1)\mid 
  \phi\cdot \bar \Delta = \sum_{i,j=1}^N g_{ij}(\phi(e_i)\otimes e_j + (-1)^{|\phi||e_i|} e_i\otimes \phi(e_j))=0\}
  \subset \gl'(V_1)\subset \GGC_{V,n}\, .
\]

We then have the following variant of Proposition \ref{prop:GCx def}.

\begin{prop}\label{prop:GCpx def}
The inclusion $\iota: \osp_{V}\to \GGC_{V,n}^{Z_{\bar \Delta}}$ is a morphism of dg Lie algebras, i.e., every element $\phi\in \osp_{V}$ is a cocycle, $\delta^{Z_{\bar \Delta}}\phi=0$.
Furthermore, the image of the injective map  
\[
  i : \osp_{V} \oplus \GCp_{V,n} \xrightarrow{\iota\oplus \delta_1}
  \GGC_{V,n}^{Z_{\bar \Delta}}
\]
is a dg Lie subalgebra.
\end{prop}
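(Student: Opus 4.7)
The proof will closely parallel that of Proposition \ref{prop:GCx def}, with a substantial simplification: since $Z_{\bar\Delta}$ consists of only the "$L$-type" summand from \eqref{equ:ZDeltapic} (no $Y$-correction is needed because $\bar\Delta$ is already non-degenerate on $V$), no analog of Lemma \ref{lem:osp diff} is required, and elements of $\osp_V$ turn out to be genuine cocycles rather than merely lifting to closed elements in $\GCx_{V,n}$.

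I will first verify the cocycle property $\delta^{Z_{\bar\Delta}}\phi = 0$ for $\phi \in \osp_V$. Under the identification $\gl'_V \cong V^* \otimes V_1$, the element $\phi$ corresponds to a single-vertex hairy graph with one $V^*$-decoration at the vertex and one $V_1$-decorated hair. The untwisted differential $\delta = \delta_{\text{split}} + \delta_{\text{join}}$ annihilates such a graph because any splitting or joining would produce vertices of valence $\leq 2$, excluded from $\GGC_{V,n}$. For the remaining contribution $[Z_{\bar\Delta},\phi]$, the term $\phi * Z_{\bar\Delta}$ vanishes trivially since $Z_{\bar\Delta}$ carries no $V^*$-decorations to pair with the hair of $\phi$, while $Z_{\bar\Delta} * \phi$ is, up to sign, the graph whose decorations encode the element $\phi\cdot\bar\Delta \in V_1\otimes V\oplus V\otimes V_1$. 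This vanishes by the defining property of $\osp_V$. This is the analog of Lemma \ref{lem:LDelta phi commute}, and here it gives the stronger conclusion $\delta^{Z_{\bar\Delta}}\phi = 0$ directly.

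For the closure of $\operatorname{im}(i)$ under the Lie bracket, the $[\osp_V,\osp_V]$-bracket stays in $\osp_V$ by the standard fact that derivations annihilating $\bar\Delta$ form a Lie subalgebra, and the $[\delta_1\GCp_{V,n},\delta_1\GCp_{V,n}]$-bracket stays in $\delta_1(\GCp_{V,n})$ because $\delta_1$ is a dg Lie algebra morphism (Proposition \ref{prop:GC well defined}, adapted to the current Maurer-Cartan element $Z_{\bar\Delta}$). The cross bracket $[\iota\phi, \delta_1\gamma]$ is handled exactly as in Proposition \ref{prop:GCx def}: the Jacobi identity in $\tGGC_{V,n}^{Z_{\bar\Delta}}$ together with $[\phi,Z_{\bar\Delta}]=0$ yields
\[
[\phi, [Z_{\bar\Delta},\gamma]] = (-1)^{|\phi|}[Z_{\bar\Delta},[\phi,\gamma]],
\]
and extracting the one-hair component of both sides gives $[\phi,\delta_1\gamma]=(-1)^{|\phi|}\delta_1[\phi,\gamma]$, which lies in $\delta_1(\GCp_{V,n})$ since $[\phi,\gamma]\in\GC_{V,n}[1]$. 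Closure under the differential is then immediate: the $\osp_V$-summand is closed by the first part, and $\delta_1$ is a chain map on $\GCp_{V,n}$.

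The most delicate point in this plan is the cross-bracket calculation (c), where one must be careful that extracting the "one-hair part" of the Jacobi identity is well defined; this in turn rests on the fact that $[\phi,Z_{\bar\Delta}]=0$ holds \emph{exactly} at the graph level (not merely modulo lower-hair parts), which is what makes the reduced-diagonal case cleaner than the full-diagonal case treated in Proposition \ref{prop:GCx def}. The combinatorial sign verification in computing $Z_{\bar\Delta}*\phi$ and matching it with $\phi\cdot\bar\Delta$ is mildly tedious but structurally parallel to Lemma \ref{lem:LDelta phi commute}, and I expect no genuine obstacle.
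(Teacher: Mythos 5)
Your proposal is correct and follows essentially the same route as the paper: the cocycle property reduces to $[Z_{\bar\Delta},\phi]=0$ because $\phi$ annihilates $\bar\Delta$ (with no $Y$-type correction needed, exactly as you note), and the cross bracket is controlled by combining this with the (Jacobi/Leibniz) identity in $\tGGC_{V,n}^{Z_{\bar\Delta}}$ and projecting to the one-hair part to get $[\phi,\delta_1\gamma]=\pm\,\delta_1[\phi,\gamma]$. The only cosmetic difference is that the paper phrases the cross-bracket step via $\delta[\phi,x]=(-1)^{|\phi|}[\phi,\delta x]$ for the full twisted differential rather than via the Jacobi identity for $[Z_{\bar\Delta},-]$ alone; for hairless $\gamma$ the one-hair parts agree, so the two computations coincide.
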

In particular this endows 
\[
  \GCpx_{V,n} := \osp_{V} \oplus \GCp_{V,n}  
\]
with a dg Lie algebra structure, extending those on $\osp_{V}$ and $\GCp_{V,n}$. This dg Lie algebra is just the semidirect product of $\osp_{V}$ and $\GCp_{V,n}$, using the natural action of $\osp_{V}$ on $\GCp_{V,n}$ by transforming the $V^*$-decorations on vertices.

\begin{proof}[Proof of Proposition \ref{prop:GCpx def}]
First note that $[Z_{\bar \Delta}, \phi]=0$ for all $\phi\in \osp_V$, since $\phi$ annihilates the diagonal that appears in the decoration of the hairs in $Z_{\bar \Delta}$. Hence each such $\phi$ is closed under the differential.
Hence, for $x\in \GC_{V,n}$ we have that 
\[
  \delta [\phi, x ]
  =
  (-1)^{|\phi|} [\phi, \delta x ].
\]
Projecting to the subspace generated by graphs with exactly one hair we obtain
\begin{align*}
  \delta_1 [\phi, \delta x ] &=
  (-1)^{|\phi|}  [\phi, \delta_1 x ].
\end{align*}
In particular this implies that $\GCpx_{V,n}$, considered as a graded subspace of $\GGC_{V,n}^{Z_{\bar \Delta}}$, is also closed under the bracket. 
\end{proof}

Finally, we may compare the cohomologies of of dg Lie algebras, analogously to Proposition \ref{prop:GC HGC comparison}.
\begin{prop}\label{prop:GCp GGC comparison}
  The inclusion
  \[
   i: \GCpx_{V,n} \to \GGC_{V,n}^{Z_{\bar \Delta}}
  \]
  is a quasi-isomorphism of dg vector spaces.
  The spectral sequence on the mapping cone of $i$ associated to the filtration by number of vertices has trivial $E^2$-page.
  Furthermore, for $n\geq 3$ the induced map of connected components of the Maurer-Cartan spaces 
  \begin{equation}\label{equ:GCp GGC MC}
    \pi_0\MC_\bullet(\GCp_{V,n}) \to \pi_0 \MC_\bullet(\mF^1_{vert}\GGC_{V,n}^{Z_{\bar \Delta}})
  \end{equation}
  is a bijection.
  
  \end{prop}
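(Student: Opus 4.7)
The plan is to follow the same strategy used for Proposition \ref{prop:GC HGC comparison} and Corollary \ref{cor:HGC GC}, with the modifications forced by the reduced diagonal. The first two assertions (quasi-isomorphism and triviality of the $E^2$-page) will be established simultaneously by computing the cohomology of the mapping cone of $i$ with respect to the ascending filtration by number of internal vertices, since the filtration is bounded below on each fixed-arity subcomplex and vanishing on each vertex level automatically gives both claims.

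First I would treat the zero-vertex level. The zero-vertex part of $\GGC_{V,n}^{Z_{\bar\Delta}}$ can be identified (as in the full diagonal case) with $\gl'_V$, and the differential $\delta^{Z_{\bar\Delta}}=[Z_{\bar\Delta},-]$ on this subspace is precisely the operation $\phi\mapsto \phi\cdot\bar\Delta$. Since $\bar\Delta$ is non-degenerate, this operation is surjective onto the image subspace inside the one-edge graphs, and its kernel is $\osp_V$ by definition. Hence at vertex count zero, the cone of $i$ is acyclic and the cohomology of $\GGC_{V,n}^{Z_{\bar\Delta}}$ is $\osp_V$, matching the $\osp_V$-summand of $\GCpx_{V,n}$.

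At vertex count $\geq 1$ I would exploit that, by the surjectivity of $\delta_1:\GC_{V,n}\twoheadrightarrow \GCp_{V,n}$, the mapping cone of $i$ restricted to $\mF^1_{vert}$ is quasi-isomorphic to the $\mF^1_{vert}$-part of the extended complex $\tGGC_{V,n}^{Z_{\bar\Delta}}=\GC_{V,n}[1]\oplus \GGC_{V,n}$, since the kernel of $\delta_1$ consists of graphs carrying no $V^*$-decoration which will be acyclic in their own right. I then filter this complex by the number of non-hair edges. Only the hair-addition part of the differential survives on the associated graded, and the latter factors, as in the proof of Proposition \ref{prop:GC HGC comparison}, as a tensor product of local per-vertex complexes $W_v$. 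The crucial difference in the reduced diagonal case is that $Z_{\bar\Delta}$ (see \eqref{equ:Z bar Delta}) involves only decorations in $V$, never the unit $1\in V_1$, so no ``$\omega$-hedgehog'' classes appear; a direct local computation should then show $H(W_v)=0$ for every admissible vertex, so the entire $E^1$-page vanishes. This local cohomology computation is where I expect the main obstacle to lie, because one has to carefully track the signs and verify that no surprising low-valence (bivalent or tadpole) cohomology class slips through; the required compatibility is precisely the non-degeneracy of $\bar\Delta$ on $V$.

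For the third statement, I would apply a Goldman-Millson type comparison theorem to the dg Lie algebra inclusion $\GCp_{V,n}\hookrightarrow \mF^1_{vert}\GGC_{V,n}^{Z_{\bar\Delta}}$. The $E^2$-page vanishing from the first part shows this is a quasi-isomorphism of pro-nilpotent filtered dg Lie algebras, and hence by the standard Goldman-Millson theorem (e.g.\ in the form of \cite{DolRog}) it induces a weak equivalence of Maurer-Cartan spaces. The restriction $n\geq 3$ enters exactly as in Corollary \ref{cor:HGC GC}: for $n=2$ the additional hair-free cocycles (tadpoles on a single vertex, analogous to $H_1$) land in cohomological degree $\leq 2$ and can a priori obstruct gauge equivalence of Maurer-Cartan elements, while for $n\geq 3$ any such leftover class sits in degree $\geq 3$ and hence does not affect $\pi_0$. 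Even a weakened form of the Goldman-Millson theorem requiring only a quasi-isomorphism in degrees $\leq 1$ of the associated graded therefore suffices to conclude a bijection on $\pi_0$.
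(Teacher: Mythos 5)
Your proposal diverges from what actually happens at two critical points, and both are fatal. First, the zero-vertex level of the cone is \emph{not} acyclic: the differential on $\gl'_V$ is $\phi\mapsto\phi\cdot\bar\Delta$, landing in $(V_1\otimes V_1\otimes\sgn_2^n)_{S_2}$, and since $\bar\Delta$ lives in $V\otimes V$ this map never reaches the $e_0\otimes e_0$ component; the class $L_{e_0e_0}$ (an edge joining two $1$-decorated hairs) survives. Second, and more seriously, your claim that ``the entire $E^1$-page vanishes'' contradicts the very statement you are proving (which asserts triviality only of the $E^2$-page). In the reduced-diagonal setting $\delta_{hair}$ only replaces $V$-decorations by $V^*$-decorated hairs and never attaches a hair to an undecorated vertex, so your per-vertex local complex $W_v$ is \emph{not} acyclic for undecorated vertices. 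The correct decomposition is $U_0\oplus U_{\geq 1}$, where $U_0$ is spanned by graphs with no vertex decorations and all hairs labeled $e_0=1$: the $E^0$-differential vanishes identically on $U_0$, only $U_{\geq 1}$ dies, and the $E^1$-page is $(U_0\oplus\Q L_{e_0e_0},\delta_{split}+\delta_{join})$. Killing this requires the nontrivial acyclicity result \cite[Proposition 1]{KWZ2} (via the homotopy for the ``join all hairs'' term of $\delta_{join}$) — this external input is the essential content of the first two assertions and is entirely absent from your argument.

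The third assertion also cannot be obtained the way you propose. The standard Goldman--Millson theorem of \cite{DolRog} requires a quasi-isomorphism on the associated graded, i.e.\ at the $E^1$-level, and that is exactly what fails here; $E^2$-triviality alone does not feed into that theorem. The actual fix is a degree count on the surviving $E^1$-classes: a graph in $U_0$ with $v$ vertices, $e$ edges and $h\geq 1$ hairs has degree $nv-(n-1)e\leq -\tfrac12(n-3)v-\tfrac12(n-1)h\leq -1$ for $n\geq 3$ by trivalence, so the inclusion is a quasi-isomorphism on associated gradeds in degrees $0,1,2$, which is all the $\pi_0$-part of the Goldman--Millson argument uses. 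Your version of this step asserts the leftover classes sit in degree $\geq 3$, which is the bound relevant to the hedgehog classes of Corollary \ref{cor:HGC GC} in the \emph{full}-diagonal case; here the relevant classes lie in negative degree, and the reasoning you import does not apply.
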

  \begin{proof}
  We proceed as in the proof of Proposition \ref{prop:GC HGC comparison}. It is sufficient to show the statement about the spectral sequence on the mapping cone, which implies that the cone is acyclic, and hence that $d_1$ is indeed a quasi-isomorphism.
  
  On the $E^0$-page of the spectral sequence we have to compute $H(\gr^k_{vert}  \cone(i) )$ for all numbers of vertices $k$.
  For $k=0$ the complex $\gr^0_{vert}  \cone(i)$ has the form 
  \[
    \osp_V \to \gl'_V \to \mathrm{span} \{ L_{\alpha\beta} \mid \alpha,\beta \in V_1 \}\cong (V_1\otimes V_1 \otimes \sgn_2^n)_{S_2},
  \]
  with $L_{\alpha\beta}$ the elements \eqref{equ:L alpha beta}. 
  Since $\osp_V$ is the kernel of the right-hand arrow by definition, the above sequence is exact at the first and second position. The cokernel of the right-hand map is easily seen to be $\Q L_{e_0e_0}$, so that 
  \[
    H(\gr^0_{vert}  \cone(i)) \cong \Q L_{e_0e_0}.
  \]
  
  For $k\geq 1$ we have that
  \[
    \gr^k_{vert}  \cone(i) \cong (\gr^k_{vert} \tGGC_{V,n}^{Z_{\bar \Delta}} /I ,\delta_{glue}+\delta_{hair})
  \] 
  with $I$ spanned by graphs with no hairs and no decorations.
  Note that here the part $\delta_{hair}$ only adds a hair in place of a $V$-decoration, and only adds hairs decorated by $V^*$, but it does not attach a hair to an undecorated vertex as the analogous piece of the differential did in the proof of Proposition \ref{prop:GC HGC comparison}.
  It is convenient to split 
  \[
    ( \tGGC_{V,n}^{Z_{\bar \Delta}} /I, \delta_{glue}+\delta_{hair}) \cong U_0 \oplus U_{\geq 1},
  \] 
  with $U_0$ the subcomplex generated by graphs with no $V$-decorations on vertices and all hairs decorated by $e_0=1$, and with $U_{\geq 1}$ the subcomplex generated by all other graphs. In particular, the differential $\delta_{glue}+\delta_{hair}$ is zero on $U_0$.
  By the same argument as in the proof of Proposition \ref{prop:GC HGC comparison} one see that $H(\gr^k_{vert}U_{\geq 1}, \delta_{glue}+\delta_{hair})=0$ for $k\geq 1$. 
  The $E^1$-page of our spectral sequence is hence identified with 
  \[
    (U_0\oplus \Q L_{e_0e_0},\delta_{split} + \delta_{join}).
  \]
  It is shown in \cite[Proposition 1]{KWZ2} that this complex is acyclic, and hence the first two statements of the Proposition are shown.
  For completenedd, let us quickly recall from \cite{KWZ2} the reason for this acyclicity.
  The operation $\delta_{join}$ in particular has one term that joins all the hairs of the graph together:
  \[
  \begin{tikzpicture}
    \node[ext] (v) at (0,.3) {$\Gamma$};
    \node (w1) at (-.6,-.5) {$1$};
    \node (w2) at (-.2,-.5) {$1$};
    \node (w3) at (.2,-.5) {$1$};
    \node (w4) at (.6,-.5) {$1$};
    \draw (v) edge (w1) edge (w2) edge (w3) edge (w4) ;
  \end{tikzpicture}
  \mapsto 
  \begin{tikzpicture}
    \node[ext] (v) at (0,.3) {$\Gamma$};
    \node[int] (w) at (0,-.3) {};
    \node (w1) at (0,-1) {$1$};
    \draw (v.west) edge (w)  (v.east) edge (w) (v.north west) edge (w) (v.north east) edge (w)
    (w1) edge (w);
    \node[ext,fill=white] at (0,.3) {$\Gamma$};
  \end{tikzpicture}
  \]
  For this piece one can construct a homotopy and hence show acyclicity, see \cite{KWZ2} for details.
  
  Finally we turn to the statement about Maurer-Cartan spaces.
  We want to use a version of the Goldman-Millson theorem. However, since our spectral sequence only converges on the $E^2$-page, not the $E^1$-page as required in the standard version \cite[Theorem 1.1]{DolRog}, we have to slightly adapt the argument.
  First we note that due to the condition $n\geq 3$ the $E^1$ page of our spectral sequence is concentrated in degrees $\leq -1$.
  Indeed, 
  the degree of a graph $\Gamma$ in $\GGC_{V,n}$ with $v$ vertices, $h\geq 1$ hairs all decorated by $e_0=1$ and $e$ edges is
  \begin{align*}
    |\Gamma| &= nv -(n-1) e 
    \\& \leq -\frac12 (n-3) v  -\frac 12 (n-1) h 
    \leq -1,
  \end{align*}
  since by trivalence $e\geq \frac 32 v +\frac 12 h$ and $n\geq 3$.
  Hence the inclusion $\GCp_{V,n} \to \mF^1_{vert}\GGC_{V,n}^{Z_{\bar \Delta}}$ induces a quasi-isomorphism on the associated graded coplexes in degrees $\geq 0$.
  Looking at the proof of the $\pi_0$-part of the Goldman-Millson Theorem in \cite[section 3]{DolRog} we see that it only requires the quasi-isomorphism property in degrees 0,1,2.
  Hence this is still applicable here and yields the bijection \eqref{equ:GCp GGC MC}.
  \end{proof}

For later use we again define a pro-nilpotent variant, cf. \eqref{equ:GCex def},
\begin{equation}\label{equ:GCpex def}
\GCpex_{V,n} = \osp_V^{nil} \oplus \GCp_{V,n} \subset \GCpx_{V,n}.
\end{equation}

\section{Applications}\label{sec:applications}

In the previous sections we have studied abstract $\La$ Hopf $\e_n^c$-comodules of configuration space type in general.
In this section we shall use the results to produce models for topological configuration spaces of points on manifolds using the alagebraic results above.
We also discuss the automorphism groups.






\subsection{Rational homotopy type of configuration spaces of general manifolds}

Let $\Mfd$ be a parallelized connected manifold of dimension $n\geq 2$. Then by Proposition \ref{prop:FM configuration type} the compactified configuration spaces of points $\FM_\Mfd$ form an $\FM_n$ module of configuration space type. Furthermore, by Corollary \ref{cor:FM Hopf csc} the differential forms $\Omega_\sharp(\FM_\Mfd)$ are a right $\La$ Hopf $\Omega_\sharp(\FM_n)$-comodule of configuration space type.
Now, we use the rational formality of the little disks operad to connect $\Omega_\sharp(\FM_n)$ to its cohomology $\e_n^c$ by a zigzag of quasi-isomorphisms.
\[
  \Omega_\sharp(\FM_n) \to \bullet \leftarrow \e_n^c.
\]
We may use the Quillen equivalences of categories of comodules of Proposition \ref{prop:res ind adjunction} over these cooperads to extend our zigzag to a zigzag of quasi-isomorphisms of pairs of $\La$ Hopf cooperads and their comodules 
\[
  (\Omega_\sharp(\FM_n),\Omega_\sharp(\FM_\Mfd))
   \to \bullet \leftarrow (\e_n^c,\MOp),
\]
with $\MOp$ some $\La$ Hopf $\e_n^c$ comodule quasi-isomorphic to $\Omega_\sharp(\FM_\Mfd)$.

By Lemma \ref{lem:comodule csc basic props} we have that the right-hand object $\MOp$ is still of configuration space type.
Hence our main Theorem \ref{thm:main} tells us that there is a Maurer-Cartan element $Z\in \mG^1\GGC_{\bar H, n}$ and a quasi-isomorphism of $\La$ Hopf $W\e_n^c$-comodules
\[
\Graphs_{\bar H,n}^Z \to W\MOp \leftarrow \MOp,
\]
where $\bar H=\bar H(\Mfd)$ is the reduced cohomology of $M$.
Thus we have shown:

\begin{cor}\label{cor:config space}
  Let $\Mfd$ be an $n\geq 2$-dimensional connected parallelized manifold with finite dimensional reduced rational cohomology $\bar H=\bar H(\Mfd)$.
  Then there is a Maurer-Cartan element $Z\in \mG^1\GGC_{\bar H}$ such that $\Graphs_{\bar H,n}^Z$ is a model for $\FM_M$. Concretely, this means that there is a zigzag of quasi-isomorphisms of pairs of Hopf cooperads and their $\La$-Hopf comodules 
  \[
  (\e_n^c, \Graphs_{\bar H,n}^Z) \to \bullet \leftarrow (\Omega_\sharp(\FM_n),\Omega_\sharp(\FM_M)).  
  \]
\end{cor}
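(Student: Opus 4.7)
The plan is to assemble the corollary from ingredients already in hand, closely following the sketch preceding the statement. The key pieces are: $\FM_\Mfd$ is cofibrant as a $\La$ $\FM_n$-module of configuration space type (Propositions \ref{prop:FMM cofibrant}, \ref{prop:FM configuration type}); rational forms compute its $\La$ Hopf $\Omega_\sharp(\FM_n)$-comodule model (Theorem \ref{thm:FWMod main}, Corollary \ref{cor:FM Hopf csc}); formality of the little disks operad transports the model across cooperads (Proposition \ref{prop:res ind adjunction}); and our main Theorem \ref{thm:main} produces the graphical presentation.

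First I would apply Theorem \ref{thm:FWMod main} to the pair $(\FM_n, \FM_\Mfd)$, which is cofibrant by Propositions \ref{prop:FMn cofibrant} and \ref{prop:FMM cofibrant}. Since $\Mfd$ is connected with finite dimensional reduced cohomology, the comodule $\FM_\Mfd(1)\simeq \Mfd$ is connected and of finite rational homology type, so the resulting $\Omega_\sharp(\FM_\Mfd)$ is a genuine model. By Corollary \ref{cor:FM Hopf csc}, $\Omega_\sharp(\FM_\Mfd)$ is of configuration space type.

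Next I would invoke the formality of the little disks operad, which provides a zigzag of quasi-isomorphisms of $\La$ Hopf cooperads $\Omega_\sharp(\FM_n) \xrightarrow{\sim} \COp \xleftarrow{\sim} \e_n^c$. By Proposition \ref{prop:res ind adjunction}, each such quasi-isomorphism induces a Quillen equivalence between the respective categories of right $\La$ Hopf comodules via the corestriction/coinduction adjunction. Transporting $\Omega_\sharp(\FM_\Mfd)$ through this zigzag (using appropriate cofibrant and fibrant replacements at each step) yields a zigzag of quasi-isomorphisms
\[
  (\Omega_\sharp(\FM_n), \Omega_\sharp(\FM_\Mfd)) \to (\COp, \bullet) \leftarrow (\e_n^c, \MOp)
\]
for some $\La$ Hopf $\e_n^c$-comodule $\MOp$ with $H(\MOp(1)) \cong H(\Mfd)$. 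By the two parts of Lemma \ref{lem:comodule csc basic props}, configuration space type is preserved both under weak equivalence of comodules and under coinduction/corestriction along a quasi-isomorphism of cooperads, so $\MOp$ is of configuration space type.

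Finally I would apply Theorem \ref{thm:main} to $\MOp$: since $H(\MOp(1)) = \Q 1 \oplus \bar H$ is connected and finite dimensional by hypothesis, the theorem produces a Maurer-Cartan element $Z \in \mG^1 \GGC_{\bar H, n}$ and a quasi-isomorphism of $\La$ Hopf $W\e_n^c$-comodules $\Graphs_{\bar H,n}^Z \xrightarrow{\sim} W\MOp$. Composing with the canonical resolutions $\e_n^c \to W\e_n^c$ and $\MOp \to W\MOp$ produces the additional zigzag of quasi-isomorphisms of pairs $(\e_n^c, \Graphs_{\bar H,n}^Z) \to (W\e_n^c, W\MOp) \leftarrow (\e_n^c, \MOp)$, which concatenates with the previous zigzag to give the desired chain.

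The only real obstacle is the bookkeeping in the middle step: one must verify that the transfer along formality actually lands in a model $\MOp$ satisfying the hypotheses of Theorem \ref{thm:main}, which requires choosing the cofibrant and fibrant replacements so that both sides of Proposition \ref{prop:res ind adjunction} compute honest derived functors, and then invoking Lemma \ref{lem:comodule csc basic props} at each step to propagate the configuration space type condition. Once this is in place, the remainder is a direct application of the main theorem.
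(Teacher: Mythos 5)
Your proposal is correct and follows essentially the same route as the paper: Propositions \ref{prop:FM configuration type} and \ref{prop:FMM cofibrant} together with Corollary \ref{cor:FM Hopf csc} to get a Hopf comodule model of configuration space type, the formality zigzag transported through the Quillen equivalences of Proposition \ref{prop:res ind adjunction} with Lemma \ref{lem:comodule csc basic props} propagating the configuration space type condition, and finally Theorem \ref{thm:main} to produce $Z$ and the quasi-isomorphism $\Graphs_{\bar H,n}^Z \to W\MOp \leftarrow \MOp$. The extra care you take in the first step (invoking Theorem \ref{thm:FWMod main} and cofibrancy to justify that $\Omega_\sharp(\FM_\Mfd)$ is an honest model) is implicit in the paper's argument but not a difference in approach.
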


In practice, it is often difficult to determine the correct Maurer-Cartan element $Z$ appearing in the Corollary.
However, for several special cases this is possible, as we shall discuss next.

\subsection{Special case 1: Highly connected manifolds}
The simplest case is that of highly connected manifolds.

\begin{lemma}\label{lem:highconn vanishing}
Let $n\geq 3$ and $V$ be a finite dimensional positively graded vector space such that the degree $k$ part $V^k$ is zero for $k< \frac13 n$ or $k> \frac23 n-1$.
Then any graph $\Gamma\in \GGC_{V,n}$ with at least one vertex has degree $|\Gamma|\leq 0$.
\end{lemma}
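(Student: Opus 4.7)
The plan is to bound $|\Gamma|$ by redistributing its contributions locally among the internal vertices. Recall that a graph $\Gamma \in \GGC_{V,n}$ with $v$ internal vertices and $e$ edges has degree
\[
  |\Gamma| = nv - (n-1)e + D_{V^*} + D_{V_1},
\]
where $D_{V^*} \leq 0$ is the total degree of the $V^*$-decorations at internal vertices and $D_{V_1} \geq 0$ the total degree of the $V_1$-decorations at hairs. Internal vertices are $\geq 3$-valent (each decoration counting $+1$), hairs are univalent, and since the graph is connected with $v \geq 1$ no edge joins two hairs. Let $a$ and $b$ denote the minimum and maximum degrees appearing in $V$, so that $n/3 \leq a \leq b \leq 2n/3 - 1$ by hypothesis.

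Next, I would assign the edge and hair-decoration contributions to vertices asymmetrically: each internal-internal edge contributes $-\tfrac{n-1}{2}$ to each of its two endpoints, while each edge from an internal vertex $u$ to a hair $j$ contributes $-(n-1)$, together with the hair's decoration degree $d_j \in \{0\} \cup [a,b]$, entirely to $u$. Writing $d_u$, $\ell_u$, $k_u$ for the numbers of $V^*$-decorations, int-int edge-endpoints, and int-hair edges at $u$, and $\sigma_u \leq -a d_u$ for the sum of the decoration degrees at $u$, this produces the local quantity
\[
  c_u := n + \sigma_u - \tfrac{n-1}{2} \ell_u - (n-1) k_u + \sum_{\text{hairs at } u} d_j,
\]
and a routine bookkeeping check yields $|\Gamma| = \sum_u c_u$.

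The heart of the argument is to show $c_u \leq 0$. Using $\sigma_u \leq -a d_u$, $d_j \leq b$, and the fact that $b \leq 2n/3 - 1$ implies $n - 1 - b \geq n/3$, we obtain
\[
  c_u \leq n - a d_u - \tfrac{n-1}{2} \ell_u - \tfrac{n}{3} k_u.
\]
Maximizing the right-hand side subject to the valency constraint $d_u + \ell_u + k_u \geq 3$ is a linear program whose optimum is attained at an extreme point of the simplex $d_u + \ell_u + k_u = 3$. At the three corners $(3,0,0)$, $(0,3,0)$, $(0,0,3)$, the expression evaluates to $n - 3a$, $\tfrac{3-n}{2}$, and $0$ respectively, each of which is $\leq 0$ under the hypotheses $a \geq n/3$ and $n \geq 3$. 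Hence $c_u \leq 0$ for every internal vertex, and since $v \geq 1$, summing gives $|\Gamma| \leq 0$.

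The main technical subtlety lies in choosing the attribution of edge contributions. A symmetric half-and-half assignment would leave each hair with a local contribution $d_j - \tfrac{n-1}{2}$, which is positive for $n \geq 4$ when the decoration has nontrivial degree, and the argument fails to close. Absorbing the full $-(n-1)$ of each int-hair edge into its internal endpoint is what converts the hypothesis $b \leq 2n/3 - 1$ into the effective coefficient $-\tfrac{n}{3}$ for $k_u$, exactly matching the worst-case coefficient $-a \geq -n/3$ of $d_u$. This balancing is what makes the LP optimum equal to zero, with equality realized by a single trivalent vertex attached to three hairs carrying the maximum-degree decoration.
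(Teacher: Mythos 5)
Your proof is correct. It uses the same two ingredients as the paper's argument — the trivalence of internal vertices (with $V^*$-decorations counted toward valency) and the degree window $[\tfrac{n}{3},\tfrac{2n}{3}-1]$ for $V$ — but packages them differently. The paper sums the valency inequality over all internal vertices to get the single global inequality $3v+h\leq 2e+u$, solves for $v$, and substitutes into $|\Gamma|=nv-(n-1)e+d+D$ together with the bounds $d\leq -\tfrac{n}{3}u$ and $D\leq(\tfrac23 n-1)h$, which collapses to $|\Gamma|\leq -\tfrac{n-3}{3}(e-h)\leq 0$. You instead localize: you write $|\Gamma|=\sum_u c_u$ over internal vertices via an asymmetric discharging of edge contributions, and bound each $c_u\leq 0$ by a linear program over the valency simplex. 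Your decomposition is valid — the hypotheses $v\geq 1$ and connectedness do rule out hair–hair edges, so every edge, decoration and hair is charged to exactly one internal vertex, and the corner values $n-3a$, $\tfrac{3-n}{2}$, $0$ are all nonpositive under $a\geq\tfrac n3$ and $n\geq 3$. What the local argument buys is the identification of the extremal local configuration (a trivalent vertex carrying three hairs decorated in top degree $\tfrac23 n-1$, which indeed has degree exactly $0$), at the cost of the bookkeeping needed to justify the asymmetric edge assignment; the paper's global substitution reaches the same conclusion in three lines and additionally records the quantitative bound proportional to $e-h$. Either way the statement follows.
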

\begin{proof}
Let $\Gamma\in \GGC_{V,n}$ be a graph with $e$ edges, $v\geq 1$ vertices, $u$ decorations in $V^*$ and $h$ hairs with $V_1$-decorations.
The total degree of the graph is 
\begin{equation}\label{equ:pre Gammadeg}
  |\Gamma| = 
  n v - (n-1)e + d + D 
\end{equation}
with $d\leq 0$ the total degree of the decorations in $V^*$ and $D\geq 0$ the total degree of the decorations (at the hairs) in $V_1$.
By the trivalence condition we have that $3v + h \leq 2e + u$, or equivalently $v\leq \frac23 e+\frac13 u -\frac13 h$.
Inserting this into \eqref{equ:pre Gammadeg} together with the degree bounds on $V$ we find
\begin{align*}
  |\Gamma|
  &\leq 
  (\frac 23 n-n+1) e
  +\frac n3 u - \frac n3 h 
  -\frac n3 u + (\frac 23 n-1)h
  \\
  &\leq -3(n-3) (e-h).
\end{align*}
But since any hair must be attached to an edge we have $e\geq h$, and since furthermore $n\geq 3$ we have $|\Gamma|\leq 0$ as desired.
\end{proof}

\begin{cor}
Let $n\geq 3$ and $\Mfd$ be a parallelized connected $n$-manifold with finite dimensional cohomology, such that 
\[
\bar H^k := \bar H^k(\Mfd) = 0  \quad \text{for $k< \frac n3$ or $k> \frac n3-1$.}
\]
Let $\bar\Delta\in \bar H \otimes \bar H$ be the diagonal element.
Then the Maurer-Cartan element $Z\in \GGC_{\bar H,n}$ of Corollary \ref{cor:config space} is $Z=Z_{\bar \Delta}$, see \eqref{equ:Z bar Delta}.
\end{cor}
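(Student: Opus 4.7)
The plan is to apply Corollary \ref{cor:config space} to obtain some Maurer--Cartan element $Z\in \mG^1\GGC_{\bar H,n}$ giving a model for $\FM_\Mfd$, and then to rigidify $Z$ into the explicit element $Z_{\bar\Delta}$ of \eqref{equ:Z bar Delta}. I note that the upper bound ``$k>\tfrac{n}{3}-1$'' in the statement is presumably a typo for ``$k>\tfrac{2n}{3}-1$'', so that the hypothesis matches the one in Lemma \ref{lem:highconn vanishing}; I will work under this corrected bound.

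First I would use Lemma \ref{lem:highconn vanishing} to drastically restrict the combinatorial shape of $Z$. Since $Z$ has cohomological degree $+1$ while every graph with at least one internal vertex has degree $\leq 0$, the whole Maurer--Cartan element must be supported on graphs with no internal vertices. Graphs in $\GGC_{\bar H,n}$ are by construction connected and have hairs of valence exactly one, so the only zero-vertex connected graphs are single edges $L_{e_ie_j}$ joining two hair vertices decorated by basis elements $e_i,e_j\in V_1$. A quick degree count gives $|L_{e_ie_j}|=-(n-1)+|e_i|+|e_j|$, which equals $+1$ only when $|e_i|+|e_j|=n$; since $V=\bar H$ is concentrated strictly below degree $n$, neither decoration can be the unit $1\in V_1$, so both decorations lie in $\bar H$. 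Consequently $Z=\sum_{i,j} c_{ij}L_{e_ie_j}$ for some $(-1)^n$-symmetric element $c\in\bar H\otimes\bar H$ of degree $n$.

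Next I would pin down the coefficients $c_{ij}$ via the diagonal element of Definition \ref{def:diagonal element}. By Corollary \ref{cor:GraphsZ config space type} this diagonal for $\Graphs_{\bar H,n}^Z$ is precisely $c$. Since the diagonal is extracted from the arity-$2$ coinduction long exact sequence \eqref{equ:coIndlongexact} and is functorial in the comodule, it is a weak-equivalence invariant, and must therefore coincide with the diagonal of $\Omega_\sharp(\FM_\Mfd)$. By Example \ref{ex:diag of config space} the latter is the topological diagonal class $\Delta_\Mfd$. Under our degree hypothesis $\bar H^n=0$, so the topological diagonal has no component involving a top class and necessarily coincides with $\bar\Delta\in \bar H\otimes\bar H$. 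This forces $c=\bar\Delta$, i.e.\ $Z=Z_{\bar\Delta}$.

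The main obstacle will be justifying cleanly that the algebraic diagonal of Definition \ref{def:diagonal element} is preserved along the zigzag of quasi-isomorphisms produced in the proof of Corollary \ref{cor:config space}, and that it is correctly matched with the topological diagonal of Example \ref{ex:diag of config space}. This should follow from the naturality of the long exact sequence \eqref{equ:coIndlongexact} combined with Proposition \ref{prop:res ind adjunction}, which guarantees that corestriction/coinduction along the formality quasi-isomorphism $\Omega_\sharp(\FM_n)\simeq \e_n^c$ preserves the relevant weak equivalences. A secondary point to highlight is that the corrected hypothesis excludes closed $n$-manifolds with $n\geq 3$ (whose top cohomology forces $\bar H^n=\Q$), so the statement effectively concerns parallelized non-closed manifolds with reduced cohomology concentrated in the narrow band $[n/3,\,2n/3-1]$.
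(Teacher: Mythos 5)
Your proof is correct and takes essentially the same route as the paper: Lemma \ref{lem:highconn vanishing} eliminates every graph with an internal vertex from the degree-one element $Z$, and Corollary \ref{cor:GraphsZ config space type} (matching $\Delta(Z)$ against the manifold's diagonal from Example \ref{ex:diag of config space}) identifies the remaining zero-vertex, one-edge part as $Z_{\bar\Delta}$; the paper merely performs these two steps in the opposite order. Your reading of the hypothesis as a typo for $k>\tfrac{2n}{3}-1$ is the intended one, and the only tiny imprecision — that zero-internal-vertex connected graphs also include the single-hair elements of $\gl_{\bar H}'$ — is harmless since $Z\in\mG^1\GGC_{\bar H,n}$ consists of graphs with at least one edge.
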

Note that the diagonal element can for example be determined as in Example \ref{ex:diag of config space} and is usually known or easy to obtain.

\begin{proof}
We know that $Z=Z_{\bar \Delta}+Z_1$, with $Z_1\in \GGC_{\bar H,n}$ terms with $\geq 1$ vertex, see Corollary \ref{cor:GraphsZ config space type}.
But by Lemma \ref{lem:highconn vanishing} there are no possible graphs of degree 1 that could contribute to $Z_1$, and hence $Z_1=0$.
\end{proof}

\subsection{Special case 2: Closed manifolds}
Let now $\Mfd$ be a closed connected parallelized manifold. This means that for $\Mfd$ we have Poincar\'e duality, and that the diagonal element $\Delta\in H(\Mfd)\otimes H(\Mfd)$ is non-degenerate.
By Corollary \ref{cor:HGC GC} we hence find:

\begin{cor}\label{cor:M closed}
Let $\Mfd$ be a closed connected parallelized manifold with reduced cohomology $\bar H=\bar H(M)$ and diagonal $\Delta\in  \bar H_1\otimes  \bar H_1$. Then the Maurer-Cartan element $Z$ of Corollary \ref{cor:config space} can be taken to be of the form 
$Z=Z_\Delta+Z_1$ with $Z_1\in \MC(\GC_{\bar H,n})\subset \MC(\GGC_{\bar H,n}^{Z_\Delta})$\footnote{Here we consider $\GC_{\bar H,n}$ as a dg Lie subalgebra of $\GGC_{\bar H,n}^{Z_\Delta}$ via the embedding $\delta_1$ as in Proposition \ref{prop:GC well defined}.} and $Z_\Delta$ as in \eqref{equ:ZDeltapic}.
\end{cor}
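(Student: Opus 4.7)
The plan is to extract the zero-internal-vertex part of $Z$, show it equals $L_\Delta$ for invariant-theoretic reasons, and then apply the Goldman--Millson-type comparison of Corollary \ref{cor:HGC GC} to straighten out the remainder. A connected graph in $\GGC_{\bar H, n}$ with no internal vertices and all external vertices univalent is necessarily a two-hair ``dumbbell'' $L_{\alpha\beta}$ with $\alpha, \beta \in V_1 = H(M)$, so the zero-internal-vertex part of $Z$ is a linear combination of such graphs; by Corollary \ref{cor:GraphsZ config space type} this combination is precisely $L_{\Delta(Z)}$, where $\Delta(Z) \in V_1 \otimes V_1$ is the diagonal element of $\Graphs_{\bar H, n}^Z$. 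Since $\Graphs_{\bar H, n}^Z$ is a model for $\FM_M$ and the diagonal is a quasi-isomorphism invariant, $\Delta(Z)$ coincides with the Poincar\'e diagonal $\Delta$ of the closed manifold $M$ (cf.\ Example \ref{ex:diag of config space}); hence the zero-internal-vertex part of $Z$ is exactly $L_\Delta$.

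Set $Z' := Z - Z_\Delta = (Z - L_\Delta) + Y_\Delta$. Every graph occurring in $Z'$ has at least one internal vertex, so $Z' \in \mF^1_{vert} \GGC_{\bar H, n}$, and since both $Z$ and $Z_\Delta$ are Maurer--Cartan in $\GGC_{\bar H, n}$ the element $Z'$ is Maurer--Cartan in the twisted dg Lie algebra $\GGC_{\bar H, n}^{Z_\Delta}$. The bracket on $\GGC_{\bar H, n}$ is additive in the internal-vertex count, and the differential $\delta = \delta_{split} + \delta_{join}$ does not decrease it; combined with the fact that $Z_\Delta$ is supported on graphs with at most one internal vertex, this shows that $\mF^1_{vert} \GGC_{\bar H, n}^{Z_\Delta}$ is a pro-nilpotent dg Lie subalgebra of $\GGC_{\bar H, n}^{Z_\Delta}$ in which $Z'$ is a Maurer--Cartan element.

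Now invoke Corollary \ref{cor:HGC GC}: the map $\delta_1 : \MC_\bullet(\GC_{\bar H, n}) \to \MC_\bullet(\mF^1_{vert} \GGC_{\bar H, n}^{Z_\Delta})$ is a weak equivalence, so there exist $Z_1 \in \MC(\GC_{\bar H, n})$ and a degree zero gauge element $\gamma \in \mF^1_{vert} \GGC_{\bar H, n}$ whose exponential carries $Z'$ to $\delta_1(Z_1)$. The same $\gamma$ carries $Z$ to $\tilde Z := Z_\Delta + \delta_1(Z_1)$ in $\GGC_{\bar H, n}$, and the action of $e^\gamma$ on $\Graphs_{\bar H, n}$ realises an isomorphism of twisted $\La$ Hopf $\e_n^c$-comodules $\Graphs_{\bar H, n}^Z \cong \Graphs_{\bar H, n}^{\tilde Z}$; in particular $\Graphs_{\bar H, n}^{\tilde Z}$ is still a model for $\FM_M$, and replacing $Z$ by $\tilde Z$ yields the desired form. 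The only delicate step is the first one, namely pinning down the zero-internal-vertex part of $Z$ as $L_\Delta$ on the nose; everything after that is bookkeeping around the comparison of Corollary \ref{cor:HGC GC}.
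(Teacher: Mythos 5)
Your argument is correct and follows the route the paper intends: identify the zero-internal-vertex part of $Z$ with $L_\Delta$ via the diagonal element of Corollary \ref{cor:GraphsZ config space type}, observe that $Z-Z_\Delta$ is then a Maurer--Cartan element of $\mF^1_{vert}\GGC_{\bar H,n}^{Z_\Delta}$, and gauge it into $\GC_{\bar H,n}$ using the $\pi_0$-bijectivity from Corollary \ref{cor:HGC GC} --- this is exactly the argument the paper writes out for the spherical-boundary analogue, Corollary \ref{cor:M with bnd}. You in fact make explicit two points the paper leaves implicit, namely why the line-graph part of $Z$ is exactly $L_\Delta$ (quasi-isomorphism invariance of the diagonal class) and why replacing $Z$ by a gauge-equivalent element preserves the property of being a model (the exponentiated $\GGC^{nil}$-action identifies the two twists).
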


The Maurer-Cartan element $Z$ can be further restricted if we impose suitable degree bounds on the cohomology.

\begin{lemma}\label{lem:GC deg bound}
  Suppose that $V$ is concentrated in degrees $\geq 2$.
  Let $\gamma\in \GC_{V,n}$ be a graph of loop order $g$ with $D$ decorations in $V^*$. Then the cohomological degree of $\gamma$ satisfies
  \[
    |\gamma| 
    \leq 
    -(n-3)(g-1) - D +1.
  \]
  In particular, if $n\geq 4$ and $g\geq 1$ then
  \[
    |\gamma| \leq 0.
  \]
\end{lemma}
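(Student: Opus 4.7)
The plan is a direct combinatorial degree count using the three pieces of data available: the cohomological degree formula for graphs in $\GC_{V,n}$, the lower bound on degrees of $V^*$-decorations coming from the hypothesis on $V$, and the trivalence condition combined with Euler's formula relating vertex count, edge count, and loop order. The argument is elementary; I would simply assemble these inputs.

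First I would record the degree formula. Since $\GC_{V,n}[1]$ is the hair-free summand of $\tGGC_{V,n}$, a graph $\gamma \in \GC_{V,n}$ with $v$ vertices, $e$ edges, and $D$ decorations in $V^*$ of total decoration degree $d$ has cohomological degree
\[
|\gamma| = nv - (n-1)e + d + 1,
\]
the $+1$ accounting for the degree shift $[1]$. The hypothesis that $V$ is concentrated in degrees $\geq 2$ implies $d \leq -2D$, since each individual $V^*$-decoration has degree $\leq -2$.

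Next I would bring in the combinatorial constraints. One reduces to the case of a connected graph (the degree formula is additive over disjoint unions, and loop order is additive for disjoint unions). Euler's formula gives $v = e - g + 1$, and the trivalence condition on internal vertices, with $V^*$-decorations counted toward valency, reads $2e + D \geq 3v$, hence $e \leq D + 3g - 3$. Substituting $v = e - g + 1$ into the degree formula eliminates $v$ and leaves a coefficient $+1$ for $e$, so substituting the trivalence bound and simplifying yields
\[
|\gamma| \leq -(n-3)(g-1) - D + 1,
\]
as claimed.

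For the second inequality, assuming $n \geq 4$ and $g \geq 1$, I would split into cases: if $D \geq 1$, then both $-(n-3)(g-1) \leq 0$ and $-D + 1 \leq 0$, so $|\gamma| \leq 0$; if $D = 0$, the trivalence condition $2e \geq 3v$ combined with $v = e - g + 1$ gives $v \geq 2(g-1)$, ruling out any nonzero trivalent graph with $g = 1$ and $D = 0$, so necessarily $g \geq 2$, whereupon $-(n-3)(g-1) + 1 \leq -(n-3) + 1 \leq 0$. The only place one could slip up is in bookkeeping the sign conventions introduced by the shift by $[1]$ in passing from $\tGGC_{V,n}$ to $\GC_{V,n}$ and in the dualization $V \leadsto V^*$; once these are carefully tracked the computation is routine.
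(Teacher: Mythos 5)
Your argument is essentially the paper's own proof: the same degree formula $|\gamma|=nv-(n-1)e+d+1$, the bound $d\leq -2D$, Euler's relation $v=e-g+1$, and the trivalence inequality $3v\leq 2e+D$ giving $e\leq D+3g-3$, assembled in the same way; your case split on $D$ for the second claim matches the paper's observation that $g=1$ forces $D\geq 1$. One small slip: in the $D=0$ case the trivalence condition yields $v\leq 2(g-1)$ (equivalently $e\leq 3(g-1)$), not $v\geq 2(g-1)$ as written — it is the upper bound that forces $v=e=0$ when $g=1$ and hence rules out such graphs; with that direction corrected the proof is complete.
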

\begin{proof}
Let $\gamma\in \GC_{V,n}$ be a graph with $v$ vertices, of loop order $g$, with $e$ edges, $v=e-g+1$ vertices and $D$ decorations in $V^*$. Then the trivalence condition implies that 
\[
3v = 3(e-g+1) \leq 2e + D \Leftrightarrow e+3 \leq D +3g.
\]
Furthermore, the (negative) degree of decorations is $\leq -2D$, since by assumption $V$ is concentrated in degrees $\geq 2$.
The cohomological degree of $\gamma$ is hence 
\[
|\gamma|=nv - (n-1) e + (\text{decoration degree}) + 1 
\leq 
 e-ng + n - 2D +1 
 \leq 
 -(n-3)(g-1) - D +1.
\]
The second inequality is clear for $g\geq 2$. For $g=1$ the graph must have at least one decoration to satisfy trivalence, hence $D\geq 1$. 
\end{proof}

\begin{cor}\label{cor:Z1 tree}
 In the setting of Corollary \ref{cor:M closed}, if the dimension $n$ of $\Mfd$ is at least 4 and $H^1(\Mfd)=0$, then the Maurer-Cartan element $Z_1\in \GC_{\bar H,n}$ is a (possibly infinite) linear combination of tree graphs only.
\end{cor}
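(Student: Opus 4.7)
The plan is to deduce this directly from the degree bound in Lemma \ref{lem:GC deg bound} together with the observation that $Z_1$ is by construction concentrated in cohomological degree $1$.

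First, I would record the two facts that make the argument work. Since $\Mfd$ is connected, the reduced cohomology is $\bar H = H^{>0}(\Mfd)$, and the hypothesis $H^1(\Mfd)=0$ forces $\bar H$ to be concentrated in degrees $\geq 2$. Thus Lemma \ref{lem:GC deg bound} applies with $V=\bar H$. On the other hand $Z_1 \in \GC_{\bar H,n}$ is a Maurer-Cartan element in a dg Lie algebra whose differential raises degree by one, so $Z_1$ must be a sum of graphs of cohomological degree $+1$.

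Next, I would separate $Z_1$ into its contributions of different loop orders $g=0,1,2,\ldots$ (recall that for $\gamma\in\GC_{\bar H,n}$ with $v$ vertices and $e$ edges the loop order is $g=e-v+1$). For any graph $\gamma$ of loop order $g\geq 1$ with $D$ decorations in $\bar H^*$, Lemma \ref{lem:GC deg bound} yields
\[
|\gamma|\ \leq\ -(n-3)(g-1)-D+1.
\]
Assuming $n\geq 4$, the term $(n-3)(g-1)$ is non-negative, so the bound is $\leq 1-D$. For the contribution of $\gamma$ to a degree-$1$ element to be nonzero one would need $|\gamma|=1$, hence simultaneously $D=0$ and $(n-3)(g-1)=0$. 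The only potentially surviving case is $g=1$ with $D=0$, i.e., a connected graph of loop order one without $\bar H^*$-decorations; but as already noted in the proof of Lemma \ref{lem:GC deg bound}, the trivalence condition forces any such graph to carry at least one $\bar H^*$-decoration (a vertex on a single loop would otherwise be bivalent), so $D\geq 1$, contradiction. For $g\geq 2$, the factor $(n-3)(g-1)\geq n-3\geq 1$ already forces $|\gamma|\leq 0<1$.

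Hence no graph of positive loop order can appear with nonzero coefficient in $Z_1$, so $Z_1$ is a (possibly infinite) sum of tree graphs, completing the proof. The argument involves no obstacle beyond bookkeeping; the only subtle point is invoking the trivalence refinement for the boundary case $g=1$, $D=0$, which is precisely the case handled in the last sentence of the proof of Lemma \ref{lem:GC deg bound}.
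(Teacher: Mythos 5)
Your proof is correct and follows the same route as the paper: the paper's proof is a one-line appeal to Lemma \ref{lem:GC deg bound} ("the only graphs in $\GC_{\bar H,n}$ of degree $+1$ are tree graphs"), and your argument simply unpacks that lemma's "in particular" clause, including the trivalence refinement $D\geq 1$ for loop order $g=1$ that is handled in the last sentence of the lemma's proof. No gaps.
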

\begin{proof}
  By the previous Lemma the only graphs in $\GC_{\bar H,n}$ of degree $+1$ are tree graphs.
\end{proof}

Note that the tree part $Z_1$ of the Corollary can usually be determined relatively easily.
Namely, as in section \ref{sec:complexity filtration} the tree part of the Maurer-Cartan element $Z\in \GGC_{V,n}$ determines the homotopy type of the dg commutative algebra 
$\Graphs_{n,V}^Z(1)$. But this has to be the same as the rational homotopy type of $\Mfd$.
Moreover, one can see that this condition already determines $Z_1$ uniquely up to gauge equivalence, and rescaling. We shall not elaborate further on this point but refer to the discussions in \cite[section 8]{CW} and \cite{HamiltonLazarev}, where it is shown that the cyclic rational homotopy type (encoded by $Z_1$) is determined by the rational homotopy type.

\subsection{Special case 3: Manifolds with spherical boundary}


The last case we consider are parallelizable manifolds $\Mfd$ that are obtained from orientable (but possibly not parallelizable) closed manifolds by removing a disk.
These manifolds do not support Poincar\'e duality. However, due to Poincar\'e duality on the closed manifold the reduced diagonal element 
\[
\bar \Delta \in \bar H\otimes \bar H,
\] 
with $\bar H=\bar H(\Mfd)$, is still non-degenerate.

\begin{cor}\label{cor:M with bnd}
Let $\Mfd$ be a parallelized manifold obtained by removing a disk from a closed connected manifold of dimension $n\geq 3$. Let $\bar \Delta\in \bar H \otimes \bar H$ be the non-degenerate reduced diagonal element.
Then the Maurer-Cartan element $Z$ in Corollary \ref{cor:config space} may be taken to be 
$Z = Z_{\bar \Delta} + Z_1$ with $Z_1\in \GCp_{\bar H,n}$
and $Z_{\bar \Delta}$ as in \eqref{equ:Z bar Delta}.

If an addition $H^1(\Mfd)=0$, then $Z_1$ is a (possibly infinite) linear combination of tree graphs only.
\end{cor}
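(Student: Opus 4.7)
The plan is to combine Corollary \ref{cor:config space} with the algebraic comparison Proposition \ref{prop:GCp GGC comparison}, and then perform a degree count as in Lemma \ref{lem:GC deg bound}.

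First I would apply Corollary \ref{cor:config space} to obtain \emph{some} Maurer-Cartan element $Z \in \mG^1 \GGC_{\bar H,n}$ and a model $\Graphs_{\bar H,n}^Z$ for $\FM_\Mfd$. Next I would identify the part of $Z$ consisting of graphs with no internal vertices. By the analysis of the edge spectral sequence (see section \ref{sec:CohenTaylorSS} and the discussion preceding Corollary \ref{cor:GraphsZ config space type}), the zero-vertex, one-edge part $Z_{(1,0)}$ of $Z$ is itself a Maurer-Cartan element in the truncated complex, and the resulting diagonal class $\Delta(Z)$ of $\Graphs_{\bar H,n}^Z$ must agree with the diagonal of $\FM_\Mfd$. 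Since $\Mfd$ is obtained from a closed manifold by removing an open disk, its top cohomology vanishes, so the diagonal class of $\FM_\Mfd$ lies in $\bar H \otimes \bar H$ and coincides (up to the normalization in section \ref{sec:ZDeltadef}) with $\bar \Delta$. This forces $Z_{(1,0)} = Z_{\bar\Delta}$ up to a (zero-vertex) gauge; after performing this gauge transformation we may write
\[
  Z = Z_{\bar\Delta} + Z',\qquad Z' \in \mF^1_{vert}\GGC_{\bar H,n}.
\]
By the Maurer-Cartan equation for $Z$ in $\GGC_{\bar H,n}$, and since $Z_{\bar\Delta}$ is itself Maurer-Cartan, the remainder $Z'$ is a Maurer-Cartan element in the twisted dg Lie algebra $\mF^1_{vert}\GGC_{\bar H,n}^{Z_{\bar\Delta}}$.

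Now I would invoke Proposition \ref{prop:GCp GGC comparison}: for $n \geq 3$ the inclusion
\[
  \GCp_{\bar H,n} \hookrightarrow \mF^1_{vert}\GGC_{\bar H,n}^{Z_{\bar\Delta}}
\]
induces a bijection on $\pi_0$ of Maurer-Cartan spaces. Thus $Z'$ is gauge equivalent, inside $\mF^1_{vert}\GGC_{\bar H,n}^{Z_{\bar\Delta}}$, to an element $Z_1$ in the image of $\delta_1: \GCp_{\bar H,n} \to \GGC_{\bar H,n}^{Z_{\bar\Delta}}$. The gauge action is realized via biderivations of $\Graphs_{\bar H,n}$ compatible with the $\e_n^c$-coaction (section \ref{sec:dg Lie action}), and by Proposition \ref{prop:linfty from g on M action} and the twisting construction it induces a quasi-isomorphism $\Graphs_{\bar H,n}^{Z_{\bar\Delta} + Z'} \xrightarrow{\sim} \Graphs_{\bar H,n}^{Z_{\bar\Delta} + Z_1}$ of $\La$ Hopf $W\e_n^c$-comodules. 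Hence $\Graphs_{\bar H,n}^{Z_{\bar\Delta} + Z_1}$ is also a model for $\FM_\Mfd$, which is the first assertion.

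For the second assertion, assume $H^1(\Mfd) = 0$. When $n \geq 4$, $\bar H$ is then concentrated in degrees $\geq 2$, and Lemma \ref{lem:GC deg bound} applies to $\GC_{\bar H,n}$: any graph $\gamma$ of loop order $g \geq 1$ satisfies $|\gamma| \leq -(n-3)(g-1) - D + 1 \leq 0$, so no such graph can contribute to a degree $+1$ Maurer-Cartan element. The quotient map $\GC_{\bar H,n} \twoheadrightarrow \GCp_{\bar H,n}$ preserves loop order, so $Z_1$ is a linear combination of tree graphs. The case $n = 3$ is vacuous: rational Poincar\'e duality on the closed manifold $\hat \Mfd$ gives $H^2(\hat\Mfd) \cong H^1(\hat\Mfd)^\vee = 0$, and removing an open disk kills the top class, so $\bar H = 0$.

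The main obstacle is the identification of the zero-vertex, one-edge part of $Z$ with $Z_{\bar\Delta}$ in paragraph one: one must verify that the "diagonal" extracted from the algebraic Maurer-Cartan element (via the edge spectral sequence and Definition \ref{def:diagonal element}) genuinely matches the geometric diagonal of $\FM_\Mfd$ in the non-closed case, and that the zero-vertex gauge transformation needed to normalize it lifts to a morphism of models. Once this is in place, the second paragraph is a direct application of Proposition \ref{prop:GCp GGC comparison} and the third paragraph is a straightforward weight count.
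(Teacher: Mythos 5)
Your proposal is correct and follows essentially the same route as the paper: write $Z = Z_{\bar\Delta} + Z'$ with $Z'$ a Maurer--Cartan element of $\mF^1_{vert}\GGC_{\bar H,n}^{Z_{\bar\Delta}}$, apply the $\pi_0$-bijection of Proposition \ref{prop:GCp GGC comparison} to replace $Z'$ by some $Z_1\in\GCp_{\bar H,n}$, and conclude the tree statement from Lemma \ref{lem:GC deg bound}. The only difference is that you spell out the identification of the zero-vertex part of $Z$ with $Z_{\bar\Delta}$ via the diagonal class (Corollary \ref{cor:GraphsZ config space type} and Example \ref{ex:diag of config space}), a point the paper's proof asserts without elaboration, and you treat $n=3$ by noting $\bar H=0$ where the paper instead uses the bound $D\geq 1$ in Lemma \ref{lem:GC deg bound}; both are valid.
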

\begin{proof}
Let $Z$ be the Maurer-Cartan element provided by Corollary \ref{cor:config space}.
Then we have that 
\[
X := Z - Z_{\bar \Delta}\in \mF^1_{vert}\GGC_{\bar H, n}
\]
is a Maurer-Cartan element in $\mF^1_{vert}\GGC_{\bar H, n}^{Z_{\bar \Delta}}$. This is true since $Z$ consists of graphs with positive numbers of edges, and the only graphs with positive numbers of edges and no vertices are the line graphs. But they are accounted for by subtracting $Z_{\bar \Delta}$.
Next, we know from the second part of Proposition \ref{prop:GCp GGC comparison} that the inclusion
\begin{equation}\label{equ:GCpGC cor proof}
  \GCp_{\bar H,n}\to \mF^1_{vert}\GGC_{\bar H, n}^{Z_{\bar \Delta}}
\end{equation}
induces a bijection on $\pi_0$ of the associated Maurer-Cartan spaces. Hence the Maurer-Cartan element $X$ is gauge equivalent to some Maurer-Cartan element $Z_1\in \GCp_{V,n}$.

For the final statement of the corollary note that $\GCp_{\bar H,n}$ is a quotient of $\GC_{\bar H,n}$, and hence Lemma \ref{lem:GC deg bound} is a fortiori applicable to $\GCp_{\bar H,n}$. We even have $D\geq 1$ (as in the Lemma) since graphs in $\GCp_{\bar H,n}$ all have at least one decoration. Hence the Lemma states that there are no graphs in $\GCp_{\bar H,n}$ of degree 1 and positive genus if $H^1(\Mfd)=0$.
\end{proof}
\subsection{Automorphisms of rationalized configuration spaces}

If we have identified the correct Maurer-Cartan element $Z\in \GGC_{\bar H,n}$ that describes the rational homotopy type of the right $\FM_n$ $\La$ module $\FM_{\Mfd}$ as in Corollary \ref{cor:config space}, we can apply Theorem \ref{thm:main aut} to compute the homotopy automorphisms of the rationalization of $\FM_{\Mfd}$. In the case that the diagonal element is non-dengenerate, the answer may be simplified by passing to the smaller graph complexes of section \ref{sec:GC from HGC}. In this subsection we shall list the relevant results.

\begin{cor}\label{cor:ExpAut GC}
  Let $n$ be an integer and let $V$ be a finite dimensional positively graded vector space. Let $\Delta\in V_1\otimes V_1$ be a non-degenerate and $(-1)^n$-symmetric element of degree $n$.
Let $Z_1\in \GC_{V,n}$ be a Maurer-Cartan element, so that $Z=Z_\Delta + Z_1\in \GGC_{V,n}$ is a Maurer-Cartan element.
Then there is a weak equivalence of simplicial monoids
\[
  \Aut^h(\Graphs_{V,n}^{Z})_{[1]} \cong \Exp_\bullet((\GCex_{V, n})^{Z_1}),
\] 
with $\GCex_{V, n}$ defined as in \eqref{equ:GCex def}.
\end{cor}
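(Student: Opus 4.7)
The plan is to combine Corollary \ref{cor:ExpAut} with a reduction from the pro-nilpotent dg Lie algebra $\GGC_{V,n}^{nil,Z}$ to the smaller dg Lie algebra $(\GCex_{V,n})^{Z_1}$. Since Corollary \ref{cor:ExpAut} already yields a weak equivalence $\Exp_\bullet(\GGC_{V,n}^{nil,Z}) \simeq \Aut^h(\Graphs_{V,n}^Z)_{[1]}$, it suffices to construct a morphism of pro-nilpotent dg Lie algebras $\iota : (\GCex_{V,n})^{Z_1} \to \GGC_{V,n}^{nil,Z}$ and show that it induces a weak equivalence on the associated exponential simplicial groups.

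First I would verify that the natural inclusion from Proposition \ref{prop:GCx def} lands in $\GGC_{V,n}^{nil}$. The summand $\osp_V^{nil}$ consists of $0$-vertex, $0$-edge graphs of negative cohomological degree, so it embeds in $(\gl_V')_{<0}$. The summand $\GC_{V,n}$ embeds via $\delta_1$ of Proposition \ref{prop:GC well defined}; since $\delta_1(\gamma) = [L_\Delta, \gamma]$ attaches the single edge of $L_\Delta$ to $\gamma$, the image has at least one edge and thus lies in $\mG^1\GGC_{V,n}$. Hence $\iota$ takes values in $(\gl_V')_{<0} \oplus \mG^1\GGC_{V,n} = \GGC_{V,n}^{nil}$, and twisting the source by $Z_1 \in \GC_{V,n}$ corresponds under $\iota$ to twisting the target by $\delta_1(Z_1)$, so that $Z = Z_\Delta + \delta_1(Z_1)$.

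Next I would show that $\iota$ is a quasi-isomorphism in cohomological degrees $\leq 1$. The key input is Proposition \ref{prop:GC HGC comparison}: the slightly larger inclusion $\GCx_{V,n} \oplus \Q L_{\omega\omega} \oplus \bigoplus_k \Q H_k \to \GGC_{V,n}^{Z_\Delta}$ is a quasi-isomorphism, already on the complexes associated graded with respect to the vertex filtration. Restricting $\osp_V$ to $\osp_V^{nil}$ cuts both sides down to their $\GGC^{nil}$-parts, yielding a quasi-isomorphism $\GCex_{V,n} \oplus \Q L_{\omega\omega} \oplus \bigoplus_k \Q H_k \to \GGC_{V,n}^{nil,Z_\Delta}$. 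To incorporate the twist by $Z_1$, observe that $Z_1$ has at least one vertex, so $[Z_1,-]$ strictly raises vertex-filtration degree; on the vertex-graded pieces the twisted and untwisted differentials coincide, and a standard convergent spectral sequence argument (boundedness below of the vertex filtration) gives the analogous quasi-isomorphism $(\GCex_{V,n})^{Z_1} \oplus \Q L_{\omega\omega} \oplus \bigoplus_k \Q H_k \to \GGC_{V,n}^{nil,Z}$.

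The main obstacle is dealing with the extra cocycles $L_{\omega\omega}$ and $H_k$, which are not hit by $\iota$ and prevent a full quasi-isomorphism. However, each of these classes has cohomological degree $\geq 2$: $L_{\omega\omega}$ has degree $n+1 \geq 3$ and $H_k$ has degree $2k \geq 2$; in particular they contribute nothing to $H^{\leq 1}$. Hence $\iota$ itself is a quasi-isomorphism of pro-nilpotent dg Lie algebras in degrees $\leq 1$. A generalization of the Goldman-Millson theorem in the spirit of Theorem \ref{thm:GM tech} used to prove Theorem \ref{thm:main aut}, or the slightly relaxed version of Dolgushev-Rogers already invoked in the proof of Corollary \ref{cor:HGC GC}, then implies that the induced map on exponential simplicial groups is a weak equivalence of simplicial monoids. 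Composing with the weak equivalence of Corollary \ref{cor:ExpAut} finishes the proof.
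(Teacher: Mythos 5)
Your proposal is correct and follows essentially the same route as the paper: reduce via Corollary \ref{cor:ExpAut} to comparing exponential groups, map $(\GCex_{V,n})^{Z_1}$ into $\GGC_{V,n}^{nil,Z}$ via the inclusion of Proposition \ref{prop:GCx def}, and use the vertex-filtration spectral sequence together with Proposition \ref{prop:GC HGC comparison} and the observation that the residual classes $L_{\omega\omega}$ and $H_k$ sit in degree $\geq 2$. The only difference is the last step: since $\Exp_\bullet(\fg)=\MC_\bullet(\fg_{ab})$ and $\pi_k(\MC_\bullet(\fg_{ab}))=H^{-k}(\fg)$, the paper concludes directly from the quasi-isomorphism in non-positive degrees without any Goldman--Millson argument -- your invocation of Theorem \ref{thm:GM tech} should be understood as applied to the abelianizations (where it degenerates to this elementary statement), not to the dg Lie algebras themselves, which would instead compare the Maurer--Cartan spaces rather than the exponential groups.
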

\begin{proof}
By Corollary \ref{cor:ExpAut} we know that
\begin{equation}\label{equ:ExpAut GC 1}
\Aut^h(\Graphs_{V,n}^{Z})_{[1]} \cong \Exp_\bullet(\GGC_{V, n}^{nil,Z})
=
\MC_\bullet((\GGC_{V, n}^{nil,Z})_{ab}) .
\end{equation}
By twisting with $Z_1$ we obtain from the inclusion $\GCex_{V,n}\to \GGC_{V, n}^{nil,Z_\Delta}$ (see Proposition \ref{prop:GCx def}) the map of dg Lie algebras
$(\GCex_{V,n})^{Z_1}\to \GGC_{V, n}^{nil,Z}$.
Hence we obtain a map of simplicial groups 
\begin{equation}\label{equ:ExpAut GC 2}
  \MC_\bullet(((\GCex_{V, n})^{Z_1})_{ab})
  =
  \Exp_\bullet((\GCex_{V, n})^{Z_1})
  \to
  \Exp_\bullet(\GGC_{V, n}^{nil,Z})
  =
  \MC_\bullet((\GGC_{V, n}^{nil,Z})_{ab})
\end{equation}
For an abelian $\SL_\infty$ algebra $\fg$ and all $k=0,1,2,\dots$ we have that $\pi_k (\MC_\bullet(\fg))=H^{-k}(\fg)$ (for any choice of basepoint).
Hence to check that \eqref{equ:ExpAut GC 2} is a weak equivalence we have to check that the inclusion
\begin{equation}\label{equ:GC GGC cor}
(\GCex_{V,n})^{Z_1}\to \GGC_{V, n}^{nil,Z}
\end{equation}
is a quasi-isomorphism in non-positive degrees. But that is a consequence of Proposition \ref{prop:GC HGC comparison} as follows:
We take on both sides of \eqref{equ:GC GGC cor} the spectral sequences from the filtration by the numbers of vertices. The associated graded morphism is the same as the associated graded morphism of $\GCex_{V,n}\to \GGC_{V, n}^{nil,Z_\Delta}$.
But Proposition \ref{prop:GC HGC comparison} implies that this associated graded morphism is a quasi-isomorphism 
up to the (span of the) classes $H_k$ and $L_{\omega\omega}$.
But these classes are all of degree $\geq 2$. Hence by standard spectral sequence comparison results we find that \eqref{equ:GC GGC cor} is indeed a quasi-isomorphism in non-positive degrees.
\end{proof}

\begin{cor}\label{cor:ExpAut GCp}
  Let $n$ be an integer, $V$ a finite dimensional positively graded vector space. Let $\bar \Delta\in V\otimes V$ be a non-degenerate and $(-1)^n$-symmetric element of degree $n$.
Let $Z_1\in \GCp_{V,n}$ be a Maurer-Cartan element, and $Z=Z_{\bar \Delta} + Z_1\in \GGC_{V,n}$ the corresponding Maurer-Cartan element in $\GGC_{V,n}$ as above.
Then there is a weak equivalence of simplicial monoids
\[
  \Aut^h(\Graphs_{V,n}^{Z})_{[1]} \cong \Exp_\bullet((\GCpex_{V, n})^{Z_1}),
\] 
with $\GCpex_{V, n}$ defined as in \eqref{equ:GCpex def}.
\end{cor}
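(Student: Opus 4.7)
I would adapt the proof of Corollary \ref{cor:ExpAut GC}. By Corollary \ref{cor:ExpAut} we have a weak equivalence $\Aut^h(\Graphs_{V,n}^{Z})_{[1]} \simeq \Exp_\bullet(\GGC_{V,n}^{nil,Z}) = \MC_\bullet((\GGC_{V,n}^{nil,Z})_{ab})$. Proposition \ref{prop:GCpx def} produces the injective dg Lie algebra morphism $\iota\oplus\delta_1 : \GCpx_{V,n} \hookrightarrow \GGC_{V,n}^{Z_{\bar\Delta}}$; restricting to pro-nilpotent sub-algebras and twisting by the Maurer--Cartan element $Z_1\in\GCp_{V,n}$ yields $(\GCpex_{V,n})^{Z_1} \to \GGC_{V,n}^{nil,Z}$ and hence the map of simplicial monoids of the corollary. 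Since both $\SL_\infty$ algebras are abelian, it remains to check that the underlying map of dg vector spaces is a quasi-isomorphism in non-positive cohomological degrees.

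I would then filter both sides by the descending complete filtration $\mF^\bullet_{vert}$ by the number of vertices. Since $Z_1 \in \GCp_{V,n}$ is a combination of graphs with at least one vertex, the operator $[Z_1,-]$ strictly raises the vertex count and hence vanishes on the associated graded. A standard spectral sequence comparison reduces the problem to showing that the untwisted inclusion $\GCpex_{V,n}\to\GGC_{V,n}^{nil,Z_{\bar\Delta}}$ is a quasi-isomorphism in non-positive degrees.

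For this I would invoke Proposition \ref{prop:GCp GGC comparison}, which asserts that the larger inclusion $\GCpx_{V,n}\to \GGC_{V,n}^{Z_{\bar\Delta}}$ is already a quasi-isomorphism of dg vector spaces. The respective cokernels $\osp_V^{\geq 0}$ and $(\gl_V')^{\geq 0}$ are concentrated in non-negative degrees and carry trivial induced differential (the former because $\osp_V$ consists of cocycles by Proposition \ref{prop:GCpx def}, the latter because $[Z_{\bar\Delta},\phi]\in\mG^1\GGC_{V,n}$ for $\phi\in\gl_V'$). In degrees $< 0$ the pro-nilpotent inclusion agrees with the full inclusion, and the required quasi-isomorphism is immediate. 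In degree $0$ a diagram chase via the five-lemma on the long exact sequences associated to the short exact sequences $0\to\GCpex_{V,n}\to\GCpx_{V,n}\to\osp_V^{\geq 0}\to 0$ and its $\GGC$-analogue, combined with the injectivity of $\osp_V^0\hookrightarrow(\gl_V')^0$ and the already-established isomorphism on $H^{-1}$, yields the isomorphism $H^0(\GCpex_{V,n})\cong H^0(\GGC_{V,n}^{nil,Z_{\bar\Delta}})$.

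The main obstacle is precisely this degree-zero comparison. In contrast to the proof of Corollary \ref{cor:ExpAut GC}, where the exceptional classes $L_{\omega\omega}$ and $H_k$ all lay in degrees $\geq 2$ and were invisible to the relevant homotopy groups, here the discrepancy between $\GCpex_{V,n}$ and $\GGC_{V,n}^{nil,Z_{\bar\Delta}}$ sits exactly in degree $0$, where the inclusion $\osp_V^0\hookrightarrow(\gl_V')^0$ is not surjective. The key structural fact making the diagram chase go through is that $\osp_V^0$ coincides with the subspace of cocycles of $(\gl_V')^0$ inside $\GGC_{V,n}^{Z_{\bar\Delta}}$, so that the degree-zero cohomology on both sides sees the same orthosymplectic piece.
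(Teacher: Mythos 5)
Your reduction to showing that $(\GCpex_{V,n})^{Z_1}\to \GGC_{V,n}^{nil,Z}$ is a quasi-isomorphism in non-positive degrees matches the paper, and your treatment of the untwisted inclusion $\GCpex_{V,n}\to\GGC_{V,n}^{nil,Z_{\bar\Delta}}$ via the cokernels $\osp_V^{\geq 0}\subset (\gl_V')^{\geq 0}$ is workable (these cokernels are concentrated in non-negative degrees with zero induced differential, so the five-lemma chase goes through; the aside about $\osp_V^0$ being exactly the cocycles of $(\gl_V')^0$ is not needed for degrees $\leq 0$). The genuine gap is the sentence ``a standard spectral sequence comparison reduces the problem to the untwisted inclusion.'' That reduction would be standard if the untwisted map induced a quasi-isomorphism on the associated $\mF_{vert}$-graded, i.e.\ if the $E^1$-page of its mapping cone vanished. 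But Proposition \ref{prop:GCp GGC comparison} only gives vanishing of the \emph{$E^2$-page} of the cone: the $E^1$-page is nonzero (it is spanned by graphs with no $V^*$-decorations and all legs decorated by $1$), and acyclicity is only achieved after running the $d_1$-differential. Since the one-vertex part of $Z_1$ raises the vertex count by exactly one, the bracket $[Z_1^{(1)},-]$ \emph{does} contribute to $d_1$ on the twisted side, so the two cones have the same $E^1$-page as graded vector spaces but a priori different $d_1$-differentials, and knowing $E^2=0$ for one does not give $E^2=0$ for the other.

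This is precisely the point the paper's proof is devoted to: one must check that the extra contribution of $[Z_1^{(1)},-]$ acts trivially on the $E^1$-page. The argument is a decoration count: any one-vertex graph in $\GCp_{V,n}$ carries at least three decorations in $V^*$, of which at most one can be the top class by degree reasons, so its bracket with any $E^1$-representative (a graph with no decorations and only $1$-decorated legs) produces graphs with at least two decorations, which are already exact on the $E^0$-page. Hence the twisted and untwisted $d_1$-differentials agree, the $E^2$-pages coincide, and the vanishing from Proposition \ref{prop:GCp GGC comparison} transfers. Without some such argument your proof does not close; note that this issue did not arise in Corollary \ref{cor:ExpAut GC} only because there the discrepancy classes sat in degrees $\geq 2$, whereas here the convergence of the spectral sequence itself is the delicate point.
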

\begin{proof}
The proof is parallel to that of Corollary \ref{cor:ExpAut GC}, but using Proposition \ref{prop:GCp GGC comparison} instead of Proposition \ref{prop:GC HGC comparison}.
Concretely, proceeding as in the proof of Corollary \ref{cor:ExpAut GC}, the result is shown if we can show that the inclusion 
\begin{equation}\label{equ:GCp GGC cor}
  (\GCpex_{V,n})^{Z_1}\to \GGC_{V, n}^{nil,Z}
\end{equation}
is a quasi-isomorphism in non-positive degrees.
We endow the mapping cone of \eqref{equ:GCp GGC cor} with the filtration by the number of vertices and consider the associated spectral sequence.
Then by the proof of Proposition \ref{prop:GCp GGC comparison}
the $E^1$-page is spanned by those graphs in $\GGC_{V, n}^{nil,Z}$ that have no decoration in $\bar H^*$ and all legs are decorated by $e_0=1$.
A priori, the differential on the $E^2$-page of the cone of \eqref{equ:GCp GGC cor} might differ from that considered in Proposition \ref{prop:GCp GGC comparison}, because the Lie bracket with the one-vertex part of $Z_1$ contributes to the differential on the $E^1$-page.
However, note that any one-vertex graph in $\GCpex_{V,n}$ must have at least 3 decorations in $\bar H^*$, and at most one of them can be the top class $\omega$ by degree reasons. Hence the bracket of that 1-vertex graph with any  graph with no decoration and only $e_0$-decorated legs must have at least two decorations, and hence be exact on the $E_1$ page of our spectral sequence.
Hence these terms in fact do not contribute to the differential on the $E^1$-page.
The $E^1$-page thus agrees with that considered in Proposition \ref{prop:GCp GGC comparison}. By that Proposition we then conclude that the $E^2$-page of our spectral sequence is zero, and hence \eqref{equ:GCp GGC cor} is a quasi-isomorphism as required.
\end{proof}

We may apply these corollaries to compute the automorphisms of rationalized configuration spaces.
As is often the case in rational homotopy theory this application requires additional finiteness assumptions.
Generally, we say that a right operadic $\FM_n$ module $\MOp$ is $\Q$-good if the map $\MOp\to \MOp^{\Q}$ induces an isomorphism on rational chomology
\[
H(\MOp) := H^\bullet(\MOp,Q) \cong H(\MOp^{\Q}).
\] 
(Recall that the rationalization is defined as $\MOp^{\Q}=L\G (R\Omega_\sharp(\hat \MOp))$. )
This means that each space $\MOp(r)$ is $\Q$-good.

For example, let $\Mfd$ be an $n$-dimensional parallelizable manifold.
Then $\MOp=\FM_{\MOp}$ is $\Q$-good if $n\geq 3$ and $\Mfd$ is compact and simply connected, because then each $\FM_{\MOp}(r)$ is simply connected and compact as well.
However, configuration spaces in dimension $n=2$ are not $\Q$-good, not even for $\Mfd=\R^2$, see \cite{IvanovMikhailov}. Hence we restrict to $n\geq 3$ in the following.

For a $\Q$-good cofibrant $\La$ $\FM_n$ module $\MOp$ one can show analogously to \cite[Proposition 16]{FWAut} that
\[
\Aut^h(\MOp^{\Q}) \simeq \Aut^h(\Omega_\sharp(\MOp)),
\]
where on the left-hand side one takes the homotopy automorphisms in the category of $\La$ $\FM_n^\Q$ modules and on the right-hand side in the category of $\La$ Hopf $\Omega_\sharp(\FM_n)$ comodules.
Combining these observations with the results above we find the following Corollary.

\begin{cor}\label{cor:aut mfds}
  Let $n\geq 3$ and let $\Mfd$ be an $n$-dimensional parallelizable manifold such that the the right operadic $\La$ $\FM_n$ module $\FM_\Mfd$ is $\Q$-good. For example, this is satisfied if $\Mfd$ is simply connected and compact, or the interior of a compact manifold.
  Let $\bar H = \bar H(\Mfd)$ be the reduced rational cohomology. 
  Let $Z\in\mG^1 \GGC_{\bar H,n}$ be the Maurer-Cartan element of Corollary \ref{cor:config space}.
  Then we have a weak equivalence of simplicial monoids 
  \[
    \Aut^h(\FM_{\Mfd}^\Q)_{[1]} \simeq 
    \Exp_\bullet(\GGC_{\bar H,n}^{nil,Z}).
  \]
  \begin{itemize}
\item Suppose furthermore that $\Mfd$ is closed so that the diagonal element $\Delta\in \bar H_1\otimes \bar H_1$ is non-degenerate, and let $Z_1\in \GC_{\bar H,n}$ be the Maurer-Cartan element of Corollary \ref{cor:M closed} or Corollary \ref{cor:Z1 tree}. Then we have a weak equivalence of simplicial monoids 
\[
\Aut^h(\FM_\Mfd^\Q)_{[1]} \simeq \Exp_\bullet((\GCex_{\bar H, n})^{Z_1}).
\]
\item Alternatively, suppose that $\Mfd$ is obtained by removing a disk from a closed manifold, so that the reduced diagonal $\bar\Delta\in \bar H\otimes\bar H$ is non-degenerate.  Let $Z_1\in\GCp_{\bar H,n}$ be the Maurer-Cartan element of Corollary \ref{cor:M with bnd}. Then there is a weak equivalence of simplicial monoids
  \[
  \Aut^h(\FM_\Mfd^\Q)_{[1]} \simeq \Exp_\bullet((\GCpex_{\bar H, n})^{Z_1}).
  \]
  \end{itemize}
\end{cor}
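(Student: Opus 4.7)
The plan is to assemble the statement from three ingredients already proved: (i) the rational model for $\FM_\Mfd$ provided by Corollary \ref{cor:config space}, (ii) the identification of homotopy automorphisms of a rationalization with those of its Sullivan--Hopf model under $\Q$-goodness, and (iii) the computation of $\Aut^h(\Graphs_{\bar H,n}^Z)_{[1]}$ given by Corollary \ref{cor:ExpAut} and its refinements, Corollaries \ref{cor:ExpAut GC} and \ref{cor:ExpAut GCp}.

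First I would record the $\Q$-goodness claim. If $\Mfd$ is simply connected and compact, then each $\FM_\Mfd(r)$ is a compact simply connected manifold with corners of finite rational type, hence $\Q$-good in the usual sense of rational homotopy theory. If $\Mfd$ is the interior of a compact manifold with boundary, then $\FM_\Mfd$ is weakly equivalent (as an operadic right module) to the corresponding $\FM$-module of the compact manifold with its nonempty boundary removed, which reduces us to the previous situation up to standard deformation-retract arguments. Hence, in either case, the arity-wise rationalization of $\FM_\Mfd$ is $\Q$-good. Then, exactly as in \cite[Proposition 16]{FWAut}, $\Q$-goodness combined with the Quillen equivalence between $\La$-Hopf comodules over $\Omega_\sharp(\FM_n)$ and $\La$-operadic modules over $\FM_n^\Q$ (see Theorems \ref{thm:modc model cat} and \ref{thm:FWMod main} together with Proposition \ref{prop:res ind adjunction}) gives a weak equivalence of simplicial monoids
\[
\Aut^h(\FM_\Mfd^\Q) \;\simeq\; \Aut^h\bigl(\Omega_\sharp(\FM_\Mfd)\bigr),
\]
where the right-hand side is taken in $\dgLaHModc_{\Omega_\sharp(\FM_n)}$. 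Restricting to the preimage of the identity on cohomology, this equivalence descends to a weak equivalence $\Aut^h(\FM_\Mfd^\Q)_{[1]} \simeq \Aut^h(\Omega_\sharp(\FM_\Mfd))_{[1]}$.

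Next I would use Corollary \ref{cor:config space}, which provides a zigzag of quasi-isomorphisms of pairs
\[
(\e_n^c,\, \Graphs_{\bar H,n}^Z) \;\longrightarrow\; \bullet \;\longleftarrow\; (\Omega_\sharp(\FM_n),\, \Omega_\sharp(\FM_\Mfd)).
\]
Since homotopy automorphism spaces in a model category are invariant under weak equivalences (and the Quillen equivalences \eqref{equ:cores coind adj} from Proposition \ref{prop:res ind adjunction} transport homotopy automorphism monoids along the middle object), we obtain a weak equivalence
\[
\Aut^h(\Omega_\sharp(\FM_\Mfd))_{[1]} \;\simeq\; \Aut^h(\Graphs_{\bar H,n}^Z)_{[1]}.
\]
Concatenating with the previous step yields $\Aut^h(\FM_\Mfd^\Q)_{[1]} \simeq \Aut^h(\Graphs_{\bar H,n}^Z)_{[1]}$, and applying Corollary \ref{cor:ExpAut} (equivalently Theorem \ref{thm:intro ExpAut}) gives the first and general assertion of the corollary.

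Finally, for the closed case I would apply Corollary \ref{cor:ExpAut GC}: since the diagonal $\Delta$ is non-degenerate, Corollary \ref{cor:M closed} allows us to choose a representative of the form $Z = Z_\Delta + Z_1$ with $Z_1 \in \MC(\GC_{\bar H,n})$, so Corollary \ref{cor:ExpAut GC} immediately yields the equivalence with $\Exp_\bullet((\GCex_{\bar H,n})^{Z_1})$. The spherical-boundary case is completely analogous using Corollary \ref{cor:M with bnd} and Corollary \ref{cor:ExpAut GCp}. The only delicate point is that the different choices of $Z_1$ provided by Corollaries \ref{cor:M closed}/\ref{cor:M with bnd} are only defined up to gauge equivalence, but the exponential group and its Maurer-Cartan space depend on the gauge class only up to weak equivalence, which suffices. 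The step I expect to demand the most care is the transport of the $[1]$-component (the Torelli part) along the zigzag: one must verify that the action of $\GL(H(\MOp(1)))$ on $\pi_0$ of the automorphism monoid is compatible with each of the quasi-isomorphisms in the chain, so that the preimage of the identity on $H(\Mfd)$ corresponds on all sides. This is the analogue in the module setting of the argument used in \cite{FWAut} for operads, and I would carry it out by noting that each quasi-isomorphism in the zigzag of Corollary \ref{cor:config space} is compatible with the canonical identifications on cohomology in arity one, so the Torelli sub-monoids match.
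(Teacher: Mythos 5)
Your proposal is correct and follows essentially the same route as the paper, which leaves the proof implicit: the paper's preceding paragraphs establish $\Q$-goodness and the equivalence $\Aut^h(\FM_\Mfd^\Q)\simeq\Aut^h(\Omega_\sharp(\FM_\Mfd))$ (after \cite[Proposition 16]{FWAut}), and the corollary is then exactly the combination of Corollary \ref{cor:config space} with Corollaries \ref{cor:ExpAut}, \ref{cor:ExpAut GC} and \ref{cor:ExpAut GCp} that you describe. Your additional remarks on gauge-independence and on transporting the Torelli component along the zigzag are correct points of care that the paper glosses over.
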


As a particular application of the last assertion of the Corollary \ref{cor:aut mfds}, let us consider for $d\geq 2$ the
$n=2d$-dimensional highly connected manifolds 
\begin{align*}
  W_{g,1} &:= W_g \setminus [0,1]^n. 
& &\text{with}
&
W_g &:= \underbrace{(S^d\times S^d) \# \cdots \#(S^d\times S^d)}_{g\times},
\end{align*}
that are parallelizable \cite{KRWframings} and have recently received significant attention in the literature \cite{KRWdiffeom, KRWalgebraic}. 
In this case the Maurer-Cartan element $Z_1\in\GCp_{\bar H,n}$ must be zero, since there are no graphs of degree 1 in $\GCp_{\bar H,n}$:
Indeed, by Lemma \ref{lem:GC deg bound} the only potential graphs of degree $+1$ are tree graphs. 
But a tree graph $\gamma\in \GC_{\bar H, n}$ with $v$ vertices must have at least $v+2$ decorations, and hence has degree 
\[
|\gamma| 
\leq nv -(n-1)(v-1) - \frac n2 (v+2) +1
= -(\frac n2-1)v <0.
\]
Hence we conclude from the last assertion of Corollary \ref{cor:aut mfds} that there is a weak equivalence of simplicial monoids
\[
    \Aut^h(\FM_{W_{g,1}}^\Q)_{[1]} \simeq \Exp_\bullet(\GCp_{\bar H(W_{g,1}), n}).
\]
Note that we used here that $\GCp_{\bar H(W_{g,1}),n}=\GCpex_{\bar H(W_{g,1}),n}$, since $\bar H(W_{g,1})$ is concentrated in the single degree $\frac n2$ and has hence no linear endomorphisms of negative degree.




\appendix 

\section{A technical variant of the Goldman-Millson Theorem}
We will need a somewhat subtle variation of the Goldman-Millson theorem. It is similar to Theorem 1.1 of \cite{DolRog}, proven by Dolgushev-Rogers, however, we need to cope with filtered complete $\SL_\infty$-algebras with filtrations that start at the zeroth filtration degree rather than the first, hence leading to certain convergence issues. To remedy this, we consider two different filtrations, one for ensuring convergence, and one for the cohomology computation.

\begin{thm}[Version of Goldmann-Millson Theorem]\label{thm:GM tech}
  Let
  \[
    F=\{F_k\}_{k\geq 1} : \fg \to \fh   
  \]
  be an $\SL_\infty$-morphism between $\SL_\infty$-algebras $\fg$ and $\fh$ satisfying the following list of conditions.

  \begin{enumerate}
\item $\fg$ is abelian (i.e., just a dg vector space) and is equipped with two descending complete filtrations 
\begin{align*}
  \fg &= \mF^1\fg \supset  \mF^2\fg \supset \cdots
  \\ 
  \fg &= \mG^0\fg \supset  \mG^1\fg \supset \cdots 
  \end{align*}
  compatible with the $\SL_\infty$-structure (i.e., the differential) and such that 
  \begin{equation}\label{equ:FGg deg one}
  \fg^0  = \mG^1\fg^0.   
  \end{equation}
\item $\fh$ is equipped with two descending complete filtrations 
\begin{align*}
  \fh &= \mF^1\fh \supset  \mF^2\fh \supset \cdots
  \\ 
  \fh &= \mG^0\fh \supset  \mG^1\fh \supset \cdots 
  \end{align*}
  compatible with the $\SL_\infty$-structure.
  \item The $\SL_\infty$ operations $\mu_k$ satisfy
  \begin{equation}\label{equ:mu in G1}
  \mu_k(\fh,\dots,\fh)\subset \mG^1\fh  
  \end{equation}
  for all $k\geq 2$.
  \item\label{it:F G compat}
  The two filtrations $\mF$ and $\mG$ are compatible in the sense that 
  $\mG^p\fg\subset \mF^{q(p)}\fg$ with $q(p)\to \infty$ as $p\to \infty$, and similarly for $\fh$. In other words, the identity maps $\fg^{\mG}\to \fg^{\mF}$ and $\fh^{\mG}\to \fh^{\mF}$ are continuous, using the topology induced by $\mG$ on the left-hand side and the topology induced by $\mF$ on the right-hand side.
  \item\label{it:F compat} The morphism $F$ is compatible with both the filtrations $\mF$ and $\mG$ in the sense that 
  \begin{align*}
  F_k(\mF^{p_1}\fg, \dots, \mF^{p_k}\fg) &\subset \mF^{p_1+\dots+p_k}\fh,
  &
  F_k(\mG^{p_1}\fg, \dots, \mG^{p_k}\fg) &\subset \mG^{p_1+\dots+p_k}\fh\, .
  \end{align*}
  \item\label{it:gr qiso} The linear component $F_1$ induces an isomorphism
  \[
 H^p(\gr_{\mG}\fg) \to H^p(\gr_{\mG}\fh)
  \]
  between the associated graded complexes with respect to the $\mG$-filtrations for each $p$.
  \end{enumerate}

Then the morphism $F$ induces a weak equivalence of the Maurer-Cartan spaces 
\begin{equation}\label{equ:MC induced}
\MC_\bullet(\fg) \to \MC_\bullet(\fh),  
\end{equation}
where we use the filtrations $\mF^\bullet$ to complete tensor products in the definition of the Maurer-Cartan spaces.
\end{thm}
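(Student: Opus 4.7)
The strategy is a two-filtration adaptation of the standard Goldman--Millson--Dolgushev--Rogers argument: use the $\mF$-filtration to force convergence of all power series appearing in Maurer--Cartan and gauge data, and use the $\mG$-filtration to run the obstruction theory. The crucial structural observation is that by condition \eqref{equ:mu in G1}, twisting any $\SL_\infty$-structure by a degree-zero element (which in $\fg$ automatically sits in $\mG^1$ by \eqref{equ:FGg deg one}, and in $\fh$ after pushing forward) preserves the $\mG$-associated graded complex, so that $\gr_{\mG}\fg^\tau \cong \gr_{\mG}\fg$ and $\gr_{\mG}\fh^\sigma \cong \gr_{\mG}\fh$ for all MC elements $\tau,\sigma$. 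Hence the hypothesis (\ref{it:gr qiso}) automatically propagates to every twisted complex.

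First I would reduce to a nilpotent setting via the $\mF$-tower. Since both $\fg$ and $\fh$ are $\mF$-complete and the $\SL_\infty$-operations and the morphism $F$ respect $\mF$ by hypotheses (1), (2) and (\ref{it:F compat}), one has $\MC_\bullet(\fg) = \lim_p \MC_\bullet(\fg/\mF^p\fg)$ and likewise for $\fh$, and the tower maps are Kan fibrations (standard verification using the lifting of MC elements along surjective $\SL_\infty$-morphisms of nilpotent algebras). So it suffices to show that each induced morphism
\[
F^{(p)}:\fg/\mF^p\fg \longrightarrow \fh/\mF^p\fh
\]
gives a weak equivalence on $\MC_\bullet$. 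On these truncations, the inherited $\mG$-filtration is now \emph{bounded}: by hypothesis (\ref{it:F G compat}) we have $\mG^s\fg \subset \mF^{q(s)}\fg$ with $q(s) \to \infty$, hence for $s$ sufficiently large $\mG^s\fg \subset \mF^p\fg$, making the induced $\mG$-filtration on $\fg/\mF^p\fg$ (and similarly on $\fh/\mF^p\fh$) nilpotent in the classical sense.

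Next I would verify that $F^{(p)}_1$ induces a quasi-isomorphism on $\mG$-associated graded complexes of the truncations. Using the identification
\[
\gr^s_{\mG}(\fg/\mF^p\fg) \cong (\mG^s\fg + \mF^p\fg)/(\mG^{s+1}\fg + \mF^p\fg),
\]
one bifilters by intersecting the two filtrations and runs the spectral sequence associated to $\mF$ on $\mG^s\fg/\mG^{s+1}\fg$; boundedness of the $\mF$-filtration restricted to each $\mG$-layer (a consequence of $\mG^s \subset \mF^{q(s)}$ with $q(s) \to \infty$, applied inside $\mG^s$) together with hypothesis (\ref{it:gr qiso}) makes the spectral sequence converge and implies that $F_1$ remains a quasi-isomorphism at each graded piece of the truncation. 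Having verified this, one applies the standard Dolgushev--Rogers theorem \cite[Theorem 1.1]{DolRog} at each level $p$ to the nilpotent pair $(\fg/\mF^p\fg, \fh/\mF^p\fh)$ with their bounded $\mG$-filtrations, obtaining weak equivalences $\MC_\bullet(F^{(p)})$. Taking the homotopy limit over the tower yields the desired weak equivalence \eqref{equ:MC induced}.

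\textbf{Main obstacle.} The technical heart is the compatibility verification in the previous paragraph: showing that the bifiltered spectral sequence comparing $\mG$-associated graded complexes of the truncations with $\gr_{\mG}\fg$ and $\gr_{\mG}\fh$ degenerates to a quasi-isomorphism, despite the fact that the two filtrations interact only through the asymptotic relation $q(s) \to \infty$ with no explicit comparison. A cleaner alternative, which avoids invoking Dolgushev--Rogers as a black box, is to re-do the classical obstruction arguments for surjectivity/injectivity on $\pi_0$ and for higher $\pi_k$ directly, inducting on the $\mG$-filtration but working in each stage modulo $\mF^p$; here the role of \eqref{equ:FGg deg one} is to guarantee that MC perturbations sit in $\mG^1$ where the induction starts, and the compatibility \eqref{equ:mu in G1} ensures the twisted differentials used in lifting higher homotopies still have trivial effect on $\gr_{\mG}$. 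Either route requires the careful interplay of the two filtrations that condition (\ref{it:F G compat}) is designed to encode.
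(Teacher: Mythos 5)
Your overall architecture (reduce along the $\mF$-tower to nilpotent quotients, apply Dolgushev--Rogers levelwise, pass to the limit) is different from the paper's, and it contains a genuine gap at exactly the step you flag as the ``technical heart''. To apply \cite[Theorem 1.1]{DolRog} to $\fg/\mF^p\fg \to \fh/\mF^p\fh$ you need $F_1$ to induce a quasi-isomorphism on the $\mG$-associated graded of these quotients. Hypothesis (\ref{it:gr qiso}) only gives a quasi-isomorphism $\gr_{\mG}\fg \to \gr_{\mG}\fh$; since
\[
\gr^s_{\mG}(\fg/\mF^p\fg) \cong \gr^s_{\mG}\fg \big/ \mathrm{im}\bigl(\mG^s\fg\cap\mF^p\fg\bigr),
\]
what you actually need is that $F_1$ is a quasi-isomorphism on each \emph{bigraded} piece of the two filtrations, i.e.\ a filtered quasi-isomorphism of $\gr_{\mG}$ with respect to the induced $\mF$-filtration. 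That is strictly stronger than hypothesis (\ref{it:gr qiso}) and does not follow from it: a quasi-isomorphism of filtered complexes need not induce quasi-isomorphisms on the quotients by a filtration step (take an acyclic two-term complex, filtered so that the top degree is the deeper layer, mapping to $0$). Your bifiltered spectral sequence does converge, but its $E_1$-page is precisely the bigraded comparison over which you have no control, so the argument does not close.

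The paper avoids truncating altogether. It places the square $\MC_\bullet(\mG^1\fg)\to\MC_\bullet(\mG^1\fh)$ over $\MC_\bullet(\fg)\to\MC_\bullet(\fh)$: the top arrow is a weak equivalence by the standard Goldman--Millson theorem (there the $\mG$-filtration starts in degree one), condition \eqref{equ:FGg deg one} together with $H^1(\gr_{\mG}\fh)=0$ lets one gauge every Maurer--Cartan element of $\fh$ into $\mG^1\fh$ (giving surjectivity on $\pi_0$), injectivity on $\pi_0$ is shown by an explicit manipulation of $1$-cells (rectification and correction by exact terms) pushing the relevant gauge into $\mG^1\fh$, and the higher homotopy groups are computed via Berglund's theorem as $H^{-k}$ of the twisted algebras, where only the total --- not bigraded --- quasi-isomorphism on $\gr_{\mG}$ is needed. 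To salvage your route you would have to either strengthen (\ref{it:gr qiso}) to a bigraded statement or replace the levelwise weak equivalences by a pro-equivalence argument on the towers; as written the proof does not go through.
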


To prefer for the proof of the Theorem, we first need to show some technical results about the Maurer-Cartan space.

\subsection{1-cells and concatenation}
\label{sec:onecell}
Let us recall a few well known technical results about the Maurer-Cartan space $\MC_\bullet(\fg)$ of a filtered complete $\SL_\infty$-algebra $\fg$.
0-cells in the simplicial set $\MC_\bullet(\fg)$ are just Maurer-Cartan elements.
1-cells are Maurer-Cartan elements $A\in \MC(\fg\hotimes \Omega(\Delta^1))$.
Concretely, we can separate $A$ into its 0- and 1-form parts and write
\[
A = \alpha(t) + a(t) dt.
\]
Then the Maurer-Cartan equation for $A$ reads, explicitly,
\begin{align}\label{equ:onecell MC}
\sum_{k\geq 1} \frac 1 {k!} \mu_k(\alpha(t),\dots,\alpha(t))&=0
&
\frac d {dt} \alpha(t)
&=
D_\alpha a(t) 
:= \sum_{k\geq 0}
\frac 1 {k!}
\mu_{k+1}(\alpha(t),\dots,\alpha(t),a(t)).
\end{align}
If there is a one-cell $A$ as above, we say that it connects the Maurer-Cartan (MC) elements $\alpha(0)$ and $\alpha(1)$. We also say that the latter MC elements are gauge equivalent, and call $A$ a gauge transformation, slightly abusing notation.
We shall use below the following well-known Lemmas, that we shall present with brief proof sketches. (See, e.g., \cite{DolRog} for more detailed discussions.)

\begin{lemma}\label{lem:IVP}
Let $a(t)\in \fg\hotimes \Omega(\Delta^1)$ be an arbitrary degree 0 element and let $\alpha_0\in \MC(\fg)$ be a Maurer-Cartan element. Then there is a unique one-cell $A = \alpha(t) + a(t) dt.$ such that $\alpha(0)=\alpha_0$.
\end{lemma}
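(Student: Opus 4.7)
The plan is to solve the initial value problem for the ODE appearing in \eqref{equ:onecell MC} and then to argue that the pointwise Maurer--Cartan condition is automatically propagated from $t=0$. After decomposing $A = \alpha(t) + a(t)\, dt$ the system \eqref{equ:onecell MC} splits into the pointwise equation $\mU(\alpha(t)) = 0$ and the flow equation $\dot\alpha(t) = D_{\alpha(t)} a(t)$. Since the prescribed data $\alpha_0 \in \MC(\fg)$ and $a(t)$ provide an initial value for the flow equation alone, I would first solve that equation without reference to the MC constraint, and then show that the constraint is conserved along the flow.

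For the ODE I would use Picard iteration, setting $\alpha^{(0)}(t) := \alpha_0$ and
\[
\alpha^{(n+1)}(t) := \alpha_0 + \int_0^t D_{\alpha^{(n)}(s)} a(s)\, ds.
\]
Compatibility of the $\SL_\infty$ operations with the complete filtration $\mF^\bullet$, see \eqref{equ:Linfty filtration compat}, implies that the difference $D_{\alpha^{(n)}} a - D_{\alpha^{(n-1)}} a$ depends on $\alpha^{(n)} - \alpha^{(n-1)}$ only through the higher brackets $\mu_{k+1}$ with $k \geq 1$, each of which is strictly multilinear in $\alpha$. Consequently the successive differences $\alpha^{(n+1)} - \alpha^{(n)}$ gain at least one filtration degree per step, and by completeness of $\mF^\bullet$ the sequence converges to a solution $\alpha(t)$. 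Applying the same estimate to the difference of two candidate solutions yields uniqueness. This convergence argument is the main technical obstacle; everything else is formal.

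The remaining point is the conservation of the MC equation. Define the curvature $C(t) := \mU(\alpha(t)) \in \fg \hotimes \Omega(\Delta^1)$. Differentiating in $t$ and inserting the flow equation yields $\dot C(t) = D_{\alpha(t)} D_{\alpha(t)} a(t)$. Differentiating the $\SL_\infty$ relation $D\mU(x)[\mU(x)] = 0$ once more in the auxiliary direction $a$ produces the bilinear identity
\[
D_\alpha D_\alpha a = -D^2 \mU(\alpha)[a, \mU(\alpha)] = -D^2 \mU(\alpha)[a, C],
\]
in which $C$ appears linearly. Hence $C(t)$ satisfies a linear first-order ODE with initial condition $C(0) = \mU(\alpha_0) = 0$, and the Picard argument of the previous paragraph, applied to this linear equation, forces $C(t) \equiv 0$. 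Thus $\alpha(t)$ remains Maurer--Cartan for all $t$, and $A = \alpha(t) + a(t)\, dt$ is the desired unique one-cell.
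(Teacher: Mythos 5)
Your proposal is correct and follows essentially the same route as the paper: the paper likewise solves the flow equation $\dot\alpha = D_{\alpha}a$ by fixed-point iteration on its integral form (with convergence supplied by the complete filtration $\mF^\bullet$) and then notes that the Maurer--Cartan constraint is automatically propagated from $t=0$. Your explicit verification that the curvature $C(t)=\mU(\alpha(t))$ satisfies a linear ODE with $C(0)=0$ is precisely the ``small computation'' the paper leaves to the reader.
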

\begin{proof}
The second equation of \eqref{equ:onecell MC} yields an initial value problem for $\alpha(t)$ that can for example be solved by fixed-point iteration on the integral form of the equation
\[
\alpha(t) = \int_0^t D_{\alpha(s)} a(s)ds.
\]
The resulting element $A$ then automatically satisfies the MC equation if so does the initial condition $\alpha_0$, as a small computation shows.
\end{proof}
The next Lemma is just a special case of the fact that $\MC_\bullet(\fg)$ is a Kan complex.
\begin{lemma}\label{lem:onecell concat}
Suppose that $A,B\in \MC_1(\fg)$ are two one-cells, such that $A=\alpha(t)+a(t)dt$ connects the Maurer-Cartan elements $\alpha_0=\alpha(0)$ and $\alpha_1=\alpha(1)$ and $B=\beta(t)+b(t)dt$ connects $\alpha_1=\beta(0)$ and $\alpha_2=\beta(1)$. Then there is a one-cell $C\in \MC_1(\fg)$ connecting $\alpha_0$ to $\alpha_2$, such that $A,B,C$ form the boundary of a 2-cell $X\in \MC_2(\fg)$.
\end{lemma}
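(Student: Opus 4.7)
The statement is the inner horn-filling condition $\Lambda^2_1 \hookrightarrow \Delta^2$ for the simplicial set $\MC_\bullet(\fg)$, a standard property of Maurer-Cartan simplicial sets of filtered complete $\SL_\infty$-algebras. I would give a direct construction of the filler $X \in \MC_2(\fg)$ with $d_2 X = A$ and $d_0 X = B$; the required 1-cell is then $C := d_1 X$.

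Parameterize $\Delta^2 = \{(s,t) : 0 \le t \le s \le 1\}$ with vertices $v_0=(0,0)$, $v_1=(1,0)$, $v_2=(1,1)$, so that the faces $d_2, d_0, d_1$ correspond respectively to $t=0$, $s=1$, and $s=t$. I would seek the filler in the form
\[
X = \gamma(s,t) + \eta_s(s,t)\, ds + \eta_t(s,t)\, dt + \zeta(s,t)\, ds\, dt \in (\fg \hotimes \Omega(\Delta^2))_0,
\]
and prescribe the 1-form coefficients a priori by $\eta_s(s,t) := a(s)$ and $\eta_t(s,t) := b(t)$, matching the connection parts of $A$ and $B$ on the respective faces. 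The Maurer-Cartan equation for $X$ then decomposes by form-degree into the pointwise MC equation for $\gamma$, two first-order transport equations for $\partial_s\gamma$ and $\partial_t\gamma$ of the type appearing in \eqref{equ:onecell MC}, and a 2-form equation determining $\zeta$ algebraically from $\gamma, \eta_s, \eta_t$. I would solve this system by iterated integration starting from the initial datum $\gamma(0,0) = \alpha_0$: first integrate in $s$ along the edge $t=0$, which by uniqueness in Lemma \ref{lem:IVP} forces $\gamma(s,0) = \alpha(s)$ and hence $X|_{t=0} = A$; then for each fixed $s$ integrate in $t$, obtaining $\gamma$ on all of $\Delta^2$. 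Convergence of both integrations is ensured by completeness of $\fg$ with respect to the filtration $\mF$, exactly as in the proof of Lemma \ref{lem:IVP}.

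The main subtlety will be to check that the \emph{full} Maurer-Cartan equation is satisfied on all of $\Delta^2$, rather than merely the transport equations along coordinate curves. This is a consistency check that reduces to the $\SL_\infty$-Jacobi identities, which imply that the ``curvature'' of the candidate Maurer-Cartan equation is itself covariantly transported along the integration flow; it therefore vanishes throughout $\Delta^2$ as soon as it vanishes at the initial vertex $v_0$, where it does because $\alpha_0$ is a Maurer-Cartan element. The identity $d_0 X = B$ then follows from uniqueness in Lemma \ref{lem:IVP} applied on the edge $s=1$, using that $\eta_t|_{s=1} = b$ and the initial value $\gamma(1,0) = \alpha_1 = \beta(0)$ agree with the data defining $B$. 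Setting $C := d_1 X$ completes the construction.
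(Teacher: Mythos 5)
Your overall strategy---build a Maurer--Cartan element on (a half of) the square by solving transport equations, identify two of its faces with $A$ and $B$ via uniqueness in Lemma \ref{lem:IVP}, and take $C:=d_1X$---is the right one and is essentially the paper's. The gap is in the ansatz: you prescribe \emph{both} one-form coefficients, $\eta_s:=a(s)$ and $\eta_t:=b(t)$, and this over-determines the Maurer--Cartan equation. Once $\gamma$ is fixed by $\gamma(s,0)=\alpha(s)$ and the $t$-transport $\partial_t\gamma=D_\gamma\eta_t$ (with $D_\gamma$ the twisted differential as in \eqref{equ:onecell MC}), the $ds$-component of the MC equation, $\partial_s\gamma=D_\gamma\eta_s$, is an \emph{additional} constraint which does not involve $\zeta$ at all and which generically fails for $t>0$. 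Already for $\fg$ a dg Lie algebra, the defect $E:=\partial_s\gamma-D_\gamma a(s)$ vanishes at $t=0$ but satisfies (up to signs) $\partial_t E=[E,b(t)]+\bigl([D_\gamma a,b]+[a,D_\gamma b]\bigr)$, a linear transport equation with a generically nonzero source, so $E$ does not remain zero off the edge $t=0$. Similarly the two-form component reduces to $D_\gamma\zeta=\pm[\eta_s,\eta_t]+\cdots$, whose right-hand side is in general not even $D_\gamma$-closed, let alone exact. So the ``consistency check'' you defer to the $\SL_\infty$-Jacobi identities is exactly the step that breaks: the curvature is indeed covariantly transported, but with a source term created by the over-prescription, and it does not vanish.

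The repair is what the paper does: apply Lemma \ref{lem:IVP} with $\fg\hotimes\Omega(\Delta^1)$ (coordinate $t$) in place of $\fg$, taking the $1$-cell $A$ as the initial Maurer--Cartan element and the $t$-independent datum $b(s)$ as the $ds$-connection. This yields $Y=\alpha(t,s)+a(t,s)\,dt+b(s)\,ds\in\MC(\fg\hotimes\Omega(\Delta^1\times\Delta^1))$ in which only the $ds$-coefficient is prescribed, while the $dt$-coefficient $a(t,s)$ is part of the unknown, determined by the $s$-flow (it equals $a(t)$ only at $s=0$). No separate consistency check is then needed, since $Y$ is a Maurer--Cartan element by construction; the face carrying $b(s)\,ds$ is identified with $B$ by uniqueness in Lemma \ref{lem:IVP}, and restricting $Y$ to the triangle gives the filler $X$, with $C$ read off the diagonal. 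If you modify your ansatz to prescribe only one of $\eta_s,\eta_t$ and let the other be determined by the flow, your argument becomes correct and coincides with the paper's.
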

\begin{proof}
To get a rather explicit formula, we may apply Lemma \ref{lem:IVP} for $\fg\hotimes \Omega(\Delta^1)$ in place of $\fg$ and obtain a Maurer-Cartan element $Y\in \MC(\fg\hotimes \Omega(\Delta^1\times \Delta^1))$ such that
\[
Y = \alpha(t,s) + a(t,s) dt
+ b(s) ds
\]
and $\alpha(t,0)=\alpha(t)$ and $a(t,0)=a(t)$. Then automatically $\alpha(0,s)+b(s)ds$ is a one-cell such that the $s=0$-initial value agrees with $\beta(0)$, and hence $\alpha(0,s)=\beta(s)$ by uniqueness of solutions of the initial value problem.
Now the desired horn filler $X$ can be obtained by restricting $Y$ to the half $\{0\leq s\leq t\leq 1\}\subset \Delta^1\times \Delta^1$ of the square. In particular $C$ then has the form 
\[
C = \alpha(t,t) + (a(t,t)+b(t) ) dt.
\]
Note in particular that $a(t,t)=a(t)+b(t)+(\cdots)$, where $(\cdots)$ are correction terms that are computed from $\alpha,a,b$ by applying nontrivial natural operations in $\fg$, i.e., just $\SL_\infty$ operations or integrals on the polynomial coefficients.
\end{proof}

\begin{lemma}\label{lem:onecell gauge}
  Let $A=\alpha(t)+a(t)dt$ be a 1-cell and let $c(t)\in \fg\hotimes \Omega^0(\Delta^1)$ be an arbitrary degree $-2$ element.
  Then 
  \[
    \tilde A= \alpha(t)+(a(t)+D_{\alpha(t)} c(t) )dt
  \]
  is still a 1-cell, where $D_{\alpha(t)}$ is the $\alpha$-twisted differential as in \eqref{equ:onecell MC}.
\end{lemma}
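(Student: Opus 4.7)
The plan is to verify the two conditions of \eqref{equ:onecell MC} for $\tilde A$ directly. The first condition involves only the $0$-form part of $\tilde A$, which coincides with that of $A$, namely $\alpha(t)$; hence it is immediate from the assumption that $A$ is a $1$-cell. So the entire content of the lemma is the second condition, which after substitution becomes
\[
\frac{d}{dt}\alpha(t) \;=\; D_{\alpha(t)}\bigl(a(t) + D_{\alpha(t)}c(t)\bigr)
\;=\; D_{\alpha(t)}a(t) + D_{\alpha(t)}^2 c(t).
\]
Since $A$ is a $1$-cell, the left hand side equals $D_{\alpha(t)}a(t)$, so the lemma reduces to the identity $D_{\alpha(t)}^2 c(t) = 0$.

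The key observation is that for each fixed $t$, the element $\alpha(t)$ is a Maurer--Cartan element of $\fg\hotimes\Omega^0(\Delta^1)$ (viewed over the ground ring $\Omega^0(\Delta^1)$, or pointwise over $\Q$), and the operator $D_{\alpha(t)}$ is by construction the linear part $\mu_1^{\alpha(t)}$ of the twisted $\SL_\infty$-structure $\fg^{\alpha(t)}$ recalled in the first part of section~\ref{sec:Linfty_MC}. Because $\alpha(t)$ satisfies the MC equation, $\fg^{\alpha(t)}$ is a (flat, i.e., non-curved) $\SL_\infty$-algebra, and hence its unary operation $\mu_1^{\alpha(t)} = D_{\alpha(t)}$ squares to zero. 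This is a direct consequence of the $\SL_\infty$-relations, which in the schematic form \eqref{equ:Linfty structure series} read $D\mU^{\fg^{\alpha(t)}}(x)[\mU^{\fg^{\alpha(t)}}(x)]=0$; expanding around $x=0$ and using $\mu_0^{\alpha(t)}=\curv(\alpha(t))=0$ yields $(\mu_1^{\alpha(t)})^2=0$.

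Applying this to $c(t)$ (which lies in the appropriate degree, $c(t)\in(\fg\hotimes\Omega^0(\Delta^1))^{-2}$, matching the grading so that $D_{\alpha(t)}c(t)\,dt$ is of total degree $0$ and can be added to $a(t)\,dt$) gives $D_{\alpha(t)}^2 c(t)=0$, which completes the verification. There is no real obstacle in this argument; it is essentially the observation that the modification $a(t)\mapsto a(t)+D_{\alpha(t)}c(t)$ is an exact shift in the twisted complex $\fg^{\alpha(t)}$ and hence does not affect the evolution equation governing $\alpha(t)$.
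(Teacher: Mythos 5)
Your proof is correct and follows exactly the same route as the paper's: the zero-form part is unchanged so the first equation of \eqref{equ:onecell MC} is automatic, and the second reduces to $D_{\alpha(t)}\circ D_{\alpha(t)}=0$, which holds because $\alpha(t)$ is a Maurer--Cartan element. The only difference is that you spell out why the twisted unary operation squares to zero, which the paper simply asserts.
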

\begin{proof}
We need to check the MC equations \eqref{equ:onecell MC}. The first is obviously satisfied, and for the second we just use that for Maurer-Cartan elements $\alpha$ we have
\[
D_\alpha \circ D_\alpha = 0.
\]
\end{proof}

We also cite the following result.
\begin{lemma}[{\cite[Lemma B.2]{DolRog}}]\label{lem:onecell rectifiable}
  Let $A=\alpha(t)+a(t)dt\in \MC_1(\fg)$ be a 1-cell. Then there is a 1-cell $\tilde A=\tilde \alpha(t)+\tilde a dt$ with $\tilde a \in \fg^{-1}$ constant, such that $\alpha(0)=\tilde \alpha(0)$ and $\alpha(1)=\tilde \alpha(1)$.
\end{lemma}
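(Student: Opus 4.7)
I would prove the rectification lemma by reformulating it as a fixed-point/limit problem in the pro-nilpotent Lie algebra $\fg^{-1}$ equipped with the filtration $\mF^\bullet$. The idea is that a 1-cell $A = \alpha(t) + a(t)\,dt$ encodes a ``time-dependent gauge transformation'' connecting $\alpha(0)$ to $\alpha(1)$, and the goal is to replace it by a ``time-independent'' one with the same endpoints: an element $\tilde a \in \fg^{-1}$ whose associated constant-coefficient initial value problem $\dot{\tilde\alpha} = D_{\tilde\alpha}\tilde a$, $\tilde\alpha(0) = \alpha(0)$ (whose unique solution is given by Lemma \ref{lem:IVP}) satisfies $\tilde\alpha(1) = \alpha(1)$.

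First, I would set up the approximate solution. Define $\Phi : \fg^{-1} \to \fg^{0}$ by $\Phi(b) := \gamma_b(1)$, where $\gamma_b$ is the unique 1-cell obtained from Lemma \ref{lem:IVP} with constant $dt$-component $b$ and initial value $\alpha(0)$. The lemma reduces to finding $\tilde a \in \fg^{-1}$ with $\Phi(\tilde a) = \alpha(1)$. As a starting guess, take $\tilde a^{(0)} := \int_0^1 a(s)\,ds$. A direct inspection of the two initial value problems shows that, modulo $\mF^2\fg$, the twisted differentials $D_{\alpha(t)}$ and $D_{\gamma_{\tilde a^{(0)}}(t)}$ both reduce to the plain differential $d$, whence $\Phi(\tilde a^{(0)}) \equiv \alpha(0) + d\tilde a^{(0)} \equiv \alpha(1) \pmod{\mF^{2}\fg^{0}}$.

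Second, I would run a filtered iteration: assuming $\tilde a^{(k)} \in \fg^{-1}$ has been produced with $\Phi(\tilde a^{(k)}) - \alpha(1) \in \mF^{k+2}\fg^{0}$, I would build $\tilde a^{(k+1)} = \tilde a^{(k)} + \delta_{k}$ with $\delta_{k} \in \mF^{k+2}\fg^{-1}$ that cancels the error to next order. The correction $\delta_k$ is extracted as follows: the original $A$ and the rectified candidate $\gamma_{\tilde a^{(k)}}(t)+\tilde a^{(k)}\,dt$ connect the same initial point $\alpha(0)$, so Lemma \ref{lem:onecell concat} assembles them into a 2-cell whose third boundary edge is a 1-cell from $\alpha(1)$ to $\Phi(\tilde a^{(k)})$; this witness 1-cell, written to leading order in the $\mF$-filtration, furnishes a cocycle in $\fg$ (via the gauge freedom of Lemma \ref{lem:onecell gauge}) that can be absorbed into $\tilde a^{(k)}$ to yield the improved approximation $\tilde a^{(k+1)}$. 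By completeness of $\fg$ with respect to $\mF^\bullet$, the sequence $(\tilde a^{(k)})$ converges to the desired $\tilde a \in \fg^{-1}$, and $\tilde A := \gamma_{\tilde a}(t) + \tilde a\, dt$ solves the problem.

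The main obstacle is the filtered inductive step: verifying that the error $\Phi(\tilde a^{(k)}) - \alpha(1)$ is actually reducible by an adjustment in $\mF^{k+2}\fg^{-1}$. This requires a careful linearization argument for the flow map $\Phi$ at filtration level $k{+}2$: concretely, that the derivative of $\Phi$, reduced modulo $\mF^{k+3}$, is the plain differential $d$ applied to $b$ (plus lower-filtration correction terms already fixed by $\tilde a^{(k)}$), so that the cohomological obstruction to solving the next correction in fact vanishes because both $\Phi(\tilde a^{(k)})$ and $\alpha(1)$ are Maurer–Cartan elements lying in the same gauge equivalence class. Once this filtered linearization is in place, the limit argument is formal and completes the proof.
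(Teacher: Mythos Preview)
The paper does not give its own proof of this lemma; it simply cites \cite[Lemma B.2]{DolRog}. So there is no argument in the paper to compare your proposal against.

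Your filtered fixed-point scheme is a reasonable strategy, but the inductive step as written has a gap. You need, at stage $k$, an element $\delta_k\in\mF^{k+2}\fg^{-1}$ with $d\delta_k\equiv -(\Phi(\tilde a^{(k)})-\alpha(1))\pmod{\mF^{k+3}}$. The Maurer--Cartan equations for $\Phi(\tilde a^{(k)})$ and $\alpha(1)$ only show that the error is \emph{closed} modulo $\mF^{k+3}$; you assert it is \emph{exact} because the two elements are gauge equivalent. But gauge equivalence gives you a 1-cell $C=\gamma(t)+c(t)\,dt$ whose $dt$-component $c(t)$ has no reason to lie in $\mF^{k+2}$ (its leading part is $a(t)-\tilde a^{(k)}$, which is only in $\mF^{1}$), so the na\"ive candidate $\int_0^1 c(t)\,dt$ does not furnish the required $\delta_k$. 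To close the gap you would need an additional argument, for instance passing to the quotient $\fg/\mF^{k+3}$ where $\mF^{k+2}$ becomes central and analysing the gauge action on that central extension, or showing that the connecting 1-cell can itself be inductively pushed into higher filtration; neither step is carried out in your sketch. The proof in \cite{DolRog} proceeds more directly and avoids this iterated surjectivity problem.
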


\subsection{Proof of Theorem \ref{thm:GM tech}}

We will show the Theorem in several steps.
First note that the conditions \ref{it:F G compat} guarantees that the identity map induces morphisms of simplicial sets 
\begin{align*}
  \MC^\mG(\mG^1\fg)
  &\to \MC^\mF(\fg) \\
  \MC^\mG(\mG^1\fh)
  &\to \MC^\mF(\fh),
\end{align*}
where we use the filtrations $\mG$ to define the Maurer-Cartan spaces on the left, as indicated in the notation $\MC^\mG_\bullet(-)$, and the filtrations $\mF$ on the right-hand side.
Together with condition \ref{it:F compat} this means that we have a commutative square of simplicial sets
\begin{equation}\label{equ:GM square}
\begin{tikzcd}
  \MC^\mG(\mG^1\fg) \ar{r}{\sim} \ar{d} & \MC^\mG(\mG^1\fh) \ar{d} \\
  \MC^\mF(\fg) \ar{r}{\eqref{equ:MC induced}}& \MC^\mF(\fh).
\end{tikzcd}
\end{equation}

Here the upper arrow is a weak equivalence by the standard Goldman-Millson Theorem \cite[Theorem 1.1]{DolRog}, which is applicable to the restricted morphism of $\SL_\infty$-algebras $F: \mG^1\fg\to \mG^1\fh$.

\begin{lemma}
  Under the conditions of Theorem \ref{thm:GM tech} each Maurer-Cartan element in $\fg$ (resp. in $\fh$) is gauge equivalent to one in $\mG^1\fg$ (respectively $\mG^1\fh$).
\end{lemma}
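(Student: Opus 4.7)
The plan is to dispatch the $\fg$-case trivially from \eqref{equ:FGg deg one} and reduce the $\fh$-case to a single ``linear'' gauge transformation that kills the $\mG^0/\mG^1$ component of any Maurer--Cartan element in one step. Since $\fg$ is abelian, every Maurer--Cartan element of $\fg$ is by definition a degree-$0$ element, and condition \eqref{equ:FGg deg one} gives $\fg^0 = \mG^1\fg^0 \subset \mG^1\fg$, so there is nothing to do on the $\fg$-side. All the work is therefore on the $\fh$-side.

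Fix $\beta \in \MC(\fh)$ and let $\bar\beta \in \gr^0_\mG\fh = \fh/\mG^1\fh$ denote its image. The key linearization observation is that, by \eqref{equ:mu in G1}, the operations $\mu_k$ for $k\geq 2$ vanish modulo $\mG^1\fh$, so the Maurer--Cartan equation for $\beta$ collapses mod $\mG^1\fh$ to $\mu_1(\bar\beta)=0$; in particular $\bar\beta$ is a $0$-cocycle in the (now abelian) quotient dg vector space $\gr^0_\mG\fh$. I would next exhibit $\bar\beta$ as a coboundary there: by hypothesis \ref{it:gr qiso} one has $H^0(\gr^0_\mG\fh) \cong H^0(\gr^0_\mG\fg)$, and the right-hand side vanishes because $(\gr^0_\mG\fg)^0 = \fg^0/\mG^1\fg^0 = 0$ by \eqref{equ:FGg deg one}. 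Thus there exists $c \in \fh^{-1}$ with $\mu_1(c) \equiv \bar\beta \pmod{\mG^1\fh}$. Applying Lemma \ref{lem:IVP} with constant $1$-form datum $a(t) := -c$ and initial condition $\alpha(0) = \beta$, I obtain a unique $1$-cell $A = \alpha(t) - c\,dt \in \MC_1(\fh)$, with the Picard iteration converging in the $\mF$-topology as usual.

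To see that the endpoint lies in $\mG^1\fh$, I would project the ODE
\[
\frac{d\alpha}{dt} \;=\; -D_\alpha c \;=\; -\mu_1(c) \;-\; \sum_{k\geq 1}\tfrac{1}{k!}\mu_{k+1}(\alpha,\ldots,\alpha,c)
\]
modulo $\mG^1\fh$: every summand in the series lies in $\mG^1\fh$ by \eqref{equ:mu in G1}, so the reduced equation reads $d\bar\alpha/dt = -\mu_1(\bar c) = -\bar\beta$ with $\bar\alpha(0) = \bar\beta$, whose unique solution is $\bar\alpha(t) = (1-t)\bar\beta$. In particular $\bar\alpha(1) = 0$, i.e.\ $\alpha(1) \in \mG^1\fh$, which is the desired gauge equivalence. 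The one technical point I expect to work through carefully is verifying that the projection $\fh\hotimes\Omega(\Delta^1)\to (\fh/\mG^1\fh)\otimes\Omega(\Delta^1)$ honestly intertwines the ODE with its ``mod $\mG^1$'' version: this is where condition \ref{it:F G compat} quietly enters, ensuring that the $\mF$-limit in which the Picard iterates converge is compatible with the $\mG$-quotient, so that each iterate's polynomial $t$-coefficients have a well-defined image in the quotient and the limit respects the projection. Once this bookkeeping is in place, the whole statement reduces to the elementary linear calculation above in the abelian dg vector space $\gr^0_\mG\fh$.
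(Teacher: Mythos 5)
Your proof is correct and follows essentially the same route as the paper's: use \eqref{equ:FGg deg one} to dispose of the $\fg$-case, use \eqref{equ:mu in G1} together with condition \ref{it:gr qiso} to see that the class of $\beta$ in $\gr^0_{\mG}\fh$ is an exact cocycle, and then flow along the $1$-cell of Lemma \ref{lem:IVP} with constant gauge parameter to land in $\mG^1\fh$. The only cosmetic difference is that you solve the reduced ODE in the quotient explicitly, where the paper integrates the unreduced ODE and absorbs the higher terms into $O(\mG^1\fh)$.
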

\begin{proof}
The statement for $\fg$ is empty by \eqref{equ:FGg deg one}.
To see the statement for $\fh$, note first that by condition \ref{it:gr qiso} and \eqref{equ:FGg deg one} we have that $H^1(\gr_{\mG}\fh)=0$.
Let $\beta \in \fh$ be a Maurer-Cartan element. Then by the Maurer-Cartan equation and \eqref{equ:mu in G1} we have 
\[
d\beta \in \mG^1\fh.
\]
and hence $\beta+db \in \mG^1\fh$ for some $b\in \fh$ of degree -1.
We may then use Lemma \ref{lem:IVP} to build a 1-cell (i.e., Maurer-Cartan element in $\fh\hotimes \Omega(\Delta^1)$)
\[
B = \beta(t) + bdt
\]
such that $\beta(0)=\beta$, by solving the MC equation (an initial value problem) for $\beta(t)$. (Note that existence of the solution implicitly uses the presence of the filtration $\mF$ for convergence, but has nothing to do with the filtration $\mG$ or any of its properties.)
The MC equation for $B$ states in particular that 
\[
\frac d {dt} \beta(t) = D_B b = db +O(\mG^1\fh),
\]
using again \eqref{equ:mu in G1}. Hence 
\[
\beta(1) = \underbrace{\beta +db}_{\in \mG^1\fh} + O(\mG^1\fh) 
\]
is the desired Maurer-Cartan element in $\mG^1\fh$ gauge equivalent to $\beta$.
\end{proof}

\begin{cor}
The map \eqref{equ:MC induced} induces a surjection on connected components, i.e., on $\pi_0$.
\end{cor}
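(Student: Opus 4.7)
The strategy is to reduce the surjectivity statement for the bottom arrow of the square \eqref{equ:GM square} to the surjectivity (in fact, weak equivalence) of the top arrow, which is covered by the standard Goldman--Millson theorem applied to the restricted $\SL_\infty$-morphism $F\colon \mG^1\fg\to \mG^1\fh$.

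First, given a Maurer--Cartan element $\beta\in \MC(\fh)$, invoke the preceding lemma to produce a $1$-cell in $\MC_\bullet(\fh)$ (constructed via Lemma \ref{lem:IVP}) connecting $\beta$ to some $\beta'\in \mG^1\fh$. In particular, $\beta$ and $\beta'$ lie in the same connected component of $\MC^\mF_\bullet(\fh)$. Since $\beta'$ lies in $\mG^1\fh$ and the $\SL_\infty$-structure respects the $\mG$-filtration, $\beta'$ is equally well a $0$-cell of $\MC^\mG_\bullet(\mG^1\fh)$, and its image under the right-hand vertical map of \eqref{equ:GM square} is $\beta'$ itself (viewed in $\MC^\mF_\bullet(\fh)$).

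Next, apply the classical Goldman--Millson theorem \cite[Theorem 1.1]{DolRog} to the restricted $\SL_\infty$-morphism $F\colon \mG^1\fg \to \mG^1\fh$: the hypotheses \eqref{it:F compat} and \eqref{it:gr qiso}, together with the fact that $\mG^1\fg$ and $\mG^1\fh$ are filtered complete with respect to $\mG^{\bullet\geq 1}$, give that the top arrow of \eqref{equ:GM square} is a weak equivalence, hence in particular surjective on $\pi_0$. Therefore there exists $\alpha\in \mG^1\fg = \fg$ (using \eqref{equ:FGg deg one}) and a $1$-cell in $\MC^\mG_\bullet(\mG^1\fh)$ connecting $F(\alpha)$ to $\beta'$.

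Finally, push this $1$-cell forward through the right-hand vertical map of \eqref{equ:GM square}, which is well defined thanks to condition \eqref{it:F G compat} (ensuring that $\mG$-complete completions map continuously to $\mF$-complete completions). The result is a $1$-cell in $\MC^\mF_\bullet(\fh)$ connecting $F(\alpha)$ to $\beta'$, and concatenating this $1$-cell with the one constructed in the first step (using Lemma \ref{lem:onecell concat}) yields a $1$-cell connecting $F(\alpha)$ to $\beta$ in $\MC^\mF_\bullet(\fh)$. Hence $[\beta]=[F(\alpha)]$ in $\pi_0\MC^\mF_\bullet(\fh)$, proving surjectivity on $\pi_0$. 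The only mildly delicate point is verifying that the gauge path living a priori in the $\mG$-complete tensor product descends to a gauge path in the $\mF$-complete tensor product; this is precisely guaranteed by condition \eqref{it:F G compat}, and poses no real obstacle.
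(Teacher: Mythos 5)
Your proposal is correct and is essentially the paper's argument: the paper proves this by the same $\pi_0$-level diagram chase on the square \eqref{equ:GM square}, using that the top arrow is a bijection on $\pi_0$ (standard Goldman--Millson) and that the vertical arrows are surjective on $\pi_0$ (the preceding lemma, which is vacuous for $\fg$ by \eqref{equ:FGg deg one}). You have merely unpacked that chase into explicit $1$-cells and concatenation, which is a fine, if more verbose, presentation of the same proof.
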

\begin{proof}
  Consider $\pi_0$ of the commutative square \eqref{equ:GM square}. The upper horizontal arrow induces a bijection on $\pi_0$ and by the preceding Lemma the two vertical arrows induce surjections on $\pi_0$. Hence the lower horizontal arrow must induce a surjection on $\pi_0$ as well.
\end{proof}

\begin{lemma}
The morphism \eqref{equ:MC induced} induces an injective map on $\pi_0$.
\end{lemma}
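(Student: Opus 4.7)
\medskip

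The plan is to use the commutative square \eqref{equ:GM square} together with the surjectivity already established. The top arrow is a weak equivalence by the classical Goldman--Millson theorem of \cite{DolRog} applied to the restricted morphism $F\vert_{\mG^1\fg}\colon \mG^1\fg\to \mG^1\fh$ equipped with the $\mG$-filtrations (whose hypotheses are guaranteed by condition (vi) and the fact that $\mG^1$ shifts the filtration into the ``classical'' range). So if I can show that any gauge equivalence in $\MC^\mF(\fh)$ between two elements of $\mG^1\fh$ can be promoted to a gauge equivalence in $\MC^\mG(\mG^1\fh)$, I can transport the result back to $\fg$ via the top arrow and conclude.

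Concretely, suppose $\alpha_0,\alpha_1\in\MC^\mF(\fg)$ with $F(\alpha_0)\sim F(\alpha_1)$ in $\MC^\mF(\fh)$. Using the preceding Lemma I may assume $\alpha_0,\alpha_1\in\mG^1\fg=\fg^0$, and by Lemma \ref{lem:onecell rectifiable} I may take a 1-cell $B=\beta(t)+b\,dt\in\MC^\mF_1(\fh)$ with $b\in\fh^{-1}$ constant, joining $F(\alpha_0)$ to $F(\alpha_1)$. Reducing the Maurer--Cartan equation \eqref{equ:onecell MC} for $B$ modulo $\mG^1\fh$ and using condition (iii) to kill all higher brackets, I obtain $\frac{d}{dt}\bar\beta(t)=d\bar b$ with $\bar\beta(0)=\bar\beta(1)=0$; this forces $\bar\beta\equiv0$ (so $\beta(t)\in\mG^1\fh$ for all $t$) and $d\bar b=0$. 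So $[\bar b]\in H^{-1}(\gr^0_\mG\fh)$ is a well-defined class.

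By condition (vi), $F_1$ induces an isomorphism $H^{-1}(\gr^0_\mG\fg)\xrightarrow{\cong}H^{-1}(\gr^0_\mG\fh)$, so I can choose $b_\fg\in\fg^{-1}$ with $d b_\fg\in\mG^1\fg$ and $F_1(\bar b_\fg)-\bar b$ exact in $\gr^0_\mG\fh$. Using Lemma \ref{lem:IVP}, construct a 1-cell $A_\fg=\alpha(t)+b_\fg\,dt\in\MC^\mF_1(\fg)$ starting at $\alpha_0$; the same mod-$\mG^1\fg$ argument shows $\alpha(t)\in\mG^1\fg$ for all $t$. The image $F(A_\fg)$ is a 1-cell from $F(\alpha_0)$ to $F(\alpha(1))$ whose 1-form part equals $F_1(b_\fg)$ modulo $\mG^1\fh$.

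Now form the concatenation $C:=F(A_\fg)^{-1}\cdot B$ via Lemma \ref{lem:onecell concat}; it is a 1-cell in $\MC^\mF_1(\fh)$ from $F(\alpha(1))$ to $F(\alpha_1)$. The key point is that, by construction of the horn filler together with condition (iii), all corrections introduced by the concatenation lie in $\mG^1\fh$, so the 1-form part of $C$ reduces modulo $\mG^1\fh$ to $-F_1(\bar b_\fg)+\bar b$, which by choice of $b_\fg$ is exact in $\gr^0_\mG\fh$. A single application of Lemma \ref{lem:onecell gauge} (with a lift of the primitive to $\fh^{-2}$) therefore corrects this contribution and yields a 1-cell $C'$ with identical endpoints whose 1-form part lies in $\mG^1\fh$. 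Combined with $\beta$-part in $\mG^1\fh$ (by the same Step 1 analysis applied to $C'$), this means $C'\in\MC^\mG_1(\mG^1\fh)$. Applying the top arrow of \eqref{equ:GM square} (a weak equivalence), I lift $C'$ to a 1-cell in $\MC^\mG_1(\mG^1\fg)$ from $\alpha(1)$ to $\alpha_1$; concatenating with $A_\fg$ (which joins $\alpha_0$ to $\alpha(1)$ in $\MC^\mF(\fg)$) finishes the proof.

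The genuinely delicate step is the verification that the 1-form part of $C=F(A_\fg)^{-1}\cdot B$ reduces modulo $\mG^1\fh$ to $-F_1(\bar b_\fg)+\bar b$: this requires tracking how the horn-filling construction in the proof of Lemma \ref{lem:onecell concat} produces the concatenation, and confirming that every correction term involves at least one $\SL_\infty$-operation $\mu_k$ with $k\ge2$, hence lands in $\mG^1\fh$ by condition (iii). Everything else is formal bookkeeping between the two filtrations, ensured by condition (iv).
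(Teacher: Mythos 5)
Your proof is correct and follows essentially the same route as the paper: rectify the connecting 1-cell, observe via condition (c) that its constant 1-form part is closed modulo $\mG^1\fh$, use condition (f) to produce a preimage in $\fg^{-1}$ and the corresponding 1-cell on the $\fg$-side, concatenate, gauge by the primitive in $\fh^{-2}$ to push the 1-form part into $\mG^1\fh$, and conclude by the classical Goldman--Millson theorem for $\mG^1\fg\to\mG^1\fh$. The only cosmetic differences are that you treat two arbitrary Maurer--Cartan elements directly where the paper reduces to comparison with $0$ and twists at the end, and your phrase ``lift $C'$ to a 1-cell'' should be read as invoking the bijection on $\pi_0$ induced by the top arrow of \eqref{equ:GM square}, which is all that is needed.
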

\begin{proof}
Let $\alpha\in \fg^0=\mG^1\fg^0$ be a Maurer-Cartan element such that $F(\alpha)\in \MC(\fh)$ is gauge equivalent to 0. Using Lemma \ref{lem:onecell rectifiable} we may assume that this gauge equivalence is realized by a 1-cell
\[
B:=\beta(t) + b dt \in \fh\hotimes \Omega(\Delta^1)
\]
such that $\beta(0)=F(\alpha)$, $\beta(1)=0$ and $b\in \fh^{-1}$ is constant.
From the Maurer-Cartan equation for $B$ in integral form and \eqref{equ:mu in G1} we see
\[
\mG^1\fh \ni \beta(1)-\beta(0)
=
\int_0^1 D_{\beta(t)} b dt
=db + O(\mG^1\fh),
\]
and hence 
\[
db \in \mG^1\fh.
\]
Hence by assumption \ref{it:gr qiso} of our Goldman-Millson Theorem there is some $a\in \fg^{-1}$ such that $da\in \mG^1\fg$ and 
$F_1(a)= b+dc$ for some $c\in \fh^{-2}$.
We consider a 1-cell 
\[
  A= (\alpha+(1-t)da) -adt
\] 
connecting the MC element $\alpha+da$ to the gauge equivalent MC element $\alpha$.
The one-cell $F(A)$ has the form 
\[
F(A) = F(\alpha+(1-t)da) - F^{\alpha+(1-t)da}(a) dt
=
F(\alpha+(1-t)da) - (F_1(a) +O(\mG^1\fh) )dt.
\]
We then glue the 1-cells $F(A)$ and $B$ (see Lemma \ref{lem:onecell concat}) to obtain a one-cell
\[
C=\beta'(t) + b'(t)dt  
\]
connecting $\beta'(0)=F(\alpha+da)$ to $\beta'(1)=0$. Furthermore, by the proof of Lemma \ref{lem:onecell concat} we can see that $b'(t)=-F_1(a)+b+O(\mG^1\fh)$, because higher bracket terms are in $\mG^1\fh$ by \eqref{equ:mu in G1}.
Next we can invoke Lemma \ref{lem:onecell gauge} to apply a further gauge transformation by $c$ to $C$, to obtain a 1-cell
\[
D=\beta'(t) + b''(t)dt
\]
such that
\[
  b''(t)=
  b'(t) + D_{\beta'(t)}c
  =
  -F_1(a)+b+dc +O(\mG^1\fh)\in \mG^1\fh.
\]
We have hence constructed a 1-cell in $\mG^1\fh$ connecting $U(\alpha+da)$ to 0.
Hence, by the standard Goldman-Millson Theorem applied to the morphism $F:\mG^1\fg\to \mG^1\fh$ we find that the Maurer-Cartan element (i.e., cocycle) $\alpha+da$ is gauge trivial, i.e., exact, and hence so is $\alpha$.

This argument shows that the preimage under 
\[
[F]: \pi_0\MC_\bullet(\fg) \to \pi_0\MC_\bullet(\fh)
\]
of (the class of) the Maurer-Cartan element $0\in \fh$ consists of a single element. Consider next some Maurer-Cartan element $\alpha\in \fg$ and set $\beta=F(\alpha)\in \mG^1\fh$.
Then the preimage of the class of $\beta$ under $[F]$ above is the same as the preimage of $0$ in the twisted $\SL_\infty$-algebra $\fh^\alpha$ under the twisted morphism $F^\alpha$.
Hence applying the statement already shown to this twisted morphism shows the general statement of the Lemma.
\end{proof}

\begin{proof}[Proof of Theorem \ref{thm:GM tech}]
To end the proof of Theorem \ref{thm:GM tech} it remains to check that on each connected component of the Maurer-Cartan spaces the map \eqref{equ:MC induced} induces a bijection on homotopy groups.
So let $\alpha\in \MC(\fg)$ be a Maurer-Cartan element and let $\beta=F(\alpha)\in \MC(\fh)$.
Then by \cite[Theorem 5.5]{Berglund} the homotopy groups are computed by\footnote{We note that stricly speaking Berglund's result does not cover the case of $\fg$ since our filtration starts at $\mF^0$. However, given that $\fg$ is abelian, the statement can still easily be checked under our conditions.}
\begin{align*}
\pi_k(\MC_\bullet(\fg))&=
H^{-k}(\fg^\alpha)
&
\pi_k(\MC_\bullet(\fh))&=
H^{-k}(\fh^\beta)
\end{align*}
for $k=1,2,\dots$. We hence have to check that the twisted morphism 
\[
F^\alpha : \fg^\alpha \to \fh^\beta
\]
is a quasi-isomorphism (in non-positive degrees). But this readily follows from condition \ref{it:gr qiso} and standard spectral sequence arguments.

\end{proof}



\begin{bibdiv}
\begin{biblist}


\bib{Turchin2}{article}{
author={Arone, G.},
author={Turchin, V.},
     TITLE = {On the rational homology of high-dimensional analogues of
              spaces of long knots},
   JOURNAL = {Geom. Topol.},
    VOLUME = {18},
      YEAR = {2014},
    NUMBER = {3},
     PAGES = {1261--1322},
}

\bib{Turchin3}{article}{
author={Arone, G.},
author={Turchin, V.},
title= {Graph-complexes computing the rational homotopy of high dimensional
  analogues of spaces of long knots},
journal={Ann. Inst. Fourier},
volume={65},
year={2015},
number={1},
pages={1--62},
}

\bib{AxelrodSinger1994}{article}{
  author       = {Axelrod, Scott and Singer, I. M.},
  title        = {Chern-{S}imons perturbation theory. {II}},
  journal = {J. Differential Geom.},
  date         = {1994},
  volume       = {39},
  number       = {1},
  pages        = {173--213},
  issn         = {0022-040X},
  eprint       = {hep-th/9304087},
  eprinttype   = {arXiv},
  url          = {http://projecteuclid.org/euclid.jdg/1214454681},
  coden        = {JDGEAS},
  fjournal     = {Journal of Differential Geometry},
}

\bib{Arnold1969}{article}{
  author       = {Arnold, Vladimir},
  title        = {The cohomology ring of the group of dyed braids},
  journaltitle = {Mat. Zametki},
  date         = {1969},
  volume       = {5},
  pages        = {227--231}
}



\bib{BergerMoerdijk}{article}{
   AUTHOR = {Berger, Clemens},
   author={Moerdijk, Ieke},
    TITLE = {The {B}oardman-{V}ogt resolution of operads in monoidal model
             categories},
  JOURNAL = {Topology},
   VOLUME = {45},
     YEAR = {2006},
   NUMBER = {5},
    PAGES = {807--849},
     ISSN = {0040-9383},
}


\bib{Berglund}{article}{
author={Berglund, Alexander},
title={Rational homotopy theory of mapping spaces via Lie theory for $L_\infty$-algebras},
journal={Homology Homotopy Appl.},
volume={17},
number={2},
year={2015},
pages={343--369},
}

\bib{WBdB}{article}{
    AUTHOR = {Boavida de Brito, Pedro},
    author={Weiss, Michael},
     TITLE = {Manifold calculus and homotopy sheaves},
   JOURNAL = {Homology Homotopy Appl.},
    VOLUME = {15},
      YEAR = {2013},
    NUMBER = {2},
     PAGES = {361--383},
      ISSN = {1532-0073},
}

\bib{WBdB2}{article}{
AUTHOR = {Boavida de Brito, Pedro},
    author={Weiss, Michael},
    title={Spaces of smooth embeddings and configuration categories},
      year={2018},
    journal={J. Topol.},
    volume={11},
    number={1},
    pages={65--143},
}



\bib{Borceux1}{book}{
   author={Borceux, Francis},
   title={Handbook of categorical algebra. 1},
   series={Encyclopedia of Mathematics and its Applications},
   volume={50},
   note={Basic category theory},
   publisher={Cambridge University Press, Cambridge},
   date={1994},
   pages={xvi+345},
   isbn={0-521-44178-1},
   review={\MR{1291599}},
}

\bib{BoardmanVogt}{book}{
    AUTHOR = {Boardman, J. M. and Vogt, R. M.},
     TITLE = {Homotopy invariant algebraic structures on topological spaces},
    SERIES = {Lecture Notes in Mathematics, Vol. 347},
 PUBLISHER = {Springer-Verlag, Berlin-New York},
      YEAR = {1973},
     PAGES = {x+257},
   MRCLASS = {55D35},
  MRNUMBER = {0420609},
MRREVIEWER = {J. Stasheff},
}

\bib{CDI}{article}{
  doi = {10.48550/ARXIV.1911.09474},
  url = {https://arxiv.org/abs/1911.09474},
  author = {Campos, Ricardo and Ducoulombier, Julien and Idrissi, Najib},
  keywords = {Algebraic Topology (math.AT), FOS: Mathematics, FOS: Mathematics, 18M70, 18M75, 18N40},
  title = {Boardman-Vogt resolutions and bar/cobar constructions of (co)operadic (co)bimodules},
  publisher = {arXiv},
  year = {2019},
  copyright = {arXiv.org perpetual, non-exclusive license}
}

\bib{CW}{article}{
      title={A model for configuration spaces of points}, 
      author={Campos, Ricardo},
      author={Willwacher, Thomas},
      year={2016},
      eprint={1604.02043},
      archivePrefix={arXiv},
      primaryClass={math.QA}
}

\bib{Cohen1976}{article}{
  author    = {Cohen, Fred R.},
  title     = {The  homology  of $C_{n+1}$-spaces, $n\geq 0$},
  booktitle = {The homology of iterated loop spaces},
  publisher = {Springer},
  location  = {Berlin},
  series    = {Lecture Notes in Mathematics},
  date      = {1976},
  volume    = {533},
  pages     = {207--351}
}

\bib{CohenTaylor}{article}{
    AUTHOR = {Cohen, F. R. and Taylor, L. R.},
     TITLE = {Computations of {G}el\cprime fand-{F}uks cohomology, the cohomology
              of function spaces, and the cohomology of configuration
              spaces},
 BOOKTITLE = {Geometric applications of homotopy theory ({P}roc. {C}onf.,
              {E}vanston, {I}ll., 1977), {I}},
    SERIES = {Lecture Notes in Math.},
    VOLUME = {657},
     PAGES = {106--143},
 PUBLISHER = {Springer, Berlin},
      YEAR = {1978},
   MRCLASS = {58H10 (55T99 57R32)},
  MRNUMBER = {513543},
MRREVIEWER = {D. B. Fuks},
}

\bib{Crainic}{article}{
  author={Crainic, Marius},
  title={On the perturbation lemma, and deformations},
  note={preprint arXiv:math/0403266},
  year={2004}
}


%
%
%
\bib{DolRog}{article}{
author={Dolgushev, Vasily},
author={Rogers, Christopher L.},
    TITLE = {A version of the {G}oldman-{M}illson theorem for filtered
             {$L_{\infty}$}-algebras},
  JOURNAL = {J. Algebra},
   VOLUME = {430},
     YEAR = {2015},
    PAGES = {260--302},
}

%



\bib{DFT}{article}{
author={Ducoulombier, J.},
author={Fresse, B.},
author={Turchin, V.},
title={Projective and Reedy model category structures for (infinitesimal) bimodules over an operad},
note={arXiv:1911.03890},
year={2019},
}


%
%

\bib{DwyerKan}{article}{
    AUTHOR = {Dwyer, W. G.},
    author={Kan, D. M.},
     TITLE = {Function complexes in homotopical algebra},
   JOURNAL = {Topology},
    VOLUME = {19},
      YEAR = {1980},
    NUMBER = {4},
     PAGES = {427--440},
 }

\bib{ErdalIlhan}{article}{
   title={A model structure via orbit spaces for equivariant homotopy},
   volume={14},
   ISSN={1512-2891},
   url={http://dx.doi.org/10.1007/s40062-019-00241-4},
   DOI={10.1007/s40062-019-00241-4},
   number={4},
   journal={Journal of Homotopy and Related Structures},
   publisher={Springer Science and Business Media LLC},
   author={Erdal, Mehmet Akif and Güçlükan İlhan, Aslı},
   year={2019},
   month={Jun},
   pages={1131–1141}
}

\bib{Felder}{article}{
      title={Graph Complexes and higher genus Grothendieck-Teichm\"uller Lie algebras}, 
      author={Felder, Matteo},
      year={2021},
      eprint={2105.02056},
      archivePrefix={arXiv},
      primaryClass={math.QA}
}

\bib{FultonMacPherson1994}{article}{
  author       = {Fulton, William and MacPherson, Robert},
  title        = {A compactification of configuration spaces},
  journaltitle = {Ann. of Math. (2)},
  date         = {1994},
  volume       = {139},
  number       = {1},
  pages        = {183--225},
  issn         = {0003-486X},
  doi          = {10.2307/2946631},
  coden        = {ANMAAH},
  file         = {:FultonMacPherson1994.pdf:PDF},
  fjournal     = {Annals of Mathematics. Second Series},
  groups       = {Configuration spaces},
  mrclass      = {14C05 (05C30 14F25 55P62)},
  mrnumber     = {1259368 (95j:14002)},
  mrreviewer   = {Burt Totaro},
  timestamp    = {2015.12.01}
}




\bib{FrII}{book}{
author={Fresse, Benoit},
title={Homotopy of Operads and Grothendieck-Teichm\"uller Groups. Part 2.  The applications of (rational) homotopy theory methods},
series= {Mathematical Surveys and Monographs},
publisher= {American Mathematical Society, Providence, RI},
volume={217},
year={2017},
pages={xxxv+704},
ISBN={978-1-4704-3482-3},
}

\bib{Frbook2}{book}{
    AUTHOR = {Fresse, Benoit},
     TITLE = {Modules over operads and functors},
    SERIES = {Lecture Notes in Mathematics},
    VOLUME = {1967},
 PUBLISHER = {Springer-Verlag, Berlin},
      YEAR = {2009},
     PAGES = {x+308},
      ISBN = {978-3-540-89055-3},
}

%

\bib{FTW}{article}{
author={Fresse, B.},
author={Turchin, V.},
author={Willwacher, T.},
title={The rational homotopy of mapping spaces of $E_n$ operads},
note={arXiv:1703.06123},
year={2017},
}

\bib{FTW3}{article}{
  doi = {10.48550/ARXIV.2008.08146},
  url = {https://arxiv.org/abs/2008.08146},
  author = {Fresse, Benoit and Turchin, Victor and Willwacher, Thomas},
  keywords = {Algebraic Topology (math.AT), Geometric Topology (math.GT), FOS: Mathematics, FOS: Mathematics, 58D10, 55P62, 18F50 (primary), 18G85, 18M70, 18M75, 55P48, 57R40, 57R42 (secondary)},
  title = {On the rational homotopy type of embedding spaces of manifolds in $R^n$},
  publisher = {arXiv},
  year = {2020},
  copyright = {arXiv.org perpetual, non-exclusive license}
}

\bib{FresseExtended}{article}{
  AUTHOR = {Fresse, Benoit},
  TITLE = {The extended rational homotopy theory of operads},
JOURNAL = {Georgian Math. J.},
FJOURNAL = {Georgian Mathematical Journal},
 VOLUME = {25},
   YEAR = {2018},
 NUMBER = {4},
  PAGES = {493--512},
   ISSN = {1072-947X},
MRCLASS = {18D50 (18G55 55P62)},
MRNUMBER = {3881488},
MRREVIEWER = {Sinan Yalin},
    DOI = {10.1515/gmj-2018-0061},
    URL = {https://doi.org/10.1515/gmj-2018-0061},
}

\bib{FW}{article}{
author={Fresse, B.},
author={Willwacher, T.},
title={The intrinsic formality of $E_n$-operads},
journal={to appear in  J. Eur. Math. Soc. },
note={arXiv:1503.08699},
year={2015},
}

\bib{FWAut}{article}{
    title={Mapping Spaces for DG Hopf Cooperads and Homotopy Automorphisms of the Rationalization of $E_n$-operads},
    author={B. Fresse},
    author={T. Willwacher},
    year={2020},
    eprint={arXiv:2003.02939},
    archivePrefix={arXiv},
    primaryClass={math.AT}
}

\bib{FWMod}{article}{
    title={Rational homotopy theory of operadic modules},
    author={B. Fresse},
    author={T. Willwacher},
    year={2020},
    note={to appear},
}

\bib{GetzJones}{article}{
author={Getzler, Ezra},
author={Jones, J. D. S.},
title={Operads, homotopy algebra and iterated integrals for double loop spaces},
note={arXiv:hep-th/9403055},
year={1994},
}
%

%

\bib{GK}{article}{
author= {Goodwillie, Thomas G.},
author= {Klein, John R.},
title={Multiple disjunction for spaces of smooth embeddings},
journal={J. Topol.},
volume={8},
year={2015},
number={3},
pages={651-674},
}


\bib{GW}{article}{
    AUTHOR = {Goodwillie, Thomas G.},
    author={Weiss, Michael},
     TITLE = {Embeddings from the point of view of immersion theory. {II}},
   JOURNAL = {Geom. Topol.},
    VOLUME = {3},
      YEAR = {1999},
     PAGES = {103--118 (electronic)},
      ISSN = {1465-3060},
}







\bib{HamiltonLazarev}{article}{
    AUTHOR = {Hamilton, Alastair and Lazarev, Andrey},
     TITLE = {Symplectic {$C_\infty$}-algebras},
   JOURNAL = {Mosc. Math. J.},
  FJOURNAL = {Moscow Mathematical Journal},
    VOLUME = {8},
      YEAR = {2008},
    NUMBER = {3},
     PAGES = {443--475, 615},
      ISSN = {1609-3321},
   MRCLASS = {16E45 (16E40 55P62 57P10)},
  MRNUMBER = {2483220},
MRREVIEWER = {David Chataur},
       DOI = {10.17323/1609-4514-2008-8-3-443-475},
       URL = {https://doi.org/10.17323/1609-4514-2008-8-3-443-475},
}

%


\bib{HirshMilles}{article}{
   author={Hirsh, Joseph},
   author={Mill\`es, Joan},
   title={Curved Koszul duality theory},
   journal={Math. Ann.},
   volume={354},
   date={2012},
   number={4},
   pages={1465--1520},
   issn={0025-5831},
   review={\MR{2993002}},
   doi={10.1007/s00208-011-0766-9},
}


\bib{Idrissi2018b}{article}{
  number       = {1},
  volume       = {216},
  author       = {Idrissi, Najib},
  title        = {The Lambrechts--Stanley Model of Configuration Spaces},
  journaltitle = {Invent. Math},
  year         = {2019},
  pages        = {1--68},
  issn         = {1432-1297},
  doi          = {10.1007/s00222-018-0842-9},
  eprint       = {1608.08054},
  eprinttype   = {arXiv}
}

\bib{IvanovMikhailov}{article}{
    AUTHOR = {Ivanov, Sergei O. and Mikhailov, Roman},
     TITLE = {A finite {$\Bbb Q$}-bad space},
   JOURNAL = {Geom. Topol.},
  FJOURNAL = {Geometry \& Topology},
    VOLUME = {23},
      YEAR = {2019},
    NUMBER = {3},
     PAGES = {1237--1249},
      ISSN = {1465-3060},
   MRCLASS = {55P60 (14F35 16W60 20J05)},
  MRNUMBER = {3956892},
MRREVIEWER = {Behrooz Mashayekhy},
       DOI = {10.2140/gt.2019.23.1237},
       URL = {https://doi.org/10.2140/gt.2019.23.1237},
}



%
%
\bib{KWZ2}{article}{
author={Khoroshkin, A.},
author={Willwacher, T.},
author={\v Zivkovi\'c, M.},
title={Differentials on graph complexes II - Hairy Graphs},
journal={Lett. Math. Phys.}
volume={107},
number={10},
year={2017},
pages={1781--1797},
}

\bib{K2}{article}{
   author={Kontsevich, Maxim},
   title={Operads and motives in deformation quantization},
   note={Mosh\'e Flato (1937--1998)},
   journal={Lett. Math. Phys.},
   volume={48},
   date={1999},
   number={1},
   pages={35--72},
}
\bib{KontsevichDefQ}{article}{
 author={Kontsevich, Maxim},
 title={Deformation quantization of Poisson manifolds},
 journal={Lett. Math. Phys.},
 volume={66},
 date={2003},
 number={3},
 pages={157--216},
 issn={0377-9017},
 review={\MR{2062626 (2005i:53122)}},
}
%

\bib{Kos}{article}{
   author={Kosmann-Schwarzbach, Yvette},
   title={Derived brackets},
   journal={Lett. Math. Phys.},
   volume={69},
   date={2004},
   pages={61--87},
   issn={0377-9017},
   review={\MR{2104437}},
   doi={10.1007/s11005-004-0608-8},
}

\bib{Kriz}{article}{
    AUTHOR = {K\v{r}\'{\i}\v{z}, Igor},
     TITLE = {On the rational homotopy type of configuration spaces},
   JOURNAL = {Ann. of Math. (2)},
  FJOURNAL = {Annals of Mathematics. Second Series},
    VOLUME = {139},
      YEAR = {1994},
    NUMBER = {2},
     PAGES = {227--237},
      ISSN = {0003-486X},
   MRCLASS = {55P62 (32G13)},
  MRNUMBER = {1274092},
MRREVIEWER = {Elmer G. Rees},
       DOI = {10.2307/2946581},
       URL = {https://doi.org/10.2307/2946581},
}

\bib{KRWframings}{article}{
    AUTHOR = {Kupers, Alexander and Randal-Williams, Oscar},
     TITLE = {Framings of {$W_{g,1}$}},
   JOURNAL = {Q. J. Math.},
  FJOURNAL = {The Quarterly Journal of Mathematics},
    VOLUME = {72},
      YEAR = {2021},
    NUMBER = {3},
     PAGES = {1029--1053},
      ISSN = {0033-5606},
   MRCLASS = {57R15},
  MRNUMBER = {4310307},
       DOI = {10.1093/qmath/haaa057},
       URL = {https://doi.org/10.1093/qmath/haaa057},
}
	
\bib{KRWalgebraic}{article}{
    AUTHOR = {Kupers, Alexander and Randal-Williams, Oscar},
     TITLE = {The cohomology of {T}orelli groups is algebraic},
   JOURNAL = {Forum Math. Sigma},
  FJOURNAL = {Forum of Mathematics. Sigma},
    VOLUME = {8},
      YEAR = {2020},
     PAGES = {Paper No. e64, 52},
   MRCLASS = {20G05 (20F65 55R80 57M07)},
  MRNUMBER = {4190064},
       DOI = {10.1017/fms.2020.41},
       URL = {https://doi.org/10.1017/fms.2020.41},
}

\bib{KRWdiffeom}{article}{
  doi = {10.48550/ARXIV.2007.13884},
  url = {https://arxiv.org/abs/2007.13884},
  author = {Kupers, Alexander and Randal-Williams, Oscar},
  keywords = {Algebraic Topology (math.AT), FOS: Mathematics, FOS: Mathematics, 55R40, 58D10, 57S05},
  title = {On diffeomorphisms of even-dimensional discs},
  publisher = {arXiv},
  year = {2020},
  copyright = {arXiv.org perpetual, non-exclusive license}
}

\bib{LS}{article}{
   author={Lambrechts, Pascal},
   author={Stanley, Don},
   title={A remarkable DGmodule model for configuration spaces},
   journal={Algebr. Geom. Topol.},
   volume={8},
   date={2008},
   number={2},
   pages={1191--1222},
   issn={1472-2747},
   review={\MR{2443112}},
   doi={10.2140/agt.2008.8.1191},
}

 \bib{LVformal}{article}{
    AUTHOR = {Lambrechts, Pascal},
    author={Voli{\'c}, Ismar},
     TITLE = {Formality of the little {$N$}-disks operad},
   JOURNAL = {Mem. Amer. Math. Soc.},
    VOLUME = {230},
      YEAR = {2014},
    NUMBER = {1079},
     PAGES = {viii+116},
}

 \bib{LV}{book}{
author={Loday, J.-L. },
author={Vallette, B.},
title={Algebraic Operads},
number={346},
series= {Grundlehren der mathematischen Wissenschaften},
 publisher= {Springer},
 address={Berlin},
 year={2012},
}

\bib{Salvatore}{article}{
author={Salvatore, Paolo},
title={Configuration spaces with summable labels},
conference={
  title={Cohomological methods in homotopy theory},
  address={Bellaterra},
  date={1998},
  },
 book={
   series={Progr. Math.},
   volume={196},
   publisher={Birkh\"auser},
   place={Basel},
},
pages={375--395},
date={2001},
}
%

%
%

\bib{Silvan}{article}{
  author={Schwarz, Silvan},
  title={An Obstruction Theory for the Existence of Maurer-Cartan Elements in curved $L_\infty$-algebras and an Application in Intrinsic Formality of $P_\infty$-Algebras},
  year={2022},
  note={arXiv:2209.15538}
}

\bib{Sinha}{article}{
author={Sinha, Dev},
title={Manifold-theoretic compactiﬁcations of conﬁguration spaces},
journal={Selecta Math. (N.S.)},
volume={10},
year={2004},
number={3},
pages={391--428},
}

%

\bib{SinhaLD}{article}{
   author={Sinha, Dev P.},
   title={The (non-equivariant) homology of the little disks operad},
   language={English, with English and French summaries},
   conference={
      title={OPERADS 2009},
   },
   book={
      series={S\'{e}min. Congr.},
      volume={26},
      publisher={Soc. Math. France, Paris},
   },
   date={2013},
   pages={253--279},
   review={\MR{3203375}},
}

\bib{Totaro}{article}{
    AUTHOR = {Totaro, Burt},
     TITLE = {Configuration spaces of algebraic varieties},
   JOURNAL = {Topology},
  FJOURNAL = {Topology. An International Journal of Mathematics},
    VOLUME = {35},
      YEAR = {1996},
    NUMBER = {4},
     PAGES = {1057--1067},
      ISSN = {0040-9383},
   MRCLASS = {57R19 (14F99 55T99)},
  MRNUMBER = {1404924},
MRREVIEWER = {Laurence R. Taylor},
       DOI = {10.1016/0040-9383(95)00058-5},
       URL = {https://doi.org/10.1016/0040-9383(95)00058-5},
}


\bib{Turchin4}{article}{
author={Turchin, Victor},
title={Context-free manifold calculus and the Fulton--MacPherson operad},
journal={Algebr. Geom. Topol.},,
year={2013},
volume={13},
number={3},
pages={1243--1271}
}
%

%

%
%
%
%

\bib{WillwacherRecollections}{article}{
 author={Willwacher, Thomas},
 title={Recollections on Maurer-Cartan spaces of filtered complete $L_\infty$-algebras},
 note={in preparation},
 year={ 2023},
}

\end{biblist}
\end{bibdiv}

\end{document}